\documentclass[]{article}

\usepackage{graphicx}
\usepackage{amsmath}
\usepackage{amssymb}
\usepackage{amsthm}
\usepackage{pxfonts}
\usepackage{enumerate}
\usepackage{color}
\usepackage{mathdots}
\usepackage{sectsty}
\usepackage{tikz}
\usepackage{adjustbox}
\usepackage{enumitem}
\usepackage{caption}
\usepackage{bbold}
\usepackage{mathrsfs}
\usepackage[hidelinks]{hyperref}
\allowdisplaybreaks

\sectionfont{\scshape\centering\fontsize{11}{14}\selectfont}
\subsectionfont{\scshape\fontsize{11}{14}\selectfont}
\usepackage{fancyhdr}
\usepackage[nottoc,notlot,notlof]{tocbibind}

\newcommand{\sgn}{\textnormal{sgn}}

\newcommand{\bup}{\boldsymbol{\upsilon}}
\newcommand{\bUpsilon}{\boldsymbol{\Upsilon}}
\newcommand{\bxi}{\boldsymbol{\xi}}

\newcommand{\de}{\delta}

\newcommand{\ep}{\epsilon}

\newcommand{\cT}{{\mathcal T}}
\newcommand{\cA}{\mathcal A}

\newcommand{\ka}{\kappa}
\newcommand*{\defeq}{\overset{\mathrm{def}}{=}}

\newcommand*\dif{\mathrm{d}}
\newcommand{\prob}{\mathbb{P}}
\newcommand{\expt}{\mathbb{E}}
\usepackage{bbm}
\newcommand{\indi}{\mathbf{1}}

\newcommand{\eps}{\varepsilon}

\newcommand{\dist}{\operatorname{dist}}

\newcommand{\Log}{\operatorname{Log}}
\newcommand{\R}{\mathbb R}
\newcommand{\C}{\mathbb C}
\newcommand{\bXi}{\boldsymbol{\mathsf{\Xi}}}
\newcommand{\Rc}{\mathfrak R}
\newcommand{\K}{\mathfrak K}
\newcommand{\BL}{\mathfrak A}

\newcommand{\bx}{\mathbf{x}}
\newcommand{\bl}{\mathbf{l}}

\newcommand{\Sine}{\mathsf{Sine}}
\newcommand{\mi}{\mathrm{i}}
\newcommand{\mT}{\mathrm{T}}
\newcommand{\me}{\mathrm{e}}

\newcommand{\mE}{{\mathfrak{E}}}

\newcommand{\UN}{{(N)}}
\newcommand{\sfX}{\mathsf{X}}
\newcommand{\sfB}{\mathsf{B}}
\newcommand{\sfG}{\mathsf{G}}
\newcommand{\sfL}{\mathsf{L}}

\newcommand{\sfS}{\mathsf{S}}
\newcommand{\sfU}{\mathsf{U}}

\newcommand{\bsfL}{\boldsymbol{\mathsf{L}}}
\newcommand{\bsfJ}{\boldsymbol{\mathsf{J}}}
\newcommand{\sfY}{\mathsf{Y}}

\newcommand{\bXXi}{\bXi_{\bxi}}
\newcommand{\tbXXi}{\widetilde{\bXi_{\bxi}}}

\newcommand{\tsup}{\mathrm{sup}}
\newcommand{\tinf}{\mathrm{inf}}
\newcommand{\tsai}{\mathcal{L}_{\kappa,\rho}^p}
\newcommand{\tsaipro}{\mathcal{P}_{\kappa,\rho}^p}

\newcommand\shorttitle{Infinite-dimensional Dyson Brownian motion(s)}
\newcommand\authors{T. Assiotis and F. Li}

\newtheorem{theorem}{Theorem}[section]
\newtheorem{proposition}[theorem]{Proposition}
\newtheorem{cor}[theorem]{Corollary}
\newtheorem{lemma}[theorem]{Lemma}
\newtheorem{definition}[theorem]{Definition}
\newtheorem*{theorem*}{Theorem}
\newtheorem*{mainone}{Theorem A}
\newtheorem*{maintwo}{Theorem B}
\newtheorem{remark}[theorem]{Remark}
\DeclareMathOperator{\length}{length}
\title{\large \bf INFINITE-DIMENSIONAL DYSON BROWNIAN MOTION(S)}
\author{\small THEODOROS ASSIOTIS AND FENGYI LI}
\date{}

\begin{document}
\maketitle

\begin{abstract}
We study infinite-dimensional Dyson Brownian motions obtained as limits of finite systems without rescaling the actual stochastic dynamics. For $\beta=2$, we construct determinantal processes on an extended space of initial data and prove convergence of their finite-dimensional distributions under essentially optimal conditions. This extends the seminal results of Katori and Tanemura \cite{Katori_2009}. The additional parameters record information at infinity and enter through an associated Laguerre--P\'olya entire function. Moreover, for explicit classes of configurations, we establish convergence on path space and the Markov property. We prove rescaled long-time convergence, in finite-dimensional distributions, to the stationary extended $\Sine$ process from arbitrary symmetric initial configurations with power-law counting exponent $q\in(0,2)$. This extends the integer lattice relaxation result of Katori and Tanemura \cite{Katori_2009} which was the only such result for explicit deterministic initial conditions. For $\beta\geq1$, we prove convergence of finite particle systems from regular initial data to the unique strong solution of an infinite-dimensional stochastic differential equation in a certain rigid-path-regularity class. This extends seminal works of Tsai and Osada \cite{MR3568040,Osada2013Logarithmic,Osada1996Dirichlet,Osada2012RandomMatrices}. We finally derive a stochastic partial differential equation of Burgers-type for the Stieltjes transform of the dynamics.
\end{abstract}

\numberwithin{equation}{section}
\tableofcontents
\addtocontents{toc}{\setcounter{tocdepth}{1}} 

\clearpage
\section{Introduction}
\subsection{Setting and motivation}
Dyson Brownian motion, introduced by Dyson in 1962, describes the evolution of eigenvalues of matrices undergoing Brownian motion \cite{Dyson1962}. It is one of the most extensively studied objects in random matrix theory and a quintessential example of a system of stochastic differential equations with singular logarithmic interaction. Its local relaxation properties underpin a powerful approach to proving universality of local eigenvalue statistics \cite{ErdosYau2017} and it also gives rise, under suitable soft-edge scaling, to the Airy line ensemble, a fundamental object in the Kardar–Parisi–Zhang (KPZ) universality class \cite{Corwin2012,CorwinHammond2014}.

In this paper we study infinite-dimensional Dyson Brownian motions obtained by letting the
number of particles tend to infinity without rescaling the dynamics. For a
finite interval $\mathcal I_N\subset\mathbb Z$, distinct ordered initial
positions $(x_i^{(N)})_{i\in\mathcal I_N}$ and $\beta\geq1$, the equations
\begin{equation}\label{eq SDE DBM}
 \dif\sfX_i^{(N)}\left(t\right)
 =\dif\sfB_i\left(t\right)
 +\frac\beta2\sum_{\substack{j\in\mathcal I_N\\j\ne i}}
 \frac{\dif t}{\sfX_i^{(N)}\left(t\right)-\sfX_j^{(N)}\left(t\right)},
 \qquad \sfX_i^{(N)}\left(0\right)=x_i^{(N)},\quad i\in\mathcal I_N,
\end{equation}
have a unique strong noncolliding solution, where the $(\sfB_i)_{i\in\mathbb Z}$
are independent standard Brownian motions; see
\cite{anderson2010introduction}. The central question conceptually answered in this paper is which infinite-particle
limits arise from these finite systems, and what initial data determine them.

The upshot is that the local limiting initial configuration $(x_i)_{i\in \mathbb{Z}}$ does not contain all the required data and thus one cannot truly speak of the infinite-dimensional Brownian motion starting from configuration $(x_i)_{i\in \mathbb{Z}}$ without further qualifications.
Particles escaping to infinity disappear from the configuration but their
reciprocal sums can leave a common drift, and their reciprocal-square sums
can leave a further quadratic parameter. Two finite approximations of the
same configuration can therefore produce different limiting dynamics.
The answer naturally involves an extended state space. Its elements encode
an entire function: the zeros record the limiting particles, while the
exponential factors retain the information lost at infinity.
The information retained at infinity has an explicit dynamical effect:
it produces a common translation and, at $\beta=2$, a dilation and time change.

The equilibrium dynamics of infinite Dyson systems were first studied by
Spohn~\cite{Spohn1987Dyson,Spohn1987Tracer}. Osada, later in collaboration with Tanemura, developed a general theory,  based on Dirichlet forms, for diffusion
constructions and solutions to infinite systems of stochastic differential equations with logarithmic and other singular
interactions~\cite{Osada1996Dirichlet,Osada2013Logarithmic,OSADA2016186,OsadaOsada2023Ergodicity,OsadaTanemura2020Tail,OsadaOsadaCoulomb}.
Katori and Tanemura constructed determinantal infinite-particle dynamics
from explicit initial configurations at $\beta=2$, see
\cite{Katori_2009,MR2918121,MR3019667}. Tsai constructed unique strong
solutions of the labelled equations for $\beta\geq1$ from a regular class
of explicit initial configurations, and proved convergence of spatial truncations
\cite{MR3568040}. 
Finally, more recently Suzuki~\cite{Suzuki2025Curvature} developed a more geometric perspective and has proven Bakry--\'Emery estimates and
transport regularity for the $\Sine_\beta$-symmetric Dirichlet forms.

In this paper we extend the determinantal construction through its initial parameters
and  control of the reciprocal
tails encoded in the aforementioned entire function, under essentially optimal conditions. We moreover prove convergence on path-space and the Markov property of the limiting dynamics. Finally, we prove a rescaled long-time limit to the $\Sine$ point process
for an explicit class of initial configurations. The initial configurations
may have zero, finite positive, or infinite asymptotic density. Under only
the stated power asymptotics from the abstract, the rescaled several-time laws converge to
those of the stationary extended $\Sine$ process. From explicit initial conditions the only known result before our work was for the integer configuration \cite{Katori_2009} which allowed for a very explicit computation. It is worth noting that using Dirichlet form theory ergodicity for almost every $\Sine_2$ initial condition (we note that one cannot write down explicit configurations without additional arguments) was proved in
\cite{OsadaOsada2023Ergodicity,Suzuki2024Ergodicity} and it was moreover later shown that the corresponding
Dirichlet forms have no spectral gap~\cite{Suzuki2025NoGap}.

The extended-state viewpoint is motivated by earlier work on dynamics
on boundaries of graphs~\cite{Assiotis2020HuaPickrell} and the
first author's joint work with Mirsajjadi on logarithmically
interacting diffusions and evolving characteristic
polynomials~\cite{AssiotisMirsajjadi2024ISDE,AssiotisMirsajjadi2026GL}.
Consistency or intertwining relations between finite systems play a central role in
these constructions as they fit a powerful algebraic framework introduced by Borodin and Olshanski called the method of intertwiners~\cite{BorodinOlshanskiMarkov,OlshanskiICM,OlshanskiLectureNotes}. This framework does not seem to give non-trivial results directly for Dyson Brownian motion. Here, instead we develop the extended-state viewpoint directly for unscaled
Dyson Brownian motion, using the determinantal structure at $\beta=2$
and the comparison of ordered gaps at $\beta\geq1$. At $\beta=2$, the main object is a Laguerre--P\'olya entire function~\cite{levin1964distribution,MR4487977}
associated with the initial data. For general $\beta \ge 1$, we extend Tsai's approach which is based on the comparison of
ordered gaps. We also show that from this approach one can derive a stochastic partial differential equation of Burgers-type for the limiting Stieltjes transform. This is the bulk (instead of edge), non-equilibrium analogue of an equation for the Stieltjes transform for the Airy line ensemble obtained by Huang and Zhang \cite{HUANG2026111028}.

\subsection{Main results}\label{sec:intro-main-results}
We first introduce the data, which is unfortunately quite heavy, needed to state our two main results precisely. Theorem A includes all our main results special to the $\beta=2$ case coming from the determinantal structure and Theorem B all our main results valid for $\beta \ge 1$ related to stochastic equations.

Write
$\mathfrak M$ for the space of locally finite, nonnegative integer-valued
Radon measures on $\mathbb R$, with the vague topology; multiplicities are
allowed. We write $\bxi$ for deterministic configurations and $\bXi$
for random configurations. Write
$\bxi^{[N]}=\bxi|_{[-N,N]}$ for spatial restriction, reserving $(N)$ for a
general approximation. Finite-dimensional convergence means weak
convergence in $\mathfrak M^m$ at each finite collection of admissible
times. The space $\mathcal C([0,\infty)\to\mathfrak M)$ carries the topology
of locally uniform vague convergence. For a map $F$, the pushforward
$F_\#\bxi$ places each atom $x$ at $F(x)$, with its multiplicity.

\smallskip\noindent\textbf{Extended initial data.}
Fix $b>0$, which is a purely technical device as will be explained in more detail in the sequel: this auxiliary cutoff separates the roots near the origin
from those encoded by their reciprocals. The coordinates depend on $b$,
but the kernel is unchanged under the corresponding change of coordinates
in \eqref{eq:cutoff-change}. A cutoff is admissible for a configuration
if it has no atoms at either $b$ or $-b$.
An element of the space $\bUpsilon_b$ of
Definition~\ref{def parameter space} has the form
\[
 \bup=\left(\left(\boldsymbol\alpha^+,\boldsymbol\alpha^-\right),
 \left(\mathbf a^+,\mathbf a^-\right),\gamma_1,\delta\right).
\]
The nonnegative sequences $\boldsymbol\alpha^\pm$ are decreasing, have
entries smaller than $1/b$, and are square summable. Their nonzero entries
encode roots $1/\alpha_j^+$ and $-1/\alpha_j^-$. The inner data
$\mathbf a^\pm=((a_j^\pm)_{j\geq1},k^\pm)$ consist of decreasing sequences
in $[0,b)$ with finitely many positive entries, and integers $k^\pm\geq0$.
They encode roots $a_j^+$, $-a_j^-$ for positive entries, and
$k^++k^-$ roots at zero. We use the convention $k^-=0$. Thus, the root
configuration denoted by $\bxi$ has no atoms at $\pm b$. Finally, $\gamma_1\in\mathbb R$
and $\delta\in\mathbb R$ satisfies
$\delta\geq\sum_j((\alpha_j^+)^2+(\alpha_j^-)^2)$; set
\begin{equation}\label{eq:intro-defect}
 \gamma_2\left(\bup\right)\defeq
 \delta-\sum_{j\geq1}\left(\left(\alpha_j^+\right)^2+
 \left(\alpha_j^-\right)^2\right),
 \qquad
 \tau\left(\bup\right)\defeq\frac1{\gamma_2\left(\bup\right)},
 \qquad \frac10\defeq\infty.
\end{equation}
Convergence in $\bUpsilon_b$ means coordinatewise convergence of the
reciprocal and inner-root sequences, eventual equality of each total inner
degree $q(\mathbf a^\pm)=\#\{j:a_j^\pm>0\}+k^\pm$, and convergence of
$\gamma_1$ and $\delta$. In particular, it does not require separate
convergence of $\gamma_2$.

The entire function associated with these data is
\begin{equation}\label{eq:intro-entire}
 \mE_{\bup}\left(z\right)\defeq
 \me^{-\gamma_1z-\gamma_2z^2/2}
 \prod_{\substack{x\in\bxi\\|x|<b}}\left(z-x\right)
 \prod_{\substack{x\in\bxi\\|x|>b}}
 \left(1-\frac zx\right)\me^{z/x},
\end{equation}
where products count multiplicities. It belongs to the Laguerre--P\'olya
class, the nonzero locally uniform limits of real polynomials with only
real zeros; see \cite{levin1964distribution,MR4487977}.
The correlation kernel $\K_{\bup}$ is an explicit double integral involving
a ratio of these entire functions. We defer its precise and rather complicated contour integral expression to
Section~\ref{sec:preliminaries}.

For configurations with finite reciprocal-square tail, the natural
reciprocal parameter is the symmetric principal value. Define
\begin{equation}\label{eq:intro-reciprocal-data}
 s_b\left(\bxi\right)\defeq
 \int_{|x|>b}\frac{\bxi\left(\dif x\right)}{x^2},
 \qquad
 p_b\left(\bxi\right)\defeq
 \lim_{\substack{R\to\infty\\R\in\mathbb N}}
 \int_{b<|x|\leq R}\frac{\bxi\left(\dif x\right)}x,
\end{equation}
whenever the principal value exists. For a configuration with
$s_b(\bxi)<\infty$ and no atoms at $\pm b$, let $f_r^b(\bxi)$ denote
its parameter with $\gamma_1=r$ and $\gamma_2=0$. For finite $\bxi$,
$p_b(\bxi)=\sum_{x\in\bxi,\,|x|>b}1/x$, and the unshifted finite
Dyson process has kernel $\K_{f_{p_b(\bxi)}^b(\bxi)}$.
Repeated finite initial points at $\beta=2$ are understood through the
entrance laws of Proposition~\ref{finitedysonmodel}, obtained as distinct
finite initial points merge.
The space $\mathfrak N$ consists of configurations for which both
quantities are finite, for an admissible cutoff $b$. We write
$\bXi_{\bxi}$ for the process with kernel
$\K_{f_{p_b(\bxi)}^b(\bxi)}$. This choice is independent of the cutoff.

\smallskip\noindent\textbf{Initial conditions for paths and the Markov property.}
For path convergence, consider simple configurations $\bxi^{(N)}$ with
finite reciprocal-square tails, no atoms at $\pm b$, and
$f_{r^{(N)}}^b(\bxi^{(N)})\to\bup$ in $\bUpsilon_b$, whose limiting root
configuration is simple and infinite. We require a decreasing function
$\varepsilon_{\mathrm{sep}}:[0,\infty)\to[0,\infty)$ tending to zero such that
\begin{equation}\label{eq:intro-separation}
 \sum_{\substack{y\in\bxi^{(N)}\\y\ne x,-x}}
 \frac1{\left|y^2-x^2\right|}
 \leq\varepsilon_{\mathrm{sep}}\left(\left|x\right|\right),
 \qquad x\in\bxi^{(N)},\quad N\geq1.
\end{equation}
The uniformity over the approximating family is part of this condition.

For the Markov property we use two explicit classes in $\mathfrak N$.
The first consists of symmetric simple configurations with a positive
minimum spacing. The second consists of
$\bxi=\ep\delta_0+\sum_{j\geq1}(\delta_{a_j}+\delta_{-a_j})$, $\ep\in\{0,1\}$ and $0<a_1<a_2<\cdots\to\infty$, with the following local-count bounds for
some $q\in(1,2)$. Let $\mathcal N(u,s)$ count all roots in $[u-s,u+s]$
and $\mathcal N^+(u,s)$ count its positive roots. There are positive
constants $C_{\mathrm{up}},c_{\mathrm{low}},C_{\mathrm{low}},U_0$ such that
\begin{equation}\label{eq:intro-local-counts}
 \begin{aligned}
 \mathcal N\left(u,s\right)
 &\leq C_{\mathrm{up}}\left(1+s\left(1+u\right)^{q-1}+s^q\right),
 &&u\geq0,\ s>0,\\
 \mathcal N^+\left(u,s\right)&\geq c_{\mathrm{low}}s u^{q-1},
 &&u\geq U_0,\ C_{\mathrm{low}}u^{1-q}\leq s\leq u/4.
 \end{aligned}
\end{equation}

A more general criterion for the Markov property is given in
Proposition~\ref{prop markov from rho}.

\smallskip\noindent\textbf{Initial conditions and scales for relaxation.}
For the long-time limit we consider
\begin{equation}\label{eq:intro-relaxation-data}
 \begin{aligned}
 \bxi&=\eps\delta_0+\sum_{j\geq1}\left(\delta_{a_j}+\delta_{-a_j}\right),
 &&0<a_1<a_2<\cdots\to\infty,\quad\eps\in\left\{0,1\right\},\\
 a_j&\sim\left(j/c\right)^{1/q},
 &&j\longrightarrow\infty,
 \end{aligned}
\end{equation}
where $0<q<2$ and $c>0$. We note that no rate of convergence or additional spacing
assumption is imposed in the relaxation theorem. These configurations
belong to $\mathfrak N$. For all sufficiently large $T$, put
\begin{equation}\label{eq:intro-sine-scales}
 n_T\defeq\max\left\{n\geq1:a_n^2\leq Tn\right\},
 \qquad \sigma_T\defeq\sqrt{T/n_T},
 \qquad \rho_q\defeq\frac1\pi
 \left(\frac{q\pi}{\sin\left(\pi q/2\right)}\right)^{1/(2-q)}.
\end{equation}
Write $\bXi_{\Sine,\rho}$ for the stationary extended $\Sine$ process of
density $\rho$, defined by the extended kernel in
Section~\ref{sec:sine}. Its one-time law is the $\Sine$ point process with
kernel $\sin(\pi\rho(x-y))/(\pi(x-y))$, with value $\rho$ on the diagonal.

Our first result constructs the limiting configuration processes and
describes their dependence on the initial data. The path, Markov and
relaxation assertions use the respective hypotheses just stated.
\begin{mainone}
Let $\beta=2$.
\begin{enumerate}[label=\textnormal{(\roman*)},leftmargin=*]
\item Every $\bup\in\bUpsilon_b$ determines a determinantal process
$\bXi_{\bup}$ with kernel $\K_{\bup}$ on $(0,\tau(\bup))$, obtainable as
a finite-dimensional limit of finite unscaled Dyson Brownian motions.
If $\bup^{(N)}\to\bup$, their kernels converge locally uniformly in the
spatial variables at each fixed finite collection of positive times below
$\tau(\bup)$, and the corresponding processes converge in finite-dimensional
distributions.
\item With the root data $\bxi$ fixed, write $\bXi^{\gamma_1,\gamma_2}$ for the
process with the indicated scalar parameters. Put $A_t=1-\gamma_2t$ and
$\Phi_t(x)=A_tx-\gamma_1t$. Then
\begin{equation}\label{eq:intro-parameter-map}
 \left(\bXi^{\gamma_1,\gamma_2}\left(t\right)\right)_{0<t<1/\gamma_2}
 \overset{\mathrm{law}}{=}
 \left(\left(\Phi_t\right)_\#
 \bXi^{0,0}\left(t/A_t\right)\right)_{0<t<1/\gamma_2}.
\end{equation}
\item Under the path hypotheses above, including
\eqref{eq:intro-separation}, the limiting parameter has $\gamma_2=0$.
The processes have continuous versions with their prescribed configurations
at time zero, and convergence holds in
$\mathcal C([0,\infty)\to\mathfrak M)$.
\item For either of the two Markov classes above, $\bXi_{\bxi}$ has the
Markov property, with transition laws given by restarting the same
construction from the current configuration.
\item Under \eqref{eq:intro-relaxation-data}, as $T\to\infty$,
\begin{equation}\label{eq:intro-sine-limit}
 \left(x\mapsto x/\sigma_T\right)_\#
 \bXi_{\bxi}\left(T+\sigma_T^2t\right)
  \overset{\mathrm{f.d.d.}}{\longrightarrow}\ \bXi_{\Sine,\rho_q}\left(t\right)
\end{equation}
in finite-dimensional distributions at fixed real rescaled times. The difference between the rescaled kernel and the limiting extended sine
kernel tends to zero locally uniformly in all four variables.
\end{enumerate}
\end{mainone}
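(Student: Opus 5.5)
The plan is to build everything on the explicit double contour integral kernel for finite unscaled Dyson Brownian motion at $\beta=2$ (Proposition~\ref{finitedysonmodel}). For finite $\bxi$ this kernel is $\K_{f_{p_b(\bxi)}^b(\bxi)}$. Its integrand contains the ratio $\mE(w)/\mE(z)$ of the associated polynomials, normalised by the factors $(1-z/x)\me^{z/x}$, against Gaussian heat factors.

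\textbf{Step (i).} First I would show that any $\bup\in\bUpsilon_b$ is a limit of finite data $f^b_{r^{(N)}}(\bxi^{(N)})$. The $\gamma_1$ part is absorbed by choosing $r^{(N)}$. The defect $\gamma_2>0$ is produced by adding $M$ far-away roots at $\pm\sqrt{M/\gamma_2}$: their reciprocals vanish, while their squared reciprocals sum to $\gamma_2$. Convergence in $\bUpsilon_b$ then gives locally uniform convergence $\mE_{\bup^{(N)}}\to\mE_{\bup}$ off the real axis, via the Laguerre--P\'olya/Hadamard factorisation with genus-one factors and $\delta$ controlling the quadratic term. Dominated convergence on the contours gives kernel convergence. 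The required uniform bounds come from $|(1-z/x)\me^{z/x}|\le \me^{C|z|^2/x^2}$. This bound is summable because the $\alpha$'s are square summable, and it is beaten by the Gaussian $\me^{-(w-y)^2/2t}$ exactly when $t<\tau(\bup)$. Locally uniform kernel convergence then implies f.d.d.\ convergence of the determinantal processes by standard Fredholm determinant arguments. The limits are bona fide point processes because they are limits of point processes and the kernels are locally trace class uniformly.

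\textbf{Step (ii).} This is a change of variables in the contour integral. Multiplying $\mE$ by $\me^{-\gamma_1 z-\gamma_2z^2/2}$ and completing the square in the Gaussian factor $\me^{(z-x)^2/2t}$ yields a Gaussian with time $t/A_t$ at rescaled spatial arguments $\Phi_t^{-1}$. The multi-time extended kernel then transforms by the Jacobian factor, together with a conjugation that does not change determinants.

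\textbf{Steps (iii) and (iv).} For (iii), the separation bound \eqref{eq:intro-separation} forces the reciprocal-square tails to converge to those of the limit, so $\gamma_2=0$. Tightness on path space would come from Kolmogorov-type moment bounds on $\langle f,\bXi(t)\rangle-\langle f,\bXi(s)\rangle$. These would be obtained by comparison with the finite SDEs: the drift on a particle at $x$ from far particles is controlled by \eqref{eq:intro-separation}, uniformly in $N$. Continuity at $t=0$ then follows from the same estimate. For (iv), I would invoke Proposition~\ref{prop markov from rho}. This requires checking that almost surely at each time $t$ the configuration $\bXi_\bxi(t)$ again lies in $\mathfrak N$ with the correct parameter $p_b$ and with $\gamma_2=0$, so that restarting reproduces the kernel. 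This is the step where the spacing hypotheses and the local-count bounds \eqref{eq:intro-local-counts} are used, via variance bounds on linear statistics $\sum 1/x$ and $\sum 1/x^2$ at time $t$. The semigroup identity itself is inherited from the finite systems by the convergence in (i).

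\textbf{Step (v).} This is steepest descent. After the substitution $x\mapsto\sigma_T x$ and $t\mapsto T+\sigma_T^2t$, the exponent is $\log\mE(\sigma_T z)-\sigma_T^2z^2/(2T)$. Using $a_j\sim(j/c)^{1/q}$, the log-derivative $\sum_j 2z/(z^2-a_j^2)$ behaves, at the scale fixed by $n_T$, like $\int_0^\infty \frac{2z\,\dif(cu^q)}{z^2-u^2}$. This integral evaluates through $\pi q/\sin(\pi q/2)$, which produces a pair of complex conjugate critical points at $\pm\mi\cdot$const and real part $\pm\pi\rho_q$. Deforming the contours through these critical points gives the extended sine kernel. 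The main obstacle is the uniform error control. Only asymptotic equivalence of the $a_j$ is assumed, with no rate, so the Riemann-sum approximation must be controlled by monotonicity and Abelian arguments. Tail bounds along the steepest-descent contours must also be established for all $q\in(0,2)$, including $q\le1$, where the density vanishes and the critical points must be located carefully.
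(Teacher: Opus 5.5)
Your part (ii) is the paper's completion-of-squares argument, and the outlines of (i) and (v) follow the paper, but parts (iii) and (iv) rest on steps that would not work as proposed.

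\textbf{Part (iii).} You want $N$-uniform Kolmogorov bounds from the finite SDEs, with the far-field force ``controlled by \eqref{eq:intro-separation}''. That hypothesis constrains only the initial configurations. In It\^o's formula for $\langle\phi,\bXi^{(N)}(t)\rangle$ the drift contains $\sum_i\phi'(\mathsf X_i(t))\sum_j(\mathsf X_i(t)-\mathsf X_j(t))^{-1}$, a principal-value force exerted by the whole random configuration at time $t$. Nothing in your argument bounds its moments uniformly in $N$.

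The paper never estimates the drift. It uses the Katori--Tanemura complex Brownian representation (Theorem~\ref{theorem complex repre}, extended to arbitrary $r$ by a Cameron--Martin shift). There, Dyson expectations become expectations over independent Brownian motions weighted by $\det[\mathfrak F_{\bxi,r}(x_i,\mathsf Z_j(T))]$, with $\mathfrak F_{\bxi,r}(a,z)=\mE(z)/((z-a)\mE'(a))$. The square-separation condition is exactly what bounds $1/|\mE'(a)|$ by $C_\eta\me^{\eta a^2}$ through the identity \eqref{simple equality}, giving Proposition~\ref{bound of res}. Reducing to determinants of size at most $Q$ and applying H\"older then yields $C|t-s|^{Q/2}$ (Lemma~\ref{lemma for bound}).

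\textbf{Part (iv).} You cite Proposition~\ref{prop markov from rho} but never supply its hypothesis, the $N$-uniform sublinear one-point bound \eqref{eq rho bound} for the spatial truncations. Membership of $\bXi_{\bxi}(t)$ in $\mathfrak N$ and convergence of the restarted parameters are \emph{consequences} of that bound. They come from count-discrepancy moments via eigenvalue concentration and Stieltjes integration by parts, not from separately available variance estimates. Producing the bound is the bulk of the work. For the dense class it needs a steepest-descent analysis through the saddles $u_N(x)\pm\mi v_N(u_N(x))$ plus a deformed-GUE Wegner bound near the edge (Theorem~\ref{thm bound}). For the separated class it needs the residue formula with regularized products and the shifted-centre count (Proposition~\ref{prop:uniform-density-separated}). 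Saying the spacing and local-count hypotheses are used ``here'' gives no mechanism.

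\textbf{Part (i).} Your bound $|(1-z/x)\me^{z/x}|\le\me^{C|z|^2/x^2}$ controls only $\mE(w)$ on $\mi\mathbb R$. The kernel also contains $1/\mE^{(N)}(z)$ on the $z$-contour, which needs an $N$-uniform lower bound on $|\mE^{(N)}|$ there. To reach the full horizon $\tau(\bup)$ this bound must be sharp. On the rays $|\Re w|=\sqrt3|\Im w|$ one has $|\me^{w}(1-w)|^{-1}\le\me^{\Re(w^2)/2}$ (Lemma~\ref{denolemma}). The tails of $\boldsymbol\alpha^{\pm,(N)}$ must then be combined with $\gamma_2^{(N)}$ through convergence of $\delta$ (Lemma~\ref{lm bound for sum}), since $\gamma_2^{(N)}$ need not converge. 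In your own approximation $\gamma_2^{(N)}=0$, so all the quadratic growth of $1/\mE^{(N)}$ comes from genus factors. A numerator-type estimate $\me^{|w|^2/2}$ per reciprocal factor would give only $t<1/(2\gamma_2)$, because $|w|^2=2\Re(w^2)$ on these rays.

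Choosing $r^{(N)}$ freely also produces Dyson motions with an extra common drift, not unscaled finite Dyson motions, whose kernels have $\gamma_1=p_b(\bxi^{(N)})$. The paper instead realises $\gamma_1$ by an asymmetric block of $M_n$ escaping roots at $M_n/c_n$, whose square mass $c_n^2/M_n$ vanishes (Proposition~\ref{well_defined process}).

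\textbf{Part (v).} The saddles sit at scale $L_T=\sigma_Tn_T$, not $\sigma_T$. With $z=L_T\zeta$ the phase is $n_TF_T(\zeta)$, and in your variables the critical points lie at height of order $n_T$. The sine kernel is the residue crossed when the $z$-contour moves from $\Gamma/L_T$, which meets $\mi\mathbb R$ at $\pm\mi L_T^{-1}$, to horizontal lines through the exact saddles $\pm\mi w_T$. The missing rate is handled not by Abelian arguments but by centring at these exact finite-$T$ saddles (Lemma~\ref{lem:exact-saddle}).
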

\begin{proof}
Proposition~\ref{well_defined process} gives the construction and finite-particle
realisation, and Theorem~\ref{MainTheorem} gives parameter continuity.
Proposition~\ref{gammadependence} proves (ii), and
Theorem~\ref{theorem path cts} proves (iii). The two assertions in (iv)
are Corollaries~\ref{cor:section5-markov} and~\ref{xi in L have M-p}.
Finally, (v) is Theorem~\ref{thm:main}.
\end{proof}
The rescaled long-time statistics in (v) are unchanged by
symmetry-preserving finite modifications and ordered perturbations
$\widetilde a_j/a_j\to1$ of the initial positions within the stated class.
For $\gamma_2>0$, the restriction $t<1/\gamma_2$ is sharp.
Proposition~\ref{prop:gamma2-horizon-sharpness} gives finite initial
configurations with convergent parameters whose first correlation function
at the origin diverges both at $t=1/\gamma_2$ and at every later fixed time.
Thus the kernel-convergence conclusion cannot, in general, be extended
to or beyond this horizon.

The transformation in \eqref{eq:intro-parameter-map} also suggests a
labelled equation. Suppose that $p_b(\bxi)$ exists and that the natural
zero-defect process admits a labelling solving the principal-value Dyson
equation. Formally, for $\beta=2$ and $0<t<1/\gamma_2$, the transformed
particles then solve
\begin{equation}\label{eq ISDE gamma_2}
 \dif\sfX_i\left(t\right)=\dif\widetilde{\sfB}_i\left(t\right)
 +\left\{\operatorname{PV}\sum_{j\ne i}
 \frac1{\sfX_i\left(t\right)-\sfX_j\left(t\right)}
 -\frac{\gamma_1-p_b\left(\bxi\right)+\gamma_2\sfX_i\left(t\right)}
 {1-\gamma_2t}\right\}\dif t.
\end{equation}
Here $\operatorname{PV}$ denotes the limit of sums over $0<|j-i|\leq K$,
and the time change produces independent standard Brownian motions
$(\widetilde{\sfB}_i)_i$. The constant drift involves the excess
$\gamma_1-p_b(\bxi)$, equal to $\gamma_1$ for balanced data with
$p_b(\bxi)=0$. We leave justifying \eqref{eq ISDE gamma_2} as an open problem.

\smallskip\noindent\textbf{Regular labelled data.}
For the second result, let $\mathcal W$ be the ordered sequences
$\bx=(x_i)_{i\in\mathbb Z}$ in $\mathbb R\cup\{\pm\infty\}$, with
$x_i<x_{i+1}$ unless both are the same infinity, equipped with coordinatewise
convergence. A finite configuration is padded by $-\infty$ to the left
and $+\infty$ to the right. Set $\mathbb V=\frac12+\mathbb Z$ and define
the gaps $l_v=\mathfrak q(\bx)_v=x_{v+1/2}-x_{v-1/2}$ when both endpoints
are real, and $l_v=\infty$ otherwise. For a finite nonempty
$A\subset\mathbb V$, write
$\overline\Sigma_A^r(\bl)=|A|^{-1}\sum_{v\in A}l_v^r$ and
$\overline\Sigma_A=\overline\Sigma_A^1$. For $0<\kappa<1$, $\rho>0$ and
$p>1$, Tsai's regular gap class is
\begin{equation}\label{eq:intro-regular-gaps}
 \begin{aligned}
 \mathcal L_{\kappa,\rho}^p
 &\defeq\left\{\bl\in\left(0,\infty\right)^{\mathbb V}:
 \mathfrak D_{\kappa,\rho}\left(\bl\right)<\infty,\quad
 \mathfrak M_p\left(\bl\right)<\infty\right\},\\
 \mathfrak D_{\kappa,\rho}\left(\bl\right)
 &\defeq\sup_{m\in\mathbb Z\setminus\left\{0\right\}}
 \left|m\right|^\kappa
 \left|\overline\Sigma_{\left(0,m\right)\cap\mathbb V}\left(\bl\right)-\rho\right|,
 \qquad
 \mathfrak M_p\left(\bl\right)\defeq
 \sup_{m\in\mathbb Z\setminus\left\{0\right\}}
 \overline\Sigma_{\left(0,m\right)\cap\mathbb V}^p\left(\bl\right).
 \end{aligned}
\end{equation}
Here $(0,m)$ denotes the interval between $0$ and $m$, in either order.
The corresponding path class $\mathcal P_{\kappa,\rho}^p$ consists of
positive continuous gap paths on which both control quantities are
bounded uniformly on each compact time interval. Regularity implies
$x_i=x_0+\rho i+O(|i|^{1-\kappa})$. The parameter $\rho$ is a mean gap,
whereas $\rho_q$ above is a point density.

Let $\bx^{(N)}\to\bx$ be finite ordered approximations, where
$\bl=\mathfrak q(\bx)\in\mathcal L_{\kappa,\rho}^p$, and put
$\mathcal I_N=\{i:x_i^{(N)}\in\mathbb R\}$,
$\bl^{(N)}=\mathfrak q(\bx^{(N)})$ and
$l_v^{\mathrm{inf},(N)}=\inf_{n\geq N}l_v^{(n)}$.
Our assumptions on the approximations are: for some $C<\infty$, $N_0$
and $\rho^{(N)}>0$,
\begin{equation}\label{eq:intro-gap-approximations}
 \begin{aligned}
 &\rho^{(N)}\longrightarrow\rho,\qquad
 \bl^{\mathrm{inf},(N)}\in\mathcal L_{\kappa,\rho^{(N)}}^p
 \quad\left(N\geq N_0\right),\\
 &\sup_{N\geq N_0}\mathfrak D_{\kappa,\rho^{(N)}}
 \left(\bl^{\mathrm{inf},(N)}\right)\leq C,
 \qquad
 \sup_N\sup_{v:\,l_v^{(N)}<\infty}l_v^{(N)}\leq C.
 \end{aligned}
\end{equation}
Choose $b>0$ avoiding the absolute values of every finite initial
coordinate, including those of the limit, and define the reciprocal
parameters and their discrepancy by
\begin{equation}\label{eq:intro-tail-discrepancy}
 \gamma_{1,b}\left(\bx\right)\defeq
 \lim_{K\to\infty}\sum_{\substack{|i|\leq K\\|x_i|>b}}\frac1{x_i},
 \qquad
 \gamma\defeq\lim_{N\to\infty}\gamma_{1,b}\left(\bx^{(N)}\right)
 -\gamma_{1,b}\left(\bx\right).
\end{equation}
We require the second limit to exist; $1/(\pm\infty)=0$. The discrepancy
$\gamma$ is independent of the admissible cutoff. For the associated
configuration $\bxi=\sum_i\delta_{x_i}$, the regularity estimate gives
$\gamma_{1,b}(\bx)=p_b(\bxi)$; see Section~\ref{sec:isde}.
Thus, at $\beta=2$, $\gamma$ is the excess linear parameter
$\gamma_1-p_b(\bxi)$ in \eqref{eq ISDE gamma_2}, with
$\gamma_1=\lim_N\gamma_{1,b}(\bx^{(N)})$.
For regular labelled configurations the drift is
\[
 \mu_i\left(\bx\right)\defeq
 \lim_{K\to\infty}\sum_{0<|j-i|\leq K}\frac1{x_i-x_j}.
\]
Finally, for a regular particle process and $z$ in the upper half-plane
$\mathbb H=\{z\in\mathbb C:\operatorname{Im}z>0\}$, set the dynamical Stieltjes transform:
\begin{equation}\label{eq:intro-stieltjes-definition}
 \sfS_t\left(z\right)\defeq
 \lim_{R\to\infty}\sum_{|i|\leq R}\frac1{\sfX_i\left(t\right)-z},
 \qquad
 \sfS_{t,\gamma}\left(z\right)\defeq\sfS_t\left(z\right)+\gamma.
\end{equation}
For the finite systems, $\sfS_t^{(N)}(z)$ is the corresponding sum over
$i\in\mathcal I_N$.

Our second result identifies the finite-particle limit through the labelled
equations for every $\beta\geq1$. The discrepancy in the initial reciprocal
tails survives as a common drift and as an additive constant in the
Stieltjes transform.
\begin{maintwo}
Let $\beta\geq1$ and suppose the regularity and approximation assumptions
above hold. Construct the finite Dyson systems from $\bx^{(N)}$ using
the common Brownian family $(\sfB_i)_{i\in\mathbb Z}$. Then, almost surely,
each fixed coordinate converges uniformly on compact time intervals to
the unique strong solution $\boldsymbol{\sfX}=(\sfX_i)_{i\in \mathbb{Z}}$, among processes with gap process in
$\mathcal P_{\kappa,\rho}^p$, of
\begin{equation}\label{eq:intro-shifted-isde}
 \dif\sfX_i\left(t\right)=\dif\sfB_i\left(t\right)
 +\frac\beta2\left\{\mu_i\left(\boldsymbol{\sfX}\left(t\right)\right)
 -\gamma\right\}\dif t,
 \qquad\sfX_i\left(0\right)=x_i,\quad i\in\mathbb Z.
\end{equation}
Pathwise uniqueness holds among adapted solutions in this regular class.
Moreover, $\sfS^{(N)}\to\sfS_{\cdot,\gamma}$ locally uniformly on
$[0,\infty)\times\mathbb H$ almost surely, and
\begin{equation}\label{eq:intro-stieltjes}
 \dif\sfS_{t,\gamma}\left(z\right)=\dif\sfU_t\left(z\right)
 +\left\{\frac\beta4\partial_z\left(\sfS_{t,\gamma}\left(z\right)^2\right)
 +\frac{2-\beta}{4}\partial_z^2\sfS_{t,\gamma}\left(z\right)\right\}\dif t,
\end{equation}
where $\sfU$ is a holomorphic local-martingale field with the covariations
specified in Proposition~\ref{prop:stieltjes-spde}.
\end{maintwo}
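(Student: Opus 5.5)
This is Theorem B, so the plan is to extend Tsai's gap-comparison method for a single spatial truncation to a general family of approximations. I would reduce the approximating systems to a tail-corrected system. I will use that for the finite system the drift on particle $i$ splits as
\[
\sum_{j\in\mathcal I_N,\,j\ne i}\frac1{x_i-x_j}=\sum_{j\in\mathcal I_N,\,0<|j-i|\le K}\frac1{x_i-x_j}+\sum_{j\in\mathcal I_N,\,|j-i|>K}\frac1{x_i-x_j}.
\]
For fixed $i$ and $K\to\infty$, the far sum behaves like $-\sum_{|x_j|>b}1/x_j$ over far particles plus an error $\sum_{|j-i|>K} x_i/(x_j(x_j-x_i))$. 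By the uniform regularity in \eqref{eq:intro-gap-approximations}, $x^{(N)}_j\approx x_0+\rho^{(N)}j$ up to $O(|j|^{1-\kappa})$ for the lower envelope, so this error is $O(K^{-1})$ uniformly in $N$ over compact time intervals. This uses the gap bounds propagated dynamically.

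The first dynamical step is therefore to prove the \emph{uniform-in-$N$} a priori estimate: the gap processes $\bl^{(N)}(t)$ lie in $\mathcal P_{\kappa,\rho^{(N)}}^p$ with control constants bounded uniformly in $N$ on $[0,T]$, almost surely. Following Tsai, I would use the monotonicity (comparison) of ordered gaps for $\beta\ge1$. Gaps of the system started from $\bl^{\mathrm{inf},(N)}$-type data dominate from below, and the uniform bound $l_v^{(N)}\le C$ controls them from above. The Brownian averages $\overline\Sigma_A$ of increments are controlled by a Borel--Cantelli argument over dyadic blocks $A$, uniformly in $N$ since the same Brownian family is used.

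Given this, the reciprocal tail dynamics is the key new point. I would show that
\[
\sum_{|x^{(N)}_j(t)|>b}\frac1{x^{(N)}_j(t)}-\gamma_{1,b}(\bx^{(N)})
\]
restricted to the far region vanishes in the limit after subtracting the limiting principal value. Heuristically, far particles move by $O(\sqrt t+\text{drift})$, and the change of $1/x$ is then $O(|x|^{-2})$, which is summable. The discrepancy $\gamma$ therefore survives as a frozen constant, giving the drift $-\frac\beta2\gamma$. Tightness of each coordinate then follows from the uniform drift bounds. Any subsequential limit solves \eqref{eq:intro-shifted-isde} with gap process in $\mathcal P_{\kappa,\rho}^p$. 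Pathwise uniqueness in this class, proved by Tsai's $\ell^\infty$-type weighted contraction on the differences (the constant $\gamma$ cancels in differences), gives almost sure convergence of the full sequence. Strong existence also follows, either by Yamada--Watanabe or directly from the pathwise argument. I expect the main obstacle to be the uniform-in-$N$ propagation of regularity together with the control of the far reciprocal sum under general, not nested, approximations. Here the hypothesis on $\bl^{\mathrm{inf},(N)}$ is what is needed for comparison with a monotone family.

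For the Stieltjes transform, apply Itô's formula to $\sum_i 1/(\sfX^{(N)}_i-z)$. The drift terms produce $\frac\beta2\sum_{i\ne j}\frac{1}{(x_i-x_j)(x_i-z)^2}$, which symmetrizes to $\frac\beta4\partial_z(\sfS^2)$ minus a diagonal $\frac\beta4\partial_z^2\sfS$. Itô's correction gives $\frac12\partial_z^2\sfS$, and together these yield \eqref{eq:intro-stieltjes} for the finite systems, with martingale $\sfU^{(N)}_t(z)=-\sum_i\int(\sfX_i-z)^{-2}\dif\sfB_i$. I would pass to the limit locally uniformly on $\mathbb H$ using the coordinate convergence plus the tail control above, the tails giving the constant $\gamma$. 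Note that $\partial_z\gamma=0$, so adding $\gamma$ is consistent with the equation. The martingale fields converge in $L^2$ by the summability of $|x_i-z|^{-4}$.
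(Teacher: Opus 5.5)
Your outline has two genuine gaps, both in the particle-convergence part. First, the route ``tightness, every subsequential limit solves the shifted ISDE, pathwise uniqueness'' does not give the almost-sure convergence the theorem asserts. If you take limits in law, carrying the common Brownian family along, pathwise uniqueness with a Gy\"ongy--Krylov argument yields only convergence in probability. If you take limits $\omega$ by $\omega$ along random subsequences, the limit points need not be adapted, so uniqueness among adapted solutions says nothing about them. Moreover, the nearest-neighbour drift is singular, so even tightness needs more than ``uniform drift bounds''.

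The paper gets almost-sure convergence from the order structure rather than from compactness. With the common noise, comparison gives $\bsfL^{\mathrm{inf},(N)}\le\bsfL^{(N)}\le\bsfL^{\mathrm{sup},(N)}$, and both envelopes are monotone in $N$, so their limits exist pathwise and are adapted. The decreasing limit solves the gap equation and is squeezed between $\bsfL$ and Tsai's greatest solutions $\bsfL^{\vee\varepsilon_k}\downarrow\bsfL$. The increasing limit is shown to lie in $\mathcal P^p_{\kappa,\rho}$ via Tsai's Lemma 6.4, using block lower bounds from a fixed regular lower process, and is then identified with $\bsfL$ by pathwise uniqueness. Dini's theorem gives uniformity (Proposition~\ref{prop convergence of gap}). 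You use the monotone family only for a priori bounds, but it is exactly what makes the limit exist almost surely.

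Second, your uniform a priori estimate is false as stated. The hypotheses constrain only the lower envelope, so the finite data can have mean gap far from $\rho^{(N)}$ away from the origin. Take $\bx=\mathbb Z$, and let $\bx^{(n)}$ agree with $\mathbb Z$ for $|i|\le n/2$ and have gaps $2$ for $n/2<|i|\le n$. Every hypothesis holds: the lower envelope is $\bl$ itself, $\gamma=0$, and all gaps are at most $2$. Yet $|m|^\kappa|\overline\Sigma_{(0,m)\cap\mathbb V}(\bl^{(n)})-1|$ is of order $n^\kappa$ at $m=n$, already at $t=0$.

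What is true, and what you need both to control the far interaction tail and to place any limit in $\mathcal P^p_{\kappa,\rho}$, is a uniform bound on the time displacement of cumulative gaps: $\sup_N\sup_{t\le T}|m|^{\kappa-1}|\sfL^{(N)}_{(0,m)}(t)-l^{(N)}_{(0,m)}|<\infty$ (Lemma~\ref{lem:uniform-density-error}). Proving this with non-nested physical endpoints is the technical core. It needs summed-gap inequalities with explicit boundary terms, interaction bounds of order $\log(2+m)$ from directed seeds of the common lower process, and a core--fringe decomposition; Borel--Cantelli over blocks is only one ingredient.

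Your heuristic that far particles move by $O(\sqrt t+\text{drift})$ is also not available, since finite-system drifts grow near the physical edges. The tail must instead be tracked in translation-invariant gap form. One needs $\psi_{0,>M}(\bsfL^{(N)}(t))-\psi_{0,>M}(\bl^{(N)})=O(M^{-\kappa})$ uniformly in $N$ and $t\le T$, combined with $\lim_M\limsup_N|\psi_{0,>M}(\bl^{(N)})+\gamma|=0$.

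Your It\^o computation for the Stieltjes transform is correct. However, the convergence $\sfS^{(N)}\to\sfS_{\cdot,\gamma}$ inherits the particles' mode of convergence. Passing the martingale and covariation identities to the limit also requires a localization, for example $\sum_k|x_k-z|^{-4}\le\eta^{-3}\operatorname{Im}\sfS(z)$ on $\{\operatorname{Im}z\ge\eta\}$, rather than plain $L^2$ convergence.
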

\begin{proof}
Theorem~\ref{theo sde theorem} gives the coupled particle convergence and
the strong existence and pathwise uniqueness statements.
Proposition~\ref{prop s transform convergence} identifies the limit of
the finite Stieltjes transforms, and Proposition~\ref{prop:stieltjes-spde}
gives its stochastic evolution and covariations.
\end{proof}
Conversely, motivated by the work of Huang and Zhang \cite{HUANG2026111028}, under the regularity and stopped continuation assumptions of
Proposition~\ref{prop:pole-recovery}, we show that  the poles of the Stieltjes field \eqref{eq:intro-stieltjes} recover
the shifted particle equation. This  recovery is proved there
by contour integration around the moving poles following \cite{HUANG2026111028}.

\smallskip\noindent\textbf{Extensions of previous results.}
At $\beta=2$, our determinantal construction extends the initial-condition
classes of Katori and Tanemura~\cite{Katori_2009} to configurations with
finite reciprocal-square tails, with additional limiting data retained
in $\bup$. The extension also reaches path convergence. For example,
the symmetric configuration with
\[
 \bxi\defeq\sum_{n\geq1}\left(\delta_{a_n}+\delta_{-a_n}\right),
 \qquad a_n\defeq\sqrt n\log\left(n+1\right)
\]
has finite reciprocal-square sum and square-separation bound
$O(1/\log n)$, while its reciprocal $\alpha$-moments diverge for every
$\alpha\in(1,2)$. Thus Theorem~\ref{theorem path cts} applies to its
symmetric truncations. We also establish the Markov property for
explicit non-equilibrium initial configurations and extend the lattice
relaxation theorem of~\cite{Katori_2009} to symmetric power-asymptotic
configurations with every exponent $q\in(0,2)$, including convergence
of the several-time laws.

For every $\beta\geq1$, Theorem~B extends the finite-approximation theory
within Tsai's regularity framework~\cite{MR3568040} to controlled arrays
carrying a nonzero reciprocal discrepancy. It identifies the resulting
common translation $-\beta\gamma t/2$, proves almost-sure, locally
uniform convergence of the Stieltjes transforms to $\sfS_{t,\gamma}$,
and derives the stochastic Burgers equation for this shifted field, which as far as we can tell are novel results.

\subsection{Strategy of proof}\label{sec:proof-strategy}
The starting point is the closure of normalized real-rooted polynomials
in the Laguerre--P\'olya class. The canonical function
\eqref{eq:intro-entire} retains information which vague convergence of
their zeros loses. For example, locally uniformly on $\mathbb C$,
\[
 \left(1-\frac zN\right)^N\longrightarrow\me^{-z},
 \qquad
 \left(1-\frac{z^2}{2N}\right)^N\longrightarrow\me^{-z^2/2}.
\]
All zeros escape every compact set, but their reciprocal sums and
reciprocal-square sums leave the exponential factors. These are the
roles of $\gamma_1$ and the quadratic defect $\gamma_2$. The convergent
coordinate is the total square parameter $\delta$; the defect need not
converge separately. Proposition~\ref{uniform convergence of entire}
turns convergence in $\bUpsilon_b$ into locally uniform convergence of
the canonical functions.

The finite-particle kernel makes this analytic convergence useful.
In the notation of Definition~\ref{def def of K}, with $\mi\mathbb R$
oriented upward, its contour part is
\[
\begin{aligned}
 &\BL_{\bup,\Gamma}\left(\left(s,x\right),\left(t,y\right)\right)=\frac1{\left(2\pi\mi\right)^2\sqrt{st}}
 \int_{\Gamma\times\mi\mathbb R}
 \frac{\me^{\left(w-y\right)^2/(2t)-\left(z-x\right)^2/(2s)}}{w-z}
 \frac{\mE_{\bup}\left(w\right)}{\mE_{\bup}\left(z\right)}
 \,\dif z\,\dif w.
\end{aligned}
\]
The full kernel also contains a residue correction and the time-ordered
heat term. Passing to the limit requires control of the unbounded
contours, the small denominator near the roots, and the singularity
where the contours intersect. At intersections the integral is interpreted
jointly in the two real contour parameters.

The decisive bounds are those of Proposition~\ref{bound}. For
$\bup^{(N)}\to\bup$ in $\bUpsilon_b$, write
$\mE^{(N)}=\mE_{\bup^{(N)}}$ and $\gamma_2=\gamma_2(\bup)$. For every
$\eta>0$ and all sufficiently large $N$,
\[
\begin{aligned}
 \left|\mE^{(N)}\left(z\right)\right|
 &\leq C_\eta\me^{\left(\gamma_2+\eta\right)\left|z\right|^2/2},&&z\in\mathbb C,\\
 \left|\mE^{(N)}\left(z\right)\right|^{-1}
 &\leq C_\eta\me^{\left(\gamma_2+\eta\right)\Re\left(z^2\right)/2},
 &&z\in\Gamma.
\end{aligned}
\]
We split the product into a finite core and a genus-one tail. Convergence
of $\delta$ controls the tail together with the approximating Gaussian
coefficient, while the contour geometry supplies the reciprocal bound.
The heat factors dominate below $1/\gamma_2$. At an intersection the
remaining singularity has the integrable form $(u^2+v^2)^{-1/2}$.
These bounds give kernel convergence and then finite-dimensional
convergence by Proposition~\ref{prop mgf convergence}.
Proposition~\ref{prop:gamma2-horizon-sharpness} gives examples of first-density
divergence at and beyond the horizon, making the restriction sharp for
kernel convergence.

For path convergence, the same entire functions provide interpolation weights.
Under the path hypotheses of Section~\ref{sec:paths}, write
$\mE=\mE_{f_r^b(\bxi)}$. Definition~\ref{residue function} and
Proposition~\ref{bound of res} give, for $a\in\bxi$,
\[
\begin{gathered}
 \mathfrak F_{\bxi,r}\left(a,z\right)
 =\frac{\mE\left(z\right)}{\left(z-a\right)\mE'\left(a\right)},
 \qquad\left|\mathfrak F_{\bxi,r}\left(a,z\right)\right|
 \leq C_\eta\me^{\eta\left(a^2+\left|z\right|^2\right)},\\
 \sup_N\expt\left[
 \left|\left\langle\phi,\bXi^{(N)}\left(t\right)
 -\bXi^{(N)}\left(s\right)\right\rangle\right|^Q\right]
 \leq C_{Q,T,\phi}\left|t-s\right|^{Q/2}.
\end{gathered}
\]
The quotient is continued at $z=a$, and the bounds are uniform over the
approximating family; here $Q\geq4$ is even, $\phi\in C_c^1(\mathbb R)$,
and $s,t\in[0,T]$. The difficulty is controlling derivatives at infinitely
many roots. Square separation supplies the interpolation estimate and,
with the infinite limiting configuration, forces $\gamma_2=0$.
The complex Brownian representation of Katori and Tanemura
\cite{MR3019667} converts it into the increment estimate
of Corollary~\ref{cor kol bd phi}. Standard tightness, compact containment and
the identified finite-dimensional laws then give path convergence.

For the Markov property, restarting introduces random reciprocal
parameters. Under the density and principal-value hypotheses
\eqref{eq rho bound}--\eqref{eq rho symmetry}, the finite processes
from spatial truncations satisfy, for fixed $t>0$ and $\eta>0$,
\[
 \lim_{R\to\infty}\sup_N\prob\left[
 \left|p_R\left(\bXi_{\bxi^{[N]}}\left(t\right)\right)\right|
 +s_R\left(\bXi_{\bxi^{[N]}}\left(t\right)\right)>\eta\right]=0.
\]
Sublinear one-point bounds, particle-count moments and cancellation of
the mean reciprocal tail yield this estimate and its limiting counterpart.
The density bounds follow from the entire-function kernel, by saddle
analysis for the dense class and regularized-product estimates for the
separated class. Together with local matching and the no-atom conditions,
the tail bounds give convergence of restarted parameters. Parameter
continuity and vague-Borel measurability then allow passage to the limit
in the finite Markov identity, see Proposition~\ref{prop markov from rho}.

For relaxation, the logarithm of the entire function determines the
asymptotic phase. Under \eqref{eq:intro-relaxation-data}, use the
symmetric normalization $\mE_{\bxi}$ of Section~\ref{sec:sine} and set
$L_T=\sqrt{Tn_T}$. With $\Log$ interpreted as the sum of the factor
logarithms specified in \eqref{eq:finite-phase},
\[
 F_T\left(z\right)\defeq\frac{z^2}{2}
 +\frac1{n_T}\Log\left(\frac{\mE_{\bxi}\left(L_Tz\right)}{L_T^\eps}\right)
 \longrightarrow\frac{z^2}{2}
 +\frac\pi{\sin\left(\pi q/2\right)}\left(-z^2\right)^{q/2}.
\]
This convergence holds with three derivatives locally off the real axis.
Counting-function convergence identifies the limit; logarithmic
small-root and reciprocal-square large-root estimates justify passage
through the infinite product. We use the exact finite saddles
$F_T'(\pm\mi w_T)=0$, with $w_T\to u_q=\pi\rho_q$; see
Lemmas~\ref{lem:phase-convergence} and~\ref{lem:exact-saddle}.
The qualitative power asymptotics give no convergence rate, so expansion
at a limiting saddle could leave an uncontrolled linear term after
multiplication by $n_T$. The exact saddles remove this difficulty.

The contour deformation then exposes the term that survives. Put
$T_s=T+\sigma_T^2s$ and write $\BL_{\bxi}$ for the corrected analytic
part of the kernel. Uniformly on compact sets of rescaled parameters,
\[
\begin{aligned}
 \sigma_T\BL_{\bxi}\left(\left(T_s,\sigma_TX\right),
                         \left(T_t,\sigma_TY\right)\right)
 &=R_{T;s,t}\left(X,Y\right)
   +O\left(n_T^{-1/2}\right)+O\left(L_T^{-1}\right),\\
 R_{T;s,t}\left(X,Y\right)
 &\longrightarrow\frac1{2\pi}\int_{-u_q}^{u_q}
 \me^{\left(t-s\right)k^2/2+\mi k\left(Y-X\right)}\,\dif k.
\end{aligned}
\]
Here $R_{T;s,t}$ is the crossed residue in Proposition~\ref{prop:multitime}.
Global quadratic phase bounds control the deformed double integral,
including its intersections, giving the first error; the fixed-contour
correction gives the second. Several-time perturbations have bounded
coefficients on compact parameter sets and are absorbed by these bounds.
The surviving residue gives the sine integral. The heat term scales
exactly, so the full-kernel difference converges even across the time
diagonal, yielding the several-time $\Sine$ limit.

For general $\beta\ge 1$, Tsai's idea of comparison of ordered gaps
\cite{MR3568040} supplies lower and upper envelope processes. Their
limiting gaps leave a common translation undetermined. Reciprocal-tail
estimates, uniform in $N$ and on compact time intervals, identify the
zeroth particle and the missing drift. Theorem~\ref{theo sde theorem}
and Proposition~\ref{prop s transform convergence} give, with $\sfY$
the regular unshifted solution driven by the same Brownian family,
\[
\begin{gathered}
 \sfX_i\left(t\right)=\sfY_i\left(t\right)-\frac{\beta\gamma t}{2},
 \qquad\sfS_t^{(N)}\left(z\right)\longrightarrow\sfS_t\left(z\right)+\gamma.
\end{gathered}
\]
Thus the same reciprocal-tail discrepancy produces both a common translation of the particles and an additive constant in the limiting Stieltjes transform.
Analytic convergence also controls derivatives,
but passing to martingale identities and covariations requires a common
fourth-moment localization. This gives \eqref{eq:intro-stieltjes}.
Finally, the stopped-continuation hypotheses of
Proposition~\ref{prop:pole-recovery} permit contour integration around
moving poles and recover the shifted particle equation.

\subsection{Future directions}
The construction at $\beta=2$ leaves open the problem of identifying labelled
equations for very dense initial configurations and for $\gamma_2>0$.
This requires an appropriate summation or renormalisation of the
interaction. A related question is to find an intrinsic analytic class
for the Stieltjes equation and prove initial-value uniqueness which
also identifies its evolving poles, without assuming the regular
particle representation and continuation properties of
Proposition~\ref{prop:pole-recovery}. Huang and Zhang~\cite{HUANG2026111028}
characterise the Airy$_\beta$ line ensemble through a Stieltjes-transform
equation and its pole evolution under Airy-type asymptotic assumptions.
Their result provides a model for such a characterisation, with a
different initial-data problem from the one considered here.

We expect that the extended-state and entire-function approach has
analogues for Airy \cite{Corwin2012,CorwinHammond2014,KatoriTanemura2009Airy} and Bessel \cite{KatoriTanemura2011Bessel,Wu2023Bessel} dynamics at $\beta=2$. The question is
which prescribed initial configurations and additional limiting data
can be included in such constructions at the soft and hard edges.
On the other hand, the present general-$\beta$ SDE argument uses two-sided bulk geometry and
a finite positive asymptotic mean gap, so an edge analogue requires a
different comparison and control of the reciprocal tails. It would be interesting to develop this and we leave this as an open problem.

In higher dimensions, Osada's Ginibre dynamics
\cite{Osada2012RandomMatrices,OsadaTanemura2020Tail} and the recent
Coulomb constructions of Osada-Osada~\cite{OsadaOsadaCoulomb} provide related infinite-particle
equations. Existing strong-solution results include fixed initial
configurations in specified sets of full equilibrium measure. It
would be very interesting but probably difficult to describe explicit classes of prescribed
deterministic initial configurations and determine whether their
finite approximations require additional data at infinity. As far as we can tell, the
ordered-gap comparison and the one-variable entire functions used
here have no immediate counterparts in this setting.

\subsection{Organisation of the paper}
Section~\ref{sec:preliminaries} introduces the extended parameter space
and its canonical entire functions, and constructs the determinantal
processes from the finite Dyson kernels.
Section~\ref{sec:kernels} proves continuity of the kernels and
finite-dimensional distributions, identifies the effects of the two
additional parameters, and gives the sharpness example for the time
horizon.
Section~\ref{sec:paths} establishes continuous versions and convergence
on path space under the square-separation condition, using the complex
Brownian representation.
Section~\ref{sec:density} obtains one-point density estimates for dense
symmetric initial configurations.
Section~\ref{sec:markov} gives a criterion for the Markov property and
applies it to the dense and uniformly separated classes.
Section~\ref{sec:sine} proves the rescaled long-time convergence to the
extended $\Sine$ process for the stated regularly varying configurations.
Section~\ref{sec:isde} proves the general-$\beta$ particle convergence
and identifies its shifted equations. It then derives the Stieltjes
equation and the conditional recovery of the particle dynamics from
its poles.

\paragraph{Acknowledgements} Fengyi Li is grateful to the School of Mathematics of the University of Edinburgh for funding through a PhD studentship.

\paragraph{AI disclosure} All ideas and proofs were initially generated by the human authors in the course of the past three years as part of the second author's PhD thesis. During the final stages of preparation we made use of AI tools (mainly ChatGPT 5.5, 5.6) for technical help with rigorous details in the asymptotic analysis of contour integrals in Section \ref{sec:sine} and the proof of Lemma \ref{lem:uniform-density-error}. We verified all mathematical arguments. In the past weeks we made use of AI tools (ChatGPT 6) for proofreading and general editorial editing, including language editing, of the manuscript. Any remaining mistakes are our own responsibility.

\section{Preliminaries}\label{sec:preliminaries}
Let us say a word on the notation. Boldface denotes vectors, matrices and configuration measures. Random quantities are normally written in sans serif, with bold sans serif for random vectors, matrices and configurations.

We recall the configuration space and its vague topology for the process construction.
\begin{definition}
We denote by $\mathfrak{M}$ the space of nonnegative, locally finite,
integer-valued Radon measures on $\mathbb{R}$. Thus every
$\bxi\in\mathfrak{M}$ can be written as
\begin{equation*}
    \bxi=\sum_{i\in I}\delta_{x_i},
\end{equation*}
where $I$ is at most countable, repetitions among the $x_i$ are allowed,
and $\bxi(K)<\infty$ for every compact $K\subset\mathbb{R}$. We say that
$\bxi^{(N)}$ converges vaguely to $\bxi$, and write
$\bxi^{(N)}\xrightarrow{\mathrm{vg}}\bxi$, if
\begin{equation*}
    \lim_{N\to\infty}\int_{\mathbb{R}}f\,\dif\bxi^{\left(N\right)}
    =\int_{\mathbb{R}}f\,\dif\bxi
    \qquad\text{for every }f\in\mathcal{C}_c\left(\mathbb{R}\right).
\end{equation*}
Equipped with the vague topology, $\mathfrak{M}$ is a Polish space
(see, for example, \cite{article}).
\end{definition}

Fix a complete metric $d_{\mathrm{vg}}$ compatible with the vague topology on
$\mathfrak{M}$.
The continuous-path space will carry the convergence proved in Section~4.
\begin{definition}
For $T>0$, let $\mathcal C([0,T]\to\mathfrak M)$ be the space of continuous
paths, equipped with the metric
\begin{equation*}
    d_{\mathcal C,T}\left(\phi,\psi\right)
    \defeq\sup_{t\in\left[0,T\right]}d_{\mathrm{vg}}\left(\phi\left(t\right),\psi\left(t\right)\right).
\end{equation*}
Then $\mathcal{C}([0,T]\to\mathfrak{M})$ is Polish; see
\cite[Theorem~4.19]{Kechris1995}.
\end{definition}
\begin{remark}
The topology of locally uniform convergence on
$\mathcal{C}([0,\infty)\to\mathfrak{M})$ is induced by
\begin{equation*}
    d_{\mathcal C}\left(\phi,\psi\right)
    \defeq\sum_{n=1}^{\infty}2^{-n}
      \min\left\{1,\sup_{t\in\left[0,n\right]}d_{\mathrm{vg}}\left(\phi\left(t\right),\psi\left(t\right)\right)\right\}.
\end{equation*}
Convergence for this metric is equivalent to convergence in
$\mathcal{C}([0,T]\to\mathfrak{M})$ for every $T>0$.
\end{remark}
An $\mathfrak{M}$-valued process $\bXi(\cdot)$ is called
continuous if it has a version whose paths belong to
$\mathcal{C}([0,\infty)\to\mathfrak{M})$.
\subsection{Determinantal processes and extended kernels}
We briefly recall the point-process facts used below. Let $\Lambda$ be a
locally compact Polish space and let $\lambda$ be a locally finite Radon
measure on $\Lambda$. We write $\mathfrak{M}_\Lambda$ for the corresponding
configuration space. A point process on $\Lambda$ is a probability measure
on $\mathfrak{M}_\Lambda$.

Given $\bxi=\sum_{i\in I}\delta_{x_i}$ and $n\in\mathbb{N}$, put
\[
    I_{\neq}^{n}
    \defeq\left\{\left(i_1,\ldots,i_n\right)\in I^n:i_p\neq i_q\text{ whenever }p\neq q\right\},
\]
and define the $n$-th factorial measure by
\begin{equation}\label{Xi_n}
    \bxi_n
    \defeq\sum_{\left(i_1,\ldots,i_n\right)\in I_{\neq}^{n}}
      \delta_{\left(x_{i_1},\ldots,x_{i_n}\right)}.
\end{equation}
Thus distinct indices, rather than distinct spatial locations, are used,
and multiplicities are retained. We set $\mathbb{R}_+=[0,\infty)$.

Correlation measures record factorial moments and will specify the determinantal laws.
\begin{definition}
Let $\bXi$ be a random configuration with the point-process law under consideration,
and let $\bXi_n$ be its factorial measure as in \eqref{Xi_n}.
The $n$-th correlation measure is
\[
    M_n\left(A\right)\defeq\expt\left[\bXi_n\left(A\right)\right],\qquad A\in\mathcal{B}\left(\Lambda^n\right).
\]
If $M_n$ is absolutely continuous with respect to $\lambda^{\otimes n}$,
we denote its density by $\rho_n$:
\[
    M_n\left(A\right)=\int_A\rho_n\left(x_1,\ldots,x_n\right)
    \prod_{j=1}^{n}\lambda\left(\dif x_j\right).
\]
The function $\rho_n$ is called the $n$-th correlation function.
\end{definition}
The following local growth criterion of Lenard \cite{Lenard1973} ensures that the correlation functions determine the law. We use it to obtain uniqueness from a locally bounded determinantal kernel.
\begin{proposition}\label{prop criterion rho determine bxi}
For every relatively compact Borel set $B\subset\Lambda$, put
\[
    m_n\left(B\right)\defeq\frac{1}{n!}\int_{B^n}\rho_n\left(x_1,\ldots,x_n\right)
    \prod_{j=1}^{n}\lambda\left(\dif x_j\right).
\]
If, for every such $B$, there exists $C_B<\infty$ such that
\[
    m_n\left(B\right)\leq C_B^n n^n,\qquad n\in\mathbb{N},
\]
then the correlation functions determine the point process uniquely.
In particular, it is sufficient that
\[
    \sup_{\left(x_1,\ldots,x_n\right)\in B^n}\left|\rho_n\left(x_1,\ldots,x_n\right)\right|
    \leq C_B^n n^{2n},\qquad n\in\mathbb{N}.
\]
\end{proposition}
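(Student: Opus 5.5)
We prove the first assertion by reducing uniqueness of the law to the classical moment problem for the joint counts in finitely many bounded sets, and then deduce the second assertion from the first. Recall that the law of a point process on $\mathfrak M_\Lambda$ is determined by the joint distributions of $(\bXi(B_1),\ldots,\bXi(B_k))$ over disjoint relatively compact Borel sets $B_1,\ldots,B_k$. Indeed, such cylinder events form a $\pi$-system generating the Borel $\sigma$-algebra of the vague topology, since $\Lambda$ is locally compact Polish. By disjointness these sets can all be placed inside one relatively compact set $B=\bigcup_i B_i$.

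The factorial moments are expressed through the correlation functions:
\[
\expt\Bigl[\prod_{i=1}^k \bigl(\bXi(B_i)\bigr)_{n_i}\Bigr]=\int_{B_1^{n_1}\times\cdots\times B_k^{n_k}}\rho_{n}\,\dif\lambda^{\otimes n},\qquad n=n_1+\cdots+n_k,
\]
where $(m)_r$ denotes the falling factorial. This follows directly from the definition of $\bXi_n$ via distinct indices. Hence all mixed factorial moments of the count vector are determined by $(\rho_n)_n$, and so are all ordinary mixed moments. The joint law of the counts is therefore determined once we show that the law of the single variable $\bXi(B)$ is determined by its moments. For a nonnegative integer vector whose total $\bXi(B)$ has determinate moments, the joint law is determined as well, because exponential moments of the total bound the joint generating function.

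It thus suffices to show that $\expt[(1+s)^{\bXi(B)}]<\infty$ for some $s>0$. Note that $\expt\bigl[\binom{\bXi(B)}{n}\bigr]=m_n(B)\le C_B^n n^n$. The standard Carleman-type argument then applies. Either one checks $\sum_n m_n(B)^{-1/n}=\infty$ directly, which gives $\sum_n (C_Bn)^{-1}=\infty$ and determinacy of the Stieltjes moment problem for the factorial moments. Or, more concretely, one notes that $\expt[(\bXi(B))^r]\le \sum_{n\le r} S(r,n)\, n!\, m_n(B)\le (C'r)^{2r}$. In either case, Carleman's condition $\sum_r \expt[\bXi(B)^r]^{-1/(2r)}=\infty$ holds, with growth of order $r^{-1}$ per term, and the moment problem is determinate. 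Applying the same bound to all $\bXi(B_i)\le\bXi(B)$ gives determinacy of the joint mixed moment problem, via the multivariate Carleman condition on marginals. I expect this Carleman step to be the only delicate point. The $n^n$ growth is exactly the borderline allowed: it forbids an exponential moment but still gives a divergent Carleman series. So I would carefully use the Stirling estimate $S(r,n)n!\le n^r$ in the bound above.

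For the second assertion, take $\sup_{B^n}|\rho_n|\le C_B^nn^{2n}$. Then
\[
m_n(B)\le \frac{\lambda(B)^n C_B^n n^{2n}}{n!}\le \bigl(e\,\lambda(B)C_B\bigr)^n n^{n},
\]
using $n!\ge (n/e)^n$ and $\lambda(B)<\infty$, which holds since $\lambda$ is Radon and $B$ is relatively compact. This is the first hypothesis with the constant $e\lambda(B)C_B$. In the intended application, a locally bounded determinantal kernel $K$ gives this bound through Hadamard's inequality: $|\det[K(x_i,x_j)]|\le n^{n/2}\sup_B|K|^n$.
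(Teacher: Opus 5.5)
The paper gives no proof of this proposition. It quotes Lenard's uniqueness theorem, a Carleman-type divergence criterion stated directly in terms of the $m_n(B)$. Your argument is a self-contained alternative through count vectors, and its main line is correct:
\begin{itemize}
\item mixed factorial moments of counts in disjoint sets are integrals of $\rho_n$;
\item the identity $\expt[\bXi(B)^r]=\sum_{n\le r}S(r,n)\,n!\,m_n(B)$, together with $S(r,n)\,n!\le n^r$, gives $\expt[\bXi(B)^r]\le (C'r)^{2r}$ (the inequality is a count of surjections, not a Stirling estimate);
\item the Stieltjes form of Carleman's criterion, $\sum_r\expt[\bXi(B)^r]^{-1/(2r)}=\infty$, then applies, legitimately, because every competing law of counts lives on $[0,\infty)$.
\end{itemize}
What your route buys over the citation is transparency: it shows that $n^n$ is exactly the threshold at which the Stieltjes series still diverges, with terms of order $r^{-1}$. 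Your derivation of the ``in particular'' clause from $\lambda(B)<\infty$ and $n!\ge(n/e)^n$ is fine.

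Two passages need repair.

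First, drop the reduction to $\expt[(1+s)^{\bXi(B)}]<\infty$, and with it the justification of the joint step via exponential moments of the total. Under the hypothesis, $\expt[(1+s)^{\bXi(B)}]=\sum_n s^n m_n(B)$ can diverge for every $s>0$, as you yourself observe. Also drop the first alternative (``$\sum_n m_n(B)^{-1/n}=\infty$ gives determinacy for the factorial moments''). As stated it is not a standard theorem; it is essentially Lenard's result itself.

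Second, the passage from marginals to the joint law is exactly where the borderline bites, so you must use a half-line version. The usual multivariate Carleman condition on $\mathbb R^k$, namely $\sum_r\expt[\bXi(B_i)^{2r}]^{-1/(2r)}=\infty$, is not implied by your bound, which only gives terms of order $r^{-2}$. It fails, for instance, for counts with tails like $\me^{-c\sqrt k}$, which are compatible with the hypothesis.

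A clean fix uses Laplace transforms. Take disjoint $B_1,\ldots,B_k$ with union $B$, and $\mathbf t\in[0,\infty)^k$. The variable $Z_{\mathbf t}=\sum_i t_i\,\bXi(B_i)$ has three properties:
\begin{itemize}
\item it is nonnegative;
\item it satisfies $Z_{\mathbf t}\le \max_i t_i\,\bXi(B)$, so it meets the Stieltjes Carleman condition;
\item its moments are polynomials in the mixed moments.
\end{itemize}
Hence its law, and in particular $\expt[\me^{-Z_{\mathbf t}}]$, is the same for every point process with the given correlation functions. The Laplace transform on $[0,\infty)^k$ determines the law of $(\bXi(B_1),\ldots,\bXi(B_k))$. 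Equivalently, you can write $\bXi(B_i)=Y_i^2$ with independent random signs and apply the multivariate Hamburger--Carleman theorem to $(Y_1,\ldots,Y_k)$, since the even moments of $Y_i$ are the moments of $\bXi(B_i)$.

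With these changes the proof is complete.
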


We describe the class of point processes whose correlation functions are determinants of a kernel.
\begin{definition}\label{determinantal}
A point process on $\Lambda$ is called determinantal if there exists a
measurable, locally bounded kernel
$\K:\Lambda\times\Lambda\longrightarrow\mathbb{C}$ such that
\begin{equation}\label{eq rho and kernel}
    \rho_n\left(x_1,\ldots,x_n\right)
    =\det\left[\K\left(x_i,x_j\right)\right]_{i,j=1}^{n},\qquad n\in\mathbb{N}.
\end{equation}
\end{definition}

\begin{remark}\label{remark bounded K determines process}
If $B\subset\Lambda$ is relatively compact and
\[
    \left\|\K\right\|_B\defeq\sup_{x,y\in B}\left|\K\left(x,y\right)\right|,
\]
then Hadamard's inequality gives
\[
    \sup_{\left(x_1,\ldots,x_n\right)\in B^n}\left|\rho_n\left(x_1,\ldots,x_n\right)\right|
    \leq\left\|\K\right\|_B^n n^{n/2}.
\]
Thus a locally bounded determinantal kernel determines the process
uniquely by Proposition~\ref{prop criterion rho determine bxi}.
\end{remark}

Extended kernels encode the joint configurations at any finite collection of times.
\begin{definition}\label{def time-dependent process}
Let $\mathcal I\subset\mathbb{R}$. An $\mathfrak{M}$-valued process
$\bXi=(\bXi(t))_{t\in\mathcal I}$ is called
determinantal with extended kernel $\K$ if, for every finite set
$\mathcal A=\{t_1,\ldots,t_M\}\subset\mathcal I$, the configuration
\[
    \sum_{m=1}^{M}\delta_{t_m}\otimes\bXi\left(t_m\right)
\]
on $\mathcal A\times\mathbb{R}$ is determinantal, with reference measure
equal to counting measure on $\mathcal A$ times Lebesgue measure on
$\mathbb{R}$, and with kernel equal to the restriction of $\K$.
\end{definition}

We specify the finite-dimensional convergence notion used in the kernel criterion.
\begin{definition}\label{def weakly finite dim}
For $\mathfrak M$-valued processes $\bXi^{(N)}$ and $\bXi$ on $\mathcal I$,
we write $\bXi^{(N)}(\cdot)\xrightarrow{\mathrm{f.d.d.}}\bXi(\cdot)$ if
their joint laws at every finite collection of times in $\mathcal I$
converge weakly in the corresponding finite product of $\mathfrak M$.
\end{definition}

We record the determinantal convergence criterion used throughout the paper. It turns local uniform kernel convergence on each finite set of times into convergence of the corresponding finite-dimensional distributions; see \cite[Proposition~2.18]{https://doi.org/10.1112/jlms.70482}.
\begin{proposition}\label{prop mgf convergence}
Let $\mathcal I\subset\mathbb{R}$ and let $\bXi^{(N)}$ be
determinantal processes with extended kernels $\K^{(N)}$. Suppose that
$\K((s,\cdot),(t,\cdot))$ is continuous on $\mathbb{R}^2$ for every
$s,t\in\mathcal I$, and that, for every finite
$\mathcal A\subset\mathcal I$ and every compact $B\subset\mathbb{R}$,
\[
 \lim_{N\to\infty}
 \max_{s,t\in\mathcal A}\sup_{x,y\in B}
 \left|\K^{\left(N\right)}\left(\left(s,x\right),\left(t,y\right)\right)-\K\left(\left(s,x\right),\left(t,y\right)\right)\right|=0.
\]
Then $\K$ is the extended kernel of a determinantal process
$\bXi$, and
\[
    \bXi^{\left(N\right)}\left(\cdot\right)
    \xrightarrow{\mathrm{f.d.d.}}\bXi\left(\cdot\right).
\]
\end{proposition}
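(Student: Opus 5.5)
The strategy is to prove the statement in two parts. First I identify a candidate limit law through its factorial moments. Then I use a tightness-plus-uniqueness argument to get convergence. Fix a finite set $\mathcal A=\{t_1,\ldots,t_M\}\subset\mathcal I$ and work on $\Lambda=\mathcal A\times\mathbb R$, with reference measure $\lambda$ equal to counting measure on $\mathcal A$ times Lebesgue measure. Convergence of $\K^{(N)}$ to $\K$, uniformly on $\mathcal A\times B$ for each compact $B$, has two consequences. It gives a uniform bound $\sup_N\|\K^{(N)}\|_{\mathcal A\times B}\le C_B$, and it gives locally uniform convergence of every correlation function $\rho_n^{(N)}=\det[\K^{(N)}(x_i,x_j)]$ to $\rho_n=\det[\K(x_i,x_j)]$. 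The latter holds because the determinant is a polynomial in the entries.

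For tightness, I would note that by Hadamard's inequality (Remark~\ref{remark bounded K determines process}) the expected counts satisfy $\expt[\bXi^{(N)}(t_m)(B)]\le C_B|B|$ uniformly in $N$. This bound on first moments, for every compact $B$, gives tightness of the joint laws in $\mathfrak M^M$ (i.e.\ in $\mathfrak M_\Lambda$).

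To identify subsequential limits, I would take a subsequence converging weakly to some $\bXi$. For $f_1,\ldots,f_n\in\mathcal C_c(\Lambda)$ nonnegative, the factorial moments $\expt\langle f_1\otimes\cdots\otimes f_n,\bXi^{(N)}_n\rangle=\int f_1\otimes\cdots\otimes f_n\,\rho_n^{(N)}\,\dif\lambda^{\otimes n}$ converge to the corresponding integral against $\rho_n$. To pass the limit inside the expectation, I need uniform integrability of $\langle f,\bXi^{(N)}_n\rangle$. I would obtain it from the $(2n)$-th factorial moment bound $\int_{B^{2n}}|\rho^{(N)}_{2n}|\le C_B^{2n}|B|^{2n}(2n)^n$, which is uniform in $N$. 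The map $\bxi\mapsto\langle f,\bxi_n\rangle$ is continuous on the relevant part of $\mathfrak M$: the factorial measure is a polynomial in integrals $\langle g,\bxi\rangle$ of the configuration, which are vaguely continuous. The main technical point is controlling the diagonal terms so that this continuity holds; I would handle it by expanding $\bxi_n$ via inclusion–exclusion as a finite combination of products $\prod\langle g_k,\bxi\rangle$. Hence $\bXi$ has correlation functions $\rho_n=\det[\K]$.

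For uniqueness and conclusion, continuity of $\K((s,\cdot),(t,\cdot))$ makes $\K$ locally bounded on $\Lambda$. Remark~\ref{remark bounded K determines process} together with Proposition~\ref{prop criterion rho determine bxi} then shows that these correlation functions determine the law uniquely. So every subsequential limit is the same determinantal process with kernel $\K|_\Lambda$. This gives existence of a determinantal configuration on each $\mathcal A\times\mathbb R$ with kernel $\K$. These laws are consistent under $\mathcal A\subset\mathcal A'$, since restriction of a determinantal process in the time variable restricts the kernel. Kolmogorov extension then yields the process $\bXi$ on $\mathcal I$, and the full sequence converges in f.d.d. The main obstacle is the uniform integrability and continuity step for factorial moments; Hadamard's $n^{n/2}$ growth is what makes the moment bounds strong enough.
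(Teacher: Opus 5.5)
Your argument is correct. The paper does not prove this proposition itself: it imports it from Proposition~2.18 of the JLMS reference cited just before the statement. That argument presumably runs through generating functions. For bounded compactly supported $\phi$ on $\mathcal A\times\mathbb R$, one expands $\expt\bigl[\prod_{x}(1+\phi(x))\bigr]=\sum_{n\geq0}\frac1{n!}\int\phi^{\otimes n}\det[\K^{(N)}(x_i,x_j)]_{i,j=1}^n\,\dif\lambda^{\otimes n}$, with the product over the atoms of the configuration on $\mathcal A\times\mathbb R$. Hadamard's bound makes the $n$-th term $O(C^n n^{n/2}/n!)$ uniformly in $N$. Locally uniform kernel convergence, together with dominated convergence in $n$, then gives convergence of these generating functionals, or equivalently of joint moment generating functions of counts near the origin. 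Weak convergence follows by a Curtiss/Kallenberg-type criterion.

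You instead split the proof into four steps:
\begin{itemize}
\item tightness from the uniform first-moment bound;
\item identification of subsequential limits, by passing factorial moments through the vaguely continuous set-partition polynomials in $\langle g,\bxi\rangle$, with uniform integrability;
\item uniqueness from Lenard's criterion (Proposition~\ref{prop criterion rho determine bxi} with Remark~\ref{remark bounded K determines process});
\item Kolmogorov extension.
\end{itemize}
Your route stays within the tools already set up in Section~\ref{sec:preliminaries} and produces the limit process by compactness. The generating-function route gets convergence of the count laws in one stroke, without a separate uniform-integrability step. It pays for this by needing summability of the whole determinantal series and a criterion converting count convergence into weak convergence on $\mathfrak M$.

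Two cosmetic points. First, for uniform integrability of $\langle f,\bXi^{(N)}_n\rangle$ you should bound $\expt[\bXi^{(N)}(B)^{2n}]$. This is a nonnegative combination of all factorial moments of orders at most $2n$, not only the $2n$-th one; Hadamard bounds each of them uniformly in $N$. Second, consistency over $\mathcal A\subset\mathcal A'$ follows most directly by applying the continuous coordinate projection $\mathfrak M^{\mathcal A'}\to\mathfrak M^{\mathcal A}$ to the convergent laws.
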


\subsection{Parameter space}
Reciprocal tails are recorded by decreasing square-summable sequences.
\begin{definition}\label{def al^pm}
For $b>0$, define
    \begin{equation*}
    \begin{aligned}
        \mathbb{W}_b \defeq \left\{\boldsymbol{\alpha}=\left(\alpha_1,\alpha_2,\dots\right)\in \left[0,\infty\right)^{\mathbb{N}} :\frac{1}{b}>\alpha_1\geq\alpha_2\geq\dots\geq0, \sum_{i=1}^\infty \alpha_i^2 <\infty\right\}.
    \end{aligned}
    \end{equation*}
We equip $\mathbb{W}_b$ with the metric
\[
 d_{\mathbb W}\left(\boldsymbol{\alpha},\boldsymbol{\beta}\right)
 \defeq\sum_{j=1}^{\infty}2^{-j}
  \min\left\{1,\left|\alpha_j-\beta_j\right|\right\}.
\]
This metric induces coordinatewise convergence. We also define
\[
 s\left(\boldsymbol{\alpha}\right)\defeq\sum_{j=1}^{\infty}\alpha_j^2.
\]
\end{definition}
We write $\mathbb Z_+=\mathbb N\cup\{0\}$.
The finite inner roots and their total multiplicity require a separate parameter space.
\begin{definition}
For $b>0$, define
    \begin{equation*}
    \begin{aligned}
        \mathbb{U}_b \defeq\left\{\mathbf{a}=\left( \left(a_1,a_2,\dots\right),k\right)\in \left[0,\infty\right)^{\mathbb{N}}\times\mathbb{Z}_+ :b>a_1\geq a_2\geq\dots\geq0, \sum_{i=1}^\infty \indi_{\left\{a_i>0\right\}}<\infty \right\}.
    \end{aligned}
    \end{equation*}
For $\mathbf a=((a_1,a_2,\ldots),k)\in\mathbb U_b$, define
\[
 m\left(\mathbf a\right)\defeq\sum_{j=1}^{\infty}\indi_{\left\{a_j>0\right\}},
 \qquad q\left(\mathbf a\right)\defeq m\left(\mathbf a\right)+k.
\]
We equip $\mathbb U_b$ with the metric
\[
 d_{\mathbb U}\left(\mathbf a,\mathbf a'\right)
 \defeq\indi_{\left\{q\left(\mathbf a\right)\neq q\left(\mathbf a'\right)\right\}}
  +\sum_{j=1}^{\infty}2^{-j}\min\left\{1,\left|a_j-a'_j\right|\right\}.
\]
Equivalently, $\mathbf a^{(N)}\to\mathbf a$ if its coordinates converge
and $q(\mathbf a^{(N)})=q(\mathbf a)$ for all sufficiently large $N$.
\end{definition}

We combine the root parameters with the linear and quadratic data of the canonical product.
\begin{definition}\label{def parameter space}
Let $b>0$. Define
\[
\bUpsilon_b\defeq\left\{\begin{aligned}
&
\left(\left(\boldsymbol{\alpha}^+,\boldsymbol{\alpha}^-\right),
 \left(\mathbf a^+,\mathbf a^-\right),\gamma_1,\delta\right)
 \in\left(\mathbb W_b\times\mathbb W_b\right)
 \times\left(\mathbb U_b\times\mathbb U_b\right)\times\mathbb R\times\mathbb R_+:\\
&\mathbf a^\pm=\left(\left(a_1^\pm,a_2^\pm,\ldots\right),k^\pm\right),\qquad
k^-=0,\qquad
\delta\geq s\left(\boldsymbol{\alpha}^+\right)+s\left(\boldsymbol{\alpha}^-\right)
\end{aligned}\right\}.
\]
We equip $\bUpsilon_b$ with the corresponding product topology.
We assign all zero roots to the positive inner coordinate, so $k^-=0$;
this removes the redundancy in the representation of $\mathfrak E_{\bup}$.
\end{definition}
Each parameter tuple determines a canonical entire function and its positive-time horizon.
\begin{definition}
    Let $b>0$ and let $\bup = \left((\boldsymbol{\alpha^+},\boldsymbol{\alpha^-}),(\mathbf{a}^+,\mathbf{a}^-),\gamma_1,\delta\right)\in\bUpsilon_b$. Denote
    \begin{equation}\label{gamma2}
        \gamma_2=\gamma_2\left(\bup\right)
        \defeq \delta-s\left(\boldsymbol{\alpha}^+\right)
                       -s\left(\boldsymbol{\alpha}^-\right)\geq0.
    \end{equation}
    We also define the time horizon
    \[
    \tau\left(\bup\right)\defeq
    \begin{cases}
      \gamma_2\left(\bup\right)^{-1},&\gamma_2\left(\bup\right)>0,\\
      \infty,&\gamma_2\left(\bup\right)=0.
    \end{cases}
    \]
    We define $\mathfrak{E}_{\bup}\colon\mathbb{C}\longrightarrow\mathbb{C},$ as
    \begin{equation}\label{kernel function}
    \begin{aligned}
        \mathfrak{E}_{\bup}\left(z\right) \defeq {\me}^{-\gamma_1 z-\frac{\gamma_2}{2}z^2}z^{k^++k^-}\prod_{i=1}^{m\left(\mathbf{a}^+\right)}\left(z-a_i^+\right)\prod_{i=1}^{m\left(\mathbf{a}^-\right)}\left(z+a_i^-\right)\prod_{i=1}^\infty {\me}^{z\alpha_i^+} \left(1-z\alpha_i^+\right)\prod_{i=1}^\infty {\me}^{-z\alpha_i^-} \left(1+z\alpha_i^-\right).
    \end{aligned}
    \end{equation}
\end{definition}
\paragraph*{Change of cutoff.}
The cutoff only specifies which roots enter the finite factors.
Let $\bup\in\bUpsilon_b$ have root configuration $\bxi$, and let $b'>b$
with $\bxi(\{\pm b'\})=0$. Write $A=\{x\in\bxi:b<|x|<b'\}$,
counting multiplicities, and $S_j=\sum_{x\in A}x^{-j}$ for $j=1,2$.
Move these finitely many roots from the reciprocal sequences to the inner
coordinates and set $\gamma_1'=\gamma_1-S_1$ and $\delta'=\delta-S_2$.
The resulting tuple $\bup'\in\bUpsilon_{b'}$ satisfies
$\gamma_2(\bup')=\gamma_2(\bup)$, $\tau(\bup')=\tau(\bup)$, and
\begin{equation}\label{eq:cutoff-change}
 \mathfrak E_{\bup'}\left(z\right)
 =\left(\prod_{x\in A}\left(-x\right)\right)\mathfrak E_{\bup}\left(z\right).
\end{equation}
Indeed, each transferred factor satisfies
$(1-z/x)\me^{z/x}=(-1/x)(z-x)\me^{z/x}$, and the change in
$\gamma_1$ cancels the linear exponentials. The constant in
\eqref{eq:cutoff-change} cancels from the ratio defining the kernel below,
so the kernel and process law are unchanged. For the embeddings and tail
sums defined below, this gives
\[
 p_{b'}\left(\bxi\right)=p_b\left(\bxi\right)-S_1,\qquad
 s_{b'}\left(\bxi\right)=s_b\left(\bxi\right)-S_2,\qquad
 \K_{f_{r-S_1}^{b'}\left(\bxi\right)}=\K_{f_r^b\left(\bxi\right)}.
\]
The identity for $p_b$ holds whenever its principal value exists.
Thus $\gamma_1-p_b(\bxi)$ is invariant when defined, whereas keeping a freely
prescribed $r$ fixed when changing $b$ need not preserve the law.
On the overlap where both cutoffs avoid the roots, this change of
coordinates is continuous: local root matching makes the transferred
multiset eventually have fixed size and convergent roots, and hence
convergent reciprocal sums. Decreasing the cutoff gives the inverse
change.
We separate the finite factors from the genus products. For $\sigma\in\{-1,1\}$,
$\boldsymbol\alpha\in\mathbb W_b$, and $\mathbf a=((a_j)_{j\geq1},k)\in\mathbb U_b$, define
\[
 \mathfrak G^{\sigma}_{\boldsymbol\alpha}\left(z\right)
 \defeq\prod_{j=1}^{\infty}\me^{\sigma\alpha_jz}\left(1-\sigma\alpha_jz\right),
 \qquad
 \mathfrak P^{\sigma}_{\mathbf a}\left(z\right)
 \defeq z^k\prod_{j=1}^{m\left(\mathbf a\right)}\left(z-\sigma a_j\right).
\]
Then
\begin{equation}\label{Decomposition}
 \mathfrak E_{\bup}\left(z\right)
 =\me^{-\gamma_1z-\gamma_2z^2/2}
  \mathfrak P^+_{\mathbf a^+}\left(z\right)\mathfrak P^-_{\mathbf a^-}\left(z\right)
  \mathfrak G^+_{\boldsymbol\alpha^+}\left(z\right)\mathfrak G^-_{\boldsymbol\alpha^-}\left(z\right).
\end{equation}
For either sign, the genus product $\mathfrak G^{\sigma}_{\boldsymbol\alpha}$ is entire
of order at most two; see \cite{levin1964distribution}. Moreover, for every
$\epsilon>0$ there is $L_\epsilon>0$ such that
\begin{equation}\label{boundofGenus}
 \left|\mathfrak G^{\sigma}_{\boldsymbol\alpha}\left(z\right)\right|\leq\me^{\epsilon\left|z\right|^2},
 \qquad \left|z\right|\geq L_\epsilon,\quad \sigma\in\left\{-1,1\right\}.
\end{equation}
The canonical entire function varies locally uniformly with its parameters. This supplies the compact-convergence input for the kernel limits in Section~3; see \cite{MR4487977}.
\begin{proposition}\label{uniform convergence of entire}
    Let $b>0$ and $({\bup}^{(N)})_{N=1}^\infty \subset \bUpsilon_b$ be a sequence, and ${\bup}\in\bUpsilon_b$. If ${\bup}^{(N)} \longrightarrow {\bup}$, then one has $\mathfrak{E}_{{\bup}^{(N)}}\longrightarrow\mathfrak{E}_{{\bup}}$ locally uniformly on $\mathbb{C}$.
\end{proposition}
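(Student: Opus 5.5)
The plan is to work factor by factor using the decomposition \eqref{Decomposition}. Since products of locally uniformly convergent sequences of entire functions converge locally uniformly, it suffices to prove local uniform convergence of three pieces: the exponential prefactor, the polynomial parts $\mathfrak P^\pm_{\mathbf a^\pm}$, and a suitably regrouped form of the genus products. The only coordinate that does not converge on its own is $\gamma_2$. So the quadratic exponential $\me^{-\gamma_2 z^2/2}$ should not be split off from the genus products. Instead I will treat $\me^{-\delta z^2/2}$ together with $\me^{s(\boldsymbol\alpha^+)z^2/2}\mathfrak G^+_{\boldsymbol\alpha^+}\,\me^{s(\boldsymbol\alpha^-)z^2/2}\mathfrak G^-_{\boldsymbol\alpha^-}$. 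The factor $\me^{-\gamma_1 z-\delta z^2/2}$ then converges locally uniformly because $\gamma_1^{(N)}\to\gamma_1$ and $\delta^{(N)}\to\delta$.

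\emph{Polynomial part.} Since $q(\mathbf a^{\pm,(N)})=q(\mathbf a^\pm)$ eventually, the degrees stabilise. Each of $\mathfrak P^\pm_{\mathbf a^{\pm,(N)}}$ is a monic polynomial of fixed degree $q$. Coordinatewise convergence of the $a_j$ gives convergence of the root multisets: positive entries may tend to $0$, which accounts for the change in $k$, and this is compatible with the fixed total degree. Indeed, the roots are $a^{(N)}_1,\dots,a^{(N)}_q$ padded with zeros, so the coefficients, being symmetric functions of these roots, converge. Hence the polynomials converge locally uniformly.

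\emph{Genus part (main step).} Write $E(u)=(1-u)\me^{u+u^2/2}$, so that $\me^{s(\boldsymbol\alpha)z^2/2}\mathfrak G^{\sigma}_{\boldsymbol\alpha}(z)=\prod_j E(\sigma\alpha_j z)$. For $|u|\le 1/2$ one has $|\log E(u)|\le C|u|^3$. On $|z|\le R$, fix $J$ and split the product into $j\le J$ and $j>J$. The first $J$ factors converge uniformly by coordinatewise convergence. For the tail, the key uniform bound is
\[
\sup_N\sum_{j>J}\bigl(\alpha_j^{(N)}\bigr)^3\le \sup_N\alpha^{(N)}_{J}\sum_j\bigl(\alpha_j^{(N)}\bigr)^2 .
\]
The sums $\sum_j(\alpha_j^{(N)})^2\le\delta^{(N)}$ are bounded since $\delta^{(N)}$ converges. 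Monotonicity gives $\alpha^{(N)}_J\le J^{-1/2}\bigl(\sum_j(\alpha_j^{(N)})^2\bigr)^{1/2}$, which is uniformly small for large $J$. Moreover $\alpha_j^{(N)}R\le 1/2$ for $j>J$ once $J$ is large. Hence the tail products lie within $\me^{CR^3 J^{-1/2}\sup\delta}$ of $1$ uniformly in $N$, and the same holds for the limit. This is where decreasing order and the $\delta$ constraint replace any need for $s(\boldsymbol\alpha^{(N)})\to s(\boldsymbol\alpha)$. The squared masses may escape to infinity, but that escape is exactly what is absorbed in $\gamma_2$.

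Finally, I recombine the pieces. The product equals
\[
\me^{-\gamma_1z-\delta z^2/2}\,\mathfrak P^+\mathfrak P^-\prod_j E(\alpha^+_jz)\prod_jE(-\alpha^-_jz),
\]
which by \eqref{gamma2} is exactly $\mathfrak E_{\bup}$, and likewise for each $N$. Local uniform convergence follows. I expect the main obstacle to be the regrouping and the uniform cubic tail estimate; the rest is routine.
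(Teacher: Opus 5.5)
Your proof is correct. For the polynomial factors it is the paper's argument: eventual equality of $q(\mathbf a^\pm)$ plus coordinatewise convergence, with roots tending to zero absorbed into the power of $z$.

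For the transcendental part the paper gives no argument of its own. It cites a continuity theorem for the Laguerre--P\'olya class. That theorem gives locally uniform convergence of the combined Gaussian--genus factor under coordinatewise convergence of $\boldsymbol\alpha^\pm$ and convergence of $\gamma_1$ and $\delta$, even though $\gamma_2^{(N)}$ need not converge.

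You prove this step directly. You regroup $\me^{-\gamma_2 z^2/2}\mathfrak G^+_{\boldsymbol\alpha^+}\mathfrak G^-_{\boldsymbol\alpha^-}$ as $\me^{-\delta z^2/2}\prod_j E(\alpha_j^+z)\prod_j E(-\alpha_j^-z)$, with genus-two factors $E(u)=(1-u)\me^{u+u^2/2}$. This puts the convergent coordinate $\delta$ in the exponent. The cubic bound $|\log E(u)|\le C|u|^3$, together with $\alpha_J^{(N)}\le J^{-1/2}(\delta^{(N)})^{1/2}$ from monotonicity, then gives a tail estimate uniform in $N$.

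What your route buys is a self-contained, quantitative proof that shows exactly which structural features of $\bUpsilon_b$ matter: the decreasing order and the constraint $s(\boldsymbol\alpha^+)+s(\boldsymbol\alpha^-)\le\delta$. Without monotonicity, a fixed reciprocal placed at index $N$ would converge coordinatewise to zero while its factor $\me^{cz}(1-cz)$ persisted, and the statement would fail. The citation is shorter and places the result within the general convergence theory of Laguerre--P\'olya functions.

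One small imprecision: the tail estimate should read $\bigl|\prod_{j>J}E(\alpha_j^{(N)}z)-1\bigr|\le \me^{\varepsilon_J}-1$ on $|z|\le R$. Here $\varepsilon_J\le C R^3 J^{-1/2}\bigl(\sup_N\delta^{(N)}\bigr)^{3/2}$, not a bound linear in $\sup_N\delta^{(N)}$. This does not affect the argument.
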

\begin{proof}
The cited Laguerre--Pólya continuity result gives locally uniform
convergence of the combined Gaussian and genus-product factor under
coordinatewise convergence of $\boldsymbol{\alpha}^{\pm}$ and convergence
of $\gamma_1$ and $\delta$; the individual values $\gamma_2^{(N)}$ need
not converge. For the finite factors, convergence in $\mathbb U_b$ fixes
the total degree
$m(\mathbf a)+k$ eventually. Any positive root which disappears in the
limit converges to zero and is absorbed by the factor $z^k$. Hence
\[
 z^{k^{\left(N\right)}}\prod_{j=1}^{m\left(\mathbf a^{\left(N\right)}\right)}\left(z-a_j^{\left(N\right)}\right)
 \longrightarrow
 z^k\prod_{j=1}^{m\left(\mathbf a\right)}\left(z-a_j\right)
\]
locally uniformly. Applying this separately to the positive and negative
finite factors proves the proposition.
\end{proof}
We define the fixed contour, its correction and the full kernel used in the construction.
\begin{definition}\label{def def of K}
Let $b>0$ and $\bup\in\bUpsilon_b$. Define
\begin{equation}\label{Gamma}
\begin{aligned}
    &\Gamma_1^+ \defeq \left\{x+ \frac{\mi}{\sqrt{3}} x : x \geq \sqrt{3} \right\} \bigcup \left\{x- \frac{\mi}{\sqrt{3}} x : x \geq \sqrt{3} \right\},\\
    &\Gamma_1^- \defeq \left\{x+ \frac{\mi}{\sqrt{3}} x : x \leq -\sqrt{3} \right\} \bigcup \left\{x- \frac{\mi}{\sqrt{3}} x : x \leq -\sqrt{3} \right\},\\
    &\Gamma_2 \defeq \left\{x+\mi: -\sqrt{3}\leq x\leq \sqrt{3}\right\}\bigcup\left\{x-\mi: -\sqrt{3}\leq x \leq \sqrt{3}\right\},\\ 
    &\Gamma_1 \defeq \Gamma_1^+ \bigcup \Gamma_1^-,\quad\quad \Gamma \defeq \Gamma_1 \bigcup \Gamma_2.
\end{aligned}
\end{equation}
Orient the upper component of $\Gamma$ from $+\infty$ to $-\infty$
and the lower component from $-\infty$ to $+\infty$. For
$L\geq\sqrt{3}$, let $\Gamma_L$ be the positively oriented closed contour
obtained from $\Gamma\cap\{|\Re z|\leq L\}$ by adding the two vertical
end segments. For $w\notin\Gamma$, define
\[
 \operatorname{Ind}_{\Gamma}\left(w\right)
 \defeq\lim_{L\to\infty}\frac{1}{2\pi\mi}
   \oint_{\Gamma_L}\frac{\dif z}{z-w}.
\]
In particular,
\[
 \operatorname{Ind}_{\Gamma}\left(\mi u\right)=\indi_{\left\{\left|u\right|<1\right\}}
\]
for Lebesgue-almost every $u\in\mathbb R$.

For $s,t\in(0,\tau(\bup))$, $x,y\in\mathbb R$, and an oriented
sub-contour $\Theta\subset\Gamma$, set
\begin{equation}\label{kernel_omega}
\BL_{\bup,\Theta}\left(\left(s,x\right),\left(t,y\right)\right)
\defeq\frac{1}{2\pi\mi}\int_{\Theta\times\mathbb R}
 \mathfrak h_s\left(x,z\right)\mathfrak h_t\left(-\mi y,u\right)
 \frac{1}{\mi u-z}
 \frac{\mathfrak E_{\bup}\left(\mi u\right)}{\mathfrak E_{\bup}\left(z\right)}
 \,\dif z\,\dif u .
\end{equation}
When $\Theta$ contains either crossing point, this is understood as a
joint two-dimensional improper integral, not as an iterated
principal-value integral. Here
\[
 \mathfrak h_s\left(x,z\right)\defeq\frac{1}{\sqrt{2\pi s}}
 \me^{-\left(x-z\right)^2/\left(2s\right)}.
\]
Define the parameter-independent residue correction
\[
\begin{aligned}
\Rc_{\Gamma}\left(\left(s,x\right),\left(t,y\right)\right)
&\defeq\int_{\mathbb R}\operatorname{Ind}_{\Gamma}\left(\mi u\right)
  \mathfrak h_s\left(x,\mi u\right)\mathfrak h_t\left(-\mi y,u\right)\,\dif u\\
&=\int_{-1}^{1}
  \mathfrak h_s\left(x,\mi u\right)\mathfrak h_t\left(-\mi y,u\right)\,\dif u ,
\end{aligned}
\]
the analytic part
\[
 \BL_{\bup}\defeq\BL_{\bup,\Gamma}+\Rc_{\Gamma},
\]
and the full kernel
\[
 \K_{\bup}\left(\left(s,x\right),\left(t,y\right)\right)
 \defeq\BL_{\bup}\left(\left(s,x\right),\left(t,y\right)\right)
  -\indi_{\left\{s>t\right\}}\mathfrak h_{s-t}\left(x,y\right).
\]
See Figure~\ref{Gammagraph} for the graph of $\Gamma$.
\end{definition}
\begin{figure}[ht]
\centering
\begin{tikzpicture}[scale=1.1]
  \def\a{1.732}
  \def\b{0.57735}
  \def\xmax{3}  

  \draw[->, thick] (-\xmax, 0) -- (\xmax+0.2, 0) node[right] {$\Re(z)$};
  \draw[->, thick] (0, -2) -- (0, 2) node[above] {$\Im(z)$};

  \draw[very thick, blue, ->, domain=\xmax:\a, samples=100, variable=\x]
    plot ({\x}, {\b*(\x)});
  \draw[very thick, blue, ->, domain=-\a:-\a-1, samples=100, variable = \x]
    plot ({\x}, {-\b*(\x)});
  \draw[very thick, blue, ->, domain=\a:\a+1, samples=100, variable=\x]
    plot ({\x}, {-\b*(\x)});
  \draw[very thick, blue, ->, domain = -\xmax:-\a, samples=100, variable = \x]
   plot ({\x}, {\b*(\x)});
  \draw[very thick, blue,->] (\a, 1) -- (0, 1);
  \draw[very thick, blue] (0, 1) -- (-\a, 1);
  \draw[very thick, blue, ->] (-\a, -1) -- (0, -1);
  \draw[very thick, blue] (0, -1) -- (\a, -1);
  \node[blue, above] at (\a,\a-0.3) {$(\sqrt{3},1)$};
  \node[blue, below] at (\a,-\a+0.3) {$(\sqrt{3},-1)$};
  \node[blue, above] at (-\a,\a-0.3) {$(-\sqrt{3},1)$};
  \node[blue, below] at (-\a,-\a+0.3) {$(-\sqrt{3},-1)$};
\end{tikzpicture}
\captionsetup{justification=raggedright,singlelinecheck=false}
\caption{Contour $\Gamma$, with sloped parts $\Gamma_1$ and horizontal parts $\Gamma_2$.}
\label{Gammagraph}
\end{figure}

\begin{remark}
Below we prove that $\K_{\bup}$ is the kernel of a determinantal process
on $(0,\tau(\bup))$ and is obtained as a finite-dimensional Dyson limit.
The estimates establishing convergence of the joint contour integral are
proved in the next section.
\end{remark}
To embed configurations in the parameter space, we separate their inner roots from their reciprocal tails.
\begin{definition}\label{reorder xi}
Let $b>0$ and $\bxi\in\mathfrak M$ with $\bxi(\{0\})=k$ and
$\bxi(\{\pm b\})=0$. Put
$m_{\bxi,b}^+=\bxi((0,b))$, $m_{\bxi,b}^-=\bxi((-b,0))$,
$n_{\bxi,b}^+=\bxi((b,\infty))$, and
$n_{\bxi,b}^-=\bxi((-\infty,-b))$. Then
\begin{equation}\label{eq ordered bxi}
 \bxi=k\delta_0+\sum_{i=1}^{m_{\bxi,b}^+}\delta_{x_{i,<}^+}
 +\sum_{i=1}^{m_{\bxi,b}^-}\delta_{x_{i,<}^-}
 +\sum_{i=1}^{n_{\bxi,b}^+}\delta_{x_{i,>}^+}
 +\sum_{i=1}^{n_{\bxi,b}^-}\delta_{x_{i,>}^-}.
\end{equation}
Order the nonempty particle lists so that
    \begin{equation*}
    \begin{aligned}
        &0<x^+_{m^+_{\bxi,b},<}\leq \dots\leq x^+_{1,<}<b,\quad
        -b<x^-_{1,<}\leq\dots\leq x^-_{m^-_{\bxi,b},<}<0,\\
        &b<x^+_{1,>} \leq x_{2,>}^+\leq \dots,\quad -b>x^-_{1,>}\geq x^-_{2,>}\geq \dots.
    \end{aligned}
    \end{equation*}
    We next define
    \begin{equation}\label{M_b}
        \mathfrak{M}_b \defeq \left\{{\bxi}\in\mathfrak{M}: {\bxi}\left(\left\{\pm b\right\}\right)=0,\quad \sum_{i=1}^{n_{{\bxi},b}^+}\left(\frac{1}{x_{i,>}^+}\right)^2 + \sum_{i=1}^{n_{{\bxi},b}^-}\left(\frac{1}{x_{i,>}^-}\right)^2 <\infty\right\}. 
    \end{equation}
\end{definition}
The following embedding records these root data together with a prescribed reciprocal parameter.
\begin{definition}
    For $b>0$ and $r\in\mathbb R$, define the parameter map
    \begin{equation*}
f_r^b\colon\mathfrak{M}_b\longrightarrow\bUpsilon_b,\quad {\bxi}
    \mapsto \left(\left(\boldsymbol{\alpha^+}\left(\bxi\right),\boldsymbol{\alpha^-}\left(\bxi\right)\right),
    \left(\mathbf{a}^+\left(\bxi\right),\mathbf{a}^-\left(\bxi\right)\right),r,\de\left(\bxi\right)\right),
    \end{equation*}
    by
    \begin{equation}\label{eq f_r}
    \begin{aligned}
        &\alpha_j^+\left(\bxi\right)\defeq
        \begin{cases}1/x_{j,>}^+,&j\leq n_{\bxi,b}^+,\\0,&j>n_{\bxi,b}^+,\end{cases}
        \qquad
        \alpha_j^-\left(\bxi\right)\defeq
        \begin{cases}-1/x_{j,>}^-,&j\leq n_{\bxi,b}^-,\\0,&j>n_{\bxi,b}^-,\end{cases}\\ 
        &\mathbf{a}^+\left(\bxi\right) \defeq \left(\left(x_{1,<}^+,\dots,x_{m_{{\bxi},b}^+,<}^+,0,0\dots\right), k\right),\quad \mathbf{a}^-\left(\bxi\right) \defeq \left(\left(-x_{1,<}^-,\dots,-x_{m_{{\bxi},b}^-,<}^-,0,0,\dots\right), 0\right),\\
        &\gamma_1=r,\quad \delta\left(\bxi\right)
        \defeq s\left(\boldsymbol{\alpha}^+\right)+s\left(\boldsymbol{\alpha}^-\right).
    \end{aligned}
    \end{equation}
The dependence of the root coordinates on $b$ is suppressed in the notation.
\end{definition}
Thus $f_r^b(\bxi)$ belongs to the canonical slice $k^-=0$, and
\[
    \gamma_2\left(f_r^b\left(\bxi\right)\right)=0.
\]
For example, let ${\bxi} = \sum_{i=-N}^N \delta_i$. One has $\mE_{f_0^{1/2}({\bxi})}(z) = z\prod_{i=1}^N(1-\frac{z}{i})\prod_{i=1}^N(1+\frac{z}{i})$, which is a polynomial.
For $b>0$ and $\bxi\in\mathfrak M$, write
\begin{equation}\label{eq:tail-sums}
 p_b\left(\bxi\right)\defeq
 \lim_{\substack{R\to\infty\\R\in\mathbb N}}
 \int_{\{b<|x|\leq R\}}\frac{\bxi(\dif x)}{x},\qquad
 s_b\left(\bxi\right)\defeq
 \int_{\{|x|>b\}}\frac{\bxi(\dif x)}{x^2},
\end{equation}
where $p_b$ is defined when the displayed limit is finite, and $s_b$ may
equal $+\infty$. For finite $\bxi$, the first expression is
simply the sum $\sum_{|x_i|>b}x_i^{-1}$; it selects the parameter of
the unshifted Dyson kernel.
Deforming a separated finite-root contour to the fixed contour creates the following residue. This identifies the correction needed to recover the finite Dyson kernel.
\begin{lemma}\label{lemma finite contour correction}
Let $\bxi$ be a finite configuration with $\bxi(\{0\})=0$, choose
$b>0$ with $\bxi(\{\pm b\})=0$, and let $C_{\bxi}$ be a positively
oriented union of small loops enclosing the support points of $\bxi$
and disjoint from $\mi\mathbb R$. Products over $a\in\bxi$ count
multiplicity. Then, for $s,t>0$ and $x,y\in\mathbb R$,
\[
\begin{aligned}
&\frac{1}{2\pi\mi}
 \int_{C_{\bxi}\times\mathbb R}
 \mathfrak h_s\left(x,z\right)\mathfrak h_t\left(-\mi y,u\right)
 \frac{1}{\mi u-z}
 \prod_{a\in\bxi}\frac{a-\mi u}{a-z}
 \,\dif z\,\dif u\\
&\hspace{25mm}
=\BL_{f_{p_b\left(\bxi\right)}^b\left(\bxi\right),\Gamma}\left(\left(s,x\right),\left(t,y\right)\right)
 +\Rc_{\Gamma}\left(\left(s,x\right),\left(t,y\right)\right).
\end{aligned}
\]
\end{lemma}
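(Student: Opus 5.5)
The plan is to reduce the statement to a residue computation in the $z$-variable at fixed $u$, followed by an integration in $u$.

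\emph{Step 1: identify the integrand.} Take $\bup=f^b_{p_b(\bxi)}(\bxi)$. Then $\gamma_1=p_b(\bxi)=\sum_{|a|>b}a^{-1}$, $\gamma_2=0$, and there are no zero roots. Hence
\[
\mathfrak E_{\bup}(z)=\me^{-p_b(\bxi)z}\prod_{|a|<b}(z-a)\prod_{|a|>b}\Bigl(1-\frac za\Bigr)\me^{z/a}
=\prod_{|a|<b}(z-a)\prod_{|a|>b}\frac{a-z}{a}.
\]
The linear exponentials cancel exactly against $\me^{-\gamma_1 z}$, and this is the reason for the choice $\gamma_1=p_b(\bxi)$. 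In the ratio $\mathfrak E_{\bup}(\mi u)/\mathfrak E_{\bup}(z)$ the sign and normalising constants cancel, so the ratio equals $\prod_{a\in\bxi}(a-\mi u)/(a-z)$. Consequently the integrand on the left of the lemma coincides with the integrand of $\BL_{\bup,\Gamma}$ in \eqref{kernel_omega}, and only the contours differ.

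\emph{Step 2: contour deformation at fixed $u$.} Fix $u$ with $|u|\neq1$, so that $\mi u\notin\Gamma$. As a function of $z$, set
\[
g_u(z)=\mathfrak h_s(x,z)\,\frac{1}{\mi u-z}\prod_a\frac{1}{a-z}.
\]
It is meromorphic, with poles at the real points $a\in\bxi$ and at $z=\mi u$.

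On the sloped parts $\Gamma_1$ we have $|\Im z|=|\Re z|/\sqrt3$, so $\Re(z-x)^2\geq c|z|^2-C$ and $\mathfrak h_s(x,z)$ decays like a Gaussian. The contributions of the vertical end segments of $\Gamma_L$ therefore vanish as $L\to\infty$. With the stated orientation, $\Gamma_L$ is positively oriented around a region containing all real roots, and it contains $\mi u$ exactly when $\operatorname{Ind}_\Gamma(\mi u)=1$, that is, when $|u|<1$.

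The residue theorem then gives
\[
\int_\Gamma g_u\,\dif z=\int_{C_{\bxi}}g_u\,\dif z+2\pi\mi\,\operatorname{Ind}_\Gamma(\mi u)\operatorname{Res}_{z=\mi u}g_u .
\]
Since $\operatorname{Res}_{z=\mi u}g_u=-\mathfrak h_s(x,\mi u)$ (the product equals $1$ at $z=\mi u$), this becomes
\[
\frac1{2\pi\mi}\int_{C_{\bxi}}g_u\,\dif z=\frac1{2\pi\mi}\int_\Gamma g_u\,\dif z+\operatorname{Ind}_\Gamma(\mi u)\,\mathfrak h_s(x,\mi u).
\]

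\emph{Step 3: integrate in $u$.} Multiply by $\mathfrak h_t(-\mi y,u)$, which is a Gaussian in $u$ since $|\me^{-(u+\mi y)^2/(2t)}|=\me^{(y^2-u^2)/(2t)}$, and integrate over $u\in\mathbb R$. The numerator $\prod(a-\mi u)$ grows only polynomially, so all pieces are absolutely integrable away from the crossings. The $C_{\bxi}$ side is a genuine absolutely convergent double integral, because $C_{\bxi}$ stays away from $\mi\mathbb R$. The residue term gives exactly $\Rc_\Gamma$, and the $\Gamma$ term gives $\BL_{\bup,\Gamma}$.

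\emph{Main obstacle.} The delicate point is reconciling the fixed-$u$ identity with the joint two-dimensional improper-integral interpretation of $\BL_{\bup,\Gamma}$ near the crossings $z=\mi u=\pm\mi$, where $1/(\mi u-z)$ is not absolutely integrable on $\Gamma\times\mathbb R$ in the iterated sense. I would handle this by removing small squares $\{|z\mp\mi|<\epsilon,\ |u\mp1|<\epsilon\}$ and applying Fubini on the complement, where everything is absolutely integrable. On the excised $u$-range the left side and the $\Rc_\Gamma$ piece contribute $O(\epsilon)$. It then remains to check that the $\Gamma$-integral over the excised strip, taken jointly, tends to zero. Near a crossing the singularity has the form $1/(\mi(u\mp1)-(z\mp\mi))$ with $z\mp\mi$ real along $\Gamma_2$, and its modulus is $((u\mp1)^2+v^2)^{-1/2}$, which is integrable in two dimensions. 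So the joint limit exists and equals the limit of the excised integrals, which yields the lemma.
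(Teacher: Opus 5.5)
Your proof is correct and follows the paper's route. At fixed $u$ with $|u|\neq1$, you deform $C_{\bxi}$ to the truncated contour $\Gamma_L$ and pick up the residue $-\mathfrak h_s(x,\mi u)\mathfrak h_t(-\mi y,u)$ at $z=\mi u$ exactly when $|u|<1$. You then discard the closing segments using the Gaussian factor and integrate in $u$ to produce $\Rc_\Gamma$.

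Two small corrections, neither of which affects the argument. First, your $g_u$ as written omits the constant factor $\prod_a(a-\mi u)$, and it must carry it for the stated residue to hold. Second, the excision at the crossings is unnecessary: $1/\sqrt{q^2+(u\mp1)^2}$ is integrable in two dimensions, so by Tonelli the integrand is absolutely integrable on $\Gamma\times\mathbb R$. Fubini therefore applies directly, and your remark that it fails in the iterated sense is not correct.
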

\begin{proof}
First truncate $\Gamma$ to $\Gamma_L$, with $L$ larger than every
$|a|$, and fix $u$ away from the two crossing values. During the
deformation of $C_{\bxi}$ to $\Gamma_L$, the only additional pole is
$z=\mi u$. Its residue is
\[
 -\mathfrak h_s\left(x,\mi u\right)\mathfrak h_t\left(-\mi y,u\right).
\]
The contour $\Gamma_L$ encloses this pole precisely when $|u|<1$.
Thus the residue theorem gives, for the analytic integrand with the
$u$-dependent heat factor included,
\[
 \frac{1}{2\pi\mi}\int_{\Gamma_L}\left(\cdots\right)\,\dif z
 =\frac{1}{2\pi\mi}\int_{C_{\bxi}}\left(\cdots\right)\,\dif z
  -\indi_{\left\{\left|u\right|<1\right\}}
   \mathfrak h_s\left(x,\mi u\right)\mathfrak h_t\left(-\mi y,u\right).
\]
Rearranging and integrating in $u$ gives the displayed identity.
For a closing segment $z=\pm L+\mi v$, $|v|\leq L/\sqrt3$,
\[
 \Re\!\left[-\frac{\left(x-z\right)^2}{2s}\right]
 \leq -\frac{L^2}{3s}+\frac{\left|x\right|L}{s}.
\]
The remaining finite-configuration factors have at most polynomial
growth in $L$ and $u$, whereas $\mathfrak h_t(-\mi y,u)$ is Gaussian
in $u$. Consequently the closing-segment integrals vanish as
$L\to\infty$, and dominated convergence yields the integral over
$\Gamma$. The singularities at $(z,u)=(\mi,1)$ and $(-\mi,-1)$ are
locally integrable in the joint two-dimensional sense.
\end{proof}

We identify the full kernel with finite Dyson Brownian motion, including initial configurations with multiple points. These finite systems will supply the approximations for the general parameter construction.
\begin{proposition}\label{finitedysonmodel}
Let $\bxi\in\mathfrak M$ have finite mass $|\bxi|$, and choose $b>0$
with $\bxi(\{\pm b\})=0$. The $\beta=2$ Dyson Brownian motion started
from $\bxi$, understood as its entrance-law extension when $\bxi$ has
multiple points, is determinantal at positive times. Its extended kernel
is $\K_{f_{p_b(\bxi)}^b(\bxi)}$, independently of the admissible choice
of $b$.
\end{proposition}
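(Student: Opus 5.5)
The plan is to start from the known finite-$N$ determinantal formula for $\beta=2$ Dyson Brownian motion with distinct initial points, due to Katori and Tanemura (noncolliding Brownian motion, via the Karlin--McGregor/Eynard--Mehta construction). It gives the extended kernel
\[
\frac{1}{2\pi\mi}\int_{C_{\bxi}}\dif z\int_{\mathbb R}\dif u\,
\mathfrak h_s\left(x,z\right)\mathfrak h_t\left(-\mi y,u\right)\frac{1}{\mi u-z}\prod_{a\in\bxi}\frac{a-\mi u}{a-z}
-\indi_{\left\{s>t\right\}}\mathfrak h_{s-t}\left(x,y\right),
\]
with $C_{\bxi}$ small loops around the initial points, disjoint from $\mi\mathbb R$. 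I would verify this form briefly by rewriting the Katori--Tanemura double integral: substitute $w=\mi u$ and use the Gaussian identity for $\mathfrak h_t(-\mi y,u)$. The Lagrange-interpolation form of their kernel is then exactly $\prod_a (a-w)/(a-z)$.

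\textbf{Step 1: distinct points, no atom at $0$.} Lemma~\ref{lemma finite contour correction} converts the contour part above into $\BL_{f^b_{p_b(\bxi)}(\bxi),\Gamma}+\Rc_\Gamma$. It remains to check that the integrand ratio equals $\mE_{f}(\mi u)/\mE_f(z)$ with $f=f^b_{p_b(\bxi)}(\bxi)$. For finite $\bxi$ we have $\gamma_2=0$, and
\[
\mE_f(z)=\me^{-p_b z}\prod_{|a|<b}(z-a)\prod_{|a|>b}(1-z/a)\me^{z/a}.
\]
The exponentials cancel because $\gamma_1=p_b=\sum_{|a|>b}1/a$. So $\mE_f(z)=c\prod_a(z-a)$ with the constant $c=\prod_{|a|>b}(-1/a)$, and the ratio matches. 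Independence of $b$ then follows from \eqref{eq:cutoff-change} together with $p_{b'}=p_b-S_1$.

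\textbf{Step 2: atom at $0$.} If $0\in\bxi$ and the points are distinct, I would translate the configuration by a small $c$, which moves the atom off the origin. Dyson Brownian motion commutes with translation, and the kernels satisfy $\K_{\bxi+c}((s,x),(t,y))=\K_{\bxi}((s,x-c),(t,y-c))$. Alternatively, I would let a simple root $\epsilon\to0$ and use the continuity argument of Step 3 below. Either way the contour formula persists, with $z^{k}$ absorbing the root.

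\textbf{Step 3: multiple points (main obstacle).} The entrance law from $\bxi$ is defined as the weak limit of the laws started from distinct points $\bxi^{(n)}\to\bxi$. I would take such a sequence with atoms avoiding $\pm b$ for all $n$. Then $f^b_{p_b(\bxi^{(n)})}(\bxi^{(n)})\to f^b_{p_b(\bxi)}(\bxi)$ in $\bUpsilon_b$: the roots converge, the inner degrees are eventually constant, and the reciprocal sums over the finitely many outer roots converge.

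At this point I cannot invoke the Section~3 continuity theorem, since it is not yet proved. I would instead argue directly.
\begin{itemize}
\item Kernel convergence: for finite polynomial data, $\mE_{(n)}\to\mE$ locally uniformly by Proposition~\ref{uniform convergence of entire}. On $\Gamma$, $|\mE_{(n)}|^{-1}$ is bounded by a polynomial, because $\Gamma$ stays at distance at least $1/2$ from $\mathbb R$ near the origin and recedes linearly. $|\mE_{(n)}(\mi u)|$ is polynomially bounded, and the Gaussian factors give dominated convergence. Near the crossings the joint singularity $(u^2+v^2)^{-1/2}$ is integrable uniformly in $n$. This yields locally uniform kernel convergence at each finite set of positive times.
\item Identification of the limit: Proposition~\ref{prop mgf convergence} then gives f.d.d.\ convergence to a determinantal process with kernel $\K_{f^b_{p_b(\bxi)}(\bxi)}$.
\item Consistency: the limit law coincides with the entrance law, by uniqueness of weak limits and Remark~\ref{remark bounded K determines process}.
\end{itemize}
The delicate points are the uniform domination near the crossing points and the verification that the Dyson entrance law is indeed continuous in its initial data.
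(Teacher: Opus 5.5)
Your proposal follows the paper's proof. Simple configurations without an atom at zero are handled by the Katori--Tanemura formula together with Lemma~\ref{lemma finite contour correction}; zero atoms and multiple points are handled by simple perturbations (whose convergence the paper takes from the finite-particle continuity in \cite{Katori_2009}), kernel convergence and Proposition~\ref{prop mgf convergence}. The one difference is that the paper simply forward-cites Proposition~\ref{Uniformconvergence}, which is not circular, while you redo that dominated-convergence argument directly for the polynomial ratios.

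There are two small cautions. First, perturb zero atoms to positive values, as the paper does: with the convention $k^-=0$, a negative perturbation changes $q(\mathbf a^-)$ and breaks convergence in $\bUpsilon_b$ (harmless for your direct argument, since the polynomial ratios still converge). Second, your translation shortcut is not automatic, because $\Gamma$ and $\Rc_\Gamma$ are fixed rather than translated, so it needs its own contour-shift check; your perturbation alternative avoids this.
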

\begin{proof}
For a simple configuration with no particle at zero,
Proposition~2.1 of \cite{Katori_2009} gives
\begin{equation}\label{eq kernel finite dim}
\begin{aligned}
\K_{\bxi}\left(\left(s,x\right),\left(t,y\right)\right)
={}&\frac{1}{2\pi\mi}
 \int_{C_{\bxi}\times\mathbb R}
 \mathfrak h_s\left(x,z\right)\mathfrak h_t\left(-\mi y,u\right)
 \frac{1}{\mi u-z}
 \prod_{a\in\bxi}\frac{a-\mi u}{a-z}
 \,\dif z\,\dif u\\
&-\indi_{\left\{s>t\right\}}\mathfrak h_{s-t}\left(x,y\right).
\end{aligned}
\end{equation}
Directly from the definition of $f_{p_b(\bxi)}^b$,
\[
 \frac{\mathfrak E_{f_{p_b\left(\bxi\right)}^b\left(\bxi\right)}\left(\mi u\right)}
      {\mathfrak E_{f_{p_b\left(\bxi\right)}^b\left(\bxi\right)}\left(z\right)}
 =\prod_{a\in\bxi}\frac{a-\mi u}{a-z}.
\]
Lemma~\ref{lemma finite contour correction} identifies
\eqref{eq kernel finite dim} with $\K_{f_{p_b(\bxi)}^b(\bxi)}$.

For multiple nonzero roots, choose simple real perturbations
$\bxi^{(m)}\to\bxi$, preserving the number of particles and avoiding
$\{0,\pm b\}$. The finite-particle continuity statement immediately
preceding Proposition~2.1 in \cite{Katori_2009} shows that Dyson
Brownian motion started from $\bxi$, including multiplicities, is the
positive-time weak limit of these noncolliding systems. Moreover,
$f_{p_b(\bxi^{(m)})}^b(\bxi^{(m)})\to
f_{p_b(\bxi)}^b(\bxi)$, so Proposition \ref{Uniformconvergence} gives locally uniform convergence of the kernels on each finite set of positive times. Proposition \ref{prop mgf convergence} show that the limiting kernel is determinantal with kernel $\K_{f_{p_b(\bxi)}^b(\bxi)}$, independently of the chosen perturbation.  Equivalently, the loops in
\eqref{eq kernel finite dim} coalesce into loops around the corresponding
higher-order poles.  If $\bxi(\{0\})=k>0$, use distinct positive real
perturbations of the $k$ zero atoms as well; the same argument gives the
shrinking-loop prescription at zero. Finally, the displayed ratio is
intrinsic to $\bxi$, so the result does not depend on $b$.
\end{proof}
Every admissible parameter can be approximated by finite Dyson configurations. Their limits construct the determinantal process associated with the canonical kernel.
\begin{proposition}\label{well_defined process}
Let $\bup\in\bUpsilon_b$. Then $\K_{\bup}$ is the extended kernel of a
determinantal process
$(\bXi_{\bup}(t))_{t\in(0,\tau(\bup))}$. Moreover, there
exists a sequence of finite configurations $(\bxi^{(n)})_{n\geq1}$
whose $\beta=2$ Dyson models converge to $\bXi_{\bup}$ in
finite-dimensional distributions on every compact subinterval of
$(0,\tau(\bup))$.
\end{proposition}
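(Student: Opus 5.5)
The plan is to approximate $\bup$ in $\bUpsilon_b$ by parameters of finite configurations and then let the kernel-convergence machinery do the rest. Everything rests on two inputs: the parameter continuity of the kernels (Theorem~\ref{MainTheorem}, via the bounds of Proposition~\ref{bound}, on each finite set of times in $(0,\tau(\bup))$) and the determinantal criterion Proposition~\ref{prop mgf convergence}. Granting these, the proof reduces to constructing, for a given $\bup$, finite configurations $\bxi^{(n)}$ such that $f^b_{p_b(\bxi^{(n)})}(\bxi^{(n)})\to\bup$ in $\bUpsilon_b$. By Proposition~\ref{finitedysonmodel}, these are exactly the parameters of the finite $\beta=2$ Dyson models.

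\textbf{Construction of the approximations.} The inner data $\mathbf a^\pm$ are kept unchanged. For the reciprocal data we take the first $n$ roots $1/\alpha_j^+$ and $-1/\alpha_j^-$ (those $j\le n$ with $\alpha_j^\pm>0$). This truncation gives coordinatewise convergence of $\boldsymbol\alpha^\pm$ and $\delta$ of the truncation tending to $s(\boldsymbol\alpha^+)+s(\boldsymbol\alpha^-)$. The missing linear and quadratic data are then produced by adding escaping roots, following the model examples $(1-z/N)^N\to\me^{-z}$ and $(1-z^2/(2N))^N\to\me^{-z^2/2}$ from the strategy section.
\begin{itemize}
\item For $\gamma_2$, add $M_n$ symmetric pairs $\pm\sqrt{M_n/\gamma_2}$. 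Each pair contributes $2\gamma_2/M_n$ to $\delta$ and nothing to the reciprocal sum, so together they contribute exactly $2\gamma_2$ to $\delta$. I will rescale to $\pm\sqrt{2M_n/\gamma_2}$ so that the total contribution is exactly $\gamma_2$. As $M_n\to\infty$ these roots escape to infinity, their reciprocals tend to $0$ coordinatewise, and the $\delta$-coordinate converges to $s(\boldsymbol\alpha^+)+s(\boldsymbol\alpha^-)+\gamma_2=\delta$.
\item For $\gamma_1$, let $c_n=\gamma_1-\sum_{j\le n}(\alpha_j^+-\alpha_j^-)$ be the needed correction. Add $K_n$ roots at $K_n/c_n$ (omitted if $c_n=0$). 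Their reciprocal sum is $c_n$ and their square contribution is $c_n^2/K_n$. Choosing $K_n\to\infty$ fast enough that $c_n^2/K_n\to0$ and $K_n/|c_n|\to\infty$ makes them escape.
\end{itemize}
Since every added root lies beyond $b$ for large $n$, the total inner degree is unchanged. Small generic perturbations keep everything simple if desired, though Proposition~\ref{finitedysonmodel} allows multiplicities anyway. The reordered $\boldsymbol\alpha$ sequences of $\bxi^{(n)}$ must converge coordinatewise; this holds because the added reciprocals tend to $0$ uniformly, so they sit below any fixed positive $\alpha_j$ eventually.

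\textbf{Passage to the limit.} With $\bup^{(n)}\to\bup$, Theorem~\ref{MainTheorem} gives locally uniform convergence of $\K_{\bup^{(n)}}$ to $\K_{\bup}$ on each finite set of times in $(0,\tau(\bup))$. Note that $\tau(\bup^{(n)})=\infty$ for the finite configurations. Continuity of $\K_{\bup}((s,\cdot),(t,\cdot))$ follows from the same dominated-convergence estimates. Proposition~\ref{prop mgf convergence} then says that $\K_{\bup}$ is the extended kernel of a determinantal process, with finite-dimensional convergence. Remark~\ref{remark bounded K determines process} gives uniqueness of the law, and Kolmogorov consistency across finite time sets comes automatically from the kernel restrictions.

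\textbf{Main obstacle.} The hardest step is the uniform bound feeding the kernel convergence. Along the sequence, $\gamma_2^{(n)}=0$ while the limit has $\gamma_2>0$, so the Gaussian growth $\me^{(\gamma_2+\eta)|z|^2/2}$ must be recovered from the genus products of the escaping pairs, uniformly in $n$. Controlling $1/\mE^{(n)}$ on $\Gamma$ near the added roots, together with the joint singularity at the crossing points, is exactly what Proposition~\ref{bound} provides. The approximation is also designed so that only the convergence of $\delta$, and not separate convergence of $\gamma_2$, is needed there.
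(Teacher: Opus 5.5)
Your proposal is correct and follows essentially the same route as the paper, and your use of Theorem~\ref{MainTheorem} is not circular, since the approximating processes exist by Proposition~\ref{finitedysonmodel} and only the kernel-convergence part is needed. The paper likewise keeps the inner roots and truncates $\boldsymbol\alpha^\pm$ at $n$. It adds $n$ symmetric escaping pairs at $\pm\sqrt{2n/\gamma_2}$ for the quadratic defect, and $M_n=\lceil n(1+|c_n|+c_n^2)\rceil$ same-sign roots at distance $M_n/|c_n|$ for the linear correction; your conditions on $K_n$ correctly allow $c_n$ to be unbounded, since $\boldsymbol\alpha^\pm$ are only square-summable. It then concludes via the kernel convergence of Proposition~\ref{Uniformconvergence} and Proposition~\ref{prop mgf convergence}, as you do.
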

\begin{proof}
Write
\[
 \bup=\left(\left(\boldsymbol{\alpha}^+,\boldsymbol{\alpha}^-\right),
       \left(\mathbf a^+,\mathbf a^-\right),\gamma_1,\delta\right),
 \qquad k^-=0,
\]
and set
\[
 c_n\defeq\gamma_1-
 \left(\sum_{j=1}^{n}\alpha_j^+
       -\sum_{j=1}^{n}\alpha_j^-\right),
 \qquad
 M_n\defeq\left\lceil n\left(1+\left|c_n\right|+c_n^2\right)\right\rceil,
 \qquad
 v_n\defeq\sqrt{\frac{\gamma_2}{2n}}.
\]
For $a\geq0$ and $m\in\mathbb Z_+$, let $[a]_m$ denote the multiset of
$m$ copies of $a$, with copies of zero omitted. Define finite multisets
\[
\begin{aligned}
 A_n^+
 &\defeq\left\{\alpha_j^+:1\leq j\leq n,\ \alpha_j^+>0\right\}
   \uplus\left[\frac{\max\left(c_n,0\right)}{M_n}\right]_{M_n}
   \uplus\left[v_n\right]_n,\\
 A_n^-
 &\defeq\left\{\alpha_j^-:1\leq j\leq n,\ \alpha_j^->0\right\}
   \uplus\left[\frac{\max\left(-c_n,0\right)}{M_n}\right]_{M_n}
   \uplus\left[v_n\right]_n.
\end{aligned}
\]
Rearrange each multiset in decreasing order. After discarding finitely
many $n$, all its elements lie in $(0,1/b)$. Define
\[
\begin{aligned}
\bxi^{\left(n\right)}
\defeq{}&\sum_{j=1}^{m\left(\mathbf a^+\right)}\delta_{a_j^+}
 +\sum_{j=1}^{m\left(\mathbf a^-\right)}\delta_{-a_j^-}
 +\left(k^++k^-\right)\delta_0+\sum_{a\in A_n^+}\delta_{a^{-1}}
 +\sum_{a\in A_n^-}\delta_{-a^{-1}},
\end{aligned}
\]
where multiset elements are counted with multiplicity. The asymmetric
block gives
\[
 \sum_{a\in A_n^+}a-\sum_{a\in A_n^-}a=\gamma_1,
\]
and hence $p_b(\bxi^{(n)})=\gamma_1$. Moreover,
\[
\begin{aligned}
 \sum_{a\in A_n^+\uplus A_n^-}a^2
={}&\sum_{j=1}^{n}
 \left(\left(\alpha_j^+\right)^2+\left(\alpha_j^-\right)^2\right)
 +\frac{c_n^2}{M_n}+\gamma_2
 \longrightarrow\delta.
\end{aligned}
\]
Indeed,
\[
 \frac{\left|c_n\right|}{M_n}\leq\frac1n,\qquad
 \frac{c_n^2}{M_n}\leq\frac1n,\qquad
 v_n\longrightarrow0.
\]
Set $d_n=\max\{|c_n|/M_n,v_n\}$, so that $d_n\to0$. If
$\alpha_j^\pm>0$, then for all sufficiently large $n$ the added entries
are smaller than $\alpha_j^\pm/2$ and the first $j$ target entries are
present. If $\alpha_j^\pm=0$, the $j$-th rearranged entry is at most
$d_n$. Thus decreasing rearrangement preserves convergence of every
fixed coordinate, and
\[
 \bup^{\left(n\right)}
 \defeq f_{p_b\left(\bxi^{\left(n\right)}\right)}^b\left(\bxi^{\left(n\right)}\right)
 \longrightarrow\bup
 \qquad\text{in }\bUpsilon_b.
\]
By Proposition~\ref{finitedysonmodel}, $\K_{\bup^{(n)}}$ is the finite
Dyson kernel. Proposition~\ref{Uniformconvergence}, together with
Proposition~\ref{prop mgf convergence}, yields a determinantal process
with kernel $\K_{\bup}$ and
\[
 \bXi_{\bup^{\left(n\right)}}\left(\cdot\right)
 \xrightarrow{\mathrm{f.d.d.}}
 \bXi_{\bup}\left(\cdot\right)
\]
on $(0,\tau(\bup))$, since Proposition~\ref{prop mgf convergence}
applies to every finite set of times in this interval.
\end{proof}
\section{Correlation kernels}\label{sec:kernels}
Recall the definitions of $\gamma_2$ and $\tau(\bup)$ from
\eqref{gamma2}, and the contour integral $\BL_{\bup,\Gamma}$, the
residue correction $\Rc_\Gamma$, and the full kernel
$\K_{\bup}$ from Definition~\ref{def def of K}.

The next theorem gives continuity of the determinantal laws under convergence in the parameter space. Its kernel convergence will also identify the limits of the approximations used later.
\begin{theorem}\label{MainTheorem}
Let $b>0$ and suppose that ${\bup}^{\UN}\longrightarrow\bup$ in
$\bUpsilon_b$. Put $\tau_N=\tau({\bup}^{\UN})$ and
$\tau=\tau(\bup)$. For every finite set $\mathcal A\subset(0,\tau)$ and
every compact set $B\subset\mathbb R$, one has
\begin{equation}\label{eq main local kernel convergence}
 \lim_{N\to\infty}\max_{s,t\in\mathcal A}\sup_{x,y\in B}
 \left|\K_{{\bup}^{\UN}}\left(\left(s,x\right),\left(t,y\right)\right)
       -\K_{\bup}\left(\left(s,x\right),\left(t,y\right)\right)\right|=0.
\end{equation}
Moreover, $\mathcal A\subset(0,\tau_N)$ for all sufficiently large $N$.
Consequently, the associated determinantal processes satisfy
\[
 {\bXi}^{\UN}\left(\cdot\right)
 \xrightarrow[]{\mathrm{f.d.d.}}{\bXi}\left(\cdot\right)
 \qquad\hbox{on }\left(0,\tau\right).
\]
\end{theorem}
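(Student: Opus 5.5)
The kernel $\K_{\bup}$ splits into three pieces: the contour part $\BL_{\bup,\Gamma}$, the residue correction $\Rc_\Gamma$, and the heat term $-\indi_{\{s>t\}}\mathfrak h_{s-t}(x,y)$. The last two do not depend on the parameter, so \eqref{eq main local kernel convergence} reduces to locally uniform convergence of $\BL_{\bup^{(N)},\Gamma}\to\BL_{\bup,\Gamma}$. The inclusion $\mathcal A\subset(0,\tau_N)$ for large $N$ comes first. Since $\gamma_2$ enters only through the total $\delta$, I would prove $\limsup_N\gamma_2(\bup^{(N)})\le\gamma_2(\bup)$. By Fatou applied to the coordinatewise convergence of $\boldsymbol\alpha^{\pm,(N)}$,
\[
 \liminf_N s\left(\boldsymbol\alpha^{\pm,(N)}\right)\ge s\left(\boldsymbol\alpha^{\pm}\right),
\]
while $\delta^{(N)}\to\delta$. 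Hence for $t_{\max}=\max\mathcal A<\tau$ we get $\gamma_2(\bup^{(N)})t_{\max}<1$ eventually. Once $\gamma_2+\eta$ is fixed with $(\gamma_2+\eta)t_{\max}<1$, the same inequality gives $\gamma_2^{(N)}\le\gamma_2+\eta$ for large $N$.

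The core step is a uniform domination of the integrand in \eqref{kernel_omega}. On compacts, Proposition~\ref{uniform convergence of entire} gives pointwise (indeed locally uniform) convergence of $\mE^{(N)}(\mi u)/\mE^{(N)}(z)$, at least where $\mE(z)\neq0$. The fixed contour $\Gamma$ avoids the real axis, so $\mE$ has no zeros on it. The remaining work is the two-sided growth bounds announced in the strategy section: for every $\eta>0$ and large $N$,
\[
 |\mE^{(N)}(\mi u)|\le C_\eta \me^{(\gamma_2+\eta)u^2/2},
 \qquad
 |\mE^{(N)}(z)|^{-1}\le C_\eta \me^{(\gamma_2+\eta)\Re(z^2)/2},\quad z\in\Gamma.
\]
To prove these I would use the decomposition \eqref{Decomposition}. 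The finite factors $\mathfrak P^{\pm}$ have eventually fixed degree and convergent roots, so they are polynomially bounded above. Below they are bounded away from zero on $\Gamma$, since $|\Im z|\ge1$ there. For the genus products I would split the index set at $J$ into a head with $j\le J$ and a tail with $j>J$. The head converges coordinatewise. For the tail, use $|\log((1-w)\me^{w})+w^2/2|\le C|w|^3$ when $|w|\le1/2$, with $w=\alpha_j z$. Combined with the Gaussian factor $\me^{-\gamma_2^{(N)}z^2/2}$, this produces
\[
 \exp\Bigl(-\tfrac{z^2}{2}\bigl(\gamma_2^{(N)}+\textstyle\sum_{j>J}(\alpha_j^{(N)})^2\bigr)\Bigr),
\]
whose coefficient equals $\delta^{(N)}$ minus the head sums, and this converges to $\gamma_2+\sum_{j>J}\alpha_j^2$, which is small beyond $\gamma_2$. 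Factors with $|\alpha_j z|>1/2$ are handled crudely. For the upper bound I would use $|(1-w)\me^w|\le\me^{|w|^2}$. For the reciprocal bound on $\Gamma_1$, where $\arg z=\pm\pi/6$, the ratio $\Re(z^2)/|z|^2=1/2$ is bounded below, and $|1-\alpha z|\ge\sin(\pi/6)$, so every factor can be controlled by $\me^{c\alpha^2\Re(z^2)}$. On the compact piece $\Gamma_2$ a uniform lower bound follows from locally uniform convergence.

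With these bounds in hand, the Gaussian factors give domination. The $z$-heat factor has modulus $\me^{-\Re(z-x)^2/(2s)}$, which decays like $\me^{-\Re(z^2)/(2s)}$, and $1/s>\gamma_2+\eta$. The $u$-factor gives $\me^{-u^2/(2t)}$, which beats $\me^{(\gamma_2+\eta)u^2/2}$. Both hold uniformly over $x,y\in B$ and $s,t\in\mathcal A$. The remaining issue is the factor $1/(\mi u-z)$. Away from the crossings $(z,u)=(\pm\mi,\pm1)$ it is bounded. Near a crossing, parametrize $z=\mp v\pm\mi$; then $|\mi u-z|\asymp\sqrt{(u\mp1)^2+v^2}$, which is jointly integrable in the two real parameters. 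Dominated convergence in the joint improper sense then gives pointwise convergence. Uniformity over $x,y\in B$ and the finite set $\mathcal A$ follows because the dominating function is uniform and the integrand is equicontinuous in $(x,y)$; alternatively one can use compactness together with continuity of the limit. Finally, Proposition~\ref{prop mgf convergence} gives f.d.d.\ convergence, using that the limit kernel is continuous in $(x,y)$ by the same domination.

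The main obstacle is the reciprocal bound on $\Gamma_1$ uniformly in $N$. The approximating sequences may carry many small $\alpha_j^{(N)}$ that mimic a Gaussian, as in Proposition~\ref{well_defined process}, so the tail must be controlled through $\delta^{(N)}$ rather than through individual summability. I would first try the logarithmic comparison above, checking carefully that the third-order error $\sum_{j>J}|\alpha_j^{(N)}|^3|z|^3$ is absorbed. It is bounded by $\max_{j>J}\alpha_j^{(N)}|z|\cdot\sum_j(\alpha_j^{(N)})^2|z|^2$. For $|z|\le 1/(2\max_{j>J}\alpha_j^{(N)})$ this is $\eta|z|^2$, and larger $|z|$ must be handled by the crude factorwise bounds.
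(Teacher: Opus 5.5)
\textbf{The sharp Gaussian constant in the tail bounds is not obtained.} Your reduction to the contour part $\BL_{\bup^{\UN},\Gamma}$ is fine, and so are the Fatou argument for $\limsup_N\gamma_2(\bup^{\UN})\le\gamma_2(\bup)$, the finite factors via $|\Im z|\ge1$ on $\Gamma$, and the crossing singularity. The gap is in the step you yourself flag as the main obstacle, and it is more than bookkeeping.

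The bounds must hold with the sharp constant: $|\mE^{\UN}(z)|^{-1}\le C\me^{(\gamma_2+\eta)\Re(z^2)/2}$ on $\Gamma_1$, and $|\mE^{\UN}(\mi u)|\le C\me^{(\gamma_2+\eta)u^2/2}$. The heat factors only supply $\me^{-\Re(z^2)/(2s)}$ and $\me^{-u^2/(2t)}$, and $s,t$ may be arbitrarily close to $\tau=1/\gamma_2$.

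Your Taylor comparison gives the constant $\tfrac12$ only for factors with $|\alpha_j^{\UN}z|\le c\eta$. For $|z|\le 1/(2\max_{j>J}\alpha_j^{\UN})$ your cubic error is bounded only by $\tfrac{C}{2}\sum_{j>J}(\alpha_j^{\UN})^2|z|^2$, not by $\eta|z|^2$; that is the same order as the main term.

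For the remaining factors, your crude estimates lose the constant:
\begin{itemize}
\item On $\Gamma_1$, $|1-w|\ge\tfrac12$ and $\Re(z^2)=\tfrac12|z|^2$ give $|\me^{w}(1-w)|^{-1}\le 2\me^{|w|}\le\me^{c'\Re(w^2)}$, but only with $c'$ much larger than $\tfrac12$.
\item In the numerator, $|(1-w)\me^{w}|\le\me^{|w|^2}$ loses a factor $2$ in the exponent.
\end{itemize}

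These crude factors can carry the whole Gaussian mass. In the approximations of Proposition~\ref{well_defined process}, $\gamma_2(\bup^{(n)})=0$, and the mass $\gamma_2$ sits in $2n$ reciprocal coordinates equal to $v_n=\sqrt{\gamma_2/(2n)}$. All of them are in the crude regime once $|z|\gtrsim v_n^{-1}$. Up to lower-order terms your majorant then behaves like $\me^{(c'\gamma_2-1/(2s))\Re(z^2)}$ and $\me^{(\gamma_2-1/(2t))u^2}$. These are not integrable for $s\ge1/(2c'\gamma_2)$ or $t\ge 1/(2\gamma_2)$. So your argument handles $\gamma_2(\bup)=0$, where the tail mass is small and any constant works. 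For $\gamma_2(\bup)>0$ it only gives convergence on a strictly shorter interval than $(0,\tau)$.

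\textbf{The fix.} You need two factorwise inequalities that are exact at every scale of $w$:
\begin{itemize}
\item $|(1-w)\me^{w}|\le\me^{|w|^2/2}$ for every $w\in\mathbb C$. This is $\log(1+\theta)\le\theta$ with $\theta=|w|^2-2\Re w$.
\item $|\me^{w}(1-w)|^{-1}\le\me^{\Re(w^2)/2}$ for every $w$ on the lines $|\Re w|=\sqrt3|\Im w|$. This is Lemma~\ref{denolemma}: writing $w=a(1\pm\mi/\sqrt3)$, the difference of logarithms has derivative $8a^3/(3(4a^2-6a+3))$, so its minimum is $0$, attained at $a=0$.
\end{itemize}
They must hold at every scale of $|w|$, not only for small $|w|$, because the mass of $\boldsymbol\alpha^{\pm,\UN}$ can sit at any scale relative to $|z|$.

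Real dilations $w=\pm\alpha z$ keep $z\in\Gamma_1$ on these lines. So multiplying over the tail $j\ge M$, together with the Gaussian factor, gives the reciprocal bound with coefficient
$\gamma_2^{\UN}+\sum_{j\ge M}\bigl((\alpha_j^{+,\UN})^2+(\alpha_j^{-,\UN})^2\bigr)\le\gamma_2+\epsilon$
(Lemma~\ref{lm bound for sum}). This happens in one step, with no split according to the size of $|\alpha_jz|$. The head $j<M$ is then a finite core, treated as you describe. With these two inequalities replacing the Taylor-plus-crude scheme, the rest of your plan goes through.
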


\begin{remark}
If $\gamma_2(\bup)=0$, then $\tau(\bup)=\infty$, and hence the theorem
holds on every finite time interval.
\end{remark}

\subsection{Bounds for the entire functions}\label{section bd for function}

We first bound the canonical entire functions and their reciprocals on the integration contour. Combined with the heat factors, these bounds provide uniform domination for kernel convergence.
\begin{proposition}\label{bound}
Suppose that ${\bup}^{\UN}\longrightarrow\bup$ in $\bUpsilon_b$, and
write $\gamma_2=\gamma_2(\bup)$. For every $\epsilon>0$ there are
$C_\epsilon<\infty$ and $N_\epsilon\in\mathbb N$ such that
\begin{equation}\label{bdinequality}
\begin{aligned}
 \sup_{N\geq N_\epsilon}\left|\mathfrak E_{{\bup}^{\UN}}\left(z\right)\right|
 &\leq C_\epsilon
       \exp\left(\frac{\gamma_2+\epsilon}{2}\left|z\right|^2\right),
 &&z\in\mathbb C,\\
 \sup_{N\geq N_\epsilon}
 \left|\frac{1}{\mathfrak E_{{\bup}^{\UN}}\left(z\right)}\right|
 &\leq C_\epsilon
       \exp\left(\frac{\gamma_2+\epsilon}{2}\Re\left(z^2\right)\right),
 &&z\in\Gamma .
\end{aligned}
\end{equation}
The same bounds, after increasing $C_\epsilon$, hold for
$\mathfrak E_{\bup}$.
\end{proposition}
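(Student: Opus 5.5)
The plan is to use the decomposition \eqref{Decomposition} and treat the four types of factors separately, uniformly in $N$. Write $\mathfrak E^{(N)}=\me^{-\gamma_1^{(N)}z-\gamma_2^{(N)}z^2/2}\mathfrak P^{(N)}\mathfrak G^{(N)}$, where $\mathfrak P^{(N)}$ collects the inner polynomial factors and $\mathfrak G^{(N)}$ the two genus products. The polynomial factors are harmless. By convergence in $\mathbb U_b$ the total degree $q$ is eventually constant and all inner roots lie in $[-b,b]$. Hence $|\mathfrak P^{(N)}(z)|\leq(|z|+b)^q$ everywhere. On $\Gamma$, every point satisfies $\dist(z,[-b,b])\geq c>0$, provided we first use \eqref{eq:cutoff-change} to reduce to $b<\sqrt3$ at the cost of a constant. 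Since $|z-a|\geq \max(c,|z|-b)$, this also gives $|\mathfrak P^{(N)}(z)|^{-1}\leq C$ on $\Gamma$. Both polynomial contributions are absorbed into $\me^{\epsilon|z|^2/4}$ and $\me^{\epsilon\Re(z^2)/4}$. The second absorption needs $\Re(z^2)\geq c'|z|^2$ on $\Gamma_1$, which holds since $\Re z^2=\tfrac23x^2$ there, while $\Gamma_2$ is bounded. Likewise $|\me^{-\gamma_1^{(N)}z}|^{\pm1}\leq \me^{C|z|}$ with $\sup_N|\gamma_1^{(N)}|<\infty$ is absorbed.

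The core step is the genus tail together with the Gaussian coefficient. Fix $J$ large, to be chosen with $\epsilon$, and split each $\boldsymbol\alpha^{\pm,(N)}$ into indices $j\leq J$ and $j>J$. For the tail I will use the elementary bounds $|\me^{w}(1-w)|\leq \me^{|w|^2}$ (or $\me^{C|w|^2}$) for all $w\in\mathbb C$. From these, $|\mathfrak G_{\text{tail}}^{(N)}(z)|\leq\exp\bigl(C|z|^2\sum_{j>J}(\alpha_j^{(N)})^2\bigr)$. The key uniform control is
\[
\limsup_N\sum_{j>J}\bigl((\alpha_j^{+,(N)})^2+(\alpha_j^{-,(N)})^2\bigr)=\lim_N\delta^{(N)}-\sum_{j\leq J}\bigl((\alpha_j^+)^2+(\alpha_j^-)^2\bigr),
\]
where the right-hand side equals $\gamma_2+\sum_{j>J}((\alpha_j^+)^2+(\alpha_j^-)^2)+o(1)$ because $\delta^{(N)}\to\delta$ and the coordinates converge. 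The point is then to combine the Gaussian factor $\me^{-\gamma_2^{(N)}\Re(z^2)/2}$ with the tail. Since $\gamma_2^{(N)}=\delta^{(N)}-\sum_j(\alpha_j^{(N)})^2$, the sum $\gamma_2^{(N)}+\sum_{j>J}(\alpha_j^{(N)})^2=\delta^{(N)}-\sum_{j\leq J}(\alpha^{(N)}_j)^2$ converges to $\gamma_2+r_J$, with $r_J\to0$ as $J\to\infty$. Using $|\me^{-g z^2/2}|\leq \me^{g|z|^2/2}$ then gives the upper bound with exponent $(\gamma_2+r_J+o(1))|z|^2/2$ times the constant $C$.

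To get the sharp constant $1/2$ rather than $C$, I will use the sharper inequality $\log|\me^{w}(1-w)|\leq \tfrac12|w|^2$ for real-rooted factors in the relevant form. Equivalently, I will treat the tail as a single Laguerre--P\'olya function with vanishing roots and apply the genus-two estimate $\log|\me^w(1-w)|\le \tfrac12|w|^2$ for $|w|$ small. Large $|w|$ is controlled separately, since $\alpha_j\leq\alpha_{J+1}$ is small and $\log|\me^w(1-w)|\leq \Re w+\log(1+|w|)$. For the head $j\leq J$, finitely many convergent factors give $\me^{\epsilon|z|^2}$ for $|z|$ large, as in \eqref{boundofGenus}.

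For the reciprocal bound on $\Gamma$ the same split applies, now bounding $-\log|\me^{w}(1-w)|$ from above on $\Gamma$. Here the geometry enters: for $z=x(1\pm \mi/\sqrt3)$ and real $\alpha$, $w=\alpha z$ stays in a sector away from $1$. There $1-w$ is bounded below by $c\max(1,|w|)$, and $-\log|\me^w(1-w)|=-\Re w-\log|1-w|\leq \tfrac12\Re(w^2)+C|w|^3$ for small $|w|$. For large $|w|$ I would use the crude bound $-\Re w\leq |w|$, which is dominated because $\Re(z^2)\gtrsim|z|^2$ on $\Gamma_1$. Summing gives $\tfrac12\Re(z^2)\sum_{j>J}\alpha_j^2$ plus errors of size $o(|z|^2)$. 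These combine with $\me^{\gamma_2^{(N)}\Re(z^2)/2}$ exactly as above, and the head factors are bounded below on $\Gamma$ by the sector geometry.

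The main obstacle is making the cubic/large-$|w|$ errors uniformly $\leq\epsilon|z|^2$ across $N$. Since $\alpha_j^{(N)}$ for $j>J$ are uniformly small but $|z|$ is unbounded, terms with $|\alpha_jz|\gtrsim1$ must be handled by a counting argument. The number of such $j$ is at most $|z|^2\sum\alpha_j^2$, and each contributes $O(|\alpha_jz|^{2})$ against a possible logarithmic loss. Finally, the statement for $\mathfrak E_{\bup}$ follows by taking the constant sequence.
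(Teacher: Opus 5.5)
Your architecture matches the paper's. The inner polynomials, the $\gamma_1$-exponential and finitely many leading genus factors form a core with polynomial or linear-exponential behaviour, absorbed into $\me^{\epsilon|z|^2/4}$ and $\me^{\epsilon\Re(z^2)/4}$. The Gaussian factor is merged with the genus tail through $\gamma_2^{(N)}+\sum_{j\geq J}\bigl((\alpha_j^{+,(N)})^2+(\alpha_j^{-,(N)})^2\bigr)=\delta^{(N)}-\sum_{j<J}(\cdots)$, which is Lemma~\ref{lm bound for sum}. No change of cutoff is needed for the core: $|\Im z|\geq1$ on all of $\Gamma$, so $|z-a|\geq1$ for every real root $a$.

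The gap is in the per-factor estimates for the tail. You rightly note that the constant $\tfrac12$ is indispensable, but you only secure it for small $|w|$, by Taylor expansion. For $|w_j|\gtrsim1$ you propose crude bounds plus a counting argument: $\Re w+\log(1+|w|)$ above, and $-\Re w\leq|w|$, $|1-w|\geq\tfrac12$ for the reciprocal. These crude bounds exceed $\tfrac12|w|^2$, respectively $\tfrac12\Re(w^2)=\tfrac14|w|^2$ on the rays, for $|w|$ of order one; for example $\log 2>\tfrac12$ at $w=\mi$. So counting only yields an exponent $C|z|^2\sum_{j\geq J}\alpha_j^2$ with some $C>\tfrac12$.

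When $\gamma_2=0$ this is harmless, since the tail sum is then small. When $\gamma_2>0$, however, the tail can carry essentially all of $\gamma_2$ exactly in that regime. In the approximations of Propositions~\ref{well_defined process} and~\ref{prop:gamma2-horizon-sharpness}, $\gamma_2^{(N)}=0$ and $\delta^{(N)}\to\gamma_2$ is carried by $\asymp N$ reciprocal roots of size $\asymp N^{-1/2}$. Hence for $|z|\asymp\sqrt N$ every $w_j=\pm\alpha_j^{(N)}z$ is of order one. The resulting error is of order $\gamma_2|z|^2$, not $o(|z|^2)$. You would only obtain the bounds with $\gamma_2$ replaced by $2C\gamma_2$, which shrinks the time horizon available in Lemma~\ref{kernel majorant}.

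What is missing is that both sharp inequalities are global, so no splitting by the size of $|w|$ and no counting is needed.
\begin{itemize}
\item For every $w\in\mathbb C$, $\log|\me^{w}(1-w)|=\Re w+\tfrac12\log\bigl(1-2\Re w+|w|^2\bigr)\leq\tfrac12|w|^2$, by $\log(1+x)\leq x$. This is \eqref{naive}.
\item On the rays $|\Re w|=\sqrt3|\Im w|$, Lemma~\ref{denolemma} gives $|\me^{w}(1-w)|^{-1}\leq\me^{\Re(w^2)/2}$. These rays contain $\pm\alpha z$ for every real $\alpha$ and $z\in\Gamma_1$. Writing $w=a(1\pm\mi/\sqrt3)$, the function $\ell(a)=a+a^2/3+\tfrac12\log(1-2a+\tfrac43a^2)$ has $\ell'(a)=8a^3/\{3(4a^2-6a+3)\}$, so its minimum is $0$, attained at $a=0$.
\end{itemize}
Take products over $j\geq J$ and combine with $|\me^{-\gamma_2^{(N)}z^2/2}|^{\pm1}$, using $\Re(z^2)=\tfrac12|z|^2\geq0$ on $\Gamma_1$ for the reciprocal. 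This gives the tail bounds $\exp\{\tfrac12(\gamma_2+\epsilon/2)|z|^2\}$ on $\mathbb C$ and $\exp\{\tfrac12(\gamma_2+\epsilon/2)\Re(z^2)\}$ on $\Gamma_1$ directly, uniformly for large $N$. The compact pieces $\Gamma_2$, at distance one from the real zeros, are then handled by Proposition~\ref{uniform convergence of entire}.
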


Write $\gamma_2^{\UN}=\gamma_2(\bup^{\UN})$.

The reciprocal-square tails and the Gaussian coefficient admit a common upper bound under parameter convergence. This controls their combined contribution even when the Gaussian coefficients do not converge separately.
\begin{lemma}\label{lm bound for sum}
For every $\epsilon>0$ there are $N_\epsilon,M_\epsilon\in\mathbb N$
such that
\[
 \sup_{N\geq N_\epsilon}
 \left\{
 \sum_{j=M_\epsilon}^{\infty}\left(\alpha_j^{+,\UN}\right)^2+
 \sum_{j=M_\epsilon}^{\infty}\left(\alpha_j^{-,\UN}\right)^2+
 \gamma_2^{\UN}\right\}
 \leq\gamma_2+\epsilon .
\]
\end{lemma}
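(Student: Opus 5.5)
The plan rests on one identity. By definition of $\gamma_2^{\UN}$,
\[
 \gamma_2^{\UN}=\delta^{\UN}-s\bigl(\boldsymbol\alpha^{+,\UN}\bigr)-s\bigl(\boldsymbol\alpha^{-,\UN}\bigr),
\]
so the quantity inside the supremum equals
\[
 \delta^{\UN}-\sum_{j<M}\Bigl(\bigl(\alpha_j^{+,\UN}\bigr)^2+\bigl(\alpha_j^{-,\UN}\bigr)^2\Bigr).
\]
This is exact, with no inequality lost. It expresses the combined tail-plus-Gaussian quantity through finitely many coordinates and $\delta^{\UN}$, and convergence in $\bUpsilon_b$ controls all of these. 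The only nontrivial input is the choice of $M$, which we take from the square-summability of the limit.

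Fix $\epsilon>0$. Since $s(\boldsymbol\alpha^\pm)<\infty$ for the limit parameter, choose $M_\epsilon$ so that
\[
 \sum_{j\geq M_\epsilon}\Bigl((\alpha_j^+)^2+(\alpha_j^-)^2\Bigr)\leq\epsilon/3.
\]
Equivalently,
\[
 \sum_{j<M_\epsilon}\Bigl((\alpha_j^+)^2+(\alpha_j^-)^2\Bigr)\geq s(\boldsymbol\alpha^+)+s(\boldsymbol\alpha^-)-\epsilon/3.
\]
Convergence in $\bUpsilon_b$ gives coordinatewise convergence of $\boldsymbol\alpha^{\pm,\UN}$, a finite set of coordinates. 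Hence there is $N_\epsilon$ such that for $N\geq N_\epsilon$ the finite sum $\sum_{j<M_\epsilon}((\alpha_j^{+,\UN})^2+(\alpha_j^{-,\UN})^2)$ is within $\epsilon/3$ of the corresponding limit sum. Enlarging $N_\epsilon$ if necessary, convergence of $\delta$ also gives $\delta^{\UN}\leq\delta+\epsilon/3$.

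Combining these three estimates, for $N\geq N_\epsilon$ the quantity is at most
\[
 \delta+\tfrac{\epsilon}{3}-s(\boldsymbol\alpha^+)-s(\boldsymbol\alpha^-)+\tfrac{\epsilon}{3}+\tfrac{\epsilon}{3}=\gamma_2+\epsilon,
\]
which is the claim.

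The subtle point is that $\gamma_2^{\UN}$ need not converge to $\gamma_2$, and neither do the tails separately. Mass may migrate from the tails of $\boldsymbol\alpha^{\pm,\UN}$ into the Gaussian coefficient, as in the approximation of Proposition~\ref{well_defined process}. The argument avoids this by never bounding the two pieces separately. Only their sum is used, and that sum is determined by $\delta^{\UN}$ and finitely many coordinates. Note also that only an upper bound is needed, so the lower-semicontinuity direction of the coordinatewise limit is never required.
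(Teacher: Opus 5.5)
Your proof is correct and is essentially the paper's own argument. Both rewrite the bracketed quantity exactly as $\delta^{(N)}$ minus the first $M_\epsilon-1$ squared reciprocal coordinates, choose $M_\epsilon$ from the square-summability of the limiting sequences, and conclude using coordinatewise convergence together with convergence of $\delta^{(N)}$; the only difference is that the paper splits the error into halves where you use thirds.
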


\begin{proof}
Choose $M_\epsilon$ so that
\[
 \sum_{j=M_\epsilon}^{\infty}\left(\alpha_j^+\right)^2+
 \sum_{j=M_\epsilon}^{\infty}\left(\alpha_j^-\right)^2\leq\epsilon/2.
\]
For every $N$ the expression inside braces equals
\[
 \delta^{\UN}
 -\sum_{j=1}^{M_\epsilon-1}\left(\alpha_j^{+,\UN}\right)^2
 -\sum_{j=1}^{M_\epsilon-1}\left(\alpha_j^{-,\UN}\right)^2.
\]
Coordinatewise convergence of the displayed finite sums and of
$\delta^{\UN}$ makes this at most
\[
 \delta
 -\sum_{j=1}^{M_\epsilon-1}\left(\alpha_j^+\right)^2
 -\sum_{j=1}^{M_\epsilon-1}\left(\alpha_j^-\right)^2+\epsilon/2
 \leq\gamma_2+\epsilon
\]
for all sufficiently large $N$.
\end{proof}

We use the elementary inequalities
\begin{equation}\label{naive}
 \left|\left(1+z\right)\me^{-z}\right|\leq \me^{\left|z\right|^2/2},
 \qquad \left|\me^z\right|\leq \me^{\left|z\right|}
\end{equation}

On the sloped rays, a single genus-one factor satisfies a useful reciprocal bound. Applying it to the tail product gives the denominator estimate below.
\begin{lemma}\label{denolemma}
If $z\in\mathbb C$ satisfies $|\Re z|=\sqrt3|\Im z|$, then
\[
 \left|\me^z\left(1-z\right)\right|^{-1}\leq
 \exp\left(\frac{\Re\left(z^2\right)}{2}\right).
\]
\end{lemma}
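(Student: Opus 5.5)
The plan is to reduce the inequality to a one-variable calculus statement along each ray, exploiting the special slope $|\Re z|=\sqrt3|\Im z|$. Write $z=x+\mi y$ with $x^2=3y^2$ and put $r=|z|$. Then $y^2=r^2/4$, $x=cr$ with $c\in\{\sqrt3/2,-\sqrt3/2\}$, and
\[
 \Re\left(z^2\right)=x^2-y^2=\frac{r^2}{2}.
\]
Since $|\me^z|=\me^{x}$ and $|1-z|^2=(1-x)^2+y^2=1-2cr+r^2$, taking logarithms shows that the claimed bound is equivalent to $g(r)\geq0$ for all $r\geq0$. Here
\[
 g\left(r\right)\defeq\frac12\log\left(1-2cr+r^2\right)+cr-\frac{r^2}{4}.
\]
It is cleaner to treat both signs of $\Re z$ at once by letting $c$ range over $\pm\sqrt3/2$ in this formula.

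The first step is to check that $g$ is well defined and smooth on $[0,\infty)$. The argument of the logarithm satisfies $1-2cr+r^2=(r-c)^2+1-c^2\geq 1/4>0$, since $c^2=3/4$. This corresponds to the fact that the rays avoid the zero $z=1$ of the factor.

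The second step is to note that $g(0)=0$ and to compute $g'$. Over the common positive denominator $D=1-2cr+r^2$,
\[
 g'\left(r\right)=\frac{r-c}{D}+c-\frac r2
 =\frac{\left(r-c\right)+\left(c-\frac r2\right)\left(1-2cr+r^2\right)}{D}.
\]
I expect the numerator to simplify to $r\left(\tfrac32-2c^2\right)+\tfrac{r^3}{2}$. The constant terms and the $r^2$ terms cancel identically. The linear coefficient vanishes exactly because $c^2=3/4$, and this is precisely where the slope $\sqrt3$ is used. The numerator is therefore $r^3/2\geq0$, so $g$ is nondecreasing and hence $g\geq0$. This gives the lemma for both rays and both signs of $\Im z$, which enter only through $y^2$.

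The only real obstacle is arranging the expansion so that this cancellation is visible; a naive bound via \eqref{naive} is too lossy near $r=0$, where $g$ vanishes to third order. If the algebra in the numerator does not collapse as expected, I would instead expand $g$ in powers of $r$ to confirm that the $O(r)$ and $O(r^2)$ terms cancel, and then treat small and large $r$ separately.
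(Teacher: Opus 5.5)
Your approach is the paper's. The paper writes $z=a(1+\mi/\sqrt3)$ with $a\in\mathbb R$ and sets $\ell(a)=\log|\me^z(1-z)|+\Re(z^2)/2$. It then computes $\ell'(a)=8a^3/(3(4a^2-6a+3))$, so $\ell$ has its minimum $\ell(0)=0$, and conjugation handles the other ray. Your $r$-parametrization is the same computation, since $a=cr$ and $4a^2-6a+3=3D$.

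However, your function $g$ has a sign error that breaks the key step as written. Taking logarithms in $|\me^z(1-z)|^{-1}\le\exp(\Re(z^2)/2)$ gives $\log|\me^z(1-z)|+\Re(z^2)/2\geq0$. The function to show nonnegative is therefore
\[
 g\left(r\right)=\tfrac12\log\left(1-2cr+r^2\right)+cr+\tfrac{r^2}{4},
\]
with $+r^2/4$, not $-r^2/4$. With your sign, $g(r)\to-\infty$ as $r\to\infty$, so the claim $g\geq0$ is false. The numerator of your displayed $g'$ is actually $(\tfrac12-2c^2)r+2cr^2-\tfrac{r^3}{2}$, and neither the linear nor the quadratic terms cancel. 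With the corrected sign, $g'(r)=\frac{r-c}{D}+c+\frac r2$. The numerator $(r-c)+(c+\frac r2)(1-2cr+r^2)$ then does collapse to $(\tfrac32-2c^2)r+\tfrac{r^3}{2}=\tfrac{r^3}{2}$, which is exactly the simplification you predicted. Together with $g(0)=0$ and $D\geq\tfrac14$, this completes the proof. A minor point: the corrected $g$ vanishes to fourth order at $0$, since $g(r)\sim r^4/8$.
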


\begin{proof}
It suffices to write $z=a(1+\mi/\sqrt3)$; the other ray is identical.
Then
\[
 \ell\left(a\right)\defeq\log\left|\me^z\left(1-z\right)\right|+\frac{\Re\left(z^2\right)}2
 =a+\frac{a^2}{3}
  +\frac12\log\left(1-2a+\frac43a^2\right)
\]
has derivative
\[
 \ell'\left(a\right)=\frac{8a^3}{3\left(4a^2-6a+3\right)}.
\]
The denominator is strictly positive, so $\ell$ has its global minimum
$\ell(0)=0$ at $a=0$.
\end{proof}

\begin{proof}[Proof of Proposition \ref{bound}]
Apply Lemma~\ref{lm bound for sum} with error $\epsilon/2$, and put
\[
 \mathfrak U_\epsilon^{\UN}\left(z\right)\defeq
 \me^{-\gamma_2^{\UN}z^2/2}
 \prod_{j=M_{\epsilon/2}}^\infty
 \me^{z\alpha_j^{+,\UN}}\left(1-z\alpha_j^{+,\UN}\right)
 \me^{-z\alpha_j^{-,\UN}}\left(1+z\alpha_j^{-,\UN}\right).
\]
By \eqref{naive} and Lemma~\ref{denolemma}, for all sufficiently large $N$,
\[
 \left|\mathfrak U_\epsilon^{\UN}\left(z\right)\right|
 \leq\me^{(\gamma_2+\epsilon/2)|z|^2/2}\quad(z\in\mathbb C),\qquad
 \left|\mathfrak U_\epsilon^{\UN}\left(z\right)\right|^{-1}
 \leq\me^{(\gamma_2+\epsilon/2)\Re(z^2)/2}\quad(z\in\Gamma_1).
\]
The remaining factor
$\mathfrak E_{{\bup}^{\UN}}/\mathfrak U_\epsilon^{\UN}$ has uniformly
bounded polynomial degree, bounded finite-product parameters, and bounded
linear exponential parameter $\gamma_1^{\UN}$. Consequently, after
increasing $N_\epsilon$ and changing $C_\epsilon$,
\begin{align}
 \sup_{N\geq N_\epsilon}
 \left|\frac{\mathfrak E_{{\bup}^{\UN}}\left(z\right)}
              {\mathfrak U_\epsilon^{\UN}\left(z\right)}\right|
 &\leq C_\epsilon \me^{\epsilon\left|z\right|^2/4},
 &&z\in\mathbb C,\label{eq finite core numerator}\\
 \sup_{N\geq N_\epsilon}
 \left|\frac{\mathfrak U_\epsilon^{\UN}\left(z\right)}
              {\mathfrak E_{{\bup}^{\UN}}\left(z\right)}\right|
 &\leq C_\epsilon \me^{\epsilon\Re\left(z^2\right)/4},
 &&z\in\Gamma_1.\label{eq finite core denominator}
\end{align}
Indeed, the polynomial factors have uniformly bounded degree, all
remaining exponential factors have uniformly bounded linear type, and
polynomial and linear-exponential growth can be absorbed into an
arbitrarily small quadratic exponential. On the sloped rays the
finite canonical factors are uniformly separated from zero; for
example,
\[
 \left|1-\alpha a\left(1\pm\mi/\sqrt3\right)\right|^2
 =\left(1-\alpha a\right)^2+\frac{\alpha^2a^2}{3}\geq\frac14 .
\]
Equations \eqref{eq finite core numerator} and
\eqref{eq finite core denominator}, with the error split among the
constituent estimates, prove \eqref{bdinequality} on
$\mathbb C$ and $\Gamma_1$. Finally, $\Gamma_2$ is compact and is
disjoint from the real zero set. Proposition
\ref{uniform convergence of entire} therefore gives
\[
 \sup_{N\geq N_\epsilon}\sup_{z\in\Gamma_2}
 \left|\mathfrak E_{{\bup}^{\UN}}\left(z\right)\right|^{-1}<\infty.
\]
The limiting estimates follow in the same way, or by taking limits.
\end{proof}

\subsection{Pointwise and locally uniform convergence of the kernel}

For $\sigma\in\{-1,1\}$ write
\[
 \Gamma_2^\sigma\defeq\left\{q+\mi\sigma:-\sqrt3\leq q\leq\sqrt3\right\}.
\]
The integrals over $\Gamma_2^\sigma\times\mathbb R$ below are joint
Lebesgue integrals. Values of a one-dimensional slice at the single
crossing point are immaterial.

At a contour crossing, the singularity is integrable in the two real integration variables. The quantitative bound below makes small crossing neighbourhoods uniformly negligible.
\begin{lemma}\label{integralbiliylemma}
For every $L>0$ and $\sigma\in\{-1,1\}$,
\[
 \int_{-\sqrt3}^{\sqrt3}\int_{-L}^{L}
 \frac{\dif u\,\dif q}{\sqrt{q^2+\left(u-\sigma\right)^2}}<\infty.
\]
More precisely, for $0<\rho<1$,
\begin{equation}\label{eq crossing neighborhood}
 \int_{\left|q\right|<\rho}\int_{\left|u-\sigma\right|<\rho}
 \frac{\dif u\,\dif q}{\sqrt{q^2+\left(u-\sigma\right)^2}}
 \leq2\pi\sqrt2\,\rho .
\end{equation}
\end{lemma}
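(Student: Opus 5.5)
The first finiteness claim follows from the quantitative estimate \eqref{eq crossing neighborhood} together with boundedness of the integrand away from the crossing point, so the plan is to prove \eqref{eq crossing neighborhood} first. Translate $u\mapsto u-\sigma$, so the singular point sits at the origin of the $(q,u)$-plane. The domain of integration is the square $\{|q|<\rho,\ |u|<\rho\}$, and this square is contained in the disc of radius $\sqrt2\,\rho$ centred at the origin. The integrand is nonnegative, so enlarging the domain to this disc only increases the integral.

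Next, pass to polar coordinates $q=r\cos\theta$, $u=r\sin\theta$. The Jacobian $r$ cancels the singular factor $1/r$, and we get
\[
 \int_{|q|<\rho}\int_{|u|<\rho}\frac{\dif u\,\dif q}{\sqrt{q^2+u^2}}
 \leq\int_0^{2\pi}\int_0^{\sqrt2\rho}\frac{r\,\dif r\,\dif\theta}{r}
 =2\pi\sqrt2\,\rho,
\]
which is \eqref{eq crossing neighborhood}.

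For the full integral over $[-\sqrt3,\sqrt3]\times[-L,L]$, fix $\rho=1/2$ and split the domain into the crossing neighbourhood $\{|q|<\rho,\ |u-\sigma|<\rho\}$ and its complement. The first piece is bounded by $\sqrt2\,\pi$ from the previous step. On the complement we have $\sqrt{q^2+(u-\sigma)^2}\geq\rho$, so the integrand is at most $1/\rho$ there. The complement lies in a rectangle of area at most $4\sqrt3L$, so the second piece is at most $4\sqrt3L/\rho<\infty$. Adding the two pieces gives finiteness.

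There is no genuine obstacle here. The only thing to get right is the containment of the square in the disc of radius $\sqrt2\,\rho$, since that is what produces the exact constant in \eqref{eq crossing neighborhood}.
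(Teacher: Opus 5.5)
Your proof is correct and follows the paper's argument: enclose the square in the disc of radius $\sqrt2\,\rho$, integrate $1/r$ in polar coordinates, and use the fact that the integrand is bounded on the rest of the bounded rectangle. Your choice $\rho=1/2$ and the explicit bound $4\sqrt3L/\rho$ simply make the paper's last sentence quantitative.
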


\begin{proof}
The integrand is $1/r$ in the two real coordinates
$(q,u-\sigma)$. The square in \eqref{eq crossing neighborhood} is
contained in the disc of radius $\sqrt2\rho$, and polar integration
gives the stated bound. The remainder of the bounded rectangle is
bounded away from the singular point.
\end{proof}

We now combine the product bounds with the heat factors to dominate the kernel integrand and its first spatial derivatives. These estimates control both the unbounded tails and the contour crossings.
\begin{lemma}\label{kernel majorant}
Let $0<a<T<\tau(\bup)$ and let $B\subset\mathbb R$ be compact. Set
$Q=[a,T]^2\times B^2$, and define
\[
 \mathcal J_N\left(s,x,t,y;z,u\right)
 \defeq
 \mathfrak h_s\left(x,z\right)\mathfrak h_t\left(-\mi y,u\right)
 \frac{\mathfrak E_{{\bup}^{\UN}}\left(\mi u\right)}
      {\left(\mi u-z\right)\mathfrak E_{{\bup}^{\UN}}\left(z\right)}.
\]
There exist $c,C>0$ and $N_0\in\mathbb N$ such that, uniformly on $Q$,
for $N\geq N_0$ and
$D\in\{1,\partial_x,\partial_y\}$,
\begin{align}
 \left|D\mathcal J_N\left(s,x,t,y;z,u\right)\right|
 &\leq C\left(1+\left|u\right|+\left|z\right|\right)^2
 \me^{-cu^2-c\left|z\right|^2+C\left|z\right|},
 &&z\in\Gamma_1,\label{eq majorant sloped}\\
 \left|D\mathcal J_N\left(s,x,t,y;q+\mi\sigma,u\right)\right|
 &\leq
 \frac{C\left(1+\left|u\right|\right)^2\me^{-cu^2}}
 {\sqrt{q^2+\left(u-\sigma\right)^2}},
 &&q+\mi\sigma\in\Gamma_2^\sigma.
 \label{eq majorant crossing}
\end{align}
The same estimates hold for the limiting integrand.
\end{lemma}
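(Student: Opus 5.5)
The plan is to multiply pointwise bounds for the three factors of $\mathcal J_N$: the two heat factors, the ratio of entire functions, and the Cauchy denominator. I would take $\epsilon>0$ so small that $\gamma_2+\epsilon<1/T$, say $\gamma_2+\epsilon\leq(1-2\theta)/T$ with $\theta>0$. Then I apply Proposition~\ref{bound} with this $\epsilon$; this fixes $N_0=N_\epsilon$ and also covers the limit $\mathfrak E_{\bup}$.

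\emph{Numerator.} Here $\mathfrak h_t(-\mi y,u)=(2\pi t)^{-1/2}\me^{-(u+\mi y)^2/(2t)}$, whose modulus is $(2\pi t)^{-1/2}\me^{-(u^2-y^2)/(2t)}$. Together with $|\mathfrak E_{\bup^{\UN}}(\mi u)|\leq C_\epsilon\me^{(\gamma_2+\epsilon)u^2/2}$ and $s,t\in[a,T]$, $y\in B$, this gives a bound $C\me^{-\theta u^2/T}$, hence $C\me^{-cu^2}$.

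\emph{Denominator on $\Gamma_1$.} Writing $z=p+\mi w$, we have $|\mathfrak h_s(x,z)|\leq C\me^{-(p^2-w^2)/(2s)+C|z|}$. The factor $\me^{-\Re(z^2)/(2s)}$, multiplied by the reciprocal bound $\me^{(\gamma_2+\epsilon)\Re(z^2)/2}$, gives $\me^{-\theta\Re(z^2)/T}$. On $\Gamma_1$ one has $\Re(z^2)=p^2-w^2=\tfrac23p^2=\tfrac12|z|^2$, so this is $\me^{-c|z|^2}$. On $\Gamma_1$ we also have $|\Im z|\geq1$, so $|\mi u-z|\geq|u-\Im z|$ could vanish; but $|\Re z|\geq\sqrt3$ gives $|\mi u-z|\geq\sqrt3$, and the Cauchy factor is bounded there.

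\emph{On $\Gamma_2^\sigma$.} The $z$-set is compact and $1/\mathfrak E$ is uniformly bounded there, by Proposition~\ref{bound} or directly by the last step of its proof. The heat factor in $z$ is bounded, and $|\mi u-z|=\sqrt{q^2+(u-\sigma)^2}$, which is exactly \eqref{eq majorant crossing}.

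\emph{Derivatives.} $\partial_x\mathfrak h_s(x,z)=-(x-z)s^{-1}\mathfrak h_s$ brings in a factor $C(1+|z|)$, and $\partial_y$ brings in $C(1+|u|)$. These account for the polynomial prefactor $(1+|u|+|z|)^2$; the derivatives do not touch $\mathfrak E$ or the Cauchy factor.

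The step that needs care is the choice of $\epsilon$ depending on $T<\tau$, so that the Gaussian decay of the heat kernels beats the $(\gamma_2+\epsilon)$-growth uniformly in $N$. Proposition~\ref{bound} is uniform in $N$, and the sloped geometry makes $\Re(z^2)$ comparable to $|z|^2$, so I expect this to go through without further difficulty.
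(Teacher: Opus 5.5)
Your proposal is correct and follows the paper's proof essentially step for step. In both, one chooses $\epsilon$ with $\gamma_2+\epsilon<1/T$ and applies Proposition~\ref{bound}. The heat factors then absorb the growth, using $\Re(z^2)=\tfrac12|z|^2$ and $|\mi u-z|\geq|\Re z|\geq\sqrt3$ on $\Gamma_1$. On the compact segments $\Gamma_2^\sigma$, $1/\mathfrak E$ is uniformly bounded and $|\mi u-z|=\sqrt{q^2+(u-\sigma)^2}$. Finally, $\partial_x,\partial_y$ only contribute the polynomial factors $(x-z)/s$ and $(y-\mi u)/t$.
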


\begin{proof}
Choose $\epsilon>0$ so small that
$c_0=1/T-\gamma_2-\epsilon>0$, and then use Proposition \ref{bound}.
For the inner heat kernel,
\[
 \left|\mathfrak h_t\left(-\mi y,u\right)\right|
 =\left(2\pi t\right)^{-1/2}
 \exp\left(\frac{y^2}{2t}-\frac{u^2}{2t}\right),
\]
and hence its product with
$|\mathfrak E_{{\bup}^{\UN}}(\mi u)|$ is bounded by a constant times
$\me^{-c_0u^2/2}$ on $Q$. If
$z=r\pm\mi r/\sqrt3\in\Gamma_1$, then
\[
 \Re\left(z^2\right)=\frac23r^2,\qquad
 \left|\mathfrak h_s\left(x,z\right)\right|
 =\left(2\pi s\right)^{-1/2}
 \exp\left(-\frac{x^2-2xr+2r^2/3}{2s}\right).
\]
The reciprocal estimate in \eqref{bdinequality} therefore leaves a
negative quadratic in $r$, uniformly for $(s,x)\in[a,T]\times B$.
Moreover $|\mi u-z|\geq|\Re z|=|r|\geq\sqrt3$. This proves
\eqref{eq majorant sloped}. On $\Gamma_2$, compact convergence and
the fact that all zeros are real give a uniform bound for
$|\mathfrak E_{{\bup}^{\UN}}(z)|^{-1}$, while
\[
 \left|\mi u-\left(q+\mi\sigma\right)\right|=\sqrt{q^2+\left(u-\sigma\right)^2}.
\]
This proves \eqref{eq majorant crossing}. Finally,
\[
 \partial_x\mathfrak h_s\left(x,z\right)
 =-\frac{x-z}{s}\mathfrak h_s\left(x,z\right),\qquad
 \partial_y\mathfrak h_t\left(-\mi y,u\right)
 =\frac{y-\mi u}{t}\mathfrak h_t\left(-\mi y,u\right),
\]
so differentiation contributes only the polynomial factors already
allowed in the two bounds.
\end{proof}

The preceding bounds give local uniform convergence, including first
spatial derivatives of the analytic part. This supplies the kernel
convergence needed in Theorem~\ref{MainTheorem}.
\begin{proposition}\label{Uniformconvergence}
Let $0<a<T<\tau(\bup)$ and let $B\subset\mathbb R$ be compact. For
$D\in\{1,\partial_x,\partial_y\}$,
\begin{equation}\label{eq uniform analytic convergence}
 \lim_{N\to\infty}
 \sup_{\substack{a\leq s,t\leq T\\x,y\in B}}
 \left|
 D\left\{
 \BL_{{\bup}^{\UN}}\left(\left(s,x\right),\left(t,y\right)\right)
 -\BL_{\bup}\left(\left(s,x\right),\left(t,y\right)\right)
 \right\}\right|=0.
\end{equation}
In particular, for every finite $\mathcal A\subset(0,\tau(\bup))$,
\begin{equation}\label{eq uniform full kernel convergence}
 \lim_{N\to\infty}
 \max_{s,t\in\mathcal A}\sup_{x,y\in B}
 \left|
 \K_{{\bup}^{\UN}}\left(\left(s,x\right),\left(t,y\right)\right)
 -\K_{\bup}\left(\left(s,x\right),\left(t,y\right)\right)
 \right|=0.
\end{equation}
\end{proposition}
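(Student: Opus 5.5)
The residue correction $\Rc_\Gamma$ and the heat term $\indi_{\{s>t\}}\mathfrak h_{s-t}$ do not depend on the parameter, so they cancel in every difference. It therefore suffices to prove \eqref{eq uniform analytic convergence} for the contour part $\BL_{\bup^{\UN},\Gamma}-\BL_{\bup,\Gamma}$. The full-kernel statement \eqref{eq uniform full kernel convergence} then follows at once with $D=1$, taking $a=\min\mathcal A$ and $T=\max\mathcal A<\tau(\bup)$. Because the majorants of Lemma~\ref{kernel majorant} also control $\partial_x$ and $\partial_y$, I will differentiate under the integral sign. This gives $D\BL_{\bup^{\UN},\Gamma}=\frac1{2\pi\mi}\int_{\Gamma\times\mathbb R}D\mathcal J_N\,\dif z\,\dif u$, and the same formula for the limit with integrand $D\mathcal J$.

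The argument is dominated convergence with uniformity in the parameters $(s,x,t,y)\in Q=[a,T]^2\times B^2$.

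\textbf{Step 1 (pointwise convergence).} Fix $(z,u)\in\Gamma\times\mathbb R$ with $z\neq\mi u$. By Proposition~\ref{uniform convergence of entire}, $\mathfrak E_{\bup^{\UN}}\to\mathfrak E_{\bup}$ locally uniformly. Since all zeros are real and $\Gamma\cap\mathbb R=\emptyset$, the limit $\mathfrak E_{\bup}$ does not vanish on $\Gamma$. Hence $D\mathcal J_N\to D\mathcal J$ pointwise. This convergence is uniform over $(s,x,t,y)\in Q$ and over $(z,u)$ in compact subsets of $(\Gamma\times\mathbb R)\setminus\{(\pm\mi,\pm1)\}$. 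The reason is that the heat factors and their derivatives are jointly continuous in all variables, and $|\mathfrak E_{\bup^{\UN}}|$ is bounded below on compact subsets of $\Gamma$ for large $N$.

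\textbf{Step 2 (tails).} Given $\eta>0$, I will use \eqref{eq majorant sloped} and \eqref{eq majorant crossing} to choose $R$ such that the integrals of the majorant over $\{|u|>R\}\cup\{z\in\Gamma_1,|z|>R\}$ are less than $\eta$. These majorants are independent of $N\geq N_0$ and of the point of $Q$. The same bound holds for the limit integrand, so this region contributes at most $2\eta$ to the difference, uniformly in $N$ and on $Q$.

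\textbf{Step 3 (crossings).} Around each crossing $(q,u)=(0,\sigma)$ on $\Gamma_2^\sigma\times\mathbb R$, the majorant \eqref{eq majorant crossing} is at most a constant times $(q^2+(u-\sigma)^2)^{-1/2}$. By \eqref{eq crossing neighborhood}, a $\rho$-square around the crossing contributes $O(\rho)$, again uniformly in $N$ and $Q$. I choose $\rho$ so that this is less than $\eta$.

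\textbf{Step 4 (remaining compact region).} What remains is a compact set avoiding both crossings. There Step 1 gives uniform convergence of the integrand over $Q$, so the integral of the difference tends to $0$. Combining Steps 2–4, the limsup of the supremum is at most $C\eta$. Since $\eta$ is arbitrary, the limit is $0$.

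The main obstacle is uniformity in the crossing neighbourhoods and in the unbounded contour tails. Both are handled by Lemma~\ref{kernel majorant}, which in turn rests on Proposition~\ref{bound} with $\epsilon$ small enough that $1/T-\gamma_2-\epsilon>0$. The condition $T<\tau(\bup)$ is used exactly there. The one delicate point is that the uniform lower bound for $|\mathfrak E_{\bup^{\UN}}|$ near the crossings must be compatible with locally uniform convergence. This holds because $\Gamma_2$ is compact and at distance $1$ from $\mathbb R$.
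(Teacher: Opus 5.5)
Your proposal is correct and follows essentially the same route as the paper. In both, the parameter-independent residue correction and heat term cancel; the majorants of Lemma~\ref{kernel majorant} make the contour tails and the $\rho$-neighbourhoods of the two crossings uniformly negligible; and compact convergence of $\mathfrak E_{\bup^{\UN}}$ and of its reciprocal handles the remaining compact region. The paper also states explicitly, via Lemma~\ref{lm bound for sum}, that $T<\tau(\bup^{\UN})$ for all large $N$, so the approximating kernels are defined on $[a,T]^2\times B^2$; your argument uses this only implicitly through the majorant lemma.
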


\begin{proof}
Choose $\epsilon>0$ so that $\gamma_2(\bup)+\epsilon<1/T$. Lemma
\ref{lm bound for sum} gives
$\gamma_2(\bup^{\UN})\leq\gamma_2(\bup)+\epsilon$ for all
sufficiently large $N$. Hence $T<\tau(\bup^{\UN})$, and the
approximating kernels below are defined on $[a,T]^2\times B^2$ for all
such $N$.
On a compact truncation of $\Gamma\times\mathbb R$ away from the
two crossings, Proposition~\ref{uniform convergence of entire} and
compact convergence of the reciprocals give uniform convergence of
the integrands and their first spatial derivatives, uniformly on
$[a,T]^2\times B^2$. By \eqref{eq majorant crossing} and
\eqref{eq crossing neighborhood}, crossing neighbourhoods of radius
$\rho$ contribute $O(\rho)$ uniformly in $N$ and the external variables.
The tails of the joint integral are uniformly negligible by
\eqref{eq majorant sloped} and the Gaussian factor in
\eqref{eq majorant crossing}. Letting $N\to\infty$, then
$\rho\downarrow0$ and the truncation radius tend to infinity proves
\eqref{eq uniform analytic convergence} for the contour integrals. The residue correction is the same for every $N$, so it
cancels in the displayed difference. The heat-kernel term also
cancels, giving \eqref{eq uniform full kernel convergence}.
\end{proof}

\begin{proof}[Proof of Theorem \ref{MainTheorem}]
For every finite $\mathcal A\subset(0,\tau(\bup))$, Lemma~\ref{lm bound for sum}
gives $\mathcal A\subset(0,\tau_N)$ for all sufficiently large $N$.
Proposition~\ref{Uniformconvergence} proves
\eqref{eq main local kernel convergence}.
Applying Proposition~\ref{prop mgf convergence} on each finite time set
$\mathcal A$ gives the asserted finite-dimensional convergence.
\end{proof}

\subsection{Dependence on the gamma parameters}

Fix $b>0$, $\boldsymbol\alpha^\pm\in\mathbb W_b$,
$\mathbf a^\pm\in\mathbb U_b$ with $k^-=0$, $\gamma_1\in\mathbb R$,
and $\gamma_2\geq0$. Set
\begin{equation}\label{eq mu and mu^gamma}
\begin{aligned}
 \bup&\defeq\left(\left(\boldsymbol\alpha^+,\boldsymbol\alpha^-\right),
             \left(\mathbf a^+,\mathbf a^-\right),0,
             s\left(\boldsymbol\alpha^+\right)+s\left(\boldsymbol\alpha^-\right)\right),\\
 \bup^{\gamma_1,\gamma_2}
 &\defeq\left(\left(\boldsymbol\alpha^+,\boldsymbol\alpha^-\right),
         \left(\mathbf a^+,\mathbf a^-\right),\gamma_1,
         s\left(\boldsymbol\alpha^+\right)+s\left(\boldsymbol\alpha^-\right)+\gamma_2\right).
\end{aligned}
\end{equation}
Then $\mathfrak E_{\bup^{\gamma_1,\gamma_2}}(z)
=\me^{-\gamma_1z-\gamma_2z^2/2}\mathfrak E_{\bup}(z)$.
Write $\tau_\gamma=\tau(\bup^{\gamma_1,\gamma_2})$ and, for
$0\leq t<\tau_\gamma$, put
\[
\begin{aligned}
 A_t&\defeq1-\gamma_2t,\qquad
 \vartheta_{\gamma_2}\left(t\right)\defeq\frac{t}{A_t},\qquad
 \chi_t\left(x\right)\defeq\frac{x+\gamma_1t}{A_t},\\
 \mathfrak g\left(t,x\right)
 &\defeq\exp\left(-\gamma_1x-\frac{\gamma_1^2t}{2}
       -\frac{\gamma_2\left(x+\gamma_1t\right)^2}{2A_t}\right).
\end{aligned}
\]

\begin{remark}\label{equivalentkernel}
For a locally bounded determinantal kernel $\K$, multiplication by a
gauge factor $\mathfrak g(t,y)/\mathfrak g(s,x)$ leaves every correlation
determinant unchanged: the row and column factors cancel.
Thus the process law is unchanged whenever $\mathfrak g$ and its
reciprocal are measurable and locally bounded.
\end{remark}

The gamma parameters give a time change and an affine spatial transformation.
The next identity also identifies the common drift used in Section~\ref{sec:paths}.
\begin{proposition}\label{gammadependence}
For $0<s,t<\tau_\gamma$,
\begin{equation}\label{Integralterm}
 \K_{\bup^{\gamma_1,\gamma_2}}
 \left(\left(s,x\right),\left(t,y\right)\right)
 =
 \frac{\mathfrak g\left(t,y\right)}
      {\mathfrak g\left(s,x\right)\sqrt{A_sA_t}}\,
 \K_{\bup}
 \left(\left(\vartheta_{\gamma_2}\left(s\right),\chi_s\left(x\right)\right),
       \left(\vartheta_{\gamma_2}\left(t\right),\chi_t\left(y\right)\right)\right).
\end{equation}
Consequently, if $\bXi$ has kernel $\K_{\bup}$ and
$\Phi_t(q)=A_tq-\gamma_1t$, then
\[
 \bXi^{\gamma_1,\gamma_2}\left(t\right)
 \defeq\left(\Phi_t\right)_\#\bXi\left(\vartheta_{\gamma_2}\left(t\right)\right),
 \qquad 0<t<\tau_\gamma,
\]
has kernel $\K_{\bup^{\gamma_1,\gamma_2}}$.
\end{proposition}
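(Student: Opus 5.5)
The plan is to verify \eqref{Integralterm} directly on the contour representation, separately for each of the three pieces of $\K_{\bup^{\gamma_1,\gamma_2}}=\BL_{\bup^{\gamma_1,\gamma_2},\Gamma}+\Rc_\Gamma-\indi_{\{s>t\}}\mathfrak h_{s-t}$. No change of integration variables and no contour deformation should be needed; only a completion of squares in the heat factors. Since $\mathfrak E_{\bup^{\gamma_1,\gamma_2}}(z)=\me^{-\gamma_1z-\gamma_2z^2/2}\mathfrak E_{\bup}(z)$, the integrand of $\BL_{\bup^{\gamma_1,\gamma_2},\Gamma}$ equals that of $\BL_{\bup,\Gamma}$ with the heat factors $\mathfrak h_s(x,z)$ and $\mathfrak h_t(-\mi y,u)$ replaced, respectively, by
\[
\mathfrak h_s\left(x,z\right)\me^{\gamma_1z+\gamma_2z^2/2}
\quad\text{and}\quad
\mathfrak h_t\left(-\mi y,u\right)\me^{-\mi\gamma_1u+\gamma_2u^2/2}.
\]

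The key algebraic step is the identity
\[
-\frac{(x-z)^2}{2s}+\gamma_1z+\frac{\gamma_2z^2}{2}
=-\frac{A_s}{2s}\bigl(z-\chi_s(x)\bigr)^2+\frac{(x+\gamma_1s)^2}{2sA_s}-\frac{x^2}{2s}.
\]
Since $s/A_s=\vartheta_{\gamma_2}(s)$, this gives
\[
\mathfrak h_s(x,z)\me^{\gamma_1z+\gamma_2z^2/2}=A_s^{-1/2}\,c_s(x)\,\mathfrak h_{\vartheta_{\gamma_2}(s)}\bigl(\chi_s(x),z\bigr)
\]
for an explicit $z$-independent factor $c_s(x)$. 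The analogous computation with $z=\mi u$ and $x$ replaced by $y$ treats the $u$-factor. Here I will check that the heat factor $\mathfrak h_t(-\mi y,u)$ is written as $(2\pi t)^{-1/2}\me^{-(u+\mi y)^2/(2t)}$, up to the paper's conventions. It then becomes $A_t^{-1/2}c'_t(y)\,\mathfrak h_{\vartheta_{\gamma_2}(t)}(-\mi\chi_t(y),u)$. I will verify that $c'_t(y)/c_s(x)$ is exactly $\mathfrak g(t,y)/\mathfrak g(s,x)$; the definition of $\mathfrak g$ is evidently reverse-engineered from this completion of squares. The contour, the factor $1/(\mi u-z)$ and the ratio $\mathfrak E_{\bup}(\mi u)/\mathfrak E_{\bup}(z)$ are untouched. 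Also $\bup$ has $\gamma_2(\bup)=0$, so $\vartheta_{\gamma_2}(s)<\infty=\tau(\bup)$ is admissible. Hence the joint improper integral, including the crossing neighbourhoods controlled by Lemma~\ref{integralbiliylemma}, transforms with the same prefactor. For $\Rc_\Gamma$ the integrand is the same heat product evaluated at $z=\mi u$, where the $\gamma$-exponentials from numerator and denominator coincide. The same identity at $z=\mi u$ then yields the same prefactor.

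The heat term is where I expect the real bookkeeping obstacle, since it must produce the \emph{same} gauge $\mathfrak g(t,y)/\mathfrak g(s,x)$ and the factor $(A_sA_t)^{-1/2}$, with no contour available. I will compute directly, using
\[
\vartheta_{\gamma_2}(s)-\vartheta_{\gamma_2}(t)=\frac{s-t}{A_sA_t},
\]
and expand $(\chi_s(x)-\chi_t(y))^2A_sA_t/(2(s-t))$ to compare with $(x-y)^2/(2(s-t))$ plus the logarithms of $\mathfrak g$. This is the classical space--time (Doob/Girsanov) transform of the heat kernel under a drift and an Ornstein--Uhlenbeck-type dilation. Since $\vartheta_{\gamma_2}$ is increasing on $[0,\tau_\gamma)$, the indicators $\indi_{\{s>t\}}$ and $\indi_{\{\vartheta(s)>\vartheta(t)\}}$ agree.

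For the consequence, $\Phi_t^{-1}=\chi_t$ has derivative $1/A_t$. Hence the configuration $(\Phi_t)_\#\bXi(\vartheta_{\gamma_2}(t))$ at finitely many times is determinantal with kernel
\[
(A_sA_t)^{-1/2}\,\K_{\bup}\bigl((\vartheta(s),\chi_s(x)),(\vartheta(t),\chi_t(y))\bigr),
\]
where the Jacobians are split symmetrically between rows and columns. Remark~\ref{equivalentkernel} shows that multiplying by the locally bounded gauge $\mathfrak g(t,y)/\mathfrak g(s,x)$ does not change the law. Together with \eqref{Integralterm} and the uniqueness in Remark~\ref{remark bounded K determines process}, this identifies $\K_{\bup^{\gamma_1,\gamma_2}}$ as its kernel.
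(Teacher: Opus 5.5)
Your proposal is correct and follows the paper's proof essentially step by step. You complete the square in each heat factor to absorb $\me^{\gamma_1z+\gamma_2z^2/2}$ and $\me^{-\mi\gamma_1u+\gamma_2u^2/2}$, reuse the same identities at $z=\mi u$ for $\Rc_\Gamma$, complete the square separately for the heat term with the increasing map $\vartheta_{\gamma_2}$ preserving the indicator, and finish with the spatial Jacobian $(A_sA_t)^{-1/2}$ and the gauge Remark~\ref{equivalentkernel}. One bookkeeping slip: your normalization gives $c_s(x)=1/\mathfrak g(s,x)$ and $c'_t(y)=\mathfrak g(t,y)$, so the prefactor to verify is the product $c_s(x)c'_t(y)$, not the quotient $c'_t(y)/c_s(x)$.
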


\begin{proof}
Completing the two squares gives
\[
\begin{aligned}
 \mathfrak h_s\left(x,z\right)\me^{\gamma_1z+\gamma_2z^2/2}
 &=\frac{\mathfrak h_{\vartheta_{\gamma_2}(s)}
            \left(\chi_s\left(x\right),z\right)}
          {\mathfrak g\left(s,x\right)\sqrt{A_s}},\\
 \mathfrak h_t\left(-\mi y,u\right)\me^{-\gamma_1\mi u+\gamma_2u^2/2}
 &=\frac{\mathfrak g\left(t,y\right)}{\sqrt{A_t}}\,
   \mathfrak h_{\vartheta_{\gamma_2}(t)}
            \left(-\mi\chi_t\left(y\right),u\right).
\end{aligned}
\]
Substituting in \eqref{kernel_omega} proves \eqref{Integralterm} for
the contour integrals. At $z=\mi u$ the parameter-dependent
exponentials cancel, so the same identities give
\[
 \Rc_\Gamma\left(\left(s,x\right),\left(t,y\right)\right)
 =
 \frac{\mathfrak g\left(t,y\right)}
      {\mathfrak g\left(s,x\right)\sqrt{A_sA_t}}\,
 \Rc_\Gamma\left(
  \left(\vartheta_{\gamma_2}\left(s\right),\chi_s\left(x\right)\right),
  \left(\vartheta_{\gamma_2}\left(t\right),\chi_t\left(y\right)\right)\right).
\]
Thus the analytic parts obey the same identity. For $s>t$,
a further completion of squares gives
\begin{equation}\label{heatkernelterm}
 \mathfrak h_{s-t}\left(x,y\right)
 =
 \frac{\mathfrak g\left(t,y\right)}
      {\mathfrak g\left(s,x\right)\sqrt{A_sA_t}}\,
 \mathfrak h_{\vartheta_{\gamma_2}(s)-\vartheta_{\gamma_2}(t)}
       \left(\chi_s\left(x\right),\chi_t\left(y\right)\right).
\end{equation}
Since $\vartheta_{\gamma_2}$ is strictly increasing, the time-order
indicator is unchanged. Subtracting \eqref{heatkernelterm} proves
\eqref{Integralterm}.

The change of variables $x=\Phi_t(q)$ gives the spatial Jacobian
$1/\sqrt{A_sA_t}$ in the transformed kernel. The remaining factor
is removed by Remark~\ref{equivalentkernel}, proving the process statement.
\end{proof}
For $\gamma_2=0$ this is translation by $-\gamma_1t$.
For $\gamma_1=0$ it is dilation by $1-\gamma_2t$ combined with the
time change $t\mapsto t/(1-\gamma_2t)$.

\subsection{Sharpness of the time horizon}
The restriction to times below $\tau(\bup)$ cannot in general be relaxed in
Theorem~\ref{MainTheorem}. The following symmetric clouds have convergent
parameters, but their first correlation functions diverge at the origin both
at and after the limiting time horizon.

\begin{proposition}\label{prop:gamma2-horizon-sharpness}
Fix $\gamma>0$ and set
\[
 t_c\defeq\gamma^{-1},\qquad
 a_N\defeq\sqrt{2N/\gamma},\qquad
 \bxi^{(N)}\defeq N\left(\delta_{a_N}+\delta_{-a_N}\right).
\]
For any fixed $b>0$, the parameters $f_0^b(\bxi^{(N)})$, defined for all
sufficiently large $N$, converge to a parameter $\bup_\gamma$ with
$\gamma_2(\bup_\gamma)=\gamma$. Let $\rho_t^{(N)}$ be the first correlation
function of the $\beta=2$ Dyson Brownian motion from $\bxi^{(N)}$, in the
entrance-law sense of Proposition~\ref{finitedysonmodel}. Then
\[
 \rho_{t_c}^{\left(N\right)}\left(0\right)
 \sim\frac{\Gamma\left(3/4\right)\sqrt\gamma}
 {2^{1/4}\pi^{3/2}}N^{1/4},\qquad
 \rho_t^{\left(N\right)}\left(0\right)
 \sim\frac{a_N}{\pi t}\sqrt{\gamma t-1}\quad\left(t>t_c\right),
\]
where $\Gamma$ denotes Euler's gamma function. In particular, the first
correlation functions cannot converge locally uniformly to a finite
function at any fixed $t\geq t_c$.
\end{proposition}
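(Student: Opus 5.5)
The parameter convergence is immediate. I would take $N$ so large that $a_N>b$; then $\bxi^{(N)}$ has no inner roots. In $f_0^b(\bxi^{(N)})$ the sequences $\boldsymbol\alpha^{\pm,(N)}$ consist of $N$ copies of $1/a_N$ followed by zeros, with $\gamma_1=0$ and $\delta^{(N)}=2N/a_N^2=\gamma$. Since $1/a_N\to0$, every coordinate of $\boldsymbol\alpha^\pm$ tends to $0$ (and is $<1/b$), the inner degrees are eventually $0$, and $\delta^{(N)}\equiv\gamma$. Hence the limit is $\bup_\gamma$ with $\boldsymbol\alpha^\pm=0$, $\gamma_1=0$, $\delta=\gamma$, so $\gamma_2(\bup_\gamma)=\gamma$. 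This is the $(1-z^2/(2N))^N$ mechanism of the introduction, and indeed $\mathfrak E^{(N)}(z)=(1-z^2/a_N^2)^N\to\me^{-\gamma z^2/2}$.

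For the density I would use Proposition~\ref{finitedysonmodel}, which gives $\rho_t^{(N)}(0)=\K((t,0),(t,0))$ with $s=t$. The heat term then vanishes, so only $\BL_{\bup^{(N)},\Gamma}+\Rc_\Gamma$ remains, with ratio $\mathfrak E(\mi u)/\mathfrak E(z)=\bigl((1+u^2/a_N^2)/(1-z^2/a_N^2)\bigr)^N$. I prefer the shrinking-loop representation of Lemma~\ref{lemma finite contour correction}, with loops around $\pm a_N$. After rescaling $z=a_Nw$, $\mi u=a_N\zeta$, the integrand becomes $\me^{N(\Phi(\zeta)-\Phi(w))}/(\zeta-w)$ times constants, where
\[
 \Phi\left(w\right)=\frac{w^2}{\gamma t}+\log\left(1-w^2\right),
\]
using $a_N^2/(2t)=N/(\gamma t)$. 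The critical points solve $w\bigl(1/(\gamma t)-1/(1-w^2)\bigr)=0$, namely $w=0$ and $w^2=1-\gamma t$.

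For $t>t_c$ the nonzero saddles are $w_\pm=\pm\mi\sqrt{\gamma t-1}$. I would deform the $w$-loops and the vertical $\zeta$-line through $w_\pm$ along steepest-descent paths. The contours must cross, and the pole at $\zeta=w$ contributes a residue integral of $\mathfrak h_t(0,\cdot)\mathfrak h_t(0,\cdot)$ against the $\zeta$-variable along the segment joining $w_-$ to $w_+$. The phases cancel exactly on the diagonal, so this residue equals $\frac{a_N}{2\pi\mi t}(w_+-w_-)=\frac{a_N}{\pi t}\sqrt{\gamma t-1}$. The remaining double integral is $O(1)$ by standard Gaussian saddle bounds. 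This gives the second asymptotic.

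At $t=t_c$ the three saddles merge at $0$, and $\Phi(w)=-w^4/2+O(w^6)$. I would pass both contours through $0$ along the directions where $\mp\Re w^4$ decays, which gives a Pearcey-type configuration. Rescaling $w=N^{-1/4}v$ produces a prefactor $a_NN^{-1/4}\asymp N^{1/4}$ times the explicit integral
\[
 \frac{1}{(2\pi\mi)^2}\iint\frac{\me^{-\zeta^4/2+v^4/2}}{\zeta-v}\,\dif v\,\dif\zeta
\]
(with the crossing residue included). The main work is evaluating this integral to obtain $\Gamma(3/4)/(2^{1/4}\pi^{3/2})$. I would do it by writing $1/(\zeta-v)$ as an exponential integral, or by differentiating in an auxiliary spatial variable, which separates the integral into products of one-dimensional quartic integrals $\int\me^{-x^4/2}\dif x$-type Gamma values. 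I expect the hardest steps to be this constant, together with the global control of the deformed contours away from the saddle, where $\Re\Phi$ must be shown monotone along the chosen rays. The final claim follows at once: a locally uniform finite limit would force $\rho^{(N)}_t(0)$ to stay bounded, whereas both asymptotics diverge.
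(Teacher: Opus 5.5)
Your proposal is correct, but it reaches the asymptotics by a genuinely different route from the paper.

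\textbf{What the paper does.} The paper does no saddle analysis at all. At $x=0$ it averages the integrand over $u\leftrightarrow-u$, which replaces $(\mi u-z)^{-1}$ by $-z/(z^2+u^2)$. It then substitutes $w=z^2$, which merges the two $N$-fold poles at $\pm a$ into a single pole at $a^2$. That residue is a truncated exponential series, i.e.\ an Erlang tail probability. Tonelli then gives the exact identity $\rho_t^{N,a}(0)=\frac1{\pi t}\expt[(2t\mathsf G_N-a^2)_+^{1/2}]$ for every $N,a,t$. Both asymptotics follow from it: the law of large numbers for $t>t_c$, and the central limit theorem with uniform integrability at $t=t_c$, where the constant appears as $\frac{\sqrt{2\gamma}}{\pi}\expt[\mathsf Z_+^{1/2}]$. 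No contour geometry is needed.

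\textbf{What your route buys.} Your steepest-descent argument does not use this exact solvability, so it would extend to asymmetric clouds or $x\neq0$. It also shows that the $t>t_c$ divergence is exactly the crossing residue of Section~\ref{sec:density}, and it exhibits the Pearcey structure at $t_c$. The price is the global contour control you flag.

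\textbf{Two points for carrying it out.}

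For $t$ slightly above $t_c$, you cannot use the horizontal lines through $w_\pm$ (the choice of Section~\ref{sec:sine}). They pass at distance $\sqrt{\gamma t-1}$ from the poles $\pm1$, where $\Re\Phi\to-\infty$, and $\Re\Phi$ drops below $\Phi(w_+)$ there; this already happens at $\gamma t=1.01$, $w=1+\mi\sqrt{\gamma t-1}$. Take instead the boundary of $\{\frac{\gamma t}{2}(|w-1|^{-2}+|w+1|^{-2})>1\}$, as in Section~\ref{sec:density}. At $x=0$ it is a closed curve through $w_\pm$ enclosing $\pm1$. By the derivative identity of Lemma~\ref{lem:derivatives} and oddness of $\widehat\Psi$, the phase increases along it away from $w_\pm$.

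At $t=t_c$, the X-shaped contour meets $\mi\mathbb R$ only at $0$. So there is no crossing residue, and the whole constant comes from the double integral.
\begin{itemize}
\item Writing $1/(\zeta-v)$ as a Laplace integral does not reduce this integral to constants: you are left with $\int_0^\infty$ of a product of $\lambda$-dependent Pearcey functions.
\item What does work is the integrable form. Insert the factor $\me^{Xv-Y\zeta}$ to define $K(X,Y)$, and set $A_j(X)=\oint v^j\me^{v^4/2+Xv}\dif v$ and $B_j(Y)=\int_{\mi\mathbb R}\zeta^j\me^{-\zeta^4/2-Y\zeta}\dif\zeta$. Integration by parts (the $(\partial_v+\partial_\zeta)$ term integrates to zero) gives $(X-Y)K(X,Y)=\frac{2}{(2\pi\mi)^2}(A_0B_2+A_1B_1+A_2B_0)$.
\item Since $A_2(0)=A_3(0)=B_1(0)=0$, differentiating at $X=Y=0$ gives $K(0,0)=\frac{2}{(2\pi\mi)^2}A_1(0)B_2(0)=\frac{\Gamma(3/4)}{2^{3/4}\pi^{3/2}}$.
\item Multiplying by $a_NN^{-1/4}/t_c=\sqrt{2\gamma}\,N^{1/4}$ yields the stated constant.
\item Equivalently, apply the paper's trick at the Pearcey scale: symmetrize in $\zeta$ and substitute $p=v^2$, which turns $\me^{v^4/2}$ into a Gaussian.
\end{itemize}
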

\begin{proof}
For large $N$, all roots lie outside $[-b,b]$. Both reciprocal sequences
consist of $N$ copies of $a_N^{-1}$ followed by zeros, and their combined
square sum is $2N/a_N^2=\gamma$. The reciprocal sum is zero and the inner
root data are empty. Thus the parameters converge coordinatewise with
$\gamma_1=0$, $\delta=\gamma$ and both limiting reciprocal sequences zero.
This gives $\gamma_2(\bup_\gamma)=\gamma$.

We first compute the density for the finite cloud
$N(\delta_a+\delta_{-a})$, with arbitrary $a,t>0$. Write it as
$\rho_t^{N,a}$ and let $\mathsf G_N$ be a sum of $N$ independent
exponential random variables of mean one. We claim that
\begin{equation}\label{eq:gamma2-gamma-density}
 \rho_t^{N,a}\left(0\right)
 =\frac{1}{\pi t}\expt\left[
       \left(2t\mathsf G_N-a^2\right)_+^{1/2}\right].
\end{equation}
Here $q_+=\max\{q,0\}$. To prove the identity, take a positively oriented
small circle $D$ about $a^2$, disjoint from $(-\infty,0]$, and let $C$ be
its two inverse images under $z\mapsto z^2$. The finite kernel formula
\eqref{eq kernel finite dim}, at $s=t$ and $x=y=0$, gives
\[
 \rho_t^{N,a}\left(0\right)
 =\frac{1}{2\pi t}\int_{\mathbb R}\me^{-u^2/\left(2t\right)}
 \frac{1}{2\pi\mi}\oint_C
 \frac{\me^{-z^2/\left(2t\right)}}{\mi u-z}
 \left(\frac{a^2+u^2}{a^2-z^2}\right)^N\,\dif z\,\dif u.
\]
These integrals are absolutely convergent. Averaging the integrand over
$u$ and $-u$ replaces $(\mi u-z)^{-1}$ by $-z/(z^2+u^2)$.
Set $c=a^2+u^2$ and change variables to $w=z^2$ on each component of
$C$. The two Jacobian factors of $1/2$ add, and expansion at $w=a^2$
evaluates the resulting contour integral as
\[
 -\frac{c^N}{2\pi\mi}\oint_D
 \frac{\me^{-w/\left(2t\right)}}{\left(a^2-w\right)^N\left(w+u^2\right)}\,\dif w
 =\me^{-a^2/\left(2t\right)}\sum_{k=0}^{N-1}\frac{\left(c/\left(2t\right)\right)^k}{k!}.
\]
Since $\prob(\mathsf G_N>v)=\me^{-v}\sum_{k=0}^{N-1}v^k/k!$ for $v\geq0$,
Tonelli's theorem now gives
\[
 \rho_t^{N,a}\left(0\right)
 =\frac{1}{2\pi t}\int_{\mathbb R}
   \prob\left(2t\mathsf G_N>a^2+u^2\right)\,\dif u
 =\frac{1}{\pi t}\expt\left[
   \left(2t\mathsf G_N-a^2\right)_+^{1/2}\right].
\]

For $a=a_N$ and $t>t_c$, divide \eqref{eq:gamma2-gamma-density} by
$\sqrt N$. Since $\mathsf G_N/N\to1$ in $L^2$ and
$|\sqrt{x_+}-\sqrt{y_+}|\leq\sqrt{|x-y|}$,
\[
 \frac{\rho_t^{\left(N\right)}\left(0\right)}{\sqrt N}
 \longrightarrow\frac{\sqrt{2t-2/\gamma}}{\pi t}.
\]
At $t=t_c$, put $\mathsf Z_N=(\mathsf G_N-N)/\sqrt N$. The central limit
theorem gives $\mathsf Z_N\xrightarrow{\mathrm{law}}\mathsf Z$, where $\mathsf Z$ is
standard normal. Moreover,
$\expt[(\mathsf Z_N)_+^2]\leq\expt[\mathsf Z_N^2]=1$, so
$((\mathsf Z_N)_+^{1/2})_{N\geq1}$ is uniformly integrable. Therefore
\[
 \frac{\rho_{t_c}^{\left(N\right)}\left(0\right)}{N^{1/4}}
 =\frac{\sqrt{2\gamma}}{\pi}
      \expt\left[\left(\mathsf Z_N\right)_+^{1/2}\right]
 \longrightarrow\frac{\sqrt{2\gamma}}{\pi\sqrt{2\pi}}
       \int_0^\infty v^{1/2}\me^{-v^2/2}\,\dif v
 =\frac{\Gamma\left(3/4\right)\sqrt\gamma}{2^{1/4}\pi^{3/2}}.\qedhere
\]
\end{proof}

\section{Convergence on path space}\label{sec:paths}

We shall use the following subspace of the configuration space:
\begin{equation*}
 \mathfrak{M}^0
 \defeq\left\{\bxi\in\mathfrak{M}:\bxi\left(\left\{y\right\}\right)\leq1\ \hbox{for every }y\in\mathbb R,
 \quad
 \sum_{\substack{x\in\operatorname{supp}\left(\bxi\right)\\ \left|x\right|\geq1}}\frac1{x^2}<\infty
 \right\}.
\end{equation*}
As before, \(x\in\bxi\) means \(x\in\operatorname{supp}(\bxi)\).
All processes in this section are extended to time \(0\) by their deterministic
initial configurations.

We say that
\(\bxi^\UN\) converges to \(\bxi\) rootwise if, for every \(R>0\) such that
\(\bxi(\{\pm R\})=0\), the atoms in \((-R,R)\) can, for all sufficiently
large \(N\), be put in bijection with those of \(\bxi\) in \((-R,R)\), and
the matched atoms converge.  In particular, rootwise convergence implies
vague convergence and uniform boundedness of the number of atoms in each
fixed compact interval.

The next theorem upgrades parameter convergence to convergence on path space. Its separation hypothesis also rules out a positive limiting Gaussian coefficient.
\begin{theorem}\label{theorem path cts}
 Let \(b>0\), let \(\bxi^\UN\in\mathfrak{M}^0\cap\mathfrak{M}_b\),
 and let \(r^\UN\to r\).  Put
 \[
       \bup^\UN\defeq f_{r^\UN}^b\left(\bxi^\UN\right),
 \]
 and assume that
 \[
       \bup^\UN\longrightarrow\bup\quad\hbox{in }\bUpsilon_b.
 \]
 Write
 \(\mathbf a^\pm=((a_i^\pm)_{i\geq1},k^\pm)\), and let \(\bxi\) be the
 root configuration encoded by \(\bup\), namely
 \[
 \bxi\defeq\left(k^++k^-\right)\delta_0
      +\sum_{i=1}^{m\left(\mathbf a^+\right)}\delta_{a_i^+}
      +\sum_{i=1}^{m\left(\mathbf a^-\right)}\delta_{-a_i^-}
      +\sum_{\alpha_i^+>0}\delta_{1/\alpha_i^+}
      +\sum_{\alpha_i^->0}\delta_{-1/\alpha_i^-}.
 \]
 Assume that \(\bxi\) is simple and infinite, and that there is a decreasing function
 \(\varepsilon_{\mathrm{sep}}\colon[0,\infty)\to[0,\infty)\), with
 \(\varepsilon_{\mathrm{sep}}(R)\to0\), such that
 \begin{equation}\label{extra condition}
  \text{for every \(N\) and every \(x\in\bxi^\UN\),}\qquad
  \sum_{\substack{y\in\bxi^\UN\\y\ne x,\,-x}}
       \frac1{\left|y^2-x^2\right|}
  \leq \varepsilon_{\mathrm{sep}}\left(\left|x\right|\right).
 \end{equation}
 Let \(\bXi^\UN\) be the process with kernel
 \(\K_{\bup^\UN}\).  Then
 \[
     \bup=f_r^b\left(\bxi\right),
 \]
 the processes \(\bXi^\UN\) and the process
 \(\bXi\) with kernel \(\K_{f_r^b(\bxi)}\) have continuous
 versions on \([0,\infty)\), with
 \(\bXi^\UN(0)=\bxi^\UN\) and
 \(\bXi(0)=\bxi\), and
 \begin{equation*}
   \bXi^\UN\left(\cdot\right)
   \xrightarrow{\mathrm{law}}
   \bXi\left(\cdot\right)
   \quad\text{in }\mathcal C\left(\left[0,\infty\right)\to\mathfrak M\right).
 \end{equation*}
\end{theorem}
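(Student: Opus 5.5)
The plan has four steps: identify the limit parameter, prove uniform Kolmogorov increment bounds, prove continuity at time zero, and pass to path-space limits via tightness and finite-dimensional laws.

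\emph{Identifying the limit.} First I would show $\bup=f_r^b(\bxi)$, that is, $\gamma_2(\bup)=0$. Coordinatewise convergence of $\boldsymbol\alpha^{\pm,(N)}$ gives $\delta=\lim_N\delta^{(N)}\geq s(\boldsymbol\alpha^+)+s(\boldsymbol\alpha^-)$. So I must rule out reciprocal-square mass escaping to infinity, i.e. show
\[
\lim_{M\to\infty}\sup_N\sum_{j\geq M}\big((\alpha_j^{+,(N)})^2+(\alpha_j^{-,(N)})^2\big)=0.
\]
The idea is to use \eqref{extra condition} together with the fact that $\bxi$ is infinite. Fix a far-out limiting root $x_0$, which has matched approximating roots $x^{(N)}\to x_0$. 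For every $y\in\bxi^{(N)}$ with $|y|\geq 2|x_0|$ we have $1/|y^2-x^{(N)2}|\geq c/y^2$. Summing over such $y$ and applying \eqref{extra condition} bounds the tail $\sum_{|y|\geq 2|x_0|}y^{-2}$ by $C\varepsilon_{\mathrm{sep}}(|x_0|/2)$, uniformly in $N$. Since $\bxi$ is infinite, $|x_0|$ can be taken arbitrarily large, so the escaping mass vanishes and $\gamma_2=0$. The same matching argument shows that the limiting root configuration is rootwise the limit of $\bxi^{(N)}$. Theorem~\ref{MainTheorem} then gives finite-dimensional convergence on $(0,\infty)$ to $\bXi$ with kernel $\K_{f_r^b(\bxi)}$.

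\emph{Tightness via increment moments.} Next I would follow Katori--Tanemura's complex Brownian representation \cite{MR3019667}. For $\phi\in C_c^1$, the finite process satisfies
\[
\expt\Big[\prod_k\langle\phi_k,\bXi^{(N)}(t_k)\rangle\Big]
=\expt\Big[\det\big(\delta_{ij}+\phi_i(Z_i(t))\,\mathfrak F_{\bxi^{(N)},r^{(N)}}(x_j,Z_i(t))\big)\Big],
\]
where the $Z_i$ are complex Brownian motions started at the roots, and the weights are
\[
\mathfrak F(a,z)=\frac{\mE(z)}{(z-a)\mE'(a)}.
\]
These weights are martingales in $z$, since $\mE$ is entire. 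The key estimate is
\[
|\mathfrak F(a,z)|\leq C_\eta e^{\eta(a^2+|z|^2)},
\]
uniform in $N$. Write $\mE'(a)$ as a product over the other roots. Pairing $y$ with $-y$ and using the symmetric principal value, the logarithm of $|\mE'(a)|^{-1}$ becomes comparable to $\sum_{y\neq\pm a}\log|1-a^2/y^2|^{-1}$. The near-root part is controlled by $a^2\varepsilon_{\mathrm{sep}}(|a|)$, which is $o(a^2)$, and the numerator $\mE(z)$ is handled by Proposition~\ref{bound} with $\gamma_2=0$. With this bound, the moment identity and Gaussian tails of $Z$ give
\[
\sup_N\expt\big|\langle\phi,\bXi^{(N)}(t)-\bXi^{(N)}(s)\rangle\big|^Q\leq C|t-s|^{Q/2}
\]
for even $Q\geq4$, with the sum over infinitely many roots controlled by the $e^{\eta a^2}$ weight against the Gaussian decay of $\phi(Z_a(t))$ for roots far from $\operatorname{supp}\phi$. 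Kolmogorov's criterion then yields continuous versions.

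\emph{Time zero and conclusion.} As $t\downarrow0$, the same representation gives $\langle\phi,\bXi^{(N)}(t)\rangle\to\langle\phi,\bxi^{(N)}\rangle$, uniformly in $N$ in the $L^Q$ sense, so we may set $\bXi^{(N)}(0)=\bxi^{(N)}$ and likewise $\bXi(0)=\bxi$. Compact containment in $\mathfrak M$ comes from local particle-count moments bounded via the kernel. Tightness of $\langle\phi_k,\cdot\rangle$ for a countable dense family $\{\phi_k\}$ then gives tightness in $\mathcal C([0,\infty)\to\mathfrak M)$. The finite-dimensional limits are identified at positive times by the first step and at time zero by rootwise convergence, which proves the convergence in law.

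The main obstacle is the uniform interpolation bound on $\mathfrak F$: controlling $1/\mE'(a)$ at infinitely many roots uniformly in $N$. I would attack it first through the symmetric pairing with \eqref{extra condition}, splitting the product over $y$ into $|y|<|a|/2$, $|a|/2\leq|y|\leq 2|a|$ and $|y|>2|a|$. The middle range is where \eqref{extra condition} does the work.
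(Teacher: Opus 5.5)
Your architecture is the paper's. You prove $\gamma_2(\bup)=0$ from \eqref{extra condition} and the infiniteness of $\bxi$; your tail argument is Lemma~\ref{gamma_2 is 0} in direct rather than contrapositive form. You then take the uniform bound $|\mathfrak F(a,z)|\le C_\eta\me^{\eta(a^2+|z|^2)}$ as the key estimate and use the Katori--Tanemura complex Brownian representation to get $|t-s|^{Q/2}$ increment bounds. Finally you identify the limit through Theorem~\ref{MainTheorem} at positive times and rootwise convergence at time $0$.

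Your three-range treatment of $1/\mE'(a)$ also works, but drop ``pairing $y$ with $-y$'' and ``symmetric principal value'': the $\bxi^\UN$ need not be symmetric, and $p_b(\bxi^\UN)$ need not exist. What your comparison with $\sum_{y\ne\pm a}\log|1-a^2/y^2|^{-1}$ really uses is the identity $[(1-a/y)\me^{a/y}]^{-1}=\bigl(1+\frac{a^2}{y^2-a^2}\bigr)(1+a/y)\me^{-a/y}$. It splits off a reflected canonical product, bounded like the numerator with parameter $-r$, and leaves $\exp\{a^2\sum_{y\ne\pm a}|y^2-a^2|^{-1}\}$; the factor at $y=-a$ contributes $\me/2$. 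Also, the Jakubowski-type criterion you invoke needs compact containment uniformly in $t\in[0,T]$, which fixed-time count moments from the kernel do not give. The paper obtains it from the increment bound for $\langle\chi_j,\cdot\rangle$, $\chi_j\ge\indi_{[-j,j]}$, via Garsia--Rodemich--Rumsey.

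The genuine gap is the passage from the representation to a Kolmogorov bound for the processes actually in the theorem.

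\emph{(i) The displayed identity is wrong.} Its right side is a Fredholm-type generating functional, not the product moment on the left. Moreover, test functions act on the real parts $\mathsf X_i=\Re\mathsf Z_i$, not on $\mathsf Z_i$. What holds for finite simple $\boldsymbol\eta$ is $\expt[G(\bXi)]=\expt[G(\sum_i\delta_{\mathsf X_i(\cdot)})\det[\mathfrak F(x_i,\mathsf Z_j(T))]_{i,j}]$. A bound uniform in the number of particles then needs the reduction of Corollary~\ref{complex representation}: expand the $Q$-th power of the increment, and use $\expt[\mathfrak F(x_i,\mathsf Z_j(T))]=\delta_{ij}$ to delete unused rows and columns, leaving $q\times q$ determinants with $q\le Q$. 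Only then do H\"older, the $\me^{\eta a^2}$ bound and $\sup\sum_a\me^{-ca^2}<\infty$ close the estimate.

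\emph{(ii) Arbitrary $r^\UN$ is not covered by Katori--Tanemura.} Their identity is the case $r=p_b(\boldsymbol\eta)$. For arbitrary $r^\UN$ the weights carry the extra factor $\me^{-(r-p_b(\boldsymbol\eta))(z-x)}$, so the representation must be rederived. The paper does this in Theorem~\ref{theorem complex repre} via Cameron--Martin, a Gaussian contour shift and Proposition~\ref{gammadependence}.

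\emph{(iii) Infinite configurations are not handled.} The $\bxi^\UN$ may be infinite and $\bxi$ is infinite, while $\bXi^\UN$ and $\bXi$ are defined only through their kernels at positive times. You call them ``the finite process'', but no representation is available for them a priori, so ``Kolmogorov's criterion yields continuous versions'' has nothing to act on yet. You must work with spatial truncations $\bxi^{\UN,[L]}$ and prove the $\mathfrak F$-bound and Gaussian summability uniformly in $L$ as well as $N$. Then pass to $L\to\infty$:
\begin{itemize}
\item The paper proves the infinite-configuration representation via uniform integrability and kernel convergence (Proposition~\ref{condition for complex representation}). It builds the continuous versions of $\bXi^\UN$ as limits of the truncated systems, which are tight uniformly in $(N,L)$.
\item Since you only need an upper bound, Fatou's lemma under the finite-dimensional convergence of Theorem~\ref{MainTheorem} would transfer it just as well. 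This works because $(\boldsymbol\mu,\boldsymbol\nu)\mapsto|\langle\phi,\boldsymbol\mu-\boldsymbol\nu\rangle|^Q$ is vaguely continuous and nonnegative, and the deterministic initial configurations converge vaguely.
\end{itemize}
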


The separation hypothesis forces the Gaussian coefficient to vanish. This identifies the limiting parameter with the embedding of the limiting configuration, as required in Theorem~\ref{theorem path cts}.
\begin{lemma}\label{gamma_2 is 0}
 Under the assumptions of Theorem~\ref{theorem path cts},
 $\gamma_2(\bup)=0$ and hence $\bup=f_r^b(\bxi)$.
\end{lemma}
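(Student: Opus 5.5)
The plan is to show that the total reciprocal-square mass $\delta^{\UN}=s(\boldsymbol\alpha^{+,\UN})+s(\boldsymbol\alpha^{-,\UN})$ cannot retain a contribution from roots escaping to infinity. This means $\limsup_N\delta^{\UN}\leq s(\boldsymbol\alpha^+)+s(\boldsymbol\alpha^-)+\epsilon$ for every $\epsilon>0$. Since $\delta^{\UN}\to\delta\geq s(\boldsymbol\alpha^+)+s(\boldsymbol\alpha^-)$ by definition of $\bUpsilon_b$, this gives $\gamma_2(\bup)=0$. The tool is the separation hypothesis \eqref{extra condition}, applied at a single "anchor" root that is far from the origin but fixed in $N$. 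The infinitude of $\bxi$ is what lets the anchor be chosen arbitrarily far out.

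\emph{Anchor.} Since $\bxi$ is infinite and its inner part is finite, some reciprocal sequence, say $\boldsymbol\alpha^+$, has infinitely many positive entries $\alpha_j^+\downarrow0$. The case of $\boldsymbol\alpha^-$ is symmetric. Given $\epsilon>0$, choose $j$ so large that $x_0\defeq1/\alpha_j^+$ satisfies $\varepsilon_{\mathrm{sep}}(x_0/2)<\epsilon$. Coordinatewise convergence gives $\alpha_j^{+,\UN}\to\alpha_j^+>0$. Hence $x_0^{\UN}\defeq1/\alpha_j^{+,\UN}$ is an atom of $\bxi^{\UN}$ with $x_0^{\UN}\to x_0$, and in particular $x_0^{\UN}\geq x_0/2$ for large $N$. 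Because $\varepsilon_{\mathrm{sep}}$ is decreasing, \eqref{extra condition} at $x=x_0^{\UN}$ yields
\[
 \sum_{\substack{y\in\bxi^{\UN}\\ y\neq\pm x_0^{\UN}}}\frac1{|y^2-(x_0^{\UN})^2|}\leq\epsilon .
\]
For $|y|\geq2x_0^{\UN}$ we have $|y^2-(x_0^{\UN})^2|\leq y^2$, so $\sum_{y\in\bxi^{\UN},\,|y|\geq 2x_0^{\UN}}y^{-2}\leq\epsilon$, uniformly in all large $N$.

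\emph{Finite part.} The remaining contribution to $\delta^{\UN}$ comes from roots with $b<|y|<2x_0^{\UN}$. In reciprocal coordinates these are the entries $\alpha_i^{\pm,\UN}>1/(2x_0^{\UN})\approx\alpha_j^+/2$. Fix a threshold $\theta$ slightly below $\alpha_j^+/2$ that is not a limiting value $\alpha_i^\pm$. Only finitely many limiting entries exceed $\theta$, and coordinatewise convergence of the decreasing sequences controls the entries of $\boldsymbol\alpha^{\pm,\UN}$ above $\theta$. Then $\sum_{\alpha_i^{\pm,\UN}>\theta}(\alpha_i^{\pm,\UN})^2\to\sum_{\alpha_i^\pm>\theta}(\alpha_i^\pm)^2\leq s(\boldsymbol\alpha^+)+s(\boldsymbol\alpha^-)$. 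Entries with $\alpha_i^{\pm,\UN}\leq\theta<1/(2x_0^{\UN})$ belong to the far tail already bounded by $\epsilon$. Combining the two parts gives $\delta=\lim\delta^{\UN}\leq s(\boldsymbol\alpha^+)+s(\boldsymbol\alpha^-)+\epsilon$, and letting $\epsilon\downarrow0$ gives $\gamma_2(\bup)=0$.

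\emph{Identification and main obstacle.} With $\gamma_2=0$ we have $\delta=s(\boldsymbol\alpha^+)+s(\boldsymbol\alpha^-)$. Also $\gamma_1=\lim r^{\UN}=r$, and the inner data and reciprocal sequences of $\bup$ are, by the definition of $\bxi$, exactly those produced by $f_r^b(\bxi)$ (with $k^-=0$). Hence $\bup=f_r^b(\bxi)$. The main obstacle is the bookkeeping in the finite part: it needs a cutoff threshold that avoids limiting atoms so that counts above it stabilise. It also needs a check that the excluded term $y=-x_0^{\UN}$, a single root near $-x_0$ inside the finite window, causes no problem. I expect both points to reduce to the eventual-equality and coordinatewise convergence built into the topology of $\bUpsilon_b$.
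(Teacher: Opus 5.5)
Your proposal is correct and follows essentially the paper's argument: anchor the separation hypothesis at a far matched root $x_0^{(N)}\to x_0$ to bound the reciprocal-square mass beyond $2x_0^{(N)}$ by $\epsilon$, and use coordinatewise convergence on the bounded window. The paper phrases this as a contradiction with a spatial cutoff $L$ avoiding atoms, where you use the reciprocal threshold $\theta$. The two points you flag as obstacles are already settled by your own argument: the root near $-x_0$ lies in the window $\{\alpha>\theta\}$, which coordinatewise convergence handles, and it never enters the separation sum over $|y|\geq 2x_0^{(N)}$.
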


\begin{proof}
Parameter convergence implies rootwise convergence of $\bxi^\UN$ to $\bxi$.
Indeed, eventual equality of $q(\mathbf a)=m(\mathbf a)+k$ and coordinate convergence in
$\mathbb U_b$ match the finite inner multisets, including roots absorbed
at zero. Outside $[-b,b]$, the decreasing reciprocal coordinates match
the finitely many entries exceeding $1/R$ whenever
$\bxi(\{\pm R\})=0$; their number is eventually constant. Restricting the
inner matching when $R\leq b$, and combining these matchings when $R>b$,
gives the required bijections. The $\gamma_1$-coordinate of $\bup$ is
$r$, so only the equality of its square coordinate with the root square
mass remains to be proved. Write $\gamma_2=\gamma_2(\bup)$.
 For every \(N\),
 \[
  \delta^\UN
   =s\left(\boldsymbol{\alpha}^{+,\UN}\right)
     +s\left(\boldsymbol{\alpha}^{-,\UN}\right)
   =\sum_{\substack{y\in\bxi^\UN\\\left|y\right|>b}}\frac1{y^2}.
 \]
 Choose \(L>b\) outside the countable set
 \(\{|x|:x\in\operatorname{supp}(\bxi)
          \cup\bigcup_N\operatorname{supp}(\bxi^\UN)\}\).
 Rootwise convergence on the compact annulus
 \(\{b<|y|<L\}\), together with \(\delta^\UN\to\delta\), gives
 \begin{equation}\label{escaped square mass}
 \begin{aligned}
  \lim_{N\to\infty}
  \sum_{\substack{y\in\bxi^\UN\\\left|y\right|\geq L}}\frac1{y^2}
   &=\delta-
     \sum_{\substack{y\in\bxi\\b<\left|y\right|<L}}\frac1{y^2}=\gamma _2+
     \sum_{\substack{y\in\bxi\\\left|y\right|\geq L}}\frac1{y^2}
   \geq\gamma _2 .
 \end{aligned}
 \end{equation}

 Suppose that \(\gamma _2>0\).  Since \(\bxi\) is infinite and locally
 finite, choose \(x\in\bxi\) so far from the origin that
 \(\varepsilon_{\mathrm{sep}}(|x|/2)<\gamma _2/2\), and let
 \(x^\UN\in\bxi^\UN\) be its matched atom.  Next choose \(L\), as above,
 with \(L>2\sup_{N\geq N_0}|x^\UN|\).  For all sufficiently large \(N\),
 every atom \(y\) in the sum below is different from \(x^\UN\) and
 \(-x^\UN\), and
 \[
  \sum_{\substack{y\in\bxi^\UN\\\left|y\right|\geq L}}\frac1{y^2}
  \leq
  \sum_{\substack{y\in\bxi^\UN\\\left|y\right|\geq L}}
       \frac1{\left|y^2-\left(x^\UN\right)^2\right|}
  \leq\varepsilon_{\mathrm{sep}}\left(\left|x^\UN\right|\right)
  \leq\varepsilon_{\mathrm{sep}}\left(\left|x\right|/2\right)<\frac{\gamma _2}{2}.
 \]
 This contradicts \eqref{escaped square mass}.  Hence \(\gamma _2=0\).
 Therefore \(\bup=f_r^b(\bxi)\).
\end{proof}

The following interpolation functions are the weights in the complex Brownian representation.
\begin{definition}\label{residue function}
 For \(r\in\mathbb R\), \(\bxi\in\mathfrak M_b\cap\mathfrak M^0\),
 \(x\in\bxi\), and \(z\in\mathbb C\), define
 \begin{equation*}
 \mathfrak F_{\bxi,r}\left(x,z\right)\defeq
 \frac{\mathfrak E_{f_r^b\left(\bxi\setminus\left\{x\right\}\right)}\left(z\right)}
      {\mathfrak E_{f_r^b\left(\bxi\setminus\left\{x\right\}\right)}\left(x\right)}
 \begin{cases}
  1,& \left|x\right|<b,\\
  \exp\left(z/x-1\right),& \left|x\right|>b.
 \end{cases}
 \end{equation*}
 Since \(\bxi\) is simple, the denominator is nonzero.
\end{definition}

For \(L>b\), write \(\bxi^{[L]}=\bxi|_{[-L,L]}\), choosing \(L\) so that
neither \(L\) nor \(-L\) is an atom of \(\bxi\) or any \(\bxi^\UN\).
We use the same truncation notation for \(\bxi^\UN\).

The interpolation functions satisfy a common subquadratic bound and converge locally uniformly at matched roots. These estimates provide the domination needed for the Brownian moment representation.
\begin{proposition}\label{bound of res}
 Under the assumptions of Theorem \ref{theorem path cts}, for every
 \(\eta>0\) there is a finite \(C_\eta\), independent of \(N,L,x,z\),
 such that
 \begin{equation}\label{eq bound of res}
 \left|\mathfrak F_{\boldsymbol\zeta,r_*}\left(x,z\right)\right|
 \leq C_\eta\exp\left(\eta\left(|x|^2+|z|^2\right)\right),
 \qquad x\in\boldsymbol\zeta,\quad z\in\mathbb C,
 \end{equation}
 for each of the pairs
 $(\boldsymbol\zeta,r_*)=(\bxi^\UN,r^\UN)$,
 $(\bxi^{\UN,[L]},r^\UN)$, $(\bxi,r)$, and $(\bxi^{[L]},r)$.
 Moreover, if
 \(x^\UN\to x\) are matched roots, then
 \begin{equation}\label{residue local convergence}
  \mathfrak F_{\bxi^\UN,r^\UN}\left(x^\UN,\cdot\right)
   \longrightarrow\mathfrak F_{\bxi,r}\left(x,\cdot\right)
 \end{equation}
 locally uniformly on \(\mathbb C\).  The analogous assertion holds as
 \(L\to\infty\) for spatial truncations.
\end{proposition}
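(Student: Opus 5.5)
The plan is to write $\mathfrak F_{\boldsymbol\zeta,r_*}(x,z)$ explicitly as a product and bound the factors one at a time. Fix $x\in\boldsymbol\zeta$ with $|x|>b$. Since $\gamma_1=r_*$, $\gamma_2=0$ and the canonical factor $(1-z/x)\me^{z/x}$ for $x$ is removed, Definition~\ref{residue function} gives
\[
 \mathfrak F_{\boldsymbol\zeta,r_*}\left(x,z\right)
 =\me^{-r_*\left(z-x\right)}\me^{\left(z-x\right)/x}
 \prod_{\substack{y\in\boldsymbol\zeta\\|y|<b}}\frac{z-y}{x-y}
 \prod_{\substack{y\in\boldsymbol\zeta\setminus\{x\}\\|y|>b}}
 \frac{1-z/y}{1-x/y}\,\me^{\left(z-x\right)/y}.
\]
The case $|x|<b$ is the same, except that the first exponential is replaced by a factor $1$. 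The linear exponentials and the finitely many inner factors are harmless. Here $r^\UN\to r$, the number of inner roots is eventually constant, and each inner factor is at most $(|z|+b)/\mathrm{dist}(x,\pm\text{inner roots})$. For $|x|>b$ this denominator is bounded below by $|x|-b$, which stays away from zero once $x$ is away from $\pm b$. Inner roots $x$ themselves form a finite matched family whose mutual distances stay bounded below, because $\bxi$ is simple and $\bxi^\UN\to\bxi$ rootwise by the first part of the proof of Lemma~\ref{gamma_2 is 0}. So all these factors are bounded by $C\me^{C(|x|+|z|)}$, which is absorbed into $C_\eta\me^{\eta(|x|^2+|z|^2)}$.

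The core of the argument is the outer product. Pair $y$ with $-y$ whenever both lie in $\boldsymbol\zeta$; unpaired atoms are handled in the same way with one-sided factors. A paired factor equals $(1-z^2/y^2)/(1-x^2/y^2)$, because the exponentials cancel in pairs. Split the outer roots into the near ones, $|y|\leq R$, and the far ones, $|y|>R$.

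For the far roots, the numerator is bounded via \eqref{naive}, or via the genus-two bound $|1-w|\leq\me^{|w|}$ applied with $w=z^2/y^2$. This gives $\exp(|z|^2\sum_{|y|>R}y^{-2})$. The denominator is handled using $\sum 1/|1-x^2/y^2|$ type control: $\log|1-x^2/y^2|^{-1}\leq C x^2/|y^2-x^2|$ when $|y|$ is not close to $|x|$. So the far denominator contributes at most $\exp(C x^2\sum_{y\neq\pm x}|y^2-x^2|^{-1})\leq\exp(Cx^2\varepsilon_{\mathrm{sep}}(|x|))$, by \eqref{extra condition}. This last estimate is exactly where the square-separation hypothesis enters, and it is the main obstacle. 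One has to handle $y$ with $|y|$ close to $|x|$, where $1-x^2/y^2$ is small. The bound $-\log(1-u)\leq Cu$ fails there, so I would instead use directly that $|y^2-x^2|^{-1}\leq\varepsilon_{\mathrm{sep}}$ forces $|y^2-x^2|\geq1/\varepsilon_{\mathrm{sep}}(|x|)$. Hence $|1-x^2/y^2|\geq 1/(y^2\varepsilon_{\mathrm{sep}})$ for the few such $y$. The number of $y$ with $|y^2-x^2|\leq x^2$ is at most $2x^2\varepsilon_{\mathrm{sep}}(|x|)$ by \eqref{extra condition}. So their product is at most $\exp(Cx^2\varepsilon_{\mathrm{sep}}\log(x^2\varepsilon_{\mathrm{sep}}))$, which is still $O(\me^{\eta x^2})$ for large $|x|$ since $\varepsilon_{\mathrm{sep}}\to0$.

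Small $|x|$ are a finite matched set and are handled by compactness. Uniformity in $N$ of the tail $\sum_{|y|>R}y^{-2}\leq\eta$ follows from Lemma~\ref{lm bound for sum} together with $\gamma_2=0$ (Lemma~\ref{gamma_2 is 0}). The near roots contribute a uniformly bounded number of polynomial factors, with lower bounds on denominators coming from the same separation argument. Truncations $\bxi^{[L]}$ only remove factors, so they satisfy the same bounds with identical constants; the removed far denominators only help.

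For the convergence \eqref{residue local convergence}, write $\mathfrak F_{\bxi^\UN,r^\UN}(x^\UN,z)$ as $\mathfrak E_{f^b_{r^\UN}(\bxi^\UN\setminus\{x^\UN\})}(z)$ divided by its value at $x^\UN$. I would show that $f^b_{r^\UN}(\bxi^\UN\setminus\{x^\UN\})\to f^b_r(\bxi\setminus\{x\})$ in $\bUpsilon_b$. Removing one matched root preserves coordinate convergence, and $\delta$ drops by $(x^\UN)^{-2}\to x^{-2}$ when $|x|>b$. Then Proposition~\ref{uniform convergence of entire} gives locally uniform convergence of the numerators and of the denominators, and the limiting denominator is nonzero by simplicity. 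The exponential correction converges trivially. The $L\to\infty$ statement is identical, using $f^b_r(\bxi^{[L]}\setminus\{x\})\to f^b_r(\bxi\setminus\{x\})$. Apart from the $-\log$ term, the first $\sum y^{-2}$ convergence holds because $\gamma_2=0$; the truncation has no escaping mass by construction.
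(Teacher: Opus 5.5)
Your overall architecture matches the paper's: genus-one numerator bounds with a uniform square tail, a denominator bound driven by \eqref{extra condition}, and parameter continuity for \eqref{residue local convergence}. The argument fails, however, at the step you identify as the main obstacle.

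For the roots with $|y^2-x^2|\le x^2$ you bound each factor by $2x^2\varepsilon_{\mathrm{sep}}(|x|)$ and their number by $2x^2\varepsilon_{\mathrm{sep}}(|x|)$. This gives a bound of the form $\exp\{Cx^2\varepsilon_{\mathrm{sep}}\log(x^2\varepsilon_{\mathrm{sep}})\}$. That is $O(\me^{\eta x^2})$ for every $\eta>0$ only if $\varepsilon_{\mathrm{sep}}(R)\log R\to0$, whereas the hypothesis only gives $\varepsilon_{\mathrm{sep}}\to0$. For the paper's own example $a_n=\sqrt n\log(n+1)$ one has $\varepsilon_{\mathrm{sep}}(R)\asymp1/\log R$, so your bound is then of order $\me^{cx^2}$ with a fixed $c>0$.

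The detour is unnecessary. The inequality you wrote first holds for \emph{every} $y\ne\pm x$ with constant one, because
\[
 \log\frac{y^2}{\left|y^2-x^2\right|}=\log\left|1+\frac{x^2}{y^2-x^2}\right|\le\frac{x^2}{\left|y^2-x^2\right|}.
\]
Hence the full product is at most $\exp\{x^2\varepsilon_{\mathrm{sep}}(|x|)\}$ with no exceptional set. This is exactly the paper's use of $|1+a|\le\me^{|a|}$.

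Two further steps need more than you give.

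First, Theorem~\ref{theorem path cts} does not assume symmetry, so unpaired outer roots are the generic case. For them the factors $\me^{-x/y}$ do not cancel, so ``in the same way'' is not enough. The paper treats every outer root at once through
\[
 \frac{1}{(1-x/y)\me^{x/y}}=\Bigl(1+\frac{x^2}{y^2-x^2}\Bigr)\Bigl(1+\frac xy\Bigr)\me^{-x/y}.
\]
The product of the last two factors, together with $\me^{r_*x}$, is then the canonical product at $x$ of the reflected outer configuration with parameter $-r_*$. This is already controlled by the numerator estimate; the root $y=-x$, if present, contributes exactly $\me/2$.

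Second, \eqref{extra condition} is assumed only for the $\bxi^\UN$. For the pairs $(\bxi,r)$ and $(\bxi^{[L]},r)$ you must first transfer it to $\bxi$. Fatou's lemma along matched roots does this, giving the bound with $\varepsilon_{\mathrm{sep}}(|x|/2)$.
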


\begin{proof}
 Lemma \ref{gamma_2 is 0}, coordinatewise convergence, and convergence of
 the \(\delta\)-coordinates imply the uniform square-tail estimate
 \begin{equation}\label{uniform square tail}
  \lim_{M\to\infty}\ \sup_N
  \left\{\sum_{j\geq M}\left(\alpha_j^{+,\UN}\right)^2+
         \sum_{j\geq M}\left(\alpha_j^{-,\UN}\right)^2\right\}=0.
 \end{equation}
 Indeed, subtract the first \(M-1\) convergent square coordinates from
 \(\delta^\UN\to
 s(\boldsymbol{\alpha}^+)+s(\boldsymbol{\alpha}^-)\).
 Deleting roots only decreases the left hand side.  The canonical-product
 inequality
 \[
       \left|\left(1-w\right)\me^w\right|\leq \me^{\left|w\right|^2/2}
 \]
 applied after separating finitely many root coordinates therefore yields,
 uniformly for \(\boldsymbol{\zeta}\) equal to any of
 \(\bxi^\UN,\bxi^{\UN,[L]},\bxi,\bxi^{[L]}\),
 \begin{equation}\label{residue numerator bound}
  \left|\mathfrak E_{f_{r_*}^b\left(\boldsymbol{\zeta}\setminus\left\{x\right\}\right)}\left(z\right)\right|
   \leq C_\eta \me^{\eta\left|z\right|^2},
 \end{equation}
 where \(r_*=r^\UN\) or \(r\), respectively.  Here the bounded
 \(r_*\)'s and the uniformly finite collection of roots in a compact
 interval are absorbed into \(C_\eta\).

 We spell out the denominator estimate, since it is where
 \eqref{extra condition} is needed.  For \(x^2\ne y^2\) and $y\neq0$,
 \begin{equation}\label{simple equality}
  \frac1{\left(1-x/y\right)\me^{x/y}}
  =\left(1+\frac{x^2}{y^2-x^2}\right)
    \left(1+x/y\right)\me^{-x/y}.
 \end{equation}
 Put
 \[
  I_{\boldsymbol\zeta,x}
    \defeq\left\{y\in\boldsymbol\zeta\setminus\left\{x\right\}:\left|y\right|<b\right\},
  \qquad
  O_{\boldsymbol\zeta,x}
    \defeq\left\{y\in\boldsymbol\zeta\setminus\left\{x\right\}:\left|y\right|>b\right\}.
 \]
 Since configurations in \(\mathfrak M_b\) have no roots at \(\pm b\),
 the definition of the canonical product gives the exact factorization
 \begin{equation}\label{denominator core split}
 \begin{aligned}
 &\left|
  \mathfrak E_{f_{r_*}^b\left(\boldsymbol\zeta\setminus\left\{x\right\}\right)}\left(x\right)
  \right|^{-1}={\me}^{r_* x}
  \prod_{y\in I_{\boldsymbol\zeta,x}}\left|x-y\right|^{-1}
  \prod_{y\in O_{\boldsymbol\zeta,x}}
       \left|\left(1-x/y\right)\me^{x/y}\right|^{-1}.
 \end{aligned}
 \end{equation}
 The number of inner roots is uniformly bounded.  If \(|x|\geq2b\),
 then \(|x-y|\geq|x|/2\) for every inner root.  If \(|x|<2b\),
 rootwise convergence to the simple limiting configuration makes the
 distinct roots in a fixed larger compact interval uniformly separated;
 the finitely many exceptional \(N\)'s are absorbed into the constant.
 The same statement holds for the truncations, which contain the same
 inner core.  Consequently, for every \(\eta>0\),
 \begin{equation}\label{inner core bound}
  \prod_{y\in I_{\boldsymbol\zeta,x}}\left|x-y\right|^{-1}
  \leq C_\eta \me^{\eta x^2}
 \end{equation}
 uniformly over the four families of configurations under consideration.

 Apply \eqref{simple equality} only to the outer roots.  If
 \(-x\in O_{\boldsymbol\zeta,x}\), then necessarily \(|x|>b\), and that
 one outer canonical factor contributes exactly \(\me/2\) to the inverse.
 For all remaining outer roots, the product of the final factors in
 \eqref{simple equality}, together with \(\me^{r_* x}\), is the canonical
 product at \(x\) of the reflected outer subconfiguration, with parameter
 \(-r_*\).  The proof of \eqref{residue numerator bound} applies to this
 product: reflection preserves square tails, deletion decreases them,
 and \(-r_*\) remains bounded.  Hence its absolute value is at most
 \(C_\eta \me^{\eta x^2}\).  Finally, using
 \(|1+a|\leq \me^{|a|}\), with \(a=x^2/(y^2-x^2)\), in
 \eqref{denominator core split}, and shrinking \(\eta\) at the start,
 gives
 \begin{equation}\label{residue denominator bound}
 \left|
  \mathfrak E_{f_{r_*}^b\left(\boldsymbol{\zeta}\setminus\left\{x\right\}\right)}\left(x\right)
 \right|^{-1}
 \leq C_\eta \me^{\eta x^2}
 \exp\left\{
   x^2\sum_{\substack{y\in\boldsymbol{\zeta}\\y\ne x,-x}}
          \frac1{\left|y^2-x^2\right|}
 \right\}.
 \end{equation}
 Truncation decreases the sum.  For \(\boldsymbol{\zeta}=\bxi^\UN\),
 \eqref{extra condition} bounds it by
 \(\varepsilon_{\mathrm{sep}}(|x|)\); for the limiting configuration, Fatou's
 lemma gives the same conclusion with
 \(\varepsilon_{\mathrm{sep}}(|x|/2)\).  Since this tends to zero, its product
 with \(x^2\) is absorbed into \(\me^{\eta x^2}\) for large \(|x|\), while
 bounded \(x\)'s are absorbed into \(C_\eta\).
 The extra factor \(\exp(z/x-1)\), when present, is also absorbed by
 \(2|z/x|\leq\eta|z|^2+\eta^{-1}x^{-2}\) and \(|x|>b\).
 Combining \eqref{residue numerator bound} and
 \eqref{residue denominator bound}, and replacing \(\eta\) by a smaller
 number at the start, proves \eqref{eq bound of res}.

 Finally, deleting a matched root from the parameter tuple preserves
 coordinatewise convergence (and changes the square coordinate by the
 corresponding reciprocal square when the root lies outside \([-b,b]\)).
 Proposition \ref{uniform convergence of entire}, together with the
 nonvanishing of the limiting denominator, then proves
 \eqref{residue local convergence}.  The proof for truncations is
 identical.
\end{proof}

Let \(\boldsymbol{\eta}=\sum_{i=1}^n\delta_{x_i}\) be finite and simple,
and set
\[
 \mathfrak P_{\boldsymbol{\eta}}^{x}\left(z\right)
 \defeq\prod_{a\in\boldsymbol{\eta}\setminus\left\{x\right\}}
       \frac{z-a}{x-a}.
\]
Let \(\mathsf Z_i=\mathsf X_i+\mi \mathsf Y_i\) be independent complex Brownian motions started
at \(x_i\), with all their real and imaginary parts independent.
We write $\expt_{\mathbf x}^{\mathrm{BM}}$ for this Brownian expectation
and $\expt_{r,\boldsymbol\eta}^{\mathrm{DBM}}$ for expectation under the
process with kernel $\K_{f_r^b(\boldsymbol\eta)}$, omitting the
superscripts when the law is clear.

We first obtain the complex Brownian representation for finite configurations and an arbitrary reciprocal parameter. This includes the common drift needed when applying the representation to spatial truncations.
\begin{theorem}\label{theorem complex repre}
 Fix \(b>0\), let
 \(\boldsymbol{\eta}\in\mathfrak M_b\) be finite and simple, let
 \(r\in\mathbb R\), and let \(T>0\). If \(G\) is a bounded
 cylinder functional of the configuration path up to time \(T\), then
 \begin{equation}\label{finite arbitrary r representation}
 \expt_{r,\boldsymbol{\eta}}^{\mathrm{DBM}}\left[G\left(\bXi\right)\right]
 =
 \expt_{\mathbf{x}}^{\mathrm{BM}}\left[
  G\left(\sum_{i=1}^n\delta_{\mathsf X_i\left(\cdot\right)}\right)
  \det\left[
    \mathfrak F_{\boldsymbol{\eta},r}\left(x_i,\mathsf Z_j\left(T\right)\right)
  \right]_{i,j=1}^n
 \right].
 \end{equation}
\end{theorem}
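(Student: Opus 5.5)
The plan is to reduce to the Katori--Tanemura complex Brownian representation for the case $r=p_b(\boldsymbol\eta)$, and then to treat a general reciprocal parameter through the translation identity of Proposition~\ref{gammadependence} together with a Girsanov-type argument.

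\textbf{Unshifted case.} For $r=p_b(\boldsymbol\eta)$, Proposition~\ref{finitedysonmodel} identifies the process with kernel $\K_{f_r^b(\boldsymbol\eta)}$ with $\beta=2$ Dyson Brownian motion from $\boldsymbol\eta$. The representation of \cite{MR3019667} then reads
\[
\expt\left[G\right]=\expt^{\mathrm{BM}}_{\mathbf x}\left[G\left(\textstyle\sum_i\delta_{\mathsf X_i}\right)\det\left[\mathfrak P^{x_i}_{\boldsymbol\eta}\left(\mathsf Z_j(T)\right)\right]\right].
\]
The argument behind it is as follows. Each $z\mapsto\mathfrak P^{x_i}_{\boldsymbol\eta}(z)$ is a polynomial, hence $\mathsf Z_j\mapsto\mathfrak P^{x_i}(\mathsf Z_j(t))$ is a martingale. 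Conditioning on the real parts, which is possible because $\mathsf Y_j$ is independent of $\mathsf X_j$, turns the determinant into the Karlin--McGregor/Vandermonde ratio $\prod_{i<j}(\mathsf X_j-\mathsf X_i)/(x_j-x_i)$, up to the killing at collisions handled by antisymmetry. I would therefore check that $\mathfrak F_{\boldsymbol\eta,p_b(\boldsymbol\eta)}(x,z)=\mathfrak P^x_{\boldsymbol\eta}(z)$. This holds because, for $|a|>b$, the exponentials $\me^{z/a}$ combine with $\me^{-p_b z}$ to cancel. For the factor attached to the deleted root $x$ itself, the term $\exp(z/x-1)$ in Definition~\ref{residue function} supplies exactly the missing $\me^{z/x}/\me^{x/x}$. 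This normalization check is routine bookkeeping.

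\textbf{General $r$.} Write $r=p_b(\boldsymbol\eta)+c$. Then
\[
\mathfrak E_{f_r^b}(z)=\me^{-cz}\,\mathfrak E_{f_{p_b}^b}(z),\qquad
\mathfrak F_{\boldsymbol\eta,r}(x,z)=\me^{-c(z-x)}\,\mathfrak P^x_{\boldsymbol\eta}(z).
\]
By Proposition~\ref{gammadependence} with $\gamma_2=0$, the process with parameter $r$ is the unshifted process translated by $-ct$. Now apply the unshifted representation to the cylinder functional $\tilde G(\omega)=G(\omega(\cdot)-c\,\cdot)$, and perform Girsanov on the Brownian side, giving each $\mathsf Z_j$ a drift $-c$ in the real direction. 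The density is $\prod_j\exp(-c(\mathsf X_j(T)-x_j)-c^2T/2)$. The shifted paths $\mathsf X_j-ct$ are then Brownian motions from $x_j$, and $\mathsf Z_j(T)$ becomes $\mathsf Z_j(T)+cT$ after relabelling.

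\textbf{Main obstacle.} The hardest step is matching the resulting weight with $\prod_j \me^{-c(\mathsf Z_j(T)-x_j)}\det[\mathfrak P^{x_i}(\mathsf Z_j(T))]$. The real Girsanov factor contains $\mathsf X_j$, whereas the target contains the complex $\mathsf Z_j$. I would first try to use that $\me^{-c(\mathsf Z_j(t)-x_j)-c^2\cdot 0}$ is itself a complex martingale: since $\mathsf Z_j$ is conformal, $\me^{-c\mathsf Z_j}$ has no Itô correction, the real $c^2/2$ and imaginary $-c^2/2$ contributions cancelling. Its conditional expectation given the real path is $\me^{-c(\mathsf X_j(T)-x_j)}\expt[\me^{-\mi c\mathsf Y_j(T)}]=\me^{-c(\mathsf X_j(T)-x_j)-c^2T/2}$, which is exactly the Girsanov density. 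Because the imaginary parts are independent of the real parts and $G$ depends only on the real paths, one can integrate out $\mathsf Y$ and combine these factors with the polynomial martingales. Multiplying both functions in each column of the determinant by the common factor $\me^{-c\mathsf Z_j}$ keeps it a determinant, with entries $\mathfrak F_{\boldsymbol\eta,r}(x_i,\mathsf Z_j(T))$, which gives the claim. The point to verify carefully is that the conditional independence of the $\mathsf Y_j$ survives the product structure across the determinant.
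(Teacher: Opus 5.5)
Your overall route is the paper's. With $c=r-p_b(\boldsymbol\eta)$, you factor $\mathfrak F_{\boldsymbol\eta,r}(x,z)=\me^{-c(z-x)}\mathfrak P^x_{\boldsymbol\eta}(z)$. You then use Proposition~\ref{gammadependence} to reduce to the Katori--Tanemura representation applied to $G(\omega(\cdot)-c\,\cdot)$, and apply Cameron--Martin to the real parts. Your normalization check is correct, and so is the Cameron--Martin step (density $\prod_j\me^{-c(\mathsf X_j(T)-x_j)-c^2T/2}$, endpoints becoming $\mathsf Z_j(T)+cT$).

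The gap is the final matching, which you flag but then justify incorrectly. Knowing that $\expt[\me^{-c(\mathsf Z_j(T)-x_j)}\mid\mathsf X]$ equals the Cameron--Martin density does not let you ``integrate out $\mathsf Y$'' in the product with the determinant. Both factors depend on $\mathsf Y_j(T)$, so independence of $\mathsf Y$ from $\mathsf X$ is beside the point. For a general polynomial $H$, $\expt[\me^{-\mi c\mathsf Y(T)}H(a+\mi\mathsf Y(T))]\neq\me^{-c^2T/2}\expt[H(a+\mi\mathsf Y(T))]$; already $H(z)=z$ gives a discrepancy $cT\me^{-c^2T/2}$. Your sketch also never disposes of the shift $+cT$ produced by Cameron--Martin.

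The missing ingredient, which the paper uses, is the Gaussian contour shift. Complete the square and move the line of integration, which is legitimate because the integrand is a polynomial times a Gaussian:
\[
\expt\bigl[\me^{-\mi c\mathsf Y(T)}H\bigl(a+\mi\mathsf Y(T)\bigr)\bigr]
=\me^{-c^2T/2}\,\expt\bigl[H\bigl(a+cT+\mi\mathsf Y(T)\bigr)\bigr].
\]
Applied in each $\mathsf Y_j$, conditionally on the real paths, this gives
\[
\expt\Bigl[\me^{-c\sum_j(\mathsf Z_j(T)-x_j)}\det\bigl[\mathfrak P^{x_i}_{\boldsymbol\eta}(\mathsf Z_j(T))\bigr]\Bigm|\mathsf X\Bigr]
=\prod_j\me^{-c(\mathsf X_j(T)-x_j)-c^2T/2}\,
\expt\Bigl[\det\bigl[\mathfrak P^{x_i}_{\boldsymbol\eta}(\mathsf Z_j(T)+cT)\bigr]\Bigm|\mathsf X\Bigr].
\]
This is exactly your Cameron--Martin expression, so the $+cT$ cancels with no further structure needed.

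Alternatively, your intended factorization can be rescued, but only through an observation absent from the proposal. The determinant $\det[\mathfrak P^{x_i}_{\boldsymbol\eta}(z_j)]=\prod_{i<j}(z_j-z_i)/\prod_{i<j}(x_j-x_i)$ depends only on differences. Meanwhile $\sum_j\mathsf Y_j(T)$ is independent of the differences $\mathsf Y_j(T)-\mathsf Y_k(T)$. Together these justify the factorization and make the $+cT$ shift harmless. As written, the key step rests on a reason that fails for general polynomial weights.
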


\begin{proof}
 Directly from Definition \ref{residue function},
 \begin{equation}\label{finite bridge factor}
  \mathfrak F_{\boldsymbol{\eta},r}\left(x,z\right)
   =\exp\left\{-\left(r-p_b\left(\boldsymbol{\eta}\right)\right)\left(z-x\right)\right\}
     \mathfrak P_{\boldsymbol{\eta}}^x\left(z\right).
 \end{equation}
 Put \(c=r-p_b(\boldsymbol{\eta})\).  The standard finite Dyson
 representation (the case \(c=0\)) is the determinantal-martingale
 representation of \cite{MR3019667}.  Factoring the exponential in
 \eqref{finite bridge factor} out of rows and columns gives
 \[
 \det\left[\mathfrak F_{\boldsymbol{\eta},r}\left(x_i,\mathsf Z_j\left(T\right)\right)\right]
 =
 \me^{-c\sum_j\left(\mathsf Z_j\left(T\right)-x_j\right)}
 \det\left[\mathfrak P_{\boldsymbol{\eta}}^{x_i}\left(\mathsf Z_j\left(T\right)\right)\right].
 \]
 For independent standard real Brownian motions \(\mathsf B,\mathsf Y\)
 and a polynomial \(H\), Gaussian integration gives
 \[
 \begin{aligned}
  \expt\left[\me^{-c\mathsf B\left(T\right)}F\left(\mathsf B\right)\right]
    &=\me^{c^2T/2}\expt\left[F\left(\mathsf B-c\,\cdot\right)\right],\\
  \expt\left[\me^{-\mi c\mathsf Y\left(T\right)}H\left(a+\mi \mathsf Y\left(T\right)\right)\right]
    &=\me^{-c^2T/2}\expt\left[H\left(a+cT+\mi \mathsf Y\left(T\right)\right)\right].
 \end{aligned}
 \]
 The first identity is Cameron--Martin and the second is the elementary
 Gaussian contour shift, justified here because the determinant is a
 polynomial in each complex endpoint.  The scalar factors cancel.  The
 real shift sends \(\mathsf X_j(t)\) to \(\mathsf X_j(t)-ct\), while the \(+cT\) in the
 imaginary identity cancels that shift inside
 \(\mathsf Z_j(T)=\mathsf X_j(T)+\mi \mathsf Y_j(T)\).  Hence
 \begin{align*}
 &\expt_{\mathbf{x}}\left[
  G\left(\sum_i\delta_{\mathsf X_i\left(\cdot\right)}\right)
  \me^{-c\sum_j\left(\mathsf Z_j\left(T\right)-x_j\right)}
  \det\left[\mathfrak P_{\boldsymbol{\eta}}^{x_i}\left(\mathsf Z_j\left(T\right)\right)\right]
 \right]=
 \expt_{\mathbf{x}}\left[
  G\left(\sum_i\delta_{\mathsf X_i\left(\cdot\right)-ct}\right)
  \det\left[\mathfrak P_{\boldsymbol{\eta}}^{x_i}\left(\mathsf Z_j\left(T\right)\right)\right]
 \right].
 \end{align*}
 This identity follows first for bounded cylinder functions by ordinary
 Gaussian integration and then by a monotone-class argument.  By
 Proposition \ref{gammadependence}, changing the \(\gamma _1\)-coordinate
 from \(p_b(\boldsymbol{\eta})\) to \(r\) shifts every particle by
 \(-ct=-(r-p_b(\boldsymbol{\eta}))t\).  Applying the standard representation proves
 \eqref{finite arbitrary r representation}.
\end{proof}

Expanding an increment moment and integrating out the unused Brownian
coordinates gives determinants of size at most the moment order.
The resulting formula will pass to infinite configurations.
\begin{cor}\label{complex representation}
Let $\phi\in\mathcal C_c^1(\mathbb R)$, $0\leq s<t\leq T$, and $Q\in\mathbb N$.
Then
 \begin{equation}\label{eq complex representation}
 \begin{aligned}
 &\expt_{r,\boldsymbol{\eta}}
 \left[
  \left\{\left\langle\phi,\bXi\left(t\right)\right\rangle
    -\left\langle\phi,\bXi\left(s\right)\right\rangle\right\}^{Q}
 \right]
 =
 \sum_{q=1}^{Q}
 \sum_{\substack{p_1,\ldots,p_q\geq1\\p_1+\cdots+p_q=Q}}
 \binom{Q}{p_1,\ldots,p_q}\\
 &\qquad\times
 \sum_{\substack{x_{i_1}>\cdots>x_{i_q}\\x_{i_j}\in\boldsymbol{\eta}}}
 \expt\left[
  \prod_{j=1}^q
   \left\{\phi\left(\mathsf X_{i_j}\left(t\right)\right)-\phi\left(\mathsf X_{i_j}\left(s\right)\right)\right\}^{p_j}
  \det\left[
   \mathfrak F_{\boldsymbol{\eta},r}
      \left(x_{i_j},\mathsf Z_{i_k}\left(T\right)\right)
  \right]_{j,k=1}^q
 \right].
 \end{aligned}
 \end{equation}
\end{cor}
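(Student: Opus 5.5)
We apply Theorem~\ref{theorem complex repre} with the bounded cylinder functional $G(\bXi)=\{\langle\phi,\bXi(t)\rangle-\langle\phi,\bXi(s)\rangle\}^Q$; it is bounded because $\boldsymbol\eta$ is finite and $\phi$ is bounded. Under the Brownian measure this becomes
\[
 \Bigl(\sum_{i=1}^n\Delta_i\Bigr)^Q,\qquad
 \Delta_i\defeq\phi\left(\mathsf X_i\left(t\right)\right)-\phi\left(\mathsf X_i\left(s\right)\right).
\]
Expanding by the multinomial theorem and grouping by the set of distinct indices that appear gives a sum over $q\leq Q$, over ordered $q$-subsets $\{i_1,\ldots,i_q\}$ (labelled so that $x_{i_1}>\cdots>x_{i_q}$), and over exponents $p_j\geq1$ with $\sum_j p_j=Q$, each term carrying the multinomial coefficient. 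No double counting occurs: fixing the order of the $x_{i_j}$ fixes the assignment of the exponents $p_j$ to indices.

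Each term therefore has the form
\[
 \expt\Bigl[\prod_{j}\Delta_{i_j}^{p_j}\det\left[\mathfrak F_{\boldsymbol\eta,r}\left(x_i,\mathsf Z_k\left(T\right)\right)\right]_{i,k=1}^n\Bigr].
\]
The key step is to integrate out the Brownian motions $\mathsf Z_k$ with $k\notin J\defeq\{i_1,\ldots,i_q\}$, which the prefactor does not involve. Expand the $n\times n$ determinant by the Leibniz formula. By independence of the $\mathsf Z_k$, each factor $\mathfrak F(x_{\sigma(k)},\mathsf Z_k(T))$ with $k\notin J$ can be averaged separately. The needed identity is
\[
 \expt\left[\mathfrak F_{\boldsymbol\eta,r}\left(x_i,\mathsf Z_k\left(T\right)\right)\right]=\indi_{\{i=k\}}.
\]
To prove it, use \eqref{finite bridge factor}: $\mathfrak F(x_i,z)=\me^{-c(z-x_i)}\mathfrak P^{x_i}_{\boldsymbol\eta}(z)$ with $c=r-p_b(\boldsymbol\eta)$. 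For a complex Brownian motion $\mathsf Z_k$ with independent real and imaginary parts, $h(\mathsf Z_k(T))$ is a martingale for every entire $h$ of suitable growth, so $\expt h(\mathsf Z_k(T))=h(x_k)$. Explicitly, $\expt[\mathsf Z^m]=x^m$, and the exponential factor has expectation $\me^{c^2T/2}\me^{-c^2T/2}=1$ times a shift, handled by the same Gaussian contour shift as in Theorem~\ref{theorem complex repre}. Evaluating at $z=x_k$ gives $\me^{-c(x_k-x_i)}\mathfrak P^{x_i}(x_k)=\delta_{ik}$, since $\mathfrak P^{x_i}$ is the Lagrange interpolation polynomial.

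With this identity, the only surviving permutations are those with $\sigma(k)=k$ for every $k\notin J$. Summing over permutations of $J$ then reproduces the $q\times q$ determinant $\det[\mathfrak F(x_{i_j},\mathsf Z_{i_k}(T))]$, which is \eqref{eq complex representation}.

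The main obstacle is justifying the interchange of the expectation with the product over $k\notin J$. This needs independence together with integrability: the prefactor depends only on $\mathsf X_{i_j}$ for $i_j\in J$, and the remaining factors are products of independent, polynomially-times-exponentially bounded functions of distinct $\mathsf Z_k$, so Fubini applies. A subtler point is that the martingale identity must hold for the complex entire function $\mathfrak F(x_i,\cdot)$, whose exponential factor $\me^{-cz}$ has Gaussian-integrable growth. I would verify this first via Itô's formula, since $\partial_{\bar z}h=0$ and the Laplacian of a holomorphic function vanishes, and then check that the stochastic integral is a true martingale using the Gaussian moments.
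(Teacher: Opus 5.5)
Your proposal is correct and follows the paper's own argument. You apply Theorem~\ref{theorem complex repre} to the $Q$-th power, expand and group by distinct indices, and use independence together with the complex Brownian martingale identity $\expt[\mathfrak F_{\boldsymbol\eta,r}(x_i,\mathsf Z_k(T))]=\mathfrak F_{\boldsymbol\eta,r}(x_i,x_k)=\delta_{ik}$ (valid because \eqref{finite bridge factor} is an exponential times a polynomial) to integrate out the unused columns. Your Leibniz-expansion bookkeeping, which keeps only permutations fixing every unused index, is the detailed form of the paper's remark that integrating an unused column removes its row and column.
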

\begin{proof}
Expand the $Q$-th power in \eqref{finite arbitrary r representation} and
group terms by their distinct particle indices and multiplicities.
For an unused index $j$, independence and the complex Brownian
martingale identity give
\[
 \expt\left[\mathfrak F_{\boldsymbol\eta,r}
                  \left(x_i,\mathsf Z_j\left(T\right)\right)\right]
 =\mathfrak F_{\boldsymbol\eta,r}\left(x_i,x_j\right)=\delta_{ij}.
\]
The identity is applicable because \eqref{finite bridge factor} is an
exponential times a polynomial. Integrating each unused column therefore
removes its corresponding row and column, yielding
\eqref{eq complex representation}, also when $s=0$.
\end{proof}

The reduced representation gives a uniform even-moment estimate for linear-statistic increments. This supplies both tightness and the uniform integrability needed to pass to infinite configurations.
\begin{lemma}\label{lemma for bound}
 Let \(\mathcal A\subset(\mathfrak M_b\cap\mathfrak M^0)\times\mathbb R\)
 be a family of pairs \((\boldsymbol\zeta,r)\) for which the residue estimate
 \eqref{eq bound of res} holds with common constants and
 \begin{equation}\label{uniform gaussian summability}
   \sup_{(\boldsymbol{\zeta},r)\in\mathcal A}
   \sum_{x\in\boldsymbol{\zeta}}\me^{-c x^2}<\infty
   \qquad\text{for every }c>0.
 \end{equation}
 For every even \(Q\geq4\), \(T<\infty\), and
 \(\phi\in\mathcal C_c^1(\mathbb R)\), there is
 \(C_{Q,T,\phi}<\infty\) such that, for every
 \((\boldsymbol\zeta,r)\in\mathcal A\) with \(\boldsymbol\zeta\) finite,
 and \(0\leq s,t\leq T\),
 \begin{equation}\label{even Q moment estimate}
 \expt_{r,\boldsymbol{\zeta}}\left[
 \left|
  \left\langle\phi,\bXi\left(t\right)\right\rangle
  -\left\langle\phi,\bXi\left(s\right)\right\rangle
 \right|^Q\right]
 \leq C_{Q,T,\phi}\left|t-s\right|^{Q/2}.
 \end{equation}
\end{lemma}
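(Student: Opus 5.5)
Since $Q$ is even, the absolute value in \eqref{even Q moment estimate} can be dropped, and I would start from Corollary~\ref{complex representation} with $\boldsymbol\eta=\boldsymbol\zeta$. By symmetry in $s,t$ we may take $s<t$. The number of compositions $(q;p_1,\ldots,p_q)$ depends only on $Q$, so it is enough to bound each term
\[
 \sum_{x_{i_1}>\cdots>x_{i_q}}
 \expt\left[\prod_{j=1}^q\left|\phi\left(\mathsf X_{i_j}\left(t\right)\right)-\phi\left(\mathsf X_{i_j}\left(s\right)\right)\right|^{p_j}
 \left|\det\left[\mathfrak F_{\boldsymbol\zeta,r}\left(x_{i_j},\mathsf Z_{i_k}\left(T\right)\right)\right]_{j,k=1}^q\right|\right]
\]
by $C|t-s|^{Q/2}$, uniformly over $\mathcal A$. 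First expand the determinant into its $q!$ products. By \eqref{eq bound of res}, each product is bounded by $C_\eta^q\prod_j\me^{\eta x_{i_j}^2}\prod_k\me^{\eta|\mathsf Z_{i_k}(T)|^2}$. Because the factors are products over independent particles, the expectation factorises over the indices $i_1,\ldots,i_q$.

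For a single index with start $x$ and exponent $p\geq1$, use H\"older on
\[
 \expt\left[\left|\phi(\mathsf X(t))-\phi(\mathsf X(s))\right|^{p}\me^{\eta|\mathsf Z(T)|^2}\right].
\]
The Gaussian factor has bounded $L^2$ norm times $\me^{2\eta x^2}$ once $\eta<1/(8T)$. The increment factor is the main point. Since $\phi$ is Lipschitz, it is at most $\|\phi'\|_\infty|\mathsf X(t)-\mathsf X(s)|$. It also vanishes unless the path visits $\operatorname{supp}\phi\subset[-K,K]$ at time $s$ or $t$. Combining this through Cauchy--Schwarz gives
\[
 \expt\left[|\phi(\mathsf X(t))-\phi(\mathsf X(s))|^{2p}\right]^{1/2}
 \leq C\,|t-s|^{p/2}\,\prob\left(\sup_{u\leq T}|\mathsf X(u)|\geq K\ \text{hits}\right)^{1/2}
 \leq C|t-s|^{p/2}\me^{-c(|x|-K)_+^2/T}.
\]
For the time-$s$ and time-$t$ positions, the Gaussian tail $\me^{-(|x|-K)^2/(2u)}$ with $u\leq T$ gives this bound. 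Choosing $\eta$ small relative to $c/T$ makes the single-index contribution at most $C|t-s|^{p_j/2}\me^{-c'x^2}$. Multiplying over $j$ and using $\sum_j p_j=Q$ yields $C|t-s|^{Q/2}\prod_j\me^{-c'x_{i_j}^2}$.

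Summing over ordered $q$-tuples is then bounded by $\bigl(\sum_{x\in\boldsymbol\zeta}\me^{-c'x^2}\bigr)^q$. This is uniformly finite by \eqref{uniform gaussian summability}, and the constants depend only on $Q,T,\phi$ and the common constants of \eqref{eq bound of res}.

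The main obstacle is that the decay must be Gaussian in the \emph{starting point}. It has to beat the factor $\me^{\eta x^2}$ from \eqref{eq bound of res} together with the $\me^{2\eta x^2}$ coming from $\me^{\eta|\mathsf Z(T)|^2}$, and it must do so without losing the power $|t-s|^{p/2}$. I would handle this by splitting the increment with Cauchy--Schwarz exactly as above: one half carries the Lipschitz increment moment, the other half carries the probability of reaching $\operatorname{supp}\phi$. This halves the Gaussian rate, but it remains fixed and independent of $\eta$, so taking $\eta$ small closes the estimate. The H\"older exponents (a $2p$ moment and a square moment) still give exactly $|t-s|^{p_j/2}$ per index.
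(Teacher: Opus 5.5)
Your proposal is correct and is essentially the paper's argument. It uses the same ingredients: the Lipschitz bound times the indicator that the particle visits $\operatorname{supp}\phi$, Gaussian decay of that indicator in the starting point, the residue estimate \eqref{eq bound of res} with $\eta$ small, and the summability \eqref{uniform gaussian summability}. The only difference is ordering. You bound the determinant pathwise and factorise over the independent particles before applying Cauchy--Schwarz, whereas the paper applies H\"older with exponents $2,4,4$ to the joint expectation.

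There is one harmless slip. The two Cauchy--Schwarz steps give $\prob(\mathsf I)^{1/4}$ rather than $\prob(\mathsf I)^{1/2}$, where $\mathsf I$ is the event that $\mathsf X(s)$ or $\mathsf X(t)$ lies in $\operatorname{supp}\phi$. Also, your displayed event $\sup_{u\leq T}|\mathsf X(u)|\geq K$ should read $\inf_{u\leq T}|\mathsf X(u)|\leq K$. Neither affects the conclusion, since any fixed positive power still gives a Gaussian rate independent of $\eta$.
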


\begin{proof}
 Fix a term in \eqref{eq complex representation}, with
 \(p_1+\cdots+p_q=Q\), and let \(K=\operatorname{supp}\phi\).  Put
 \[
  \mathsf I_j\defeq\indi\left\{\mathsf X_{i_j}\left(s\right)\in K
                      \text{ or }\mathsf X_{i_j}\left(t\right)\in K\right\}.
 \]
 The corresponding difference of \(\phi\)'s vanishes unless \(\mathsf I_j=1\),
 and the mean-value theorem gives
 \[
 \left|\phi\left(\mathsf X_{i_j}\left(t\right)\right)-\phi\left(\mathsf X_{i_j}\left(s\right)\right)\right|^{p_j}
 \leq \left\|\phi'\right\|_\infty^{p_j}
      \left|\mathsf X_{i_j}\left(t\right)-\mathsf X_{i_j}\left(s\right)\right|^{p_j}\mathsf I_j .
 \]
 Hölder's inequality with exponents \(2,4,4\) bounds the absolute value
 of the expectation in this term by
 \begin{align}
 &\left\{\expt\left[\prod_{j=1}^q
   \left(\left\|\phi'\right\|_\infty
       \left|\mathsf X_{i_j}\left(t\right)-\mathsf X_{i_j}\left(s\right)\right|\right)^{2p_j}
  \right]\right\}^{1/2}
 \left\{\expt\left[\prod_{j=1}^q \mathsf I_j\right]\right\}^{1/4}\notag\\
 &\qquad\times
 \left\{\expt\left[\left|
  \det\left[\mathfrak F_{\boldsymbol{\zeta},r}
       \left(x_{i_j},\mathsf Z_{i_k}\left(T\right)\right)\right]_{j,k=1}^q
 \right|^4\right]\right\}^{1/4}.
 \label{holder 244}
 \end{align}
 The first factor in \eqref{holder 244} is
 \[
 C_{\mathbf{p}}\left\|\phi'\right\|_\infty^{Q}
 \left|t-s\right|^{Q/2};
 \]

 If \(K\subset[-R,R]\), the Gaussian transition density, uniformly for
 \(0\leq s,t\leq T\), gives
 \[
 \left\{\expt\left[\prod_{j=1}^q \mathsf I_j\right]\right\}^{1/4}
 \leq C_{q,T,R}\exp\left\{-c_{T,R}
                  \sum_{j=1}^q x_{i_j}^2\right\}.
 \]
 Expanding the determinant, applying \eqref{eq bound of res}, and then
 integrating the Gaussian variables gives, for every sufficiently small
 \(\eta>0\),
 \[
 \left\{\expt\left[\left|
  \det\left[\mathfrak F_{\boldsymbol{\zeta},r}
       \left(x_{i_j},\mathsf Z_{i_k}\left(T\right)\right)\right]_{j,k=1}^q
 \right|^4\right]\right\}^{1/4}
 \leq C_{q,T,\eta}
       \exp\left\{\eta C_{q,T}
                  \sum_{j=1}^q x_{i_j}^2\right\}.
 \]
 Choose \(\eta\) small enough that at least half of the Gaussian decay
 remains.  Summing
 \eqref{holder 244} over distinct roots is finite, uniformly in the
 configuration, by \eqref{uniform gaussian summability}.  Finally sum
 over the finitely many \(q\)'s and compositions of \(Q\).  This proves
 \eqref{even Q moment estimate}.
\end{proof}

\begin{remark}\label{remark sum is uniformly bounded}
 The family consisting of all \(\bxi^\UN,\bxi\) and all their spatial
 truncations satisfies \eqref{uniform gaussian summability}.  Indeed,
 parameter convergence bounds
 \[
 \sup_N\sum_{\substack{x\in\bxi^\UN\\\left|x\right|>b}}\frac1{x^2}<\infty,
\]
 and rootwise convergence bounds the number of roots in every compact
 interval.  Since \(\me^{-cx^2}\leq C_c/x^2\) for \(|x|>b\), the assertion
 follows; truncation only decreases the sum.
\end{remark}

We now pass the reduced moment formula from spatial truncations to an infinite configuration. The interpolation bounds control its series, while higher moments justify convergence of the expectation.
\begin{proposition}\label{condition for complex representation}
 Fix \(r\in\mathbb R\), let
 \(\boldsymbol{\zeta}\in\mathfrak M_b\cap\mathfrak M^0\) be infinite,
 and let \(\bXi\) have kernel \(\K_{f_r^b(\boldsymbol\zeta)}\).
 Assume the residue bound \eqref{eq bound of res}, with the same
 constants for \(\boldsymbol{\zeta}^{[L]}\).  Then
 \eqref{eq complex representation}, with the sum over all distinct roots
 of \(\boldsymbol{\zeta}\), is valid for every \(Q\in\mathbb N\),
 \(\phi\in\mathcal C_c^1(\mathbb R)\), and \(0\leq s<t\leq T\).
 Moreover the series is absolutely convergent.
\end{proposition}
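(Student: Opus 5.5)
The plan is to approximate $\boldsymbol\zeta$ by its spatial truncations $\boldsymbol\zeta^{[L]}$, $L\to\infty$ along values avoiding the atoms, apply Corollary~\ref{complex representation} to each truncation with the same $r$, and pass to the limit on both sides of \eqref{eq complex representation}.

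\emph{Left-hand side.} The parameters $f_r^b(\boldsymbol\zeta^{[L]})$ converge to $f_r^b(\boldsymbol\zeta)$ in $\bUpsilon_b$: the inner data are eventually constant, the reciprocal sequences converge coordinatewise, and $\delta(\boldsymbol\zeta^{[L]})\to\delta(\boldsymbol\zeta)$ because $\boldsymbol\zeta\in\mathfrak M^0$. Theorem~\ref{MainTheorem} (with $\tau=\infty$) then gives convergence of finite-dimensional distributions on $(0,\infty)$. When $s=0$ we use the deterministic value $\langle\phi,\boldsymbol\zeta^{[L]}\rangle=\langle\phi,\boldsymbol\zeta\rangle$ for $L$ large. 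Hence $\langle\phi,\bXi^{[L]}(t)\rangle-\langle\phi,\bXi^{[L]}(s)\rangle$ converges in law. The family $\{(\boldsymbol\zeta^{[L]},r)\}$ satisfies the residue bound by hypothesis and \eqref{uniform gaussian summability} by the argument of Remark~\ref{remark sum is uniformly bounded}. Therefore Lemma~\ref{lemma for bound}, applied with an even exponent $Q'>Q$ (using $s=0$ if needed), gives uniformly bounded $Q'$-th moments, which yields uniform integrability of the $Q$-th power. So the left-hand side converges to the moment under $\K_{f_r^b(\boldsymbol\zeta)}$.

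\emph{Right-hand side.} Each summand is indexed by a finite tuple of distinct roots $x_{i_1}>\dots>x_{i_q}$ of $\boldsymbol\zeta$. For fixed roots, the summand for $\boldsymbol\zeta^{[L]}$ converges to the one for $\boldsymbol\zeta$. This follows from the local uniform convergence $\mathfrak F_{\boldsymbol\zeta^{[L]},r}(x,\cdot)\to\mathfrak F_{\boldsymbol\zeta,r}(x,\cdot)$ in Proposition~\ref{bound of res}, together with dominated convergence in the Brownian expectation. The domination comes from \eqref{eq bound of res}: with $\eta$ small relative to $1/T$, the quantity $\exp(\eta|\mathsf Z(T)|^2)$ has all moments.

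To interchange limit and sum we use the bound from the proof of Lemma~\ref{lemma for bound}. Each term is at most $C\exp\{-c\sum_j x_{i_j}^2\}$, with constants uniform in $L$, and this bound also holds for $\boldsymbol\zeta$ itself. The H\"older split $2,4,4$ is applied to the indicator of hitting $\operatorname{supp}\phi$ and to the fourth moment of the determinant. By \eqref{uniform gaussian summability} the dominating series $\sum_{q}\sum_{\text{tuples}}\prod_j \me^{-cx_{i_j}^2}\le\bigl(\sum_x \me^{-cx^2}\bigr)^q$ is finite. This gives absolute convergence of the infinite series, and dominated convergence over the countable index set then gives convergence of the right-hand side. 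Equating the two limits proves the identity for every $Q\in\mathbb N$. Odd $Q$ poses no difficulty, since only the domination, not positivity, was used.

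\emph{Main obstacle.} The delicate step is making the Gaussian decay of the indicator factor beat the growth $\exp(\eta C_{q,T}\sum x_{i_j}^2)$ of the determinant moment uniformly in $L$. This requires choosing $\eta$ depending on $T$ and on the support radius $R$ before invoking \eqref{eq bound of res}, and checking that the constant $C_\eta$ there is genuinely independent of $L$. Proposition~\ref{bound of res} provides exactly this for truncations, and the hypothesis asks for it as well.
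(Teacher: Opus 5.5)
Your proposal is correct and follows essentially the same route as the paper: apply Corollary~\ref{complex representation} to the truncations $\boldsymbol\zeta^{[L]}$. On the right you pass to the limit termwise under the uniform majorant $C\exp\{-c\sum_j x_{i_j}^2\}$ from the indicator and determinant estimates of Lemma~\ref{lemma for bound} with Gaussian summability. On the left you use Theorem~\ref{MainTheorem}, vague convergence at time $0$, and uniform integrability from Lemma~\ref{lemma for bound} with a larger even exponent.

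One citation needs changing. Proposition~\ref{bound of res} is stated under the hypotheses of Theorem~\ref{theorem path cts}, which are not assumed here. Obtain the entrywise convergence directly instead, as the paper does, from $f_r^b(\boldsymbol\zeta^{[L]}\setminus\{x\})\to f_r^b(\boldsymbol\zeta\setminus\{x\})$ in $\bUpsilon_b$ and Proposition~\ref{uniform convergence of entire}. The limiting denominator is nonzero because $\boldsymbol\zeta$ is simple.
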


\begin{proof}
 Apply Corollary \ref{complex representation} to
 \(\boldsymbol{\zeta}^{[L]}\).  For a fixed collection of matched roots,
 canonical-product convergence gives
 \[
  f_r^b\left(\boldsymbol\zeta^{\left[L\right]}\setminus\left\{x\right\}\right)
   \longrightarrow f_r^b\left(\boldsymbol\zeta\setminus\left\{x\right\}\right)
  \quad\hbox{in }\bUpsilon_b .
 \]
 Proposition \ref{uniform convergence of entire}, evaluated also at the
 nonzero limiting denominator, therefore gives locally uniform convergence
 of every entry of the reduced determinant.  For the majorant, use
 \[
  \left|\phi\left(\mathsf X_i\left(t\right)\right)-\phi\left(\mathsf X_i\left(s\right)\right)\right|^{p}
   \leq \left(2\left\|\phi\right\|_\infty\right)^p
      \indi\left\{\mathsf X_i\left(s\right)\in\operatorname{supp}\phi
                    \text{ or }\mathsf X_i\left(t\right)\in\operatorname{supp}\phi\right\}.
 \]
 The indicator and determinant estimates from the proof of Lemma
 \ref{lemma for bound} then give, uniformly in \(L\), the majorant
 \[
       C\exp\left\{-c\sum_jx_{i_j}^2\right\}.
 \]
 The Gaussian summability argument in
 Remark~\ref{remark sum is uniformly bounded} applies to
 $\boldsymbol\zeta$ and its truncations. Thus the majorant is summable
 uniformly in $L$, and the right-hand side converges term by term
 and in absolute value.

 It remains to justify convergence of the unbounded polynomial on the
 left.  Apply Lemma \ref{lemma for bound} with the even exponent
 \(R=2\max\{2,Q\}>Q\).  The resulting uniform higher-moment bound makes
 the \(Q\)-th powers uniformly integrable.
 Theorem \ref{MainTheorem} gives convergence in distribution at the
 positive times, while at time \(0\) the initial configurations converge
 vaguely.  Uniform integrability therefore gives convergence of the
 expectations on the left, proving the stated infinite representation.
\end{proof}

The preceding estimates give a uniform Kolmogorov bound for the original processes and their spatial truncations. This is the tightness input in the proof of Theorem~\ref{theorem path cts}.
\begin{cor}\label{cor kol bd phi}
 Under the assumptions of Theorem \ref{theorem path cts}, for every even
 \(Q\geq4\), \(T<\infty\), and
 \(\phi\in\mathcal C_c^1(\mathbb R)\), there is
 \(C_{Q,T,\phi}<\infty\) such that
 \begin{equation}\label{eq B_phi}
 \sup_N\expt_{r^\UN,\bxi^\UN}\left[
 \left|
  \left\langle\phi,\bXi^\UN\left(t\right)\right\rangle
  -\left\langle\phi,\bXi^\UN\left(s\right)\right\rangle
 \right|^Q\right]
 \leq C_{Q,T,\phi}\left|t-s\right|^{Q/2},
 \qquad 0\leq s,t\leq T.
 \end{equation}
 The same estimate holds for all spatial truncations and for the limiting
 configuration.
\end{cor}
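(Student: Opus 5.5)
The plan is to assemble Corollary~\ref{cor kol bd phi} from Lemma~\ref{lemma for bound}, Proposition~\ref{bound of res} and Remark~\ref{remark sum is uniformly bounded}, and then to reach the limiting configuration through Proposition~\ref{condition for complex representation}.

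\textbf{Step 1: a single admissible family.} Take $\mathcal A$ to be the set of all pairs $(\bxi^\UN,r^\UN)$ and $(\bxi^{\UN,[L]},r^\UN)$ for $N\geq1$ and admissible $L$, together with $(\bxi,r)$ and $(\bxi^{[L]},r)$. Proposition~\ref{bound of res} gives the residue estimate \eqref{eq bound of res} on all of $\mathcal A$ with constants $C_\eta$ that depend on neither $N$ nor $L$. Remark~\ref{remark sum is uniformly bounded} gives the Gaussian summability \eqref{uniform gaussian summability} uniformly over $\mathcal A$. Both hypotheses of Lemma~\ref{lemma for bound} therefore hold with common constants.

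\textbf{Step 2: finite configurations.} For every finite configuration in $\mathcal A$, Lemma~\ref{lemma for bound} gives \eqref{even Q moment estimate} with a constant $C_{Q,T,\phi}$ that depends only on $Q$, $T$, $\phi$ and the common constants. This covers all spatial truncations $\bxi^{\UN,[L]}$ and $\bxi^{[L]}$, and also every $\bxi^\UN$ that happens to be finite.

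\textbf{Step 3: infinite configurations.} Any $\bxi^\UN$ that is infinite, and the limit $\bxi$, are handled by truncation. The estimate for $\bxi^{[L]}$ holds uniformly in $L$, so it suffices to pass $L\to\infty$ in the $Q$-th moment. The residue bounds hold uniformly along the truncations, so Proposition~\ref{condition for complex representation} applies. Its proof already shows that the $Q$-th increment moments of the truncated processes converge to those of the infinite one. That proof combines two ingredients: convergence in distribution of $(\langle\phi,\bXi(s)\rangle,\langle\phi,\bXi(t)\rangle)$, which comes from Theorem~\ref{MainTheorem} at positive times and from vague convergence at time $0$; and uniform integrability, which comes from the uniform bound of order $2Q$ supplied by Step~2.

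For the infinite configurations, Step 3 needs the parameters of the truncations to converge, $f_r^b(\bxi^{[L]})\to f_r^b(\bxi)$. This holds because square-summability makes the $\delta$-coordinates converge. Once the moments converge, the limit inherits the same bound $C_{Q,T,\phi}|t-s|^{Q/2}$. Since that bound does not depend on $N$, taking the supremum gives \eqref{eq B_phi}. The case $s>t$ follows by symmetry, and $s=t$ is trivial.

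\textbf{Main obstacle.} The only delicate point is uniformity. Both Lemma~\ref{lemma for bound} and the moment passage must use constants that are independent of $N$ and $L$. This independence is exactly what Proposition~\ref{bound of res} supplies, through the separation hypothesis \eqref{extra condition} and Lemma~\ref{gamma_2 is 0}.
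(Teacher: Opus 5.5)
Your proposal is correct and is essentially the paper's argument. The paper's proof simply notes that Proposition~\ref{bound of res}, Remark~\ref{remark sum is uniformly bounded}, Lemma~\ref{lemma for bound} and Proposition~\ref{condition for complex representation} apply with constants uniform in $N$ and in the truncation level; your Step~3 only spells out the passage to infinite configurations (truncation-parameter convergence, Theorem~\ref{MainTheorem} plus vague convergence at time $0$, and uniform integrability from a higher even moment) that the paper leaves inside the proof of Proposition~\ref{condition for complex representation}.
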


\begin{proof}
 Proposition \ref{bound of res}, Remark
 \ref{remark sum is uniformly bounded}, Lemma \ref{lemma for bound}, and
 Proposition \ref{condition for complex representation} apply with
 constants uniform in \(N\) and in the truncation level.
\end{proof}

\begin{proof}[Proof of Theorem \ref{theorem path cts}]
 We first construct the asserted continuous versions.  For fixed \(N\),
 the finite systems associated with
 \(\bxi^{\UN,[L]}\) and parameter \(r^\UN\) are, by Theorem
 \ref{theorem complex repre}, common-drift transforms of finite Dyson
 Brownian motion and hence have continuous paths.  Proposition
 \ref{bound of res} and Corollary \ref{cor kol bd phi} give estimates
 uniform in \(L\).  Since
 \(f_{r^\UN}^b(\bxi^{\UN,[L]})\to
   f_{r^\UN}^b(\bxi^\UN)\),
 Theorem \ref{MainTheorem} identifies every positive-time
 finite-dimensional limit with the kernel \(\K_{\bup^\UN}\).
 The same argument applies to \(\bxi\).

 We give the tightness argument, including compact containment.  Fix
  \(T<\infty\).  Choose a countable convergence-determining family
  \((\phi_k)_{k\geq1}\subset\mathcal C_c^\infty(\mathbb R)\) for the
  vague topology, put
 \[
  A_k\defeq1+C_{4,T,\phi_k},\qquad
  \psi_k\defeq A_k^{-1/4}\phi_k,
 \]
  and use the resulting compatible vague metric
 \begin{equation*}
  d_T\left(\boldsymbol{\mu},\boldsymbol{\nu}\right)
  \defeq\sum_{k=1}^\infty2^{-k}
    \frac{\left|\left\langle\psi_k,\boldsymbol{\mu}
                        -\boldsymbol{\nu}\right\rangle\right|}
         {1+\left|\left\langle\psi_k,\boldsymbol{\mu}
                        -\boldsymbol{\nu}\right\rangle\right|}.
 \end{equation*}
 Jensen's inequality and \eqref{eq B_phi} give
 \begin{equation}\label{Holder}
  \sup_{N,L}
  \expt\!\left[
   d_T\left(\bXi^{\UN,\left[L\right]}\left(t\right),
               \bXi^{\UN,\left[L\right]}\left(s\right)\right)^4
  \right]
  \leq
  \sum_{k=1}^\infty2^{-k}A_k^{-1}
       C_{4,T,\phi_k}\left|t-s\right|^2
  \leq \left|t-s\right|^2.
 \end{equation}
 The same bound holds without truncation and for the limit.

 For compact containment, choose nonnegative
 \(\chi_j\in\mathcal C_c^\infty(\mathbb R)\) with
 \(\chi_j\geq1\) on \([-j,j]\).  Rootwise convergence gives
 \[
        a_j\defeq\sup_{N,L}
        \left\langle\chi_j,\bxi^{\UN,\left[L\right]}\right\rangle<\infty.
 \]
  The fourth-moment estimate and the standard
  Garsia--Rodemich--Rumsey consequence (with any H\"older exponent below
  \(1/4\)) imply
 \[
 \sup_{N,L}\expt\left[
   \sup_{0\leq t\leq T}
   \left|\left\langle\chi_j,\bXi^{\UN,\left[L\right]}\left(t\right)
                    -\bxi^{\UN,\left[L\right]}\right\rangle\right|^4
 \right]\leq D_{j,T}<\infty.
 \]
 Given \(\varepsilon>0\), choose \(M_j>a_j\) so that
 \(D_{j,T}(M_j-a_j)^{-4}\leq\varepsilon2^{-j}\), and set
 \[
  K_\varepsilon
   \defeq\left\{\boldsymbol{\mu}\in\mathfrak M:
       \left\langle\chi_j,\boldsymbol{\mu}\right\rangle\leq M_j
       \text{ for every }j\right\}.
 \]
 This set is compact for the vague topology: it is closed, and its
 defining inequalities give a uniform mass bound on every compact
 interval.  The union bound yields
 \[
 \inf_{N,L}
 \prob\left\{\bXi^{\UN,\left[L\right]}\left(t\right)\in K_\varepsilon
       \text{ for every }0\leq t\leq T\right\}
 \geq1-\varepsilon.
 \]
  Compact containment together with \eqref{Holder} and the standard
  tightness criterion based on a uniform modulus of continuity proves
  tightness in
 \(\mathcal C([0,T]\to\mathfrak M)\).  Letting \(L\to\infty\) constructs
 continuous versions of every \(\bXi^\UN\) and of
 \(\bXi\).  Their values at zero are the corresponding
 deterministic configurations; continuity at zero follows from
 \eqref{eq B_phi} with \(s=0\).

 The same estimates, now uniform in \(N\), prove tightness of
 \((\bXi^\UN)_{N\geq1}\) on every finite time interval.
 Lemma \ref{gamma_2 is 0} identifies the limiting parameter as
 \(f_r^b(\bxi)\).  Hence Theorem \ref{MainTheorem} identifies the
 finite-dimensional distributions of every subsequential path limit at
 strictly positive times with those of \(\bXi\).  At time
 zero, rootwise convergence gives
 \(\bXi^\UN(0)=\bxi^\UN\to\bxi\) vaguely.  Continuity then
 identifies mixed finite-dimensional distributions which include time
 \(0\), by letting their positive time arguments decrease to zero.
 Since the Borel sigma-field on path space is generated by evaluations at
 a dense set of times, the subsequential limit is unique.  Thus the whole
 sequence converges on \([0,T]\).  Finally let \(T\) range over the
 positive integers to obtain convergence in
 \(\mathcal C([0,\infty)\to\mathfrak M)\).
\end{proof}
\section{Bounds for the first correlation function}\label{sec:density}

\subsection{Assumptions and main bound}

Fix
\begin{equation*}
 q\in\left(1,2\right),\qquad \theta\defeq q-1\in\left(0,1\right),\qquad\ep\in\{0,1\}.
\end{equation*}
Let $0<a_1<a_2<\cdots$ with $a_n\to\infty$, and fix $b>0$ such that
$b\ne a_n$ for every $n$.  Consider the symmetric configuration and its
finite truncations
\begin{equation}\label{eq:config}
 \bxi\defeq \ep\delta_0+\sum_{n=1}^{\infty}\left(\delta_{a_n}+\delta_{-a_n}\right),
 \quad
 \bxi^\UN\defeq \ep\delta_0+\sum_{n=1}^{N}\left(\delta_{a_n}+\delta_{-a_n}\right).
\end{equation}
Let $\rho_{\bxi^\UN,t}$ be the one-point density at time $t>0$ of the
finite Dyson model started from $\bxi^\UN$, and put
\begin{equation*}
 \rho_{N,t}\defeq \rho_{\bxi^\UN,t},
 \quad
 A_N\defeq \left\{\pm a_1,\ldots,\pm a_N\right\}.
\end{equation*}
For $u\ge0$ and $s>0$, define the local interval counts
\begin{equation*}
 \mathcal N\left(u,s\right)\defeq\#\left\{a\in\left\{\pm a_n:n\ge1\right\}:\left|a-u\right|\le s\right\},
 \qquad
 \mathcal N^+\left(u,s\right)\defeq\#\left\{n\ge1:\left|a_n-u\right|\le s\right\}.
\end{equation*}

We impose the following assumptions.

\emph{($\mathbf{UC}$) Upper local count.}  There is $C_{\mathrm{up}}<\infty$ such
that
\begin{equation*}
 \mathcal N\left(u,s\right)
 \le C_{\mathrm{up}}\left[1+s\left(1+u\right)^\theta+s^q\right],
 \quad u\ge0,\ s>0.
\end{equation*}

\emph{($\mathbf{LC}$) Lower local count.}  There are
$c_{\mathrm{low}}>0$, $C_{\mathrm{low}}<\infty$, and $U_0<\infty$ such
that
\begin{equation}\label{eq:LC}
 \mathcal N^+\left(u,s\right)\ge c_{\mathrm{low}}s u^\theta
 \quad\text{whenever}\quad
 u\ge U_0,\quad C_{\mathrm{low}}u^{-\theta}\le s\le \frac u4.
\end{equation}
If $U_0\le u\le a_N/4$ and $s\le u/4$, then
$u+s\le5a_N/16<a_N$.  Thus every positive root counted in
\eqref{eq:LC} already belongs to $A_N$ throughout the saddle range used
below.

The upper counting assumption also gives
\begin{equation}\label{eq:count-square-tail}
 \sum_{a_n>R}\frac1{a_n^2}\le C R^{q-2},\qquad R\ge1.
\end{equation}
Indeed, the roots in $2^jR<a_n\le2^{j+1}R$ contribute at most
$C(2^jR)^{q-2}$, and these bounds are summable because $q<2$.
Thus $\sum_n a_n^{-2}<\infty$. The ordered outer reciprocal coordinates
of $f_0^b(\bxi^\UN)$ converge coordinatewise, their squared sums converge
monotonically, the inner coordinates eventually stabilize, and the linear
coordinate $\gamma_1$ is zero. Consequently,
$f_0^b(\bxi^\UN)\to f_0^b(\bxi)$ in $\bUpsilon_b$.

We first bound the one-point density uniformly over finite truncations. The sublinear growth bound will imply the Markov property in Section~\ref{sec:markov}.
\begin{theorem}\label{thm bound}
Let $\bxi$ and $\bxi^\UN$ be given by \eqref{eq:config}.  Assume \emph{($\mathbf{UC}$)} and \emph{($\mathbf{LC}$)}.  For every $t>0$, there are constants
$R_t,C_t<\infty$, independent of $N$, such that
\begin{equation}\label{eq:interior-main}
 \rho_{N,t}\left(x\right)\le C_t\left(1+\left|x\right|^\theta\right),
 \quad R_t\le \left|x\right|\le \frac{a_N}{16}.
\end{equation}
Moreover,
\begin{equation}\label{eq:global-main}
 \sup_{N\ge1}\rho_{N,t}\left(x\right)
 \le C_t\left(1+\left|x\right|^{q/2}\right),
 \quad x\in\R.
\end{equation}
\end{theorem}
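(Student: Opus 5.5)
The plan is to write the one-point density as a diagonal value of the finite kernel and estimate it by steepest descent. Proposition~\ref{finitedysonmodel} gives $\rho_{N,t}(x)=\K_{f_0^b(\bxi^\UN)}((t,x),(t,x))$, and by symmetry $p_b(\bxi^\UN)=0$. I would start from the Katori--Tanemura double integral \eqref{eq kernel finite dim}, with $x=y$ and $s=t$. Averaging over $u\mapsto -u$ and using symmetry of $A_N$, as in the proof of Proposition~\ref{prop:gamma2-horizon-sharpness}, gives
\[
 \rho_{N,t}\left(x\right)=\frac{1}{2\pi\mi}\int_{C\times\mathbb R}\mathfrak h_t\left(x,z\right)\mathfrak h_t\left(-\mi x,u\right)\frac{1}{\mi u-z}\,\frac{P_N\left(\mi u\right)}{P_N\left(z\right)}\,\dif z\,\dif u ,
 \qquad P_N\left(w\right)\defeq w^\ep\prod_{a\in A_N}\left(1-\frac wa\right).
\]
The phase is therefore $\Phi(z)=-(z-x)^2/(2t)-\Log P_N(z)$.

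The next step is to deform the contours. I would move the $z$-contour and the $u$-line (rotated to $w=\mi u$) through the saddle points of $\Phi$ near $x$. Write $\Log P_N(x+\mi v)$ via $\sum_a\log|1-(x+\mi v)/a|$. Its imaginary part is $\arg$, approximately $\pi$ times the counting function on the relevant scale. Crossing the $z$-contour over the $w$-contour produces a residue term of the form $\frac{1}{2\pi t}\int_{-v_*}^{v_*}(\ldots)\,\dif v$. This term is the analogue of the sine-kernel piece, and it is bounded by $v_*/(\pi t)$, where $\mi v_*$ is the height of the saddle. The saddle equation reads approximately $v/t\approx\operatorname{Im}\partial_z\Log P_N(x+\mi v)=\sum_a v/((x-a)^2+v^2)$.

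The core estimate is on the saddle height. Using ($\mathbf{UC}$) dyadically around $x$, I would show $\sum_a v/((x-a)^2+v^2)\lesssim |x|^\theta+v^{q-1}+1/v$. Using ($\mathbf{LC}$), which is valid because $|x|\le a_N/16$ keeps all counted roots inside $A_N$, the same sum is bounded below by $\gtrsim|x|^\theta$. Together these give $v_*\asymp t|x|^\theta$ for $|x|\ge R_t$, and hence the residue contributes $O(1+|x|^\theta)$.

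The remaining double integral over the steepest-descent contours must then be bounded by $O(1)$. This needs a global quadratic lower bound $\operatorname{Re}(\Phi(z)-\Phi(\text{saddle}))\ge c|z-\text{saddle}|^2/t$ along the contours, together with the integrable crossing singularity of Lemma~\ref{integralbiliylemma}. I expect this to be the main obstacle. Near the real axis the log-derivative of $P_N$ fluctuates, because roots are only controlled in counts. I would therefore keep the contour at height at least $c\,t|x|^\theta\ge C_{\mathrm{low}}|x|^{-\theta}$, where ($\mathbf{LC}$) averages the roots. Beyond height $|x|/4$, I would use the crude genus bounds \eqref{naive} and Lemma~\ref{denolemma}, with \eqref{eq:count-square-tail} making the square tail uniform in $N$.

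For \eqref{eq:global-main} I would drop the lower count and the restriction $|x|\le a_N/16$. I would instead use the a priori saddle height $v$ solving $v/t\approx\sum_a v/((x-a)^2+v^2)$ with only ($\mathbf{UC}$). The dominant term $v^{q-1}$ from the $s^q$ part of ($\mathbf{UC}$) gives $v^{2-q}\lesssim t$. Combining the $|x|^\theta$ and $s^q$ contributions by Young's inequality yields $v\lesssim 1+|x|^{q/2}$. The same contour argument then bounds $\rho_{N,t}(x)$ by $C_t(1+|x|^{q/2})$ uniformly in $N$. When $|x|\ge a_N/16$, the local root count near $x$ is bounded by $C(1+|x|^{q/2})$ from ($\mathbf{UC}$) at scale $s\sim\sqrt t$ plus the Gaussian tail, which covers this regime as well.
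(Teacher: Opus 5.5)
Your interior analysis has the same structure as the paper's. The crossed residue is $v/(\pi t)$, and the two-sided bound $v\asymp_t|x|^\theta$ follows from ($\mathbf{UC}$) applied dyadically together with ($\mathbf{LC}$); these are Lemmas~\ref{lem:resolvent}, \ref{lem:nearest} and~\ref{lem:height}. One correction: the saddles are $u\pm\mi v$ with $\Psi_N(u)=x$, and $|u-x|$ can be of order $x^\theta\log x$, so the $w$-line must pass through $u$, not through $x$.

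\textbf{Interior remainder.} The gap is the step you flag yourself: the uniform $O(1)$ bound on the deformed double integral. This is the core of the interior estimate, and your sketch does not produce it.
\begin{enumerate}
\item The $z$-contour must enclose every point of $A_N$, including the sparse roots near the origin and the roots near $\pm a_N$. The height at which $s\mapsto\Re\Phi_{N,x}(r+\mi s)$ is maximal varies like $r^\theta$, so a contour kept at height $\asymp|x|^\theta$ is not a path of monotone phase.
\item The far pieces cannot be handled by \eqref{naive} and Lemma~\ref{denolemma}. On the sloped rays, $|\me^{-(z-x)^2/(2t)}|$ reaches $\me^{x^2/(4t)}$ at $z=\tfrac32x(1\pm\mi/\sqrt3)$. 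Those bounds only give $|Q_N(z)|^{-1}\le C_\epsilon\me^{\epsilon\Re(z^2)/2}$, and the saddle value $\Re\Phi_{N,x}(z_+)$ is $o(x^2)$ in absolute value. So absolute bounds there are exponentially large in $x^2$.
\end{enumerate}
The paper's device is to take the positive-tail $z$-contour to be $\partial\Omega_{N,t}\cap\{\Re z\ge U\}$, where $\Omega_{N,t}=\{t\sum_a|z-a|^{-2}>1\}$.
\begin{itemize}
\item On this curve $H_N(z)=z+t\sum_a(z-a)^{-1}$ is real, so the phase along it has derivative $(\widehat\Psi_N(r)-x)/t$.
\item The identity of Lemma~\ref{lem:derivatives}, with the curvature bound $v^2\mathfrak a_N\ge c_t$ (again from ($\mathbf{LC}$)), gives $\Psi_N'\ge c_t$ and hence the quadratic horizontal gap.
\item Lemma~\ref{lem:vertical} follows from $F_N'(s)=s(\sigma_N(u,|s|)-1/t)$, valid at all heights.
\end{itemize}
The remaining pieces each need their own argument, none of which you supply:
\begin{itemize}
\item fixed loops and chords around the roots left of $U$, with phase gap $c_tx^2$;
\item the reflected tail, with the exact gain $2rx/t$ coming from evenness;
\item the outer arc beyond $a_N/4$, with gap $c_ta_N^2$ set against the Cauchy--Crofton length bound of Lemma~\ref{lem:length}.
\end{itemize}

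\textbf{Global bound.} The bound \eqref{eq:global-main} is where the proposal breaks down.
\begin{itemize}
\item Without ($\mathbf{LC}$) there is in general no nondegenerate pair of complex saddles to deform through. Near the origin the roots may be sparse on the scale $\sqrt t$, and there may be no complex saddles at all. Near $\pm a_N$ the conjugate saddles merge on the real axis at the end of the real trace of $\Omega_{N,t}$. The quadratic lower bounds on the phase came precisely from the lower count, so ``the same contour argument'' with ($\mathbf{UC}$) alone is unsupported.
\item Your fallback for $|x|\ge a_N/16$ bounds the density by the initial root count within $O(\sqrt t)$ of $x$ plus a Gaussian tail. This is false for Dyson Brownian motion. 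The interaction spreads particles over distances of order $\sqrt{mt}$: from $m$ particles at one point one obtains a semicircle of radius $2\sqrt{mt}$. So there is mass beyond $a_N$ where there are no initial roots.
\end{itemize}
The missing ideas are simple:
\begin{itemize}
\item For $|x|\le R_t$, use the locally uniform kernel convergence of Theorem~\ref{MainTheorem} plus continuity of the finitely many remaining $\rho_{N,t}$.
\item For $|x|>a_N/16$, use the dimension-uniform Wegner estimate $\sup_y\rho_{\boldsymbol\eta,t}(y)\le C\sqrt{m/t}$ (Lemma~\ref{lem:wegner}) with $m=2N\le C(1+a_N^q)$ from ($\mathbf{UC}$). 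This gives $C_t(1+a_N^{q/2})\le C_t'(1+|x|^{q/2})$.
\end{itemize}
The exponent $q/2$ comes from this $\sqrt N$, not from a saddle height. Where complex saddles exist, ($\mathbf{UC}$) already gives heights $\lesssim_t 1+|x|^\theta$, and $\theta<q/2$. Finally, for $\epsilon=1$ the atom at $0$ lies on $\mi\R$, so the separated-loop formula does not apply verbatim. The paper perturbs that atom off zero and bounds the pole-free difference $J_{N,t}$ using the same contours.
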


Since $q<2$, one has $\theta=q-1<q/2<1$.  Thus
\eqref{eq:global-main} has a uniformly sublinear exponent, while
\eqref{eq:interior-main} gives the sharper interior exponent.

\begin{remark}\label{rem:limit-density}
Here $\gamma_2=0$, so Proposition~\ref{well_defined process} supplies
the process $\bXXi$ for every positive time. Theorem~\ref{MainTheorem}
gives $\rho_{N,t}\to\rho_t$ locally uniformly, where
$\rho_t(x)=\K_{f_0^b(\bxi)}((t,x),(t,x))$.
For each fixed $|x|\ge R_t$, one has
$|x|\le a_N/16$ for all sufficiently large $N$, so
\eqref{eq:interior-main} passes to the limit. The density is bounded on
the remaining compact interval by local uniform convergence. Thus,
after enlarging $C_t$ if necessary,
\begin{equation*}
 \rho_t\left(x\right)\le C_t\left(1+\left|x\right|^{q-1}\right),\quad x\in\R.
\end{equation*}
\end{remark}

The proof is divided into three spatial regions.  A saddle analysis treats the region
$R_t<|x|\le a_N/16$; locally uniform convergence treats bounded $x$; and
a dimension-uniform deformed-GUE estimate treats $|x|>a_N/16$.

In Section 5.2-5.7, we prove Theorem \ref{thm bound} for $\ep=0$. Throughout these subsections, we impose this restriction. The case $\ep=1$ will be proved in Section 5.8 using the same auxiliary phase and contours.

\subsection{Correlation function and phase}

For all $N\in\mathbb{N}$, we define the function
\begin{equation*}
 Q_N\left(z\right)\defeq\prod_{n=1}^{N}\left(1-\frac{z^2}{a_n^2}\right).
\end{equation*}
Since $A_N=-A_N$, one has $p_b(\bxi^\UN)=0$, and directly from
\eqref{eq f_r},
\begin{equation*}
 \frac{\mathfrak E_{f_0^b\left(\bxi^\UN\right)}\left(w\right)}
      {\mathfrak E_{f_0^b\left(\bxi^\UN\right)}\left(z\right)}
 =\frac{Q_N\left(w\right)}{Q_N\left(z\right)}.
\end{equation*}
Proposition~\ref{finitedysonmodel} and \eqref{eq kernel finite dim},
specialized to equal times and followed by the change of variables
$w=\mi u$, therefore give
\begin{equation}\label{eq:kernel}
 \rho_{N,t}\left(x\right)
 =\frac{1}{\left(2\pi \mi\right)^2t}
   \oint_{C_N}\,\dif  z\int_{\mi\R}\dif w\,
   \frac{\exp\!\left\{\frac{\left(w-x\right)^2-\left(z-x\right)^2}{2t}\right\}}
        {w-z}\,
   \frac{Q_N\left(w\right)}{Q_N\left(z\right)}.
\end{equation}
Here $C_N=C_{\bxi^\UN}$ may be any positively oriented union of small
loops enclosing the points of $A_N$ once and disjoint from $\mi\R$;
the line $\mi\R$ is oriented upward. The resulting double integral is
an ordinary absolutely convergent integral.

By Lemma~\ref{lemma finite contour correction}, the separated finite-root
representation \eqref{eq:kernel} already equals the full kernel at equal times
$\K_{f_0^b(\bxi^\UN)}$. The residue $v/(\pi t)$ in
\eqref{eq:decomposition} is created by the subsequent deformation of the
$w$-contour and is distinct from the fixed-contour correction
$\Rc_\Gamma$ of Definition~\ref{def def of K}.

Introduce the phase
\begin{equation*}
 \Phi_{N,x}\left(z\right)\defeq\frac{\left(z-x\right)^2}{2t}+\log Q_N\left(z\right).
\end{equation*}
Only phase differences and derivatives occur, so local logarithm branches
suffice.  Formula \eqref{eq:kernel} becomes
\begin{equation*}
 \rho_{N,t}\left(x\right)
 =\frac{1}{\left(2\pi \mi\right)^2t}
   \oint_{C_N}\,\dif  z\int_{\mi\R}\dif w\,
   \frac{\me^{\Phi_{N,x}\left(w\right)-\Phi_{N,x}\left(z\right)}}{w-z}.
\end{equation*}
For fixed $z$, the $w$-integrand has only the simple pole $w=z$; the
Gaussian controls vertical contour movements at imaginary infinity.

Writing $z=u+\mi v$, the saddle equation $\Phi'_{N,x}(z)=0$ is equivalent to
\begin{equation}\label{eq:saddle-system}
 x=u+t\sum_{a\in A_N}\frac{u-a}{\left(u-a\right)^2+v^2},
 \quad
 v\left(\frac1t-\sum_{a\in A_N}\frac1{\left(u-a\right)^2+v^2}\right)=0.
\end{equation}

\subsection{Consequences of the counting assumptions}

Applying \emph{($\mathbf{UC}$)} with $u=0$ and $s=a_N$ gives
\begin{equation}\label{eq:edge-size}
 N\le C\left(1+a_N^q\right).
\end{equation}
We shall use the reciprocal-square tail bound
\eqref{eq:count-square-tail} established above.

We shall also use two standard consequences of this estimate.  The products
$Q_N$ converge locally uniformly to
\begin{equation*}
 Q_\infty\left(z\right)\defeq \prod_{n=1}^{\infty}\left(1-\frac{z^2}{a_n^2}\right),
\end{equation*}
and, uniformly in $N$,
\begin{equation*}
 \log\left|Q_N\left(z\right)\right|\le C\left(1+\left|z\right|^q\right),\quad z\in\C.
\end{equation*}
For completeness, take $R=1+|z|$.  The factors with $a_n>2R$ are bounded
using $\log(1+s)\le s$ and the reciprocal-square tail.  For
$a_n\le2R$, dyadically group the roots according to their size; \emph{($\mathbf{UC}$)}
at the origin bounds the number in the $j$th block by
$C[1+(2^{-j}R)^q]$, while its logarithmic weight is $O(j+1)$.  Summing
until the fixed first root is reached gives $O(1+R^q)$.  The same tail
estimate is the Weierstrass criterion for local uniform convergence.

For $k>0$, define
\begin{equation*}
 \sigma_{N,k}\left(u,v\right)
 \defeq\sum_{a\in A_N}\frac1{\left(\left(u-a\right)^2+v^2\right)^{k/2}},
 \quad u\ge0,\ v>0,
\end{equation*}
and write $\sigma_N=\sigma_{N,2}$.

The upper counting assumption controls the resolvent sums appearing in the saddle equation. We record a bound uniform in the truncation.
\begin{lemma}\label{lem:resolvent}
If $k>q$, then, uniformly in $N$, $u\ge0$, and $v>0$,
\begin{equation*}
 \sigma_{N,k}\left(u,v\right)
 \le C_k\left[v^{-k}+\left(1+u\right)^\theta v^{1-k}+v^{q-k}\right].
\end{equation*}
\end{lemma}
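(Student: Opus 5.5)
The plan is a dyadic decomposition of the points $a\in A_N$ according to their distance from the point $u$, with each shell controlled by the upper count ($\mathbf{UC}$). Put $d(a)=|u-a|$. Since $A_N\subset\{\pm a_n:n\ge1\}$, every count below may be taken over the full configuration, which makes all bounds automatically uniform in $N$. Each summand is at most $\min\{v^{-k},d(a)^{-k}\}$. We split into the near shell $d(a)\le v$ and the outer shells $2^{j}v<d(a)\le 2^{j+1}v$ for $j\ge0$.

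For the near shell, ($\mathbf{UC}$) with $s=v$ gives at most $C_{\mathrm{up}}[1+v(1+u)^\theta+v^q]$ points. Each contributes at most $v^{-k}$, so the shell contributes $C[v^{-k}+(1+u)^\theta v^{1-k}+v^{q-k}]$, which is exactly the claimed form.

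For the $j$th outer shell, set $s_j=2^{j+1}v$. By ($\mathbf{UC}$) it contains at most $C[1+s_j(1+u)^\theta+s_j^q]$ points, and each summand is bounded by $(2^jv)^{-k}$. The shell therefore contributes at most
\[
C\,2^{-jk}v^{-k}\left[1+2^{j}v(1+u)^\theta+2^{jq}v^q\right].
\]
We then sum over $j\ge0$. The three resulting geometric series have ratios $2^{-k}$, $2^{1-k}$ and $2^{q-k}$. They converge because $k>q>1$, which gives $k>1$ for the middle series and $k>q$ for the last.

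The only point that needs care, and the place the hypothesis $k>q$ is used, is the convergence of the last series $\sum_j 2^{j(q-k)}$. That series encodes the $s^q$ growth of the counts at large scales. Everything else is a routine geometric sum, and the constant $C_k$ depends only on $k$, $q$ and $C_{\mathrm{up}}$.
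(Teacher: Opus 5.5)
Your proof is correct and follows the paper's argument. You use the same dyadic shells around $u$ (your shells are the paper's $B_j$ with the index shifted by one), bound each shell's count by ($\mathbf{UC}$) at its outer radius and each summand by the inner radius, and obtain three geometric series that converge because $k>q>1$.
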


\begin{proof}
Set
\begin{equation*}
 B_0\defeq \left\{a\in A_N:\left|a-u\right|\le v\right\},\quad
 B_j\defeq \left\{a\in A_N:2^{j-1}v<\left|a-u\right|\le2^jv\right\},\quad j\ge1.
\end{equation*}
These sets cover $A_N$.  The contribution of $B_0$ is bounded directly by
\emph{($\mathbf{UC}$)}.  If $j\ge1$, then
\begin{equation*}
 \sum_{a\in B_j}\frac1{\left(\left(u-a\right)^2+v^2\right)^{k/2}}
 \le \#B_j\,\left(2^{j-1}v\right)^{-k}.
\end{equation*}
Moreover,
$\#B_j\le C[1+2^jv(1+u)^\theta+(2^jv)^q]$.  Hence
\begin{equation*}
 \sum_{a\in B_j}\frac1{\left(\left(u-a\right)^2+v^2\right)^{k/2}}
 \le C_k\!\left[
  2^{-jk}v^{-k}
  +2^{-j\left(k-1\right)}\left(1+u\right)^\theta v^{1-k}
  +2^{-j\left(k-q\right)}v^{q-k}\right].
\end{equation*}
All three geometric series converge because $k>q>1$.
\end{proof}

In particular, taking $k=2$ gives
\begin{equation}\label{eq:resolvent-two}
 \sup_{N\ge1}\sigma_N\left(u,v\right)
 \le C\left[v^{-2}+\frac{\left(1+u\right)^\theta}{v}+v^{q-2}\right].
\end{equation}

The lower counting assumption places a root close to every sufficiently large point. This ensures that the saddle equation has positive height in the interior region.
\begin{lemma}\label{lem:nearest}
There are $C,U<\infty$ such that
\begin{equation*}
 \inf_{n\ge1}\left|u-a_n\right|\le Cu^{-\theta},\quad u\ge U.
\end{equation*}
\end{lemma}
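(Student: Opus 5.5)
The lemma follows directly from the lower count assumption (LC) applied with the smallest admissible window. The plan is as follows.

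Fix $u\ge U_0$ and take $s\defeq C_{\mathrm{low}}u^{-\theta}$. This $s$ is admissible in \eqref{eq:LC} as soon as $C_{\mathrm{low}}u^{-\theta}\le u/4$, that is, $u^{1+\theta}=u^{q}\ge 4C_{\mathrm{low}}$. Accordingly, put $U\defeq\max\{U_0,(4C_{\mathrm{low}})^{1/q}\}$. For $u\ge U$ both constraints in \eqref{eq:LC} then hold, and we obtain
\[
 \mathcal N^+\left(u,C_{\mathrm{low}}u^{-\theta}\right)\ge c_{\mathrm{low}}C_{\mathrm{low}}u^{-\theta}u^{\theta}=c_{\mathrm{low}}C_{\mathrm{low}}>0 .
\]

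Since $\mathcal N^+$ is integer valued, a positive lower bound forces $\mathcal N^+\ge1$. Hence some $a_n$ satisfies $|u-a_n|\le C_{\mathrm{low}}u^{-\theta}$, and the lemma holds with $C=C_{\mathrm{low}}$ and the $U$ chosen above. Taking the infimum over $n$ only improves the bound.

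There is no real obstacle here. The only points requiring care are the compatibility of the lower window endpoint with the upper restriction $s\le u/4$, which fixes $U$, and the integrality step converting the positive bound $c_{\mathrm{low}}C_{\mathrm{low}}$ into the existence of at least one root.
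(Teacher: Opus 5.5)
Your proof is correct and is essentially the paper's argument: apply (LC) at a window of size $s\asymp u^{-\theta}$, once $u$ is large enough that this window is at most $u/4$. The only cosmetic difference is that the paper takes $s=Ku^{-\theta}$ with $K\ge C_{\mathrm{low}}$ and $c_{\mathrm{low}}K\ge1$, so the lower bound is already at least one. You instead use the minimal window together with integrality of $\mathcal N^+$, which needs $C_{\mathrm{low}}>0$; that holds by the introduction's statement of the assumption, or after enlarging $C_{\mathrm{low}}$, which only weakens (LC).
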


\begin{proof}
Choose $K\ge C_{\mathrm{low}}$ with $c_{\mathrm{low}}K\ge1$ and apply
\emph{($\mathbf{LC}$)} with $s=Ku^{-\theta}$.  For large $u$ this scale is at most
$u/4$, and the corresponding interval contains a positive root.
\end{proof}

The same assumption forces the gaps between consecutive roots to vanish. We use this to construct a connected tail contour through the saddle.
\begin{lemma}\label{lem:gaps}
There are $K<\infty$ and $n_0$ such that, for every $n\ge n_0$,
\begin{equation}\label{eq:gaps}
 a_{n+1}-a_n
 \le K\left(\frac{a_n+a_{n+1}}2\right)^{-\theta}.
\end{equation}
In particular, $a_{n+1}-a_n\to0$.
\end{lemma}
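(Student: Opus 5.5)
The bound follows from (LC) applied on the interval centred at the midpoint $u=(a_n+a_{n+1})/2$. The positive roots are strictly increasing, so no root lies in the open interval $(a_n,a_{n+1})$. If we choose a radius $s$ smaller than half the gap, the closed interval $[u-s,u+s]$ therefore contains no positive root. This is the contradiction we aim for.

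Fix $K\geq 2C_{\mathrm{low}}$ with $c_{\mathrm{low}}K/2\geq 2$; the extra factor is room for boundary effects. Suppose, for contradiction, that $a_{n+1}-a_n>Ku^{-\theta}$. Put $s=Ku^{-\theta}/2$. Then $s<(a_{n+1}-a_n)/2$, so $[u-s,u+s]\subset(a_n,a_{n+1})$, and hence $\mathcal N^+(u,s)=0$.

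Next check that (LC) applies. Since $a_n\to\infty$, we have $u\geq a_n\to\infty$, so $u\geq U_0$ for all $n\geq n_0$. Also $s=(K/2)u^{-\theta}\geq C_{\mathrm{low}}u^{-\theta}$, and $s\leq u/4$ once $u^{1+\theta}\geq 2K$, which again holds for large $n$. Then \eqref{eq:LC} gives $\mathcal N^+(u,s)\geq c_{\mathrm{low}}su^\theta=c_{\mathrm{low}}K/2\geq 2>0$. This contradicts $\mathcal N^+(u,s)=0$, so $a_{n+1}-a_n\le Ku^{-\theta}$, which is \eqref{eq:gaps}.

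One could worry that $a_{n+1}-a_n$ is enormous, so that $s$ could be as large as $u/4$. This does not matter: $s$ is defined as $Ku^{-\theta}/2$, independently of the gap, and the gap assumption only ensures that this window contains no root. Finally, $\theta>0$ and $u\to\infty$ give $a_{n+1}-a_n\to0$. The only step needing care is the choice of $n_0$ so that both range conditions of (LC) hold, and that is routine.
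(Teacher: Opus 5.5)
Your proof is correct and is essentially the paper's argument. The paper applies Lemma~\ref{lem:nearest} (which is (LC) at scale $Ku^{-\theta}$) at the midpoint $u=(a_n+a_{n+1})/2$ and notes that the nearest positive root is an endpoint, whereas you inline that lemma as a direct contradiction; the side condition $c_{\mathrm{low}}K/2\ge 2$ is harmless but unnecessary, since any positive lower bound on the integer count $\mathcal N^+(u,s)$ already contradicts $\mathcal N^+(u,s)=0$.
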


\begin{proof}
Apply Lemma~\ref{lem:nearest} at $u_n=(a_n+a_{n+1})/2$.
The nearest positive root is an endpoint, so
$(a_{n+1}-a_n)/2\le C u_n^{-\theta}$ for all sufficiently large $n$.
This proves \eqref{eq:gaps}.
\end{proof}

Fix $t>0$.  By Lemma~\ref{lem:gaps}, all sufficiently late gaps are less
than $2\sqrt t$.  If $a<b$ are consecutive roots in this tail and
$r\in[a,b]$, then, with the convention $1/0=+\infty$,
\begin{equation}\label{eq:real-superlevel}
 \frac1{\left(r-a\right)^2}+\frac1{\left(b-r\right)^2}
 \ge\frac8{\left(b-a\right)^2}>\frac1t.
\end{equation}
Thus every sufficiently late positive gap lies in the real trace of the
symmetric superlevel domain introduced below.

\subsection{The saddle curve and the two critical points}

We now construct the positive saddle height and identify its order of growth. This height determines the leading contribution to the density.
\begin{lemma}\label{lem:height}
There is $U_t<\infty$ such that, whenever
\begin{equation}\label{eq:height-range}
 U_t\le u\le \frac{a_N}{4},
\end{equation}
there is a unique $v_N(u)>0$ satisfying
$t\sigma_N(u,v_N(u))=1$.  Moreover,
\begin{equation}\label{eq:height-size}
 c_tu^\theta\le v_N\left(u\right)\le C_tu^\theta,
\end{equation}
with constants independent of $N$ and $u$.
\end{lemma}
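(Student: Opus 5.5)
For fixed $u\ge0$, the map $v\mapsto\sigma_N(u,v)=\sum_{a\in A_N}((u-a)^2+v^2)^{-1}$ is continuous and strictly decreasing on $(0,\infty)$, and it tends to $0$ as $v\to\infty$ because $A_N$ is finite. Uniqueness of a solution of $t\sigma_N(u,v)=1$ is therefore automatic. Existence follows from the intermediate value theorem once I show that $t\sigma_N(u,v)>1$ for some small $v>0$. In the same way, the two-sided bound \eqref{eq:height-size} reduces to two inequalities, by monotonicity:
\[
 t\sigma_N\left(u,c_tu^\theta\right)>1,\qquad t\sigma_N\left(u,C_tu^\theta\right)<1,
\]
for $U_t\le u\le a_N/4$, with constants independent of $N$. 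The first inequality gives both existence and the lower bound; the second gives the upper bound.

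For the \textbf{upper bound}, I use the resolvent estimate \eqref{eq:resolvent-two}, which is uniform in $N$. At $v=Cu^\theta$, with $u\ge1$, it gives
\[
 \sigma_N\left(u,v\right)\le C'\left[C^{-2}u^{-2\theta}+2^\theta C^{-1}+C^{q-2}u^{\theta(q-2)}\right].
\]
Here $q-2<0$ and $\theta>0$, so the first and third terms are small for large $u$, and the middle term is $O(1/C)$. Choosing $C=C_t$ large and then $U_t$ large makes $t\sigma_N<1$.

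For the \textbf{lower bound}, I use ($\mathbf{LC}$) with $s=v=cu^\theta$. I need the constraint $C_{\mathrm{low}}u^{-\theta}\le s\le u/4$, which holds for large $u$ since $\theta<1$. I also need $u\ge U_0$. Because $u\le a_N/4$, every positive root counted in ($\mathbf{LC}$) lies in $A_N$, as remarked after \eqref{eq:LC}. There are at least $c_{\mathrm{low}}s\,u^\theta=c_{\mathrm{low}}c\,u^{2\theta}$ such roots, and each satisfies $(u-a)^2+v^2\le2v^2$. Hence
\[
 \sigma_N\left(u,v\right)\ge\frac{c_{\mathrm{low}}c\,u^{2\theta}}{2c^2u^{2\theta}}=\frac{c_{\mathrm{low}}}{2c},
\]
which exceeds $1/t$ once $c=c_t<c_{\mathrm{low}}t/2$. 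With this $c_t$, I then enlarge $U_t$ so that $c_tu^\theta\ge C_{\mathrm{low}}u^{-\theta}$ and $c_tu^\theta\le u/4$. Both hold for large $u$ because $2\theta>0$ and $\theta<1$.

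The main obstacle is this lower bound: it needs the lower count at the exact scale $v\asymp u^\theta$, uniformly in $N$. The key point is that the hypothesis $u\le a_N/4$ ensures the truncation $A_N$ already contains all the roots ($\mathbf{LC}$) produces. The remaining steps are routine monotonicity arguments.
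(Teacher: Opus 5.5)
Your proof is correct. It uses the same two ingredients as the paper: the resolvent bound \eqref{eq:resolvent-two} for the upper bound, where your argument is identical to the paper's, and ($\mathbf{LC}$), together with the observation that $u\le a_N/4$ keeps the counted roots inside $A_N$. What differs is how you organize existence and the lower bound.

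The paper first proves existence via Lemma~\ref{lem:nearest}: a root within $Cu^{-\theta}$ of $u$ gives $\sigma_N(u,0^+)\ge d_N(u)^{-2}>1/t$. It then bootstraps the lower bound. The same nearest root gives $v_N(u)\ge\sqrt{t/2}$, and this, combined with the upper bound, places $v_N(u)$ in the admissible window $[C_{\mathrm{low}}u^{-\theta},u/4]$. So ($\mathbf{LC}$) can be applied at the self-referential scale $s=v_N(u)$, and the identity $1/t=\sigma_N(u,v_N(u))\ge cu^\theta/v_N(u)$ is solved for $v_N(u)$.

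You instead test at the explicit point $v=c_tu^\theta$, where ($\mathbf{LC}$) gives the scale-free bound $\sigma_N\ge c_{\mathrm{low}}/(2c_t)>1/t$. Monotonicity then yields existence and the lower bound simultaneously, with no need for Lemma~\ref{lem:nearest} or the preliminary bound $v_N\ge\sqrt{t/2}$. Your version is a little shorter and more self-contained.

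The paper's version records, as a by-product, the count \eqref{eq:height-count} at the exact scale $v_N(u)$, which is reused in Lemmas~\ref{lem:derivatives} and~\ref{lem:vertical}. Once your two-sided bound is in hand, that count follows from one further application of ($\mathbf{LC}$) at $s=v_N(u)$. This is legitimate because $c_tu^\theta\ge C_{\mathrm{low}}u^{-\theta}$ and $C_tu^\theta\le u/4$ for large $u$, so nothing is lost.
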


\begin{proof}
Let $d_N(u)=\dist(u,A_N)$.  By Lemma~\ref{lem:nearest} and the
finite-truncation observation following \eqref{eq:LC},
$d_N(u)\le Cu^{-\theta}$ on \eqref{eq:height-range}, once $U_t$ is large.
Consequently,
\begin{equation*}
 \sigma_N\left(u,0\right)\ge d_N\left(u\right)^{-2}>1/t.
\end{equation*}
Here and below we use the convention
\begin{equation*}
 \sigma_N\left(u,0\right)\defeq \lim_{v\to 0}\sigma_N\left(u,v\right)\in\left(0,\infty\right].
\end{equation*}
Thus $\sigma_N(u,0)>1/t$.  On $(0,\infty)$, the function
$v\mapsto\sigma_N(u,v)$ is continuous and strictly decreasing, and it
tends to zero as $v\to\infty$.  This proves existence and uniqueness.

For the upper bound, insert $v=Ku^\theta$ into
\eqref{eq:resolvent-two}:
\begin{equation*}
 \sigma_N\left(u,Ku^\theta\right)
 \le C\left[K^{-2}u^{-2\theta}+K^{-1}+K^{q-2}u^{\theta\left(q-2\right)}\right].
\end{equation*}
Because $q-2<0$, this is less than $1/t$ when $K$ and then $U_t$ are
large.  Monotonicity gives $v_N(u)\le C_tu^\theta$.

For the lower bound, the nearest-root term and
$\sigma_N(u,v_N(u))=1/t$ imply
\begin{equation*}
 \frac1t\ge\frac1{d_N\left(u\right)^2+v_N\left(u\right)^2}.
\end{equation*}
After increasing $U_t$, $d_N(u)^2\le t/2$, and hence
$v_N(u)\ge\sqrt{t/2}$.  Thus $v_N(u)$ lies above the lower scale in
\emph{($\mathbf{LC}$)}.  The upper height bound and $\theta<1$ give
$v_N(u)\le u/4$ for large $u$, and the finite-truncation observation makes
the count below a count in $A_N$:
\begin{equation}\label{eq:height-count}
 \#\left\{a\in A_N:\left|a-u\right|\le v_N\left(u\right)\right\}
 \ge c\,v_N\left(u\right)u^\theta.
\end{equation}
Every term indexed by this set is at least
$[2v_N(u)^2]^{-1}$.  Therefore
\begin{equation*}
 \frac1t=\sigma_N\left(u,v_N\left(u\right)\right)
 \ge c\frac{u^\theta}{v_N\left(u\right)},
\end{equation*}
which proves the lower bound in \eqref{eq:height-size}.
\end{proof}

Define the real saddle map
\begin{equation}\label{eq:Psi}
 \Psi_N\left(u\right)
 \defeq u+t\sum_{a\in A_N}
       \frac{u-a}{\left(u-a\right)^2+v_N\left(u\right)^2}.
\end{equation}

Differentiating the saddle equation controls both the height and the real saddle map. These bounds give uniqueness of the saddle and quadratic decay along the contour.
\begin{lemma}\label{lem:derivatives}
For $u$ in \eqref{eq:height-range}, put
\begin{equation*}
 D_a\left(u\right)\defeq \left(u-a\right)^2+v_N\left(u\right)^2,
 \quad
 \mathfrak a_N\left(u\right)\defeq \sum_{a\in A_N}D_a\left(u\right)^{-2},
 \quad
 \mathfrak b_N\left(u\right)\defeq \sum_{a\in A_N}\left(u-a\right)D_a\left(u\right)^{-2}.
\end{equation*}
Then
\begin{equation}\label{eq:derivative-identities}
 v_N'\left(u\right)=-\frac{\mathfrak b_N\left(u\right)}{v_N\left(u\right)\mathfrak a_N\left(u\right)},
 \quad
 \Psi_N'\left(u\right)=2t\left[v_N\left(u\right)^2\mathfrak a_N\left(u\right)
                +\frac{\mathfrak b_N\left(u\right)^2}{\mathfrak a_N\left(u\right)}\right].
\end{equation}
There are constants $c_t,C_t>0$, independent of $N$ and $u$, such that
\begin{equation}\label{eq:derivative-bounds}
 c_t\le\Psi_N'\left(u\right)\le2,
 \quad
 \left|v_N'\left(u\right)\right|\le C_t.
\end{equation}
In particular, $\Psi_N$ is strictly increasing.
\end{lemma}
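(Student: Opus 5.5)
Differentiate the defining identity $t\sigma_N(u,v_N(u))=1$ implicitly, after noting that $v_N$ is $C^1$ by the implicit function theorem: $\partial_v\sigma_N(u,v)=-2v\,\mathfrak a_N<0$ since $v>0$, so the nondegeneracy condition holds and uniqueness in Lemma~\ref{lem:height} gives a single smooth branch. Writing $\sigma_N=\sum_a D_a^{-1}$, one has $\partial_u\sigma_N=-2\mathfrak b_N$ and $\partial_v\sigma_N=-2v\mathfrak a_N$. Hence $-2\mathfrak b_N-2v_N v_N'\mathfrak a_N=0$, which is the first identity in \eqref{eq:derivative-identities}.

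For $\Psi_N$, differentiate termwise. The derivative of $(u-a)/D_a$ in $u$ with $v$ frozen is $(v^2-(u-a)^2)/D_a^2=1/D_a-2(u-a)^2/D_a^2$. The $v$-dependence contributes $-2v v'(u-a)/D_a^2$. Summing and using $t\sum_a D_a^{-1}=1$ gives
\[
\Psi_N'=1+t\Bigl[\sigma_N-2\sum_a\frac{(u-a)^2}{D_a^2}\Bigr]-2tv_Nv_N'\mathfrak b_N
=2-2t\sum_a\frac{(u-a)^2}{D_a^2}+\frac{2t\mathfrak b_N^2}{\mathfrak a_N}.
\]
Since $(u-a)^2=D_a-v^2$, we get $\sum_a(u-a)^2D_a^{-2}=\sigma_N-v^2\mathfrak a_N=1/t-v^2\mathfrak a_N$. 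Thus $\Psi_N'=2tv^2\mathfrak a_N+2t\mathfrak b_N^2/\mathfrak a_N$, which is the second identity.

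For the bounds, the upper bound $\Psi_N'\le2$ follows from the intermediate expression: by Cauchy--Schwarz, $\mathfrak b_N^2\le\mathfrak a_N\sum_a(u-a)^2D_a^{-2}$, so $\Psi_N'\le 2-2t\sum(u-a)^2D_a^{-2}+2t\sum(u-a)^2D_a^{-2}=2$. For the lower bound it suffices to show $v_N^2\mathfrak a_N\ge c$. The counting lower bound \eqref{eq:height-count} gives at least $c\,v_Nu^\theta$ roots with $D_a\le2v_N^2$, hence
\[
\mathfrak a_N\ge c\,v_Nu^\theta/(4v_N^4).
\]
Multiplying by $v_N^2$ gives $v_N^2\mathfrak a_N\ge c\,u^\theta/v_N\ge c_t$ by \eqref{eq:height-size}.

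For $v_N'$, write $|v_N'|=|\mathfrak b_N|/(v_N\mathfrak a_N)$. Cauchy--Schwarz again gives $|\mathfrak b_N|\le \mathfrak a_N^{1/2}(\sum(u-a)^2D_a^{-2})^{1/2}\le\mathfrak a_N^{1/2}t^{-1/2}$. Hence $|v_N'|\le (t\,v_N^2\mathfrak a_N)^{-1/2}\le C_t$, using the same lower bound on $v_N^2\mathfrak a_N$.

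The only delicate point is this lower bound $v_N^2\mathfrak a_N\ge c_t$, which needs the lower count and the two-sided height estimate together. Uniformity in $N$ comes from the finite-truncation observation after \eqref{eq:LC}: the roots counted there lie in $A_N$. Strict monotonicity of $\Psi_N$ then follows from $\Psi_N'\ge c_t>0$.
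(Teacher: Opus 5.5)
Your proposal is correct and follows the paper's proof essentially step for step. It uses implicit differentiation of $t\sigma_N(u,v_N(u))=1$, then Cauchy--Schwarz both for $\Psi_N'\le2$ and for $|\mathfrak b_N|\le(\mathfrak a_N/t)^{1/2}$, and the curvature bound $v_N^2\mathfrak a_N\ge c_t$ obtained from \eqref{eq:height-count} and \eqref{eq:height-size}. The only cosmetic difference is that you read $\Psi_N'\le2$ off the intermediate formula $2-2t\sum_a(u-a)^2D_a^{-2}+2t\mathfrak b_N^2/\mathfrak a_N$, whereas the paper applies the same Cauchy--Schwarz bound to the final identity to get $v_N^2\mathfrak a_N+\mathfrak b_N^2/\mathfrak a_N\le\sum_aD_a^{-1}=1/t$.
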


\begin{proof}
The implicit-function theorem applies because
\begin{equation*}
 \partial_v\sigma_N\left(u,v\right)
 =-2v\sum_{a\in A_N}\left(\left(u-a\right)^2+v^2\right)^{-2}<0.
\end{equation*}
Differentiating $\sum_aD_a(u)^{-1}=1/t$ gives the first identity in
\eqref{eq:derivative-identities}.  Direct differentiation of
\eqref{eq:Psi}, followed by this substitution, gives the second.

By \eqref{eq:height-count} and \eqref{eq:height-size}, the interval
$|a-u|\le v_N(u)$ contains at least $c_tv_N(u)^2$ roots.  For those roots,
$D_a(u)\le2v_N(u)^2$, so
\begin{equation}\label{eq:curvature}
 v_N\left(u\right)^2\mathfrak a_N\left(u\right)\ge c_t.
\end{equation}
This proves the lower bound for $\Psi_N'$.  Cauchy--Schwarz gives
\begin{equation*}
 \frac{\mathfrak b_N\left(u\right)^2}{\mathfrak a_N\left(u\right)}
 \le\sum_{a\in A_N}\frac{\left(u-a\right)^2}{D_a\left(u\right)^2}.
\end{equation*}
Consequently,
\begin{equation*}
 v_N\left(u\right)^2\mathfrak a_N\left(u\right)
 +\frac{\mathfrak b_N\left(u\right)^2}{\mathfrak a_N\left(u\right)}
 \le\sum_{a\in A_N}\frac1{D_a\left(u\right)}=\frac1t,
\end{equation*}
which proves $\Psi_N'\le2$.  The same Cauchy--Schwarz estimate gives
$|\mathfrak b_N|^2\le\mathfrak a_N/t$; combining this with
\eqref{eq:curvature} and the first identity in
\eqref{eq:derivative-identities} proves the bound for $v_N'$.
\end{proof}

The real saddle map differs from the identity by a sublinear term. This allows us to locate the saddle corresponding to a prescribed spatial point.
\begin{lemma}\label{lem:Psi-location}
Uniformly on \eqref{eq:height-range},
\begin{equation}\label{eq:Psi-location}
 \left|\Psi_N\left(u\right)-u\right|\le C_tu^\theta\log\left(2+u\right).
\end{equation}
\end{lemma}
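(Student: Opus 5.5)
We must bound $\Psi_N(u)-u=t\sum_{a\in A_N}(u-a)/D_a(u)$, with $D_a(u)=(u-a)^2+v^2$ and $v=v_N(u)\asymp u^\theta$ by Lemma~\ref{lem:height}. The plan is to pair the roots in a way that exploits the symmetry $A_N=-A_N$, and then split the sum by the distance $|a-u|$ into a near region, an intermediate dyadic region, and far or negative roots. Each region should be controlled using ($\mathbf{UC}$) together with the height bounds.

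\emph{Near roots}, $|a-u|\le v$. Here we use Cauchy--Schwarz against the saddle identity $\sum_a 1/D_a=1/t$:
\[
\Bigl|\sum_{|a-u|\le v}\frac{u-a}{D_a}\Bigr|\le v\sum_a\frac1{D_a}=v/t.
\]
Since $v\le C_tu^\theta$, this contributes $O(u^\theta)$ to $\Psi_N(u)-u$.

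\emph{Intermediate roots}, $v<|a-u|\le u/2$. In this range $a>0$, and we group the roots dyadically in $|a-u|\in(2^{j-1}v,2^jv]$. A crude bound fails here: ($\mathbf{UC}$) gives block count $\lesssim 2^jv\,u^\theta$, while each term is at most $(2^{j-1}v)^{-1}$, so every block is $O(u^\theta)$. The number of blocks is $\log_2(u/v)=O(\log u)$, which yields exactly $C_tu^\theta\log(2+u)$. We deliberately avoid seeking cancellation between the left and right sides of $u$, since ($\mathbf{UC}$) only gives upper bounds. The term $s^q$ in ($\mathbf{UC}$) is harmless in this range: for $s\le u$ we have $s^q=s\cdot s^{\theta}\le s u^\theta$. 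The constant $1$ in ($\mathbf{UC}$) adds $\sum_j(2^{j}v)^{-1}=O(1)$.

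\emph{Far roots}, $|a-u|>u/2$, which include all negative roots and the positive roots $a>3u/2$ and $a<u/2$. Here we pair $a$ with $-a$ for $a>0$:
\[
\frac{u-a}{(u-a)^2+v^2}+\frac{u+a}{(u+a)^2+v^2}.
\]
For $a\ge 3u/2$ the two terms nearly cancel. Their sum equals $2u(v^2+u^2-a^2)/(D_aD_{-a})$, which is of order $u/a^2$. The tail estimate \eqref{eq:count-square-tail} with $R=u$ then gives $u\sum_{a>u}a^{-2}\lesssim u\cdot u^{q-2}=u^\theta$. The pairs whose positive member lies in $(u/2,3u/2)$ but outside the intermediate range are handled in the same way. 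For $a\le u/2$, each term of the pair is bounded by $2/u$, and there are $\lesssim u^q$ such roots by ($\mathbf{UC}$) at the origin, giving $u^{q-1}=u^\theta$. The negative partners of roots in $(u/2,3u/2)$ contribute at most $(2/u)\cdot\#\{a\le 3u/2\}\lesssim u^\theta$.

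Summing the three regions and multiplying by $t$ gives \eqref{eq:Psi-location}. The main obstacle is the intermediate region: the logarithm appears because each dyadic shell contributes the same order $u^\theta$. We need to verify that the ($\mathbf{UC}$) bound is applied with centre $u$ and radius up to $u/2$, so that $s(1+u)^\theta$ dominates $s^q$. We also need to check that the lower bound $v\ge c_tu^\theta$ from Lemma~\ref{lem:height} controls the constant-count term.
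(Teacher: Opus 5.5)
Your proof is correct and is essentially the paper's argument: dyadic shells around $u$ counted with ($\mathbf{UC}$), each contributing $O(u^\theta)$ over $O(\log(2+u))$ shells, together with the pairing $a\leftrightarrow -a$ and the reciprocal-square tail \eqref{eq:count-square-tail} for the far roots. The differences are cosmetic. You bound the innermost shell by the saddle identity $\sum_{a\in A_N}D_a(u)^{-1}=1/t$; this is just the trivial bound $|u-a|\le v$, not Cauchy--Schwarz. You also treat roots at distance $\asymp u$ by the crude $O(1/u)\times O(u^q)$ estimate, whereas the paper applies ($\mathbf{UC}$) on $B_0$ and continues the dyadic shells out to radius at least $3u$ before pairing the roots beyond $2u$.
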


\begin{proof}
Write $v=v_N(u)$.  We estimate the absolute value of
\begin{equation*}
 \sum_{a\in A_N}\frac{u-a}{\left(u-a\right)^2+v^2}
\end{equation*}
by separating roots with $a_n\le2u$ from paired roots with $a_n>2u$.

For the near roots, use the annuli
\begin{equation*}
 B_0\defeq \left\{a:\left|a-u\right|\le v\right\},
 \quad
 B_j\defeq \left\{a:2^{j-1}v<\left|a-u\right|\le2^jv\right\},\quad j\ge1,
\end{equation*}
through the smallest integer $J$ for which $2^Jv\ge3u$.  By minimality,
$2^Jv<6u$.  On $B_0$, \emph{($\mathbf{UC}$)} and
\eqref{eq:height-size} give
\begin{equation*}
 \sum_{a\in B_0}\frac{\left|u-a\right|}{\left(u-a\right)^2+v^2}
 \le C\left[v^{-1}+\left(1+u\right)^\theta+v^{q-1}\right]
 \le C_tu^\theta.
\end{equation*}
For $j\ge1$, the same count at radius $2^jv$ gives
\begin{equation*}
 \sum_{a\in B_j}\frac{\left|u-a\right|}{\left(u-a\right)^2+v^2}
 \le C\left[\left(2^jv\right)^{-1}+\left(1+u\right)^\theta+\left(2^jv\right)^{q-1}\right]
 \le C_tu^\theta.
\end{equation*}
The number of nonempty annuli is $O_t(\log(2+u))$, because
$v\ge c_tu^\theta$.  Hence
\begin{equation}\label{eq:Psi-near}
 \sum_{\substack{a\in A_N\\ \left|a\right|\le2u}}
 \frac{\left|u-a\right|}{\left(u-a\right)^2+v^2}
 \le C_tu^\theta\log\left(2+u\right).
\end{equation}

For $r>2u$, pair $r$ and $-r$:
\begin{equation*}
 \frac{u-r}{\left(u-r\right)^2+v^2}+\frac{u+r}{\left(u+r\right)^2+v^2}
 =\frac{2u\left(u^2+v^2-r^2\right)}
 {\left(\left(u-r\right)^2+v^2\right)\left(\left(u+r\right)^2+v^2\right)}.
\end{equation*}
For large $u$, \eqref{eq:height-size} gives $v\le u$; hence the last
display is $O(u/r^2)$.  The reciprocal-square tail therefore yields
\begin{equation}\label{eq:Psi-far}
 \left|\sum_{\substack{1\le n\le N\\a_n>2u}}
 \left[
 \frac{u-a_n}{\left(u-a_n\right)^2+v^2}
 +\frac{u+a_n}{\left(u+a_n\right)^2+v^2}
 \right]\right|
 \le Cu\left(2u\right)^{q-2}=Cu^\theta.
\end{equation}
Combining \eqref{eq:Psi-near} and \eqref{eq:Psi-far}, then multiplying
by $t$, proves \eqref{eq:Psi-location}.
\end{proof}

The preceding bounds give a conjugate pair of critical points throughout the interior region. Their heights have the same order as the density bound we seek.
\begin{proposition}\label{prop:saddles}
There is $R_t<\infty$, independent of $N$, such that, whenever
$R_t\le x\le a_N/16$, there is a unique
$u_N(x)\in[U_t,a_N/4]$ satisfying $\Psi_N(u_N(x))=x$.  Moreover,
\begin{equation*}
 \frac x2\le u_N\left(x\right)\le2x,
 \quad
 c_tx^\theta\le v_N\left(u_N\left(x\right)\right)\le C_tx^\theta,
\end{equation*}
and the phase has the conjugate critical points
\begin{equation}\label{eq:critical-points}
 z_\pm\defeq u_N\left(x\right)\pm\mi v_N\left(u_N\left(x\right)\right).
\end{equation}
\end{proposition}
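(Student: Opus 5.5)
The plan is to obtain $u_N(x)$ by inverting the real saddle map $\Psi_N$ on the interval \eqref{eq:height-range}, which is available for $U_t\le u\le a_N/4$ with $U_t$ from Lemma~\ref{lem:height}. By Lemma~\ref{lem:derivatives}, $\Psi_N$ is $C^1$ and strictly increasing there, with $\Psi_N'\ge c_t>0$. Hence it is a bijection onto $[\Psi_N(U_t),\Psi_N(a_N/4)]$, and uniqueness of a solution of $\Psi_N(u)=x$ in $[U_t,a_N/4]$ is immediate. Existence reduces to showing that $\Psi_N(U_t)\le x\le \Psi_N(a_N/4)$ for all $R_t\le x\le a_N/16$, which I would establish with Lemma~\ref{lem:Psi-location}.

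For the endpoint comparisons, Lemma~\ref{lem:Psi-location} gives $|\Psi_N(u)-u|\le C_tu^\theta\log(2+u)$, and since $\theta<1$ this is at most $u/2$ once $u\ge U_t'$. I would enlarge $U_t$ so that this holds, noting that the enlarged $U_t$ still satisfies Lemma~\ref{lem:height}. Then $\Psi_N(U_t)\le \tfrac32U_t\le R_t$ if we choose $R_t\ge 2U_t$. Provided $a_N/4\ge U_t$, which is automatic when $a_N/16\ge R_t$, we also get $\Psi_N(a_N/4)\ge a_N/8\ge a_N/16\ge x$. The intermediate value theorem then gives $u_N(x)$. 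The two-sided comparison $x/2\le u_N\le 2x$ follows from $|x-u_N|\le u_N/2$, that is $\tfrac12u_N\le x\le\tfrac32u_N$, which is slightly stronger than needed. All constants depend only on $t$ and the counting constants, never on $N$.

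The height bounds follow by inserting $u=u_N(x)$ into \eqref{eq:height-size}: $c_tu^\theta\le v_N(u)\le C_tu^\theta$. With $x/2\le u\le 2x$, this becomes $c_t'x^\theta\le v_N\le C_t'x^\theta$.

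Finally, for the critical points, write $z=u+\mi v$ with $u=u_N(x)$ and $v=v_N(u)>0$. The second equation in \eqref{eq:saddle-system} holds because $t\sigma_N(u,v)=1$. The first equation is precisely $\Psi_N(u)=x$ by the definition \eqref{eq:Psi}. Therefore $\Phi_{N,x}'(z_+)=0$. Since $Q_N$ is real on the real axis, $\Phi_{N,x}'(\bar z)=\overline{\Phi_{N,x}'(z)}$ for real $x$, so $z_-=\bar z_+$ is also critical.

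The main obstacle is confirming that Lemma~\ref{lem:Psi-location} holds uniformly up to $u=a_N/4$. Its far-root pairing uses only roots in $A_N$, and truncation merely drops terms, so the proof carries over, but this should be checked. The other point needing care is the compatibility of the choices: $U_t$ must be large enough for the location estimate to give $|\Psi_N(u)-u|\le u/2$, and $R_t$ is then chosen relative to $U_t$.
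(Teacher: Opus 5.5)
Your proposal is correct and follows essentially the same route as the paper: it combines strict monotonicity of $\Psi_N$ from Lemma~\ref{lem:derivatives}, the location estimate of Lemma~\ref{lem:Psi-location}, the height bounds of Lemma~\ref{lem:height}, and the saddle system. The differences are minor. The paper brackets the solution directly by $\Psi_N(x/2)<x<\Psi_N(2x)$, which gives existence and $x/2\le u_N(x)\le 2x$ at once, whereas you bracket at the endpoints $U_t$ and $a_N/4$ and recover the localization afterwards from $|\Psi_N(u_N)-u_N|\le u_N/2$. Also, the uniformity up to $a_N/4$ that you flag as needing a check is already part of the statement of Lemma~\ref{lem:Psi-location}.
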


\begin{proof}
Increase $R_t$ so that $R_t/2\ge U_t$ and
\begin{equation*}
 C_tr^\theta\log\left(2+r\right)<\frac r2
 \quad\text{for every }r\ge R_t/2.
\end{equation*}
Then
Lemma~\ref{lem:Psi-location} gives
\begin{equation*}
 \Psi_N\left(x/2\right)<x<\Psi_N\left(2x\right).
\end{equation*}
Because $x\le a_N/16$, the whole interval
$[x/2,2x]$ lies in $[U_t,a_N/4]$.  Continuity and strict monotonicity give
the unique solution.  The height estimate follows from
Lemma~\ref{lem:height}, and \eqref{eq:saddle-system} gives
\eqref{eq:critical-points}.
\end{proof}

\subsection{Steepest-descent geometry}

Define the symmetric superlevel domain
\begin{equation*}
 \Omega_{N,t}
 \defeq\left\{r+\mi s\in\C:
 t\sum_{a\in A_N}\frac1{\left(r-a\right)^2+s^2}>1\right\},
\end{equation*}
with the convention that the summand is $+\infty$ at a source.  Write
\begin{equation*}
 h_{N,t}\left(r,s\right)\defeq t\sum_{a\in A_N}\frac1{\left(r-a\right)^2+s^2}.
\end{equation*}

Choose $m=m(t)$ so large that $a_m\ge U_t$ and
\begin{equation*}
 a_{n+1}-a_n<2\sqrt t,\quad n\ge m.
\end{equation*}
Fix a non-source point
\begin{equation*}
 U\in\left(a_m,a_{m+1}\right).
\end{equation*}
Then, for every $N\ge m+1$, \eqref{eq:real-superlevel} shows that
\begin{equation*}
 h_{N,t}\left(r,0\right)>1,\quad U\le r\le a_N.
\end{equation*}
For $r>a_N$, the function $r\mapsto h_{N,t}(r,0)$ is strictly decreasing
from $+\infty$ to zero.  Hence there is a unique $R_{N,t}^+>a_N$ such that
$h_{N,t}(R_{N,t}^+,0)=1$, and
\begin{equation*}
 \left\{r\ge U:h_{N,t}\left(r,0\right)>1\right\}=\left[U,R_{N,t}^+\right).
\end{equation*}

For $U\le r<R_{N,t}^+$, the function $s\mapsto h_{N,t}(r,s)$ decreases
strictly on $(0,\infty)$.  Hence there is a unique global tail height
$\widehat v_N(r)>0$ satisfying $h_{N,t}(r,\widehat v_N(r))=1$.  It is
$C^1$ on $(U,R_{N,t}^+)$, is continuously extendible to $U$, and extends
continuously to the right endpoint by $\widehat v_N(R_{N,t}^+)=0$.  Put
\begin{equation*}
 \widehat z_N\left(r\right)\defeq r+\mi\widehat v_N\left(r\right),
 \quad U\le r\le R_{N,t}^+.
\end{equation*}
On $[U,a_N/4]$, uniqueness implies $\widehat v_N=v_N$.

\noindent
The cut $U$ and the index after which this construction works are independent
of $N$.  The truncated tail
\begin{equation*}
 \cT_{N,t}^+\defeq \overline{\Omega_{N,t}\cap\left\{\Re z>U\right\}}
\end{equation*}
has the exact description
\begin{equation*}
 \cT_{N,t}^+
 =\left\{r+\mi s:
 U\le r\le R_{N,t}^+,
 \quad \left|s\right|\le\widehat v_N\left(r\right)\right\}.
\end{equation*}
In particular, it is connected.  It is a truncated part of a component
of $\Omega_{N,t}$.

Introduce
\begin{equation*}
 H_N\left(z\right)\defeq z+t\sum_{a\in A_N}\frac1{z-a}.
\end{equation*}
Define, on the open global tail,
\begin{equation*}
 \widehat\Psi_N\left(r\right)
 \defeq r+t\sum_{a\in A_N}
 \frac{r-a}{\left(r-a\right)^2+\widehat v_N\left(r\right)^2}.
\end{equation*}
The boundary equation gives
\begin{equation*}
 H_N\left(\widehat z_N\left(r\right)\right)=\widehat\Psi_N\left(r\right)\in\R.
\end{equation*}
The algebraic derivative identity in Lemma~\ref{lem:derivatives}
applies to the global height and gives $\widehat\Psi_N'(r)>0$
for $U<r<R_{N,t}^+$.  Thus $\widehat\Psi_N$ is strictly increasing and
extends monotonically to the right endpoint.  Since $R_{N,t}^+>a_N$, all
denominators remain nonzero there.  Thus $\widehat\Psi_N$ and the global
boundary phase defined below extend continuously to $R_{N,t}^+$.

Fix $x$ in the range of Proposition~\ref{prop:saddles}, and abbreviate
\begin{equation*}
 u\defeq u_N\left(x\right),\quad v\defeq v_N\left(u\right),\quad z_+=u+\mi v.
\end{equation*}
On the local upper boundary put
\begin{equation*}
 G_N\left(r\right)\defeq \Re\Phi_{N,x}\left(r+\mi v_N\left(r\right)\right).
\end{equation*}
Since $\Phi_{N,x}'(z)=(H_N(z)-x)/t$, one has
\begin{equation*}
 G_N'\left(r\right)=\frac{\Psi_N\left(r\right)-x}{t}.
\end{equation*}
Using $x=\Psi_N(u)$ and \eqref{eq:derivative-bounds}, we obtain
\begin{equation}\label{eq:horizontal-gaussian}
 G_N\left(r\right)-G_N\left(u\right)\ge c_t\left(r-u\right)^2
 \quad\text{whenever the interval between $r$ and $u$ lies in }
 \left[U_t,a_N/4\right].
\end{equation}
For the global tail define
$\widehat G_N(r)=\Re\Phi_{N,x}(\widehat z_N(r))$.  Then
\begin{equation*}
 \widehat G_N'\left(r\right)=\frac{\widehat\Psi_N\left(r\right)-x}{t}
\end{equation*}
on the open tail, and its monotonicity statements extend to the endpoint by
continuity.

For the vertical line $L_u=\{u+\mi s:s\in\R\}$, use the branch-independent
real phase
\begin{equation*}
 F_N\left(s\right)
 \defeq \frac{\left(u-x\right)^2-s^2}{2t}+\log\left|Q_N\left(u+\mi s\right)\right|.
\end{equation*}
If $u\in A_N$, set $F_N(0)=-\infty$; otherwise the same formula applies at
$s=0$.  Thus real-part phase notation below always means this
single-valued expression.  Direct differentiation away from a possible
source at $s=0$ gives
\begin{equation}\label{eq:F-derivative}
 F_N'\left(s\right)=s\left(\sigma_N\left(u,\left|s\right|\right)-\frac1t\right),
 \quad s\ne0.
\end{equation}

Along the vertical contour, the real part of the phase decreases quadratically away from the two critical points. The resulting estimate controls the unbounded part of the double integral.
\begin{lemma}\label{lem:vertical}
There is $c_t>0$, independent of $N,u,s$, such that
\begin{equation}\label{eq:vertical-gaussian}
 F_N\left(s\right)-F_N\left(v\right)
 \le-c_t\dist\left(s,\left\{-v,v\right\}\right)^2,
 \quad s\in\R.
\end{equation}
\end{lemma}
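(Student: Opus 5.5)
The plan is to integrate the derivative identity \eqref{eq:F-derivative}. Since $F_N$ is even in $s$, it suffices to treat $s\ge0$. The function $s\mapsto\sigma_N(u,s)$ is strictly decreasing on $(0,\infty)$ and equals $1/t$ exactly at $s=v=v_N(u)$ by Lemma~\ref{lem:height}. Hence $F_N'>0$ on $(0,v)$ and $F_N'<0$ on $(v,\infty)$, so $v$ is the global maximum on $[0,\infty)$. If $u\in A_N$, then $F_N(0)=-\infty$ and the inequality at $s=0$ is trivial. The task is therefore to make the sign change quantitative: I aim to show $|F_N'(s)|\ge c_t|s-v|$ for $s\ge0$, with $c_t$ uniform in $N$ and $u\in[U_t,a_N/4]$. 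Integrating from $v$ then gives \eqref{eq:vertical-gaussian}.

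\emph{Region $s\ge v$.} Here $\sigma_N(u,s)\le\sigma_N(u,v)=1/t$, so $F_N'(s)=-s(1/t-\sigma_N(u,s))$. I will write
\[
1/t-\sigma_N(u,s)=\sigma_N(u,v)-\sigma_N(u,s)=\sum_a\frac{s^2-v^2}{D_a(v)\,D_a(s)},
\]
where $D_a(r)=(u-a)^2+r^2$. For $v\le s\le2v$, restrict the sum to the $\ge c_tv^2$ roots with $|a-u|\le v$ found in the proof of Lemma~\ref{lem:derivatives}. For these roots $D_a(v)D_a(s)\le 10v^4$, so the difference is at least $c(s^2-v^2)/v^2\ge c(s-v)/v$, and therefore $|F_N'|\ge c(s-v)$. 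For $s\ge2v$, use $\sigma_N(u,s)\le\sigma_N(u,2v)\le(1-c)/t$ by the same estimate at $s=2v$. This gives $|F_N'|\ge cs/t\ge c(s-v)$. Integrating these bounds yields the quadratic decay.

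\emph{Region $0<s\le v$.} Here $F_N'(s)=s(\sigma_N(u,s)-\sigma_N(u,v))=s\sum_a(v^2-s^2)/(D_a(s)D_a(v))$. For $s\ge v/2$, the same root-count argument gives $F_N'\ge c(v-s)$. For $s\le v/2$, we have $\sigma_N(u,s)\ge\sigma_N(u,v/2)\ge(1+c)/t$, so $F_N'\ge cs$. Integrating from $s$ to $v$ gives $F_N(v)-F_N(s)\ge c\int_s^v\min(r,v-r)\,dr$, which is only of order $c(v-s)^2$ when $s$ is comparable to $v$. Near $s=0$, the quantity $\dist(s,\{\pm v\})^2$ is about $v^2$, while the lower bound obtained is about $cv^2$. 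This is still of the correct form, since $\int_0^{v/2} r\,dr\sim v^2/8$, so the inequality holds with a smaller constant on all of $[0,v]$.

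The main obstacle is the uniform lower bound $\sum_{a}1/(D_a(v)D_a(s))\gtrsim 1/v^2$, that is, converting the local count \eqref{eq:height-count} into curvature at the scale $v\asymp u^\theta$ uniformly in $N$. This is exactly \eqref{eq:curvature}, and I will reuse it with $D_a(s)\le 5v^2$ for $|a-u|\le v$ and $s\le2v$. Everything else is monotonicity of $\sigma_N$ in the height variable.
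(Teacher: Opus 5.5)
Your proposal is correct and is essentially the paper's argument. It uses evenness, the identity $\sigma_N(u,s)-\sigma_N(u,v)=\sum_a (v^2-s^2)/(D_a(s)D_a(v))$ restricted to the $\ge c_tv^2$ roots with $|a-u|\le v$, the split at $s=2v$, and monotonicity of $\sigma_N(u,\cdot)$ beyond $2v$. The differences are cosmetic: the paper treats all of $[0,v]$ at once by integrating $F_N'(s)\ge c_t s(v^2-s^2)/v^2$, which already gives $c_t(v-s)^2$ without your split at $v/2$. For $s\ge 2v$ it integrates $F_N'\le -c_t s$ from $2v$ and combines the result with the bound at $2v$, rather than using $|F_N'(r)|\ge c(r-v)$ directly.
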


\begin{proof}
The function $F_N$ is even, so take $s\ge0$.  If $u\in A_N$, the estimate
at $s=0$ follows by letting $s\downarrow0$.  Let
$E=\{a\in A_N:|a-u|\le v\}$.  By \eqref{eq:height-count} and
\eqref{eq:height-size}, $\#E\ge c_tv^2$.

If $0\le s\le v$, then
\begin{equation*}
 \sigma_N\left(u,s\right)-\sigma_N\left(u,v\right)
 =\sum_{a\in A_N}
 \frac{v^2-s^2}{\left(\left(u-a\right)^2+s^2\right)\left(\left(u-a\right)^2+v^2\right)}.
\end{equation*}
Restricting to $E$ yields
$\sigma_N(u,s)-1/t\ge c_t(v^2-s^2)/v^2$.  Integrating
\eqref{eq:F-derivative} gives
\begin{equation}\label{eq:vertical-one}
 F_N\left(v\right)-F_N\left(s\right)\ge c_t\left(v-s\right)^2,
 \quad 0\le s\le v.
\end{equation}
The same computation, now using
$((u-a)^2+s^2)\le5v^2$ for $a\in E$ and $v\le s\le2v$, gives
\begin{equation}\label{eq:vertical-two}
 F_N\left(v\right)-F_N\left(s\right)\ge c_t\left(s-v\right)^2,
 \quad v\le s\le2v.
\end{equation}
Finally,
\begin{equation*}
 \sigma_N\left(u,v\right)-\sigma_N\left(u,2v\right)\ge c_t,
\end{equation*}
so monotonicity of $\sigma_N(u,s)$ implies
$F_N'(s)\le-c_ts$ for $s\ge2v$.  Hence
\begin{equation}\label{eq:vertical-three}
 F_N\left(s\right)-F_N\left(2v\right)\le-c_t\left(s^2-4v^2\right),
 \quad s\ge2v.
\end{equation}
Combining this with \eqref{eq:vertical-two} at $2v$ gives
\begin{equation}\label{eq:vertical-four}
 F_N\left(v\right)-F_N\left(s\right)\ge c_t\left(s-v\right)^2,
 \quad s\ge2v.
\end{equation}
Equations \eqref{eq:vertical-one}--\eqref{eq:vertical-four}, followed by
evenness, prove \eqref{eq:vertical-gaussian}.
\end{proof}

\subsection{Global contour and the interior kernel estimate}

We also need a bound on the length of the global contour. Polynomial growth of its length will be dominated by the phase decay near the finite edge.
\begin{lemma}\label{lem:length}
For every fixed $t>0$,
\begin{equation}\label{eq:length}
 \length\left(\partial\Omega_{N,t}\right)
 \le C_tNa_N
 \le C_t\left(1+a_N^{q+1}\right).
\end{equation}
The constant is independent of $N$; it may depend on $t$, $q$, the constant
in \emph{($\mathbf{UC}$)}, and the fixed configuration through $a_1>0$.
\end{lemma}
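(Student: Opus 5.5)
The boundary $\partial\Omega_{N,t}$ is contained in the real-algebraic curve $\{h_{N,t}(r,s)=1\}$. Clearing denominators, this is the zero set of the polynomial
\[
P(r,s)=\prod_{a\in A_N}\bigl((r-a)^2+s^2\bigr)-t\sum_{a\in A_N}\prod_{b\ne a}\bigl((r-b)^2+s^2\bigr),
\]
which has degree $4N$ in $(r,s)$ (recall $\ep=0$, so $|A_N|=2N$). I would bound the length by the Crofton formula. A line meets $\{P=0\}$ in at most $4N$ points unless it is contained in the curve, and that cannot happen because $h_{N,t}\to0$ at infinity along every line. So it suffices to control the bounding box of $\Omega_{N,t}$ and multiply.

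\emph{Step 1 (confinement).} On $\Omega_{N,t}$ we have $t\cdot 2N/\dist(z,A_N)^2\ge h_{N,t}>1$, hence $\dist(z,A_N)<\sqrt{2Nt}$. Therefore $\Omega_{N,t}$ lies in the rectangle
\[
[-a_N-\sqrt{2Nt},\,a_N+\sqrt{2Nt}]\times[-\sqrt{2Nt},\sqrt{2Nt}],
\]
whose diameter is $O(a_N+\sqrt{Nt})$.

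\emph{Step 2 (Crofton).} For a rectifiable curve $\mathcal C$ contained in a disc of radius $\varrho$,
\[
\length(\mathcal C)=\tfrac12\int\#(\ell\cap\mathcal C)\,d\ell ,
\]
and the measure of the set of lines meeting the disc is $2\pi\varrho$. This gives $\length\le 4N\pi\varrho\le C N(a_N+\sqrt{Nt})$.

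\emph{Step 3 (removing $\sqrt N$).} By \eqref{eq:edge-size}, $N\le C(1+a_N^q)$ with $q<2$, so $\sqrt N\le C(1+a_N^{q/2})\le C(1+a_N)$. Since $a_N\ge a_1>0$, constants can be absorbed into multiples of $a_N$, which gives $\length\le C_tNa_N$. Applying \eqref{eq:edge-size} once more yields $Na_N\le C(a_N+a_N^{q+1})\le C(1+a_N^{q+1})$, using $a_N\ge a_1$ to handle the case $a_N<1$.

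The only delicate point is the Crofton step. $\partial\Omega_{N,t}$ is only a subset of $\{P=0\}$, and this is harmless: a subset meets each line in no more points, so the intersection count still bounds the length. Also, the Crofton formula needs rectifiability, which holds because real algebraic curves are finite unions of analytic arcs and points. If one prefers to avoid integral geometry, an alternative is to split the curve into $O(N)$ pieces that are monotone in $r$ and in $s$, cutting at the critical points of $P$ restricted to the curve (Bézout gives $O(N^2)$ of these, which would be too many). I would therefore stick with Crofton, for which only the line-intersection count $4N$ matters.
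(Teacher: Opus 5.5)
Your proof is correct and follows the paper's argument essentially step by step. You clear denominators to get a degree-$4N$ polynomial, confine the boundary to a disc of radius $O(a_N+\sqrt{Nt})=O_t(1+a_N)$ via $\dist(z,A_N)\le\sqrt{2Nt}$, apply Cauchy--Crofton with at most $4N$ intersections per line, and finish with \eqref{eq:edge-size}. The only cosmetic difference is that the paper rules out line components via the nonvanishing leading homogeneous part $-(r^2+s^2)^{2N}$, whereas you use $h_{N,t}\to0$ along every line; both work.
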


\begin{proof}
Put $D_a(r,s)=(r-a)^2+s^2$.  Away from the sources, every boundary point
satisfies
\begin{equation}\label{eq:boundary-level}
 t\sum_{a\in A_N}\frac1{D_a\left(r,s\right)}=1.
\end{equation}
Clear denominators and define
\begin{equation*}
 P_N\left(r,s\right)
 \defeq t\sum_{a\in A_N}\prod_{\substack{b\in A_N\\b\ne a}}D_b\left(r,s\right)
   -\prod_{a\in A_N}D_a\left(r,s\right).
\end{equation*}
Away from the sources,
$P_N=(\prod_aD_a)(h_{N,t}-1)$, while at a source $P_N>0$.
Consequently, $\Omega_{N,t}=\{P_N>0\}$ is semialgebraic and
$\partial\Omega_{N,t}\subset\{P_N=0\}$.  Since $A_N$ has $2N$ points,
\begin{equation*}
 \deg P_N=4N,
 \quad
 \text{the leading homogeneous part is }-\left(r^2+s^2\right)^{2N}.
\end{equation*}
The restriction of $P_N$ to any real affine line with nonzero direction
$(\alpha,\beta)$ has leading coefficient
$-(\alpha^2+\beta^2)^{2N}\ne0$.  Thus no affine line is a component of
the algebraic curve.

We next bound the curve spatially.  At a boundary point with $s\ne0$,
\eqref{eq:boundary-level} gives
\begin{equation*}
 \left|s\right|\le\sqrt{2Nt};
\end{equation*}
the same inequality is trivial when $s=0$.  If
$d(r)=\dist(r,[-a_N,a_N])>0$, then
\eqref{eq:boundary-level} gives
\begin{equation*}
 d\left(r\right)\le\sqrt{2Nt};
\end{equation*}
when $d(r)=0$ there is nothing to prove.  By \eqref{eq:edge-size} and
$q/2<1$, the boundary is therefore contained in a disk of radius
\begin{equation}\label{eq:containing-disk}
 R_{N,t}^{\mathrm{disk}}\le C_t\left(1+a_N\right).
\end{equation}

This bounded semialgebraic boundary is a finite union of rectifiable
arcs and points, so the Cauchy--Crofton formula applies. Every affine
line meets the boundary at at most $4N$ points, because the restriction
of $P_N$ is a nonzero polynomial of degree at most $4N$. Only lines
meeting the disk in \eqref{eq:containing-disk} contribute. Hence
\begin{equation*}
 \length\left(\partial\Omega_{N,t}\right)
 \le C_tN\left(1+a_N\right)
 \le C_tNa_N.
\end{equation*}
Here the final constant may depend on $a_1>0$.  Combining this with
\eqref{eq:edge-size} proves \eqref{eq:length}.
\end{proof}

We now deform the finite-particle kernel through the critical points. The crossed residue gives the saddle height; the remaining double integral is uniformly bounded.
\begin{proposition}\label{prop:interior-bound}
For every fixed $t>0$, there are $R_t,C_t<\infty$, independent of $N$,
such that, whenever $R_t\le x\le a_N/16$,
\begin{equation*}
 \rho_{N,t}\left(x\right)
 \le C_t+\frac{v_N\left(u_N\left(x\right)\right)}{\pi t}
 \le C_t\left(1+x^\theta\right).
\end{equation*}
By symmetry, the same estimate holds for negative $x$.
\end{proposition}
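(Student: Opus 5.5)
The plan is to deform the two contours in \eqref{eq:kernel} through the conjugate critical points $z_\pm$ of Proposition~\ref{prop:saddles}. The crossed residue produced by the deformation gives the term $v_N(u_N(x))/(\pi t)$, and the remaining double integral is bounded by $C_t$ uniformly in $N$. Fix $x$ with $R_t\le x\le a_N/16$, and write $u=u_N(x)$, $v=v_N(u)$, $z_\pm=u\pm\mi v$.

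First, move the $z$-loops $C_N$ onto $\partial\Omega_{N,t}$, oriented positively around $A_N$. This is legitimate because $\Omega_{N,t}$ contains every source, is bounded by \eqref{eq:containing-disk}, and the $z$-integrand has singularities only at $A_N$ once $w$ is kept off the loops. The right tail of this boundary is the curve $\widehat z_N$ and its conjugate; on $[U_t,a_N/4]$ it passes through $z_\pm$ because $\widehat v_N=v_N$ there.

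Next, shift the $w$-line from $\mi\R$ to $L_u=u+\mi\R$. The Gaussian factor in $w$ controls the horizontal ends. For each $z$ on the boundary lying between the two vertical lines, the shift crosses the simple pole $w=z$, whose residue is $\me^{0}=1$ up to the factor $1/t$. The $z$-points that are crossed are the boundary arc from $z_-$ to $z_+$ (together with closed components, whose contributions vanish because the residue is constant). I therefore expect, as in \eqref{eq:decomposition},
\[
 \rho_{N,t}(x)=\frac{z_+-z_-}{2\pi\mi\,t}+\frac{1}{(2\pi\mi)^2t}\int_{\partial\Omega_{N,t}}\dif z\int_{L_u}\dif w\,\frac{\me^{\Phi_{N,x}(w)-\Phi_{N,x}(z)}}{w-z},
\]
and the first term equals $v/(\pi t)$. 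The orientation and sign conventions must be checked. In particular, any small loops around roots of $A_N\cap(-U,U)$ that lie inside the strip between $\mi\R$ and $L_u$ also produce constant residues, which must be accounted for. The height bound \eqref{eq:height-size} then gives $v/(\pi t)\le C_tx^\theta$.

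For the remaining integral, use $\Re\Phi_{N,x}(z_+)=F_N(v)$ and bound the exponent by $\Re\Phi(w)-\Re\Phi(z_+)$ plus $\Re\Phi(z_+)-\Re\Phi(z)$.

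\emph{The $w$-part.} Lemma~\ref{lem:vertical} gives $\Re\Phi(w)-\Re\Phi(z_+)\le -c_t\dist(s,\{\pm v\})^2$ for $w=u+\mi s$.

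\emph{The local $z$-part.} On the local boundary, \eqref{eq:horizontal-gaussian} gives $\Re\Phi(z)-\Re\Phi(z_+)\ge c_t(r-u)^2$.

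\emph{The global right tail.} On the rest of the right tail up to $R^+_{N,t}$, the monotonicity of $\widehat\Psi_N$ shows that $\widehat G_N$ is increasing beyond $u$. Together with $\widehat\Psi_N\ge \widehat\Psi_N(a_N/4)\ge x+c a_N$, this yields a gain of order $a_N^2$ near the far edge, which beats the length bound of Lemma~\ref{lem:length}.

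\emph{The crossing singularity.} Near $z_\pm$ the denominator $|w-z|$ has the two-dimensional form $((r-u)^2+(s\mp v)^2)^{1/2}$ after using the Lipschitz bound $|v_N'|\le C_t$. This is integrable, exactly as in Lemma~\ref{integralbiliylemma}, and the Gaussian weights give an $N$-uniform constant.

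The main obstacle is the part of $\partial\Omega_{N,t}$ away from the right tail, namely the components with $\Re z<U$ (loops around small roots) and the mirror image on the negative side. No saddle structure is available there, and uniformity in $N$ must come from crude bounds. I would first try the following comparison. For such $z$ one has $|z|\le C_t$ or $\Re z<0$, so $\Re(z-x)^2\ge x^2-C_tx$, whereas $\Re(z_+-x)^2=(u-x)^2-v^2$ with $|u-x|\le C_tx^\theta\log x$ by Lemma~\ref{lem:Psi-location}. Moreover $\log|Q_N(z_+)|\le C(1+x^q)$, and on these components $\log|Q_N(z)|$ is bounded below: for the small-root loops it is at least $\log$ of a fixed separation of the finitely many roots below $U$, times the convergent tail product. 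On the negative side, evenness $Q_N(-z)=Q_N(z)$ together with the larger quadratic term $(-r-x)^2$ suffices. Since $q<2$, this gives $\Re\Phi(z)-\Re\Phi(z_+)\ge x^2/(4t)$ for $x\ge R_t$, and hence a contribution $O(\me^{-x^2/8t})$ after multiplying by the polynomial length bound of Lemma~\ref{lem:length} and integrating the Gaussian in $w$. Making the lower bound on $\log|Q_N|$ on these components genuinely uniform in $N$ is the step I expect to require the most care.
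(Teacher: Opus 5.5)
Your overall strategy is the paper's: deform through $z_\pm$, read $v/(\pi t)$ off the crossed residue, and bound the remainder using Lemma~\ref{lem:vertical}, \eqref{eq:horizontal-gaussian}, the $c_ta_N^2$ outer gap against Lemma~\ref{lem:length}, and the integrable two-dimensional crossing singularity. Two steps do not work as written, and the first is a genuine gap.

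\emph{Uniformity in $N$ away from the right tail.} You bound these pieces by a phase gap of order $x^2$ times the length bound of Lemma~\ref{lem:length}. That gives $C_t(1+a_N^{q+1})\me^{-x^2/(8t)}$. The proposition must hold uniformly for $R_t\le x\le a_N/16$, and $x=R_t$ with $N\to\infty$ is allowed, so this bound is not $O(1)$.

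On the negative side the contour contains the entire reflected tail, whose length grows with $a_N$. Even granting a gap of order $x^2$ there, you cannot multiply it by the total length. Instead use the exact reflection gain $2rx/t$ from \eqref{eq:negative-gain}, with the same regular/outer split as on the right. On $U\le r\le a_N/4$, use the Lipschitz parametrization, the weight $\me^{-c_t(r-u)^2-2rx/t}$, and the denominator bound $|w-z|\ge u+r$. Only the outer arc, which carries the $c_ta_N^2$ gap, should meet the polynomial length.

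In the strip $|\Re z|<U$, the set $\partial\Omega_{N,t}$ depends on $N$. Nothing you cite bounds its length uniformly; Lemma~\ref{lem:length} only gives $O(Na_N)$ for the whole boundary. The paper replaces this piece by a fixed compact carrier: the chords at $\Re z=\pm U$, whose heights are uniformly bounded by \eqref{eq:resolvent-two}, plus fixed small loops around the finitely many roots in $(-U,U)$. On that carrier, $|Q_N|$ is bounded above and below uniformly by local uniform convergence to $Q_\infty$. The gap $\me^{-c_tx^2}$ times a fixed length is then $O(1)$. The pointwise lower bound on $\log|Q_N|$ that you flagged is not the obstacle: on $\partial\Omega_{N,t}$ the level equation already forces $|z-a|\ge\sqrt t$ for every $a\in A_N$.

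\emph{The first deformation.} Moving $C_N$ onto $\partial\Omega_{N,t}$ while $w\in\mi\R$ is not pole-free in general. If $2t\sum_{n\le N}a_n^{-2}>1$ (for instance, for every large $t$), then $\Omega_{N,t}$ contains a segment $\{\mi s:|s|<s_0\}$ of the $w$-line. Its boundary crosses $\mi\R$ at $\pm\mi s_0$, so the deformation sweeps across the pole $z=w$ and adds $s_0/(\pi t)$.

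The subsequent shift of $w$ to $L_u$ crosses the boundary arcs with $0<\Re z<u$, not the arc through $R^+_{N,t}$. The $\dif z$-integral over those arcs is $2\mi s_0-2\mi v$. The $s_0$ terms then cancel, so your decomposition formula survives, but only after this bookkeeping.

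The paper avoids the issue by a different order of moves. It first shifts $w$ to a line $L_{B_N}$ to the right of everything; each complete source loop crossed contributes $\oint\dif z=0$. It then deforms the $z$-contour, and finally returns $w$ to $L_u$, which crosses exactly the arc $\cA_N$ with $\Re z>u$.
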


\begin{proof}
We treat $x>0$. We construct the contour and bound its pieces before
performing the deformation.

Use the non-source cut $U$ fixed above.  Choose
$N_t\ge\max\{m+1,2\}$ so large
that, for every $N\ge N_t$, the truncated tail $\cT_{N,t}^+$ is available, $a_N/4>U$, and the finite core to
the left of $U$ has stabilized.  The positively oriented boundary
$\Gamma_N^+$ of $\cT_{N,t}^+$ starts at
$U-\mi\widehat v_N(U)$, follows the lower graph to $R_{N,t}^+$, returns on
the upper graph to $U+\mi\widehat v_N(U)$, and closes by the downward chord
at $U$.  Reflect through the origin to obtain the positively oriented
negative-tail contour $\Gamma_N^-=-\Gamma_N^+$.

Surround the finitely many positive sources to the left of $U$ by a fixed
union $\Gamma^{0,+}$ of small positively oriented loops contained in
$\{0<\Re z<U\}$, and put $\Gamma^{0,-}=-\Gamma^{0,+}$.  Define, once and
for all in the rest of the proof,
\begin{equation*}
 \Gamma_N
 \defeq \Gamma_N^+\cup\Gamma^{0,+}\cup\Gamma_N^-\cup\Gamma^{0,-}.
\end{equation*}
Every source is enclosed exactly once, and the two central unions stay on
their respective sides of the initial line $\mi\R$.

The chord heights are uniformly bounded.  Indeed,
\eqref{eq:resolvent-two}, at the fixed abscissae $\pm U$, gives
\begin{equation*}
 \sup_N\sum_{a\in A_N}\frac1{\left(U-a\right)^2+s^2}\longrightarrow0
 \quad \left(\left|s\right|\to\infty\right).
\end{equation*}
Let $K_\Gamma$ be the fixed compact union of the central loop traces
and vertical segments containing the chords, chosen disjoint from the
zeros of $Q_\infty$. Local uniform convergence of $Q_N$, followed by
enlargement of $N_t$, gives
\begin{equation}\label{eq:carrier-product}
 0<c_\Gamma\le\left|Q_N\left(z\right)\right|\le C_\Gamma<\infty,
 \quad z\in K_\Gamma,\quad N\ge N_t.
\end{equation}

Put $M=\sup_{z\in K_\Gamma}|\Re z|$.  Enlarge $R_t$ so that
Proposition~\ref{prop:saddles} applies,
\begin{equation*}
 R_t>2U,\quad R_t\ge4M,\quad
 R_t>\max_{N<N_t}\frac{a_N}{16}.
\end{equation*}
For $N<N_t$, the asserted range is empty.  Henceforth take $N\ge N_t$ and
$R_t\le x\le a_N/16$, and abbreviate
\begin{equation*}
 u=u_N\left(x\right),\quad v=v_N\left(u\right),\quad z_+=u+\mi v,\quad z_-=u-\mi v.
\end{equation*}
Then $x/2\le u\le2x$, $c_tx^\theta\le v\le C_tx^\theta$, and
$u>U$, $u\ge2M$.

Put $r_0=a_N/4$.  Since $u\le2x\le a_N/8$, one has
$r_0-u\ge a_N/8$.  On $[U,r_0]$, the global functions agree with the
local ones, so \eqref{eq:horizontal-gaussian} gives
\begin{equation*}
 \widehat G_N\left(r_0\right)-\widehat G_N\left(u\right)\ge c_ta_N^2.
\end{equation*}
Define the outer positive arc
\begin{equation*}
 \Gamma_{N,\mathrm{out}}^+
 \defeq \Gamma_N^+\cap\left\{z\in\C:\Re z\ge r_0\right\}.
\end{equation*}
Because $\widehat\Psi_N$ is strictly increasing,
$\widehat G_N'(r)=(\widehat\Psi_N(r)-x)/t>0$ for $r\ge r_0$.
Because $Q_N$ has real coefficients,
\begin{equation*}
 \Re\Phi_{N,x}\left(r-\mi\widehat v_N\left(r\right)\right)
 =\Re\Phi_{N,x}\left(r+\mi\widehat v_N\left(r\right)\right).
\end{equation*}
Thus every point $z$ on this arc satisfies
\begin{equation}\label{eq:outer-arc-phase-gap}
 \Re\Phi_{N,x}\left(z\right)-\Re\Phi_{N,x}\left(z_+\right)\ge c_ta_N^2,\quad\forall z\in \Gamma_{N,\mathrm{out}}^+.
\end{equation}
Also $|w-z|\ge r_0-u\ge a_N/8$ for $w\in L_u$.  Combining this phase
gap with Lemmas~\ref{lem:vertical} and~\ref{lem:length} gives
\begin{equation}\label{eq:outer-contribution}
 \frac1{\left(2\pi\right)^2t}
 \int_{\Gamma_{N,\mathrm{out}}^+}\left|\dif z\right|
 \int_{L_u}\left|\dif w\right|\,
 \left|\frac{\me^{\Phi_{N,x}\left(w\right)-\Phi_{N,x}\left(z\right)}}{w-z}\right|
 \le C_t\left(1+a_N^{q+1}\right)\me^{-c_ta_N^2}\le C_t.
\end{equation}

On the regular positive graphs $U\le r\le r_0$, the derivative estimate
in \eqref{eq:derivative-bounds} gives $|\,\dif (r\pm \mi v_N(r))|\le C_t\,\dif  r$.
Equations \eqref{eq:horizontal-gaussian} and
\eqref{eq:vertical-gaussian} yield
\begin{equation}\label{eq:regular-phase}
 \left|\me^{\Phi_{N,x}\left(u+\mi s\right)-\Phi_{N,x}\left(r\pm \mi v_N\left(r\right)\right)}\right|
 \le
 \exp\left\{-c_t\left[\left(r-u\right)^2+\dist\left(s,\left\{-v,v\right\}\right)^2\right]\right\}.
\end{equation}
On the upper graph and for $s\ge0$, the Lipschitz bound for $v_N$ gives
the shear estimate
\begin{equation*}
 \left|u+\mi s-\left(r+\mi v_N\left(r\right)\right)\right|
 \ge c_t\sqrt{\left(r-u\right)^2+\left(s-v\right)^2}.
\end{equation*}
For $s\le0$, the same denominator is at least
$v_N(r)\ge c_tU^\theta$.  The lower graph is treated by reflection.
Writing $\alpha=r-u$ and $\beta=s-v$ near the upper crossing
(and $\beta=s+v$ near the lower crossing), the local absolute majorant is
\begin{equation*}
 C_t\frac{\me^{-c_t\left(\alpha^2+\beta^2\right)}}
          {\sqrt{\alpha^2+\beta^2}},
\end{equation*}
and its two-dimensional integrability is explicit:
\begin{equation}\label{eq:crossing-dominating-integral}
 \int_{\R^2}
 \frac{\me^{-c_t\left(\alpha^2+\beta^2\right)}}
      {\sqrt{\alpha^2+\beta^2}}\,
 \dif\alpha\,\dif\beta
 =2\pi\int_0^\infty \me^{-c_t r^2}\,\dif r<\infty.
\end{equation}
Away from fixed neighborhoods of the two crossings, the denominator is
uniformly separated from zero and the Gaussian factors are integrable. Thus
the entire regular positive-tail contribution is bounded by $C_t$.

Evenness of $Q_N$ gives the exact identity
\begin{equation}\label{eq:negative-gain}
 \Re\Phi_{N,x}\left(-r+\mi\widehat v_N\left(r\right)\right)
 -\Re\Phi_{N,x}\left(r+\mi\widehat v_N\left(r\right)\right)
 =\frac{2rx}{t},
\end{equation}
and the same identity holds on the lower boundary.  On the regular negative
tail, $|u+\mi s-(-r\pm \mi v_N(r))|\ge u+r$, so
\begin{equation*}
 C_t\int_U^{r_0}\int_\R
 \frac{\me^{-c_t\left(r-u\right)^2-c_t\dist\left(s,\left\{-v,v\right\}\right)^2-2rx/t}}
      {u+r}\,\dif s\dif r
 \le C_t.
\end{equation*}
The outer negative arc has both the outer phase gap and the additional
gain in \eqref{eq:negative-gain}, so it satisfies the same bound as
\eqref{eq:outer-contribution}.

It remains to control the two chords and the central loops.  For
$z\in K_\Gamma$, \eqref{eq:carrier-product} gives
\begin{equation*}
 \Re\Phi_{N,x}\left(z\right)=\frac{x^2}{2t}+O_t\left(x\right)+O_t\left(1\right).
\end{equation*}
At the saddle, the uniform product-growth estimate and
Lemma~\ref{lem:Psi-location} give
\begin{equation}\label{eq:saddle-phase-upper}
 \Re\Phi_{N,x}\left(z_+\right)
 \le C_t\left[\left(u-x\right)^2+1+\left|z_+\right|^q\right]
 \le C_t\left[x^{2\theta}\log^2\left(2+x\right)+x^q+1\right]=o\left(x^2\right).
\end{equation}
After increasing $R_t$,
\begin{equation}\label{eq:fixed-gap}
 \Re\Phi_{N,x}\left(z\right)-\Re\Phi_{N,x}\left(z_+\right)
 \ge c_tx^2,
 \quad z\in K_\Gamma.
\end{equation}
Moreover, $|u+\mi s-z|\ge u-M\ge u/2$ on the carrier.  Its length is fixed,
so its total absolute contribution is at most $C_t\me^{-c_tx^2}$.

For fixed $z$, define
\begin{equation*}
 \mathcal H_N\left(z,w\right)
 \defeq\frac{\me^{\Phi_{N,x}\left(w\right)-\Phi_{N,x}\left(z\right)}}{w-z}.
\end{equation*}
Here the exponential is interpreted through the polynomial quotient.
If $L_c=\{c+\mi s:s\in\R\}$ is oriented upward and $r_-<r_+$, the residue
theorem on a large rectangle, followed by the Gaussian limit on its
horizontal sides, gives the fixed-$z$ identity
\begin{equation}\label{eq:vertical-rectangle}
 \int_{L_{r_+}}\mathcal H_N\left(z,w\right)\dif w
 =\int_{L_{r_-}}\mathcal H_N\left(z,w\right)\dif w
  +2\pi \mi\,\indi_{\left\{r_-<\Re z<r_+\right\}},
 \quad z\notin L_{r_-}\cup L_{r_+}.
\end{equation}
The residue is one because the exponential equals one at $w=z$.

For the fixed $N$ under consideration, choose
$B_N>\max\{R_{N,t}^+,\sup_{z\in C_N}\Re z\}$. Start
from \eqref{eq:kernel} with $z$ on $C_N$ and $w$ on $\mi\R$. Move the
$w$-line to $L_{B_N}$. Each positive source loop crossed during this
movement is complete, and its integrated residue is
$\frac1{2\pi\mi t}\oint\dif z=0$; the negative loops are not crossed.
With $w$ on $L_{B_N}$, deform $C_N$ into the final contour $\Gamma_N$;
no $w=z$ pole is crossed.

We now move the $w$-line back from $L_{B_N}$ to $L_u$. The two contours meet
at $z_+$ and $z_-$, so we justify the movement before writing the result.
For $\delta>0$, remove from the upper and lower regular graphs the two
short parameter arcs $|r-u|<\delta$, and call the remaining contour
$\Gamma_{N,\delta}$.  On this set, integrate
\eqref{eq:vertical-rectangle} with $r_-=u$ and $r_+=B_N$ over $z$. All integrals are
absolutely convergent because the contours are separated.

The removed mass tends to zero as $\delta\downarrow0$. Indeed, the shear
estimate and \eqref{eq:regular-phase} give, in local coordinates at either
crossing, the majorant whose integrability is recorded in
\eqref{eq:crossing-dominating-integral}. Absolute continuity of its integral
shows that its mass over
$|\alpha|<\delta$ tends to zero.  All other contour pieces were already
shown to be absolutely integrable previously.  We may therefore let
$\delta\downarrow0$ for the $L_u$-integral. For the $L_{B_N}$-integral,
$\mathcal H_N(z,w)$ is smooth in $z$ near $z_+$ and $z_-$, because $L_{B_N}$
lies strictly to the right of the complete $z$-contour.  Its contribution
over the removed arcs is $O(\delta)$.  We may therefore let
$\delta\downarrow0$ in the integrated rectangle identity.  The final double integral is therefore absolutely convergent.

The set of final $z$-points with real part greater than $u$ is precisely
the arc $\cA_N\subset\Gamma_N^+$ that begins at $z_-$, follows the lower
boundary to $R_{N,t}^+$, and returns on the upper boundary to $z_+$.  By
the chosen positive orientation and \eqref{eq:vertical-rectangle}, the
residue has positive sign and equals
\begin{equation*}
 \frac1{2\pi \mi t}\int_{\cA_N}\dif z
 =\frac1{2\pi \mi t}\left(z_+-z_-\right)
 =\frac{v}{\pi t}.
\end{equation*}

The exact post-deformation identity is
\begin{equation}\label{eq:decomposition}
 \rho_{N,t}\left(x\right)=\frac{v}{\pi t}+I_{N,t}\left(x\right),
 \quad
 I_{N,t}\left(x\right)
 \defeq \frac1{\left(2\pi \mi\right)^2t}
 \int_{\Gamma_N}\dif z\int_{L_u}\dif w\,
 \mathcal H_N\left(z,w\right).
\end{equation}
The bounds on the regular, outer, and fixed contour pieces above give
\begin{equation}\label{eq:remainder}
 \left|I_{N,t}\left(x\right)\right|\le C_t
\end{equation}
uniformly in $N$ and $R_t\le x\le a_N/16$.

Combining \eqref{eq:decomposition}, \eqref{eq:remainder}, and
$v\le C_tx^\theta$ gives
\begin{equation*}
 \rho_{N,t}\left(x\right)
 \le\frac{v}{\pi t}+\left|I_{N,t}\left(x\right)\right|
 \le C_t\left(1+x^\theta\right).
\end{equation*}
Since $A_N=-A_N$, the finite density is even in $x$.  Replacing $x$ by
$-x$ proves the negative-$x$ estimate and completes the proof.
\end{proof}

\subsection{Compact region, outer region, and completion}

Near and beyond the finite edge we use a density bound for Gaussian matrix perturbations. Its dependence on the number of particles is weak enough to give the required global estimate.
\begin{lemma}\label{lem:wegner}
Let $\boldsymbol\eta=\sum_{j=1}^m\delta_{b_j}$ be any deterministic finite
configuration, and let $\rho_{\boldsymbol\eta,t}$ be the first correlation function
at time $t>0$ of the $\beta=2$ Dyson model started from $\boldsymbol\eta$.  There is
a universal constant $C<\infty$ such that
\begin{equation*}
 \sup_{y\in\R}\rho_{\boldsymbol\eta,t}\left(y\right)
 \le C\sqrt{\frac mt}.
\end{equation*}
\end{lemma}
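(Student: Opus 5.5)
The plan is to use the matrix realisation. By the classical correspondence (see \cite{anderson2010introduction}), the $\beta=2$ Dyson model started from $\boldsymbol\eta$ has, at time $t$, the law of the eigenvalues of $D+\sqrt t\,H$. Here $D=\operatorname{diag}(b_1,\ldots,b_m)$ and $H$ is an $m\times m$ GUE matrix normalised so that $\expt|H_{ij}|^2=1$; the entrance law for repeated $b_j$ is covered because the matrix model is continuous in $D$. Multiple points therefore need no separate treatment. Writing $G(z)=(D+\sqrt tH-z)^{-1}$, one has for every $y$
\[
 \rho_{\boldsymbol\eta,t}(y)=\lim_{\epsilon\downarrow0}\frac1\pi\operatorname{Im}\expt\operatorname{tr}G(y+\mi\epsilon).
\]
This limit exists, with the density continuous, since the eigenvalue law has a smooth density. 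So it suffices to bound $\expt\operatorname{Im}G_{jj}(y+\mi\epsilon)$ uniformly in $\epsilon$, $y$ and $D$.

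First I would use unitary invariance of $H$. Conjugating by an independent Haar unitary $U$, the spectrum of $UDU^*+\sqrt tH$ has the same law, and the diagonal resolvent entries become exchangeable. Hence $\expt\operatorname{Im}\operatorname{tr}G=m\,\expt\operatorname{Im}G_{11}$, and the claim reduces to
\[
 \expt\operatorname{Im}G_{11}(y+\mi\epsilon)\le \frac{C}{\sqrt{mt}} .
\]
By the Schur complement formula,
\[
 G_{11}=-\Big(z-A_{11}-\sqrt t\,g-(a+\sqrt t\,\xi)^*R^{(1)}(a+\sqrt t\,\xi)\Big)^{-1}.
\]
Here $g=H_{11}$ is a standard real Gaussian, $\xi$ is a standard complex Gaussian vector in $\C^{m-1}$, and $A_{11},a$ come from $UDU^*$. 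The matrix $R^{(1)}$ is the resolvent of the minor, and $g,\xi,R^{(1)}$ are independent given $U$.

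The crude bound comes from integrating the real Gaussian $g$ alone. Since the imaginary part of the remaining denominator has a fixed sign, $\int\operatorname{Im}(x-w)^{-1}\,\dif x=\pi$, and one gets $\expt\operatorname{Im}G_{11}\le\sqrt{\pi/(2t)}$. This yields only $C m/\sqrt t$. To gain the missing factor $\sqrt m$, I would also exploit the $2(m-1)$ Gaussian degrees of freedom in $\xi$. In the eigenbasis of the minor, the term $\xi^*R^{(1)}\xi$ becomes $\sum_k|\zeta_k|^2/(\lambda_k-z)$ with the $|\zeta_k|^2$ independent exponentials.

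The plan is a two-case bound on $\expt|w|^{-1}$, where $w$ is the full denominator, conditional on the minor. If many $\lambda_k$ lie within distance $\sqrt{t/m}$ of $y$, then $\operatorname{Im}w\ge t\sum_k|\zeta_k|^2\epsilon/((\lambda_k-y)^2+\epsilon^2)$. Concentration of sums of exponentials should then bound $\expt|w|^{-1}$ by $C/\sqrt{mt}$. If few $\lambda_k$ lie that close, the real part $t\sum_k|\zeta_k|^2(\lambda_k-y)/|\lambda_k-z|^2$ plus $\sqrt t g$ has a density spread over a scale of order $\sqrt{mt}$. A small-ball estimate for a sum of independent weighted exponentials, combined with $\int \min\{1/|x|,1/\operatorname{Im}w\}$-type integration, would then give the same bound. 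Both cases should give $\expt|G_{11}|\le C/\sqrt{mt}$ with a constant independent of $D$ and $\epsilon$.

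The main obstacle is this last step: making the dichotomy quantitative uniformly in the spectrum of the minor and in $\epsilon\downarrow0$. If it proves awkward, my fallback is a dimension-free Wegner argument. I would realise $\sqrt tH=\sqrt{t/2}H'+\sqrt{t/2}H''$ and condition on $D+\sqrt{t/2}H'$. The additional independent GUE component $\sqrt{t/2}H''$ smooths the spectral measure, and semicircle-type comparison of the density of a deformed GUE with the free convolution $\mu_D\boxplus\mathrm{sc}_t$ bounds the latter's density by $C\sqrt{m/t}$ uniformly in $D$, because the semicircle of variance $mt$ has maximal density of order $\sqrt{m/t}$.
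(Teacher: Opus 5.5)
Your set-up is correct, but the step that gains the factor $\sqrt m$ is the whole content of the lemma, and as formulated it cannot succeed. The correct parts are these: the matrix model $\mathbf D+\sqrt t\,H$ is the paper's $\mathbf D+\sqrt{mt}\,\mathbf V_m$; Stieltjes inversion and Haar-conjugation exchangeability legitimately reduce the claim to $\expt\operatorname{Im}G_{11}(y+\mi\epsilon)\le C/\sqrt{mt}$; and integrating out $g$ does give the crude bound $m/\sqrt{2\pi t}$.

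\emph{Wrong target.} You aim to bound $\expt|w|^{-1}=\expt|G_{11}(y+\mi\epsilon)|$, and this is not bounded uniformly in $\epsilon$. At $\epsilon=0$, with $y$ off the spectrum of the minor, $w$ is real and depends on $g$ only through an additive $\sqrt t\,g$. Hence $\expt_g|w|^{-1}=\infty$, and Fatou's lemma gives $\expt|G_{11}(y+\mi\epsilon)|\to\infty$ as $\epsilon\downarrow0$. You must keep the imaginary part. At $\epsilon=0$ the right object is $\expt\operatorname{Im}G_{11}(y+\mi0)=\pi\,p_{F(y)}(0)$, where $p_{F(y)}$ is the density of the real Schur complement $F(y)=A_{11}+\sqrt t\,g-y-b^*(M^{(1)}-y)^{-1}b$, with $b=a+\sqrt t\,\xi$ and $M^{(1)}$ the minor.

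\emph{Case A} uses a lower bound on $\operatorname{Im}w$ that is proportional to $\epsilon$, so it vanishes in the limit. It can be rescued through the real part. Let $\alpha_k,\zeta_k$ be the coordinates of $a,\xi$ in the minor's eigenbasis. Then $|\alpha_k+\sqrt t\zeta_k|^2$ has density at most $1/t$. So a single minor eigenvalue $\lambda_k$ within $\sqrt{t/m}$ of $y$ already gives $p_{F(y)}(0)\le|\lambda_k-y|/t\le1/\sqrt{mt}$.

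\emph{Case B} is false as a statement conditional on the minor. The spread of $b^*R^{(1)}b$ is set by $t\,(\sum_k|\lambda_k-y|^{-2})^{1/2}$ and by the mean vector $a$, not by $\sqrt{mt}$.
\begin{itemize}
\item For $\mathbf D=0$ and a minor with a gap of width $\sqrt t$ around $y$ (spacing $\sqrt{t/m}$ elsewhere), the spread is of order $m^{1/4}\sqrt t$.
\item For $U=I$ and $\mathbf D=\operatorname{diag}(y,L,\dots,L)$ with $L$ large, the conditional density of $F(y)$ at $0$ is of order $t^{-1/2}$.
\end{itemize}
So $C/\sqrt{mt}$ can only hold after averaging over $U$ and the minor. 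That averaging needs gap or level-repulsion estimates uniform in $\mathbf D$, together with control of $F(y)$ away from the bulk, and none of this is supplied.

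Your fallback does not close the gap either. Conditioning on $\mathbf D+\sqrt{t/2}\,H'$ just reproduces the same problem with $t/2$ in place of $t$. The free convolution $\mu_{\mathbf D}\boxplus\mathrm{sc}_{mt}$, with $\mathrm{sc}_{mt}$ the semicircle law of variance $mt$, does have density at most $(\pi\sqrt{mt})^{-1}$ by a one-line subordination argument, and this explains the order $\sqrt{m/t}$. However, no available result compares the finite-$m$ expected density with this free convolution uniformly in $\mathbf D$, $y$ and $m$, down to zero scale; local laws only give such comparisons at mesoscopic scales with error terms. Asserting that comparison is asserting the lemma.

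The paper does not prove the estimate from scratch. It quotes the Wegner-type bound for Gaussian perturbations, $\expt\#\{\operatorname{Spec}(\mathbf B+\mathbf V_m)\cap J\}\le Cm|J|$ uniformly in the deterministic Hermitian $\mathbf B$ (Theorem~1(iii) of \cite{c685048cba3543e29220f47cf0728c56}). It applies this with $\mathbf B=\mathbf D/\sqrt{mt}$ and $J=I/\sqrt{mt}$, then upgrades from almost every $y$ to every $y$ by continuity of the finite-particle density. Replacing your unfinished step by this citation, with your correct normalization, completes your argument. Proving that theorem yourself is substantially harder than your sketch suggests.
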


\begin{proof}
Let $\mathbf{D}=\operatorname{diag}(b_1,\ldots,b_m)$.  In the Hermitian Brownian
normalization corresponding to \eqref{eq:kernel}, the fixed-time matrix is
\begin{equation*}
 \mathbf{D}+\boldsymbol{\mathsf H}_m\left(t\right)
 \ \stackrel{\mathrm{law}}=\
 \mathbf{D}+\sqrt{mt}\,\boldsymbol{\mathsf V}_m,
\end{equation*}
where $\boldsymbol{\mathsf V}_m$ is normalized GUE: its law is proportional to
\begin{equation*}
 \exp\!\left\{-\frac m2\operatorname{Tr}\left(\mathbf V^2\right)\right\}\,\dif \mathbf V.
\end{equation*}
The mean-density estimate for Gaussian perturbations states, uniformly
in the deterministic Hermitian matrix $\mathbf B$,
\begin{equation*}
 \expt\,\left[\#\left\{\operatorname{Spec}\left(\mathbf B+\boldsymbol{\mathsf V}_m\right)\cap J\right\}\right]
 \le Cm\left|J\right|
\end{equation*}
for every interval $J$.  This is Theorem 1(iii) of \cite{c685048cba3543e29220f47cf0728c56}.

Apply the estimate to $\mathbf B=\mathbf D/\sqrt{mt}$ and
$J=I/\sqrt{mt}$.  Then
\begin{equation*}
 \expt\left[\#\left\{\operatorname{Spec}\left(\mathbf D+\boldsymbol{\mathsf H}_m\left(t\right)\right)\cap I\right\}\right]
 \le C\sqrt{\frac mt}\,\left|I\right|.
\end{equation*}
The fixed-time eigenvalue law is the Dyson model started from $\boldsymbol\eta$.
Dividing by $|I|$ and shrinking $I$ gives the asserted bound almost
everywhere; continuity of the finite-particle correlation function, which
follows from the standard finite-particle contour kernel, upgrades it to
every $y$.
\end{proof}

\begin{proof}[Proof of Theorem~\ref{thm bound}]
Let $R_t$ be the constant in Proposition~\ref{prop:interior-bound}, enlarged
if necessary. By the parameter convergence established above, taking
$\mathcal A=\{t\}$ in Theorem~\ref{MainTheorem} gives locally uniform
convergence $\rho_{N,t}\to\rho_t$. Hence the sequence is uniformly bounded
on $[-R_t,R_t]$ for all sufficiently large $N$.
Each of the finitely many remaining $\rho_{N,t}$ is continuous, so
\begin{equation}\label{eq:compact-bound}
 \sup_{N\ge1}\sup_{\left|x\right|\le R_t}\rho_{N,t}\left(x\right)\le C_t.
\end{equation}

For the outer region, apply Lemma~\ref{lem:wegner} with $m=2N$.  By
\eqref{eq:edge-size},
\begin{equation*}
 \rho_{N,t}\left(x\right)\le C_t\sqrt N
 \le C_t\left(1+a_N^{q/2}\right).
\end{equation*}
If $|x|>a_N/16$, this gives
\begin{equation}\label{eq:outer-bound}
 \rho_{N,t}\left(x\right)\le C_t\left(1+\left|x\right|^{q/2}\right).
\end{equation}

Proposition~\ref{prop:interior-bound} proves
\eqref{eq:interior-main}. The compact bound
\eqref{eq:compact-bound}, the interior bound, and
\eqref{eq:outer-bound} cover every pair $(N,x)$, including the indices
with $a_N/16<R_t$. Since $\theta=q-1<q/2$ for $q<2$, they give
\eqref{eq:global-main}.
\end{proof}

\subsection{Addition of an atom at zero}
We complete the proof of Theorem \ref{thm bound} for the configurations
\begin{equation*}
    \xi_\epsilon
    =\epsilon\delta_0+\sum_{n\geq1}(\delta_{a_n}+\delta_{-a_n}),
    \quad
    \xi_\epsilon^{(N)}
    =\epsilon\delta_0+\sum_{n=1}^{N}(\delta_{a_n}+\delta_{-a_n}),
    \quad \epsilon\in\{0,1\}.
\end{equation*}
Assume \emph{($\mathbf{UC}$)} and \emph{($\mathbf{LC}$)}, with $q\in(1,2)$ and $\theta=q-1$ as in
Section 5.1. Write $\rho_{N,t}^{(\epsilon)}$ for the one-point density
started from $\xi_\epsilon^{(N)}$. Sections 5.2--5.7 establish the theorem
for $\epsilon=0$. The polynomial $Q_N$, phase $\Phi_{N,x}$, saddle parameters, and contours used below are those of that case.
\begin{proof}[Completion of the proof of Theorem \ref{thm bound}]
Fix $t>0$. Constants may depend on $t$ and the fixed initial configuration,
but are independent of $N$ and $x$. We first consider
$R_t\leq x\leq a_N/16$, enlarging $R_t$ as in Proposition \ref{prop:interior-bound} so that
all the contours used there are available whenever this range is nonempty. Put
\begin{equation*}
    u=u_N(x),\quad v=v_N(u),\quad z_+=u+iv,\quad
    L_u=u+i\mathbb R,
\end{equation*}
where vertical lines are oriented upward. In particular,
$x/2\leq u\leq2x$ and $c_tx^\theta\leq v\leq C_tx^\theta$.
Choose a fixed sufficiently small $r_*>0$ such that the positively
oriented circle $C_0=\{|z|=r_*\}$ encloses no pair root and its closed
disk is disjoint from the contours $\Gamma_N$ of Proposition \ref{prop:interior-bound}.
Such a choice is possible: the tail contours satisfy $|\Re z|\geq U$,
and the finitely many central loops lie strictly in the two open
half-planes. Set $\widehat\Gamma_N=\Gamma_N\cup C_0$.

We first obtain an exact comparison of the two densities. For each fixed
$N$, choose $B_N$ to the right of all the root contours and of
$\widehat\Gamma_N$, and write $L_{B_N}=B_N+i\mathbb R$. The finite-particle
contour formula gives
\begin{equation*}
    \rho_{N,t}^{(\epsilon)}(x)
    =\frac{1}{(2\pi i)^2t}
    \int_{\widehat\Gamma_N}\dif z\int_{L_{B_N}}\dif w\,
    \mathrm{e}^{\Phi_{N,x}(w)-\Phi_{N,x}(z)}
    \frac{(w/z)^\epsilon}{w-z}.
\end{equation*}
Here and below the exponential is interpreted as
\begin{equation*}
    \mathrm{e}^{\Phi_{N,x}(w)-\Phi_{N,x}(z)}
    =\exp\!\left\{\frac{(w-x)^2-(z-x)^2}{2t}\right\}
      \frac{Q_N(w)}{Q_N(z)}.
\end{equation*}
For completeness, the formula for $\epsilon=0$ follows by moving the
$w$-line in (5.6) to $L_{B_N}$ and then deforming the root loops.
Each complete source loop crossed in the first movement contributes
zero, since the residue at $w=z$ is one and the integral of $\dif z$
around a closed loop is zero. The added circle $C_0$ contributes zero
in this case. For $\epsilon=1$, first replace the origin by
$\eta\in(0,r_*)$, use a small loop around $\eta$ disjoint from
$i\mathbb R$, and make the same movement of the $w$-line. With the line
on $L_{B_N}$, deform the new loop to $C_0$ and let $\eta\downarrow0$.
Proposition \ref{finitedysonmodel} identifies the limiting finite-particle density.
The limit under the integral is justified, for fixed $N$, by the
separation of the contours and Gaussian decay in $\Im w$; the polynomial
ratio is $(w-\eta)Q_N(w)/((z-\eta)Q_N(z))$.

Subtracting the two formulas and using
\begin{equation*}
    \frac{w}{z(w-z)}-\frac1{w-z}=\frac1z
\end{equation*}
removes the pole $w=z$. The remaining integrand is entire in $w$.
For fixed $N$, Gaussian decay on the horizontal sides of a rectangle
therefore permits the shift from $L_{B_N}$ to $L_u$ without a residue.
Consequently,
\begin{equation*}
    \rho_{N,t}^{(1)}(x)-\rho_{N,t}^{(0)}(x)=J_{N,t}(x),
    \quad
    J_{N,t}(x)=\frac{1}{(2\pi i)^2t}
    \int_{\widehat\Gamma_N}\frac{\dif z}{z}
    \int_{L_u}\dif w\,
    \mathrm{e}^{\Phi_{N,x}(w)-\Phi_{N,x}(z)}.
\end{equation*}
These integrals are absolutely convergent for fixed $N$. In particular,
the comparison integral has no singularity where the two contours meet.

We next bound $J_{N,t}$ uniformly. By \eqref{eq:vertical-gaussian},
\begin{equation*}
    \int_{\mathbb R}
    \mathrm{e}^{\Re\Phi_{N,x}(u+is)-\Re\Phi_{N,x}(z_+)}\dif s
    \leq
    \int_{\mathbb R}\mathrm{e}^{-c_t\operatorname{dist}(s,\{-v,v\})^2}\dif s
    \leq C_t.
\end{equation*}
Thus it suffices to bound
\begin{equation*}
    \int_{\widehat\Gamma_N}
    \frac{|\dif z|}{|z|}\,
    \mathrm{e}^{\Re\Phi_{N,x}(z_+)-\Re\Phi_{N,x}(z)}.
\end{equation*}
The contours stay a fixed positive distance from zero. On the regular
positive graphs, parametrized by $U\leq r\leq a_N/4$, \eqref{eq:derivative-bounds} gives
$|\dif z|\leq C_t\dif r$, and \eqref{eq:horizontal-gaussian} gives a phase gap at least
$c_t(r-u)^2$. The reflected graphs have the additional nonnegative gap
$2rx/t$ by \eqref{eq:negative-gain}. Their total contribution is therefore at most
\begin{equation*}
    C_t\int_U^{a_N/4}\mathrm{e}^{-c_t(r-u)^2}\dif r\leq C_t.
\end{equation*}
On the outer arcs, by \eqref{eq:outer-arc-phase-gap} the phase gap is at least
$c_ta_N^2$. Together with the length bound \eqref{eq:length}, this gives a contribution
at most $C_t(1+a_N^{q+1})e^{-c_ta_N^2}\leq C_t$.

It remains to treat the fixed central loops, the two closing chords,
and $C_0$. Include $C_0$ in the compact carrier $K_\Gamma$ of
Proposition \ref{prop:interior-bound}. The enlarged carrier is disjoint from the zeros of
$Q_\infty$, so local uniform convergence of $Q_N$ yields uniform upper
and positive lower bounds for $|Q_N|$ there for all sufficiently large $N$.
Every remaining $Q_N$ is also nonzero on this carrier, so enlarging the
constants gives the same bounds for all $N$. Hence, uniformly on this carrier,
\begin{equation*}
    \Re\Phi_{N,x}(z)=\frac{x^2}{2t}+O_t(x)+O_t(1).
\end{equation*}
The saddle estimate \eqref{eq:saddle-phase-upper} gives
\begin{equation*}
    \Re\Phi_{N,x}(z_+)
    \leq C_t\bigl(x^{2\theta}\log^2(2+x)+x^q+1\bigr)=o(x^2).
\end{equation*}
Since $\theta<1$ and $q<2$, enlarging $R_t$ gives a phase gap at least
$c_tx^2$. These contour pieces have uniformly bounded length and stay
away from zero, so their contribution is at most $C_t \mathrm{e}^{-c_tx^2}$.
Combining the three bounds proves $|J_{N,t}(x)|\leq C_t$ throughout the
interior range. The case $\epsilon=0$ of \eqref{eq:interior-main} now implies
\begin{equation*}
    \rho_{N,t}^{(1)}(x)\leq C_t(1+x^\theta),
    \quad R_t\leq x\leq a_N/16.
\end{equation*}
Reflection invariance of the finite Dyson model gives the corresponding bound for negative $x$.

For the compact region, \eqref{eq:count-square-tail} gives $\sum_n a_n^{-2}<\infty$.
Adding the fixed inner root at zero does not alter the reciprocal tails,
and the parameter-convergence argument of Section~5.1 gives
$f_0^b(\xi_1^{(N)})\to f_0^b(\xi_1)$, with no Gaussian component.
Theorem \ref{MainTheorem} therefore gives locally uniform convergence of the
one-point densities at every $t>0$. As in \eqref{eq:compact-bound}, local uniform
convergence and continuity of the finitely many remaining densities yield
\begin{equation*}
    \sup_{N\geq1}\sup_{|x|\leq R_t}\rho_{N,t}^{(1)}(x)\leq C_t.
\end{equation*}
For $|x|>a_N/16$, Lemma \ref{lem:wegner} with $2N+1$ particles, followed by \eqref{eq:edge-size}, gives
\begin{equation*}
    \rho_{N,t}^{(1)}(x)
    \leq C_t\sqrt{2N+1}
    \leq C_t(1+a_N^{q/2})
    \leq C_t'(1+|x|^{q/2}).
\end{equation*}
The compact, interior, and outer regions cover every pair $(N,x)$.
Since $\theta=q-1<q/2$, they prove \eqref{eq:global-main}, and the interior estimate proves
\eqref{eq:interior-main}. Taking the larger constants from the two cases makes both bounds
uniform over $\epsilon\in\{0,1\}$. This completes the proof of
Theorem \ref{thm bound}. The limiting-density conclusion of Remark \ref{rem:limit-density} follows for
both cases by the same local uniform convergence.
\end{proof}

\section{The Markov property}\label{sec:markov}
\subsection{A criterion for the Markov property}
Finite-particle Dyson Brownian motion is Markov by uniqueness for its finite-dimensional SDE. In this section, we pass that property to the infinite-particle process constructed in Sections 2--3.

For a Borel set $A\subset\mathbb R$, write $\bxi|_A$ for the
restriction of $\bxi$ to $A$. Recall the reciprocal statistics $p_b$
and $s_b$ from Section~\ref{sec:preliminaries}: the principal value
in $p_b$ uses integer outer cutoffs, and $s_b$ is allowed to be infinite.

The state space consists of configurations with a finite reciprocal-square tail and a convergent symmetric principal value.
\begin{definition}\label{process start from bxi}
Define
\[
 \mathfrak N\defeq\bigcup_{b>0}
 \left\{\bxi\in\mathfrak M_b:p_b\left(\bxi\right)
 \text{ exists and is finite}\right\}.
\]
Here membership in $\mathfrak M_b$ includes $s_b(\bxi)<\infty$.
\end{definition}

Spatial truncation converges in the parameter space when the principal value exists. This identifies the infinite-particle process as a limit of finite Dyson models.
\begin{lemma}\label{lem:truncation-parameter-bridge}
Let $\boldsymbol\eta\in\mathfrak N$, and choose $b>0$ such that
$\boldsymbol\eta\in\mathfrak M_b$ and $p_b(\boldsymbol\eta)$ is finite.
For each integer $N>b$, put
$\boldsymbol\eta^{[N]}=\boldsymbol\eta|_{[-N,N]}$. Then
\begin{equation}\label{eq:truncation-parameter-bridge}
 f_{p_b\left(\boldsymbol\eta^{\left[N\right]}\right)}^b\left(\boldsymbol\eta^{\left[N\right]}\right)
 \longrightarrow f_{p_b\left(\boldsymbol\eta\right)}^b\left(\boldsymbol\eta\right)
 \qquad\text{in }\bUpsilon_b.
\end{equation}
\end{lemma}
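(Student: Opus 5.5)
We verify the convergence in $\bUpsilon_b$ coordinate by coordinate, using its definition as coordinatewise convergence of the reciprocal and inner-root sequences, eventual equality of total inner degrees, and convergence of $\gamma_1$ and $\delta$. Throughout, $N>b$ is an integer.

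\emph{Inner data.} Since $N>b$, the restriction $\boldsymbol\eta^{[N]}$ agrees with $\boldsymbol\eta$ on $(-b,b)$. Hence $\mathbf a^\pm(\boldsymbol\eta^{[N]})=\mathbf a^\pm(\boldsymbol\eta)$ exactly, including the multiplicity $k$ at zero. The total degrees $q(\mathbf a^\pm)$ are therefore constant in $N$.

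\emph{Reciprocal sequences.} The outer atoms of $\boldsymbol\eta^{[N]}$ are those of $\boldsymbol\eta$ with $b<|x|\le N$. Because the $\alpha_j^+$ are the reciprocals $1/x_{j,>}^+$ listed in decreasing order, $\boldsymbol\alpha^+(\boldsymbol\eta^{[N]})$ is the same sequence as $\boldsymbol\alpha^+(\boldsymbol\eta)$ with every entry smaller than $1/N$ replaced by $0$. Coordinatewise, each fixed entry $\alpha_j^+(\boldsymbol\eta)$ is therefore reproduced exactly once $N>1/\alpha_j^+(\boldsymbol\eta)$ when it is positive, and is identically $0$ otherwise. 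The negative side is identical.

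\emph{Square coordinate.} Here $\delta(\boldsymbol\eta^{[N]})=\int_{b<|x|\le N}x^{-2}\,\boldsymbol\eta(\dif x)$, which increases monotonically to $s_b(\boldsymbol\eta)=\delta(\boldsymbol\eta)<\infty$ because $\boldsymbol\eta\in\mathfrak M_b$. In particular, the limiting $\gamma_2$ is $0$, consistent with the target $f^b_{p_b(\boldsymbol\eta)}(\boldsymbol\eta)$.

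\emph{Linear coordinate.} By the finite-configuration formula for $p_b$,
\[
 p_b\left(\boldsymbol\eta^{[N]}\right)=\int_{b<|x|\le N}\frac{\boldsymbol\eta(\dif x)}{x},
\]
which is exactly the $N$-th term of the integer-cutoff principal value defining $p_b(\boldsymbol\eta)$. It therefore converges to $p_b(\boldsymbol\eta)$ by hypothesis. This is precisely why the definition of $p_b$ uses integer cutoffs and why the truncation is taken at integer levels.

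There is no real obstacle here. The only point needing care is matching the truncation level with the integer cutoffs in the definition of $p_b$. If $\pm N$ were atoms, the closed interval $[-N,N]$ still matches $\int_{b<|x|\le N}$, so no admissibility condition at $\pm N$ is needed. Combining the four coordinate statements gives \eqref{eq:truncation-parameter-bridge}.
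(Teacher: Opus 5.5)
Your proposal is correct and follows essentially the same route as the paper's proof, checking convergence in $\bUpsilon_b$ coordinate by coordinate. The inner $\mathbb U_b$ data are fixed for $N>b$, and the ordered outer reciprocals converge coordinatewise, with absent roots recorded as zero. The $\delta$-coordinate increases monotonically to $s_b(\boldsymbol\eta)$, and $p_b(\boldsymbol\eta^{[N]})$ is exactly the $N$-th integer-cutoff partial sum defining $p_b(\boldsymbol\eta)$.
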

\begin{proof}
The $\gamma_1$-coordinates converge because
\[
 p_b\left(\boldsymbol\eta^{\left[N\right]}\right)
 =\int_{\left\{b<\left|x\right|\le N\right\}}\frac{\boldsymbol\eta\left(\dif x\right)}x
 \longrightarrow p_b\left(\boldsymbol\eta\right).
\]
The $\delta$-coordinates converge monotonically because
\[
 \int_{\left\{b<\left|x\right|\le N\right\}}\frac{\boldsymbol\eta\left(\dif x\right)}{x^2}
 \longrightarrow s_b\left(\boldsymbol\eta\right).
\]
For $N>b$ all inner roots are already present, so the two
$\mathbb U_b$ coordinates are fixed. The ordered outer reciprocal-root
coordinates converge coordinatewise, with roots not yet present represented
by zero. These are exactly the remaining coordinates of $\bUpsilon_b$.
\end{proof}

For each configuration in this state space, the parameter construction defines a process at every positive time.
\begin{definition}
Let $\bxi\in\mathfrak N$ and choose an admissible cutoff $b$. Define
$\bXi_{\bxi}(\cdot)$ to be the determinantal process with kernel
$\K_{f_{p_b(\bxi)}^b(\bxi)}$. Put
$\bxi^{[N]}=\bxi|_{[-N,N]}$ and let $\bXi_{\bxi}^{[N]}(\cdot)$ be
the finite Dyson model started from $\bxi^{[N]}$.
\end{definition}
Since $\gamma_2(f_{p_b(\bxi)}^b(\bxi))=0$, this process is defined for
all $t>0$. Proposition~\ref{finitedysonmodel},
Lemma~\ref{lem:truncation-parameter-bridge}, and
Theorem~\ref{MainTheorem} give
\[
 \bXi_{\bxi}^{\left[N\right]}\left(\cdot\right)
 \xrightarrow{\mathrm{f.d.d.}}\bXi_{\bxi}\left(\cdot\right).
\]
The process is independent of the admissible cutoff: moving roots across
the cutoff changes the associated entire function only by a nonzero
constant, which cancels from the kernel ratio.

Only positive-time finite-dimensional convergence is used below; the
path-space theorem of Section 4 is not invoked. We write
$\prob_{\bxi},\expt_{\bxi}$ and
$\prob_{\bxi^{[N]}},\expt_{\bxi^{[N]}}$ for the corresponding laws and
expectations. Denote the one-point correlation functions at time $t$ by
$\rho_{\bxi,t}$ and $\rho_{\bxi^{[N]},t}$.

We first give conditions ensuring that the configuration remains in the state space at every fixed positive time. This makes the transition operators available at the intermediate times in the Markov identity.
\begin{proposition}\label{prop in N for all time}
Suppose that, for every $t>0$, there are constants
$0<C_{1,t},C_{2,t}<\infty$, $0\leq\alpha<1$, and $\nu>0$ such that
\begin{equation}\label{eq rho bound}
 \sup_{N\in\mathbb N}\rho_{\bxi^{\left[N\right]},t}\left(x\right)
 \leq C_{1,t}\left(1+\left|x\right|^\alpha\right),
 \quad x\in\mathbb R,
\end{equation}
and, for every $L_0\geq1$,
\begin{equation}\label{eq rho symmetry}
\begin{aligned}
 \sup_{N\in\mathbb N}
 \left|\lim_{L_1\to\infty}
 \int_{\left\{L_0\leq\left|x\right|\leq L_1\right\}}
       \frac{\rho_{\bxi^{\left[N\right]},t}\left(x\right)}x\dif x\right|
 &\leq C_{2,t}L_0^{-\nu},\\
 \left|\lim_{L_1\to\infty}
 \int_{\left\{L_0\leq\left|x\right|\leq L_1\right\}}
       \frac{\rho_{\bxi,t}\left(x\right)}x\dif x\right|
 &\leq C_{2,t}L_0^{-\nu}.
\end{aligned}
\end{equation}
Then, for every fixed $t>0$, $\bXXi(t)\in\mathfrak N$ almost surely.
\end{proposition}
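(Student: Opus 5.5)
The plan is to verify the three defining requirements of $\mathfrak N$ for $\bXXi(t)$ at a fixed $t>0$, with one deterministic cutoff $b$. The requirements are: no atoms at $\pm b$, finiteness of $s_b$, and existence of the integer principal value $p_b$. Everything rests on two inputs. The first is a density bound for the limit. By Lemma~\ref{lem:truncation-parameter-bridge} and Theorem~\ref{MainTheorem} (with $\mathcal A=\{t\}$), $\rho_{\bxi^{[N]},t}\to\rho_{\bxi,t}$ locally uniformly, so \eqref{eq rho bound} passes to the limit as $\rho_{\bxi,t}(x)\le C_{1,t}(1+|x|^\alpha)$. The second is a variance bound uniform in $N$, which I will derive below.

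\emph{Atoms and $s_b$.} Since the one-point law has a locally bounded density, $\expt[\bXXi(t)(\{\pm b\})]=0$ for every fixed $b$. So any fixed $b$ that is admissible for $\bxi$ is a.s. admissible for $\bXXi(t)$. Next,
\[
\expt\Big[\int_{|x|>b}\frac{\bXXi(t)(\dif x)}{x^2}\Big]=\int_{|x|>b}\frac{\rho_{\bxi,t}(x)}{x^2}\,\dif x\le C\int_{|x|>b}\frac{1+|x|^\alpha}{x^2}\,\dif x<\infty,
\]
because $\alpha<1$. Hence $s_b(\bXXi(t))<\infty$ almost surely.

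\emph{Variance of smooth reciprocal statistics.} For the truncations, the fixed-time law of $\bXi_{\bxi}^{[N]}(t)$ is the spectrum of $\mathbf D+\boldsymbol{\mathsf H}(t)$, as in the proof of Lemma~\ref{lem:wegner}. The Gaussian Poincaré inequality then gives
\[
\operatorname{Var}\operatorname{Tr} g\big(\mathbf D+\boldsymbol{\mathsf H}(t)\big)\le Ct\,\expt\sum_i g'(\lambda_i)^2=Ct\int g'(x)^2\rho_{\bxi^{[N]},t}(x)\,\dif x
\]
for $g\in C^1_b$, uniformly in $N$. I apply this to
\[
g_{L_0,L_1}(x)=\frac{\chi(x/L_1)-\chi(x/L_0)}{x},
\]
where $\chi$ is a smooth even cutoff equal to $1$ near $0$. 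By \eqref{eq rho bound} this yields a variance bound $C L_0^{\alpha-3}$ uniformly in $N$. The smooth weights are averages of sharp annular indicators, so the first line of \eqref{eq rho symmetry} bounds the mean by $C L_0^{-\nu}$. Weak convergence at time $t$ and Fatou's lemma (with second-moment uniform integrability) transfer both bounds to $\bXXi(t)$. The second line of \eqref{eq rho symmetry} is used directly for the limiting mean.

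\emph{Existence of $p_b$.} Along $L=2^k$, the smooth partial sums $S_L=\int_{|x|>b}(1-\chi(x/L))\,\bXXi(t)(\dif x)/x$ satisfy
\[
\expt|S_{2^{k+1}}-S_{2^k}|\le C2^{-k\nu}+C2^{k(\alpha-3)/2},
\]
which is summable. Hence $S_{2^k}$ converges almost surely. The remaining and main obstacle is to pass from these smooth dyadic cutoffs to the sharp integer cutoffs $R\in\mathbb N$ used in $p_b$. The crude bound by $\bXXi(t)$-counts in an annulus of width comparable to $R$, divided by $R$, has mean of order $R^\alpha$, which is not small. I would first try cutoffs with transition width $w_R=R^{1-\alpha-\varepsilon}$ on a polynomial grid $R_k=k^m$. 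The sharp-versus-smooth discrepancy is then at most a count in $[R,R+w_R]$ divided by $R$, whose mean is $O(R^{-\varepsilon})$ and whose variance is controlled by the same Poincaré argument applied to a smoothed count. The change between consecutive grid points is likewise a count in $[R_k,R_{k+1}]$ divided by $R_k$. I would then choose $m$ and $\varepsilon$ so that these mean and variance bounds are summable over the grid, and conclude by Chebyshev's inequality and Borel--Cantelli. Monotonicity of the counts between grid points would upgrade this to all integers $R$. The exponent bookkeeping in this last step, which must be made compatible with an arbitrary $\alpha<1$, is where I expect the real difficulty, and a refinement with higher moments of counts may be needed there.
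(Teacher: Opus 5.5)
Your treatment of the atoms at $\pm b$ and of $s_b$ matches the paper. Your smooth-cutoff estimates are also sound: the Poincaré variance bound is uniform in $N$, and the mean of an even, bounded-variation weight against $\rho/x$ is controlled by \eqref{eq rho symmetry}.

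\textbf{The gap.} The only nontrivial part of membership in $\mathfrak N$ is existence of $p_b$ with \emph{sharp integer} cutoffs, and that step is not proved. You explicitly leave it open, and the sketch does not close as written, for two reasons.
\begin{enumerate}
\item The narrow-width smooth cutoffs (transition width $w_R$) that you compare with $\int_{b<|x|\le R}\bXXi(t)(\dif x)/x$ are not the cutoffs $\chi(x/2^k)$ whose convergence you established. Linking them requires a linear statistic supported on $|x|\asymp R$. Under \eqref{eq rho bound} its one-sided contributions to the mean are only $O(R^{\alpha})$, and they become small only through the cancellation in \eqref{eq rho symmetry}. The sketch does not address this.
\item Asking for the \emph{mean} bounds to be summable over the grid cannot work. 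The only available bound on the mean of $R_k^{-1}\bXXi(t)(\{R_k<|x|\le R_{k+1}\})$ is of order $(R_{k+1}-R_k)R_k^{\alpha-1}\asymp k^{m\alpha-1}$. This is not summable for any $m>0$ and $\alpha\ge0$. Only the centred fluctuations need summable tail probabilities; the means need only tend to zero.
\end{enumerate}

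\textbf{The missing idea, and the paper's route.} The paper avoids smoothing altogether.
\begin{itemize}
\item It controls the sharp one-sided counting discrepancies $\mathsf D_{\pm,t}(L)$: the count of $\bXXi(t)$ in $(0,L]$ or $[-L,0)$ minus the corresponding integral of $\rho_{\bxi,t}$.
\item It shows that almost surely $|\mathsf D_{\pm,t}(L)|\le A_tL^{\delta}$ for some $\delta<1$ and all large $L$. This uses the moment bounds of Lemma~\ref{lem Count-moment}, Chebyshev and Borel--Cantelli over integers, and monotonicity of counts in $L$ (Lemma~\ref{lm gamma_1 converges}).
\item Lemma~\ref{lem katori 3.1,3.2} then integrates $H=\mathsf D_+-\mathsf D_-$ by parts against $1/r$. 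Since $\delta<1$, one has $H(K)/K\to0$ and $\int_L^\infty|H|/r^2<\infty$.
\item Hence the sharp principal value $p_L(\bXXi(t))$ exists and is within $O(L^{\delta-1})$ of the deterministic tail of $\rho_{\bxi,t}/x$, which \eqref{eq rho symmetry} bounds. Finally $p_1=p_1^L+p_L$.
\end{itemize}

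Your own Poincaré bound would supply this discrepancy estimate:
\begin{itemize}
\item Take smoothed counts with unit-width transitions sandwiching $\bXXi(t)((0,K])$.
\item Their variances are $O(1+K^{\alpha})$, and their means are within $O(1+K^{\alpha})$ of $\int_0^K\rho_{\bxi,t}$.
\item Chebyshev at level $K^{\delta}$ with $\delta\in((1+\alpha)/2,1)$ is then summable over $K\in\mathbb N$.
\end{itemize}
So your ingredients suffice. What is missing is this Abel-summation step, which turns a sublinear counting discrepancy directly into a sharp-cutoff principal value.
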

By Theorem~\ref{MainTheorem}, the density bound \eqref{eq rho bound}
also holds for the limiting density $\rho_{\bxi,t}$.
For $\widetilde{\mathfrak M}\subseteq\mathfrak M$, set
\[
 \mathcal F_{\mathrm{bd}}\left(\widetilde{\mathfrak M}\right)
 \defeq\left\{\phi:\widetilde{\mathfrak M}\longrightarrow\mathbb R:
 \sup_{\bxi\in\widetilde{\mathfrak M}}\left|\phi\left(\bxi\right)\right|<\infty\right\},
\]
and
\[
 \mathcal C_{\mathrm{bd}}\left(\widetilde{\mathfrak M}\right)
 \defeq\left\{\phi\in\mathcal F_{\mathrm{bd}}\left(\widetilde{\mathfrak M}\right):
 \phi\text{ is vaguely continuous on }\widetilde{\mathfrak M}\right\}.
\]

The transition argument needs convergence that also controls the reciprocal parameters.
\begin{definition}
Let $\bxi^\UN,\bxi\in\mathfrak N$. We write
$\bxi^\UN\xrightarrow{\ \bUpsilon\ }\bxi$ if there exists a common
$b>0$ such that $\bxi^\UN,\bxi\in\mathfrak M_b$ for every $N$ and
\[
 f_{p_b\left(\bxi^\UN\right)}^b\left(\bxi^\UN\right)
 \longrightarrow f_{p_b\left(\bxi\right)}^b\left(\bxi\right)
 \qquad\text{in }\bUpsilon_b.
\]
Define
\[
 \mathcal C_{\bUpsilon}\left(\mathfrak N\right)
 \defeq\left\{\phi\in\mathcal F_{\mathrm{bd}}\left(\mathfrak N\right):
 \phi\left(\bxi^\UN\right)\longrightarrow\phi\left(\bxi\right)
 \text{ whenever }\bxi^\UN\xrightarrow{\ \bUpsilon\ }\bxi\right\}.
\]
\end{definition}
Since $\bUpsilon$-convergence implies vague convergence,
$\mathcal C_{\mathrm{bd}}(\mathfrak N)
\subset\mathcal C_{\bUpsilon}(\mathfrak N)$.

We associate a transition operator with the process started from each admissible configuration.
\begin{definition}
For $t>0$, define
\[
 \mT_t:\mathcal C_{\mathrm{bd}}\left(\mathfrak M\right)
 \longrightarrow\mathcal F_{\mathrm{bd}}\left(\mathfrak N\right),
 \qquad
 \left(\mT_t\phi\right)\left(\bxi\right)
 \defeq \expt_{\bxi}\!\left[\phi\left(\bXi_{\bxi}\left(t\right)\right)\right].
\]
\end{definition}

\begin{remark}
For a finite configuration $\boldsymbol\eta$, this process agrees with
the finite Dyson model started from $\boldsymbol\eta$, by
Proposition~\ref{finitedysonmodel}. Thus the same operator $\mT_t$ may
be used in the finite-particle Markov identity.

\label{remark upsilon continuous}
Theorem~\ref{MainTheorem} shows that $\bUpsilon$-convergence gives
finite-dimensional convergence. Hence
\[
 \mT_t\phi\in\mathcal C_{\bUpsilon}\left(\mathfrak N\right),
 \qquad \phi\in\mathcal C_{\mathrm{bd}}\left(\mathfrak M\right),\quad t>0.
\]

\label{rem:transition-measurable}
The function $\mT_t\phi$ is measurable on $\mathfrak N$ for the trace
of the vague Borel $\sigma$-field. Indeed, for $R\in\mathbb N$ put
$\boldsymbol\eta^{[R]}=\boldsymbol\eta|_{[-R,R]}$ and
\[
 \left(\mT_t^{\left[R\right]}\phi\right)\left(\boldsymbol\eta\right)
 \defeq \expt_{\boldsymbol\eta^{\left[R\right]}}
   \left[\phi\left(\bXi_{\boldsymbol\eta^{\left[R\right]}}\left(t\right)\right)\right].
\]
The restriction map is vague-Borel measurable, and the finite-particle
transition kernel is Borel measurable in its initial configuration, so
$\mT_t^{[R]}\phi$ is vague-Borel measurable. For each fixed
$\boldsymbol\eta\in\mathfrak N$,
Lemma~\ref{lem:truncation-parameter-bridge}, Theorem~\ref{MainTheorem},
and bounded vague continuity give
\[
 \left(\mT_t\phi\right)\left(\boldsymbol\eta\right)
 =\lim_{R\to\infty}\left(\mT_t^{\left[R\right]}\phi\right)\left(\boldsymbol\eta\right).
\]
Thus $\mT_t\phi$ is measurable as a pointwise limit.
\end{remark}

We use the following finite-dimensional formulation of the Markov property.
\begin{definition}
We say that $\bXi_{\bxi}(\cdot)$ has the Markov property with
transition operators $(\mT_t)_{t>0}$ if, for every $M\ge2$,
$0<t_1<\cdots<t_M<\infty$, and
$\phi_1,\ldots,\phi_M\in\mathcal C_{\mathrm{bd}}(\mathfrak M)$,
\[
\begin{aligned}
 &\expt_{\bxi}\left[\prod_{i=1}^M\phi_i\left(\bXXi\left(t_i\right)\right)\right]=\expt_{\bxi}\left[
 \left(\prod_{i=1}^{M-1}\phi_i\left(\bXXi\left(t_i\right)\right)\right)
 \left(\mT_{t_M-t_{M-1}}\phi_M\right)\left(\bXXi\left(t_{M-1}\right)\right)\right].
\end{aligned}
\]
Throughout this section, ``Markov property'' refers to this identity.
\end{definition}

The same density and principal-value bounds allow the finite-particle Markov identity to pass to the limit. We state the resulting criterion before proving the tail estimates on which it relies.
\begin{proposition}\label{prop markov from rho}
Suppose that, for every $t>0$, there are constants as in
Proposition~\ref{prop in N for all time} for which
\eqref{eq rho bound} and \eqref{eq rho symmetry} hold. Then
$\bXi_{\bxi}(\cdot)$ has the Markov property.
\end{proposition}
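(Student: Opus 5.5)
We pass the finite-particle Markov identity for the truncations $\bxi^{[N]}$ to the limit. For each $N$, the finite Dyson model started from $\bxi^{[N]}$ is Markov with transition operators $\mT_t$ (by the remark identifying $\mT_t$ on finite configurations with finite Dyson semigroups). Hence, for $0<t_1<\dots<t_M$ and $\phi_i\in\mathcal C_{\mathrm{bd}}(\mathfrak M)$,
\[
 \expt_{\bxi^{[N]}}\Bigl[\prod_{i=1}^M\phi_i\bigl(\bXi^{[N]}_{\bxi}(t_i)\bigr)\Bigr]
 =\expt_{\bxi^{[N]}}\Bigl[\prod_{i=1}^{M-1}\phi_i\bigl(\bXi^{[N]}_{\bxi}(t_i)\bigr)\,
 \bigl(\mT_{t_M-t_{M-1}}\phi_M\bigr)\bigl(\bXi^{[N]}_{\bxi}(t_{M-1})\bigr)\Bigr].
\]
The left side converges to the corresponding $\expt_{\bxi}$ expression by the f.d.d. convergence $\bXi^{[N]}_{\bxi}\to\bXXi$, which follows from Lemma~\ref{lem:truncation-parameter-bridge} and Theorem~\ref{MainTheorem}. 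The work is on the right side. There $\psi=\mT_{t_M-t_{M-1}}\phi_M$ is only in $\mathcal C_{\bUpsilon}(\mathfrak N)$, not vaguely continuous, so weak convergence alone is insufficient.

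The plan is to upgrade vague convergence at time $t_{M-1}$ to $\bUpsilon$-convergence in a coupled sense. By Skorokhod's representation on the Polish space $\mathfrak M^{M-1}$, realise the joint laws at $(t_1,\dots,t_{M-1})$ so that the configurations converge vaguely almost surely. Then show that, with $\boldsymbol\zeta_N=\bXi^{[N]}_{\bxi}(t_{M-1})$ and $\boldsymbol\zeta=\bXXi(t_{M-1})$, along a subsequence $\boldsymbol\zeta_N\xrightarrow{\bUpsilon}\boldsymbol\zeta$ almost surely. Once this holds, boundedness of $\psi$ and the $\phi_i$, together with dominated convergence and the $\mathcal C_{\bUpsilon}$-continuity of $\psi$ from the remark, give convergence of the right side. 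Measurability of $\psi$ from Remark~\ref{rem:transition-measurable} ensures every expression is well defined. Proposition~\ref{prop in N for all time} guarantees $\boldsymbol\zeta\in\mathfrak N$ almost surely, so $\psi(\boldsymbol\zeta)$ is meaningful.

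For the $\bUpsilon$-upgrade, choose a random admissible cutoff $b$; almost surely there are no atoms at $\pm b$, since one-point densities exist. Vague convergence then gives matching of inner roots and of finitely many outer roots in each window $\{b<|x|<R\}$. It remains to control tails uniformly:
\[
 \lim_{R\to\infty}\sup_N\prob\bigl[|p_R(\boldsymbol\zeta_N)|+s_R(\boldsymbol\zeta_N)>\eta\bigr]=0,
\]
together with the same statement for $\boldsymbol\zeta$. For $s_R$, take expectations: $\expt s_R(\boldsymbol\zeta_N)=\int_{|x|>R}\rho_{\bxi^{[N]},t}(x)x^{-2}\,\dif x\le C R^{\alpha-1}$ by \eqref{eq rho bound}, using $\alpha<1$. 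For $p_R$, split into the mean and the fluctuation. The mean $\int_{R\le|x|\le L}\rho/x$ is bounded by $C R^{-\nu}$ via \eqref{eq rho symmetry}. For the fluctuation, use dyadic blocks $2^kR\le|x|<2^{k+1}R$ and bound the variance of $\langle x^{-1}\indi_{\text{block}},\boldsymbol\zeta_N\rangle$. For a determinantal process with Hermitian, projection-like kernel at equal times, the variance of a linear statistic is at most $\int f^2\rho$, which gives $\lesssim (2^kR)^{\alpha-1}$. Summing square roots over $k$ converges and tends to $0$ as $R\to\infty$. One also needs the principal-value limit in $L$ to exist almost surely for each $N$; the same block bounds give that.

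The tail estimate then makes coordinatewise convergence of the reciprocal sequences, convergence of $\delta$, and convergence of $\gamma_1=p_b$ follow from window matching plus the uniformly small tails. Passing to a subsequence with summable error probabilities upgrades this to almost sure convergence. The main obstacle is this tail step, specifically the variance bound for the reciprocal linear statistic. The equal-time kernel $\K_{\bup}((t,\cdot),(t,\cdot))$ is not obviously Hermitian, so the inequality $\mathrm{Var}\le\int f^2\rho$ needs justification. I would first try to obtain it at the finite level, where the fixed-time law is the spectrum of $\mathbf D+\boldsymbol{\mathsf H}(t)$, a Hermitian-projection determinantal process for which the bound is standard, uniformly in $N$. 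I would then pass it to $\boldsymbol\zeta$ by Fatou's lemma under the Skorokhod coupling.
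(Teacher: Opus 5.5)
Your overall architecture matches the paper's. You use the finite Markov identity for the truncations, f.d.d.\ convergence for the left side, and a Skorokhod coupling at $t_1,\ldots,t_{M-1}$. You then upgrade vague to $\bUpsilon$-convergence at $t_{M-1}$ along a subsequence, using uniform-in-$N$ smallness of $p_R$ and $s_R$ (your $s_R$ step is exactly Lemma~\ref{lm gamma_2 converges}). Finally you use $\mathcal C_{\bUpsilon}$-continuity of $\mT_t\phi_M$ with bounded convergence.

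The gap is the step you flag yourself: the fluctuation bound for $p_R(\boldsymbol\zeta_N)$. Your proposed justification is false, because the spectrum of $\mathbf D+\boldsymbol{\mathsf H}(t)$ is not a Hermitian-projection determinantal process. It is a polynomial (biorthogonal) ensemble with a non-symmetric kernel, and it can have positively correlated points. Take $\mathbf D=\operatorname{diag}(a,-a)$ and $t=1$. The eigenvalue density is proportional to $(\lambda_1-\lambda_2)\sinh(a(\lambda_1-\lambda_2))\me^{-(\lambda_1^2+\lambda_2^2)/2}$. Writing $\lambda_1=a+\xi$ and $\lambda_2=-a+\eta$, up to exponentially small corrections $(\xi,\eta)$ has density $(2\pi)^{-1}\{1+(\xi-\eta)/(2a)\}\me^{-(\xi^2+\eta^2)/2}$. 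Hence $\operatorname{Cov}(\xi,\eta)=1/(4a^2)>0$, and $\rho_2(a+\xi,-a+\eta)>\rho_1(a+\xi)\rho_1(-a+\eta)$ whenever $\xi\eta>0$. This is impossible for any Hermitian kernel, since then $\rho_2(x,y)=\rho_1(x)\rho_1(y)-|K(x,y)|^2$. Now let $f$ be the centred $\xi$-coordinate on $(0,\infty)$ and the centred $\eta$-coordinate on $(-\infty,0)$. Then $\operatorname{Var}\langle f,\cdot\rangle-\int f^2\rho_1=2\operatorname{Cov}(\xi,\eta)>0$. So the inequality $\operatorname{Var}\le\int f^2\rho$ is not available for this model. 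Without a replacement, the uniform tail control, and hence the $\bUpsilon$-upgrade, remain unproved.

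The missing ingredient is Gaussian concentration on the matrix model, not a determinantal inequality. The paper argues as follows.
\begin{enumerate}
\item Weyl's inequality makes each ordered eigenvalue of $\mathbf D+\boldsymbol{\mathsf H}(t)$ a $1$-Lipschitz function of the Gaussian matrix.
\item Combining the resulting concentration with \eqref{eq rho bound} gives $\expt|\mathsf D^{[N]}_{\pm,t}(L)|^{2k}\le C_{k,t}(1+L^{2\alpha k})$ for the centred one-sided counts, uniformly in $N$ (Lemma~\ref{lem Count-moment}).
\item Chebyshev and a union bound over integers give count discrepancies at most $A_tL^{\delta}$, with $\delta<1$, simultaneously for all $L\ge L_0$, on an event of probability at least $1-C_tL_0^{-1}$.
\item The deterministic Stieltjes integration by parts of Lemma~\ref{lem katori 3.1,3.2}, together with \eqref{eq rho symmetry}, converts this into the uniform principal-value tail bound of Lemma~\ref{lm gamma_1 converges}.
\end{enumerate}
If you want to keep your linear-statistic route, the Gaussian Poincar\'e inequality $\operatorname{Var}\operatorname{tr}g(\mathbf D+\boldsymbol{\mathsf H}(t))\le Ct\,\expt\operatorname{tr}\bigl[g'(\mathbf D+\boldsymbol{\mathsf H}(t))^2\bigr]$ gives your dyadic bounds for smoothed blocks with room to spare. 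The sharp cutoff at $\pm R$ in $p_R$ must then be handled separately, which again amounts to controlling one-sided count discrepancies near $\pm R$.
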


If every $\rho_{\bxi^{[N]},t}$ is even, so is its locally uniform
limit, and \eqref{eq rho symmetry} holds with both left-hand sides zero.
Thus the main work in symmetric applications is the uniform
finite-particle density estimate.
\subsection{Proof of the criterion}

We separate each reciprocal statistic into a bounded part and a tail.
\begin{definition}
Let $b>0$, $L>b$, and $\bxi\in\mathfrak M$. Recall that
$p_L(\bxi)$ and $s_L(\bxi)$ are the reciprocal statistics at cutoff $L$;
the principal value defining $p_L$ uses integer outer cutoffs. Define
\[
 p_b^L\left(\bxi\right)\defeq\int_{\left\{b<\left|x\right|\le L\right\}}\frac{\bxi\left(\dif x\right)}x,
 \qquad
 s_b^L\left(\bxi\right)\defeq\int_{\left\{b<\left|x\right|\le L\right\}}\frac{\bxi\left(\dif x\right)}{x^2}.
\]
\end{definition}
For each $N\in\mathbb N$, let
$\bXXi^\UN(t)=\sum_{j=1}^{n_N}\delta_{\mathsf X_j^{(N)}(t)}$ be a
finite-dimensional Dyson Brownian motion with $\beta=2$, started from a
deterministic configuration, and let $\rho_{\bxi^\UN,t}$ be its first
correlation function. For $L\ge1$, define
\begin{equation}\label{eq def of D+,D-}
\begin{aligned}
    &\mathsf D_{+,t}^{\left(N\right)}\left(L\right)\defeq \bXXi^{\left(N\right)}\left(t\right)\left(\left(0,L\right]\right)
    - \int_0^L \rho_{\bxi^\UN,t}\left(x\right)\dif x,\\
    &\mathsf D_{-,t}^{\left(N\right)}\left(L\right)\defeq \bXXi^{\left(N\right)}\left(t\right)\left(\left[-L,0\right)\right)
    - \int_{-L}^0 \rho_{\bxi^\UN,t}\left(x\right)\dif x.
\end{aligned}
\end{equation}
For the spatial truncations, we write $\mathsf D_{\pm,t}^{[N]}$ for the
same centered counts with $\bxi^{[N]}$ in place of $\bxi^\UN$.

We begin by controlling the fluctuations of one-sided particle counts. These estimates will give uniform control of the principal-value tails.
\begin{lemma}\label{lem Count-moment}
Suppose that, for some $0\le\alpha<1$,
\[
 \sup_N\rho_{\bxi^\UN,t}\left(x\right)
 \le C_t\left(1+\left|x\right|^\alpha\right),\qquad x\in\mathbb R.
\]
Then, for every $k\in\mathbb N$, there is a deterministic
$C_{k,t}<\infty$, independent of $N$ and $L$, such that
\begin{equation*}
    \sup_{N\in\mathbb{N}}
    \left\{\expt_{\bxi^\UN}\left[\left|\mathsf D_{+,t}^{\left(N\right)}\left(L\right)\right|^{2k}\right]
    +\expt_{\bxi^\UN}\left[\left|\mathsf D_{-,t}^{\left(N\right)}\left(L\right)\right|^{2k}\right]
    \right\}\leq C_{k,t}\left(1+L^{2\alpha k}\right),\qquad L\ge1.
\end{equation*}
\end{lemma}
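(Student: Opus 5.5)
The plan is to use the fact that at a fixed time $t$ the configuration $\bXXi^{(N)}(t)$ is a determinantal point process whose kernel $\K((t,\cdot),(t,\cdot))$ is Hermitian. For $\beta=2$, finite Dyson Brownian motion at a fixed time is distributed as the eigenvalues of $\mathbf D+\boldsymbol{\mathsf H}(t)$, a unitary-invariant Gaussian perturbation, so the equal-time kernel is the kernel of an orthogonal projection, possibly after a gauge change as in Remark~\ref{equivalentkernel}. The centered count $\mathsf D_{+,t}^{(N)}(L)$ is the centered number of points in the interval $I=(0,L]$. By the Hough--Krishnapur--Peres--Vir\'ag representation, for a determinantal process with Hermitian trace-class kernel of norm at most one, the count in $I$ equals in law a sum of independent Bernoulli variables with parameters $\lambda_j$, the eigenvalues of $\K$ restricted to $I$. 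Hence $\mathsf D_{+,t}^{(N)}(L)=\sum_j(\mathsf B_j-\lambda_j)$ in law.

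The first step is to bound the even moments of this sum. A sum of independent centered Bernoulli variables has cumulants $\kappa_m$ satisfying $|\kappa_m|\le C_m\sum_j\lambda_j(1-\lambda_j)\le C_m\operatorname{Var}$. Alternatively, Rosenthal's inequality gives
\[
\expt\left[\left|\mathsf D\right|^{2k}\right]\le C_k\left(\operatorname{Var}^k+\operatorname{Var}\right).
\]
Either route reduces the lemma to showing $\operatorname{Var}(\#I)\le C_t(1+L^{\alpha})$ uniformly in $N$; then $\operatorname{Var}^k+\operatorname{Var}\le C(1+L^{2\alpha k})$ because $\alpha\ge0$ and $L\ge1$.

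The second step is the variance bound. By the projection property,
\[
\operatorname{Var}(\#I)=\sum_j\lambda_j(1-\lambda_j)\le\sum_j\lambda_j=\expt\left[\#I\right]=\int_0^L\rho_{\bxi^{(N)},t}(x)\,\dif x\le C_t\left(L+L^{1+\alpha}\right).
\]
This naive bound is too weak: it gives moments of order $L^{(1+\alpha)k}$ rather than $L^{2\alpha k}$. I therefore expect the real obstacle to be obtaining the sharper variance bound $\operatorname{Var}(\#I)\le C(1+L^{2\alpha})$. Note that $L^{2\alpha}$ may be larger or smaller than $L^{1+\alpha}$ depending on $\alpha$. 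When $\alpha\ge1/2$ we have $1+\alpha\le 4\alpha$ only in part of the range, so a crude bound does not suffice in general.

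My first attempt at the sharp variance bound would be the following. For a determinantal process with projection kernel,
\[
\operatorname{Var}(\#I)=\int_I\int_{I^c}\left|\K(x,y)\right|^2\dif x\,\dif y.
\]
This depends only on the boundary of $I$. The boundary point at $L$ contributes the interaction between $[L-s,L]$ and $[L,L+s]$. The point $0$ contributes a bounded amount, since the density there is $O(1)$. I would control $\int_I\int_{I^c}|\K(x,y)|^2$ by splitting at scale $d=|x-y|$. For $d\lesssim\rho(L)^{-1}$ the contribution is bounded by $\rho(L)^2\cdot d$-area. For larger $d$, I would use the Schwarz inequality $\int|\K(x,y)|^2\dif y=\K(x,x)$ together with off-diagonal decay of the heat-type kernel at time $t$: Gaussian factors in Definition~\ref{def def of K} should give $|\K(x,y)|^2\lesssim\rho(x)\rho(y)\,\me^{-c|x-y|^2/t}$ up to lower-order terms. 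This would yield a boundary contribution $O(\rho(L)^{2}+\rho(L))=O(1+L^{2\alpha})$.

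If that decay is unavailable uniformly in $N$, the fallback is to bound $\operatorname{Var}$ by $\int_{L-M}^{L+M}\rho$ plus an exponentially small tail. This gives $O(1+L^{\alpha})$, which is in fact better and suffices. The negative side $\mathsf D_{-,t}^{(N)}$ is identical by the same argument.
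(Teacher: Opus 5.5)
Your reduction relies on a structural claim that is false. At a fixed time the finite Dyson model is the eigenvalue law of $\mathbf D+\boldsymbol{\mathsf H}(t)$, but this matrix is not unitarily invariant. Its eigenvalues form a biorthogonal (polynomial) ensemble with density proportional to $\Delta(x)\det[\me^{-(x_i-a_j)^2/(2t)}]$, and the correlation kernel is an oblique projection, not an orthogonal one. In general no Hermitian kernel represents this process, even after a gauge change. Any kernel of the process satisfies $\K(x,y)\K(y,x)=\rho_1(x)\rho_1(y)-\rho_2(x,y)$, and in the Hermitian case this equals $|\K(x,y)|^2\ge0$. Now take two particles started at $\pm a$, time $t=1$, and $a$ large. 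Write $x=-a+\xi$, $y=a+\eta$, and let $\phi$ be the standard Gaussian density. For bounded $\xi,\eta$ the local joint density is $\frac{2a+\eta-\xi}{2a}\phi(\xi)\phi(\eta)$ up to $O(\me^{-ca^2})$. Hence $\rho_2(x,y)-\rho_1(x)\rho_1(y)=\frac{\xi\eta}{4a^2}\phi(\xi)\phi(\eta)+O(\me^{-ca^2})$, which is positive at $\xi=\eta=1$. So neither the Hough--Krishnapur--Peres--Vir\'ag Bernoulli representation nor the identity $\operatorname{Var}(\#I)=\int_I\int_{I^c}|\K(x,y)|^2\,\dif x\,\dif y$ is available, and the Rosenthal reduction has no basis.

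Even granting a Hermitian structure, the step that carries all the weight is missing. Localizing $\int_I\int_{I^c}|\K|^2$ to bounded neighbourhoods of $0$ and $L$ needs off-diagonal decay of the equal-time kernel, uniformly in $N$. You do not establish this, and it does not follow readily from the double-contour representation. Your fallback depends on the same decay. The bound $\int_{I^c}|\K(x,y)|^2\,\dif y\le\K(x,x)$ only handles $x$ within distance $M$ of $\partial I$; points deep inside $I$ again need decay. There is also an exponent error: $\alpha<1$ gives $2\alpha<1+\alpha$ always. So the naive bound $\operatorname{Var}\le\expt[\#I]$ is never enough for $L^{2\alpha k}$. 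It would only give the weaker bound \eqref{eq:one-sided-count-moment-condition}, and only under the Bernoulli structure.

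The missing idea is to use the matrix model for concentration rather than for determinantal structure. By Weyl's inequality each ordered eigenvalue $\mathsf X_j^{(N)}(t)$ is $1$-Lipschitz in the Hilbert--Schmidt norm of the perturbation. It is therefore sub-Gaussian about its mean $m_j^{(N)}$, with constants independent of $N$, $j$ and the initial data. The density hypothesis then caps the number of means in $[u,u+1]$ by $C_t(1+|u|^\alpha)$. Put $\mathsf F_N(x)=\bXXi^{(N)}(t)\bigl((-\infty,x]\bigr)$ and $J_N(x)=\#\{j:m_j^{(N)}\le x\}$. The event $\{\mathsf F_N(x)\ge J_N(x)+r\}$ equals $\{\mathsf X^{(N)}_{J_N(x)+r}(t)\le x\}$. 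It forces a deviation of order $(r/A_x)^{1/(1+\alpha)}$ with $A_x=1+|x|^\alpha$, and the lower tail is symmetric. This gives $L^p$ bounds $C_{p,t}A_x^p$ for the centred cumulative counts. Differencing at $0$ and $L$ proves the lemma without any kernel analysis.
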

Since $L\ge1$ and $2\alpha\le1+\alpha$, this also gives
\begin{equation}\label{eq:one-sided-count-moment-condition}
    \sup_{N\in\mathbb{N}}
    \left\{\expt_{\bxi^\UN}\left[\left|\mathsf D_{+,t}^{\left(N\right)}\left(L\right)\right|^{2k}\right]+\expt_{\bxi^\UN}\left[\left|\mathsf D_{-,t}^{\left(N\right)}\left(L\right)\right|^{2k}\right]\right\}\leq C_{k,t}L^{k\left(1+\alpha\right)}.
\end{equation}
\begin{remark}\label{rem vague imply bd}
    Suppose additionally that
\(\bXXi^{(N)}(t)\) converges vaguely in distribution to
\(\bXXi(t)\), and that the limiting process has no particles at fixed deterministic points almost surely. Then the same estimates hold for the corresponding centered counts of \(\bXXi(t)\).
\end{remark}
We prove the lemma first, then justify the passage to the limiting counts in Remark~\ref{rem vague imply bd}.
\begin{proof}[Proof of Lemma \ref{lem Count-moment}]
Let $\mathbf A_N$ be the deterministic diagonal $n_N\times n_N$
Hermitian matrix whose eigenvalues are the points of the initial
configuration. The $\beta=2$ Dyson Brownian motion started from this
configuration has the same law as the eigenvalue process of
$\mathbf A_N+\boldsymbol{\mathsf H}_{n_N}(t)$, where $\boldsymbol{\mathsf H}_{n_N}$ is Hermitian
Brownian motion. Write the ordered eigenvalues as
\begin{equation*}
    \mathsf X_1^{\left(N\right)}\left(t\right) \leq\cdots\leq \mathsf X_{n_N}^{\left(N\right)}\left(t\right),
\end{equation*}
and set
$m_j^{(N)}=\expt_{\bxi^\UN}[\mathsf X_j^{(N)}(t)]$. Equip the real vector
space of $n_N\times n_N$ Hermitian matrices with the Hilbert--Schmidt
norm. For Hermitian matrices $\mathbf H$ and $\widetilde{\mathbf H}$,
Weyl's inequality gives
\begin{equation*}
    \left|
    \lambda_j\left(\mathbf{A}_N+\mathbf{H}\right)
    -\lambda_j\left(\mathbf{A}_N+\widetilde {\mathbf{H}}\right)\right|\leq\left\|\mathbf{H}-\widetilde {\mathbf{H}}\right\|_{\mathrm{op}}\leq\left\|\mathbf{H}-\widetilde {\mathbf{H}}\right\|_{\mathrm{HS}}.
\end{equation*}
Under the present normalization, the covariance operator of the real
Gaussian coordinate vector of $\boldsymbol{\mathsf H}_{n_N}(t)$ is bounded by
$c_0t\mathbf I$, independently of $n_N$. The preceding display says that the
$j$-th ordered eigenvalue is $1$-Lipschitz in the Hilbert--Schmidt norm.
Gaussian concentration therefore gives a constant $c>0$, independent of
$N$, $n_N$, $j$, and $\mathbf A_N$, such that
\begin{equation}\label{eq:individual-eigenvalue-concentration}
    \prob_{\bxi^\UN}
    \left(\left|\mathsf X_j^{\left(N\right)}\left(t\right)-m_j^{\left(N\right)}\right|\geq r\right)\leq2\exp\left(-\frac{r^2}{ct}\right),\quad r\geq0.
\end{equation}
Choose $R_t<\infty$ such that
\begin{equation*}
    2\exp\left(-{R_t^2}/{\left(ct\right)}\right)\leq 1/2.
\end{equation*}
For \(u\in\mathbb{R}\), put
\begin{equation*}
    M_N\left(u\right)\defeq \#\left\{j:m_j^{\left(N\right)}\in\left[u,u+1\right]\right\}.
\end{equation*}
If \(m_j^{(N)}\in[u,u+1]\), then $|\mathsf X_j^{(N)}(t)-m_j^{(N)}|\leq R_t$ implies $\mathsf X_j^{(N)}(t)\in[u-R_t,u+1+R_t].$ It follows from \eqref{eq:individual-eigenvalue-concentration} that
\begin{equation*}
\begin{aligned}
    \frac12 M_N\left(u\right)
    &\leq\sum_{j=1}^{n_N}\prob_{\bxi^\UN}
    \left(\mathsf X_j^{\left(N\right)}\left(t\right)\in\left[u-R_t,u+1+R_t\right]\right) \\
    &=\expt_{\bxi^\UN}\left[\bXXi^\UN\left(t\right)
    \left(\left[u-R_t,u+1+R_t\right]\right)\right]=
    \int_{u-R_t}^{u+1+R_t}\rho_{\bxi^\UN,t}\left(y\right)\dif y.
\end{aligned}
\end{equation*}
Using the assumed density bound, we obtain
\begin{equation*}
    M_N\left(u\right)\leq C_t\left(1+\left|u\right|^\alpha\right),\quad u\in\mathbb{R},
\end{equation*}
where \(C_t\) is independent of \(N\). Covering an arbitrary interval by unit intervals and using
\begin{equation*}
    \left|x+y\right|^\alpha\leq \left|x\right|^\alpha+\left|y\right|^\alpha,\quad 0\leq\alpha\leq1,
\end{equation*}
we obtain, for every \(x\in\mathbb{R}\) and \(d\geq0\),
\begin{equation}\label{eq:mean-location-interval-bound}
\max\left\{
    \#\left\{j:m_j^{\left(N\right)}\in\left[x,x+d\right]\right\},\#\left\{j:m_j^{\left(N\right)}\in\left[x-d,x\right]\right\}\right\}\leq C_t \left(1+\left|x\right|^\alpha\right) \left(d+1\right)^{1+\alpha}.
\end{equation}
Define
\begin{equation*}
    \mathsf F_N\left(x\right)\defeq \bXXi^\UN\left(t\right)\left(\left(-\infty,x\right]\right),\quad J_N\left(x\right)\defeq\#\left\{j:m_j^{\left(N\right)}\leq x\right\}.
\end{equation*}
Also put $ A_x=1+|x|^\alpha.$ Suppose that \(J_N(x)+r\leq n_N\), and define $d_{+,N}(x,r)= m_{J_N(x)+r}^{(N)}-x.$
The interval $(x,x+d_{+,N}(x,r)]$ contains at least \(r\) mean locations. Hence
\eqref{eq:mean-location-interval-bound} gives
\begin{equation}\label{eq:mean-location-distance-bound}
    d_{+,N}\left(x,r\right)\geq c_t\left(\frac{r}{A_x}\right)^{1/\left(1+\alpha\right)}-1.
\end{equation}
The analogous estimate holds for the distance from \(x\) to the \(r\)-th mean location on its left.

If $\mathsf F_N(x)\geq J_N(x)+r$, then
$\mathsf X_{J_N(x)+r}^{(N)}(t)\leq x$. Combining
\eqref{eq:individual-eigenvalue-concentration} and
\eqref{eq:mean-location-distance-bound} controls the upper tail of
$\mathsf F_N(x)-J_N(x)$.

Similarly, if $\mathsf F_N(x)\leq J_N(x)-r,$ then $\mathsf X_{J_N(x)-r+1}^{(N)}(t)>x,$ whereas $m_{J_N(x)-r+1}^{(N)}\leq x.$ The same argument controls the lower tail. Events involving indices outside
\(\{1,\ldots,n_N\}\) are empty. After adjusting the constants to include
small values of \(r\), we obtain
\begin{equation}\label{eq:cumulative-count-tail}
    \prob_{\bxi^\UN}
    \left( \left|\mathsf F_N\left(x\right)-J_N\left(x\right)\right| \geq r \right) \leq C_t \exp\left[ -c_t\left(\frac{r}{A_x}\right)^{2/\left(1+\alpha\right)}\right].
\end{equation}
For every \(p\geq1\), integration of
\eqref{eq:cumulative-count-tail} gives
\begin{equation*}
\expt_{\bxi^\UN}
    \left[ \left|\mathsf F_N\left(x\right)-J_N\left(x\right)\right|^p
    \right] = p\int_0^\infty r^{p-1} \prob_{\bxi^\UN}
    \left(  \left|\mathsf F_N\left(x\right)-J_N\left(x\right)\right|  \geq r  \right)\dif r \leq  C_{p,t}A_x^p.
\end{equation*}
Since \(J_N(x)\) is deterministic,
\begin{equation*}
\begin{aligned}
    \left\|
    \mathsf F_N\left(x\right)-\expt_{\bxi^\UN}\left[\mathsf F_N\left(x\right)\right]
    \right\|_{L^p}
    &\leq\left\|\mathsf F_N\left(x\right)-J_N\left(x\right)\right\|_{L^p} +\left|\expt_{\bxi^\UN}\left[\mathsf F_N\left(x\right)\right]-J_N\left(x\right)
    \right| \\
    &\leq2\left\|\mathsf F_N\left(x\right)-J_N\left(x\right)\right\|_{L^p}.
\end{aligned}
\end{equation*}
Consequently, we have
\begin{equation}\label{eq:centered-cumulative-count-moment}
    \sup_{N\in\mathbb{N}}
    \expt_{\bxi^\UN}\left[\left|
    \mathsf F_N\left(x\right)-\expt_{\bxi^\UN}\left[\mathsf F_N\left(x\right)\right]\right|^p\right]\leq C_{p,t}
    \left(1+\left|x\right|^\alpha\right)^p.
\end{equation}
Because the one-point distributions have densities, endpoint conventions
are immaterial. Each one-sided centered count is the difference of the
centered cumulative counts at its two endpoints: $0,L$ for the positive
side and $-L,0$ for the negative side. Minkowski's inequality and
\eqref{eq:centered-cumulative-count-moment}, with $p=2k$, give both
asserted moment bounds. Since $L\ge1$ and $2\alpha\le1+\alpha$, this
also proves \eqref{eq:one-sided-count-moment-condition}.
\end{proof}
\begin{proof}[Proof of Remark \ref{rem vague imply bd}]
Let $B=(0,L]$ or $B=[-L,0)$. Since the limiting process has no
particles on $\partial B$ almost surely, vague convergence in distribution
gives
\[
 \bXXi^\UN\left(t\right)\left(B\right)\xrightarrow{\mathrm{law}}\bXXi\left(t\right)\left(B\right).
\]
The second-moment case of the preceding estimate, together with
the assumed finite-density bound, gives a uniform second-moment bound for these local
counts and hence uniform integrability. Their expectations therefore
converge, so the centered counts also converge in distribution. The
Portmanteau theorem gives
\[
\begin{aligned}
 &\expt_{\bxi}\left[\left|
 \bXXi\left(t\right)\left(B\right)-\expt_{\bxi}\left[\bXXi\left(t\right)\left(B\right)\right]\right|^{2k}\right]\leq\liminf_{N\to\infty}\expt_{\bxi^\UN}\left[\left|
 \bXXi^\UN\left(t\right)\left(B\right)-\expt_{\bxi^\UN}\left[\bXXi^\UN\left(t\right)\left(B\right)\right]
 \right|^{2k}\right].
\end{aligned}
\]
Thus the same bounds hold for the limiting centered counts.
\end{proof}

The next deterministic estimate converts bounds on counting discrepancies into bounds on reciprocal tails. It will be applied to a random configuration and its one-point density.
\begin{lemma}\label{lem katori 3.1,3.2}
Let $\bxi\in\mathfrak M$ be a locally finite configuration on $\mathbb R$, and let
$\lambda:\mathbb{R}\rightarrow[0,\infty)$ be locally integrable. Suppose that there
exist $L_0>0$, $C_0>0$, and $0<\delta<1$ such that, for every $R\geq L_0$,
\begin{equation*}
    \left|\bxi\left(\left(0,R\right]\right)-\int_0^R\lambda\left(x\right)\dif x\right|\leq
    C_0R^\delta,\quad \left|\bxi\left(\left[-R,0\right)\right) - \int_{-R}^0\lambda\left(x\right)\dif x \right|\leq
    C_0R^\delta.
\end{equation*}
For $K>L\geq L_0$, use the preceding notation $p_L^K$ and define
\begin{equation*}
 \bar p_L^K\left(\lambda\right)\defeq
 \int_{\left\{L<\left|x\right|\leq K\right\}}
 \frac{\lambda\left(x\right)}x\dif x.
\end{equation*}
Suppose that $\bar p_L(\lambda)=\lim_{K\rightarrow\infty}\bar p_L^K(\lambda)$ exists for every $L\geq L_0$. Then $p_L(\bxi) = \lim_{K\rightarrow\infty}p_L^K(\bxi)$ also exists for every $L\geq L_0$, and
\begin{equation}\label{eq:deterministic-tail-comparison}
    \left|p_L\left(\bxi\right)-\bar p_L\left(\lambda\right)\right|\leq\frac{2C_0\left(2-\delta\right)}{1-\delta}L^{\delta-1}.
\end{equation}
Consequently, if there exist $C_2>0$ and $\nu>0$ such that
\begin{equation*}
    \left|\bar p_L\left(\lambda\right)\right|\leq C_2L^{-\nu},\quad L\geq L_0,
\end{equation*}
then
\begin{equation}\label{eq:deterministic-principal-value-tail}
    \left|p_L\left(\bxi\right)\right|\leq C_2L^{-\nu}+\frac{2C_0\left(2-\delta\right)}{1-\delta}L^{\delta-1},\quad L\geq L_0.
\end{equation}
\end{lemma}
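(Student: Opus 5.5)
The plan is to use Stieltjes integration by parts (Abel summation) on each half-line separately. Fix $L\geq L_0$ and $K>L$, and introduce the discrepancy functions
\[
 D_+\left(R\right)\defeq\bxi\left(\left(0,R\right]\right)-\int_0^R\lambda\left(x\right)\dif x,
 \qquad
 D_-\left(R\right)\defeq\bxi\left(\left[-R,0\right)\right)-\int_{-R}^0\lambda\left(x\right)\dif x .
\]
These are of bounded variation on compact intervals, and by hypothesis $|D_\pm(R)|\leq C_0R^\delta$ for $R\geq L_0$.

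On the positive side, I will write the difference of the two integrals against $1/x$ as a Stieltjes integral:
\[
 \int_{(L,K]}\frac{\bxi\left(\dif x\right)-\lambda\left(x\right)\dif x}{x}
 =\int_{(L,K]}\frac{\dif D_+\left(x\right)}{x}
 =\frac{D_+\left(K\right)}K-\frac{D_+\left(L\right)}L+\int_L^K\frac{D_+\left(x\right)}{x^2}\dif x .
\]
Since $1/x$ is continuous, the integration by parts is legitimate even though $D_+$ jumps at atoms. On the negative side, the substitution $x\mapsto-x$ gives the same identity with $D_-$ and an overall minus sign, because $1/x$ is negative there. Adding the two sides expresses $p_L^K(\bxi)-\bar p_L^K(\lambda)$ as the difference of the two boundary-plus-integral expressions.

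Next comes the estimation. The boundary terms at $K$ are bounded by $C_0K^{\delta-1}\to0$. The integrals $\int_L^\infty D_\pm(x)x^{-2}\dif x$ converge absolutely, since the integrand is $O(x^{\delta-2})$ and $\delta<1$. Hence $p_L^K(\bxi)-\bar p_L^K(\lambda)$ has a finite limit as $K\to\infty$, and since $\bar p_L(\lambda)$ exists by assumption, so does $p_L(\bxi)$. For the quantitative bound on each side I use
\[
 \frac{\left|D_\pm\left(L\right)\right|}L+\int_L^\infty\frac{\left|D_\pm\left(x\right)\right|}{x^2}\dif x
 \leq C_0L^{\delta-1}+\frac{C_0L^{\delta-1}}{1-\delta}=\frac{C_0\left(2-\delta\right)}{1-\delta}L^{\delta-1}.
\]
Summing the two sides yields \eqref{eq:deterministic-tail-comparison}. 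The consequence \eqref{eq:deterministic-principal-value-tail} then follows from the triangle inequality.

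The only delicate point is the integration-by-parts step with atoms of $\bxi$ present, including possible atoms at $L$ or $K$. Because $p_L^K$ and $\bar p_L^K$ both use the half-open sets $\{L<|x|\leq K\}$, the Lebesgue–Stieltjes formula on $(L,K]$ with right-continuous $D_+$ matches these conventions exactly. For the negative side, $D_-$ counts $[-R,0)$, which is left-continuous in $-x$ and hence right-continuous as a function of $R$. So the same formula applies after reflection, and no further issues arise.
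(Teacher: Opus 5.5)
Your proof is correct and is essentially the paper's argument. Both run Abel summation against $1/x$ using the discrepancy bounds $|D_\pm(R)|\le C_0R^\delta$, and both obtain exactly the constant $2C_0(2-\delta)/(1-\delta)$; the only cosmetic difference is that the paper first reflects the negative half-line into a signed measure on $(0,\infty)$ and integrates by parts once against $H=D_+-D_-$ (with $|H(R)|\le 2C_0R^\delta$), whereas you integrate by parts on each side separately and then add.
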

\begin{proof}
    Define the signed measure
\begin{equation*}
    \mu\left(\dif x\right)\defeq\bxi\left(\dif x\right)-\lambda\left(x\right)\,\dif x.
\end{equation*}
For a Borel set $A\subset(0,\infty)$, let $-A=\{-x:x\in A\}$
and define a signed measure $\vartheta$ on $(0,\infty)$ by $\vartheta(A)=\mu(A)-\mu(-A).$ We set
\begin{equation*}
H\left(R\right)\defeq\vartheta\left(\left(0,R\right]\right)=
    \left(\bxi\left(\left(0,R\right]\right)-\int_0^R\lambda\left(x\right)\dif x\right)-
    \left(\bxi\left(\left[-R,0\right)\right)-\int_{-R}^0\lambda\left(x\right)\dif x\right).
\end{equation*}
The assumptions of the lemma imply that
\begin{equation*}
    \left|H\left(R\right)\right|\leq2C_0R^\delta,\quad R\geq L_0.
\end{equation*}
For $K>L\geq L_0$, the chosen half-open interval conventions give
\begin{equation*}
p_L^K\left(\bxi\right)-\bar p_L^K\left(\lambda\right)=\int_{\left\{L<\left|x\right|\leq K\right\}}
    \frac{\mu\left(\dif x\right)}{x} =\int_{\left(L,K\right]}\frac{\vartheta\left(\dif r\right)}{r}.
\end{equation*}
Stieltjes integration by parts yields
\begin{equation}\label{eq:stieltjes-reciprocal-tail-identity}
    \int_{\left(L,K\right]}\frac{\vartheta\left(\dif r\right)}{r}=\frac{H\left(K\right)}{K}-\frac{H\left(L\right)}{L}+\int_L^K\frac{H\left(r\right)}{r^2} \dif r.
\end{equation}
Since $0<\delta<1$, $\left| {H(K)}/{K}\right|\leq2C_0K^{\delta-1}
    \longrightarrow 0$ as $K\rightarrow\infty$. Moreover,
\begin{equation*}
\int_L^\infty
    {\left|H\left(r\right)\right|}/{r^2}\dif r\leq2C_0\int_L^\infty r^{\delta-2}\dif r=\frac{2C_0}{1-\delta}L^{\delta-1}<\infty.
\end{equation*}
Therefore, by \eqref{eq:stieltjes-reciprocal-tail-identity}, we have
\begin{equation*}
\lim_{K\rightarrow\infty} \left( p_L^K\left(\bxi\right)-\bar p_L^K\left(\lambda\right) \right)=-\frac{H\left(L\right)}{L}+\int_L^\infty\frac{H\left(r\right)}{r^2}\dif r.
\end{equation*}
Its absolute value is bounded by
\begin{equation*}
\begin{aligned}
    \left|-\frac{H\left(L\right)}{L}+\int_L^\infty\frac{H\left(r\right)}{r^2}\dif r\right|
    &\leq\frac{\left|H\left(L\right)\right|}{L}+\int_L^\infty\frac{\left|H\left(r\right)\right|}{r^2}\dif r 
    \leq2C_0L^{\delta-1}+\frac{2C_0}{1-\delta}L^{\delta-1} \\
    &=\frac{2C_0\left(2-\delta\right)}{1-\delta}L^{\delta-1}.
\end{aligned}
\end{equation*}
Since $\bar p_L^K(\lambda)\rightarrow \bar p_L(\lambda)$, it follows that $p_L^K(\bxi)$ also converges as $K\rightarrow\infty$. This proves
\eqref{eq:deterministic-tail-comparison}. The triangle inequality gives
\eqref{eq:deterministic-principal-value-tail}.
\end{proof}
We now assume that the first correlation functions of the spatial truncations satisfy \eqref{eq rho bound} and \eqref{eq rho symmetry}.

Combining the count estimates with the deterministic comparison gives uniform control of the principal-value tails. This will yield convergence of the first reciprocal parameter under the coupling used below.
\begin{lemma}\label{lm gamma_1 converges}
Suppose that \eqref{eq rho bound} and \eqref{eq rho symmetry} hold.
For every $t>0$, there are $C_{3,t},C_t<\infty$ and
$\delta_\alpha\in((1+\alpha)/2,1)$ such that, for every $L_0\ge1$,
\[
\begin{aligned}
&\inf_{N\in\mathbb N}\prob_{\bxi^{\left[N\right]}}\left(
 \begin{array}{c}
 p_L\left(\bXXi^{\left[N\right]}\left(t\right)\right)\text{ exists and}\\[-1mm]
 \left|p_L\left(\bXXi^{\left[N\right]}\left(t\right)\right)\right|
 \le C_{2,t}L^{-\nu}+C_{3,t}L^{\delta_\alpha-1}
 \quad\forall L\ge L_0
 \end{array}\right)\ge1-C_tL_0^{-1},\\
&\prob_{\bxi}\left(
 \begin{array}{c}
 p_L\left(\bXXi\left(t\right)\right)\text{ exists and}\\[-1mm]
 \left|p_L\left(\bXXi\left(t\right)\right)\right|
 \le C_{2,t}L^{-\nu}+C_{3,t}L^{\delta_\alpha-1}
 \quad\forall L\ge L_0
 \end{array}\right)\ge1-C_tL_0^{-1}.
\end{aligned}
\]
\end{lemma}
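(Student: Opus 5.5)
The plan is to apply the deterministic comparison of Lemma~\ref{lem katori 3.1,3.2} pathwise, with $\lambda=\rho_{\bxi^{[N]},t}$ (respectively $\rho_{\bxi,t}$) and $\bxi$ replaced by the random configuration $\bXXi^{[N]}(t)$ (respectively $\bXXi(t)$). The hypothesis on $\bar p_L(\lambda)$ is exactly \eqref{eq rho symmetry}, with constants $C_{2,t},\nu$ uniform in $N$. What remains is the event on which the counting discrepancies satisfy
\[
 \left|\mathsf D_{\pm,t}^{[N]}(R)\right|\le C_0R^{\delta}\quad\text{for all }R\ge L_0 .
\]
We need its probability to be at least $1-C_tL_0^{-1}$ uniformly in $N$, with $\delta=\delta_\alpha\in((1+\alpha)/2,1)$ and a fixed deterministic $C_0$. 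On that event the lemma gives existence of $p_L$ and the bound \eqref{eq:deterministic-principal-value-tail} for every $L\ge L_0$, with $C_{3,t}=2C_0(2-\delta_\alpha)/(1-\delta_\alpha)$.

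To control the discrepancies for all $R\ge L_0$ simultaneously, discretize first. Work on dyadic points $R_j=2^j$ with $2^j\ge L_0/2$. By \eqref{eq:one-sided-count-moment-condition} and Markov's inequality with exponent $2k$,
\[
 \prob\left(\left|\mathsf D_{\pm,t}(R_j)\right|>\tfrac{C_0}{4}R_j^{\delta}\right)\le C_{k,t}R_j^{k(1+\alpha)-2k\delta}.
\]
Since $\delta>(1+\alpha)/2$, choosing $k$ large with $k(2\delta-1-\alpha)\ge 2$ makes the exponent at most $-2$. The geometric sum over $R_j\ge L_0/2$ is then $O(L_0^{-2})\le O(L_0^{-1})$.

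Next, interpolate between dyadic points. For $R\in[R_j,R_{j+1}]$, monotonicity of counts gives
\[
 \bXXi(t)((0,R_j])\le\bXXi(t)((0,R])\le\bXXi(t)((0,R_{j+1}]),
\]
and the mean term changes by at most $\int_{R_j}^{R_{j+1}}\rho\le C_{1,t}R_j(1+R_{j+1}^\alpha)\le CR_j^{1+\alpha}$. This is too big, since $1+\alpha>\delta$. So I would refine the mesh instead. Within each dyadic block, use points spaced by $h_j=R_j^{\delta-\alpha}$ (at least $1$ if $\delta>\alpha$, which holds). There are about $R_j^{1-\delta+\alpha}$ such points, and the density integral over one mesh cell is at most $CR_j^{\delta}$. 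The union bound then costs a sum of $R_j^{1-\delta+\alpha}R_j^{-k(2\delta-1-\alpha)}$, which is still summable with a power $\le -2$ once $k$ is large. This sandwich handles all $R\ge L_0$ and produces the deterministic $C_0$. The negative side is identical.

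For the limiting process, the same argument applies once the moment bounds transfer, which is Remark~\ref{rem vague imply bd}. Its hypotheses hold: vague convergence in distribution of $\bXXi^{[N]}(t)$ to $\bXXi(t)$ is Theorem~\ref{MainTheorem}, and absence of atoms at fixed points follows because the limit has a locally bounded one-point density. The density bound \eqref{eq rho bound} passes to $\rho_{\bxi,t}$, and the second line of \eqref{eq rho symmetry} supplies the principal-value hypothesis.

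The main obstacle is the uniform-in-$R$ control above: the mesh refinement and the choice of $k$ depending on $\delta_\alpha-(1+\alpha)/2$. A secondary point is measurability of the event $\{p_L\text{ exists}\ \forall L\}$. I would handle this by noting that it is a countable intersection when $L$ is restricted to integer cutoffs, which is the convention for $p_L$ used here.
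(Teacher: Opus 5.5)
Your proposal is correct and follows the paper's proof. The ingredients are the same: a union bound over a discrete grid using the $2k$-th moment estimate \eqref{eq:one-sided-count-moment-condition} with $k(2\delta_\alpha-1-\alpha)$ large, monotone interpolation of the counts between grid points via \eqref{eq rho bound}, then Lemma~\ref{lem katori 3.1,3.2} combined with \eqref{eq rho symmetry}, and Remark~\ref{rem vague imply bd} for the limiting process. The only difference is the grid: the paper simply uses the integers, which already suffice because $\delta_\alpha>\alpha$ makes the density mass of a unit cell $O(K^{\alpha})\le O(K^{\delta_\alpha})$, so your dyadic refinement with spacing $R_j^{\delta-\alpha}$ is valid but unnecessary.
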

\begin{proof}
Choose $k_\alpha\in\mathbb N$ so that
$k_\alpha(2\delta_\alpha-1-\alpha)>2$. Lemma~\ref{lem Count-moment}
and Chebyshev's inequality give, for $K\in\mathbb N$,
\[
 \sup_{N\in\mathbb N}\prob_{\bxi^{\left[N\right]}}
 \left(\left|\mathsf D_{\pm,t}^{\left[N\right]}\left(K\right)\right|>K^{\delta_\alpha}\right)
 \le C_tK^{-2}.
\]
Summing over the integers $K\ge\lfloor L_0\rfloor$ and taking a union
bound over both signs gives, after changing $C_t$,
\[
 \inf_{N\in\mathbb N}\prob_{\bxi^{\left[N\right]}}
 \left(
 \left|\mathsf D_{+,t}^{\left[N\right]}\left(K\right)\right|\vee\left|\mathsf D_{-,t}^{\left[N\right]}\left(K\right)\right|
 \le K^{\delta_\alpha}
 \quad\forall K\ge\left\lfloor L_0\right\rfloor
 \right)
 \ge1-C_tL_0^{-1}.
\]
If $K\le L<K+1$, monotonicity of the count and \eqref{eq rho bound}
show that
\[
 \left|\mathsf D_{\pm,t}^{\left[N\right]}\left(L\right)\right|
 \le \max\left\{\left|\mathsf D_{\pm,t}^{\left[N\right]}\left(K\right)\right|,
             \left|\mathsf D_{\pm,t}^{\left[N\right]}\left(K+1\right)\right|\right\}
     +\int_{K<\left|x\right|\le K+1}\rho_{\bxi^{\left[N\right]},t}\left(x\right)\dif x.
\]
Because $\delta_\alpha>\alpha$, the last integral is at most
$C_tL^{\delta_\alpha}$. Thus, for some $A_t<\infty$, the joint event
\[
 E_{N,t}\left(L_0\right)\defeq \left\{
 \left|\mathsf D_{+,t}^{\left[N\right]}\left(L\right)\right|\vee\left|\mathsf D_{-,t}^{\left[N\right]}\left(L\right)\right|
 \le A_tL^{\delta_\alpha}\quad\forall L\ge L_0
 \right\}
\]
has probability at least $1-C_tL_0^{-1}$, uniformly in $N$.
On this intersection both discrepancy hypotheses of
Lemma~\ref{lem katori 3.1,3.2} hold with
$\lambda=\rho_{\bxi^{[N]},t}$ and $C_0=A_t$. Combining that lemma with
\eqref{eq rho symmetry} proves the finite-$N$ assertion, with
\[
 C_{3,t}\defeq \frac{2A_t\left(2-\delta_\alpha\right)}{1-\delta_\alpha}.
\]
Remark~\ref{rem vague imply bd} supplies the same two moment estimates
for the limiting process. Repeating the joint-event argument proves the
second assertion.
\end{proof}

The density bound also controls the reciprocal-square tails. Together with the preceding lemma, this provides the tail estimates needed for convergence in the parameter space.
\begin{lemma}\label{lm gamma_2 converges}
Suppose that \eqref{eq rho bound} holds, and let
$0<\kappa<1-\alpha$. For every $t>0$, there is $C_t<\infty$ such that,
for every $L\ge1$,
\[
\begin{aligned}
 \inf_{N\in\mathbb N}\prob_{\bxi^{\left[N\right]}}
 \left(s_L\left(\bXXi^{\left[N\right]}\left(t\right)\right)\le L^{-\kappa}\right)
 &\ge1-C_tL^{\alpha+\kappa-1},\\
 \prob_{\bxi}\left(s_L\left(\bXXi\left(t\right)\right)\le L^{-\kappa}\right)
 &\ge1-C_tL^{\alpha+\kappa-1}.
\end{aligned}
\]
\end{lemma}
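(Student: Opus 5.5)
This follows from a first-moment computation and Markov's inequality. For the finite truncations, the definition of the first correlation function gives
\[
 \expt_{\bxi^{[N]}}\left[s_L\left(\bXXi^{[N]}\left(t\right)\right)\right]
 =\int_{\{|x|>L\}}\frac{\rho_{\bxi^{[N]},t}\left(x\right)}{x^2}\,\dif x ,
\]
since $s_L$ is the linear statistic of the nonnegative function $x^{-2}\indi_{\{|x|>L\}}$. Monotone convergence justifies this even though the test function is not compactly supported. Inserting the uniform bound \eqref{eq rho bound} gives
\[
 \sup_N\expt_{\bxi^{[N]}}\left[s_L\left(\bXXi^{[N]}\left(t\right)\right)\right]
 \le 2C_{1,t}\int_L^\infty\frac{1+x^\alpha}{x^2}\,\dif x
 \le C_t L^{\alpha-1},\qquad L\ge1,
\]
using $\alpha<1$ and $L^{-1}\le L^{\alpha-1}$. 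Markov's inequality at level $L^{-\kappa}$ then gives
\[
 \prob\left(s_L>L^{-\kappa}\right)\le C_tL^{\alpha-1+\kappa},
\]
uniformly in $N$. This is the first assertion. The condition $\kappa<1-\alpha$ is exactly what makes the bound nontrivial, i.e.\ the exponent negative.

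For the limiting process, the same computation applies once we know that $\rho_{\bxi,t}$ satisfies \eqref{eq rho bound}. This is noted after Proposition~\ref{prop in N for all time}. By Lemma~\ref{lem:truncation-parameter-bridge} and Theorem~\ref{MainTheorem}, $\rho_{\bxi^{[N]},t}\to\rho_{\bxi,t}$ locally uniformly, so the bound passes pointwise to the limit. The first-moment formula for $s_L$ then holds for the determinantal process with kernel $\K_{f_{p_b(\bxi)}^b(\bxi)}$, again by monotone convergence from compactly supported approximations of $x^{-2}\indi_{\{|x|>L\}}$. Markov's inequality then gives the second assertion with the same constant.

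An alternative route for the limit is to pass the finite-$N$ probability bounds through weak convergence. Since $s_L$ is lower semicontinuous on the event that there are no atoms at $\pm L$ (a monotone limit of continuous truncated statistics), Portmanteau would give $\prob(s_L>L^{-\kappa})\le\liminf_N\prob(s_L^{[N]}>L^{-\kappa})$. The direct density route is simpler and is the one I would use.

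The only delicate point is justifying the first-moment identity for the non-compactly supported statistic, which monotone convergence handles. Apart from that, the argument is routine.
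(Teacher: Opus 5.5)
Your proposal is correct and matches the paper's proof: integrate \eqref{eq rho bound} against $x^{-2}\indi_{\{|x|>L\}}$ to get $\expt[s_L]\le C_tL^{\alpha-1}$ uniformly in $N$, then apply Markov's inequality at level $L^{-\kappa}$. The limiting process is handled by the same estimate for the limiting density, which satisfies the bound by locally uniform kernel convergence.
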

\begin{proof}
By \eqref{eq rho bound}, uniformly in $N$,
\[
 \expt_{\bxi^{\left[N\right]}}\left[s_L\left(\bXXi^{\left[N\right]}\left(t\right)\right)\right]
 \le 2C_{1,t}\int_{\left|x\right|>L}\left|x\right|^{\alpha-2}\dif x
 \le C_tL^{\alpha-1}.
\]
The same estimate holds for $\bXXi(t)$. Markov's inequality gives both
claims.
\end{proof}

\begin{proof}[Proof of Proposition \ref{prop in N for all time}]
Fix the deterministic cutoff $b=1$. Since the one-time law has a
correlation density,
\[
 \prob_{\bxi}\left(\bXXi\left(t\right)\left(\left\{-1,1\right\}\right)>0\right)=0.
\]
Moreover,
\[
\begin{aligned}
 \expt_{\bxi}\left[s_1\left(\bXXi\left(t\right)\right)\right]
 &=\int_{\left|x\right|>1}\frac{\rho_{\bxi,t}\left(x\right)}{x^2}\dif x\le C_{1,t}\int_{\left|x\right|>1}\frac{1+\left|x\right|^\alpha}{x^2}\dif x<\infty.
\end{aligned}
\]
Thus $s_1(\bXXi(t))<\infty$ almost surely. For $m\in\mathbb N$, let
\[
 E_m\defeq \left\{
 \begin{array}{c}
 p_L\left(\bXXi\left(t\right)\right)\text{ exists and}\\[-1mm]
 \left|p_L\left(\bXXi\left(t\right)\right)\right|
 \le C_{2,t}L^{-\nu}+C_{3,t}L^{\delta_\alpha-1}
 \quad\forall L\ge m
 \end{array}\right\}.
\]
They increase with $m$ and satisfy $\prob_{\bxi}(E_m)\ge1-C_tm^{-1}$
by Lemma~\ref{lm gamma_1 converges}. Hence
$\prob_{\bxi}(\bigcup_{m\ge1}E_m)=1$. On this union choose an integer
$L>\max\{m,1\}$. For $K>L$, local finiteness gives
\[
 p_1^K\left(\bXXi\left(t\right)\right)=p_1^L\left(\bXXi\left(t\right)\right)+p_L^K\left(\bXXi\left(t\right)\right).
\]
Letting $K\to\infty$ proves that $p_1(\bXXi(t))$ exists and equals
$p_1^L(\bXXi(t))+p_L(\bXXi(t))$. It is finite almost surely, proving
$\bXXi(t)\in\mathfrak N$.
\end{proof}

We next translate vague convergence and convergence of the two reciprocal statistics into convergence in the parameter space. This is the continuity statement needed for the transition factors.
\begin{lemma}\label{lem:vague-pv-to-upsilon}
Let $b>0$, and let
$\boldsymbol\eta^\UN,\boldsymbol\eta\in\mathfrak M_b$. Suppose that
\[
 \boldsymbol\eta^\UN\xrightarrow{\mathrm{vg}}\boldsymbol\eta,
 \qquad \boldsymbol\eta\left(\left\{-b,0,b\right\}\right)=0,
\]
and
\[
 p_b\left(\boldsymbol\eta^\UN\right)\longrightarrow p_b\left(\boldsymbol\eta\right),
 \qquad
 s_b\left(\boldsymbol\eta^\UN\right)\longrightarrow s_b\left(\boldsymbol\eta\right).
\]
Then
\[
 f_{p_b\left(\boldsymbol\eta^\UN\right)}^b\left(\boldsymbol\eta^\UN\right)
 \longrightarrow f_{p_b\left(\boldsymbol\eta\right)}^b\left(\boldsymbol\eta\right)
 \qquad\text{in }\bUpsilon_b.
\]
\end{lemma}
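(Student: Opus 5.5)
We verify the four coordinate blocks of $\bUpsilon_b$ separately. The linear coordinate is $\gamma_1^{(N)}=p_b(\boldsymbol\eta^{(N)})\to p_b(\boldsymbol\eta)$ by hypothesis. The square coordinate is $\delta^{(N)}=s_b(\boldsymbol\eta^{(N)})\to s_b(\boldsymbol\eta)=\delta$, also by hypothesis. It remains to prove convergence of the inner data $\mathbf a^\pm$ in $\mathbb U_b$ and coordinatewise convergence of the reciprocal sequences $\boldsymbol\alpha^\pm$ in $\mathbb W_b$.

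\emph{Inner data.} Since $\boldsymbol\eta(\{-b,0,b\})=0$ and $\boldsymbol\eta$ is locally finite, we choose $\epsilon>0$ with $\boldsymbol\eta([-\epsilon,\epsilon])=0$. Vague convergence (portmanteau for point measures on sets with null boundary) gives rootwise matching on the compact sets $[\epsilon,b]$ and $[-b,-\epsilon]$. Hence for large $N$ the counts $\boldsymbol\eta^{(N)}((0,b))$ and $\boldsymbol\eta^{(N)}((-b,0))$ eventually equal the limiting counts, matched atoms converge, and $\boldsymbol\eta^{(N)}([-\epsilon,\epsilon])=0$ eventually. In particular there are no atoms at $0$, so $k^{+,(N)}=0=k^+$. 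The degrees $q(\mathbf a^\pm)$ are therefore eventually constant and the ordered coordinates converge. Atoms of $\boldsymbol\eta^{(N)}$ could approach $\pm b$ from outside $[-b,b]$ only if $\boldsymbol\eta$ charged $\pm b$, which it does not; the same matching argument rules this out.

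\emph{Reciprocal sequences.} Fix $j$ and consider $\alpha_j^+$. For $R>b$ with $\boldsymbol\eta(\{R\})=0$, rootwise convergence on $(b,R]$ matches the atoms there, so the decreasing arrangement of $\{1/y:y\in\boldsymbol\eta^{(N)}\cap(b,R]\}$ converges entrywise to that of $\boldsymbol\eta$. If $\alpha_j^+>0$, take $R>1/\alpha_j^+$ such that $\boldsymbol\eta$ has at least $j$ positive atoms in $(b,R)$. Atoms of $\boldsymbol\eta^{(N)}$ beyond $R$ have reciprocals smaller than $1/R<\alpha_j^+$, so they cannot occupy the first $j$ slots for large $N$, and $\alpha_j^{+,(N)}\to\alpha_j^+$. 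If $\alpha_j^+=0$ (so $\boldsymbol\eta$ has fewer than $j$ atoms in $(b,\infty)$), then for any $R$ the $j$-th entry of $\boldsymbol\eta^{(N)}$ eventually comes from an atom beyond $R$, hence is at most $1/R$; letting $R\to\infty$ gives convergence to $0$. The negative side is identical.

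\emph{Role of the tail hypotheses.} The $s_b$ hypothesis is not needed for coordinatewise convergence of $\boldsymbol\alpha^\pm$. It is needed for the $\delta$-coordinate, which is part of the topology: without it, escaping mass could persist and create a spurious $\gamma_2>0$. Likewise, the $p_b$ hypothesis is exactly what supplies convergence of $\gamma_1$. The only mildly delicate step is the rootwise matching near $0$ and $\pm b$ implied by vague convergence, and the no-atom assumption at $\{-b,0,b\}$ handles it.
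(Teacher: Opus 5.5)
Your proof is correct and follows essentially the same route as the paper. The no-atom condition at $\{-b,0,b\}$ turns vague convergence into eventual equality of the inner counts, hence of the degrees $q(\mathbf a^\pm)$, with no root drifting to zero. It also gives convergence of every ordered reciprocal coordinate, while the $p_b$ and $s_b$ hypotheses are exactly convergence of the $\gamma_1$- and $\delta$-coordinates. Your explicit cutoff-$R$ argument for the vanishing reciprocal coordinates is a direct version of the paper's contradiction argument that surplus outer roots must escape to infinity.
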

\begin{proof}
Because the limit has no atoms at $-b$, $0$, and $b$, the continuity-set
integer counts
\[
 \boldsymbol\eta^\UN\left(\left[0,b\right)\right)\longrightarrow\boldsymbol\eta\left(\left[0,b\right)\right),
 \qquad
 \boldsymbol\eta^\UN\left(\left(-b,0\right)\right)\longrightarrow\boldsymbol\eta\left(\left(-b,0\right)\right)
\]
are eventually constant. These are precisely the total-degree coordinates
$q$ in the positive and negative copies of $\mathbb U_b$. The ordered inner
locations converge as well. No inner root can tend to zero, since that would
create a zero atom in the vague limit.

For the outer roots, vague convergence gives convergence of every ordered
reciprocal coordinate whose limit is nonzero. If a fixed surplus outer rank
stayed bounded, a subsequence would either approach $\pm b$, which is
excluded, or create an additional outer atom in a compact continuity set,
contradicting convergence of the corresponding counts. Thus every surplus
outer root escapes to infinity, and its reciprocal coordinate tends to zero.
Thus both $\mathbb W_b$ coordinates converge. The assumed convergence of
$p_b$ and $s_b$ is exactly convergence of the $\gamma_1$- and
$\delta$-coordinates, respectively.
\end{proof}

We can now pass a transition factor to the limit along a subsequence. This is the remaining step in passing the finite-particle Markov identity to the limiting process.
\begin{lemma}\label{prop limit of markov once}
Assume \eqref{eq rho bound} and \eqref{eq rho symmetry}. Let $M\ge2$,
let $0<t_1<\cdots<t_M<\infty$, and let
$\phi_1,\ldots,\phi_M\in\mathcal C_{\mathrm{bd}}(\mathfrak M)$.
There exists a subsequence $(N_k)_{k\ge1}$ such that
\begin{equation}\label{eq convergence of conditional expt}
\begin{aligned}
&\expt_{\bxi^{\left[N_k\right]}}\left[
 \left(\prod_{i=1}^{M-1}\phi_i\left(\bXXi^{\left[N_k\right]}\left(t_i\right)\right)\right)
 \left(\mT_{t_M-t_{M-1}}\phi_M\right)\left(\bXXi^{\left[N_k\right]}\left(t_{M-1}\right)\right)
 \right]\\
&\quad\longrightarrow
\expt_{\bxi}\left[
 \left(\prod_{i=1}^{M-1}\phi_i\left(\bXXi\left(t_i\right)\right)\right)
 \left(\mT_{t_M-t_{M-1}}\phi_M\right)\left(\bXXi\left(t_{M-1}\right)\right)
 \right].
\end{aligned}
\end{equation}
\end{lemma}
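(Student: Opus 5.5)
The plan is a Skorokhod coupling. The transition factor $\mT_{t_M-t_{M-1}}\phi_M$ is only $\bUpsilon$-continuous, by Remark~\ref{remark upsilon continuous}, not vaguely continuous, so I first upgrade vague convergence of the time-$t_{M-1}$ configuration to $\bUpsilon$-convergence along a subsequence. I then apply dominated convergence, which is available since every function involved is bounded.

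\textbf{Step 1: tightness of the augmented random vector.} Fix $b=1$ and set $s=t_{M-1}$. For each $N$ consider
\[
 \mathsf V_N=\Bigl(\bXXi^{[N]}(t_1),\ldots,\bXXi^{[N]}(t_{M-1}),\bigl(p_L(\bXXi^{[N]}(s))\bigr)_{L\in\mathbb N},\bigl(s_L(\bXXi^{[N]}(s))\bigr)_{L\in\mathbb N}\Bigr)
\]
in $\mathfrak M^{M-1}\times\mathbb R^{\mathbb N}\times[0,\infty]^{\mathbb N}$. By Theorem~\ref{MainTheorem} and Lemma~\ref{lem:truncation-parameter-bridge}, the $\mathfrak M^{M-1}$ marginals converge to those of $\bXXi$. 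Lemmas~\ref{lm gamma_1 converges} and~\ref{lm gamma_2 converges} give tightness of each real coordinate, uniformly in $N$, outside events of probability $O(L^{-1})$. Hence $\mathsf V_N$ is tight, and Prokhorov's theorem gives a subsequence $N_k$ along which it converges in law. I realise this convergence almost surely on one probability space via Skorokhod's theorem; the configuration space is Polish.

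\textbf{Step 2: identify the limit.} Call the almost sure limit $(\boldsymbol\zeta_1,\ldots,\boldsymbol\zeta_{M-1},(P_L),(S_L))$. Its configuration part has the law of $\bXXi$ at $t_1,\ldots,t_{M-1}$.
\begin{itemize}
\item By the almost-sure form of Lemma~\ref{lm gamma_1 converges}, applied to the converging sequence and letting $L_0\to\infty$, the tails $P_L$ and $p_L(\boldsymbol\zeta_{M-1})$ are both $O(L^{-\nu}+L^{\delta_\alpha-1})$ on events of probability close to one.
\item The same holds for $S_L$ and $s_L(\boldsymbol\zeta_{M-1})$ with bound $L^{-\kappa}$, by Lemma~\ref{lm gamma_2 converges}.
\item Since $\boldsymbol\zeta_{M-1}$ has a density, it almost surely has no atoms at $0$ or at integers. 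Vague convergence therefore gives $p_1^L(\bXXi^{[N_k]}(s))\to p_1^L(\boldsymbol\zeta_{M-1})$ and likewise $s_1^L$.
\end{itemize}
Splitting $p_1=p_1^L+p_L$ and letting $k\to\infty$ and then $L\to\infty$ yields, almost surely,
\[
p_1\bigl(\bXXi^{[N_k]}(s)\bigr)\to p_1(\boldsymbol\zeta_{M-1}),\qquad s_1\bigl(\bXXi^{[N_k]}(s)\bigr)\to s_1(\boldsymbol\zeta_{M-1}).
\]
Here the tails are controlled through the limiting coordinates $P_L,S_L$, which bound the lim sup of the tails. Lemma~\ref{lem:vague-pv-to-upsilon} then upgrades this to $\bXXi^{[N_k]}(s)\xrightarrow{\bUpsilon}\boldsymbol\zeta_{M-1}$, and Proposition~\ref{prop in N for all time} gives $\boldsymbol\zeta_{M-1}\in\mathfrak N$.

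\textbf{Step 3: conclude.} The maps $\phi_i$ are vaguely continuous, so the products converge almost surely. The transition factor converges by Remark~\ref{remark upsilon continuous}. All functions are bounded, so dominated convergence gives \eqref{eq convergence of conditional expt}. Measurability of the limiting integrand follows from Remark~\ref{rem:transition-measurable}.

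\textbf{Main obstacle.} The delicate point is Step 2, namely controlling the tails uniformly in a way that survives the coupling. I would encode the tail bounds as closed conditions on the augmented vector, for instance $\sup_{L\ge L_0}L^{\epsilon}|p_L|\le C$. The finite-$N$ probability estimates then pass to the limit by the Portmanteau theorem. Combined with the consistency relation $p_1=p_1^L+p_L$, which is preserved in the limit, this identifies $P_L$ with $p_L(\boldsymbol\zeta_{M-1})$.
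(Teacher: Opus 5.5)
Your proof is correct, but it handles the reciprocal tails differently from the paper.

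\emph{The paper's route.} It applies Skorohod's theorem only to the configuration vector $(\bXXi^{[N]}(t_1),\ldots,\bXXi^{[N]}(t_{M-1}))$, along the full sequence. Each coupled copy has the same one-time law as the original, so Lemmas~\ref{lm gamma_1 converges} and~\ref{lm gamma_2 converges} apply to the copies verbatim. Together with the splits $p_1=p_1^L+p_L$ and $s_1=s_1^L+s_L$, this gives convergence in probability of the two scalars $p_1$ and $s_1$. The subsequence is then just one along which these two converge almost surely.

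\emph{Your route.} You adjoin the full tail sequences $(p_L)_L$ and $(s_L)_L$ to the random vector, take the subsequence from Prokhorov, and then identify the limiting coordinates. The identification is sound: $P_1-p_1(\boldsymbol\zeta_{M-1})=P_L-p_L(\boldsymbol\zeta_{M-1})$ for every integer $L$, and both tails vanish as $L\to\infty$. Since the left side does not depend on $L$, convergence in probability of the tails in $L$ already suffices; this also covers the $s$-coordinates, where Lemma~\ref{lm gamma_2 converges} only bounds each fixed $L$. What this buys is one almost-sure realization of all tail statistics at once.

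\emph{Details to add.} Tightness of a fixed coordinate $p_L$ needs a little more than you wrote. Lemma~\ref{lm gamma_1 converges} with $L_0=L$ only gives probability $1-C_tL^{-1}$, so take $L_0\gg L$ and control the remaining piece separately. For instance, $|p_L^{L_0}|\le L^{-1}\,\bXXi^{[N]}(t_{M-1})(\{L<|x|\le L_0\})$, which is tight by Markov's inequality and \eqref{eq rho bound}. You should also state that the finite coupled configurations almost surely have no atoms at $\pm1$. This puts them in $\mathfrak M_1$, as Lemma~\ref{lem:vague-pv-to-upsilon} requires.

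The paper's version is shorter because it never needs the tail statistics to converge jointly in law with the configurations.
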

\begin{proof}
Put $t_*=t_{M-1}$. Finite-dimensional convergence gives joint weak
convergence of the vectors
\[
 \left(\bXXi^{\left[N\right]}\left(t_1\right),\ldots,\bXXi^{\left[N\right]}\left(t_{M-1}\right)\right)
 \quad\text{to}\quad
 \left(\bXXi\left(t_1\right),\ldots,\bXXi\left(t_{M-1}\right)\right)
\]
in $\mathfrak M^{M-1}$. By the Skorohod representation theorem there
are copies, denoted by tildes, for which
\begin{equation}\label{eq copy almost sure vague convergence}
 \tbXXi^{\left[N\right]}\left(t_i\right)\xrightarrow{\mathrm{vg}}\tbXXi\left(t_i\right),
 \qquad i=1,\ldots,M-1,
\end{equation}
almost surely.
Fix $b=1$ and use the deterministic integers $L\ge2$. All the one-time
laws involved have correlation densities. After removing one null event,
none of the countably many coupled configurations has an atom at
$-1$, $0$, $1$, or $\pm L$, $L\ge2$. For each fixed such $L$,
\eqref{eq copy almost sure vague convergence} gives
\[
\begin{aligned}
 p_1^L\left(\tbXXi^{\left[N\right]}\left(t_*\right)\right)&\longrightarrow
 p_1^L\left(\tbXXi\left(t_*\right)\right),\\
 s_1^L\left(\tbXXi^{\left[N\right]}\left(t_*\right)\right)&\longrightarrow
 s_1^L\left(\tbXXi\left(t_*\right)\right)
\end{aligned}
\]
almost surely. Since $p_1=p_1^L+p_L$, Lemma~\ref{lm gamma_1 converges}
makes the two $p_L$ tails uniformly small in probability as
$L\to\infty$. Consequently,
\[
 p_1\left(\tbXXi^{\left[N\right]}\left(t_*\right)\right)\longrightarrow p_1\left(\tbXXi\left(t_*\right)\right)
 \qquad\text{in probability}.
\]
Similarly, $s_1=s_1^L+s_L$ and Lemma~\ref{lm gamma_2 converges} give
\[
 s_1\left(\tbXXi^{\left[N\right]}\left(t_*\right)\right)\longrightarrow s_1\left(\tbXXi\left(t_*\right)\right)
 \qquad\text{in probability}.
\]
Extract a common subsequence $(N_k)$ along which both scalar convergences
hold almost surely. Every finite coupled configuration belongs to
$\mathfrak M_1$. For the limit, the $s_1$ estimate in the proof of
Proposition~\ref{prop in N for all time}, together with the no-atom event at
$\pm1$, gives membership in $\mathfrak M_1$. Together with
\eqref{eq copy almost sure vague convergence},
Lemma~\ref{lem:vague-pv-to-upsilon} yields
\[
 \tbXXi^{\left[N_k\right]}\left(t_*\right)\xrightarrow{\ \bUpsilon\ }\tbXXi\left(t_*\right)
 \qquad\text{almost surely}.
\]
Remark~\ref{remark upsilon continuous} now gives convergence of the
$\mT_{t_M-t_{M-1}}\phi_M$ factors, while vague continuity gives
convergence of the other factors. All factors are bounded, so bounded
convergence proves \eqref{eq convergence of conditional expt}.
\end{proof}

\begin{proof}[Proof of Proposition \ref{prop markov from rho}]
Proposition~\ref{prop in N for all time} ensures that every transition
factor below is defined almost surely. Fix $M\ge2$,
$0<t_1<\cdots<t_M$, and
$\phi_1,\ldots,\phi_M\in\mathcal C_{\mathrm{bd}}(\mathfrak M)$.
Finite-dimensional convergence gives
\begin{equation}\label{eq all converge}
 \lim_{N\to\infty}\expt_{\bxi^{\left[N\right]}}
 \left[\prod_{i=1}^M\phi_i\left(\bXXi^{\left[N\right]}\left(t_i\right)\right)\right]
 =\expt_{\bxi}\left[\prod_{i=1}^M\phi_i\left(\bXXi\left(t_i\right)\right)\right].
\end{equation}
Each finite Dyson model satisfies the Markov identity with the same
operators $\mT_t$, by their finite-configuration agreement.
Along the subsequence supplied by
Lemma~\ref{prop limit of markov once}, its transition-factor expression
converges to the corresponding infinite-particle expectation. Combining this with
\eqref{eq all converge} proves the Markov identity.
\end{proof}

The symmetric configurations treated in Theorem~\ref{thm bound} satisfy the criterion just proved. Their density bound therefore yields the Markov property.
\begin{cor}\label{cor:section5-markov}
Under the hypotheses of Theorem~\ref{thm bound}, the process
$\bXi_{\bxi}(\cdot)$ has the Markov property.
\end{cor}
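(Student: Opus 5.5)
The plan is to apply Proposition~\ref{prop markov from rho} with the spatial truncations $\bxi^{[N]}=\bxi|_{[-N,N]}$. This reduces the corollary to checking the two hypotheses \eqref{eq rho bound} and \eqref{eq rho symmetry} for the symmetric configurations of Theorem~\ref{thm bound}. First I would confirm that $\bxi\in\mathfrak N$. By \eqref{eq:count-square-tail} we have $s_b(\bxi)<\infty$. Symmetry of $\bxi$ gives $p_b(\bxi)=0$, since every truncated principal-value sum vanishes identically. Hence the process $\bXi_{\bxi}$ of Section~\ref{sec:markov} is well defined, with parameter $f_0^b(\bxi)$.

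Next I would identify the truncations with the configurations of Theorem~\ref{thm bound}. For an integer $N>b$, $\bxi^{[N]}$ equals $\bxi^{(M)}$ from \eqref{eq:config}, where $M=M(N)=\#\{n:a_n\le N\}$; this is nondecreasing in $N$ and tends to infinity. Therefore
\[
 \sup_N\rho_{\bxi^{[N]},t}(x)\le\sup_{M\ge1}\rho_{M,t}(x),
\]
and \eqref{eq:global-main} bounds the right-hand side by $C_t(1+|x|^{q/2})$. Since $q<2$, the exponent $\alpha=q/2$ lies in $[0,1)$, so \eqref{eq rho bound} holds. The finitely many $N\le b$ give finite configurations whose densities satisfy the same bound after enlarging $C_t$, using Lemma~\ref{lem:wegner} with a fixed number of particles. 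In either case, the uniform bound in $N$ is exactly what Theorem~\ref{thm bound} delivers, covering both $\ep=0$ and $\ep=1$.

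For \eqref{eq rho symmetry} I would use the remark after Proposition~\ref{prop markov from rho}. Each truncation is symmetric, and the $\beta=2$ Dyson dynamics commute with reflection, so every $\rho_{\bxi^{[N]},t}$ is even. The limiting density $\rho_{\bxi,t}$ is then even as the locally uniform limit given by Theorem~\ref{MainTheorem} together with Lemma~\ref{lem:truncation-parameter-bridge}. For an even, locally integrable function, each symmetric integral $\int_{L_0\le|x|\le L_1}\rho(x)x^{-1}\,\dif x$ vanishes. So both left-hand sides in \eqref{eq rho symmetry} are zero, and the condition holds with any $\nu>0$ and $C_{2,t}=1$.

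With both hypotheses verified for every $t>0$, Proposition~\ref{prop markov from rho} gives the Markov property. The only point I expect to need care is the bookkeeping relating truncation at spatial level $N$ to truncation at index $M$, including the atom at zero. All the substantive work, namely the uniform sublinear density bound, is already contained in Theorem~\ref{thm bound}.
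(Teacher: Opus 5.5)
Your proposal is correct and follows the paper's proof essentially verbatim. You apply Proposition~\ref{prop markov from rho} to the spatial truncations, get \eqref{eq rho bound} from \eqref{eq:global-main} with $\alpha=q/2<1$, get \eqref{eq rho symmetry} from evenness of the finite and limiting densities, and use $p_b(\bxi)=0$ to identify the process with kernel $\K_{f_0^b(\bxi)}$.

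One small bookkeeping correction: the truncations not covered by Theorem~\ref{thm bound} are those with $N<a_1$, where $M(N)=0$ and $\bxi^{[N]}=\ep\delta_0$, not those with $N\le b$. These are finitely many with trivially bounded density, so nothing changes.
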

\begin{proof}
Let $\bxi^{[N]}=\bxi|_{[-N,N]}$ be the spatial truncations used in
this section. Each $\bxi^{[N]}$ is either $\epsilon\delta_0$ or exactly one of
the symmetric pair truncations in \eqref{eq:config}. Hence
\eqref{eq:global-main} gives \eqref{eq rho bound} for the spatial
truncations with exponent $\alpha=q/2<1$; the $\epsilon\delta_0$ case has bounded density obviously.
Remark~\ref{rem:limit-density} gives the corresponding limiting-density
bound, since $q-1<q/2$. The finite densities are even by reflection invariance, and the
limiting density is even by local uniform convergence. Thus both terms in
\eqref{eq rho symmetry} vanish. Finally, symmetry gives $p_b(\bxi)=0$,
so the process defined in this section is precisely the process with kernel
$\K_{f_0^b(\bxi)}$. Proposition~\ref{prop markov from rho} applies.
\end{proof}

\subsection{A class of Markov initial configurations}
We now give a concrete class for which the density criterion holds.
For a finite simple configuration, residue evaluation of the separated
finite-root kernel, followed by a Gaussian contour shift, gives
\cite{Katori_2009}
\begin{equation}\label{eq formula of rho sum}
 \rho_{\bxi,t}\left(x\right)
 =\frac1{2\pi t}\sum_{a\in\bxi}\me^{-\left(x-a\right)^2/\left(2t\right)}
 \int_{\mathbb R}\me^{-y^2/\left(2t\right)}
 \prod_{\substack{c\in\bxi\\c\ne a}}
 \left(1-\frac{\mi y+x-a}{c-a}\right)\dif y.
\end{equation}
The shift is justified because the integrand is a polynomial times a
Gaussian. This formula already includes the residue correction of
Section~\ref{sec:preliminaries}. If there is an atom at zero, perturbing
it off zero and using finite-particle continuity gives the same formula,
as in Proposition~\ref{finitedysonmodel}.

We isolate the shifted centres and regularized products appearing after completion of the square.
\begin{definition}
For a finite $\bxi\in\mathfrak M^0$, $a\in\bxi$, and $t>0$, define
\[
 H_{\bxi,a}\defeq \sum_{\substack{c\in\bxi\\c\ne a}}\frac1{c-a},
 \qquad c_{\bxi,a}\left(t\right)\defeq a-tH_{\bxi,a},
\]
and
\[
 E_1\left(z\right)\defeq \left(1-z\right)\me^z,
 \qquad
 \mathfrak C_{\bxi,a}\left(z\right)
 \defeq \prod_{\substack{c\in\bxi\\c\ne a}}
   E_1\left(\frac z{c-a}\right).
\]
\end{definition}

The residue formula reduces a uniform density estimate to bounds on the regularized products and the number of shifted centers in a unit interval. We state this criterion in a form suitable for uniformly separated configurations.
\begin{proposition}\label{prop bd 1st coro by F number}
Let $(\bxi^\UN)_{N\ge1}$ be finite configurations in $\mathfrak M^0$.
Fix $t>0$ and suppose that, for some $A_t<\infty$ and
$0\le\theta_t<1/(2t)$,
\begin{equation}\label{eq bound of F xi a}
 \sup_{N\in\mathbb N}\sup_{a\in\bxi^\UN}
 \left|\mathfrak C_{\bxi^\UN,a}\left(z\right)\right|
 \le A_t\me^{\theta_t\left|z\right|^2},\qquad z\in\mathbb C.
\end{equation}
Suppose also that, for some $0\le\alpha<1$ and $B_t<\infty$,
\begin{equation}\label{eq number is bounded}
 \sup_{N\in\mathbb N}
 \#\left\{a\in\bxi^\UN:c_{\bxi^\UN,a}\left(t\right)\in\left[u,u+1\right)\right\}
 \le B_t\left(1+\left|u\right|^\alpha\right),\qquad u\in\mathbb R.
\end{equation}
Then there is $C_t<\infty$ such that
\[
 \sup_{N\in\mathbb N}\rho_{\bxi^\UN,t}\left(x\right)
 \le C_t\left(1+\left|x\right|^\alpha\right),\qquad x\in\mathbb R.
\]
\end{proposition}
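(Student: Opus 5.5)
The plan is to start from the residue formula \eqref{eq formula of rho sum} and bound each summand separately, turning the sum over $a$ into a count of shifted centres. Fix $N$, $a\in\bxi^\UN$, and write $w=\mi y+x-a$. First we rewrite the product as
\[
 \prod_{c\ne a}\left(1-\frac{w}{c-a}\right)
 =\mathfrak C_{\bxi^\UN,a}\left(w\right)\exp\left(-wH_{\bxi^\UN,a}\right).
\]
The factor $\exp(-wH)$ then combines with $\me^{-(x-a)^2/(2t)}\me^{-y^2/(2t)}$. Its modulus is $\exp(-(x-a)H)$, since the imaginary part $\mi yH$ contributes only a phase. Completing the square in $x-a$ gives
\[
 -\frac{(x-a)^2}{2t}-(x-a)H=-\frac{(x-c_{\bxi^\UN,a}(t))^2}{2t}+\frac{tH^2}{2}.
\]
That leftover $\me^{tH^2/2}$ is the concern.

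The better route is to avoid taking absolute values too early. Instead, we shift the $y$-contour. The integrand is entire in $y$ (a polynomial times a Gaussian), so we may replace $y$ by $y+\mi s$ for a suitable real $s$; the same contour-shift justification is used just before the statement. Choosing $s=-tH$ absorbs the linear term exactly: the combination $\me^{-y^2/(2t)}\me^{-\mi yH}$ becomes Gaussian centred with no exponential growth. After the shift, $w=\mi y+(x-a)+tH=\mi y+(x-c_{\bxi^\UN,a}(t))$. A direct computation of the exponent should give a modulus of the form
\[
 \exp\left(-\frac{(x-c_{\bxi^\UN,a}(t))^2}{2t}-\frac{y^2}{2t}\right)\left|\mathfrak C_{\bxi^\UN,a}(w)\right|,
\]
with the $H^2$ terms cancelling. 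I will verify this cancellation carefully, since it is the crux: the identity $-\frac{(x-a)^2}{2t}-\frac{(y-\mi tH)^2}{2t}\cdots$ must reorganize into exactly a Gaussian in $x-c$ and a Gaussian in $y$.

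Granting this, \eqref{eq bound of F xi a} gives $|\mathfrak C(w)|\le A_t\me^{\theta_t((x-c)^2+y^2)}$. Since $\theta_t<1/(2t)$, both Gaussians keep a positive rate $\kappa_t=1/(2t)-\theta_t$. Integrating in $y$ then bounds the $a$-th summand by
\[
 C_t\exp\left(-\kappa_t\left(x-c_{\bxi^\UN,a}(t)\right)^2\right),
\]
uniformly in $N$ and $a$.

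It remains to sum over $a$. We group the terms according to the unit interval $[u,u+1)$, $u\in\mathbb Z$, containing $c_{\bxi^\UN,a}(t)$. By \eqref{eq number is bounded}, each interval contributes at most $B_t(1+|u|^\alpha)\me^{-\kappa_t\operatorname{dist}(x,[u,u+1))^2}$. Using $|u|^\alpha\le|x|^\alpha+|u-x|^\alpha$ and summing over the Gaussian weights gives $\rho_{\bxi^\UN,t}(x)\le C_t(1+|x|^\alpha)$, uniformly in $N$.

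The main obstacle is the contour shift and exponent bookkeeping, namely confirming that no factor $\me^{tH^2/2}$ survives. If it does survive, I would instead shift by the saddle of the full quadratic, $y\mapsto y-\mi t H$, and recheck against the definition of $c_{\bxi,a}(t)$, which is designed precisely for this completion of squares.
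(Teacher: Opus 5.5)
Your proposal is correct and is essentially the paper's proof: the shift $y\mapsto y-\mi tH_{\bxi^\UN,a}$ is exactly the paper's completion of the square, and the $H^2$ terms do cancel identically, leaving the exponent $-\bigl((x-c_{\bxi^\UN,a}(t))^2+y^2\bigr)/(2t)$ (so your ``fallback'' is the same shift). The remaining steps coincide with the paper's argument: you bound $\mathfrak C_{\bxi^\UN,a}$ by \eqref{eq bound of F xi a}, keeping the residual Gaussian rate $1/(2t)-\theta_t$, and then count shifted centres per unit interval via \eqref{eq number is bounded}.
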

\begin{proof}
For $a\in\bxi^\UN$, put $u_a=x-a$. Since
\[
 \prod_{c\ne a}\left(1-\frac z{c-a}\right)
 =\me^{-H_{\bxi^\UN,a}z}\mathfrak C_{\bxi^\UN,a}\left(z\right),
\]
formula \eqref{eq formula of rho sum} becomes
\begin{equation}\label{eq rho 1st}
 \rho_{\bxi^\UN,t}\left(x\right)
 =\frac1{2\pi t}\sum_{a\in\bxi^\UN}\me^{-u_a^2/\left(2t\right)}
 \int_{\mathbb R}
 \me^{-y^2/\left(2t\right)-H_{\bxi^\UN,a}\left(u_a+\mi y\right)}
 \mathfrak C_{\bxi^\UN,a}\left(u_a+\mi y\right)\dif y.
\end{equation}
Put $q_a=x-c_{\bxi^\UN,a}(t)=u_a+tH_{\bxi^\UN,a}$.
Completing the square with $w=y+\mi tH_{\bxi^\UN,a}$ gives
$u_a+\mi y=q_a+\mi w$ and
$-(u_a^2+y^2)/(2t)-H_{\bxi^\UN,a}(u_a+\mi y)=-(q_a^2+w^2)/(2t)$.
The integrand is entire, and \eqref{eq bound of F xi a} with
$\theta_t<1/(2t)$ makes the vertical sides vanish. Moving the shifted
line back to $\mathbb R$ therefore yields
\[
 \rho_{\bxi^\UN,t}\left(x\right)
 =\frac1{2\pi t}\sum_{a\in\bxi^\UN}\me^{-q_a^2/\left(2t\right)}
 \int_{\mathbb R}\me^{-y^2/\left(2t\right)}
 \mathfrak C_{\bxi^\UN,a}\left(q_a+\mi y\right)\dif y.
\]
Put
\[
 \delta_t\defeq\frac1{2t}-\theta_t>0.
\]
Since the density is nonnegative, the triangle inequality and
\eqref{eq bound of F xi a} yield
\[
\begin{aligned}
 \rho_{\bxi^\UN,t}\left(x\right)
 &\le\frac{A_t}{2\pi t}\sum_{a\in\bxi^\UN}
 \me^{-\delta_t\left(x-c_{\bxi^\UN,a}\left(t\right)\right)^2}
 \int_{\mathbb R}\me^{-\delta_ty^2}\dif y\le C_t'\sum_{a\in\bxi^\UN}
 \me^{-\delta_t\left(x-c_{\bxi^\UN,a}\left(t\right)\right)^2}.
\end{aligned}
\]
Partition $\mathbb R$ into
\[
 I_{k,x}\defeq \left[x+k,x+k+1\right),\qquad k\in\mathbb Z.
\]
If $c_{\bxi^\UN,a}(t)\in I_{k,x}$, then
$|x-c_{\bxi^\UN,a}(t)|\ge\max\{|k|-1,0\}$. Hence
\[
 \sum_{a\in\bxi^\UN}
 \me^{-\delta_t\left(x-c_{\bxi^\UN,a}\left(t\right)\right)^2}
 \le B_t\sum_{k\in\mathbb Z}\left(1+\left|x+k\right|^\alpha\right)
 \me^{-\delta_t\max\left\{\left|k\right|-1,0\right\}^2}.
\]
If $\alpha=0$, Gaussian summability gives the asserted uniform bound
directly. If $0<\alpha<1$, then
$|x+k|^\alpha\le|x|^\alpha+|k|^\alpha$, and Gaussian summability again
gives $C_t''(1+|x|^\alpha)$, uniformly in $N$.
\end{proof}

Uniform separation of the initial particles bounds the number of shifted centers in a unit interval. This verifies the counting hypothesis of the preceding proposition.
\begin{lemma}\label{lm number is bounded}
Let $\bxi$ be a finite simple configuration with
$\inf_{a,c\in\bxi,\,a\ne c}|a-c|\ge d>0$. For every $t>0$ there is
$B_{t,d}<\infty$ such that
\[
 \#\left\{a\in\bxi:c_{\bxi,a}\left(t\right)\in\left[u,u+1\right)\right\}
 \le B_{t,d},\qquad u\in\mathbb R.
\]
\end{lemma}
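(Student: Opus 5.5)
The plan is to show that the map $a\mapsto c_{\bxi,a}(t)$ is increasing at large scales. If two atoms are far apart in $\bxi$, their shifted centres are at least distance $1$ apart. Then all atoms whose centres fall in a common window $[u,u+1)$ lie in an interval of bounded length, and the spacing $d$ bounds how many such atoms there can be.

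Order the atoms as $a_1<\cdots<a_n$, fix $i<j$, and put $D=a_j-a_i\ge d$. Pairing the $c$-terms of $H_{\bxi,a_j}$ and $H_{\bxi,a_i}$ gives the exact identity
\[
 H_{\bxi,a_j}-H_{\bxi,a_i}
 =-\frac{2}{D}+\sum_{\substack{c\in\bxi\\c\ne a_i,a_j}}\frac{D}{\left(c-a_j\right)\left(c-a_i\right)}.
\]
The first term comes from the mutual terms $1/(a_i-a_j)$ and $1/(a_j-a_i)$. The sign structure is the key point. For atoms $c$ strictly between $a_i$ and $a_j$ the summand is negative, so it only helps. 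For atoms outside $[a_i,a_j]$ the summand is positive. For the outside atoms I would use the spacing: the $k$-th atom to the right of $a_j$ satisfies $c-a_j\ge kd$ and $c-a_i\ge kd+D$, and symmetrically on the left. Hence
\[
 \sum_{c\notin[a_i,a_j]}\frac{D}{\left(c-a_j\right)\left(c-a_i\right)}
 \le 2\sum_{k\ge1}\frac{D}{kd\left(kd+D\right)}
 \le \frac{C}{d}\left(1+\log\left(1+\frac Dd\right)\right).
\]
For the last inequality, split at $k\approx D/d$: the terms with $k\le D/d$ give a harmonic sum $\lesssim d^{-1}\log(1+D/d)$, and the terms with larger $k$ give $\lesssim D\sum_k (kd)^{-2}\lesssim 1/d$.

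Combining the two displays, with all bounds independent of $n$ and of the configuration beyond $d$, gives
\[
 c_{\bxi,a_j}\left(t\right)-c_{\bxi,a_i}\left(t\right)
 = D-t\left(H_{\bxi,a_j}-H_{\bxi,a_i}\right)
 \ge D-\frac{Ct}{d}\left(1+\log\left(1+\frac Dd\right)\right).
\]
The right-hand side grows linearly in $D$. So there is $D_0=D_0(t,d)$ such that the right-hand side exceeds $1$ whenever $D\ge D_0$.

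For the conclusion, suppose $a_i<a_j$ are the smallest and largest atoms with $c_{\bxi,a}(t)\in[u,u+1)$. Then $|c_{\bxi,a_j}(t)-c_{\bxi,a_i}(t)|<1$, so $a_j-a_i<D_0$. All counted atoms therefore lie in an interval of length less than $D_0$, and by $d$-separation their number is at most $B_{t,d}\defeq D_0/d+1$.

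The main obstacle is the logarithmic control of the outside sum. Near neighbours can make $H_{\bxi,a}$ itself large (of order $\log$ of the configuration size), so a direct estimate of $|c_{\bxi,a}(t)-a|$ would not be uniform. The difference identity avoids this: the interior atoms enter with a favourable sign, and only the outside tail needs the $d$-separation estimate.
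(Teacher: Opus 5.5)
Your proposal is correct and follows essentially the same route as the paper. Like the paper, you write the difference of shifted centres through the exact pairing identity, discard the favourably signed interior terms, bound the exterior tail by $\frac{2}{d}\sum_{n\ge1}\frac{D/d}{n(n+D/d)}\le\frac{2}{d}(3+\log(1+D/d))$ using the spacing, and conclude that atoms with centres in a unit window lie in an interval of bounded length; the only differences are cosmetic (the paper works with $h_i=-H_{\bxi,x_i}$ and an explicit constant $3$, whereas you split the sum at $k\approx D/d$).
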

\begin{proof}
Write $\bxi=\sum_{i=1}^m\delta_{x_i}$ with $x_{i+1}>x_i$, and put
\[
 h_i\defeq -H_{\bxi,x_i},\qquad
 c_i\defeq c_{\bxi,x_i}\left(t\right)=x_i+th_i.
\]
Fix $i<j$ and set $L=x_j-x_i$. Then
    \begin{equation*}
    \begin{aligned}
    h_j-h_i = &\frac{2}{L}+\sum_{i<k<j}\left(\frac{1}{x_j-x_k}+\frac{1}{x_k-x_i}\right)-L\sum_{k<i}\frac{1}{\left(x_i-x_k\right)\left(x_j-x_k\right)}
    -L\sum_{k>j}\frac{1}{\left(x_k-x_i\right)\left(x_k-x_j\right)}\\
    \geq & -L\sum_{k<i}\frac{1}{\left(x_i-x_k\right)\left(x_j-x_k\right)}
    -L\sum_{k>j}\frac{1}{\left(x_k-x_i\right)\left(x_k-x_j\right)}\\
    \geq & -\frac{2}{d}\sum_{n=1}^\infty\frac{L/d}{n\left(n+L/d\right)}.
    \end{aligned}
    \end{equation*}
    Here, the last inequality holds since for $n\in\mathbb{N}\setminus\{0\}$, one has
    \begin{equation*}
    \begin{aligned}
    &x_i-x_k\geq nd,\ x_j-x_k \geq L+nd\quad \textnormal{if } k=i-n,\\
    &x_k-x_j\geq nd,\ x_k-x_i \geq L+nd\quad\textnormal{if } k= j+n.
    \end{aligned}
    \end{equation*}
    For $R>0$, one has
    $\sum_{n=1}^\infty R/[n(n+R)]\leq3+\log(1+R)$. Thus
    \begin{equation*}
        h_j-h_i \geq -\frac{2}{d}\left(3+\log\left(1+\frac{L}{d}\right)\right).
    \end{equation*}
    This implies that
    \begin{equation*}
        c_j-c_i \geq L-\frac{2t}{d}\left(3+\log\left(1+\frac{L}{d}\right)\right).
    \end{equation*}
Choose $L_{t,d}<\infty$ so that the last lower bound exceeds $1$ whenever
$L>L_{t,d}$. Therefore any collection of shifted centers in one unit
interval comes from original roots contained in an interval of length at
most $L_{t,d}$, and
\[
 \#\left\{a\in\bxi:c_{\bxi,a}\left(t\right)\in\left[u,u+1\right)\right\}
 \le1+\left\lceil\frac{L_{t,d}}d\right\rceil.
\]
\end{proof}

We introduce the class of admissible simple configurations with a positive minimum spacing.
\begin{definition}
Define, with the convention $\inf\varnothing=+\infty$,
\[
 \mathfrak L\defeq
 \left\{\bxi\in\mathfrak N\cap\mathfrak M^0:
 \inf_{a,c\in\bxi,\,a\ne c}\left|a-c\right|>0\right\}.
\]
\end{definition}

Uniform separation also controls the regularized products. The two estimates together give a density bound uniform in space and in the truncation.
\begin{proposition}\label{prop:uniform-density-separated}
Let $\bxi\in\mathfrak L$, put
\[
 d\defeq \min\left\{1,\inf_{a,c\in\bxi,\,a\ne c}\left|a-c\right|\right\}>0,
 \qquad \bxi^{\left[N\right]}\defeq \bxi|_{\left[-N,N\right]},
\]
and let $\rho_{\bxi^{[N]},t}$ be the one-point correlation function of the
finite Dyson model. For every $t>0$ there is $C_{t,d}<\infty$ such that
\[
 \sup_{N\in\mathbb N}\sup_{x\in\mathbb R}
 \rho_{\bxi^{\left[N\right]},t}\left(x\right)\le C_{t,d}.
\]
\end{proposition}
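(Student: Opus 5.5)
The plan is to apply Proposition~\ref{prop bd 1st coro by F number} with $\alpha=0$ to the family $\bxi^{[N]}$. The counting hypothesis \eqref{eq number is bounded} comes from Lemma~\ref{lm number is bounded}. Every truncation $\bxi^{[N]}$ is finite and simple with minimal spacing at least $d$, so the lemma gives the bound $B_{t,d}$ uniformly in $N$ and $u$. The remaining work is the regularized-product bound \eqref{eq bound of F xi a}: for each $\theta_t>0$ we need $\sup_N\sup_{a\in\bxi^{[N]}}|\mathfrak C_{\bxi^{[N]},a}(z)|\le A_t\me^{\theta_t|z|^2}$. Any $\theta_t<1/(2t)$ suffices, for instance $\theta_t=1/(4t)$.

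For the product bound, fix $a$ and write $w_c=c-a$ for $c\ne a$. These are the points of the shifted configuration $\bxi^{[N]}-a$, minus the origin, and they are still $d$-separated. We split the product at a radius $R$ to be chosen in terms of $\theta_t$ and $d$.

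For the far factors with $|w_c|>R$, use $|E_1(\zeta)|\le\me^{|\zeta|^2/2}$ from \eqref{naive}. This bounds their product by $\exp\bigl(\tfrac12|z|^2\sum_{|w_c|>R}|w_c|^{-2}\bigr)$. Separation alone gives $\sum_{|w_c|>R}|w_c|^{-2}\le C\sum_{n>R/d}(nd)^{-2}\le C/(dR)$, uniformly in $a$ and $N$. Choosing $R$ large makes this at most $\theta_t|z|^2/2$.

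For the near factors with $|w_c|\le R$, there are at most $2R/d+2$ of them. Each satisfies $|E_1(z/w_c)|\le(1+|z|/d)\me^{|z|/d}$, since $|w_c|\ge d$. Their product is therefore bounded by $\exp\bigl(C_{R,d}(1+|z|)\bigr)$, which is absorbed into $A_t\me^{\theta_t|z|^2/2}$. Combining the two pieces gives \eqref{eq bound of F xi a} with constants depending only on $t$ and $d$.

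Proposition~\ref{prop bd 1st coro by F number} then yields $\sup_N\rho_{\bxi^{[N]},t}(x)\le C_{t,d}$. Two points need checking. First, Proposition~\ref{prop bd 1st coro by F number} is stated for finite configurations in $\mathfrak M^0$; the truncations are finite and simple, so this holds. Second, an atom at zero is harmless, since formula \eqref{eq formula of rho sum} was noted to cover that case.

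The main obstacle I anticipate is the far-factor estimate. The key observation is that $d$-separation alone controls the reciprocal-square tail uniformly over all centres $a$. No global square-summability of $\bxi$ about $a$ is needed, and no principal-value information enters, because the linear exponentials in $E_1$ have already removed $H_{\bxi,a}$.
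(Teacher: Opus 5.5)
Your proposal is correct and follows the paper's proof. Both apply Proposition~\ref{prop bd 1st coro by F number} with $\alpha=0$, take the count from Lemma~\ref{lm number is bounded}, and split $\mathfrak C_{\bxi^{[N]},a}$ into near and far factors using only $d$-separation. The only difference is the splitting radius. The paper splits at the $z$-dependent radius $2|z|$, obtains the sharper bound $\log|\mathfrak C_{\bxi^{[N]},a}(z)|\le C(1+|z|/d)\log(2+|z|/d)$, and then absorbs it into $\theta_t|z|^2$. You split at a fixed radius $R(\theta_t,d)$ and get the $\me^{\theta_t|z|^2}$ bound directly. Either suffices.
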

\begin{proof}
Empty truncations have zero density and may be omitted.
By Proposition~\ref{prop bd 1st coro by F number}, it is enough to verify
\eqref{eq bound of F xi a} and \eqref{eq number is bounded} with
$\alpha=0$. We first prove the following bound for every finite
configuration $\boldsymbol\eta$ whose minimum spacing is at least $d$:
\begin{equation}\label{eq log bd for F}
 \sup_{a\in\boldsymbol\eta}
 \log\left|\mathfrak C_{\boldsymbol\eta,a}\left(z\right)\right|
 \le C\left(1+\frac{\left|z\right|}{d}\right)
          \log\left(2+\frac{\left|z\right|}{d}\right),
 \qquad z\in\mathbb C,
\end{equation}
where $C$ is universal. Put $r=|z|$. The assertion is immediate when
$r=0$, so assume $r>0$ and split the roots according as
$0<|c-a|\le2r$ or $|c-a|>2r$. Since
\[
 \log\left|E_1\left(w\right)\right|
 \le \left|w\right|+\log\left(1+\left|w\right|\right)\le2\left|w\right|,
\]
the spacing condition gives
\begin{equation}\label{eq l 2r bd}
\begin{aligned}
 &\sum_{\substack{c\in\boldsymbol\eta\\0<\left|c-a\right|\le2r}}
 \log\left|E_1\left(\frac z{c-a}\right)\right|\le4r\sum_{k=1}^{\left\lfloor2r/d\right\rfloor}\frac1{kd}
 \le C\frac rd\log\left(2+\frac rd\right).
\end{aligned}
\end{equation}
For $|w|<1/2$, the power series for $\log E_1(w)$ gives
$\log|E_1(w)|\le|w|^2$. Hence
\begin{equation}\label{eq g 2r bd}
\begin{aligned}
 &\sum_{\substack{c\in\boldsymbol\eta\\\left|c-a\right|>2r}}
 \log\left|E_1\left(\frac z{c-a}\right)\right|\le r^2\sum_{\substack{c\in\boldsymbol\eta\\\left|c-a\right|>2r}}
          \frac1{\left(c-a\right)^2}
 \le2r^2\sum_{k=0}^{\infty}\frac1{\left(2r+kd\right)^2}
 \le C\left(1+\frac rd\right).
\end{aligned}
\end{equation}
Equations \eqref{eq l 2r bd} and \eqref{eq g 2r bd} prove
\eqref{eq log bd for F}. Fix $t>0$ and take
$\theta_t=1/(4t)$. Since
\[
 C\left(1+r/d\right)\log\left(2+r/d\right)=o\left(r^2\right),
\]
there is $A_{t,d}<\infty$ such that the right-hand side of
\eqref{eq log bd for F} is at most
$\log A_{t,d}+\theta_t r^2$ for every $r\ge0$. Every truncation
$\bxi^{[N]}$ has minimum spacing at least $d$, so this proves
\eqref{eq bound of F xi a}, uniformly in $N$. Lemma~\ref{lm number is bounded}
proves \eqref{eq number is bounded}, again uniformly in $N$, with
$\alpha=0$. The result follows from
Proposition~\ref{prop bd 1st coro by F number}.
\end{proof}

For symmetric configurations, the reciprocal-density condition vanishes. The uniform density bound therefore gives the Markov property for the separated class.
\begin{cor}\label{xi in L have M-p}
Let $\bxi\in\mathfrak L$ be symmetric, in the sense that
$\bxi(\{a\})=\bxi(\{-a\})$ for every $a\in\mathbb R$. Then
$\bXi_{\bxi}(\cdot)$ has the Markov property.
\end{cor}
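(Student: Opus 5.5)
The plan is to apply Proposition~\ref{prop markov from rho}, exactly as in the proof of Corollary~\ref{cor:section5-markov}. We therefore have to verify \eqref{eq rho bound} and \eqref{eq rho symmetry} for the spatial truncations $\bxi^{[N]}=\bxi|_{[-N,N]}$ and for the limiting density.

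\emph{Density bound.} Since $\bxi\in\mathfrak L$, the configuration is simple with minimum spacing at least $d>0$, and every truncation inherits this spacing. Proposition~\ref{prop:uniform-density-separated} then gives
\[
\sup_N\sup_x\rho_{\bxi^{[N]},t}(x)\le C_{t,d},
\]
which is \eqref{eq rho bound} with $\alpha=0$. For the limiting density, note that $\bxi\in\mathfrak N$, so Lemma~\ref{lem:truncation-parameter-bridge} gives parameter convergence of the truncations. Theorem~\ref{MainTheorem} then yields $\rho_{\bxi^{[N]},t}\to\rho_{\bxi,t}$ locally uniformly, and the same constant bound passes to $\rho_{\bxi,t}$.

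\emph{Principal-value condition.} Symmetry of $\bxi$ makes each truncation $\bxi^{[N]}$ symmetric, since $[-N,N]$ is symmetric. The finite $\beta=2$ Dyson model is invariant under $x\mapsto-x$: the SDE \eqref{eq SDE DBM} is reflection invariant in law, and for a symmetric start the entrance law is symmetric as well. Hence $\rho_{\bxi^{[N]},t}$ is even, and its locally uniform limit $\rho_{\bxi,t}$ is even too. Both integrals in \eqref{eq rho symmetry} are therefore zero for every $L_1$, so the condition holds trivially.

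\emph{Identification of the process.} By symmetry, $p_b(\bxi)=0$ for any admissible $b$, because each integer-cutoff partial sum cancels. So $\bXi_{\bxi}$ is the process with kernel $\K_{f_0^b(\bxi)}$. This agrees with the definition used in Section~\ref{sec:markov}, and Proposition~\ref{prop markov from rho} applies and gives the Markov property.

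\emph{Main obstacle.} The only nontrivial analytic input is the uniform density bound, and it is already supplied by Proposition~\ref{prop:uniform-density-separated}. What remains is bookkeeping. Empty truncations are harmless, since they have density $0$. Membership in $\mathfrak M_b$ needs the finite square tail, which holds because $\bxi\in\mathfrak N$. Evenness must be checked carefully when $0$ is an atom; this case is covered by the entrance-law symmetry in Proposition~\ref{finitedysonmodel}.
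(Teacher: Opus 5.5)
Your proposal is correct and follows the paper's own proof. Proposition~\ref{prop:uniform-density-separated} gives \eqref{eq rho bound} with $\alpha=0$, and the bound passes to the limiting density via Theorem~\ref{MainTheorem}. Reflection invariance plus locally uniform convergence makes all densities even, so both sides of \eqref{eq rho symmetry} vanish. Symmetry gives $p_b(\bxi)=0$, and Proposition~\ref{prop markov from rho} applies.

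Your caveat about an atom at $0$ is harmless but unnecessary: $\bxi\in\mathfrak L\subset\mathfrak M^0$ is simple, so no entrance-law extension is needed.
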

\begin{proof}
Each spatial truncation $\bxi^{[N]}$ is symmetric. Reflection invariance of
finite-particle Dyson Brownian motion therefore gives
$\rho_{\bxi^{[N]},t}(x)=\rho_{\bxi^{[N]},t}(-x)$. The locally uniform kernel
convergence in Theorem~\ref{MainTheorem} shows that the limiting density is
also even. Proposition~\ref{prop:uniform-density-separated} supplies the
finite-density bound in \eqref{eq rho bound}, and the limiting-density bound
then follows as noted after Proposition~\ref{prop in N for all time}. Thus
\eqref{eq rho symmetry} holds with both left-hand sides equal to zero.
Moreover, symmetry gives $p_b(\bxi)=0$ for every admissible cutoff $b$.
Proposition~\ref{prop markov from rho} now applies.
\end{proof}

As an example, for $\alpha>0$ define the power deformation
\[
 \mathbb Z_\alpha\defeq
 \sum_{j\in\mathbb Z}\delta_{\operatorname{sgn}\left(j\right)\left|j\right|^\alpha},
 \qquad\operatorname{sgn}\left(0\right)=0.
\]

For every $\alpha\ge1$, the configuration $\mathbb Z_\alpha$ is symmetric
and has minimum spacing at least one. In addition,
$p_{1/2}(\mathbb Z_\alpha)=0$ by symmetry and
$s_{1/2}(\mathbb Z_\alpha)<\infty$, since
$\sum_{j\ge1}j^{-2\alpha}<\infty$. Hence
$\mathbb Z_\alpha\in\mathfrak L$, and Corollary~\ref{xi in L have M-p}
shows that $\bXi_{\mathbb Z_\alpha}(\cdot)$ has the Markov
property. This includes the integer configuration $\mathbb Z=\mathbb Z_1$.

For $1/2<\alpha<1$, we first present the following lemma.
\begin{lemma}\label{lem:power-configuration-counts}
    Let $\alpha\in(1/2,1)$ and $\mathbb{Z}_{\alpha}$ be the corresponding power configuration. We have $\mathbb{Z}_{\alpha}$ satisfies the hypotheses of Theorem \ref{thm bound}, \emph{($\mathbf{UC}$)} and \emph{($\mathbf{LC}$)},  with
    \begin{equation*}
    \epsilon=1,\quad a_n= n^{\alpha},\quad q=\frac{1}{\alpha}\in(1,2),\quad \theta=q-1\in(0,1).
    \end{equation*}
\end{lemma}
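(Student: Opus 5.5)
The plan is to verify (UC) and (LC) directly from the explicit root set $\{\pm n^\alpha: n\geq1\}$, with $q=1/\alpha$ and $\theta=q-1=(1-\alpha)/\alpha$. The formulation is clear: $\mathbb Z_\alpha=\delta_0+\sum_{n\ge1}(\delta_{n^\alpha}+\delta_{-n^\alpha})$, which is of the form \eqref{eq:config} with $\ep=1$ and $0<a_1<a_2<\cdots\to\infty$. Any fixed admissible cutoff $b$, for example $b\notin\{n^\alpha\}$, suffices. The key observation is that the local spacing of positive roots near height $u$ is $a_{n+1}-a_n\asymp \alpha n^{\alpha-1}\asymp u^{(\alpha-1)/\alpha}=u^{-\theta}$. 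Equivalently, the counting function $\#\{n:a_n\le y\}=\lfloor y^{q}\rfloor$ has derivative $qy^{q-1}=qy^\theta$.

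\textbf{Upper count (UC).} For $u\ge0$ and $s>0$, the positive roots in $[u-s,u+s]$ number at most $\lfloor (u+s)^q\rfloor-\lfloor (\max(u-s,0))^q\rfloor+1$, that is, at most $1+(u+s)^q-(\max(u-s,0))^q$. The negative roots in this interval lie in $[-s,0)$ when $u<s$, so there are at most $s^q$ of them. I will split into two cases.
\begin{itemize}
\item If $s\le u$, the mean value theorem gives $(u+s)^q-(u-s)^q\le 2sq(u+s)^{\theta}\le 2sq\,2^\theta(u^\theta+s^\theta)$. Since $s^{1+\theta}=s^q$ and $u\le 1+u$, this is at most $C[s(1+u)^\theta+s^q]$.
\item If $s>u$, the count is at most $1+(2s)^q+s^q\le C(1+s^q)$.
\end{itemize}
Adding the possible atom at $0$ costs only the constant $1$, and $\mathcal N$ counts only roots $\pm a_n$ anyway. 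This gives (UC).

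\textbf{Lower count (LC).} For $u\ge U_0$ and $C_{\mathrm{low}}u^{-\theta}\le s\le u/4$, I bound
\[
\mathcal N^+(u,s)\ge \lfloor(u+s)^q\rfloor-\lceil (u-s)^q\rceil \ge (u+s)^q-(u-s)^q-2.
\]
By the mean value theorem, $(u+s)^q-(u-s)^q\ge 2sq(u-s)^\theta\ge 2sq(3/4)^\theta u^\theta$. The constraint $s\ge C_{\mathrm{low}}u^{-\theta}$ makes $su^\theta\ge C_{\mathrm{low}}$. Choosing $C_{\mathrm{low}}$ large, say so that $q(3/4)^\theta C_{\mathrm{low}}\ge 4$, lets the $-2$ be absorbed into half the main term. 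This yields $\mathcal N^+(u,s)\ge c_{\mathrm{low}}su^\theta$ with $c_{\mathrm{low}}=q(3/4)^\theta$.

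\textbf{Membership of $\bxi$.} It also remains to note that $\sum n^{-2\alpha}<\infty$ because $\alpha>1/2$; this is consistent with \eqref{eq:count-square-tail} but is not required as a hypothesis.

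None of these steps is a real obstacle. The only point needing care is the bookkeeping of floor and ceiling errors in (LC), which is exactly why the lower scale $C_{\mathrm{low}}u^{-\theta}$ appears. A secondary point is the regime $s>u$ in (UC), where negative roots and the atom at zero must be counted; that is where the $s^q$ term is used.
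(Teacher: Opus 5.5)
Your proof is correct and is essentially the paper's: both count integers via $\#\{n:a_n\le r\}=\lfloor r^q\rfloor$ and apply the mean value theorem. In (LC) both use the lower scale $s\ge C_{\mathrm{low}}u^{-\theta}$ to absorb the $-2$ rounding error, with $c_{\mathrm{low}}=q(3/4)^{\theta}$. The only difference is that the paper handles (UC) in one stroke through $(u+s)^q-(u-s)_+^q\le 2qs(u+s)^{\theta}\le 2q(su^\theta+s^q)$, rather than splitting into $s\le u$ and $s>u$.

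One harmless slip: $\lfloor B\rfloor-\lfloor A\rfloor+1$ can exceed $1+B-A$, so your ``that is'' step does not follow from it. The bound $1+B-A$ on the number of integers in $[A,B]$ holds directly, and only constants would be affected anyway.
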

\begin{proof}
Write $r_+=\max\{r,0\}$. The positive-root counting function satisfies
\begin{equation*}
    \#\{n\geq1:a_n\leq r\}=\lfloor r^q\rfloor,
    \quad r\geq0.
\end{equation*}
Hence, for $u\geq0$ and $s>0$, counting the positive roots, negative
roots, and the possible contribution of the origin gives
\begin{equation*}
    Z_\alpha([u-s,u+s])
    \leq
    2+(u+s)^q-(u-s)_+^q+(s-u)_+^q.
\end{equation*}
Since we have
\begin{equation*}
    (u+s)^q-(u-s)_+^q\leq 2qs(u+s)^{q-1}\leq 2q\left(su^\theta+s^q\right),
\end{equation*}
where the last inequality uses $0<\theta<1$, we obtain
\begin{equation*}
    Z_\alpha([u-s,u+s])
    \leq C_q\left[1+s(1+u)^\theta+s^q\right].
\end{equation*}
This proves \emph{($\mathbf{UC}$)}, including the atom at zero.

For \emph{($\mathbf{LC}$)}, let $u\geq1$ and $0<s\leq u/4$. Integer counting and the mean value theorem give
\begin{equation*}
    \begin{aligned}
    \#\{n\geq1:|a_n-u|\leq s\}
    \geq (u+s)^q-(u-s)^q-2\geq 2q(3/4)^{q-1}su^\theta-2.
    \end{aligned}
\end{equation*}
Put $c_q=q(3/4)^{q-1}$. If
$s\geq(2/c_q)u^{-\theta}$, the last expression is at least
$c_qsu^\theta$. Thus \emph{($\mathbf{LC}$)} holds with
\begin{equation*}
    U_0=1, \quad C_{\mathrm{low}}=\frac{2}{c_q},
    \quad c_{\mathrm{low}}=c_q.
\end{equation*}
\end{proof}
Combining above lemma and Corollary \ref{cor:section5-markov}, we see that $\mathbb{Z}_\alpha$ generates a Markov process with $\alpha\in(1/2,1)$.

In conclusion, $\mathbb{Z}_\alpha$ generates a infinite particles Markov process for all $\alpha>1/2$.
\section{The long-time sine-kernel limit}\label{sec:sine}
\subsection{Setting and motivation}
For $\rho>0$, define the
extended sine kernel
\begin{equation}\label{eq extended sine}
    \K_{\Sine,\rho}\left(\left(s,x\right),\left(t,y\right)\right) \defeq \frac{1}{2\pi}\int_{-\rho\pi}^{\rho\pi} \me^{u^2\left(t-s\right)/2+\mi u\left(y-x\right)}\dif u -\indi_{s>t}\mathfrak{h}_{s-t}\left(x,y\right).
\end{equation}
This kernel defines a determinantal point process, which we denote by
$\bXi_{\Sine,\rho}$; see \cite{NAGAO199842}.
Katori and Tanemura \cite{Katori_2009} prove that the process started
from $\mathbb Z$ relaxes to the extended $\Sine$ process. Writing
$\K_{\mathbb Z}$ for its kernel, they show that
\begin{equation*}
    \lim_{T\to\infty}\K_{\mathbb{Z}}\left(\left(T+s,x\right),\left(T+t,y\right)\right) = \K_{\Sine,1}\left(\left(s,x\right),\left(t,y\right)\right).
\end{equation*}
We extend this long-time limit to symmetric configurations with regularly varying counting function. Let
\begin{equation}\label{eq:configuration}
  \bxi\defeq \eps\delta_0+\sum_{j\geq1}\left(\delta_{a_j}+\delta_{-a_j}\right),
  \qquad \eps\in\left\{0,1\right\},\qquad 0<a_1<a_2<\cdots .
\end{equation}
Write $\mathcal N_+(R)=\#\{j:a_j\le R\}$ for the cumulative
positive-root count. We assume that, for some $c>0$ and $q\in(0,2)$,
\begin{equation}\label{eq:assumptions}
 a_j\sim\left(j/c\right)^{1/q},\qquad j\to\infty.
\end{equation}
In particular, $\mathcal N_+(R)\sim cR^q$.

Since $q<2$, the canonical product
\[
 \mathfrak E_{\bxi}\left(z\right)\defeq z^\eps\prod_{j\geq1}\left(1-\frac{z^2}{a_j^2}\right)
\]
converges locally uniformly because $\sum_j a_j^{-2}<\infty$.
Symmetry gives $\bxi\in\mathfrak N$, with $p_b(\bxi)=0$ at every
admissible cutoff. The process $\bXi_{\bxi}$ is therefore defined for
all positive times, with parameter $f_0^b(\bxi)$ and $\gamma_2=0$.
Its kernel is determined by this symmetric normalization, since
\[
 \mathfrak E_{f_0^b\left(\bxi\right)}\left(z\right)
 =C_b\mathfrak E_{\bxi}\left(z\right)
\]
for a nonzero constant $C_b$, which cancels from kernel quotients.

For $S,U>0$ and $x,y\in\R$, let $\Gamma$ be the contour defined in
\eqref{Gamma}, with the orientation fixed in
Definition~\ref{def def of K}, and orient $\mi\R$ upward. The contour
integral is
\begin{equation}\label{eq:kernel-input}
 \BL_{\bxi,\Gamma}\left(\left(S,x\right),\left(U,y\right)\right)
 \defeq \frac{1}{\left(2\pi\mi\right)^2\sqrt{SU}}
 \int_\Gamma\!\dif z\int_{\mi\R}\!\dif w\,
 \frac{1}{w-z}
 \exp\left\{\frac{\left(w-y\right)^2}{2U}-\frac{\left(z-x\right)^2}{2S}\right\}
 \frac{\mathfrak E_{\bxi}\left(w\right)}{\mathfrak E_{\bxi}\left(z\right)}.
\end{equation}
At a transverse contour intersection, the singularity is integrable
jointly in the two contour parameters; we use the convention of
Definition~\ref{def def of K}. The residue correction and the analytic
part are
\begin{equation}\label{eq:section7-corrected-analytic}
\begin{aligned}
 \Rc_\Gamma\left(\left(S,x\right),\left(U,y\right)\right)
 &=\int_{-1}^{1}
   \mathfrak h_S\left(x,\mi u\right)\mathfrak h_U\left(-\mi y,u\right)\,\dif u,\\
 \BL_{\bxi}&\defeq \BL_{\bxi,\Gamma}+\Rc_\Gamma.
\end{aligned}
\end{equation}
The full kernel is
\begin{equation}\label{eq:full-kernel}
\K_{\bxi}\left(\left(S,x\right),\left(U,y\right)\right)
\defeq \BL_{\bxi}\left(\left(S,x\right),\left(U,y\right)\right)-\indi_{S>U} \mathfrak{h}_{S-U}\left(x,y\right).
\end{equation}

\subsection{Main result}
For all sufficiently large $T$, we set
\begin{equation}\label{eq:canonical-scale}
 n_T\defeq \max\left\{n\geq1:a_n^2\leq Tn\right\},\qquad L_T\defeq \left(Tn_T\right)^{1/2},\qquad
 \sigma_T\defeq \frac{L_T}{n_T}=\frac{T}{L_T}.
\end{equation}
Define 
\begin{equation*}
 A_q\defeq \frac{q\pi}{\sin\left(\pi q/2\right)},\qquad
 u_q\defeq A_q^{1/\left(2-q\right)},\qquad
 \rho_q\defeq \frac{u_q}{\pi}.
\end{equation*}
A subscript $T$ denotes a finite-time quantity and a subscript $q$ its
limit. In particular, $w_T$ will denote the positive saddle height,
with $w_T\to u_q$ and saddle points $\pm\mi w_T$.
For $\tau>-n_T$, define
\[
 \begin{aligned}
 T_\tau&\defeq T+\sigma_T^2\tau,\\
 \bXi_{\bxi,T}\left(\tau\right)
 &\defeq\left(x\mapsto x/\sigma_T\right)_\#
 \bXi_{\bxi}\left(T_\tau\right).
 \end{aligned}
\]
Every fixed finite collection of rescaled times is admissible for all
sufficiently large $T$.

We now state the long-time limit at the scales defined above. Both the extended kernel and the finite-dimensional distributions converge to those of the extended $\Sine$ process.
\begin{theorem}\label{thm:main}
Assume \eqref{eq:configuration} and \eqref{eq:assumptions}, and let
$\bXi_{\bxi}$ be the determinantal process described above, with full
kernel \eqref{eq:full-kernel}. Then the analytic parts satisfy
\begin{equation}\label{eq:analytic-convergence}
 \sigma_T\BL_{\bxi}\left(\left(T_s,\sigma_TX\right),\left(T_t,\sigma_TY\right)\right)
 \longrightarrow
 \frac{1}{2\pi}\int_{-u_q}^{u_q}
 \me^{k^2\left(t-s\right)/2+\mi k\left(Y-X\right)}\dif k
\end{equation}
locally uniformly in $(s,t,X,Y)\in\mathbb R^4$. Consequently,
\[
 \sigma_T\K_{\bxi}\left(\left(T_s,\sigma_TX\right),\left(T_t,\sigma_TY\right)\right)
 -\K_{\Sine,\rho_q}\left(\left(s,X\right),\left(t,Y\right)\right)\longrightarrow0
\]
locally uniformly in all four variables. The processes $\bXi_{\bxi,T}$ converge in finite-dimensional
distributions, for the vague topology on locally finite point measures,
to the extended $\Sine$ process of density $\rho_q$.
\end{theorem}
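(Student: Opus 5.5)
We follow the strategy of the introduction: rescale the double contour integral \eqref{eq:kernel-input} so that the logarithm of $\mathfrak E_{\bxi}$ becomes an order-$n_T$ phase, deform through exact saddles $\pm\mi w_T$, and identify the surviving crossed residue as the sine integral. We substitute $z=L_T\zeta$, $w=L_T\omega$, $x=\sigma_TX$, $y=\sigma_TY$, $S=T_s$, $U=T_t$. Since $L_T^2/T=n_T$ and $\sigma_T=L_T/n_T$, the Gaussian factors become $\exp\{n_T(\omega^2-\zeta^2)/(2(1+\sigma_T^2t/T))\}$ times lower-order terms. These lower-order terms are linear in $\omega,\zeta$ with coefficients $Y,X$, together with quadratic perturbations of size $\sigma_T^2/T=1/n_T$ carrying coefficients $s,t$, so they remain bounded after multiplying by $n_T$. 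The ratio $\mathfrak E_{\bxi}(L_T\omega)/\mathfrak E_{\bxi}(L_T\zeta)$ becomes $\exp\{n_T(G_T(\omega)-G_T(\zeta))\}$ with $G_T=n_T^{-1}\Log(\mathfrak E_{\bxi}(L_T\cdot)/L_T^\eps)$. Hence the leading phase is $F_T(\zeta)=\zeta^2/2+G_T(\zeta)$.

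\emph{Step 1 (phase convergence).} We write $G_T$ as a Stieltjes integral against $\mathcal N_+(L_Tr)/n_T$. By definition of $n_T$ and \eqref{eq:assumptions}, we have $cL_T^q\sim n_T$, so this rescaled counting function converges to $r^q$ locally uniformly. We then show $F_T\to\zeta^2/2+A_q(-\zeta^2)^{q/2}$ with three derivatives, locally uniformly off $\mathbb R$. Small roots contribute only logarithmically, which is negligible after division by $n_T$. Large roots are controlled by reciprocal-square tails, using the bound $\sum_{a_j>R}a_j^{-2}\lesssim R^{q-2}$ analogous to \eqref{eq:count-square-tail}. The constant arises from $\int_0^\infty \log(1+u^2/r^2)\,\dif(r^q)=\pi u^q/\sin(\pi q/2)$. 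On the imaginary axis $\zeta=\mi u$, the limit $-u^2/2+A_qu^q$ has critical points at $u=\pm u_q$, since $A_qqu^{q-2}=1$ (up to the factor $q$ absorbed in the definition of $A_q$).

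\emph{Step 2 (exact saddles and deformation).} For large $T$ we locate exact finite saddles $F_T'(\pm\mi w_T)=0$ with $w_T\to u_q$, using Hurwitz/Rouch\'e on the derivatives. We then deform $\Gamma$ into a steepest-descent contour passing through $\pm\mi w_T$. This contour encloses the real roots, so $1/\mathfrak E_{\bxi}$ poles are avoided, and it lies where $\Re F_T(\zeta)\ge\Re F_T(\mi w_T)+c|\zeta\mp\mi w_T|^2$. The vertical line is kept at $\mi\mathbb R$, where $\Re F_T$ is maximised at $\pm\mi w_T$. Moving the fixed contour $\Gamma$, together with $\Rc_\Gamma$, to the new contour creates the $w=z$ residue on the segment of $\mi\mathbb R$ between the crossings, namely $|\Im w|<L_Tw_T$. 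Combined with the heat factors, and after the substitution $w=\mi k/\sigma_T$, this gives $R_{T;s,t}$. The factor $k$ is set by $L_Tw_T/\sigma_T=n_Tw_T$ after rescaling, and its exponent reduces to $(t-s)k^2/2+\mi k(Y-X)$ plus corrections that vanish uniformly. The range of integration therefore tends to $[-u_q,u_q]$. The fixed-contour correction from $\Rc_\Gamma$ over $[-1,1]$ is $O(L_T^{-1})$ after multiplication by $\sigma_T$.

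\emph{Step 3 (remaining double integral).} The remaining integral has its integrand dominated by $\exp\{-cn_T(|\zeta-\zeta_\pm|^2+|\omega-\omega_\pm|^2)\}/|\omega-\zeta|$ near the crossings, and by a global quadratic gap elsewhere. The local Lemma~\ref{integralbiliylemma}-type $1/r$ singularity then gives a contribution of $O(n_T^{-1/2})$ after the prefactor $\sigma_T/(L_T^{-2}\sqrt{ST})$ is accounted for. This yields \eqref{eq:analytic-convergence}, locally uniformly in $(s,t,X,Y)$, because the bounded perturbations enter only through continuous coefficients. The heat term satisfies $\sigma_T\mathfrak h_{\sigma_T^2(s-t)}(\sigma_TX,\sigma_TY)=\mathfrak h_{s-t}(X,Y)$ exactly, which gives the full kernel statement. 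Proposition~\ref{prop mgf convergence}, applied to the rescaled kernels (whose Jacobian is $\sigma_T$), then gives convergence of the finite-dimensional distributions.

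The main obstacle is the global contour geometry in Step 2. It requires a uniform quadratic lower bound for $\Re F_T$ along a deformed contour through the exact saddles, with no rate available in \eqref{eq:assumptions}. Away from compacts off the real axis, we would establish this using only the convexity-type identity $\partial_v\Re F_T$ (as in Lemma~\ref{lem:vertical}) together with crude counting bounds. Near the real axis, a large-radius arc estimate comparing to the Gaussian term would be used.
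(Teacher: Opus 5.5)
Your outline has the same architecture as the paper's proof: rescaling by $L_T$, $C^3$ convergence of $F_T$ off $\mathbb R$, contours through the exact saddles $\pm\mi w_T$, the crossed residue giving the sine integral, an $O(n_T^{-1/2})$ remainder from the integrable $1/r$ crossing singularity, exact heat scaling, and Proposition~\ref{prop mgf convergence}.

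\emph{The gap.} It is the step you yourself call the main obstacle. You never exhibit a $\zeta$-contour on which $\Re F_T(\zeta)-M_T\ge c\,|\zeta\mp\mi w_T|^2$ holds globally and uniformly in large $T$, and the route you sketch does not produce one. Any steepest-descent or level curve of $F_T$ through $\pm\mi w_T$ moves with $T$. For $q<1$ the natural candidates (the limiting path $\{\Im H_q=0\}$, or a Section-5-type curve $\{\partial_v\Re F_T=0\}$) descend toward the real axis like $(\Re\zeta)^{q-1}$, into the region occupied by the roots. So a $T$-uniform patching argument would still be required. Lemma~\ref{lem:vertical}-type monotonicity in $v$ controls the $\omega$-line $\mi\mathbb R$; it does not give a lower bound along the $\zeta$-contour. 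Since \eqref{eq:assumptions} provides no rate, uniformity has to come from a fixed-shape contour, qualitative convergence on compacts, and crude growth bounds at infinity.

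\emph{The missing idea.} Take the straight lines $\mathcal C_T^\pm=\mathbb R\pm\mi w_T$ and prove $\Re F_T(x\pm\mi w_T)-M_T\ge c_0x^2$ in three regimes.
\begin{enumerate}
\item Near $x=0$: use $F_T'(\pm\mi w_T)=0$, $F_T''(\pm\mi w_T)\to2-q$, and $C^3$ convergence.
\item On compacts: use the strict limiting inequality $\Re H_q(x\pm\mi u_q)>m_q$ for $x\ne0$. With $v=x/u_q$ this is monotonicity of $\Psi_q(v)=v^2/2+(\Re(1-\mi v)^q-1)/q$, i.e.\ $v>(1+v^2)^{(q-1)/2}\sin((q-1)\arctan v)$ (Lemma~\ref{lem:limit-geometry}).
\item For large $|x|$: use the counting bound $\Re\ell_T(x+\mi v)\ge-C(2+|x|)^q\log(2+|x|)$ on $\underline w\le|v|\le\overline w$ (Lemma~\ref{lem:product-bounds}). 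Then $x^2/2$ wins because $q<2$.
\end{enumerate}
On $\mi\mathbb R$, $V_T(r)=\Re F_T(\mi r)$ satisfies $V_T'(r)=r\{\mathcal Q_T(r)-1\}$ with $\mathcal Q_T$ decreasing. Add $\Re\ell_T(\mi r)\le C|r|^q$ at infinity. Near $r=0$, where compact off-axis convergence is unavailable, use monotonicity together with $M_T\to m_q>0$. This is Lemma~\ref{lem:global-geometry}. Your large-radius estimate for the closing segments is then Lemma~\ref{lem:product-bounds}(iii), and your Steps 2--3 go through.

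\emph{Scaling slips to fix.}
\begin{itemize}
\item After $z=L_T\zeta$, $w=L_T\omega$, the factor $1/(w-z)$ contributes $L_T^{-1}$, so the prefactor is $\sigma_TL_T/\sqrt{T_sT_t}=T/\sqrt{T_sT_t}\to1$. Your $\sigma_TL_T^2/\sqrt{T_sT_t}$ is of order $L_T$; taken literally it would inflate the $O(n_T^{-1/2})$ remainder to order $L_Tn_T^{-1/2}=\sqrt T$.
\item The residue variable is $w=\mi L_Tk$, not $\mi k/\sigma_T$, so $|\Im w|<L_Tw_T$ becomes $|k|<w_T$.
\item The limiting phase is $\zeta^2/2+(A_q/q)(-\zeta^2)^{q/2}$, as your own integral $\pi u^q/\sin(\pi q/2)$ shows. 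This gives $A_qu_q^{q-2}=1$.
\item Hurwitz does locate the saddles, but you need the reflection $\zeta\mapsto-\bar\zeta$ to place them on $\mi\mathbb R$; the monotone $\mathcal Q_T$ gives this directly.
\end{itemize}
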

In particular,
\begin{equation}\label{eq:equal-time-limit}
 \sigma_T\K_{\bxi}\left(\left(T,\sigma_TX\right),\left(T,\sigma_TY\right)\right)
 \longrightarrow \frac{\sin\left\{u_q\left(X-Y\right)\right\}}{\pi\left(X-Y\right)},
\end{equation}
with the continuous value $\rho_q$ at $X=Y$.
The heat term in \eqref{eq extended sine} is singular as $s\downarrow t$
at $X=Y$, but it cancels exactly from the full-kernel difference by
\eqref{eq:heat-scaling}. Thus the locally uniform analytic convergence
\eqref{eq:analytic-convergence} gives the same convergence for that
difference across the time diagonal.

\subsection{Canonical scales}

We first verify that the canonical scales are well defined and determine their asymptotics. Their exact relations simplify the rescaled kernel.
\begin{lemma}\label{lem:scale}
For all sufficiently large $T$, the maximum in
\eqref{eq:canonical-scale} exists and
\begin{equation}\label{eq:exact-scale-relations}
 a_{n_T}\leq L_T<a_{n_T+1},\qquad
 \mathcal N_+\left(L_T\right)=n_T,\qquad L_T^2=Tn_T.
\end{equation}
Moreover,
\begin{equation}\label{eq:scale-asymptotics}
 L_T\sim\left(cT\right)^{1/\left(2-q\right)},\quad
 n_T\sim c^{2/\left(2-q\right)}T^{q/\left(2-q\right)},\quad
 \sigma_T\sim c^{-1/\left(2-q\right)}T^{\left(1-q\right)/\left(2-q\right)}.
\end{equation}
\end{lemma}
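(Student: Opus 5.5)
The plan is to treat $n\mapsto a_n^2/n$ as the key monotone-in-spirit quantity. By \eqref{eq:assumptions}, $a_n^2/n\sim c^{-2/q}n^{2/q-1}\to\infty$ because $q<2$. Hence for any fixed $T$ the set $\{n\ge1:a_n^2\le Tn\}$ is finite: eventually $a_n^2/n>T$. It is nonempty once $T\ge a_1^2$, since $n=1$ qualifies. So the maximum $n_T$ exists for all large $T$, and $n_T\to\infty$ as $T\to\infty$, because each fixed $n$ qualifies once $T\ge a_n^2/n$.

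Next come the exact relations. By definition $a_{n_T}^2\le Tn_T=L_T^2$, so $a_{n_T}\le L_T$. For the strict upper bound, maximality gives $a_{n_T+1}^2>T(n_T+1)>Tn_T=L_T^2$, so $L_T<a_{n_T+1}$. Since the $a_j$ are strictly increasing, exactly the indices $j\le n_T$ satisfy $a_j\le L_T$, which gives $\mathcal N_+(L_T)=n_T$. The identity $L_T^2=Tn_T$ is the definition.

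For the asymptotics, I would combine $\mathcal N_+(R)\sim cR^q$ with $n_T\to\infty$ (hence $L_T\ge a_{n_T}\to\infty$). This yields $n_T=\mathcal N_+(L_T)\sim cL_T^q$. Substituting into $L_T^2=Tn_T$ gives $L_T^{2-q}\sim cT$, that is, $L_T\sim(cT)^{1/(2-q)}$. Then $n_T=L_T^2/T\sim c^{2/(2-q)}T^{2/(2-q)-1}=c^{2/(2-q)}T^{q/(2-q)}$. Finally $\sigma_T=T/L_T\sim c^{-1/(2-q)}T^{1-1/(2-q)}=c^{-1/(2-q)}T^{(1-q)/(2-q)}$.

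The only step needing care is the passage from $a_j\sim(j/c)^{1/q}$ to $\mathcal N_+(R)\sim cR^q$, which the paper already asserts. If I were to verify it, I would use the sandwich $a_{\mathcal N_+(R)}\le R<a_{\mathcal N_+(R)+1}$ together with $a_{n+1}/a_n\to1$. There is no real obstacle here; the lemma is bookkeeping. Its main point is that the exact relation $\mathcal N_+(L_T)=n_T$ holds with no error term, and the later saddle analysis relies on that.
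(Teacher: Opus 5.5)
Your proposal is correct and follows the paper's proof: finiteness of the defining set from $a_n^2/n\to\infty$, then maximality giving $a_{n_T}\le L_T<a_{n_T+1}$, then $L_T^2=Tn_T$ combined with a sandwich argument. The only cosmetic difference is that you write the asymptotics as $n_T=\mathcal N_+(L_T)\sim cL_T^q$, whereas the paper applies the squeeze $1\le L_T/a_{n_T}<a_{n_T+1}/a_{n_T}$ directly to get $L_T\sim a_{n_T}\sim(n_T/c)^{1/q}$; this is the same sandwich with $a_{n+1}/a_n\to1$ that you invoke to justify $\mathcal N_+(R)\sim cR^q$.
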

\begin{proof}
From \eqref{eq:assumptions},
$a_n^2/n\sim c^{-2/q}n^{2/q-1}\to\infty$ because $q<2$.
Thus the defining set is finite.  It is nonempty for all sufficiently large
$T$, and $n_T\to\infty$.  Maximality gives
\[
 a_{n_T}^2\leq Tn_T=L_T^2,\qquad
 a_{n_T+1}^2>T\left(n_T+1\right)>Tn_T=L_T^2,
\]
which proves \eqref{eq:exact-scale-relations}.  Since
$a_{n+1}/a_n\to1$, the squeeze
$1\leq L_T/a_{n_T}<a_{n_T+1}/a_{n_T}$ shows
$L_T\sim a_{n_T}\sim(n_T/c)^{1/q}$.  Combining this with
$L_T^2=Tn_T$ yields $L_T^{2-q}\sim cT$, and all three relations in
\eqref{eq:scale-asymptotics} follow.
\end{proof}

At the canonical scale, the fixed-contour residue correction vanishes.
It therefore suffices to analyze the contour integral.
\begin{lemma}\label{lem:section7-correction-negligible}
For every compact set $\mathcal B\subset\mathbb R^4$,
\begin{equation}\label{eq:section7-correction-bound}
 \sup_{\left(s,t,X,Y\right)\in\mathcal B}
 \sigma_T\left|
 \Rc_\Gamma
 \left(\left(T_s,\sigma_TX\right),\left(T_t,\sigma_TY\right)\right)
 \right|
 \leq \frac{C_{\mathcal B}}{L_T}
 \longrightarrow0.
\end{equation}
\end{lemma}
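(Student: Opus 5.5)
The bound will follow from an explicit evaluation of the correction. By definition,
\[
 \Rc_\Gamma\left(\left(S,x\right),\left(U,y\right)\right)
 =\int_{-1}^{1}\mathfrak h_S\left(x,\mi u\right)\mathfrak h_U\left(-\mi y,u\right)\dif u,
\]
with $S=T_s$, $U=T_t$, $x=\sigma_TX$ and $y=\sigma_TY$. We begin with the absolute values of the two heat factors. Since
\[
 \left|\mathfrak h_S\left(x,\mi u\right)\right|=\left(2\pi S\right)^{-1/2}\me^{-\left(x^2-u^2\right)/\left(2S\right)},
 \qquad
 \left|\mathfrak h_U\left(-\mi y,u\right)\right|=\left(2\pi U\right)^{-1/2}\me^{\left(y^2-u^2\right)/\left(2U\right)},
\]
the integrand is bounded by
\[
 \left(2\pi\sqrt{SU}\right)^{-1}\me^{1/(2S)}\me^{y^2/(2U)},
\]
and the interval of integration has length $2$.

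The next step is to check that the exponent $y^2/(2U)$ stays bounded at the canonical scale. By Lemma~\ref{lem:scale} we have $\sigma_T^2=T/n_T$, and $n_T\to\infty$. Hence $\sigma_T^2/T=1/n_T\to0$, and $T_t=T(1+t/n_T)$ lies between $T/2$ and $2T$ uniformly for $t$ in a compact set, once $T$ is large. Therefore
\[
 \frac{y^2}{2T_t}=\frac{\sigma_T^2Y^2}{2T_t}\le\frac{Y^2}{n_T}
\]
is bounded on $\mathcal B$, and so is $\me^{1/(2S)}$.

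Combining these estimates gives
\[
 \sigma_T\left|\Rc_\Gamma\right|\le C_{\mathcal B}\,\frac{\sigma_T}{\sqrt{T_sT_t}}\le C_{\mathcal B}'\,\frac{\sigma_T}{T}.
\]
Using $\sigma_T=T/L_T$ from \eqref{eq:canonical-scale}, the right-hand side equals $C'_{\mathcal B}/L_T$. This tends to zero because $L_T\to\infty$ by \eqref{eq:scale-asymptotics}.

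No step is a real obstacle. The only point needing care is that $\mathfrak h_U(-\mi y,u)$ grows like $\me^{y^2/(2U)}$, and this growth must be shown harmless. It is, because $y^2/U=O(Y^2/n_T)$ at this scale.
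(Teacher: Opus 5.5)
Your proposal is correct and follows essentially the same route as the paper. In both, the product of the two heat factors is bounded pointwise for $|u|\le1$ by $C_{\mathcal B}/\sqrt{T_sT_t}$, using $\sigma_T^2/T=1/n_T$ and $T_\tau/T\to1$; one then integrates over $[-1,1]$ and uses $\sigma_T/T=L_T^{-1}$. Your explicit check that the growing factor $\me^{y^2/(2U)}$ is at most $\me^{Y^2/n_T}$ simply spells out what the paper leaves implicit in ``the explicit heat-kernel formula.''
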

\begin{proof}
Lemma~\ref{lem:scale} gives $n_T\to\infty$, and hence
$T_\tau/T=1+\tau/n_T$ is bounded above and away from zero, uniformly for
$\tau$ in a compact set and all sufficiently large $T$. For $|u|\leq1$
and $(s,t,X,Y)\in\mathcal B$, the explicit heat-kernel formula and
$\sigma_T^2/T=1/n_T$ give
\[
 \left|
 \mathfrak h_{T_s}\left(\sigma_TX,\mi u\right)
 \mathfrak h_{T_t}\left(-\mi\sigma_TY,u\right)
 \right|
 \leq \frac{C_{\mathcal B}}{\sqrt{T_sT_t}}
 \leq \frac{C_{\mathcal B}}{T}.
\]
Integrating over $u\in[-1,1]$ and using
$\sigma_T/T=L_T^{-1}$ proves \eqref{eq:section7-correction-bound}.
\end{proof}

\subsection{The logarithmic phase}
Put $n=n_T$, $L=L_T$, and
\[
 b_{j,T}\defeq \frac{a_j}{L},\qquad
 \nu_T\defeq \frac1n\sum_{j\geq1}\delta_{b_{j,T}}.
\]
On the upper half-plane choose the logarithm so that
$\Log(1-z^2/b^2)$ is real for $z=\mi r$, $r>0$.  On the lower half-plane
take the conjugate branch, which is real at $z=-\mi r$, $r>0$.
For the central factor use
$\Log(\pm\mi r)=\log r\pm\mi\pi/2$.  Define
\begin{equation}\label{eq:finite-phase}
 \ell_T\left(z\right)\defeq \frac1n\left\{\eps\Log z+
 \sum_{j\geq1}\Log\left(1-\frac{z^2}{b_{j,T}^2}\right)\right\},
 \qquad F_T\left(z\right)\defeq \frac{z^2}{2}+\ell_T\left(z\right).
\end{equation}
The omitted constant $\eps\log L/n$ cancels from every quotient
$\mathfrak E_{\bxi}(Lw)/\mathfrak E_{\bxi}(Lz)$.  The term $\eps\Log z/n$, although asymptotically
small, is retained in $F_T$ because its derivative contributes to the exact
finite saddle.

The rescaled root measures converge to a power-law measure and have uniform bounds at zero and infinity. These estimates justify convergence of the logarithmic phase and its derivatives.
\begin{lemma}\label{lem:measure-bounds}
There is a constant $C$ such that, for all large $T$ and all $j\geq1$,
\begin{equation}\label{eq:bj-comparison}
 C^{-1}\left(j/n\right)^{1/q}\leq b_{j,T}\leq C\left(j/n\right)^{1/q}.
\end{equation}
Consequently, for $r>0$, $0<\delta<1<R$,
\begin{align}
 \nu_T\left(\left(0,r\right]\right)&\leq Cr^q,\label{eq:count-bound}\\
 \int_{\left(0,\delta\right]}\left(1+\left|\log u\right|\right)\nu_T\left(\dif u\right)
 &\leq C\delta^q\left(1+\left|\log\delta\right|\right),\label{eq:small-bound}\\
 \int_{\left[R,\infty\right)}u^{-2}\nu_T\left(\dif u\right)&\leq CR^{q-2}.
 \label{eq:tail-bound}
\end{align}
Also, for every fixed $r>0$,
\begin{equation}\label{eq:measure-convergence}
 \nu_T\left(\left(0,r\right]\right)=\frac{\mathcal N_+\left(rL\right)}{\mathcal N_+\left(L\right)}\longrightarrow r^q.
\end{equation}
\end{lemma}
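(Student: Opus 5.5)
The argument rests on the regular-variation hypothesis \eqref{eq:assumptions} together with the exact scale relations of Lemma~\ref{lem:scale}. Everything will follow from one two-sided comparison. Since $a_j\sim(j/c)^{1/q}$, there is $C_0$ with $C_0^{-1}j^{1/q}\le a_j\le C_0j^{1/q}$ for all $j\ge1$: the asymptotics handle large $j$, and the finitely many remaining positive $a_j$ are absorbed into $C_0$. Lemma~\ref{lem:scale} gives $L=L_T\sim(n/c)^{1/q}$, so $C^{-1}n^{1/q}\le L\le Cn^{1/q}$ for large $T$. Dividing the two comparisons gives \eqref{eq:bj-comparison}.

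For \eqref{eq:count-bound}, note that $b_{j,T}\le r$ forces $C^{-1}(j/n)^{1/q}\le r$, that is $j\le C^qnr^q$. Hence $n\nu_T((0,r])\le C^qnr^q$, which is the bound. For \eqref{eq:small-bound}, write $1+|\log u|$ on $(0,\delta]$ as $1+\log(1/u)$ and integrate by parts against the distribution function $\nu_T((0,u])$:
\[
\int_{(0,\delta]}\log(1/u)\,\nu_T(\dif u)=\log(1/\delta)\,\nu_T((0,\delta])+\int_0^\delta \frac{\nu_T((0,u])}{u}\,\dif u .
\]
The boundary term at $0$ vanishes because the support of $\nu_T$ is bounded below by $b_{1,T}>0$. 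Applying \eqref{eq:count-bound} to each term bounds the right-hand side by $C\delta^q\log(1/\delta)+C\delta^q/q$. The tail bound \eqref{eq:tail-bound} is handled the same way:
\[
\int_{[R,\infty)}u^{-2}\nu_T(\dif u)\le 2\int_R^\infty u^{-3}\nu_T((0,u])\,\dif u\le C\int_R^\infty u^{q-3}\,\dif u=CR^{q-2},
\]
which is finite because $q<2$. The same step can also be done as a dyadic sum of the kind already used for \eqref{eq:count-square-tail}.

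For \eqref{eq:measure-convergence}, first observe that $n\nu_T((0,r])=\#\{j:a_j\le rL\}=\mathcal N_+(rL)$. Next, $n=\mathcal N_+(L)$ by \eqref{eq:exact-scale-relations}. Finally, $\mathcal N_+(R)\sim cR^q$ as $R\to\infty$; this follows from $a_j\sim(j/c)^{1/q}$ by inverting the monotone sequence, since $a_j\le R$ holds for $j\le(1-\epsilon)cR^q$ and fails for $j\ge(1+\epsilon)cR^q$ once $R$ is large. Because $L\to\infty$, the ratio $\mathcal N_+(rL)/\mathcal N_+(L)$ therefore tends to $r^q$.

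I do not expect a real obstacle here. The one point needing care is that the comparison constants must be uniform in $j$ down to $j=1$. The finitely many small $j$, and the requirement that $T$ be large enough for $L\asymp n^{1/q}$ to hold, are what make the constant $C$ in the statement depend only on the configuration.
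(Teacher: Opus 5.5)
Your proof is correct and follows essentially the same route as the paper. Both rest on the two-sided comparison $a_j\asymp j^{1/q}$ combined with $L\asymp n^{1/q}$ from Lemma~\ref{lem:scale}, the resulting counting bound, Stieltjes integration by parts for \eqref{eq:small-bound}, and $n=\mathcal N_+(L)$ together with $\mathcal N_+(R)\sim cR^q$ for \eqref{eq:measure-convergence}. The only cosmetic difference is in \eqref{eq:tail-bound}: you integrate against the distribution function $\nu_T((0,u])$, whereas the paper uses \eqref{eq:bj-comparison} to compare $\frac1n\sum_{b_{j,T}\geq R}b_{j,T}^{-2}$ directly with $\int_{cR^q}^\infty x^{-2/q}\,\dif x$; the two arguments are equivalent.
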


\begin{proof}
By Lemma~\ref{lem:scale}, $L\asymp n^{1/q}$.  Assumption
\eqref{eq:assumptions}, enlarged over the finitely many small $j$, therefore
gives \eqref{eq:bj-comparison}.  It implies
$\#\{j:b_{j,T}\leq r\}\leq Cnr^q$, proving
\eqref{eq:count-bound}.  Stieltjes integration by parts applied to this
distribution bound gives \eqref{eq:small-bound}.  Similarly,
\[
 \frac1n\sum_{b_{j,T}\geq R}b_{j,T}^{-2}
 \leq C\int_{cR^q}^{\infty}x^{-2/q}\dif x
 \leq CR^{q-2},
\]
which is \eqref{eq:tail-bound}.  Finally,
\eqref{eq:measure-convergence} follows from
$\mathcal N_+(x)\sim cx^q$ and $L\to\infty$.
\end{proof}

We now identify the limiting logarithmic phase, including its first three derivatives. This determines the limiting saddles and controls the finite-time phase near them.
\begin{lemma}\label{lem:phase-convergence}
For every compact $K\subset\C\setminus\R$ and $k=0,1,2,3$,
\begin{equation}\label{eq:C3-convergence}
 \sup_{z\in K}\left|\ell_T^{\left(k\right)}\left(z\right)-g_q^{\left(k\right)}\left(z\right)\right|\longrightarrow0,
 \qquad
 g_q\left(z\right)\defeq \frac{A_q}{q}\left(-z^2\right)^{q/2}.
\end{equation}
Here $(-z^2)^{q/2}$ is analytic separately in the two
half-planes, with conjugate branches, and is positive
when $z=\pm\mi r$, $r>0$.
\end{lemma}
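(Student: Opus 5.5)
By the conjugation symmetry of the branches, it suffices to work in the upper half-plane; the lower half-plane follows by conjugation. The plan is to write $\ell_T$ as an integral against the rescaled root measure,
\[
 \ell_T(z)=\frac{\eps\Log z}{n}+\int_{(0,\infty)}\Log\Bigl(1-\frac{z^2}{u^2}\Bigr)\,\nu_T(\dif u),
\]
and to pass to the limit $\nu_T\to\nu_q$, where $\nu_q(\dif u)=qu^{q-1}\dif u$. This limit holds vaguely on $(0,\infty)$ by \eqref{eq:measure-convergence}, since the distribution functions converge pointwise to the continuous function $r^q$. The term $\eps\Log z/n$ and its derivatives tend to zero uniformly on $K$, because $K$ stays away from $0$ and $n\to\infty$. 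For the main term we split the $u$-integral into $(0,\delta]$, $[\delta,R]$ and $[R,\infty)$. The compact $K$ lies in $\{\Im z\geq\eta,\ |z|\leq M\}$, so for $u\in[\delta,R]$ the integrand $\Log(1-z^2/u^2)$ is jointly continuous and bounded, and holomorphic in $z$. Vague convergence of $\nu_T$ on $[\delta,R]$, with endpoints chosen as continuity points (the limit has no atoms), then gives uniform convergence on $K$ of this middle piece to the corresponding $\nu_q$-integral. Equicontinuity in $z$ comes from the uniformly bounded derivative, so the convergence is uniform and not just pointwise.

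The two end pieces are handled by estimates that are uniform in $T$. For $u\geq R\geq 2M$ we use $|\Log(1-w)|\leq 2|w|$ when $|w|\leq 1/4$; by \eqref{eq:tail-bound} the tail is then at most $CM^2R^{q-2}$, which tends to zero as $R\to\infty$ because $q<2$. For $u\leq\delta$ we have $|z^2/u^2|$ large, and $\Log(1-z^2/u^2)=\Log(-z^2)-2\log u+O(u^2/|z|^2)$; the branch is consistent because $1-z^2/u^2$ stays in a sector avoiding the negative real axis when $z$ is in the upper half-plane. The integrand is therefore bounded by $C_K(1+|\log u|)$, and \eqref{eq:small-bound} gives a contribution of order $C\delta^q(1+|\log\delta|)$, which tends to zero as $\delta\to0$. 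The same bounds hold for $\nu_q$. Taking $T\to\infty$, then $R\to\infty$ and $\delta\to0$, yields uniform convergence of $\ell_T$ to
\[
 g(z)\defeq\int_0^\infty\Log\Bigl(1-\frac{z^2}{u^2}\Bigr)qu^{q-1}\,\dif u
\]
on $K$. Uniform convergence on compacts of holomorphic functions in the open upper half-plane then gives convergence of the derivatives of orders $k=1,2,3$ by Cauchy's estimates, after enlarging $K$ slightly to a compact neighbourhood still inside $\C\setminus\R$. So only the $k=0$ case needs the measure argument.

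It remains to identify $g=g_q$. Both functions are holomorphic in the upper half-plane, so it suffices to check $z=\mi r$, $r>0$. There $g(\mi r)=\int_0^\infty\log(1+r^2/u^2)\,qu^{q-1}\,\dif u$. Substituting $u=rv$ gives $r^q\int_0^\infty\log(1+v^{-2})\,qv^{q-1}\,\dif v$. Integrating by parts, with boundary terms vanishing because $0<q<2$, this becomes $r^q\int_0^\infty\frac{2v^{q}}{v(1+v^2)}\,\dif v=r^q\cdot\frac{\pi}{\sin(\pi q/2)}$. This equals $\frac{A_q}{q}r^q=g_q(\mi r)$, and the identity theorem concludes. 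The main obstacle is the small-$u$ piece: the branch of $\Log(1-z^2/u^2)$ must be tracked so that the expansion $\Log(-z^2)-2\log u$ holds uniformly, and the bound must be integrable against $\nu_T$ uniformly in $T$. This is exactly where \eqref{eq:small-bound} is designed to enter.
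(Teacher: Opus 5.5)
Your proposal is correct and follows essentially the paper's own proof: the same split of the $u$-integral into $(0,\delta]$, $[\delta,R]$, $[R,\infty)$, controlled by \eqref{eq:small-bound}, \eqref{eq:measure-convergence} and \eqref{eq:tail-bound}, and the same reduction of the identification step to $\int_0^\infty v^{q-1}(1+v^2)^{-1}\,\dif v=\pi/(2\sin(\pi q/2))$ on the imaginary axis. The differences are cosmetic. You obtain the derivatives of orders $1$ to $3$ from Cauchy estimates on a slightly larger compact set, rather than bounding the $z$-derivatives of the integrand directly, and you evaluate $g(\mi r)$ itself by integration by parts and then use the identity theorem, whereas the paper evaluates $g'$ on the axis, continues analytically, and fixes the integration constant by letting $r\downarrow0$.
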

\begin{proof}
For $z\in K$ write
$f_z(u)=\Log(1-z^2/u^2)$.  On $(0,\delta]$,
$|f_z(u)|\leq C_K(1+|\log u|)$, while each of its first three
$z$-derivatives is bounded uniformly in $u$.  On $[R,\infty)$, $f_z$ and
its first three derivatives are $O_K(u^{-2})$.  Hence
\eqref{eq:small-bound} and \eqref{eq:tail-bound} make both ends uniformly
small.  On $[\delta,R]$, \eqref{eq:measure-convergence} gives weak
convergence to $qu^{q-1}\,\dif  u$; uniformity in $z\in K$ follows from a finite
net and equicontinuity.  The central term $\eps\Log z/n$ and its derivatives
vanish uniformly on $K$.  We have therefore proved, including the first
three derivatives,
\begin{equation}\label{eq:phase-integral}
\ell_T\left(z\right)\longrightarrow\int_0^\infty qu^{q-1}\Log\left(1-z^2/u^2\right)\dif u=g_q\left(z\right).
\end{equation}

It remains to evaluate the integral.  Differentiation, already justified by
the preceding bounds, gives
\[
 g_q'\left(z\right)=-2qz\int_0^\infty\frac{u^{q-1}}{u^2-z^2}\dif u.
\]
For $z=\mi r$, $r>0$, the substitution $u=rv$ and Euler's beta integral give
\[
 \int_0^\infty\frac{u^{q-1}}{u^2+r^2}\dif u
 =r^{q-2}\int_0^\infty\frac{v^{q-1}}{1+v^2}\dif v
 =\frac{\pi r^{q-2}}{2\sin\left(\pi q/2\right)}.
\]
Analytic continuation on each half-plane therefore yields
\[
 g_q'\left(z\right)=A_q\frac{\left(-z^2\right)^{q/2}}{z}.
\]
Integration gives $g_q(z)=A_q(-z^2)^{q/2}/q$.  More explicitly, the
substitution $u=rv$ in \eqref{eq:phase-integral} gives
$g_q(\mi r)=r^q g_q(\mi)\to0$ as $r\downarrow0$, fixing the additive
constant to zero.  This proves
\eqref{eq:C3-convergence}.
\end{proof}

\subsection{The exact finite-time saddles}
For $w>0$ define
\begin{equation*}
 \mathcal Q_T\left(w\right)\defeq \frac{\eps}{nw^2}+\frac2n\sum_{j\geq1}
 \frac{1}{b_{j,T}^2+w^2}.
\end{equation*}

The contours will pass through the exact finite-time saddles. We locate these saddles and show that their curvature converges to a positive limit.
\begin{lemma}\label{lem:exact-saddle}
For every sufficiently large $T$ there is a unique $w_T>0$ such that
$\mathcal Q_T(w_T)=1$.  It satisfies
\begin{equation*}
 F_T'\left(\pm\mi w_T\right)=0,\quad w_T\longrightarrow u_q,\quad
 F_T''\left(\pm\mi w_T\right)\longrightarrow2-q.
\end{equation*}
More precisely,
\begin{equation}\label{eq:finite-second-derivative}
 F_T''\left(\mi w_T\right)=-w_T\mathcal Q_T'\left(w_T\right)
 =\frac{2\eps}{nw_T^2}+\frac{4w_T^2}{n}
 \sum_{j\geq1}\frac{1}{\left(b_{j,T}^2+w_T^2\right)^2}>0.
\end{equation}
If $\eps=0$, $z=0$ is an additional critical point of $F_T$; it is
separated from the dominant saddles by a positive phase gap.
\end{lemma}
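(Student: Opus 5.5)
The plan is to compute $F_T'$ on the imaginary axis explicitly, read off the saddle equation as $\mathcal Q_T(w)=1$, and then use monotonicity of $\mathcal Q_T$ together with the phase convergence of Lemma~\ref{lem:phase-convergence}.

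For $z=\mi w$ with $w>0$, the definition \eqref{eq:finite-phase} gives
\[
 F_T'\left(z\right)=z+\frac{\eps}{nz}+\frac1n\sum_{j\ge1}\frac{-2z}{b_{j,T}^2-z^2}.
\]
Substituting $z=\mi w$ yields
\[
 F_T'\left(\mi w\right)=\mi w\left(1-\frac{\eps}{nw^2}-\frac2n\sum_{j\geq1}\frac{1}{b_{j,T}^2+w^2}\right)=\mi w\left(1-\mathcal Q_T\left(w\right)\right).
\]
By conjugate symmetry the same relation holds at $-\mi w$. Hence the purely imaginary critical points away from the origin are exactly the positive roots of $\mathcal Q_T(w)=1$.

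For existence and uniqueness, note that $\mathcal Q_T$ is strictly decreasing on $(0,\infty)$ and tends to $0$ as $w\to\infty$. The sum is finite for each $w$ by \eqref{eq:tail-bound}. I need $\mathcal Q_T(w)>1$ for some small $w$. If $\eps=1$ this is immediate, since $1/(nw^2)\to\infty$. If $\eps=0$, the limit as $w\downarrow0$ is $\frac2n\sum_j b_{j,T}^{-2}=\frac{2L^2}{n}\sum_j a_j^{-2}=2T\sum_j a_j^{-2}$, using $L^2=Tn$ from Lemma~\ref{lem:scale}; this exceeds $1$ for large $T$. So a unique root $w_T$ exists for all large $T$.

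To show $w_T\to u_q$, I would use pointwise convergence $\mathcal Q_T(w)\to\mathcal Q_q(w):=2\int_0^\infty qu^{q-1}(u^2+w^2)^{-1}\dif u=A_qw^{q-2}$. This follows from the proof of Lemma~\ref{lem:phase-convergence}, or equivalently from $F_T'(\mi w)\to g_q'(\mi w)$ and the identity $g_q'(\mi w)=-\mi A_q w^{q-1}$. The limit $\mathcal Q_q$ is strictly decreasing and equals $1$ exactly at $u_q=A_q^{1/(2-q)}$. Monotonicity of each $\mathcal Q_T$ then gives the squeeze: for every $\eta>0$, eventually $\mathcal Q_T(u_q-\eta)>1>\mathcal Q_T(u_q+\eta)$, so $w_T\in(u_q-\eta,u_q+\eta)$.

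For the second derivative, differentiate the identity $F_T'(\mi w)=\mi w(1-\mathcal Q_T(w))$ in $w$. This gives $\mi F_T''(\mi w)=\mi(1-\mathcal Q_T(w))-\mi w\mathcal Q_T'(w)$. At $w=w_T$ the first term vanishes, so $F_T''(\mi w_T)=-w_T\mathcal Q_T'(w_T)$, which is the displayed positive expression \eqref{eq:finite-second-derivative}. Convergence to $2-q$ follows from the uniform $C^3$ convergence in Lemma~\ref{lem:phase-convergence} on a compact neighbourhood of $\mi u_q$, combined with $w_T\to u_q$. Indeed $F_T''(\mi w_T)\to 1+g_q''(\mi u_q)$. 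From $g_q'(\mi w)=-\mi A_qw^{q-1}$ one gets $g_q''(\mi w)=-A_q(q-1)w^{q-2}$, which equals $-(q-1)$ at $w=u_q$, so the limit is $2-q$. The lower half-plane saddle is handled by conjugation.

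The step I expect to be the main obstacle is the remark about $z=0$ when $\eps=0$. Since $F_T$ is even, $F_T'(0)=0$ is clear. For the phase gap I would compare values along the imaginary axis, where $\Re F_T(\mi r)=-r^2/2+\frac1n\sum_j\log(1+r^2/b_{j,T}^2)$ and $F_T(0)=0$. The $r$-derivative is $r(\mathcal Q_T(r)-1)$, which is positive on $(0,w_T)$. Hence $\Re F_T(\pm\mi w_T)-F_T(0)=\int_0^{w_T}r(\mathcal Q_T(r)-1)\dif r$. This converges to the corresponding limit integral $\int_0^{u_q}r(A_qr^{q-2}-1)\dif r=A_qu_q^q/q-u_q^2/2=u_q^2(1/q-1/2)>0$. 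Passing to this limit requires justifying convergence near $r=0$, which I would do using the uniform integrable bound \eqref{eq:small-bound}. That yields a strictly positive gap independent of large $T$.
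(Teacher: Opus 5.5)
Your proposal is correct and follows the paper's proof. It uses the identity $F_T'(\mi w)=\mi w\{1-\mathcal Q_T(w)\}$, the strict monotonicity of $\mathcal Q_T$ together with its limit $A_qw^{q-2}$ to obtain $w_T\to u_q$, and differentiation of the saddle equation at the root to get $F_T''(\mi w_T)=-w_T\mathcal Q_T'(w_T)\to 2-q$.

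The remaining differences are minor. For $\eps=0$ the paper gets $\mathcal Q_T(0+)\ge 2$ from $b_{j,T}\le 1$ for $j\le n$, where you use $\mathcal Q_T(0+)=2T\sum_j a_j^{-2}\to\infty$. For the gap at $z=0$, your integral near $r=0$ is unnecessary: $F_T(0)=0$ and $\Re F_T(\mi w_T)\to m_q>0$ by $C^0$ phase convergence near $\mi u_q$. The paper defers the quantitative version of this gap to Lemma~\ref{lem:global-geometry}.
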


\begin{proof}
Direct differentiation of \eqref{eq:finite-phase} gives
\begin{equation}\label{eq:saddle-equation}
 F_T'\left(\mi w\right)=\mi w\left\{1-\mathcal Q_T\left(w\right)\right\}.
\end{equation}
The function $\mathcal Q_T$ is continuous and strictly decreasing, and tends to zero
as $w\to\infty$.  If $\eps=1$, it diverges at zero.  If $\eps=0$, then
$b_{j,T}\leq1$ for $j\leq n$ by \eqref{eq:exact-scale-relations}, so
$\mathcal Q_T(0+)\geq2$.  Thus a unique positive root exists.

Lemma~\ref{lem:phase-convergence}, differentiated once, implies locally
uniformly on $(0,\infty)$ that
\begin{equation}\label{eq:saddle-profile-limit}
 \mathcal Q_T\left(w\right)\longrightarrow\mathcal G_q\left(w\right),
 \qquad \mathcal G_q\left(w\right)\defeq A_qw^{q-2}.
\end{equation}
The latter is strictly decreasing, and the equation
$\mathcal G_q(w)=1$ has the unique solution $w=u_q$. Monotonicity and \eqref{eq:saddle-profile-limit} give
$w_T\to u_q$.  Differentiating \eqref{eq:saddle-equation} at the root
gives \eqref{eq:finite-second-derivative}; convergence to $2-q$ follows
from the $C^2$ part of Lemma~\ref{lem:phase-convergence}.  When $\eps=0$,
$F_T$ is analytic at zero and $F_T'(0)=0$, but
$F_T''(0)=1-\mathcal Q_T(0+)<0$.  The vertical phase increases from zero toward
$\mi w_T$, and the quantitative gap is proved in
Lemma~\ref{lem:global-geometry} below.
\end{proof}

\subsection{Uniform phase geometry}
Put
\begin{equation*}
 H_q\left(z\right)\defeq \frac{z^2}{2}+g_q\left(z\right)
 =\frac{z^2}{2}+\frac{A_q}{q}\left(-z^2\right)^{q/2},
 \quad m_q\defeq \Re H_q\left(\mi u_q\right)
 =\frac{2-q}{2q}\,u_q^2>0.
\end{equation*}

The limiting phase has strict maxima at the saddles on the vertical contour and strict minima there on the horizontal contours. These opposite inequalities give decay of the corresponding exponential factors and will control the finite-time contours.
\begin{lemma}\label{lem:limit-geometry}
Along the imaginary axis, $\Re H_q(\mi r)$ has exactly two global
maxima, at $r=\pm u_q$.  Along either horizontal line through a
saddle,
\begin{equation}\label{eq:horizontal-strict}
 \Re H_q\left(x\pm\mi u_q\right)>m_q\quad\left(x\neq0\right).
\end{equation}
The inequalities are quadratic near the saddle and on the tails.
\end{lemma}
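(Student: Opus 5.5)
The statement reduces to two explicit one-variable inequalities, since $H_q$ is given in closed form. I would treat the imaginary axis by direct calculus and the horizontal lines by a monotonicity argument for $x\mapsto\Re H_q(x+\mi u_q)$ that uses the Cauchy--Riemann equations.

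\emph{Vertical line.} For $z=\pm\mi r$ with $r>0$ one has $-z^2=r^2$, so by the branch convention of Lemma~\ref{lem:phase-convergence},
\[
\Re H_q(\pm\mi r)=-\tfrac{r^2}{2}+\tfrac{A_q}{q}r^q .
\]
Its derivative is $r\left(A_qr^{q-2}-1\right)$. Since $q<2$, this is positive for $r<u_q$ and negative for $r>u_q$, because $u_q^{2-q}=A_q$. The value at $u_q$ is $-u_q^2/2+u_q^2/q=m_q$. At $r=0$ the function equals $0<m_q$, which handles the origin when $\eps=0$. The second derivative at $u_q$ is $-1+(q-1)A_qu_q^{q-2}=-(2-q)<0$, so the maximum is quadratic. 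On the tails the term $-r^2/2$ dominates, which gives the quadratic tail decay.

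\emph{Horizontal lines.} Put $f(x)=\Re H_q(x+\mi u_q)$. The conjugate-branch convention gives $H_q(-\bar z)=\overline{H_q(z)}$, so $f$ is even, and the lower line is the conjugate of the upper one. It therefore suffices to show $f'(x)>0$ for $x>0$. By Cauchy--Riemann, $f'(x)=\Re H_q'(z)$, and
\[
H_q'(z)=z+A_q\frac{(-z^2)^{q/2}}{z}.
\]
Write $z=|z|\me^{\mi\psi}$ with $\psi\in(0,\pi/2)$, and put $\alpha=\pi/2-\psi\in(0,\pi/2)$. Then $-z^2=|z|^2\me^{\mi(2\psi-\pi)}$. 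Using $u_q=|z|\cos\alpha$ and $A_q=u_q^{2-q}$, a short computation gives
\[
f'(x)=|z|\left[\sin\alpha-(\cos\alpha)^{2-q}\sin\left((q-1)\alpha\right)\right].
\]
If $q\le1$, the bracket is positive at once, since $\sin\left((q-1)\alpha\right)\le0$. If $1<q<2$, expand $\sin\alpha=\sin\left((q-1)\alpha+(2-q)\alpha\right)$. Both resulting terms are nonnegative, and the term $\cos\left((q-1)\alpha\right)\sin\left((2-q)\alpha\right)$ is strictly positive, so
\[
\sin\alpha>\sin\left((q-1)\alpha\right)\cos\left((2-q)\alpha\right).
\]
Concavity of $\log\cos$ on $[0,\pi/2)$, together with $\log\cos 0=0$, gives $\cos(\beta\alpha)\ge(\cos\alpha)^\beta$ for $\beta=2-q\in(0,1)$. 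Hence $f'(x)>0$ for $x>0$, so $f(x)>f(0)=m_q$ for $x\neq0$, which is \eqref{eq:horizontal-strict}.

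For the quadratic statements: harmonicity of $\Re H_q$ gives $f''(0)=-\partial_r^2\Re H_q(\mi r)\big|_{r=u_q}=2-q>0$. On the tails, $f(x)=x^2/2+O(|x|^q+1)$, and $q<2$.

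The main obstacle is the sign analysis of $f'$ for $1<q<2$, where the correction term is genuinely negative. The trigonometric inequality above is the step I would check most carefully, in particular the branch bookkeeping that produces the angle $\pi/2+(q-1)\alpha$.
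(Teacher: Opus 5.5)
Your proof is correct and follows essentially the same route as the paper: your angle $\alpha$ is the paper's $\varphi=\arctan(x/u_q)$, and your Cauchy--Riemann expression satisfies $f'(x)=u_q\Psi_q'(x/u_q)$, the derivative the paper computes directly from $\Re(1-\mi v)^q$. The only difference is in the final inequality. The paper uses the factorwise bounds $\sec^{q-1}\varphi<\sec\varphi$ and $\sin((q-1)\varphi)<\sin\varphi$, which in your notation read $(\cos\alpha)^{2-q}<1$ and $\sin((q-1)\alpha)<\sin\alpha$; your angle-addition and $\log\cos$ concavity step is valid but not needed.
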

\begin{proof}
On the imaginary axis,
\[
 \Re H_q\left(\mi r\right)=-\frac{r^2}{2}+\frac{A_q}{q}\left|r\right|^q.
\]
Its derivative on $(0,\infty)$ is
$r\{A_qr^{q-2}-1\}$, so it increases up to $u_q$ and then
decreases.  Evenness gives the second maximum.

For the horizontal direction put $v=x/u_q$.  Since
$A_qu_q^{q-2}=1$,
\begin{equation*}
 \Psi_q\left(v\right)\defeq\frac{v^2}{2}
 +\frac{\Re\left(1-\mi v\right)^q-1}{q}
 =\frac{\Re H_q\left(x+\mi u_q\right)-m_q}{u_q^2}.
\end{equation*}
The function is even.  For $v>0$, with $\varphi=\arctan v$,
\begin{equation*}
 \Psi_q'\left(v\right)=v-\left(1+v^2\right)^{\left(q-1\right)/2}
 \sin\left\{\left(q-1\right)\varphi\right\}.
\end{equation*}
If $q\leq1$, the second term displayed with the minus sign is
nonnegative.  If $1<q<2$, then
\[
 \left(1+v^2\right)^{\left(q-1\right)/2}\sin\left\{\left(q-1\right)\varphi\right\}
 <\sec\varphi\sin\varphi=v,
\]
because $0<(q-1)\varphi<\varphi<\pi/2$ and
$\sec^{q-1}\varphi<\sec\varphi$.  Hence $\Psi_q'(v)>0$ and
\eqref{eq:horizontal-strict} follows.  The quadratic behavior near zero
comes from $H_q''(\pm\mi u_q)=2-q$.  For large $|r|$ or $|x|$, the
quadratic term dominates the term of order $q<2$.
\end{proof}

Local phase convergence does not control the unbounded contour tails.
The following product bounds supply the required growth estimates. Put
\[
 \underline w\defeq u_q/2,\qquad \overline w\defeq 3u_q/2.
\]
\begin{lemma}\label{lem:product-bounds}
There are $T_0,C>0$ such that, for $T\ge T_0$,
\begin{equation}\label{eq:saddle-bracket}
 0<\underline{w}<w_T<\overline{w},
\end{equation}
and the following bounds hold.

\begin{enumerate}[label=(\roman*)]
\item For every $r\in\R\setminus\{0\}$,
\begin{equation}\label{eq:vertical-product}
 \Re\ell_T\left(\mi r\right)\leq C\left|r\right|^q.
\end{equation}

\item Uniformly for $\underline{w}\leq|v|\leq \overline{w}$ and $z=x+\mi v$,
\begin{equation*}
 \Re\ell_T\left(z\right)\geq-C\left(2+\left|x\right|\right)^q\log\left(2+\left|x\right|\right).
\end{equation*}

\item The same lower bound, with $|x|$ in place of $2+|x|$, holds for
$|x|$ large in the region used for the closing vertical segments
\begin{equation*}
 \underline{w}\leq\left|v\right|\leq \left|x\right|/\sqrt3.
\end{equation*}
\end{enumerate}
\end{lemma}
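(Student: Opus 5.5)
The saddle bracket \eqref{eq:saddle-bracket} follows from Lemma~\ref{lem:exact-saddle}: since $w_T\to u_q$, we have $w_T\in(u_q/2,3u_q/2)$ for all large $T$. The three product bounds are all statements about $\Re\ell_T$. We write
\[
\Re\ell_T(z)=\frac{\eps}{n}\log|z|+\int\log\left|1-\frac{z^2}{u^2}\right|\nu_T(\dif u)
\]
and estimate the integral by splitting the range of $u$. Throughout we use only the uniform measure bounds of Lemma~\ref{lem:measure-bounds}, namely \eqref{eq:count-bound}, \eqref{eq:small-bound} and \eqref{eq:tail-bound}; the weak convergence is never needed. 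Hence every constant is uniform in $T\ge T_0$.

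For (i), take $z=\mi r$, so each factor equals $\log(1+r^2/u^2)\ge0$. On $u\ge |r|$ we use $\log(1+s)\le s$, and \eqref{eq:tail-bound} with $R=|r|$ gives a contribution of at most $r^2\cdot C|r|^{q-2}=C|r|^q$. On $u<|r|$ we use
\[
\log(1+r^2/u^2)\le \log 2+2\log(|r|/u).
\]
Stieltjes integration by parts against $\nu_T((0,u])\le Cu^q$ then gives
\[
\int_{(0,|r|)}\log(|r|/u)\,\nu_T(\dif u)=\int_0^{|r|}\frac{\nu_T((0,u])}{u}\dif u\le C|r|^q .
\]
The constant-$\log 2$ part is bounded by $C|r|^q$ directly from \eqref{eq:count-bound}. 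The central term $\eps\log|r|/n$ is negligible for $|r|\ge1$. For small $|r|$ it is negative (or, when $n$ is large, bounded above by $C|r|^q$ after enlarging $C$, since $\log|r|\le |r|^q/q$).

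For (ii) and (iii) we need lower bounds, which is the delicate direction because of factors where $u$ is close to $|x|$. We use
\[
\left|1-\frac{z^2}{u^2}\right|=\frac{|u-z|\,|u+z|}{u^2}\ge\frac{|v|^2}{u^2}
\]
whenever $|\Im z|=|v|\ge\underline w$, which holds in both regions. The integral then splits into three ranges.
\begin{itemize}
\item Small $u\le \underline w/2$: here $|1-z^2/u^2|\ge (|z|^2/u^2)-1\ge c$, and indeed the logarithm is nonnegative up to a constant, so the contribution is bounded below by $-C$.
\item Large $u\ge 2|z|$: the power series gives $\log|1-w|\ge -C|w|^2$ for $|w|\le1/4$. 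With \eqref{eq:tail-bound}, this gives a contribution $\ge -C|z|^2(2|z|)^{q-2}=-C|z|^q$.
\item Intermediate $\underline w/2\le u\le 2|z|$: the crude bound gives $\log|1-z^2/u^2|\ge 2\log(\underline w/u)\ge -2\log(4|z|/\underline w)$. The mass of this range is at most $C(2|z|)^q$ by \eqref{eq:count-bound}. The contribution is therefore $\ge -C(2+|x|)^q\log(2+|x|)$, using $|z|\le |x|+\overline w$ in (ii).
\end{itemize}
The central term is $\ge \frac{\eps}{n}\log\underline w\ge -C$.

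For (iii) the same three ranges apply verbatim with $|z|\le 2|x|/\sqrt3$. Now $|x|$ is large, so additive constants are absorbed into $|x|^q\log|x|$.

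The main obstacle is the intermediate range in (ii)–(iii), where near-resonant roots $u\approx |x|$ can make individual factors small. The key step is the uniform lower bound $|v|/u$ on the distance to each root, which uses that the horizontal and closing contours stay at height at least $\underline w$. The logarithmic loss is then exactly the $\log(2+|x|)$ factor permitted in the statement.
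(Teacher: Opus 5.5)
Your proof is correct and follows essentially the paper's route. For (ii)--(iii) the paper also splits the roots at a fixed multiple of $U=2+|x|$ (resp.\ $|x|$), bounds each inner factor below by $-C\log U$ using $|\Im z|\geq\underline w$ and counts these factors with \eqref{eq:count-bound}, and controls the outer factors by $-C|z|^2/b^2$ together with \eqref{eq:tail-bound}; for (i) it compares the monotone sum directly with $\int_0^\infty\log(1+Cr^2x^{-2/q})\,\dif x$ via \eqref{eq:bj-comparison}, which is an equivalent alternative to your split at $u=|r|$.

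Two cosmetic fixes are needed. The inequality you actually use is $\log|1-w|\geq-C|w|$ for $|w|\leq1/4$ with $w=z^2/u^2$, not $-C|w|^2$. Also, \eqref{eq:tail-bound} is stated only for $R>1$, so you should note that it holds for all $R>0$ by integrating \eqref{eq:count-bound} by parts; this covers your applications at $R=|r|$ and $R=2|z|$.
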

\begin{proof}
From \eqref{eq:bj-comparison}, and because the summand below is decreasing,
\begin{align*}
 \frac1n\sum_{j\geq1}\log\left(1+\frac{r^2}{b_{j,T}^2}\right)
 &\leq \frac1n\sum_{j\geq1}
 \log\left\{1+Cr^2\left(n/j\right)^{2/q}\right\}\leq \int_0^\infty\log\left(1+Cr^2x^{-2/q}\right)\dif x
 =C_q\left|r\right|^q.
\end{align*}
For $|r|\leq1$, the central contribution
$\eps\log|r|/n$ is nonpositive; for $|r|>1$ it is absorbed by
$C|r|^q$.  This proves \eqref{eq:vertical-product}.

Since $w_T\to u_q$, increasing $T_0$ if necessary gives
\eqref{eq:saddle-bracket} with the displayed choices of
$\underline{w}$ and $\overline{w}$.

For the lower bound put $U=2+|x|$.  In case (ii), $|z|\leq C_0U$; the
same is true in the wedge after replacing $U$ by $|x|$.  Split the product
at $b_{j,T}=4C_0U$.  For $b\leq4C_0U$, the imaginary part keeps both
factors away from zero and
\[
 \log\left|1-\frac{z^2}{b^2}\right|
 =\log\left|b-z\right|+\log\left|b+z\right|-2\log b\geq-C\log U.
\]
There are at most $CnU^q$ such factors by
\eqref{eq:count-bound}.  For $b>4C_0U$, $|z|/b\leq1/4$ and
\[
 \log\left|1-z^2/b^2\right|\geq-C\left|z\right|^2/b^2.
\]
Using \eqref{eq:tail-bound}, the normalized sum of these terms is bounded
below by $-CU^2U^{q-2}=-CU^q$.  Finally,
$\eps\log|z|/n$ is bounded below uniformly because $|\Im z|\geq \underline{w}$.
The two lower bounds follow.
\end{proof}
Let $ M_T=\Re F_T(\mi w_T)=\Re F_T(-\mi w_T).$ By Lemmas~\ref{lem:phase-convergence} and \ref{lem:exact-saddle},$M_T\to m_q$.

Combining the limiting geometry with the product estimates gives global quadratic bounds for the finite-time phase. These bounds control both the contour deformation and its remainder.
\begin{lemma}\label{lem:global-geometry}
There are $T_0$ and $c_0>0$ such that for $T\geq T_0$, all $r,x\in\R$,
and $\varsigma\in\{-1,1\}$,
\begin{align}
 &M_T-\Re F_T\left(\mi r\right)
 \geq c_0\dist\left(r,\left\{-w_T,w_T\right\}\right)^2,
 \label{eq:global-vertical}\\
 &\Re F_T\left(x+\mi\varsigma w_T\right)-M_T\geq c_0x^2.
 \label{eq:global-horizontal}
\end{align}
When $\eps=1$, the left side of \eqref{eq:global-vertical} is interpreted
as $+\infty$ at $r=0$.
\end{lemma}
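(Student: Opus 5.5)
The plan is to split each inequality into a compact region near the saddles and a tail region, and to combine local $C^3$ convergence (Lemma~\ref{lem:phase-convergence}) with the exact saddle (Lemma~\ref{lem:exact-saddle}) near the saddles, and the uniform product bounds (Lemma~\ref{lem:product-bounds}) far away. Throughout, $M_T\to m_q$ and $w_T\to u_q$.

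\emph{Vertical estimate.} By evenness, treat only $r>0$. Along the imaginary axis the phase is explicit:
\[
\Re F_T(\mi r)=-\frac{r^2}{2}+\frac1n\Bigl\{\eps\log r+\sum_j\log\bigl(1+r^2/b_{j,T}^2\bigr)\Bigr\},
\qquad \frac{\dif}{\dif r}\Re F_T(\mi r)=r\{\mathcal Q_T(r)-1\}.
\]
This is exactly \eqref{eq:saddle-equation}. Since $\mathcal Q_T$ is strictly decreasing with $\mathcal Q_T(w_T)=1$, the function $\Re F_T(\mi r)$ increases on $(0,w_T)$ and decreases afterwards. To make this quantitative:
\begin{itemize}
\item On $[w_T,\infty)$, use $r(1-\mathcal Q_T(r))\ge r(1-\mathcal Q_T(r))-w_T(1-\mathcal Q_T(w_T))$. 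On a compact window $[\underline w,\overline w]$, $-\mathcal Q_T'$ is bounded below uniformly, because $-w\mathcal Q_T'(w)\to -w\mathcal G_q'(w)>0$ locally uniformly. For $r\ge\overline w$, use $1-\mathcal Q_T(r)\ge 1-\mathcal Q_T(\overline w)\ge c$. Integrating gives $c_0(r-w_T)^2$.
\item On $(0,w_T]$: near $w_T$ the same derivative bound applies. On $(0,\underline w]$, use $\mathcal Q_T(r)-1\ge \mathcal Q_T(\underline w)-1\ge c>0$. This gives a drop of at least $c(w_T^2-r^2)/2\ge c'\,(w_T-r)^2$ after integrating from $r$ to $\underline w$ and adding the window contribution.
\item For $\eps=1$ the $\log r$ term only helps near zero (value $-\infty$). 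For $\eps=0$ this also yields the phase gap at the critical point $0$ claimed in Lemma~\ref{lem:exact-saddle}.
\end{itemize}

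\emph{Horizontal estimate.} Write $\Re F_T(x+\mi\varsigma w_T)-M_T=\tfrac{x^2}{2}+\Re\ell_T(x+\mi\varsigma w_T)-\Re\ell_T(\mi\varsigma w_T)$. I use three regimes.
\begin{itemize}
\item Large $|x|\ge X_0$: Lemma~\ref{lem:product-bounds}(ii) gives $\Re\ell_T\ge -C(2+|x|)^q\log(2+|x|)$, and (i) bounds $\Re\ell_T(\mi w_T)\le C$. Since $q<2$, the term $x^2/2$ dominates, giving $\ge x^2/4$ once $X_0$ is fixed large.
\item Intermediate $\delta\le|x|\le X_0$: this is a compact subset of $\C\setminus\R$ after using $w_T\in(\underline w,\overline w)$. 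Uniform convergence of $\ell_T$ there, together with $w_T\to u_q$, reduces the claim to the strict inequality \eqref{eq:horizontal-strict} for $H_q$. By compactness that inequality gives a positive margin $\ge c\,x^2$, which survives for large $T$.
\item Small $|x|\le\delta$: Taylor expand around $\mi\varsigma w_T$. The first derivative vanishes exactly, since $F_T'(\pm\mi w_T)=0$; this is where the exact saddle avoids a linear term. Along the horizontal direction the second-order term is $\tfrac12\Re F_T''(\mi w_T)x^2$, where $F_T''(\mi w_T)$ is real and tends to $2-q>0$. The third-derivative remainder is uniformly $O(\delta)x^2$ by the $C^3$ convergence. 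Choosing $\delta$ small gives $\ge \tfrac{2-q}{4}x^2$.
\end{itemize}
The vertical small-$|r-w_T|$ case could be handled the same way, with curvature $-F_T''$ along the imaginary direction, but the monotonicity argument above already covers it.

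\emph{Main obstacle.} I expect the delicate step to be the uniformity of the intermediate horizontal regime, and in particular checking that the margin in Lemma~\ref{lem:limit-geometry} is genuinely quadratic near $x=0$ and linearly continuous in the saddle height. The phase is evaluated on the line $\Im z=w_T$ rather than $u_q$. I would handle this by comparing $\Re H_q(x+\mi w)-\Re H_q(\mi w)$ for $w$ in a small neighbourhood of $u_q$, which is continuous in $w$ and positive for $x\neq0$, and then absorbing the difference with the uniform $C^0$ convergence on the compact set $\{\delta\le|x|\le X_0,\ \underline w\le|v|\le\overline w\}$.
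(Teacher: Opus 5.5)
Your proposal is correct. For the horizontal inequality you follow the paper's three-regime argument exactly. Near the saddle you use the exact saddle together with $C^3$ convergence. For $\delta\le|x|\le X_0$ you use the strict limiting inequality \eqref{eq:horizontal-strict} with uniform convergence on a compact off-real set. For large $|x|$ you use Lemma~\ref{lem:product-bounds}.

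The step you flag as the main obstacle is routine. Uniform convergence of $\ell_T$ on $\{\delta\le|x|\le X_0,\ \underline w\le|\operatorname{Im} z|\le\overline w\}$, uniform continuity of $g_q$ there, and $w_T\to u_q$ reduce the margin to that of $H_q$ on the line $\operatorname{Im} z=u_q$.

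For the vertical inequality your route is more direct than the paper's. The paper reuses its local/compact/tail template here:
\begin{itemize}
\item Taylor expansion at $\pm\mi w_T$;
\item compact strictness from $\mathcal Q_T\to\mathcal G_q$;
\item a separate treatment of a neighbourhood of $r=0$, where off-real convergence is unavailable;
\item the product bound \eqref{eq:vertical-product} for large $|r|$.
\end{itemize}
You instead integrate $V_T'(r)=r\{\mathcal Q_T(r)-1\}$ against monotone bounds, using three inputs:
\begin{itemize}
\item a uniform lower bound for $-\mathcal Q_T'$ on $[\underline w,\overline w]$;
\item the endpoint gap $\mathcal Q_T(\underline w)-1\ge c$;
\item the endpoint gap $1-\mathcal Q_T(\overline w)\ge c$.
\end{itemize}
The first input is justified: differentiating \eqref{eq:saddle-equation} gives $-w\mathcal Q_T'(w)=F_T''(\mi w)-1+\mathcal Q_T(w)$, and by the $C^2$ part of Lemma~\ref{lem:phase-convergence} this tends to $A_q(2-q)w^{q-2}>0$ uniformly on the window. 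Monotonicity of $\mathcal Q_T$ then propagates the two endpoint gaps to all of $(0,\underline w]$ and $[\overline w,\infty)$.

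This gives the global quadratic bound in one stroke. It covers $r\to0$ for both values of $\eps$ with no extra argument and does not need \eqref{eq:vertical-product}. The paper's version has the advantage of treating both directions by a single scheme. That scheme is needed anyway on the horizontal line, where no comparable monotone structure is available.
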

\begin{proof}
Write $V_T(r)=\Re F_T(\mi r)$.  From
\eqref{eq:saddle-equation}, for $r>0$,
\begin{equation}\label{eq:vertical-derivative}
 V_T'\left(r\right)=r\left\{\mathcal Q_T\left(r\right)-1\right\}.
\end{equation}
Thus $V_T$ increases on $(0,w_T)$ and decreases on $(w_T,\infty)$;
the negative half-axis is symmetric.  The $C^3$ convergence near the two
saddles and \eqref{eq:finite-second-derivative} give, for fixed small
$\delta>0$,
\begin{equation*}
 M_T-V_T\left(\varsigma w_T+r\right)\geq c r^2,\quad
 \Re F_T\left(x+\mi\varsigma w_T\right)-M_T\geq cx^2
\end{equation*}
whenever $|r|,|x|\leq\delta$.

On a fixed compact set outside these neighborhoods, the vertical strictness
follows from \eqref{eq:vertical-derivative} and the convergence
$\mathcal Q_T\to \mathcal G_q$.  A small neighborhood of zero is not covered by compact
off-real convergence.  There, \eqref{eq:vertical-product} gives
$V_T(r)\leq-r^2/2+C|r|^q$; alternatively, monotonicity bounds it by its
value at the endpoint of that neighborhood.  Since $M_T\to m_q>0$, the
gap remains positive after the neighborhood is chosen sufficiently small.
The horizontal compact gap follows from Lemma~\ref{lem:limit-geometry},
Lemma~\ref{lem:phase-convergence}, and $w_T\to u_q$.
On these compact regions, the quantities
$\dist(r,\{-w_T,w_T\})^2$ and $x^2$ are bounded.  Dividing the uniform
positive gaps by their respective upper bounds therefore turns the compact
estimates into the quadratic form required in
\eqref{eq:global-vertical}--\eqref{eq:global-horizontal}.

For large $|r|$, \eqref{eq:vertical-product} yields
\[
 V_T\left(r\right)\leq-r^2/2+C\left|r\right|^q\leq-r^2/4.
\]
For $z=x+\mi\varsigma w_T$, Lemma~\ref{lem:product-bounds} and
$q<2$ give
\[
 \Re F_T\left(z\right)\geq\frac{x^2-w_T^2}{2}
 -C\left(2+\left|x\right|\right)^q\log\left(2+\left|x\right|\right)\geq x^2/4
\]
for large $|x|$.  Combining the local quadratic estimates, the positive
compact gaps, and these tail estimates, and decreasing the constant if
necessary, proves \eqref{eq:global-vertical}--\eqref{eq:global-horizontal}.
\end{proof}
\begin{remark}
Qualitative phase convergence only gives $F_T'(\mi u_q)=o(1)$.
A contour centered at the limiting saddle would require
$F_T'(\mi u_q)=o(n_T^{-1/2})$ to control its linear term on the
Gaussian scale $n_T^{-1/2}$. Passing through the exact saddle
$\mi w_T$, where the derivative vanishes, avoids this rate condition.
\end{remark}
\subsection{Equal-time contour deformation}
For fixed $X,Y\in\R$, scaling $z=L\zeta$, $w=L\omega$ in
\eqref{eq:kernel-input} with $S=U=T$ gives the exact identity
\begin{equation}\label{eq:scaled-equal-kernel}
 \sigma_T\BL_{\bxi,\Gamma}\left(\left(T,\sigma_TX\right),\left(T,\sigma_TY\right)\right)
 =\frac{\me^{c_T\left(X,Y\right)}}{\left(2\pi\mi\right)^2}
 \int_{\Gamma/L}\!\dif\zeta\int_{\mi\R}\!\dif\omega\,
 \frac{\me^{n\left\{F_T\left(\omega\right)-F_T\left(\zeta\right)\right\}-Y\omega+X\zeta}}
 {\omega-\zeta},
\end{equation}
where $ c_T(X,Y)=\frac{Y^2-X^2}{2n}$. Indeed, the prefactor is $\sigma_TL^2/(TL)=\sigma_TL/T=1$, while
$L^2/T=n$, $L\sigma_T/T=1$, and $\sigma_T^2/T=1/n$.

Let $\mathcal{C}_T^+=\R+\mi w_T$ and $\mathcal{C}_T^-=\R-\mi w_T,$ with $\mathcal{C}_T^+$ oriented from right to left and $\mathcal{C}_T^-$ from left to right.
Put $\delta_T=L^{-1}$ and, for large $T$ (so $\delta_T<w_T$),
\begin{equation*}
 I_T\defeq \left[-w_T,-\delta_T\right]\cup\left[\delta_T,w_T\right].
\end{equation*}

Both the original and deformed integrals use the convention of
Definition~\ref{def def of K} at transverse intersections.
We deform the equal-time kernel onto the horizontal lines through the exact saddles. This separates the crossed residue from a double integral that will vanish in the limit.
\begin{proposition}\label{prop:deformation}
The right side of \eqref{eq:scaled-equal-kernel} equals
\begin{equation*}
 R_T\left(X,Y\right)+J_T\left(X,Y\right),
\end{equation*}
where
\begin{align}
 R_T\left(X,Y\right)&\defeq \frac{\me^{c_T\left(X,Y\right)}}{2\pi}
 \int_{I_T}\me^{\mi k\left(X-Y\right)}\dif k,
 \label{eq:finite-residue}\\
 J_T\left(X,Y\right)&\defeq \frac{\me^{c_T\left(X,Y\right)}}{\left(2\pi\mi\right)^2}
 \sum_{\varsigma=\pm1}\int_{\mathcal{C}_T^\varsigma}\!\dif\zeta
 \int_{\mi\R}\!\dif\omega\,
 \frac{\me^{n\left\{F_T\left(\omega\right)-F_T\left(\zeta\right)\right\}-Y\omega+X\zeta}}
 {\omega-\zeta}.
 \label{eq:deformed-remainder}
\end{align}
\end{proposition}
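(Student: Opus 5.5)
The plan is to deform the scaled $\zeta$-contour $\Gamma/L$ onto the two horizontal lines $\mathcal C_T^\pm$ while keeping the $\omega$-line on $\mi\R$. The residue at $\zeta=\omega$ is collected for exactly those $\omega=\mi k$ that lie between the old and the new contour.

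The first step is to record which poles lie in the region swept out. The $\zeta$-integrand has poles at the rescaled roots $\pm b_{j,T}$ and possibly at $0$; all of these are real, and $\me^{-nF_T(\zeta)}$ carries them. It also has the pole $\zeta=\omega$. The scaled contour $\Gamma/L$ has horizontal pieces at heights $\pm1/L=\pm\delta_T$ and sloped rays. The target lines sit at heights $\pm w_T$, and $w_T>\delta_T$ for large $T$. Moving the upper component of $\Gamma/L$ up to $\mathcal C_T^+$ therefore sweeps only the strip $\{\delta_T<\Im\zeta<w_T\}$, together with the region between the sloped rays and the line. This region contains no real roots, so the only pole crossed is $\zeta=\omega=\mi k$ with $\delta_T<k<w_T$. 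The lower component is symmetric and gives $-w_T<k<-\delta_T$. Orientations are preserved: the upper contour runs right to left in both cases, and the lower one left to right.

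The second step is to compute the residue for fixed $\omega=\mi k$ off the crossing values. The residue of $1/(\omega-\zeta)$ in $\zeta$, taken with the orientation, contributes $\me^{-Y\omega+X\omega}$ with the exponential phase equal to $1$. After the $(2\pi\mi)^{-2}$ prefactor and $\dif\omega=\mi\,\dif k$, this gives $\frac{1}{2\pi}\me^{\mi k(X-Y)}$ integrated over $I_T$. The signs must be checked against Lemma~\ref{lemma finite contour correction}: there the sign convention produces $+\Rc_\Gamma$ for $|u|<1$, and the same orientation reasoning should give $+R_T$ here. Integrating over $k$ yields \eqref{eq:finite-residue}, and what remains is \eqref{eq:deformed-remainder}.

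The third step is to justify the deformation at infinity. For fixed $T$, truncate at $|\Re\zeta|=\Lambda$ and close with the vertical segments between the sloped ray and the horizontal line. These segments are exactly the region in Lemma~\ref{lem:product-bounds}(iii). On them, $\Re F_T(\zeta)\ge \Re(\zeta^2)/2 - C|x|^q\log|x|$, and since $\Re\zeta^2\ge x^2-x^2/3$, the term $\me^{-n\Re F_T}$ decays like a Gaussian. The $\omega$-integral is controlled by \eqref{eq:global-vertical}. The closing integrals therefore vanish as $\Lambda\to\infty$.

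The main obstacle is the joint two-dimensional interpretation at the crossings. Before the deformation the crossings sit at $(\zeta,\omega)=(\pm\mi\delta_T,\pm\mi\delta_T)$, and afterwards at $\pm\mi w_T$. The residue identity holds for a.e.\ fixed $k$, but exchanging it with the $k$-integral needs domination. To handle this, I would excise small squares $|\Re\zeta|<\rho$, $|k\mp \delta_T|<\rho$ (and likewise at $w_T$) and use the $1/r$ majorant of Lemma~\ref{integralbiliylemma}. This majorant is valid here because the phase factors are locally bounded by \eqref{eq:global-vertical}--\eqref{eq:global-horizontal} and by local boundedness near $\pm\mi\delta_T$. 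The excised mass is then $O(\rho)$ on both sides. After the fixed-$k$ identity is integrated over the complement, letting $\rho\downarrow0$ gives the claim.
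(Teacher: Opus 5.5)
Your proposal is correct and follows essentially the same route as the paper: truncation at $|\Re\zeta|=\Lambda$ with vertical connectors controlled by Lemma~\ref{lem:product-bounds}(iii) and \eqref{eq:global-vertical}, a fixed-$\omega$ residue comparison of the old and new contours, and two-dimensional integrability of the $1/r$ singularity at the crossings. The paper does the pole bookkeeping with winding numbers rather than a swept region, since the two contours are not nested (the sloped rays cross the line $\mathcal C_T^+$). This also settles the sign you left to analogy: the old winding number minus the new one is $-1$ exactly on $I_T$, and $\operatorname{Res}_{\zeta=\omega}(\omega-\zeta)^{-1}=-1$, so with $\dif\omega=\mi\,\dif k$ the crossed residue is indeed $+R_T$.
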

\begin{proof}
The upper component of $\Gamma/L$ crosses the imaginary axis at
$\mi \delta_T$, and the lower component at $-\mi \delta_T$.  Truncate each old and
new component at $\Re z=\pm R$.  In the upper and lower half-planes,
respectively, join the endpoints by vertical segments.  No zero of
$\mathfrak E_{\bxi}(Lz)$ is crossed because all zeros are real.

We first justify sending $R$ to infinity.  On a joining segment,
$|\Re z|=R$ and $\underline{w}\leq|\Im z|\leq R/\sqrt3$, so
\[
 \Re\left(z^2/2\right)\geq R^2/3,\quad
 \Re\ell_T\left(z\right)\geq-CR^q\log R
\]
by Lemma~\ref{lem:product-bounds}.  Hence
$\Re F_T(z)\geq R^2/4$ for large $R$, uniformly in large $T$.  The same
estimate holds on the sloped tails of the old contour.  Together with
\eqref{eq:global-vertical}, it bounds the absolute value of the closing
double integrals by a constant times
\[
 \me^{-cnR^2+C R}\int_\R \me^{-cn\dist\left(r,\left\{-w_T,w_T\right\}\right)^2}\dif r,
\]
Here $M_T$ is uniformly bounded. After choosing $R$ so large that
$M_T-R^2/4\leq-cR^2$ uniformly in large $T$, the term $nM_T$ is absorbed
into the negative quadratic contribution. The
displayed bound tends to zero as $R\to\infty$.  This also proves convergence of all
unbounded contour integrals used in the deformation.

For fixed $\omega=\mi r$ with
$r\notin\{-w_T,-\delta_T,\delta_T,w_T\}$, compare the old and new contours
by their winding numbers; this does not require their unbounded enclosed
regions to be nested. The new winding number about $\omega$ is one precisely
when $|r|<w_T$, whereas the old winding number is one precisely when
$|r|<\delta_T$. Thus their winding-number difference is supported exactly on
$r\in I_T$, where
\[
 \operatorname{Ind}_{\mathrm{old}}\left(\omega\right)
 -\operatorname{Ind}_{\mathrm{new}}\left(\omega\right)=-1.
\]
The four excluded endpoint values form a Lebesgue-null set and do not affect
the residue integral.
Since
\[
 \operatorname*{Res}_{\zeta=\omega}\frac{1}{\omega-\zeta}=-1,
\]
the counterclockwise orientations give
``old = new + crossed residue.''  At the pole the large phase cancels and
the remaining exponential is $\me^{(X-Y)\omega+c_T}$.  Writing
$\omega=\mi k$ produces \eqref{eq:finite-residue} with a positive sign.

Finally, near a same-saddle intersection write
$\omega=\mi(w_T+r)$ and $\zeta=x+\mi w_T$.  Then
$|\omega-\zeta|=(x^2+r^2)^{1/2}$, whose reciprocal is integrable in two
dimensions.  This proves the stated improper-integral interpretation and
completes the deformation.
\end{proof}

The global phase bounds show that the deformed double integral is of order $n_T^{-1/2}$. The residue therefore determines the limiting kernel.
\begin{proposition}
\label{prop:remainder}
For every compact $B\subset\R^2$,
\begin{equation}\label{eq:remainder-rate}
 \sup_{\left(X,Y\right)\in B}\left|J_T\left(X,Y\right)\right|=O\left(n_T^{-1/2}\right).
\end{equation}
\end{proposition}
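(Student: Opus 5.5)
We bound $J_T$ by absolute values, using the global phase geometry of Lemma~\ref{lem:global-geometry}. First, for $(X,Y)\in B$ the prefactor $\me^{c_T(X,Y)}$ is bounded. Second, the linear factors are harmless: on $\mathcal C_T^\varsigma$ we have $|\me^{X\zeta}|=\me^{X\Re\zeta}$, and on $\mi\R$ we have $|\me^{-Y\omega}|=1$. Parametrize $\zeta=x+\mi\varsigma w_T$ and $\omega=\mi r$. Then \eqref{eq:global-vertical} and \eqref{eq:global-horizontal} give
\[
 \left|\me^{n\{F_T(\omega)-F_T(\zeta)\}}\right|
 \le \exp\left\{-c_0n\left(x^2+\dist\left(r,\left\{-w_T,w_T\right\}\right)^2\right)\right\},
\]
because the $M_T$ terms cancel. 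The absolute integrand is therefore dominated by
\[
 C\,\frac{\me^{C|x|-c_0n(x^2+\dist(r,\{\pm w_T\})^2)}}{\left|\mi r-x-\mi\varsigma w_T\right|}.
\]
For $n$ large, $\me^{C|x|}$ is absorbed into half the Gaussian.

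Next we split the $(x,r)$ integral by the position of $r$. We work near the same-saddle crossing, $r$ close to $\varsigma w_T$. Put $\beta=r-\varsigma w_T$, so the denominator equals $(x^2+\beta^2)^{1/2}$. The local contribution is then bounded by
\[
 \int_{\R^2}\frac{\me^{-c n(x^2+\beta^2)}}{\sqrt{x^2+\beta^2}}\,\dif x\,\dif\beta
 =2\pi\int_0^\infty \me^{-cn\varrho^2}\,\dif\varrho
 =O\left(n^{-1/2}\right),
\]
which is exactly the scaling of \eqref{eq crossing neighborhood} and \eqref{eq:crossing-dominating-integral}. Away from this crossing, in particular for $r$ near the opposite saddle $-\varsigma w_T$, the denominator satisfies $|\omega-\zeta|\ge|r-\varsigma w_T|\ge c\,w_T\ge c\,\underline w$ by \eqref{eq:saddle-bracket}. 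There the integral factorizes into two one-dimensional Gaussian integrals, each $O(n^{-1/2})$, so that contribution is $O(n^{-1})$.

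The one delicate region is $r$ near $0$. For $\eps=1$, \eqref{eq:global-vertical} is interpreted as $+\infty$ at $r=0$, so the bound is only helped. In all cases the vertical gap there is a positive constant, so the contribution is exponentially small.

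Summing over $\varsigma=\pm1$ gives $\sup_B|J_T|\le Cn_T^{-1/2}$, with constants uniform in $(X,Y)\in B$, because $B$ enters only through the bounded linear exponentials.

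The main obstacle is the crossing singularity. The absolute bound must keep the $1/\sqrt{x^2+\beta^2}$ kernel and the Gaussian together, so that polar coordinates yield $n^{-1/2}$ rather than a $\log n$ loss. The remaining point to check is that the crude bound $\me^{C|x|}$ does not spoil the quadratic gap. This holds because \eqref{eq:global-horizontal} is global and uniform for $T\ge T_0$.
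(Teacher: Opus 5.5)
Your proposal is correct and follows essentially the paper's argument. You bound the integrand absolutely using the global phase bounds, let the $M_T$ terms cancel, and absorb $\me^{C|x|}$ into the Gaussian. On the part nearest the same-branch saddle you use polar coordinates to get $O(n^{-1/2})$, and on the part nearest the opposite saddle you use a lower bound on the denominator and two Gaussian integrals to get $O(n^{-1})$.

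The paper splits exactly along the half-lines $\varsigma r\ge0$ and $\varsigma r<0$, so that $\dist(r,\{-w_T,w_T\})=|r-\varsigma w_T|$ on the first. This makes your separate discussion of $r$ near $0$ unnecessary, though it is harmless.
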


\begin{proof}
On the branch $\mathcal{C}_T^\varsigma$, write
\[
 \omega=\mi r,\quad \zeta=x+\mi\varsigma w_T,\quad
 \Delta_T^{\mathrm v}\left(r\right)\defeq M_T-\Re F_T\left(\mi r\right),\quad
 \Delta_{T,\varsigma}^{\mathrm h}\left(x\right)
 \defeq \Re F_T\left(x+\mi\varsigma w_T\right)-M_T.
\]
For $X,Y$ in a fixed compact set, the absolute value of the integrand in
\eqref{eq:deformed-remainder} is bounded by
\begin{equation*}
 C\frac{\exp\left\{-n\left[\Delta_T^{\mathrm v}\left(r\right)
 +\Delta_{T,\varsigma}^{\mathrm h}\left(x\right)\right]+C\left|x\right|\right\}}
 {\left\{x^2+\left(r-\varsigma w_T\right)^2\right\}^{1/2}}.
\end{equation*}
By Lemma~\ref{lem:global-geometry},
\begin{equation}\label{eq:AB-bound}
 \Delta_T^{\mathrm v}\left(r\right)\geq c\dist\left(r,\left\{-w_T,w_T\right\}\right)^2,
 \quad \Delta_{T,\varsigma}^{\mathrm h}\left(x\right)\geq cx^2.
\end{equation}

For each branch $\varsigma$, split the vertical variable into the
half-lines $\varsigma r\ge0$ and $\varsigma r<0$. On the first, the
nearest saddle is $\varsigma w_T$. With $s=r-\varsigma w_T$, its
contribution is bounded by
\[
 C\int_{\mathbb R^2}
 \frac{\me^{-cn\left(x^2+s^2\right)+C\left|x\right|}}
 {\sqrt{x^2+s^2}}\dif x\dif s
 =O\left(n^{-1/2}\right).
\]
Indeed, completing the square bounds the exponential by
$C\me^{-cn(x^2+s^2)/2}$, and polar coordinates give the stated order.
On the other half-line, $|r-\varsigma w_T|\ge w_T\ge\underline w$,
while the nearest saddle is $-\varsigma w_T$. The two Gaussian
integrations therefore give $O(n^{-1})$. Summing over the branches
proves \eqref{eq:remainder-rate}.
\end{proof}

The residue now gives the equal-time sine kernel. The vanishing correction transfers this limit to the full kernel.
\begin{proposition}\label{prop:equal-time}
The convergence \eqref{eq:equal-time-limit} holds locally uniformly in $(X,Y)\in\R^2$.
\end{proposition}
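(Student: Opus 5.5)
We assemble the equal-time limit from the pieces already in place. The full kernel at equal times is $\K_{\bxi}((T,\sigma_TX),(T,\sigma_TY))=\BL_{\bxi,\Gamma}+\Rc_\Gamma$, because the heat term $\indi_{S>U}$ vanishes when $S=U=T$. Multiply by $\sigma_T$ and treat the two summands separately.

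The correction term is handled by Lemma~\ref{lem:section7-correction-negligible} with $s=t=0$. It gives $\sigma_T\Rc_\Gamma=O(L_T^{-1})$ uniformly on compact sets of $(X,Y)$. For the contour part, the scaling identity \eqref{eq:scaled-equal-kernel} and Proposition~\ref{prop:deformation} give
\[
\sigma_T\BL_{\bxi,\Gamma}=R_T(X,Y)+J_T(X,Y).
\]
Proposition~\ref{prop:remainder} gives $\sup_B|J_T|=O(n_T^{-1/2})$, and this tends to zero because $n_T\to\infty$ by Lemma~\ref{lem:scale}.

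It remains to show that the residue term $R_T$ converges to the sine kernel locally uniformly. From \eqref{eq:finite-residue},
\[
R_T(X,Y)=\frac{\me^{c_T(X,Y)}}{2\pi}\left(\int_{-w_T}^{w_T}\me^{\mi k(X-Y)}\dif k-\int_{-\delta_T}^{\delta_T}\me^{\mi k(X-Y)}\dif k\right).
\]
We check the three pieces in turn.
\begin{itemize}
\item The factor $\me^{c_T(X,Y)}$ tends to $1$ uniformly on compacts, since $c_T=(Y^2-X^2)/(2n_T)$.
\item The subtracted integral is at most $2\delta_T=2L_T^{-1}$ in absolute value.
\item The main integral is $\int_{-w_T}^{w_T}\me^{\mi k(X-Y)}\dif k$. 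By Lemma~\ref{lem:exact-saddle}, $w_T\to u_q$, and the integrand has modulus $1$. So this integral differs from $\int_{-u_q}^{u_q}\me^{\mi k(X-Y)}\dif k$ by at most $2|w_T-u_q|$, uniformly in $(X,Y)$.
\end{itemize}
Hence $R_T(X,Y)$ converges to
\[
\frac1{2\pi}\int_{-u_q}^{u_q}\me^{\mi k(X-Y)}\dif k=\frac{\sin\{u_q(X-Y)\}}{\pi(X-Y)},
\]
and this limit is continuous, taking the value $u_q/\pi=\rho_q$ at $X=Y$. Summing the three contributions proves \eqref{eq:equal-time-limit} locally uniformly.

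The only point needing care is the uniformity of each error term on compact sets of $(X,Y)$. This is explicit in Lemma~\ref{lem:section7-correction-negligible} and Proposition~\ref{prop:remainder}. It also holds for the residue term, because the $X,Y$ dependence there enters only through the unimodular factor $\me^{\mi k(X-Y)}$ and the bounded factor $\me^{c_T}$. No step looks like a genuine obstacle, since the heavy analysis was done in the deformation and remainder estimates.
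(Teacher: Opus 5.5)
Your proposal is correct and follows the paper's proof essentially step for step. You use the same ingredients: the splitting $\sigma_T\BL_{\bxi,\Gamma}=R_T+J_T$ from Propositions~\ref{prop:deformation} and~\ref{prop:remainder}, the residue limit via $w_T\to u_q$, $\delta_T\to0$ and $c_T\to0$, Lemma~\ref{lem:section7-correction-negligible} for $\Rc_\Gamma$, and the absence of the heat term at equal times. The only difference is cosmetic: the paper evaluates $R_T$ in closed form as $\me^{c_T}\{\sin(w_Td)-\sin(\delta_Td)\}/(\pi d)$, whereas you bound the change in the $k$-integral directly, which gives the same uniform estimate.
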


\begin{proof}
Propositions~\ref{prop:deformation} and \ref{prop:remainder} show that the
scaled contour integral differs locally uniformly from its residue by a
quantity tending to zero. If $d=X-Y$, \eqref{eq:finite-residue} is
\begin{equation*}
 R_T\left(X,Y\right)=\me^{c_T\left(X,Y\right)}
 \frac{\sin\left(w_Td\right)-\sin\left(\delta_Td\right)}{\pi d},
\end{equation*}
with continuous value
$\me^{c_T(X,X)}(w_T-\delta_T)/\pi$ at $d=0$.  Since
$w_T\to u_q$, $\delta_T=L_T^{-1}\to0$, and $c_T\to0$ locally uniformly,
the right side converges locally uniformly to the sine kernel in
\eqref{eq:equal-time-limit}. Thus the same limit holds for
$\sigma_T\BL_{\bxi,\Gamma}$. By
Lemma~\ref{lem:section7-correction-negligible}, replacing the contour
integral by the analytic part $\BL_{\bxi}$ changes the scaled kernel by a locally
uniform $o(1)$. At equal times the heat term is absent, so
$\K_{\bxi}=\BL_{\bxi}$, which proves the proposition.
\end{proof}

\subsection{Convergence at several times}
We now prove the analytic-kernel limit at several times. Fix $s,t\in\R$ and set
\begin{equation*}
 T_s=T+\sigma_T^2s,\quad T_t=T+\sigma_T^2t,\quad
 \lambda_{T,\tau}\defeq \frac{T}{T_\tau}
 =\left(1+\frac{\tau}{n}\right)^{-1}.
\end{equation*}
For $\tau$ in a compact set these quantities are well defined for all
large $T$ and $\lambda_{T,\tau}\to1$ uniformly.

We next write the scaled analytic kernel at several times as a perturbation of the equal-time phase. The exact formula isolates the time-dependent terms that must be controlled.
\begin{lemma}\label{lem:multitime-formula}
With $\lambda_s=\lambda_{T,s}$ and $\lambda_t=\lambda_{T,t}$,
\begin{equation*}\begin{aligned}
 &\sigma_T\BL_{\bxi,\Gamma}\left(\left(T_s,\sigma_TX\right),\left(T_t,\sigma_TY\right)\right)=\frac{\sqrt{\lambda_s\lambda_t}}{\left(2\pi\mi\right)^2}
 \exp\left\{\frac{\lambda_tY^2-\lambda_sX^2}{2n}\right\}
 \int_{\Gamma/L}\!\dif\zeta\int_{\mi\R}\!\dif\omega\,
 \frac{\me^{\mathcal S_T\left(\zeta,\omega\right)}}{\omega-\zeta}.
\end{aligned}\end{equation*}
where
\begin{equation}\label{eq:multitime-action}
 \mathcal S_T\left(\zeta,\omega\right)
 \defeq n\left\{F_T\left(\omega\right)-F_T\left(\zeta\right)\right\}
 -\frac{t\lambda_t}{2}\omega^2
 +\frac{s\lambda_s}{2}\zeta^2
 -\lambda_tY\omega+\lambda_sX\zeta.
\end{equation}
\end{lemma}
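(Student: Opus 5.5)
This is a direct change of variables in \eqref{eq:kernel-input}. The plan is to repeat the computation behind \eqref{eq:scaled-equal-kernel}, now with unequal times $S=T_s$ and $U=T_t$, and to write every heat exponent relative to the reference time $T$. First substitute $z=L\zeta$ and $w=L\omega$, so that $\dif z\,\dif w=L^2\dif\zeta\,\dif\omega$ and $(w-z)^{-1}=L^{-1}(\omega-\zeta)^{-1}$. The contours become $\Gamma/L$ and $\mi\R$, and the $\mi\R$ contour is invariant. The ratio $\mathfrak E_{\bxi}(L\omega)/\mathfrak E_{\bxi}(L\zeta)$ equals $\exp\{n(\ell_T(\omega)-\ell_T(\zeta))\}$, because the constant $\eps\log L/n$ cancels, as noted after \eqref{eq:finite-phase}. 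The branch choices are the ones already used in the equal-time identity, so they need no further comment.

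Next, expand the Gaussian exponents. The $\omega$-exponent is
\[
\frac{(L\omega-\sigma_TY)^2}{2T_t}=\frac{L^2\lambda_t}{2T}\omega^2-\frac{L\sigma_T\lambda_t}{T}Y\omega+\frac{\sigma_T^2\lambda_t}{2T}Y^2 .
\]
With $L^2/T=n$, $L\sigma_T/T=1$ and $\sigma_T^2/T=1/n$, this becomes $\tfrac{n\lambda_t}{2}\omega^2-\lambda_tY\omega+\tfrac{\lambda_t}{2n}Y^2$. The key algebraic step is to split $n\lambda_t=n-t\lambda_t$. This follows from $\lambda_t^{-1}=1+t/n$, which gives $n(1-\lambda_t)=t\lambda_t$. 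The term $n\omega^2/2$ then joins $n\ell_T(\omega)$ to form $nF_T(\omega)$, and the remainder is $-\tfrac{t\lambda_t}{2}\omega^2$. The $\zeta$-exponent carries a minus sign and is treated symmetrically. It gives $-nF_T(\zeta)$ together with the terms $+\tfrac{s\lambda_s}{2}\zeta^2$, $+\lambda_sX\zeta$ and $-\tfrac{\lambda_s}{2n}X^2$. Collecting the $\omega$- and $\zeta$-dependent terms reproduces exactly $\mathcal S_T$ in \eqref{eq:multitime-action}. The constant terms give the prefactor $\exp\{(\lambda_tY^2-\lambda_sX^2)/(2n)\}$.

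Finally, check the scalar prefactor. The factor $\sigma_T(T_sT_t)^{-1/2}L^2L^{-1}$ equals $\sigma_T L\,T^{-1}\sqrt{\lambda_s\lambda_t}=\sqrt{\lambda_s\lambda_t}$, again using $\sigma_TL=T$. Convergence of the rescaled integral, including the joint interpretation at contour crossings, is unchanged, because the substitution is a linear bijection of the integration parameters. The only real point to watch is the splitting $n\lambda_\tau=n-\tau\lambda_\tau$, with its sign conventions for the $\zeta$-term; the rest is bookkeeping.
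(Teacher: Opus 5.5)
Your proposal is correct and follows the paper's own argument. Both use the rescaling $z=L\zeta$, $w=L\omega$, the prefactor identity $\sigma_T L/\sqrt{T_sT_t}=T/\sqrt{T_sT_t}=\sqrt{\lambda_s\lambda_t}$, and the expansion of the heat exponents via $L^2/T=n$, $L\sigma_T/T=1$, $\sigma_T^2/T=1/n$. The paper then uses $n(\lambda_\tau-1)=-\tau\lambda_\tau$, your $n\lambda_\tau=n-\tau\lambda_\tau$, to absorb $\pm n(\cdot)^2/2$ into $nF_T$, and your sign bookkeeping for the $\zeta$-term matches its display.
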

\begin{proof}
After $z=L\zeta$, $w=L\omega$, the prefactor in
\eqref{eq:kernel-input}, including the outer $\sigma_T$, is
\[
 \frac{\sigma_TL}{\sqrt{T_sT_t}}
 =\frac{T}{\sqrt{T_sT_t}}=\sqrt{\lambda_s\lambda_t}.
\]
The two quadratic exponents become
\begin{align*}
 \frac{\left(L\omega-\sigma_TY\right)^2}{2T_t}
 -\frac{\left(L\zeta-\sigma_TX\right)^2}{2T_s}
 &=\frac n2\left(\lambda_t\omega^2-\lambda_s\zeta^2\right)
 -\lambda_tY\omega+\lambda_sX\zeta+\frac{\lambda_tY^2-\lambda_sX^2}{2n}.
\end{align*}
Since
\begin{equation*}
 n\left(\lambda_{T,\tau}-1\right)=-\tau\lambda_{T,\tau},
\end{equation*}
combining this expression with
$n\{\ell_T(\omega)-\ell_T(\zeta)\}$ gives exactly
\eqref{eq:multitime-action}.
\end{proof}

On compact sets of rescaled times, these additional terms are absorbed by the phase estimates. The same deformation then gives the several-time analytic limit.
\begin{proposition}\label{prop:multitime}
The convergence \eqref{eq:analytic-convergence} holds locally uniformly in
$(s,t,X,Y)\in\mathbb{R}^4$.
\end{proposition}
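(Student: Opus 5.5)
Starting from the exact representation of Lemma~\ref{lem:multitime-formula}, the plan is to repeat the equal-time deformation of Proposition~\ref{prop:deformation} with the same contours $\mathcal C_T^\pm=\R\pm\mi w_T$ and the same vertical line $\mi\R$, treating the extra terms in $\mathcal S_T$ as bounded perturbations. For $(s,t,X,Y)$ in a compact set $\mathcal B$, the perturbation
\[
 P_T(\zeta,\omega)\defeq-\frac{t\lambda_t}{2}\omega^2+\frac{s\lambda_s}{2}\zeta^2-\lambda_tY\omega+\lambda_sX\zeta
\]
has coefficients bounded uniformly in $T$. It is at most quadratic, with an $O(1)$ coefficient, whereas the leading phase carries the factor $n=n_T\to\infty$. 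Hence on the closing vertical segments and the sloped tails the bound $\Re F_T(z)\ge R^2/4$ from Lemma~\ref{lem:product-bounds} still dominates $|P_T|\le C(|\zeta|^2+|\omega|^2+1)$ once $n\ge n_0(\mathcal B)$. The closing segments therefore still vanish as $R\to\infty$, and the deformation is legitimate exactly as before.

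The crossed residue is computed as in Proposition~\ref{prop:deformation}. At $\zeta=\omega=\mi k$ the $n$-terms cancel and the integrand becomes
\[
 \exp\Bigl\{\tfrac{(t\lambda_t-s\lambda_s)k^2}{2}+\mi k(\lambda_sX-\lambda_tY)\Bigr\},
\]
so the residue is
\[
 R_{T;s,t}(X,Y)=\frac{\sqrt{\lambda_s\lambda_t}}{2\pi}\,\me^{(\lambda_tY^2-\lambda_sX^2)/(2n)}\int_{I_T}\me^{(t\lambda_t-s\lambda_s)k^2/2+\mi k(\lambda_sX-\lambda_tY)}\,\dif k.
\]
Since $\lambda_{T,\tau}\to1$ uniformly on compacts, $w_T\to u_q$ and $\delta_T\to0$, and the integrand is bounded on $|k|\le\overline w$, this converges locally uniformly in all four variables to $\frac1{2\pi}\int_{-u_q}^{u_q}\me^{(t-s)k^2/2+\mi k(Y-X)}\dif k$. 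The sign of $t-s$ plays no role here because $k$ ranges over a bounded set.

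For the remainder, on the branch $\mathcal C_T^\varsigma$ with $\omega=\mi r$ and $\zeta=x+\mi\varsigma w_T$, the perturbation satisfies
\[
 \Re P_T\le C\bigl(1+|x|+|r|+x^2+r^2\bigr)\le C'\bigl(1+x^2+\dist(r,\{-w_T,w_T\})^2\bigr),
\]
using $w_T\le\overline w$. Lemma~\ref{lem:global-geometry} supplies the decay $-c_0n\bigl(x^2+\dist(r,\{\pm w_T\})^2\bigr)$. Once $n\ge 2C'/c_0$, this absorbs the perturbation and leaves $\me^{C'}\me^{-c_0n(\cdots)/2}$. The polar-coordinate estimate of Proposition~\ref{prop:remainder} then gives $O(n_T^{-1/2})$ uniformly on $\mathcal B$, including the integrable singularity $(x^2+s^2)^{-1/2}$ at the crossings. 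Finally, Lemma~\ref{lem:section7-correction-negligible} removes $\Rc_\Gamma$ at cost $O(L_T^{-1})$, and this proves \eqref{eq:analytic-convergence}.

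The main obstacle is the uniformity of the quadratic absorption near $\zeta$ with $|x|$ large and on the deformation tails, since $P_T$ contains $\frac{s\lambda_s}{2}\Re\zeta^2$, which grows like $x^2$. This is handled only because $P_T$ has an $O(1)$ coefficient while $F_T$ carries the factor $n$; I would verify this carefully on the sloped parts of $\Gamma/L$ near the imaginary axis, where $|\zeta|$ is small and the bound of Lemma~\ref{lem:product-bounds} is not directly applicable. There I would instead use the compact-set control together with the winding-number argument, which involves only bounded $\zeta$.
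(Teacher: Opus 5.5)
Your proposal is correct and follows essentially the paper's proof: the same perturbation $P_T$ absorbed into the $n$-weighted phase via $r^2\le 2d_T(r)^2+2\overline w^{\,2}$ and Lemma~\ref{lem:global-geometry}, the same closing-connector and sloped-tail estimate, the same crossed residue (the paper also records the exact identity $t\lambda_t-s\lambda_s=(t-s)\lambda_s\lambda_t$, though $\lambda_{T,\tau}\to1$ suffices as you use it), the $O(n_T^{-1/2})$ remainder, and Lemma~\ref{lem:section7-correction-negligible}. As you suspected, the concern in your last paragraph is moot. The bounded part of the deformation is an exact residue-theorem identity at fixed $T$, so growth bounds are needed only on the connectors at $|\Re\zeta|=R$ and on the sloped tails. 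There the $\omega$-side terms of $P_T$ are controlled by the vertical bound of Lemma~\ref{lem:global-geometry}, not by $\Re F_T(\zeta)\ge R^2/4$, which bounds only the $\zeta$-side.
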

\begin{proof}
Fix a compact set $\mathcal{B}\subset\mathbb{R}^{4}$ of parameters
$(s,t,X,Y)$. All constants below may depend on $\mathcal{B}$, but they
are independent of $T$ and of the point in $\mathcal{B}$.
Since $\lambda_{T,\tau}\to 1$ uniformly for $\tau$ in compact sets and
$w_T\to u_q$, after increasing $T_0$ we may assume
\[
\frac{1}{2}\leq \lambda_s,\lambda_t\leq 2,\quad\underline{w}\leq w_T\leq \overline{w},
\]
uniformly for $(s,t,X,Y)\in\mathcal{B}$.\\
Recall from Lemma~\ref{lem:multitime-formula} that the exponent in the scaled double integral is
\[
\mathcal S_T\left(\zeta,\omega\right)
=n\left\{F_T\left(\omega\right)-F_T\left(\zeta\right)\right\}+P_T\left(\zeta,\omega\right),
\]
where
\[
P_T\left(\zeta,\omega\right)
\defeq -\frac{t\lambda_t}{2}\omega^2+\frac{s\lambda_s}{2}\zeta^2-\lambda_tY\omega+\lambda_sX\zeta.
\]

We first show that the additional term $P_T$ does not destroy the
steepest-descent estimates. On the final contours, write
\[
\omega=\mi r,\quad\zeta=x+\mi \varsigma w_T,
\quad r,x\in\mathbb{R},\quad\varsigma\in\left\{-1,1\right\}.
\]
Put $d_T(r)=\dist(r,\{-w_T,w_T\})$. Since
$r^2\le2d_T(r)^2+2\overline w^2$ and $|x|\le1+x^2$, boundedness of
the parameters gives
\[
 \Re P_T\left(x+\mi\varsigma w_T,\mi r\right)
 \le C\left\{1+r^2+x^2+\left|x\right|\right\}
 \le C'\left\{1+d_T\left(r\right)^2+x^2\right\}.
\]
On the other hand, Lemma~\ref{lem:global-geometry} gives
\[
\Re F_T\left(\mi r\right)-\Re F_T\left(x+\mi \varsigma w_T\right)\leq -c_0\left\{d_T\left(r\right)^2+x^2\right\}.
\]
Consequently, for all sufficiently large $T$,
\begin{equation}\label{eq:multitime-coercivity}
\Re\mathcal S_T\left(x+\mi \varsigma w_T,\mi r\right)
\leq
C-\frac{c_0n}{2}\left\{d_T\left(r\right)^2+x^2\right\},
\end{equation}
uniformly for $(s,t,X,Y)\in\mathcal{B}$.

We next justify the contour deformation. On a closing connector
$\zeta=\pm R+\mi v$ in the wedge used in Proposition~\ref{prop:deformation}, Lemma~\ref{lem:product-bounds} gives,
for all sufficiently large $R$, $\Re F_T(\zeta)\geq {R^2}/{4}.$
Moreover, Lemma~\ref{lem:global-geometry} gives $\Re F_T(\mi r)\leq M_T-c_0d_T(r)^2.$ The sequence $M_T$ is bounded, and on the connector
\[
\Re P_T\left(\zeta,\mi r\right)\leq C\left\{1+R^2+r^2\right\}.
\]
Using $r^2\leq 2d_T(r)^2+2\overline{w}^{\,2},$ we see that, for sufficiently large $T$, the terms of order $R^2+r^2$
coming from $P_T$ are absorbed by the factor $n$ multiplying the
finite-time phase. After first choosing $R$ sufficiently large, the
real part of the total exponent on the closing connectors is bounded above by the quantity $-cn\{R^2+d_T(r)^2\}.$
The same argument applies to the sloped tails of the original contour.
Thus the closing integrals tend to zero as $R\to\infty$, uniformly for
$(s,t,X,Y)\in\mathcal{B}$.

Therefore, the winding-number deformation in Proposition~\ref{prop:deformation} remains
valid with $\mathcal S_T$ in place of the equal-time exponent. It gives the
decomposition
\[
\sigma_T
 \BL_{\bxi,\Gamma}\left(\left(T_s,\sigma_TX\right),\left(T_t,\sigma_TY\right)\right)
=
R_{T;s,t}\left(X,Y\right)+J_{T;s,t}\left(X,Y\right),
\]
where $R_{T;s,t}$ is the crossed residue and
\[
J_{T;s,t}\left(X,Y\right)
\defeq 
\frac{\sqrt{\lambda_s\lambda_t}}{\left(2\pi \mi\right)^2}
\exp\left\{
\frac{\lambda_tY^2-\lambda_sX^2}{2n}
\right\}
\sum_{\varsigma=\pm1}
\int_{\mathcal{C}_T^\varsigma}\dif \zeta
\int_{\mi\mathbb{R}}\dif\omega\,
 \frac{\me^{\mathcal S_T\left(\zeta,\omega\right)}}{\omega-\zeta}.
\]
As before, the integrals at the transverse intersections are understood
as two-dimensional improper integrals.

Equation~\eqref{eq:multitime-coercivity} gives the Gaussian bound
used in Proposition~\ref{prop:remainder}, uniformly on $\mathcal B$.
The same split into $\varsigma r\ge0$ and $\varsigma r<0$ therefore gives
\[
 \sup_{\left(s,t,X,Y\right)\in\mathcal B}
 \left|J_{T;s,t}\left(X,Y\right)\right|
 =O\left(n_T^{-1/2}\right).
\]

It remains to calculate the residue. At the pole
$\zeta=\omega=\mi k$, the dominant phase cancels, and
\[
P_T\left(\mi k,\mi k\right)
=
\frac{t\lambda_t-s\lambda_s}{2}k^2
+\mi\left(\lambda_sX-\lambda_tY\right)k.
\]
The identity $t\lambda_t-s\lambda_s
=
(t-s)\lambda_s\lambda_t$ is exact. Since $I_T$ is symmetric, replacing $k$ by $-k$ gives
\[
R_{T;s,t}\left(X,Y\right)
\defeq
\frac{\sqrt{\lambda_s\lambda_t}}{2\pi}\exp\left\{\frac{\lambda_tY^2-\lambda_sX^2}{2n}\right\}
\int_{I_T}\exp\left\{\frac{t-s}{2}\lambda_s\lambda_t k^2
+\mi\left(\lambda_tY-\lambda_sX\right)k
\right\}
\dif k.
\]
Uniformly for $(s,t,X,Y)\in\mathcal B$, the two factors
$\lambda_s,\lambda_t$ and the exponential prefactor tend to one.
Moreover, $w_T\to u_q$ and $\delta_T\to0$. Thus the residue integrands
converge uniformly on a common bounded interval, and the omitted interval
$[-\delta_T,\delta_T]$ has length tending to zero. Consequently,
\[
R_{T;s,t}\left(X,Y\right)
\longrightarrow
\frac{1}{2\pi}
\int_{-u_q}^{u_q}
\exp\left\{
\frac{t-s}{2}k^2+\mi k\left(Y-X\right)
\right\}
\dif k
\]
locally uniformly in $(s,t,X,Y)$.

Since the deformed double integral tends to zero uniformly on
$\mathcal{B}$, the contour integral $\sigma_T\BL_{\bxi,\Gamma}$ has the
required locally uniform limit. Lemma~\ref{lem:section7-correction-negligible}
shows that its difference from the analytic part
$\sigma_T\BL_{\bxi}$ tends to zero locally uniformly. This proves
\eqref{eq:analytic-convergence}.
\end{proof}
We now combine these estimates to prove Theorem~\ref{thm:main}.
\begin{proof}[Proof of Theorem~\ref{thm:main}]
Proposition~\ref{prop:multitime} gives the analytic convergence. For $s>t$,
Brownian scaling gives
\begin{equation}\label{eq:heat-scaling}
 \sigma_T\mathfrak{h}_{T_s-T_t}\left(\sigma_TX,\sigma_TY\right)
 =\sigma_T\mathfrak{h}_{\sigma_T^2\left(s-t\right)}\left(\sigma_TX,\sigma_TY\right)
 =\mathfrak{h}_{s-t}\left(X,Y\right).
\end{equation}
The indicators also agree because $T_s>T_t$ exactly when $s>t$.
Combining \eqref{eq:full-kernel}, \eqref{eq:analytic-convergence}, and \eqref{eq:heat-scaling} proves the
stated locally uniform convergence of the full-kernel difference on $\mathbb R^4$. 

Let $\mathcal A\subset\mathbb R$ be a finite set of rescaled times and define
\begin{equation}\label{eq:rescaled-long-time-kernel}
 \K_T\left(\left(s,X\right),\left(t,Y\right)\right)
 \defeq \sigma_T\K_{\bxi}
   \left(\left(T_s,\sigma_TX\right),\left(T_t,\sigma_TY\right)\right).
\end{equation}
By spatial change of variables, this is the extended kernel of the rescaled
process $\bXi_{\bxi,T}$; the factor $\sigma_T$ is the spatial
Jacobian. The full-kernel convergence just proved, restricted to the
finite set $\mathcal A^2$, gives, for every compact $B\subset\mathbb R$,
\[
 \max_{s,t\in\mathcal A}\sup_{X,Y\in B}
 \left|
 \K_T\left(\left(s,X\right),\left(t,Y\right)\right)
 -\K_{\Sine,\rho_q}\left(\left(s,X\right),\left(t,Y\right)\right)
 \right|
 \longrightarrow0.
\]
Proposition~\ref{prop mgf convergence}, applied along any sequence
$T\to\infty$, now gives the asserted finite-dimensional convergence in the
vague topology. This completes the proof.
\end{proof}
\section{Infinite-dimensional SDEs}\label{sec:isde}
\subsection{The ISDE}
Proposition \ref{gammadependence} shows that two finite approximations may
converge vaguely to the same configuration while producing limits that differ
by a deterministic linear translation.  In this section we identify the same
effect directly from the infinite-dimensional SDE, for every $\beta\geq1$.
We use a common labelled state space in which finite systems are padded by points at infinity.
\begin{definition}
Define
\begin{equation*}
    \mathcal{W}\defeq  \left\{\bx \in\left(\mathbb{R}\cup\left\{\pm\infty\right\}\right)^\mathbb{Z}:
    x_i<x_{i+1}\ \textnormal{or}\ x_i=x_{i+1}\in\left\{\pm\infty\right\},\ \forall i\in\mathbb{Z}\right\}.
\end{equation*}
We call $\bx$ finite if only finitely many coordinates are real.  We give
$\mathcal W$ the product topology; thus
$\bx^\UN\to\bx$ in $\mathcal W$ means $x_i^\UN\to x_i$ for every
$i\in\mathbb Z$.
\end{definition}
Finite Weyl chambers are embedded in $\mathcal W$ by adjoining $-\infty$
on the left and $+\infty$ on the right.  We use the conventions
$1/(+\infty)=1/(-\infty)=0$. In cumulative-gap sums, $(a,b)$ denotes the open interval between the
endpoints, irrespective of their order.
The approximation-dependent translation is measured by a label-symmetric reciprocal tail.
\begin{definition}
Fix $\bx\in\mathcal W$ and $b>0$ with $x_i\neq\pm b$ for all $i$.
Whenever the following label-symmetric limit exists, define
\begin{equation}\label{eq def of gamma1b}
    \gamma_{1,b}\left(\bx\right)\defeq\lim_{k\to\infty}\sum_{b<\left|x_i\right|,\left|i\right|\leq k}1/x_i.
\end{equation}
\end{definition}
Gap coordinates remove common translations and carry the comparison order used below.
\begin{definition}
Set $\mathbb V=\frac12+\mathbb Z$.  Let $\mathcal L$ be the set of
$\bl\in(0,\infty]^{\mathbb V}$ whose finite-coordinate set is an interval
(possibly empty or infinite).  We call $\bl$ finite when this set is finite,
and order $\mathcal L$ coordinatewise in the extended order.  Define
$\mathfrak q\colon\mathcal W\to\mathcal L$ by
    \begin{equation}\label{eq label to gap}
    \mathfrak{q}\left(\bx\right)_v \defeq
    \begin{cases}
        \left(x_{v+1/2}-x_{v-1/2}\right),\quad &x_{v+1/2},x_{v-1/2}\in\mathbb{R},\\
        \infty,&\textnormal{otherwise}.
    \end{cases}
    \end{equation}
\end{definition}
For a nonempty finite $A\subset\mathbb V$, $r\geq1$, and a positive
gap vector $\mathbf z$, write
\begin{equation*}
 \overline\Sigma_A^r\left(\mathbf z\right)\defeq \frac1{\left|A\right|}\sum_{v\in A}z_v^r,
 \qquad \overline\Sigma_A\left(\mathbf z\right)\defeq \overline\Sigma_A^1\left(\mathbf z\right),
\end{equation*}
and put $\overline\Sigma_\emptyset^r=0$.
We use Tsai's regularity class to control the asymptotic mean gap and averaged moments.
\begin{definition}
Fix $\ka\in(0,1)$, $\rho>0$, and $p>1$.  Define
    \begin{equation*}
    \begin{aligned}
    \mathcal L_{\kappa,\rho}^p
    &\defeq \left\{\bl\in\left(0,\infty\right)^{\mathbb V}:
      \mathfrak D_{\kappa,\rho}\left(\bl\right)<\infty,\ \mathfrak M_p\left(\bl\right)<\infty\right\},\\
    \mathfrak D_{\kappa,\rho}\left(\bl\right)
    &\defeq \sup_{m\in\mathbb Z\setminus\left\{0\right\}}
      \left|m\right|^\kappa\left|\overline\Sigma_{\left(0,m\right)\cap\mathbb V}\left(\bl\right)-\rho\right|,\\
    \mathfrak M_p\left(\bl\right)
    &\defeq \sup_{m\in\mathbb Z\setminus\left\{0\right\}}
      \overline\Sigma_{\left(0,m\right)\cap\mathbb V}^p\left(\bl\right).
    \end{aligned}
    \end{equation*}
\end{definition}
The control quantities $\mathfrak D_{\kappa,\rho}$ and $\mathfrak M_p$ measure mean-gap deviation and averaged moments. The corresponding
asymptotic particle density is $1/\rho$. Thus $\mathcal L_{\kappa,\rho}^p$ consists of gap configurations with
asymptotic mean gap $\rho$ and a uniform averaged $p$-moment bound.
\begin{remark}\label{rem tsai linear bound}
    Let $\bx=(x_i)_{i\in\mathbb Z}\in\mathcal W$ with
    $\mathfrak q(\bx)\in\mathcal L_{\kappa,\rho}^p$.  Then
    \begin{equation*}
        x_i=x_0+\rho i+\ep\left(i\right),\qquad
        \left|\ep\left(i\right)\right|\leq\mathfrak D_{\kappa,\rho}\left(\mathfrak q\left(\bx\right)\right)\left|i\right|^{1-\kappa}.
    \end{equation*}
    Consequently, for all sufficiently large $i$,
    \begin{equation*}
        \left|\frac{1}{x_i}+\frac{1}{x_{-i}}\right|
        \leq C\frac{\left|x_0\right|+\mathfrak D_{\kappa,\rho}\left(\mathfrak q\left(\bx\right)\right)
        i^{1-\kappa}}{\rho^2i^2}.
    \end{equation*}
    Hence \eqref{eq def of gamma1b} exists for every admissible $b$.
    For the associated configuration $\bxi=\sum_i\delta_{x_i}$, it equals
    $p_b(\bxi)$. Indeed, put $K_R=\lfloor R/\rho\rfloor$. The preceding
    estimate places the endpoint labels of the spatial restriction
    $[-R,R]$ at $\pm R/\rho+O(R^{1-\kappa})$. Its reciprocal sum and the
    sum over $|i|\le K_R$, both restricted to $|x_i|>b$, therefore differ
    by $O(R^{1-\kappa})$ terms of size $O(R^{-1})$. Their difference is
    $O(R^{-\kappa})$, proving the identity along integer $R\to\infty$.
\end{remark}
We combine the regular infinite configurations with finite configurations to define the drift on a common state space.
\begin{definition}\label{def subset for function}
    Define $\mathcal{L}_0\subset\mathcal{L}$ and $\mathcal{W}_0\subset\mathcal{W}$ by
    \begin{equation*}
    \begin{aligned}
&\mathcal{L}_0\defeq \bigcup_{{p>1,\kappa\in\left(0,1\right),\rho>0}}\tsai\bigcup\left\{\bl\in\mathcal{L}: \bl\textnormal{ is finite}\right\},\\
&\mathcal{W}_0\defeq  \mathfrak{q}^{-1}\left(\mathcal{L}_0\right)=\left\{\bx\in\mathcal{W}: \mathfrak{q}\left(\bx\right)\in\mathcal{L}_0\right\}.
    \end{aligned}
    \end{equation*}
\end{definition}
For $a,b\in\mathbb R$ and $\bl\in\mathcal L$, define
 \begin{equation*}
     l_{\left(a,b\right)} \defeq \sum_{v\in\left(a,b\right)\cap\mathbb V}l_v,
 \end{equation*}
with the value $+\infty$ if one of the summands is infinite. For a gap process $\bsfL$, we write
\begin{equation*}
 \sfL_{\left(a,b\right)}\left(t\right)\defeq\sum_{v\in\left(a,b\right)\cap\mathbb V}\sfL_v\left(t\right),
\end{equation*}
and use the same scalar notation for cumulative gaps of the other processes.
 The particle drift is the label-symmetric difference of reciprocal cumulative gaps.
    \begin{definition}
    For $i\in\mathbb{Z}$, define $\mu_i\colon\mathcal{W}_0\longrightarrow\mathbb{R}$ by
        \begin{equation*}
            \mu_i\left(\bx\right)\defeq  \lim_{K\to\infty}\left(\sum_{k\in\mathbb{N},0<k\leq K}\frac{1}{\left[\mathfrak{q}\left(\bx\right)\right]_{\left(i,i-k\right)}}-\sum_{k\in\mathbb{N},0<k\leq K}\frac{1}{\left[\mathfrak{q}\left(\bx\right)\right]_{\left(i,i+k\right)}}\right).
        \end{equation*}
    \end{definition}
For finite $\bx$, $\mu_i(\bx)$ is zero or a finite sum. For solution paths, the same regularity bounds are required uniformly on compact time intervals.
\begin{definition}\label{def tsaipro}
Let $\mathcal C_+([0,\infty))$ be the space of continuous functions
$f\colon[0,\infty)\to(0,\infty)$.  Define
    \begin{equation*}
    \mathcal P_{\kappa,\rho}^p\defeq 
    \left\{\bl\left(\cdot\right)\in\mathcal C_+\left(\left[0,\infty\right)\right)^{\mathbb V}:
    \sup_{t\in\left[0,T\right]}\mathfrak D_{\kappa,\rho}\left(\bl\left(t\right)\right)<\infty,\ 
    \sup_{t\in\left[0,T\right]}\mathfrak M_p\left(\bl\left(t\right)\right)<\infty,\ \forall T>0\right\}.
    \end{equation*}
\end{definition}
We also apply $\mathfrak q$ pointwise to paths. We call a gap configuration \emph{regular} if it belongs to
$\mathcal L_{\kappa,\rho}^p$ for some $p>1$, $\kappa\in(0,1)$, and
$\rho>0$, and call a gap process regular if it belongs to the
corresponding $\mathcal P_{\kappa,\rho}^p$.
For later comparison, set
\begin{align*}
 \underline{\mathcal L}\left(\delta\right)
 &\defeq \left\{\mathbf z\in\left(0,\infty\right)^{\mathbb V}:
   \liminf_{\left|m\right|\to\infty}\overline\Sigma_{\left(0,m\right)\cap\mathbb V}\left(\mathbf z\right)
   \geq\delta\right\},\\
 \underline{\mathcal P}\left(\delta\right)
 &\defeq \left\{\mathbf z\left(\cdot\right)\in\mathcal C_+\left(\left[0,\infty\right)\right)^{\mathbb V}:
   \liminf_{\left|m\right|\to\infty}\inf_{0\leq s\leq T}
   \overline\Sigma_{\left(0,m\right)\cap\mathbb V}\left(\mathbf z\left(s\right)\right)\geq\delta,
   \ \forall T>0\right\},
\end{align*}
and put
\begin{equation*}
 \underline{\mathcal L}\defeq \bigcup_{\delta>0}\underline{\mathcal L}\left(\delta\right),
 \qquad
 \underline{\mathcal P}\defeq \bigcup_{\delta>0}\underline{\mathcal P}\left(\delta\right).
\end{equation*}

We use the standard notions of adapted solution, strong solution, and
pathwise uniqueness from \cite{1981v}.  More precisely, an adapted solution
is a continuous process satisfying the integral equation with respect to the
given Brownian family and filtration.  It is strong when it is adapted to the
usual augmentation of the Brownian filtration.  Pathwise uniqueness means
that any two adapted solutions on the same filtered probability space,
driven by the same Brownian family and with the same initial condition, are
indistinguishable.  For the Dyson interaction functions $\mu_i$, the particle
equation is
\begin{equation}\label{eq ISDE definition}
    \sfY_i\left(t\right) = x_i + \sfB_i\left(t\right)
    +\frac\beta2\int_0^t \mu_i\left(\boldsymbol{\sfY}\left(s\right)\right)\dif s,
    \quad\forall i\in\mathbb{Z}.
\end{equation}
Let $v\in\mathbb V$.  For $\bl\in
\mathcal L_0\cup\underline{\mathcal L}$ with $l_v<\infty$, define
\begin{equation*}
 \lambda_v\left(\bl\right)\defeq  \frac{2}{l_v}
 -\sum_{i\in\mathbb{Z},1<\left|i\right|}
 \left(\frac{1}{l_{\left(v,v+i\right)}}
 -\frac{1}{l_v+l_{\left(v,v+i\right)}}\right).
\end{equation*}
For an infinite configuration the series is absolutely convergent:
positive lower density gives
$l_{(v,v+i)}\geq c_v|i|$ for all sufficiently large $|i|$, and its
summand is at most $l_v/(c_v^2|i|^2)$.  For finite configurations it is
a finite sum under the convention $1/\infty=0$.  On $\mathcal L_0$ we
also define the symmetric principal value
\begin{equation*}
 \psi_0\left(\bl\right)\defeq  \lim_{K\to\infty}\left(
 \sum_{0<k\leq K}\frac{1}{l_{\left(0,-k\right)}}
 -\sum_{0<k\leq K}\frac{1}{l_{\left(0,k\right)}}\right).
\end{equation*}
Fix independent standard Brownian motions $(\sfB_i)_{i\in\mathbb Z}$ and set
\begin{equation*}
    \sfG_v\left(t\right) \defeq \sfB_{v+1/2}\left(t\right) - \sfB_{v-1/2}\left(t\right),\quad \forall t\geq 0, v\in\mathbb{V}.
\end{equation*}

For a regular initial configuration $\bx$ with gaps $\bl=\mathfrak q(\bx)$,
Tsai's canonical finite
approximation retains the particles in the open interval $(-n,n)$.
Write
\begin{equation*}
 i_n^+\defeq\max\left\{i:x_i<n\right\},\qquad
 i_n^-\defeq\min\left\{i:x_i>-n\right\},\qquad
 \mathcal I_n^{\mathrm{sp}}\defeq\left[i_n^-,i_n^+\right]\cap\mathbb Z,
\end{equation*}
and denote the resulting finite Dyson process, driven by the corresponding
Brownian motions, by $\widetilde{\boldsymbol{\sfY}}^{[n]}$.
For $\varepsilon>0$, let $\bsfL^{\vee\varepsilon}$ denote Tsai's greatest
$\underline{\mathcal P}$-valued solution from
$\bl\vee\varepsilon=(l_v\vee\varepsilon)_{v\in\mathbb V}$.
Here greatest means that it dominates every $\underline{\mathcal P}$-valued
adapted solution on the same filtered space with the same gap Brownian
motions and a smaller initial condition.

The following collects Tsai's existence, uniqueness and approximation
results used below; see Theorems 1.2 and 1.4 and Propositions 2.1, 2.5,
and 2.6 of \cite{MR3568040}.
\begin{proposition}\label{ISDE e and u}
    Let $\beta\geq1$, $\bx\in\mathcal W$, and
    $\bl=\mathfrak q(\bx)\in\mathcal L_{\kappa,\rho}^p$ for some
    $p>1$, $\kappa\in(0,1)$, and $\rho>0$. Then:
    \begin{enumerate}[label=\textnormal{(\roman*)}]
    \item There is a
    $\mathcal P_{\kappa,\rho}^p$-valued strong solution
    $\bsfL=(\sfL_v)_{v\in\mathbb V}$ and a continuous strong process
    $\sfY_0$ satisfying
    \begin{subequations}\label{eq gap,0th sde}
        \begin{equation}\label{eq gap sde}
            \sfL_v\left(t\right) = l_v+\sfG_v\left(t\right) + \frac{\beta}{2}\int_0^t \lambda_v\left(\bsfL\left(s\right)\right)\dif s,\quad v\in\mathbb{V}.
        \end{equation}
        \begin{equation}\label{eq 0th sde}
            \sfY_0\left(t\right) = x_0+\sfB_0\left(t\right) + \frac{\beta}{2}\int_0^t\psi_0\left(\bsfL\left(s\right)\right)\dif s.
        \end{equation}
    \end{subequations}
    If $\sfY_i=\sfY_0+\sfL_{(0,i)}$ for $i>0$ and
    $\sfY_i=\sfY_0-\sfL_{(0,i)}$ for $i<0$, then
    $\boldsymbol{\sfY}=(\sfY_i)_{i\in\mathbb Z}$ is a strong solution
    of \eqref{eq ISDE definition},
    and pathwise uniqueness holds among adapted solutions whose gap
    processes belong to $\mathcal P_{\kappa,\rho}^p$.
    Conversely, the gap process of any solution in this class, together
    with its zeroth coordinate, solves \eqref{eq gap,0th sde}.  The gap
    solution is pathwise unique in $\mathcal P_{\kappa,\rho}^p$.

    \item The canonical spatial truncations satisfy, almost surely,
    \begin{equation}\label{eq tilde x is finite}
       \lim_{n\to\infty}\sup_{0\leq t\leq T}
       \left|\widetilde{\sfY}^{\left[n\right]}_i\left(t\right)-\sfY_i\left(t\right)\right|=0,
       \qquad T>0,\quad i\in\mathbb Z.
    \end{equation}

    \item If $\varepsilon_k\downarrow0$, then, almost surely,
    \begin{equation}\label{eq:tsai-padded-limit}
       \sfL_v^{\vee\varepsilon_k}\left(t\right)\downarrow\sfL_v\left(t\right),
       \qquad v\in\mathbb V,\quad t\geq0.
    \end{equation}
    \end{enumerate}
\end{proposition}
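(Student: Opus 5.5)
The plan is to derive the proposition from the cited results of \cite{MR3568040} rather than reprove them, so the work consists of matching Tsai's normalisations with ours and checking the elementary identities that connect particle and gap coordinates. First I would fix the dictionary between the two settings. Our labels $i\in\mathbb Z$ and gap labels $v\in\mathbb V=\frac12+\mathbb Z$ correspond to Tsai's particle and gap indices. The class $\mathcal L_{\kappa,\rho}^p$ and the path class $\mathcal P_{\kappa,\rho}^p$ are, verbatim, his regularity classes defined through $\mathfrak D_{\kappa,\rho}$ and $\mathfrak M_p$. The parameter $\beta/2$ in front of the drift is his interaction constant, and I would restrict to $\beta\ge1$, his noncollision range. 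I would also record that $\sfG_v=\sfB_{v+1/2}-\sfB_{v-1/2}$ is the gap noise he uses, so that all objects are defined on the same filtered space as the particle Brownian family.

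Next I would verify the algebraic identities behind (i). For $\bx\in\mathcal W_0$ with $\bl=\mathfrak q(\bx)$, I would check that
\[
\mu_{v+1/2}\left(\bx\right)-\mu_{v-1/2}\left(\bx\right)=\lambda_v\left(\bl\right).
\]
The two interactions with each other give $2/l_v$. For a third particle on either side, the two reciprocal cumulative gaps combine into the differences $1/l_{(v,v+i)}-1/(l_v+l_{(v,v+i)})$. Absolute convergence of this rearrangement comes from the lower-density bound stated after the definition of $\lambda_v$. Likewise, $\mu_0(\bx)=\psi_0(\bl)$ holds directly from the definitions. With these two identities, a solution of \eqref{eq ISDE definition} yields a solution of \eqref{eq gap,0th sde} by subtracting consecutive coordinates. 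Conversely, from $(\bsfL,\sfY_0)$ the reconstruction $\sfY_i=\sfY_0\pm\sfL_{(0,i)}$ satisfies \eqref{eq ISDE definition}: summing the gap equations over $(0,i)$ telescopes both the noises and the drifts $\lambda_v=\mu_{v+1/2}-\mu_{v-1/2}$, and adding the $\sfY_0$ equation gives the $i$th equation. Strong existence and pathwise uniqueness of the gap solution in $\mathcal P_{\kappa,\rho}^p$ are Tsai's Theorem 1.2. The zeroth coordinate is then an explicit integral of an adapted functional of $\bsfL$, and it is continuous because $\psi_0$ is locally bounded along $\mathcal P_{\kappa,\rho}^p$ paths. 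Pathwise uniqueness of the particle system then transfers: two solutions have the same gaps, hence the same $\psi_0$-drift, hence the same zeroth coordinate, hence the same particles.

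For (ii) and (iii), I would simply quote his approximation statements. The convergence \eqref{eq tilde x is finite} of the canonical spatial truncations is his finite-approximation theorem (Theorem 1.4 together with Proposition 2.6). Here I must check that his truncation $x_i\in(-n,n)$ matches ours, $\mathcal I_n^{\mathrm{sp}}$, and that he uses the same Brownian motions. The monotone limit \eqref{eq:tsai-padded-limit} is his comparison and greatest-solution construction (Propositions 2.1 and 2.5), applied to the initial data $\bl\vee\varepsilon_k$, which decrease to $\bl$.

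The main obstacle I expect is the zeroth coordinate. Tsai works primarily with gaps, and I would need to confirm that his particle statements use exactly the label-symmetric principal value $\psi_0$, rather than a spatially truncated sum. If his convention differs, I would show that the two conventions agree along $\mathcal P_{\kappa,\rho}^p$ paths using the linear-growth bound of Remark~\ref{rem tsai linear bound}, uniformly for $t\in[0,T]$. That same uniformity is also what I would use to justify the local boundedness of $\psi_0$ claimed above.
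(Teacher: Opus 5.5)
Your proposal matches the paper, which also gives no independent argument and simply collects Theorems 1.2 and 1.4 and Propositions 2.1, 2.5 and 2.6 of \cite{MR3568040}. Your explicit checks fill in the dictionary the paper leaves implicit, and they are correct: the identities $\lambda_v(\mathfrak q(\bx))=\mu_{v+1/2}(\bx)-\mu_{v-1/2}(\bx)$ and $\psi_0(\mathfrak q(\bx))=\mu_0(\bx)$, the telescoping reconstruction $\sfY_i=\sfY_0\pm\sfL_{(0,i)}$, and the transfer of pathwise uniqueness from gaps to particles via the zeroth coordinate.
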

Tail envelopes place a possibly nonmonotone sequence of initial gaps between two monotone sequences.
\begin{definition}\label{def sup inf gap}
Let $(\bl^\UN)_{N\geq1}\subset\mathcal L$.  Define the tail
envelopes first in $[0,\infty]^{\mathbb V}$ by
\begin{equation*}
 l_v^{\tsup,\UN}\defeq \sup_{n\geq N}l_v^{\left(n\right)},\qquad
 l_v^{\tinf,\UN}\defeq \inf_{n\geq N}l_v^{\left(n\right)},
 \qquad v\in\mathbb V.
\end{equation*}
\end{definition}
The upper tail envelopes have finite supports that increase to the full gap lattice. This allows us to compare their finite gap processes with a single infinite limiting process.
\begin{lemma}\label{lem:envelope-supports}
Suppose that each $\bl^\UN$ is finite and that
$l_v^\UN\to l_v\in(0,\infty)$ for every $v\in\mathbb V$.  Put
\begin{equation*}
 \mathcal V_N^{\mathrm{sup}}\defeq \left\{v:l_v^{\tsup,\UN}<\infty\right\}.
\end{equation*}
After deleting a finite initial segment if necessary,
$\mathcal V_N^{\mathrm{sup}}$ is a nonempty finite interval,
\begin{equation*}
 \mathcal V_N^{\mathrm{sup}}\subset\mathcal V_{N+1}^{\mathrm{sup}},
 \qquad \bigcup_{N\geq1}\mathcal V_N^{\mathrm{sup}}=\mathbb V,
\end{equation*}
and
\begin{equation*}
 l_v^{\tinf,\UN}\uparrow l_v,\qquad
 l_v^{\tsup,\UN}\downarrow l_v,\qquad v\in\mathbb V.
\end{equation*}
In particular, $\bl^{\tsup,\UN}$ is a finite element of $\mathcal L$.
Whenever $\bl^{\tinf,\UN}$ belongs to
$\mathcal L_{\kappa,\rho'}^p$ for some $\rho'>0$, it is an infinite
element of $\mathcal L$.
\end{lemma}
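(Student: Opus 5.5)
Since each $\bl^{(N)}$ is finite and lies in $\mathcal L$, its finite-coordinate set is a finite interval $\mathcal V_N$ (possibly empty), with $l_v^{(N)}=\infty$ off $\mathcal V_N$. The plan is to work coordinatewise, get the monotone limits, and then read the supports off from those limits.

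\emph{Monotone limits.} By construction, $l_v^{\tsup,(N)}$ is nonincreasing in $N$ and $l_v^{\tinf,(N)}$ is nondecreasing in $N$. They are the tail supremum and tail infimum of a sequence converging to $l_v\in(0,\infty)$. Hence $l_v^{\tsup,(N)}\downarrow\limsup_n l_v^{(n)}=l_v$ and $l_v^{\tinf,(N)}\uparrow l_v$. Note that $l_v^{\tsup,(N)}$ may equal $+\infty$ for small $N$, and the monotone convergence still holds in the extended order.

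\emph{Supports.} First, $\mathcal V_N^{\mathrm{sup}}=\{v: l_v^{(n)}<\infty \ \forall n\ge N, \ \sup_{n\ge N}l_v^{(n)}<\infty\}$. Since $l_v^{(n)}\to l_v<\infty$, finiteness of the supremum is automatic once every term is finite. Thus $\mathcal V_N^{\mathrm{sup}}=\bigcap_{n\ge N}\mathcal V_n$.

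An intersection of intervals of $\mathbb V$ is an interval. It is finite whenever it is nonempty, since $\mathcal V_N$ is finite. Monotonicity $\mathcal V_N^{\mathrm{sup}}\subset\mathcal V_{N+1}^{\mathrm{sup}}$ is immediate from the definition. For fixed $v$, convergence to a finite limit forces $l_v^{(n)}<\infty$ for all $n\ge n_v$, so $v\in\mathcal V_{n_v}^{\mathrm{sup}}$, and the union is all of $\mathbb V$. Nonemptiness holds from $N\ge n_{1/2}$ on; the finitely many earlier indices are the initial segment to delete.

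\emph{Consequences.} The vector $\bl^{\tsup,(N)}$ has positive entries, because each is a supremum of positive entries. Its finite set is the finite interval $\mathcal V_N^{\mathrm{sup}}$, so it is a finite element of $\mathcal L$. For $\bl^{\tinf,(N)}$, membership in $\mathcal L^p_{\kappa,\rho'}$ by definition means all entries lie in $(0,\infty)$. Its finite set is therefore all of $\mathbb V$, so it is an infinite element of $\mathcal L$.

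The only step needing any care is the identification $\mathcal V_N^{\mathrm{sup}}=\bigcap_{n\ge N}\mathcal V_n$. Everything else is elementary monotonicity.
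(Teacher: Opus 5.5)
Your proof is correct and follows the paper's argument essentially step for step. You identify $\mathcal V_N^{\mathrm{sup}}$ with $\bigcap_{n\ge N}\mathcal V_n$, a finite interval because it is an intersection of intervals contained in $\mathcal V_N$. Exhaustion and the two monotone limits come from $l_v^{(n)}\to l_v\in(0,\infty)$, and the last assertion is read off from the definition of $\mathcal L^p_{\kappa,\rho'}$. You are slightly more explicit than the paper about why the tail supremum is finite once every term is finite, and about where nonemptiness begins.
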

\begin{proof}
If $\mathcal V_n=\{v:l_v^{(n)}<\infty\}$, then
$\mathcal V_N^{\mathrm{sup}}=\bigcap_{n\geq N}\mathcal V_n$.  An intersection
of intervals is an interval, and it is finite because it is contained
in $\mathcal V_N$.  Since $l_v^{(n)}\to l_v<\infty$, every fixed $v$
belongs to all $\mathcal V_n$ for sufficiently large $n$, proving the
exhaustion.  The two coordinatewise limits are the elementary tail
infimum and tail supremum limits of a convergent real sequence.  The
last assertion follows from the definition of
$\mathcal L_{\kappa,\rho'}^p$, whose elements have all coordinates
finite.
\end{proof}
Fix $\beta\geq1$ and let $\bx\in\mathcal W$ satisfy
$\bl=\mathfrak q(\bx)\in\mathcal L_{\kappa,\rho}^p$ for some
$\kappa\in(0,1)$, $\rho>0$, and $p>1$.  Let
$(\bx^\UN)_{N\geq1}$ be arbitrary finite elements of $\mathcal W$ such
that $\bx^\UN\to\bx$ coordinatewise. After deleting finitely many terms,
we assume $x_0^\UN\in\mathbb R$ for every $N$, and put
\begin{equation*}
 \mathcal I_N\defeq \left\{i\in\mathbb Z:x_i^\UN\in\mathbb R\right\}
 =\left[i_N^-,i_N^+\right]\cap\mathbb Z,\qquad
 \mathcal V_N\defeq \left(i_N^-,i_N^+\right)\cap\mathbb V,\qquad
 \bl^\UN\defeq \mathfrak q\left(\bx^\UN\right).
\end{equation*}
Thus $0\in\mathcal I_N$ for every $N$.
Let $\bl^{\tsup,\UN}$ and $\bl^{\tinf,\UN}$ be the envelopes of
Definition \ref{def sup inf gap}.  For each $N$, let $(\sfX_i^\UN)_{i\in\mathcal I_N}$ be the finite
Dyson Brownian motion driven by $(\sfB_i)_{i\in\mathcal I_N}$:
\begin{equation}\label{eq finite DBM}
    \mathsf{X}_i^\UN\left(t\right) = x_i^\UN+\sfB_i\left(t\right)
    + \frac{\beta}{2}\int_0^t \mu_i\left(\boldsymbol{\sfX}^\UN\left(s\right)\right)\dif s,
    \qquad i\in\mathcal I_N,
\end{equation}
where the configuration is extended by $-\infty$ to the left of
$i_N^-$ and by $+\infty$ to the right of $i_N^+$.
The coordinates outside $\mathcal I_N$ remain the fixed formal values
$-\infty$ and $+\infty$, so the finite support of the gap process is
$\mathcal V_N$ at every time.

The next theorem gives convergence for finite approximations whose principal-value tails need not agree with that of the limiting configuration. The discrepancy produces a deterministic linear translation, while the limiting gaps are unchanged.
\begin{theorem}\label{theo sde theorem}
Under the preceding setup, choose $b>0$ avoiding
$\{|x_i|,|x_i^\UN|:i\in\mathbb Z,N\in\mathbb N\}$ and assume that
$\lim_{N\to\infty}\gamma_{1,b}(\bx^\UN)$ exists.  Define
 \begin{equation}\label{eq def of gamma}
     \gamma\defeq \lim_{N\to\infty}\gamma_{1,b}\left(\bx^\UN\right)-\gamma_{1,b}\left(\bx\right).
 \end{equation}
The value of $\gamma$ is independent of the admissible choice of $b$.
Suppose also that there are
$C<\infty$, $N_0\in\mathbb N$, and $\rho^\UN>0$ such that
    \begin{equation}\label{eq condition for L inf N}
    \begin{aligned}
    &\rho^\UN\longrightarrow\rho,\qquad
    \bl^{\tinf,\UN}\in\mathcal L_{\kappa,\rho^\UN}^p
       \quad\left(N\geq N_0\right),\\
    &\sup_{N\geq N_0}\mathfrak D_{\kappa,\rho^\UN}\left(\bl^{\tinf,\UN}\right)\leq C,
    \end{aligned}
    \end{equation}
and assume additionally that
    \begin{equation}\label{eq condition for L sup N}
       \sup_{N\in\mathbb N}\sup_{v\in\mathcal V_N}l_v^\UN\leq C.
    \end{equation}
Then there exists a process $\boldsymbol{\sfX}=(\sfX_i)_{i\in\mathbb Z}$
such that, almost surely,
    \begin{equation}\label{eq convergence of X_i}
        \lim_{N\to\infty}\sup_{t\in\left[0,T\right]}
        \left|\sfX_i^\UN\left(t\right)-\sfX_i\left(t\right)\right|=0,
        \qquad T>0,\quad i\in\mathbb Z,
    \end{equation}
where the expression is considered for all sufficiently large $N$ for
which $i\in\mathcal I_N$.  Moreover, $\boldsymbol{\sfX}$ is a strong
solution, with gap process in $\mathcal P_{\kappa,\rho}^p$, of
    \begin{equation}\label{eq equation for X}
        \mathsf{X}_i\left(t\right) = x_i+\sfB_i\left(t\right)
        +\frac{\beta}{2}\int_0^t
        \left\{\mu_i\left(\boldsymbol{\sfX}\left(s\right)\right)-\gamma\right\}\dif s,
        \qquad i\in\mathbb Z.
    \end{equation}
Pathwise uniqueness holds among adapted solutions in this class.\end{theorem}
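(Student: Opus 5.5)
We split the problem into gaps and a translation. Let $\boldsymbol{\sfY}$ be the unshifted strong solution from $\bx$ given by Proposition~\ref{ISDE e and u}, with gap process $\bsfL\in\mathcal P_{\kappa,\rho}^p$. The candidate limit is $\sfX_i(t)=\sfY_i(t)-\beta\gamma t/2$. This process has the same gaps, and since $\mu_i$ depends only on gaps, it solves \eqref{eq equation for X}. Pathwise uniqueness in the class follows by reversing the argument: adding $\beta\gamma t/2$ to any solution of \eqref{eq equation for X} in the class gives a solution of \eqref{eq ISDE definition}, and Proposition~\ref{ISDE e and u}(i) applies. Independence of $\gamma$ from the cutoff follows from the change-of-cutoff identity, because the transferred reciprocal sums converge under coordinatewise convergence. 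It therefore remains to prove that $\mathfrak q(\boldsymbol{\sfX}^\UN)\to\bsfL$ coordinatewise, locally uniformly, and that $\sfX_0^\UN\to\sfY_0-\beta\gamma t/2$.

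For the gaps we use Tsai's monotone comparison of ordered gaps. The finite gap process of $\boldsymbol{\sfX}^\UN$ solves the finite analogue of \eqref{eq gap sde} on $\mathcal V_N$, driven by the same $\sfG_v$. Comparison with the envelopes gives
\[
 \bsfL[\bl^{\tinf,\UN}]\le\mathfrak q(\boldsymbol{\sfX}^\UN)\le\bsfL[\bl^{\tsup,\UN}]
\]
on the common support for all times, where $\bl^{\tsup,\UN}$ is a finite gap vector by Lemma~\ref{lem:envelope-supports}. Here $\bsfL[\bl^{\tinf,\UN}]$ denotes the $\mathcal P^p_{\kappa,\rho^\UN}$ solution, which exists under \eqref{eq condition for L inf N}, and $\bsfL[\bl^{\tsup,\UN}]$ is the finite-system gap process. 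By Lemma~\ref{lem:envelope-supports} the initial envelopes are monotone in $N$, so comparison makes both bounding processes monotone in $N$. Their limits are solutions of \eqref{eq gap sde} with initial data $\bl$. The lower limit stays in the regular class because the deviation constant in \eqref{eq condition for L inf N} is uniform and $\rho^\UN\to\rho$. For the upper limit, \eqref{eq condition for L sup N} supplies the averaged control: we compare it with $\bsfL^{\vee\varepsilon}$ and use \eqref{eq:tsai-padded-limit}, together with the canonical truncations of Proposition~\ref{ISDE e and u}(ii). Pathwise uniqueness of the gap solution in $\mathcal P_{\kappa,\rho}^p$ then identifies both limits with $\bsfL$. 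Dini's theorem upgrades the monotone pointwise convergence to locally uniform convergence in time.

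The main step is the zeroth particle. Write $\sfX_0^\UN(t)=x_0^\UN+\sfB_0(t)+\frac\beta2\int_0^t\mu_0(\boldsymbol{\sfX}^\UN(s))\,\dif s$ and compare it with \eqref{eq 0th sde}. The drift is $\mu_0=\psi_0$ evaluated at the gaps, and we split it at a label cutoff $K$. For $|k|\le K$ the terms converge by the gap convergence. For the tail $K<|k|$ we aim to show
\[
 \sum_{K<|k|}\Big(\frac1{\sfL^\UN_{(0,-k)}(s)}-\frac1{\sfL^\UN_{(0,k)}(s)}\Big)
 =\sum_{K<|k|,\,k\in\mathcal I_N}\frac{-1}{x_k^\UN}+o_K(1)+o_N(1)
\]
uniformly on $[0,T]$, so that the tail equals $-(\gamma_{1,b}(\bx^\UN)-\gamma_{1,b}(\bx))$ up to errors. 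To do this we write $\sfL^\UN_{(0,k)}(s)=|x^\UN_k-x^\UN_0|+\text{(gap increments)}$ and control the increments of far cumulative gaps. The squeeze between the two envelope processes gives $\sfL^\UN_{(0,k)}(s)=\rho|k|+O(|k|^{1-\kappa'})$ for a regularity exponent $\kappa'$, and the envelopes give $\sup_{s\le T}|\sfL^\UN_{(0,k)}(s)-l^\UN_{(0,k)}|$ small relative to $|k|$. Subtracting the same expansion for the limit then leaves the initial tail discrepancy, which converges to $-\gamma$ after taking $N\to\infty$ and then $K\to\infty$; the corresponding limiting tail for $\bx$ vanishes by Remark~\ref{rem tsai linear bound}.

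The hardest part is making this tail estimate uniform in $N$ and $s\le T$. Pointwise bounds on the increments are not enough. What is needed is that the label-symmetric sum $\sum_{K<|k|}\big(\sfL_{(0,k)}(s)-l_{(0,k)}\big)/k^2$, together with its antisymmetric part, is small, and this is where Tsai's averaged $\mathfrak D_{\kappa,\rho}$ bounds along paths come in. We would try the estimate $|1/a-1/b|\le|a-b|/(ab)$ with $ab\gtrsim k^2$ and sum $|k|^{1-\kappa}/k^2$. Once the tail is controlled, dominated convergence in the time integral gives the result for $\sfX_0$, and each $\sfX_i=\sfX_0\pm\sfL_{(0,i)}$ follows.
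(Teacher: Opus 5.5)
Your overall architecture matches the paper's. The limit is the translate $\mathsf Y_i-\beta\gamma t/2$ of the unshifted solution, and uniqueness follows by translating back. The gaps are squeezed between the tail-envelope processes, with the padded greatest solutions identifying the upper limit. The zeroth particle is handled by splitting $\psi_0$ at a label cutoff and identifying the initial tail with $-\gamma$.

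\textbf{The gap.} It is the step you flag as hardest, and the mechanism you propose for it does not work. You must compare $\psi_{0,>M}$ of the current gaps with $\psi_{0,>M}(\bl^{(N)})$ uniformly in $N$, because the limsup in $N$ is taken before $M\to\infty$. This requires a power-saving displacement bound
\[
 \sup_{N}\sup_{0\le s\le T}\bigl|\mathsf L^{(N)}_{(0,k)}(s)-l^{(N)}_{(0,k)}\bigr|\le \mathsf C_T\,|k|^{1-\kappa}.
\]
A bound of order $o(|k|)$ is not enough, since $\sum_{k>M}o(1)/k$ may diverge.

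\emph{The envelope squeeze does not give this bound.}
\begin{itemize}
\item Its lower half is fine and supplies your denominators: mixed comparison with the single regular process from $\bl^{\mathrm{inf},(N_0)}$ gives $\mathsf L^{(N)}_{(0,k)}(s)\ge ck$ uniformly in $N$.
\item The upper envelope, however, is a finite system whose support depends on $N$. For fixed $N_1$ it dominates $\mathsf L^{(N)}_{(0,k)}$ only for the finitely many labels in $\mathcal V^{\mathrm{sup}}_{N_1}$.
\item Envelope convergence is only coordinatewise. So nothing controls the far cumulative gaps of all the finite systems simultaneously.
\end{itemize}

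\emph{Your intermediate claim is false in the form needed.} Read uniformly in $N$, as the tail estimate requires, $\mathsf L^{(N)}_{(0,k)}(s)=\rho|k|+O(|k|^{1-\kappa'})$ fails already at $s=0$. Only the tail infima $\bl^{\mathrm{inf},(N)}$ are assumed regular. For example, take $\bx^{(N)}$ symmetric about $x_0^{(N)}=0$ with $l^{(N)}_v=\rho$ for $|v|<N/2$ and $l^{(N)}_v=2\rho$ for $N/2\le|v|<N$. This satisfies every hypothesis, with $\bl^{\mathrm{inf},(N)}\equiv\rho$ and $\gamma=0$. Yet $l^{(N)}_{(0,N)}-\rho N$ is of order $N$.

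\emph{Tsai's path bounds do not transfer either.} His bounds on $\mathfrak D_{\kappa,\rho}$ hold for one regular infinite solution, with its own random constant. You need a constant uniform over finite systems with non-nested physical endpoints. Near those endpoints the missing exterior compression lets gaps expand.

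\textbf{How the paper closes it.} The paper proves exactly this displacement bound as Lemma~\ref{lem:uniform-density-error}, by redoing Tsai's block argument in finite volume. The ingredients are:
\begin{itemize}
\item Bessel-process bounds on increments of individual gaps;
\item a Borel--Cantelli block event;
\item directed seeds taken from the fixed lower process, which lies below every finite system, used to bound $\eta^{\mathrm{up}}$ and $\eta^{\mathrm{lw},N}$ logarithmically;
\item exact finite summed-gap inequalities, in which a physical endpoint only deletes nonnegative terms;
\item a core--fringe decomposition whose fringe is controlled by the uniform upper gap bound \eqref{eq condition for L sup N}.
\end{itemize}
That last point is where this hypothesis is actually used. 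It is not used to identify the upper envelope limit; there the paper needs only $\bsfL^{\mathrm{up}}\ge\bsfL$ and domination by one fixed upper envelope. Once the lemma is available, your $|1/a-1/b|\le|a-b|/(ab)$ summation is precisely Lemma~\ref{lem:uniform-interaction-tails}.

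\textbf{A smaller point of the same nature.} Your statement ``the lower limit stays regular because the deviation constant is uniform'' presupposes a propagation estimate for $\mathfrak D_{\kappa,\rho^{(N)}}$ along paths that is uniform in $N$. The hypotheses bound only initial data, so they do not provide this. The paper instead applies Tsai's Lemma~6.4 to the limit itself, with initial datum $\bl$. The block lower bounds come from one fixed comparison process started at $\bl^{\mathrm{inf},(N_*)}$ with $\rho^{(N_*)}>3\rho/4$.
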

\begin{remark}
Let $b'>b$ be another admissible cutoff.  Only finitely many limiting
particles lie in $(-b',-b)\cup(b,b')$.  Coordinatewise convergence and
strict ordering imply that, for all sufficiently large $N$, precisely
the corresponding labels of $\bx^\UN$ lie in this annulus.  Therefore
    \begin{equation*}
 \lim_{N\to\infty}\left\{\gamma_{1,b'}\left(\bx^\UN\right)-\gamma_{1,b'}\left(\bx\right)\right\}
 =\lim_{N\to\infty}\left\{\gamma_{1,b}\left(\bx^\UN\right)-\gamma_{1,b}\left(\bx\right)\right\}
 =\gamma,
    \end{equation*}
because the two finite annular sums converge term by term.
\end{remark}
\subsection{Comparison of gap processes}
We need comparison both between regular infinite systems and between systems with different finite boundaries. These comparisons will place the approximating gap processes between the two envelope processes.
\begin{lemma}\label{lm monotonicity of gap process}
The following comparisons hold for solutions driven by the same gap
Brownian motions.
\begin{enumerate}[label=\textnormal{(\roman*)}]
\item Let $\boldsymbol{\sfL}^{(1)}$ and
$\boldsymbol{\sfL}^{(2)}$ be the corresponding regular infinite gap
solutions started from regular configurations $\bl^{(1)}$ and
$\bl^{(2)}$. If $\bl^{(1)}\leq\bl^{(2)}$, then
\begin{equation}\label{eq almost surely monotone}
 \bsfL^{\left(1\right)}\left(t\right)\leq\bsfL^{\left(2\right)}\left(t\right),\qquad
 t\geq0,\quad\text{almost surely}.
\end{equation}
\item Let $I=[i_-,i_+]\cap\mathbb Z\subset J$ be intervals of particle
labels, with $I$ finite and $J$ either finite or equal to $\mathbb Z$,
and put $I^{\mathrm g}=(i_-,i_+)\cap\mathbb V$.  Let
$\bsfL^I$ be the finite gap process on $I$, and let
$\bsfL^J$ be the
gap process on $J$, regular when $J=\mathbb Z$. If
\begin{equation*}
 \sfL_v^I\left(0\right)\geq \sfL_v^J\left(0\right),\qquad v\in I^{\mathrm g},
\end{equation*}
then, almost surely,
\begin{equation}\label{eq:mixed-gap-comparison}
 \sfL_v^I\left(t\right)\geq \sfL_v^J\left(t\right),\qquad
 v\in I^{\mathrm g},\quad t\geq0.
\end{equation}
\end{enumerate}
\end{lemma}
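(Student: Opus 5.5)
Part (i) is Tsai's monotonicity for regular gap systems (\cite{MR3568040}, the comparison underlying Propositions 2.5--2.6 there). I would derive it from the structure of the drift $\lambda_v$. Write $\lambda_v(\bl)=2/l_v-\sum_{|i|>1}\phi(l_v,l_{(v,v+i)})$ with $\phi(a,c)=1/c-1/(a+c)$. The function $\phi$ is decreasing in $c$, so $\lambda_v$ is nondecreasing in every coordinate $l_{v'}$ with $v'\neq v$. This is the quasi-monotone (Kamke) structure. The gap equations \eqref{eq gap sde} for two solutions driven by the same $\sfG_v$ have identical noise, so the difference $\Delta_v=\sfL^{(2)}_v-\sfL^{(1)}_v$ is $C^1$ in time and solves a random ODE: $\dot\Delta_v=\frac\beta2(\lambda_v(\bsfL^{(2)})-\lambda_v(\bsfL^{(1)}))$.

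For finite systems I would run the standard first-exit argument. Consider the first time some $\Delta_v$ becomes negative, perturbing the initial condition strictly upward by $\varepsilon$ so that the inequality is strict up to that time. At that time the coordinate $v$ has equal values, and every other coordinate satisfies $\Delta\ge0$. Quasi-monotonicity then gives $\dot\Delta_v\ge0$, a contradiction. Finally let $\varepsilon\downarrow0$, using continuous dependence of the finite system on its initial data.

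For infinite regular systems, a direct first-exit argument fails because infinitely many coordinates are involved. I would instead use Proposition~\ref{ISDE e and u}(iii) together with pathwise uniqueness. Each regular solution is the monotone limit $\bsfL^{\vee\varepsilon}$ of greatest solutions, which in Tsai's construction are themselves limits of finite spatial truncations. The finite comparison applies at the truncation level, and $\bl^{(1)}\vee\varepsilon\le\bl^{(2)}\vee\varepsilon$. Comparison therefore holds for the greatest solutions, and letting $\varepsilon\downarrow0$ in \eqref{eq:tsai-padded-limit} gives \eqref{eq almost surely monotone}.

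For (ii), the key observation is that a finite system on $I$ is the $J$-system with gaps outside $I^{\mathrm g}$ set to $+\infty$. Under the convention $1/\infty=0$, the drift $\lambda_v$ evaluated at such a padded gap vector is exactly the finite Dyson gap drift, with boundary terms vanishing. Since $\lambda_v$ is nondecreasing in the other coordinates, infinite outer gaps only increase the drift of the coordinates in $I^{\mathrm g}$. This point needs a short check that the extended-order monotonicity survives $+\infty$ entries; it does, because $\phi(a,\infty)=0$ is the infimum of $\phi(a,\cdot)$.

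So on $I^{\mathrm g}$ the $I$-system is a solution of the quasi-monotone system, while the $J$-system is a sub-solution whose outer coordinates are finite and hence smaller. The first-exit argument restricted to the finitely many coordinates in $I^{\mathrm g}$ then goes through, because at a touching time the $I$-drift dominates the $J$-drift. When $J=\mathbb Z$, I would again first compare with finite truncations $J_n\supset I$ of $J$ and pass to the limit via Proposition~\ref{ISDE e and u}(ii),(iii).

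I expect the main obstacle to be the infinite-dimensional passage in (i) and in (ii) for $J=\mathbb Z$. One must verify that Tsai's greatest solutions are indeed limits of the comparable finite systems. One must also verify that these limits coincide with the regular pathwise-unique solutions, which follows from (iii) and uniqueness in $\mathcal P_{\kappa,\rho}^p$.

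A secondary technical point is that finite gaps stay positive; this holds for $\beta\ge1$ by noncollision, so the first-exit times are well defined.
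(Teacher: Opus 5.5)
Your route is the paper's. For (ii), the paper writes the $J$-drift on $I^{\mathrm g}$ as $\lambda^I_v(\bsfL^J|_{I^{\mathrm g}})$ minus a nonnegative exterior compression $\chi_v^{J\setminus I}(\bsfL^J)$. This is exactly your observation about padding with $+\infty$ combined with cooperativity of $\lambda_v$, and the paper then applies a finite comparison (Tsai's Lemma~3.7). For (i), it compares Tsai's padded monotone constructions and lets the padding tend to zero.

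However, the finite comparison you sketch, on which both parts rest, does not close as written. Let $t^*$ be the first time some $\Delta_v$ reaches $0$ from a strictly positive start. There you automatically have $\dot\Delta_v(t^*)\le0$, and quasi-monotonicity only adds $\dot\Delta_v(t^*)\ge0$. Both hold when $\dot\Delta_v(t^*)=0$, so there is no contradiction, and shifting the initial data by $\varepsilon$ does not create one. You need strictness in the vector field (add $\varepsilon$ to the upper drift and remove it afterwards) or a Lipschitz estimate.

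The cleanest fix uses Lipschitz continuity. On $[0,T]$ the finitely many gaps indexed by $I^{\mathrm g}$ of both systems stay in a compact subset of $(0,\infty)$, by noncollision and continuity, and the finite drift $\lambda^I$ is Lipschitz there. Suppose $\Delta_v<0$, and let $\mathbf m$ be the coordinatewise minimum of $\bsfL^I$ and $\bsfL^J|_{I^{\mathrm g}}$, so that $m_v=\sfL^I_v$. Cooperativity and nonnegativity of the exterior compression give, for almost every $t$,
\[
 \dot\Delta_v\geq\frac\beta2\left\{\lambda^I_v\left(\mathbf m\right)-\lambda^I_v\left(\bsfL^J|_{I^{\mathrm g}}\right)\right\}\geq -C_T\sum_{w\in I^{\mathrm g}}\Delta_w^-,
 \qquad \Delta_w^-\defeq\max\left\{-\Delta_w,0\right\}.
\]
Hence $\sum_w\Delta_w^-$ is absolutely continuous, vanishes at time zero, and satisfies a Gronwall inequality, so it vanishes identically. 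This is the finite comparison the paper imports from \cite{MR3568040}.

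Two simplifications are available. For $J=\mathbb Z$ you do not need to truncate $J$ and invoke Proposition~\ref{ISDE e and u}(ii). The argument above involves only the coordinates in $I^{\mathrm g}$, and for regular $\bsfL^J$ the exterior compression is a finite, nonnegative, time-integrable term; this is how the paper treats it. For (i), note that $\bsfL^{(1)}$ is $\underline{\mathcal P}$-valued and $\bl^{(1)}\le\bl^{(2)}\vee\varepsilon$. The greatest-solution property therefore gives $\bsfL^{(1)}\le\bsfL^{(2),\vee\varepsilon}$ directly, where $\bsfL^{(2),\vee\varepsilon}$ is the greatest solution from $\bl^{(2)}\vee\varepsilon$. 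Letting $\varepsilon\downarrow0$ in \eqref{eq:tsai-padded-limit} yields \eqref{eq almost surely monotone}, with no need to compare two greatest solutions at the truncation level.
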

\begin{proof}
Part (i) follows by applying Tsai's comparison to the monotone
constructions of Propositions 2.5--2.6 of \cite{MR3568040} and then
letting the padding parameter decrease to zero.

For part (ii), given positive gaps $\mathbf z$ on $I^{\mathrm g}$, reconstruct
an ordered particle vector $\mathbf x$ on $I$, up to an irrelevant common
translation, and define
\begin{equation*}
 \mu_j^I\left(\mathbf x\right)\defeq \sum_{\substack{k\in I\\k\neq j}}\frac1{x_j-x_k},
 \qquad
 \lambda_{j+1/2}^I\left(\mathbf z\right)\defeq \mu_{j+1}^I\left(\mathbf x\right)-\mu_j^I\left(\mathbf x\right),
 \qquad j,j+1\in I.
\end{equation*}
When the $J$-equation is restricted to $I^{\mathrm g}$, its drift is
\begin{equation}\label{eq:external-compression}
 \lambda_v^I\left(\bsfL^J\left(s\right)|_{I^{\mathrm g}}\right)
 -\chi_v^{J\setminus I}\left(\bsfL^J\left(s\right)\right),
 \qquad v=j+\tfrac12,
\end{equation}
where, for a gap configuration $\mathbf z$ on $J$ and any particle
reconstruction $\mathbf x^J$ of it, define
\begin{equation*}
 \chi_v^{J\setminus I}\left(\mathbf z\right)
 \defeq\sum_{k\in J\setminus I}
 \frac{z_v}
 {\left|x_{j+1}^J-x_k^J\right|\,\left|x_j^J-x_k^J\right|}\geq0.
\end{equation*}
Indeed, a particle $k$ outside $I$ contributes
$-z_v/(|x_{j+1}^J-x_k^J|\,|x_j^J-x_k^J|)$ to
$\mu_{j+1}^J-\mu_j^J$.  Thus the $I$-system has the first term in
\eqref{eq:external-compression} and zero exterior compression.
Tsai's finite external-force comparison, Lemma 3.7 of
\cite{MR3568040}, therefore gives \eqref{eq:mixed-gap-comparison} up to
the usual stopping times keeping the finitely many gaps away from zero
and infinity.  Noncollision and continuity remove the stopping.
When $J=\mathbb Z$, regularity makes the exterior compression series
locally uniformly convergent; equivalently one may first truncate that
series and pass to the limit by its nonnegative monotone convergence.
Taking a countable intersection over finite $I$, rational terminal
times, and then using continuity makes all the comparisons
simultaneous.
\end{proof}
\subsection{Convergence of the particle dynamics}
We now prove Theorem \ref{theo sde theorem}.  All processes below are
constructed on the common Brownian space.  Let $\bsfL$ be Tsai's
regular gap solution from $\bl$.  Let $\bsfL^\UN$ be the finite gap
process on $\mathcal V_N$ from $\bl^\UN$, extended by $+\infty$
outside $\mathcal V_N$.
Finally, let $\bsfL^{\tinf,\UN}$ be the
regular infinite solution from $\bl^{\tinf,\UN}$, and let
$\bsfL^{\tsup,\UN}$ be the finite solution from
$\bl^{\tsup,\UN}$ on $\mathcal V_N^{\mathrm{sup}}$, again extended by
$+\infty$ outside its finite support. The superscripts refer to the initial
envelopes, not to extrema over time of the resulting solutions.
Subtracting adjacent equations in \eqref{eq finite DBM} and using
finite-dimensional pathwise uniqueness gives
$\mathfrak q(\boldsymbol{\sfX}^{(N)}(t))=\bsfL^{(N)}(t)$
simultaneously for $t\geq0$, almost surely.
We now use the blocks appearing in Section~6 of Tsai's work \cite{MR3568040}. More explicitly,
for $i\in\mathbb{Z}$ and $k\in\mathbb{N}$, let
\begin{equation*}
    m_i\defeq 
    \sgn\left(i\right)\left\lfloor \left|i\right|^{1/\kappa}\right\rfloor,
    \quad\widetilde m_i^{\,k}\defeq m_{ki},
\end{equation*}
and, for $b\in\mathbb V$, define
\begin{equation*}
    A_{b,k}\defeq\left(\widetilde m_{b-1/2}^{\,k},
    \widetilde m_{b+1/2}^{\,k}\right)\cap\mathbb V.
\end{equation*}

We first identify the limits of the two envelope processes. Their convergence will imply convergence of every fixed gap in the original finite systems.
\begin{proposition}\label{prop convergence of gap}
Under the hypotheses of Theorem \ref{theo sde theorem}, almost surely,
    \begin{equation}\label{eq sup inf converge}
 \lim_{N\to\infty}
 \max_{\diamond\in\left\{\mathrm{inf},\mathrm{sup}\right\}}
 \sup_{0\leq t\leq T}\left|\sfL_v^{\diamond,\UN}\left(t\right)-\sfL_v\left(t\right)\right|=0
    \end{equation}
for every $T>0$ and $v\in\mathbb V$.
\end{proposition}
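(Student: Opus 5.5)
We will prove the two envelope convergences separately, using the comparisons of Lemma~\ref{lm monotonicity of gap process} to obtain monotonicity in $N$ and identifying the monotone limits as solutions of the gap equation \eqref{eq gap sde}, so that pathwise uniqueness in $\mathcal P_{\kappa,\rho}^p$ (Proposition~\ref{ISDE e and u}(i)) closes the argument.

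\emph{Lower envelope.} By Lemma~\ref{lem:envelope-supports}, $\bl^{\tinf,\UN}\uparrow\bl$ coordinatewise, and by \eqref{eq condition for L inf N} each $\bl^{\tinf,\UN}$ is regular. Lemma~\ref{lm monotonicity of gap process}(i) is stated for a fixed class, so we first note that $\bl^{\tinf,\UN}\le\bl$ and that the comparison is really a statement about greatest solutions in $\underline{\mathcal P}$. This holds because all these classes sit inside $\underline{\mathcal P}(\delta)$ for some $\delta>0$ uniform in $N$, since $\rho^\UN\to\rho$ and $\mathfrak D$ is uniformly bounded. It therefore gives $\bsfL^{\tinf,\UN}(t)\le\bsfL^{\tinf,N+1}(t)\le\bsfL(t)$ for all $t$, almost surely. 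Hence $\underline{\sfL}_v(t)\defeq\lim_N\sfL^{\tinf,\UN}_v(t)$ exists and is at most $\sfL_v(t)$. The uniform bound on $\mathfrak D_{\kappa,\rho^\UN}(\bl^{\tinf,\UN})$, fed into Tsai's a priori estimates, gives a uniform bound on $\sup_{t\le T}\mathfrak D_{\kappa,\rho^\UN}(\bsfL^{\tinf,\UN}(t))$ and on $\sup_{t\le T}\mathfrak M_p$. These estimates are formulated via the block sums over $A_{b,k}$, and they are exactly where the blocks $\widetilde m^{\,k}$ are used. They give equicontinuity and control of the far part of the series $\lambda_v$: the truncated sum over $|i|>K$ is small uniformly in $N$ and $t\le T$. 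We then pass to the limit in \eqref{eq gap sde}, using dominated convergence for the finitely many near terms, since lower bounds on gaps come from $\underline{\sfL}>0$ by continuity and positivity. This shows that $\underline{\bsfL}$ is a $\mathcal P_{\kappa,\rho}^p$-valued solution from $\bl$, so uniqueness gives $\underline{\bsfL}=\bsfL$. Dini's theorem, applied to a monotone sequence of continuous functions with continuous limit, upgrades the convergence to uniform convergence on $[0,T]$.

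\emph{Upper envelope.} The supports $\mathcal V_N^{\mathrm{sup}}$ increase, and $\bl^{\tsup,\UN}\downarrow\bl$. Lemma~\ref{lm monotonicity of gap process}(ii) with $I$ the label interval of $\mathcal V_N^{\mathrm{sup}}$ and $J$ that of $\mathcal V_{N+1}^{\mathrm{sup}}$ gives $\sfL^{\tsup,\UN}_v\ge\sfL^{\tsup,N+1}_v$. With $J=\mathbb Z$ it gives $\sfL^{\tsup,\UN}_v\ge\sfL_v$. So $\overline{\sfL}_v=\lim_N\sfL^{\tsup,\UN}_v\ge\sfL_v$ exists. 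To identify $\overline{\bsfL}$ we compare with Tsai's padded solutions. For fixed $\varepsilon$, the bound \eqref{eq condition for L sup N} together with $\bl^{\tsup,\UN}\le\bl^{\tsup,N_1}$ shows the upper envelopes are bounded by $C$ on their supports. We then show that $\overline{\bsfL}$ is an $\underline{\mathcal P}$-valued solution, with lower density inherited from $\overline{\bsfL}\ge\bsfL$, of the gap equation started from $\bl$, and that it is the limit of finite systems whose interior gaps are dominated. By the greatest-solution property, $\overline{\bsfL}\le\bsfL^{\vee\varepsilon}$ for every $\varepsilon$, and \eqref{eq:tsai-padded-limit} then yields $\overline{\bsfL}\le\bsfL$. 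Dini's theorem again gives uniformity.

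\emph{Main obstacle.} The delicate step is passing to the limit in the drift for the upper envelope. The finite systems have boundaries that move outward, and the exterior compression $\chi$ vanishes in the finite system. We need $\sum_{|i|>K}$ of the interaction to be uniformly small along the sequence, and we need an upper mean-gap bound for $\bsfL^{\tsup,\UN}$ on a growing window. Here we would use \eqref{eq condition for L sup N} and Tsai's moment bounds on blocks, Proposition~2.1 of \cite{MR3568040}, to obtain a uniform linear lower bound $l_{(v,v+i)}\ge c|i|$ for the finite systems. This requires lower density of the finite gaps, which comes from $\bsfL^{\tsup,\UN}\ge\bsfL$. Without the upper bound we would instead invoke the greatest-solution comparison directly, avoiding identifying the limit equation.
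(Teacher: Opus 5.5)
Your architecture matches the paper's: monotone envelopes from Lemma~\ref{lm monotonicity of gap process}, identification of both monotone limits as solutions of \eqref{eq gap sde} by truncating the compression series, the greatest-solution property plus \eqref{eq:tsai-padded-limit} for the upper limit, pathwise uniqueness in $\mathcal P^p_{\kappa,\rho}$ for the lower limit, and Dini.

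The upper half is essentially right, and simpler than your last paragraph suggests. With $d_{v,i}(\mathbf z)=z_v/\{z_{(v,v+i)}(z_v+z_{(v,v+i)})\}$, you only need $d_{v,i}(\bsfL^{\tsup,\UN}(s))\le\sup_{u\le T}\sfL^{\tsup,(N_v)}_v(u)/\sfL_{(v,v+i)}(s)^2$ together with $\sfL_{(v,v+i)}(s)\ge c|i|$. The numerator bound is monotonicity in $N$ of a single gap. The denominator bound is $\bsfL^{\tsup,\UN}\ge\bsfL$ plus regularity of $\bsfL$. No growing-window mean-gap bound and no moment bounds from \cite{MR3568040} enter. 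Your fallback of using the greatest-solution comparison without identifying the limit equation is not available. That property only dominates $\underline{\mathcal P}$-valued solutions of the infinite gap equation, and the finite envelope systems are not such solutions.

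The genuine gap is in the lower envelope, at the one step that carries weight there: showing that the limit $\underline{\bsfL}$ lies in $\mathcal P^p_{\kappa,\rho}$. Without this, the pathwise uniqueness of Proposition~\ref{ISDE e and u} says nothing.

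\begin{itemize}
\item You assert that Tsai's a priori estimates give $\sup_N\sup_{t\le T}\mathfrak D_{\kappa,\rho^\UN}(\bsfL^{\tinf,\UN}(t))<\infty$. Tsai's Lemma~6.4 is only a per-solution finiteness statement. Its hypotheses include a lower-density bound and the block lower bound (6.26), which you never verify, let alone uniformly in $N$.
\item The sandwich $\bsfL^{\tinf,(N_0)}\le\bsfL^{\tinf,\UN}\le\bsfL$ cannot replace this. It only traps $\overline\Sigma_{(0,m)\cap\mathbb V}$ between $\rho^{(N_0)}-C|m|^{-\kappa}$ and $\rho+C|m|^{-\kappa}$, which is not an $O(|m|^{-\kappa})$ deviation from $\rho$.
\end{itemize}

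The paper avoids uniformity by working with the limit itself:
\begin{itemize}
\item Choose $N_*$ with $\rho_*=\rho^{(N_*)}>3\rho/4$.
\item Apply Tsai's deterministic block estimate (6.4) to $\bsfL^{\tinf,(N_*)}\in\mathcal P^p_{\kappa,\rho_*}$.
\item Use $\underline{\bsfL}\ge\bsfL^{\tinf,(N_*)}$ to get both $\overline\Sigma_{A_{b,k}}(\underline{\bsfL}(t))>\rho/2$ for large $k$ and $\liminf_{|m|\to\infty}\inf_{t\le T}\overline\Sigma_{(0,m)\cap\mathbb V}(\underline{\bsfL}(t))\ge\rho_*$.
\item Apply Lemma~6.4 directly to $\underline{\bsfL}$, which solves the gap equation from $\bl$ with $\mathfrak D_{\kappa,\rho}(\bl)<\infty$. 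This gives $\sup_{t\le T}\mathfrak D_{\kappa,\rho}(\underline{\bsfL}(t))<\infty$.
\item The moment bound $\mathfrak M_p(\underline{\bsfL}(t))\le\mathfrak M_p(\bsfL(t))$ is immediate from $\underline{\bsfL}\le\bsfL$.
\end{itemize}

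A smaller slip in the same part: you draw the gap lower bounds for dominated convergence from $\underline{\sfL}>0$. But the $\sfL^{\tinf,\UN}_v$ increase to $\underline{\sfL}_v$ and so lie below it. The domination must come from the first term: $2/\sfL^{\tinf,\UN}_v\le2/\sfL^{\tinf,(N_0)}_v$ and $d_{v,i}(\bsfL^{\tinf,\UN}(s))\le\sup_{u\le T}\sfL_v(u)/\sfL^{\tinf,(N_0)}_{(v,v+i)}(s)^2$.
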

\begin{proof}
Lemma \ref{lem:envelope-supports} and Lemma
\ref{lm monotonicity of gap process} give, on one event of probability
one,
    \begin{equation}\label{eq inf sup monotone}
 \bsfL^{\tinf,\UN}\leq\bsfL^{\tinf,\left(N+1\right)}\leq\bsfL
 \leq\bsfL^{\tsup,\left(N+1\right)}\leq\bsfL^{\tsup,\UN}.
    \end{equation}
All inequalities are coordinatewise on their common finite coordinates,
with the extended value $+\infty$ elsewhere.  Define
\begin{equation*}
 \bsfL^{\mathrm{up}}\defeq \lim_{N\to\infty}\bsfL^{\tsup,\UN},
 \qquad
 \bsfJ\defeq \lim_{N\to\infty}\bsfL^{\tinf,\UN}.
\end{equation*}
We first identify the upper limit.  Fix $v\in\mathbb V$ and $T>0$,
and choose $N_v$ such that $v\in\mathcal V_{N_v}^{\mathrm{sup}}$.  For
$N\geq N_v$, set
\begin{equation*}
 \mathsf M_{v,T}\defeq \sup_{0\leq s\leq T}\sfL_v^{\tsup,\left(N_v\right)}\left(s\right)<\infty.
\end{equation*}
Since $\bsfL^{\mathrm{up}}\geq\bsfL$, its cumulative gaps have positive
lower density, uniformly on compact time intervals.  Hence
$\bsfL^{\mathrm{up}}(s)\in\underline{\mathcal L}$ and
$\lambda_v(\bsfL^{\mathrm{up}}(s))$ is defined by the absolutely
convergent series above.
For a positive gap vector $\mathbf z$, write
\begin{equation*}
 d_{v,i}\left(\mathbf z\right)\defeq 
 \frac{z_v}{z_{\left(v,v+i\right)}\left\{z_v+z_{\left(v,v+i\right)}\right\}},\qquad \left|i\right|>1.
\end{equation*}
By \eqref{eq inf sup monotone},
\begin{equation}\label{eq:upper-drift-domination}
 \frac2{\sfL_v^{\tsup,\UN}\left(s\right)}\leq\frac2{\sfL_v\left(s\right)},
 \qquad
 0\leq d_{v,i}\left(\bsfL^{\tsup,\UN}\left(s\right)\right)
 \leq\frac{\mathsf M_{v,T}}{\sfL_{\left(v,v+i\right)}\left(s\right)^2}.
\end{equation}
Since $\bsfL\in\mathcal P_{\kappa,\rho}^p$, there are random finite
$\mathsf K_{v,T}$ and $\mathsf c_{v,T}>0$ such that
\begin{equation*}
 \inf_{0\leq s\leq T}\sfL_{\left(v,v+i\right)}\left(s\right)
 \geq \mathsf c_{v,T}\left|i\right|,\qquad \left|i\right|\geq \mathsf K_{v,T}.
\end{equation*}
Consequently,
\begin{equation}\label{eq:upper-compression-tail}
 \sup_{N\geq N_v}\sup_{0\leq s\leq T}
 \sum_{\left|i\right|>K}d_{v,i}\left(\bsfL^{\tsup,\UN}\left(s\right)\right)
 \leq \frac{\mathsf C_{v,T}}K,\qquad K\geq \mathsf K_{v,T}.
\end{equation}
For fixed $K$, coordinatewise monotone convergence gives convergence
of the truncated drift.  Equations
\eqref{eq:upper-drift-domination}--\eqref{eq:upper-compression-tail}
then give convergence of the full drift in $L^1([0,T])$.  Passing to
the finite gap equations yields
\begin{equation*}
 \sfL_v^{\mathrm{up}}\left(t\right)=l_v+\sfG_v\left(t\right)
 +\frac\beta2\int_0^t\lambda_v\left(\bsfL^{\mathrm{up}}\left(s\right)\right)\dif s.
\end{equation*}
Thus $\bsfL^{\mathrm{up}}$ is adapted, has continuous strictly positive
coordinates, and solves the infinite gap ISDE.  Moreover,
$\bsfL^{\mathrm{up}}\geq\bsfL$, and so
\begin{equation*}
 \liminf_{\left|m\right|\to\infty}\inf_{0\leq s\leq T}
 \overline\Sigma_{\left(0,m\right)\cap\mathbb V}\left(\bsfL^{\mathrm{up}}\left(s\right)\right)\geq\rho.
\end{equation*}
In particular $\bsfL^{\mathrm{up}}\in\underline{\mathcal P}$.

Let $\varepsilon_k=2^{-k}$.  Since $\bsfL^{\mathrm{up}}(0)=\bl\leq
\bl\vee\varepsilon_k$, the greatest-solution assertion in Proposition
\ref{ISDE e and u} gives
\begin{equation*}
 \bsfL\leq\bsfL^{\mathrm{up}}\leq\bsfL^{\vee\varepsilon_k},
 \qquad k\geq1.
\end{equation*}
On the countable intersection over $v\in\mathbb V$, rational
$t\geq0$, and $k\geq1$, equation \eqref{eq:tsai-padded-limit} implies
$\sfL_v^{\mathrm{up}}(t)=\sfL_v(t)$ at every rational time.  Continuity extends the
identity to all times.  Finally, the continuous functions
$\sfL_v^{\tsup,\UN}$ decrease on $[0,T]$ to the continuous function
$\sfL_v$; Dini's theorem proves the upper-envelope convergence in
\eqref{eq sup inf converge}.
For the lower limit, put $\bsfJ^\UN=\bsfL^{\tinf,\UN}$.  Fix
$v\in\mathbb V$ and $T>0$.  For $N\geq N_0$, monotonicity gives
\begin{equation*}
 \frac2{\mathsf J_v^\UN\left(s\right)}\leq\frac2{\mathsf J_v^{\left(N_0\right)}\left(s\right)},
 \qquad
 d_{v,i}\left(\bsfJ^\UN\left(s\right)\right)
 \leq\frac{\sup_{u\leq T}\sfL_v\left(u\right)}
 {\mathsf J_{\left(v,v+i\right)}^{\left(N_0\right)}\left(s\right)^2}.
\end{equation*}
Moreover $\bsfJ\geq\bsfJ^{(N_0)}$, so
$\bsfJ(s)\in\underline{\mathcal L}$ and its drift is defined.
The regularity of $\bsfJ^{(N_0)}$ makes the second majorant summable,
uniformly on $[0,T]$.  The same truncated-drift and dominated-
convergence argument used above therefore shows that $\bsfJ$ is
adapted, has continuous strictly positive coordinates, starts from
$\bl$, and solves \eqref{eq gap sde}.  It remains to prove that
$\bsfJ\in\mathcal P_{\kappa,\rho}^p$.

We first verify the density part of this assertion. Since
$\rho^{(N)}\rightarrow\rho>0$, choose a deterministic $N_{*}$ sufficiently large that $\rho_{*}=\rho^{(N_{*})}>{3\rho}/{4}.$ By Proposition \ref{ISDE e and u}, $\bsfJ^{(N_{*})}\in\mathcal{P}_{\kappa,\rho_{*}}^{p}$. Hence, for every $T>0$, we let
\begin{equation*}
    \mathsf C_{*,T}\defeq \sup_{0\leq t\leq T}\mathfrak D_{\kappa,\rho_{*}}\left(\bsfJ^{\left(N_{*}\right)}\left(t\right)\right)<\infty.
\end{equation*}
The deterministic estimate in equation~(6.4) of \cite{MR3568040} applied with
$\alpha=\kappa$, gives a constant $c_{\kappa}<\infty$ such that, for all sufficiently large $k$,
\begin{equation*}
    \sup_{0\leq t\leq T}
    \sup_{b\in\mathbb V}\left|\overline{\Sigma}_{A_{b,k}}
    \left(\bsfJ^{\left(N_{*}\right)}\left(t\right)\right)
    -\rho_{*}\right|\leq\frac{c_{\kappa}\mathsf C_{*,T}}{k}.
\end{equation*}
Choose $\mathsf K_T=\mathsf K_T(\omega)$ so large that the blocks are nonempty and
$c_{\kappa}\mathsf C_{*,T}/\mathsf K_T\leq\rho/4$. Since
$\bsfJ^{(N_{*})}\leq\bsfJ$, for every $k\geq \mathsf K_T$ we obtain
\begin{equation*}
    \inf_{0\leq t\leq T}\inf_{b\in\mathbb V}
    \overline{\Sigma}_{A_{b,k}}\left(\bsfJ\left(t\right)\right)
    \geq\rho_{*}-\frac{\rho}{4}>\frac{\rho}{2}.
\end{equation*}
This is precisely the block lower bound required in condition~(6.26) of \cite{MR3568040}.

Comparison with the same regular lower process gives
\begin{equation*}
 \inf_{0\le t\le T}\overline\Sigma_{\left(0,m\right)\cap\mathbb V}
 \left(\bsfJ\left(t\right)\right)
 \geq\rho_*-\mathsf C_{*,T}|m|^{-\kappa}.
\end{equation*}
Together with the continuity and strict positivity already proved, this yields
\begin{equation}\label{eq bsfJ in Y_t}
 \liminf_{|m|\to\infty}\inf_{0\le t\le T}
 \overline\Sigma_{\left(0,m\right)\cap\mathbb V}\left(\bsfJ\left(t\right)\right)
 \geq\rho_*>\rho/2.
\end{equation}
We now apply Lemma~6.4 of Tsai's work with $\alpha=\kappa$ and
$\gamma=\rho/2$. In the present notation, its assumptions are that $\bsfJ$
solves the gap ISDE, that
$\mathfrak D_{\kappa,\rho}(\bsfJ(0))=\mathfrak D_{\kappa,\rho}(\bl)<\infty$, that
$\bsfJ$ satisfies \eqref{eq bsfJ in Y_t}, and that the preceding block lower bound holds on
every compact time interval. All four assertions have been verified above,
and the conclusion of that lemma is
\begin{equation*}
    \sup_{t\in\left[0,T\right]}\mathfrak D_{\kappa,\rho}\left(\bsfJ\left(t\right)\right)<\infty,
    \quad \forall T>0.
\end{equation*}
It remains only to verify the averaged $p$-moment condition. The comparison
inequality established at the beginning and
$\bsfL\in\mathcal{P}_{\kappa,\rho}^{p}$ give, for every $T>0$,
\begin{equation*}
\begin{aligned}
    \sup_{0\leq t\leq T}\mathfrak M_p\left(\bsfJ\left(t\right)\right)
    \leq\sup_{0\leq t\leq T}\mathfrak M_p\left(\bsfL\left(t\right)\right)<\infty.
\end{aligned}
\end{equation*}
Thus $\bsfJ\in\mathcal P_{\kappa,\rho}^p$.
Pathwise uniqueness in Proposition \ref{ISDE e and u} identifies
$\bsfJ=\bsfL$ indistinguishably.
Since $\sfL_v^{\tinf,\UN}$ increases pointwise to the continuous
function $\sfL_v$, Dini's theorem therefore gives
\begin{equation*}
 \lim_{N\to\infty}\sup_{0\leq t\leq T}
 \left|\sfL_v^{\tinf,\UN}\left(t\right)-\sfL_v\left(t\right)\right|=0,
 \qquad T>0,\quad v\in\mathbb V.
\end{equation*}
\end{proof}
The mixed comparison lemma also gives
\begin{equation*}
 \bsfL^{\tinf,\UN}\left(t\right)\leq\bsfL^\UN\left(t\right)
 \leq\bsfL^{\tsup,\UN}\left(t\right)
\end{equation*}
on all common finite coordinates, simultaneously for $t\geq0$ and
$N\geq N_0$.  
The comparison inequality therefore transfers the envelope convergence to the original finite gap processes. This gives the local convergence needed to identify the limiting particle drift.
\begin{cor}\label{cor convergence of L^N}
Under the hypotheses of Theorem \ref{theo sde theorem}, almost surely,
    \begin{equation}\label{eq convergence of L^N}
        \lim_{N\to\infty}\sup_{0\leq t\leq T}
        \left|\sfL_v^\UN\left(t\right)-\sfL_v\left(t\right)\right|=0,
        \qquad T>0,\quad v\in\mathbb V.
    \end{equation}
\end{cor}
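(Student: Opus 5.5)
The proof is a squeeze. The sandwich inequality displayed just before the statement places each finite gap between the two envelopes, and Proposition~\ref{prop convergence of gap} shows both envelopes converge to $\sfL_v$ uniformly on $[0,T]$. So we work on the intersection of the almost-sure events of Proposition~\ref{prop convergence of gap} (countably many $v$, $T\in\mathbb N$) and the event on which the mixed comparison of Lemma~\ref{lm monotonicity of gap process}(ii) holds for all $N\geq N_0$ and all $t$. All of these have probability one.

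Fix $v\in\mathbb V$ and $T>0$. By Lemma~\ref{lem:envelope-supports}, $v\in\mathcal V_N^{\mathrm{sup}}$ for all large $N$. Since $\mathcal V_N^{\mathrm{sup}}=\bigcap_{n\geq N}\mathcal V_n\subset\mathcal V_N$, we also have $v\in\mathcal V_N$. The lower envelope $\bl^{\tinf,\UN}$ is regular, hence finite at every coordinate. Thus all three processes are real-valued at $v$, and on this event
\[
 \sfL_v^{\tinf,\UN}(t)\leq\sfL_v^\UN(t)\leq\sfL_v^{\tsup,\UN}(t),\qquad 0\leq t\leq T.
\]
Subtracting $\sfL_v(t)$ throughout gives
\[
 \sup_{0\leq t\leq T}\left|\sfL_v^\UN(t)-\sfL_v(t)\right|\leq\max_{\diamond\in\{\mathrm{inf},\mathrm{sup}\}}\sup_{0\leq t\leq T}\left|\sfL_v^{\diamond,\UN}(t)-\sfL_v(t)\right|,
\]
and the right side tends to zero by \eqref{eq sup inf converge}.

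The main point needing care is the sandwich itself, that is, applying Lemma~\ref{lm monotonicity of gap process}(ii) with the correct roles of $I$ and $J$.

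For the upper bound, take $I$ to be the label interval with gap set $\mathcal V_N^{\mathrm{sup}}$ and $J=\mathcal I_N$; this inclusion holds because $\mathcal V_N^{\mathrm{sup}}\subset\mathcal V_N$. The initial inequality $l_v^{\tsup,\UN}\geq l_v^\UN$ on $I^{\mathrm g}$ holds by the definition of the supremum. This gives $\sfL^{\tsup,\UN}\geq\sfL^\UN$ on the common coordinates.

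For the lower bound, take $I=\mathcal I_N$ and $J=\mathbb Z$ with the regular process $\bsfL^{\tinf,\UN}$, which is legitimate for $N\geq N_0$ by \eqref{eq condition for L inf N}. The initial inequality $l_v^\UN\geq l_v^{\tinf,\UN}$ on $\mathcal V_N$ again holds by definition. This gives $\sfL^\UN\geq\sfL^{\tinf,\UN}$ on $\mathcal V_N$.

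Both processes in each comparison are driven by the same gap Brownian motions $\sfG_v$, since everything is built on the common Brownian family. Taking the countable intersection over $N$ gives simultaneity in $t$ and $N$, which completes the proof.
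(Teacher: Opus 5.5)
Your proposal is correct and follows the paper's argument. The paper obtains the sandwich $\bsfL^{\tinf,\UN}\leq\bsfL^\UN\leq\bsfL^{\tsup,\UN}$ from the mixed comparison in Lemma~\ref{lm monotonicity of gap process}(ii), then squeezes using the envelope convergence of Proposition~\ref{prop convergence of gap}; your explicit choice of $I$ and $J$ in the two applications of part (ii) just spells out what the paper states in one line.
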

We bound cumulative gap displacements uniformly over the finite systems. This supplies the control of the far interaction tails needed to identify the limiting drift.
\begin{lemma}\label{lem:uniform-density-error}
Assume the hypotheses and notation of Theorem \ref{theo sde theorem}. For
$m\in\mathbb Z\setminus\{0\}$ and $t\geq0$, set
\[
\mathsf D_m^{\left(N\right)}\left(t\right)\defeq \sum_{v\in\left(0,m\right)\cap\mathcal V_N}
\left(\mathsf L^{\left(N\right)}_v\left(t\right)-l^{\left(N\right)}_v\right).
\]
There is an event $\Omega_*$ of probability one on which, for every
$T>0$, there is a finite random variable $\mathsf C_T$, chosen increasing
in $T$, such that
\begin{equation}\label{eq:uniform-density-error}
\sup_{N\in\mathbb N}\sup_{0\leq t\leq T}
\sup_{m\in\mathbb Z\setminus\left\{0\right\}}
\left|m\right|^{\kappa-1}\left|\mathsf D_m^{\left(N\right)}\left(t\right)\right|\leq \mathsf C_T
\quad\text{almost surely}.
\end{equation}
\end{lemma}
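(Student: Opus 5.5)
The plan is to sandwich the cumulative gap of the finite system between the cumulative gaps of the two envelope processes. Then each envelope displacement is controlled by its mean-gap deviation from $\rho$, combined with the known initial deviations.

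Fix $m>0$; the case $m<0$ is symmetric. The mixed comparison of Lemma~\ref{lm monotonicity of gap process} gives
\[
\bsfL^{\tinf,\UN}(t)\le\bsfL^\UN(t)\le\bsfL^{\tsup,\UN}(t)
\]
on common finite coordinates. For the lower side, I use the monotone chain \eqref{eq inf sup monotone}: $\bsfL^{\tinf,(N_*)}\le\bsfL^{\tinf,\UN}$ for $N\ge N_*$. The regular process $\bsfJ^{(N_*)}$ lies in $\mathcal P^p_{\kappa,\rho_*}$ with $\sup_{t\le T}\mathfrak D_{\kappa,\rho_*}\le \mathsf C_{*,T}$. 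Hence
\[
\sum_{v\in(0,m)\cap\mathcal V_N}\sfL^\UN_v(t)\ \ge\ \rho_*\,|(0,m)\cap\mathcal V_N|-\mathsf C_{*,T}|m|^{1-\kappa}
\]
when $(0,m)\subset\mathcal V_N$. If $(0,m)$ sticks out of $\mathcal V_N$, the sum is over the finite support $(0,i_N^+)$, and the same bound holds with $m$ replaced by $i_N^+$. The initial sum satisfies $\sum l_v^\UN\le \rho^\UN|m|+C|m|^{1-\kappa}$ by \eqref{eq condition for L inf N}, since $l^\UN\ge l^{\tinf,\UN}$ only gives a lower bound.

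This asymmetry is a problem, so I would rather not rely on envelopes for the initial data. The cleaner route is to compare with the deterministic lower bound $\sum l^{\tinf,\UN}_v\ge \rho^\UN|m|-C|m|^{1-\kappa}$ and the upper bound \eqref{eq condition for L sup N}, which only yields $O(|m|)$. This suggests a different, direct approach.

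For the direct approach, I write the finite cumulative gap via the SDE,
\[
\mathsf D^\UN_m(t)=\sfB_{j}(t)-\sfB_0(t)+\tfrac\beta2\int_0^t(\mu_j-\mu_0)(\boldsymbol\sfX^\UN(s))\,\dif s,\qquad j=\min(m-\tfrac12+\tfrac12,i_N^+).
\]
The Brownian part is $O(|m|^{1/2}\log|m|)$ uniformly on $[0,T]$ almost surely, by Borel--Cantelli over $m$. This is within $|m|^{1-\kappa}$ only if $\kappa<1/2$, so in general I would instead control the noise at block scale, using the blocks $A_{b,k}$ as in Tsai's equation (6.4).

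The drift difference $\mu_j-\mu_0$ is the main obstacle. The plan is to bound it by sums of $\lambda_v$ over the block and to use that $\lambda_v\ge -\sum_i d_{v,i}$, while the positive part $2/l_v$ is controlled by the lower envelope gaps being bounded below on $[0,T]$.

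Combining the two sides, the upper bound comes from $\bsfL^\UN\le\bsfL^{\tsup,\UN}$. The envelope $\bsfL^{\tsup,\UN}$ decreases to $\bsfL\in\mathcal P^p_{\kappa,\rho}$, but only coordinatewise, so uniform control in $m$ needs more. For this I apply Tsai's Lemma~6.4 machinery uniformly in $N$: the constants there depend only on $\sup_N\mathfrak D(\bl^{\tinf,\UN})\le C$, the $\ell^\infty$ bound $C$, and the block lower bound from $\bsfJ^{(N_*)}$. This yields $\sup_N\sup_t|m|^{\kappa-1}|\sum_{(0,m)}\sfL^\UN_v(t)-\rho^\UN|m||\le\mathsf C_T$. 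Subtracting the initial bound then gives \eqref{eq:uniform-density-error}. I expect the uniform-in-$N$ upper block estimate, near the finite boundary where gaps are $+\infty$, to be the hardest step.
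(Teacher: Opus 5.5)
Your final reduction does not work. You bound $\bigl|\sum_{v\in(0,m)\cap\mathcal V_N}\sfL^\UN_v(t)-\rho^\UN|m|\bigr|$ and then ``subtract the initial bound''. That presupposes $\sum_{v\in(0,m)\cap\mathcal V_N}l^\UN_v=\rho^\UN|m|+O(|m|^{1-\kappa})$ uniformly in $N$, and the hypotheses give no such estimate. Condition \eqref{eq condition for L inf N} concerns only the tail infimum $\bl^{\tinf,\UN}\le\bl^\UN$, so it yields only a \emph{lower} bound on the initial sum; the inequality you quote goes the other way. Condition \eqref{eq condition for L sup N} is a bare $\ell^\infty$ bound.

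Here is a concrete counterexample. Take $x_i=\rho i$, and let $\bl^\UN$ equal $\rho$ for $|v|<N$ and $2\rho$ for $N<|v|<N^2$. Then $\bl^{\tinf,\UN}\equiv\rho$, $\gamma=0$ by symmetry, and all hypotheses hold. Yet at $t=0$ and $m=N^2$ one has $|m|^{\kappa-1}\bigl|\sum_{v}l^\UN_v-\rho^\UN m\bigr|\approx\rho m^{\kappa}\to\infty$.

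The same example shows that the envelope sandwich cannot give even a one-sided bound. The estimate $\sum_v\sfL^\UN_v(t)\ge\rho_*|m|-\mathsf C|m|^{1-\kappa}$, set against an initial sum of size about $2\rho|m|$, only gives $\mathsf D^\UN_m(t)\ge-c|m|$. Envelopes compare levels, and Tsai's Lemma~6.4 propagates a time-zero bound on $\mathfrak D_{\kappa,\rho}$ that $\bl^\UN$ does not have. The lemma is a statement about increments, so it must be proved as a displacement bound that never refers to the initial mean.

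Your ``direct approach'' is the right starting point, but its key idea is missing. In $\mathsf D^\UN_m=\sfB_j-\sfB_0+\frac\beta2\int_0^t(\mu_j-\mu_0)\,\dif s$, the noise is a difference of two Brownian motions. It is therefore $O(\sqrt{\log|m|})$ uniformly in $m$ by Borel--Cantelli, and is never the obstruction.

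The drift is the obstruction. Controlling $\mu_j-\mu_0=\sum_{v\in(0,j)}\lambda_v$ through separate bounds on $2/\sfL^\UN_v$ and on the compressions loses all cancellation. By Cauchy--Schwarz, $\sum_{v\in(0,m)}2/\sfL^\UN_v$ alone is already of order $|m|$. Moreover, the lower process gives no bound on $\inf_v\inf_{t\le T}\underline{\sfL}_v(t)$ that is uniform in $v$.

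What is needed is the telescoped identity $\mu_s-\mu_r=2\{\eta^{\mathrm{up}}_{(r,s)}-\eta^{\mathrm{lw},N}_{(r,s)}\}$. Both interactions are nonnegative, and each is $O(\log|m|)$ \emph{provided} $r,s$ are seed labels. At such labels the cumulative gaps of the time-infimum of a common regular lower process (started from $\bl^{\tinf,(N_0)}$) grow at least linearly: inward for $\eta^{\mathrm{up}}$, outward for $\eta^{\mathrm{lw},N}$. Such seeds exist in every block by Tsai's Lemma~4.4, on a block event obtained from Borel--Cantelli.

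You then need a core--fringe decomposition. The buffers between the seeds and the true endpoints $0,m$, and the fringe cut off by the physical endpoint $i_N^+$, are controlled by the Bessel suprema $\mathsf Q$ and the $\ell^\infty$ bound on $\bl^\UN$. They are $O(|m|^{1-\kappa})$ because blocks at distance $m$ have that size. Finally you iterate over a fine deterministic time grid, chosen so that the $\mathsf Q$-block averages are small. This is exactly the step you defer as ``the hardest step'', so as it stands the proposal does not prove the lemma.
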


\begin{proof}
We give the finite-volume details, since the endpoints of the finite
systems need not be nested compared to the proof in \cite{MR3568040}. The proof is arguably the most technical in the paper and proceeds in several steps. All processes below use the common Brownian
coupling fixed before Proposition \ref{ISDE e and u}.

\smallskip
\noindent\emph{Step 1: a common lower process and the Bessel controls.}
Initially consider $N\geq N_0$, and put
\[
\underline{\bl}\defeq \bl^{\mathrm{inf},\left(N_0\right)},\qquad \rho_*\defeq \rho^{\left(N_0\right)}>0,
\]
and let $\underline{\bsfL}$ be the gap-ISDE solution from
$\underline{\bl}$. By \eqref{eq condition for L inf N} and Proposition
\ref{ISDE e and u},
\begin{equation}\label{eq:ud-lower-regular}
\underline{\bsfL}\in\mathcal P^p_{\kappa,\rho_*}.
\end{equation}

Coordinatewise convergence
$\bx^{(N)}\to\bx$ implies that, for each $r<\infty$,
\begin{equation}\label{eq:ud-support-exhausts}
\left[-r,r\right]\cap\mathbb Z\subset\mathcal I_N
\quad\text{for all sufficiently large }N.
\end{equation}
Moreover, on one event of probability one,
\begin{equation}\label{eq:ud-mixed-comparison}
\underline{\mathsf L}_v\left(t\right)\leq\mathsf L^{\left(N\right)}_v\left(t\right),
\qquad v\in\mathcal V_N,\ t\geq0,\ N\geq N_0.
\end{equation}
This is the mixed finite--infinite case of Lemma
\ref{lm monotonicity of gap process}, since
$\underline l_v\leq l_v^{(N)}$ for every $v$.  Intersecting the
probability-one comparison events over the countable set of $N$ gives
\eqref{eq:ud-mixed-comparison} simultaneously.

For $v\in\mathbb V$ and $0\leq s\leq u$, let
$\mathsf Q_v^s(u)$ be the nonnegative solution
\begin{equation}\label{eq:ud-Q-definition}
\mathsf Q_v^s\left(u\right)=\sfG_v\left(u\right)-\sfG_v\left(s\right)
+\beta\int_s^u\frac{\dif r}{\mathsf Q_v^s\left(r\right)},
\qquad\mathsf Q_v^s\left(s\right)=0,
\end{equation}
and set $\mathsf Q^{s,t}_v=\sup_{s\leq u\leq t}\mathsf Q_v^s(u)$.
These are Tsai's processes in (3.3)--(3.5): $\mathsf Q_v^s/\sqrt2$ is a
Bessel process of dimension $\beta+1$ started from zero at time $s$.
For fixed $s$, the indices $v-1/2$ of either parity use disjoint Brownian
pairs, so each parity subfamily is iid. Since the compression in each
finite gap drift is nonnegative,
Tsai's (3.7)--(3.8) gives
\begin{equation}\label{eq:ud-Q-comparison}
-l_v^{\left(N\right)}\leq\mathsf L^{\left(N\right)}_v\left(u\right)-l_v^{\left(N\right)}
\leq\mathsf Q^{0,T}_v,
\qquad 0\leq u\leq T,\quad v\in\mathcal V_N,
\end{equation}
and, whenever $s\leq u\leq t$,
\begin{equation}\label{eq:ud-Q-increment}
\mathsf L^{\left(N\right)}_v\left(u\right)-\mathsf L^{\left(N\right)}_v\left(s\right)\leq\mathsf Q^{s,t}_v.
\end{equation}
We shall also use the backward form (3.7): for any gap process of this
type, in particular for $\underline{\bsfL}$,
\begin{equation}\label{eq:ud-Q-backward}
 \underline{\mathsf L}_v\left(t\right)-
 \inf_{s\leq u\leq t}\underline{\mathsf L}_v\left(u\right)
 \leq\mathsf Q_v^s\left(t\right)\leq\mathsf Q^{s,t}_v.
\end{equation}

\smallskip
\noindent\emph{Step 2: the common block event and one-sided lower bounds.}
Use the blocks $A_{b,k}$ defined before Proposition \ref{prop convergence of gap}.
For $k\geq2$ the blocks are nonempty and form an ordered
partition of $\mathbb{V}$, with
\begin{equation}\label{eq:ud-block-geometry}
\frac{\left|A_{b+1,k}\right|}{\left|A_{b,k}\right|}\vee
\frac{\left|A_{b,k}\right|}{\left|A_{b+1,k}\right|}\leq R,\qquad R\defeq 2^{1/\kappa+2}.
\end{equation}
Indeed, put $\vartheta=1/\kappa$ and
$d_j=k^{\vartheta}((j+1)^{\vartheta}-j^{\vartheta})$ for $j\geq0$. These unrounded positive-side
block lengths increase, satisfy $d_j\geq k^{\vartheta}>2$, and obey
$d_{j+1}/d_j\leq2^{\vartheta}$: for $j=0$ the ratio is $2^{\vartheta}-1$, while for
$j\geq1$ this follows by integrating
$(u+1)^{\vartheta-1}\leq2^{\vartheta-1}u^{\vartheta-1}$ over $[j,j+1]$.
The actual length $|A_{j+1/2,k}|$ differs from $d_j$ by less than one,
so it lies between $d_j/2$ and $3d_j/2$. Both adjacent ratios are
therefore at most $3\cdot2^{\vartheta}<R$; reflection handles the negative side,
and the two central blocks have equal length.
For fixed $k$, equations (6.2)--(6.3) of \cite{MR3568040} also show
that every block whose underlying real interval contains the integer
$r$ has size at most
\begin{equation}\label{eq:ud-block-size}
c_{\kappa,k}\left(1+\left|r\right|^{1-\kappa}\right).
\end{equation}

Fix $T>0$. Choose a deterministic partition
$0=t_0<t_1<\cdots<t_J=T$ sufficiently fine that
\begin{equation}\label{eq:ud-short-time}
2q\left(t_j-t_{j-1},1\right)\leq\frac{\rho_*}{4},
\qquad j=1,\ldots,J,
\end{equation}
where
\begin{equation*}
 q\left(h,r\right)\defeq\expt\left[\left(\mathsf Q_{1/2}^{0,h}\right)^r\right]
\end{equation*}
is finite and tends to zero with $h$ for every $r\geq1$; this is
Tsai's (3.5).  We claim that, on an event of
probability one, there is a finite random integer $\mathsf K=\mathsf K(T)$ for which,
for every $b\in\mathbb V$ and $j=1,\ldots,J$,
\begin{subequations}\label{eq:ud-common-block-event}
\begin{align}
\overline\Sigma_{A_{b,\mathsf K}}\left(\underline{\bsfL}\left(t_j\right)\right)
&\geq\frac{3\rho_*}{4},                                \label{eq:ud-block-lower-final}\\
\overline\Sigma_{A_{b,\mathsf K}}
\left(\boldsymbol{\mathsf Q}^{t_{j-1},t_j}\right)
&\leq\frac{\rho_*}{4},                                 \label{eq:ud-block-Q-small}\\
\overline\Sigma_{A_{b,\mathsf K}}\left(\boldsymbol{\mathsf Q}^{0,T}\right)
&\leq C_Q\left(T\right),                                           \label{eq:ud-block-Q-global}\\
\max_{i\in\left[\widetilde m_{b-1/2}^{\,\mathsf K},
\widetilde m_{b+1/2}^{\,\mathsf K}\right]\cap\mathbb Z}
\sup_{0\leq u\leq T}\left|\sfB_i\left(u\right)\right|
&\leq C_B\left(T\right)\left|A_{b,\mathsf K}\right|^{1/2}.                            \label{eq:ud-block-B}
\end{align}
\end{subequations}
Here $C_Q(T)$ and $C_B(T)$ may be taken deterministic and are independent
of $b$, $j$, and $N$. To prove the claim, use
\eqref{eq:ud-lower-regular} and Tsai's deterministic block estimate
(6.4): for all sufficiently large $k$,
\[
\inf_{0\leq u\leq T}\inf_{b\in\mathbb{V}}
\overline\Sigma_{A_{b,k}}\left(\underline{\bsfL}\left(u\right)\right)
\geq\rho_*-\frac{c_\kappa}{k}
\sup_{0\leq u\leq T}
\mathfrak D_{\kappa,\rho_*}\left(\underline{\bsfL}\left(u\right)\right).
\]
This gives \eqref{eq:ud-block-lower-final}. For the remaining bounds,
split the gap indices into the two independent parity classes.  The
Bessel suprema have a finite exponential moment, and Brownian reflection
gives the corresponding Gaussian tail.  Chernoff's inequality, applied
separately to the two parity classes (and hence accounting for the
one-dependence of adjacent gap noises), gives deterministic
$C_Q,C_B<\infty$ such that,
writing $n=|A_{b,k}|$,
\begin{equation*}
\begin{split}
 &\prob\left(
 \overline\Sigma_{A_{b,k}}\left(\boldsymbol{\mathsf Q}^{t_{j-1},t_j}\right)
 >\frac{\rho_*}{4}\right)
 +\prob\left(
 \overline\Sigma_{A_{b,k}}\left(\boldsymbol{\mathsf Q}^{0,T}\right)>C_Q\right)\\
 &\quad
 +\prob\left(
 \max_{i\in\left[\widetilde m_{b-1/2}^{\,k},
 \widetilde m_{b+1/2}^{\,k}\right]\cap\mathbb Z}
 \sup_{0\leq u\leq T}\left|\sfB_i\left(u\right)\right|>C_Bn^{1/2}\right)
 \leq c\exp\left(-n/c\right).
\end{split}
\end{equation*}
Here we also used \eqref{eq:ud-short-time}; the finitely many $j$'s
are absorbed into $c$.  In particular, the lower block-size bound
(6.2) of \cite{MR3568040} makes the sum of these probabilities over
$b\in\mathbb{V}$ and all sufficiently large $k$ finite.
Borel--Cantelli gives
\eqref{eq:ud-block-Q-small}--\eqref{eq:ud-block-B}; this is the argument
used for Tsai's (6.18)--(6.19). Taking the maximum of the finitely many
random thresholds and $2$ produces one $\mathsf K$ satisfying
\eqref{eq:ud-common-block-event}.
In the remainder of the proof, $\mathsf C_T$ denotes an almost surely finite
random constant, independent of $N$, $m$, and $j$, whose value may
increase from line to line.

For $j=1,\ldots,J$, define the coordinatewise time infimum, distinct
from the initial-data tail envelope,
\[
\mathsf L_v^{\min,j}\defeq \inf_{t_{j-1}\leq u\leq t_j}\underline{\mathsf L}_v\left(u\right).
\]
Equation \eqref{eq:ud-Q-backward} gives
$\underline{\mathsf L}_v(t_j)-\mathsf L_v^{\min,j}
\leq\mathsf Q^{t_{j-1},t_j}_v$. Hence
\begin{equation}\label{eq:ud-block-inf-lower}
\overline\Sigma_{A_{b,\mathsf K}}\left(\boldsymbol{\mathsf L}^{\min,j}\right)
\geq\frac{\rho_*}{2},
\qquad b\in\mathbb{V},\quad j=1,\ldots,J.
\end{equation}

Put $\delta=\rho_*/2$. For integers $i$ define
\[
h_{\left(i,+\infty\right)}\left(\mathbf y\right)\defeq \inf_{\substack{r\in\mathbb Z\\r>i}}\frac{y_{\left(i,r\right)}}{r-i},\qquad
h_{\left(i,-\infty\right)}\left(\mathbf y\right)\defeq \inf_{\substack{r\in\mathbb Z\\r<i}}\frac{y_{\left(i,r\right)}}{i-r}.
\]
Set $c_*=\delta/[2(1+R)]$. For each $b$ and $j$ there are integer
labels
\[
 \mathsf I^+_{b,j}\in\left[\widetilde m_{b-1/2}^{\,\mathsf K},
 \widetilde m_{b+1/2}^{\,\mathsf K}\right),\qquad
 \mathsf I^-_{b,j}\in\left(\widetilde m_{b-1/2}^{\,\mathsf K},
 \widetilde m_{b+1/2}^{\,\mathsf K}\right]
\]
such that
\begin{equation}\label{eq:ud-directed-seeds}
h_{\left(\mathsf I^+_{b,j},+\infty\right)}\left(\boldsymbol{\mathsf L}^{\min,j}\right)\geq c_*,
\qquad
h_{\left(\mathsf I^-_{b,j},-\infty\right)}\left(\boldsymbol{\mathsf L}^{\min,j}\right)\geq c_*.
\end{equation}
For completeness, consider the plus direction. A directed interval
beginning at the left endpoint of $A_{b,\mathsf K}$ and ending beyond its right
endpoint consists of complete blocks of total size $S$ and at most one
terminal block fragment of size $P$. By
\eqref{eq:ud-block-inf-lower}, the complete blocks have mass at least
$\delta S$, while \eqref{eq:ud-block-geometry} gives $P\leq RS$.
Thus every such directed average is at least
$\delta/(1+R)>c_*$. Lemma~4.4 of \cite{MR3568040}, with terminal
endpoint $+\infty$, gives $\mathsf I^+_{b,j}$. Reflection gives $\mathsf I^-_{b,j}$.
Choose the least admissible label in each of these finite blocks. The
conditions defining admissibility involve countably many measurable
averages, so $\mathsf I^{\pm}_{b,j}$ are random variables. These selected labels and
$c_*$ do not depend on $N$.

\smallskip
\noindent\emph{Step 3: the exact finite summed-gap inequalities.}
Let $i_N^-\leq r<s\leq i_N^+$ be integers and let
$\mathbf z=(z_v)_{v\in\mathcal V_N}$ be a positive finite gap
configuration. Define
\begin{align}
\eta^{\mathrm{up}}_{\left(r,s\right)}\left(\mathbf z\right)
&\defeq \frac12\sum_{i=r+1}^{s}\frac1{z_{\left(r,i\right)}}
+\frac12\sum_{i=r}^{s-1}\frac1{z_{\left(i,s\right)}},
                                                        \label{eq:ud-eta-up}\\
\eta^{\mathrm{lw},N}_{\left(r,s\right)}\left(\mathbf z\right)
&\defeq \frac12\sum_{i=i_N^-}^{r-1}
\frac{z_{\left(r,s\right)}}{z_{\left(i,r\right)}z_{\left(i,s\right)}}
+\frac12\sum_{i=s+1}^{i_N^+}
\frac{z_{\left(r,s\right)}}{z_{\left(r,i\right)}z_{\left(s,i\right)}}.                  \label{eq:ud-eta-lw}
\end{align}
Empty sums are zero. If
$\mathbf z=\mathfrak q(\mathbf x)|_{\mathcal V_N}$ for a finite
particle configuration with label set $\mathcal I_N$, then telescoping
the particle drifts gives
\[
 \mu_s\left(\mathbf x\right)-\mu_r\left(\mathbf x\right)
 =2\left\{\eta^{\mathrm{up}}_{\left(r,s\right)}\left(\mathfrak q\left(\mathbf x\right)\right)
 -\eta^{\mathrm{lw},N}_{\left(r,s\right)}\left(\mathfrak q\left(\mathbf x\right)\right)\right\}.
\]
Indeed, the terms indexed by $r<i<s$ have positive sign, while an
exterior particle $i<r$ contributes
$-z_{(r,s)}/(z_{(i,r)}z_{(i,s)})$ and an exterior particle $i>s$
contributes $-z_{(r,s)}/(z_{(r,i)}z_{(s,i)})$.
Consequently,
\begin{equation}\label{eq:ud-finite-summed-gap}
\begin{split}
\mathsf L^{\left(N\right)}_{\left(r,s\right)}\left(u\right)-\mathsf L^{\left(N\right)}_{\left(r,s\right)}\left(u_0\right)
&=\left(\sfB_s\left(u\right)-\sfB_s\left(u_0\right)\right)
-\left(\sfB_r\left(u\right)-\sfB_r\left(u_0\right)\right)\\
&\quad+\beta\int_{u_0}^u
\left\{\eta^{\mathrm{up}}_{\left(r,s\right)}\left(\bsfL^{\left(N\right)}\left(w\right)\right)
-\eta^{\mathrm{lw},N}_{\left(r,s\right)}\left(\bsfL^{\left(N\right)}\left(w\right)\right)\right\}
\dif w.
\end{split}
\end{equation}
Both interactions in \eqref{eq:ud-eta-up}--\eqref{eq:ud-eta-lw} are
nonnegative. If $r=i_N^-$, the first sum in \eqref{eq:ud-eta-lw} is
absent; if $s=i_N^+$, the second is absent. Thus a physical endpoint
removes nonnegative summands from $\eta^{\mathrm{lw},N}$ and improves
the lower estimate from \eqref{eq:ud-finite-summed-gap};
$\eta^{\mathrm{up}}$ is unchanged.

Let $\mathcal C\subset\mathcal A=(r,s)\cap\mathbb{V}
\subset\mathcal V_N$ be intervals and, for $u_0\leq u\leq u_1$, put
\[
\mathsf O_{\left(r,s\right)}\left(u_0,u_1\right)\defeq \sup_{u_0\leq w\leq u_1}\left|\sfB_r\left(w\right)-\sfB_r\left(u_0\right)\right|
+\sup_{u_0\leq w\leq u_1}\left|\sfB_s\left(w\right)-\sfB_s\left(u_0\right)\right|.
\]
If $\mathcal A=(r,s)\cap\mathbb{V}$, we abbreviate
$\mathsf O_{\mathcal A}=\mathsf O_{(r,s)}$ and use the analogous
abbreviation for the two $\eta$'s.
For a finite $\mathcal D\subset\mathbb V$, we also write
$\sfL^{(N)}_{\mathcal D}=\sum_{v\in\mathcal D}\sfL_v^{(N)}$.
Rearrange \eqref{eq:ud-finite-summed-gap}, use
\eqref{eq:ud-Q-increment} on $\mathcal A\setminus\mathcal C$ for the
lower bound, and use positivity on that buffer for the upper bound.
All these inequalities are taken on the common full-probability event
for the countable choices of $N,r,s$ and the fixed time grid; continuity
then covers every $u$ in the relevant time interval.
For every $u_0\leq u\leq u_1$ this gives
\begin{subequations}\label{eq:ud-finite-summed-inequalities}
\begin{align}
\mathsf L^{\left(N\right)}_{\mathcal C}\left(u\right)
&\geq\mathsf L^{\left(N\right)}_{\mathcal C}\left(u_0\right)
-\beta\int_{u_0}^u\eta^{\mathrm{lw},N}_{\left(r,s\right)}
\left(\bsfL^{\left(N\right)}\left(w\right)\right)\dif w
-\mathsf O_{\left(r,s\right)}\left(u_0,u_1\right)
-\sum_{v\in\mathcal A\setminus\mathcal C}\mathsf Q^{u_0,u_1}_v,
                                                        \label{eq:ud-finite-lower}\\
\mathsf L^{\left(N\right)}_{\mathcal C}\left(u\right)
&\leq\mathsf L^{\left(N\right)}_{\mathcal C}\left(u_0\right)
+\beta\int_{u_0}^u\eta^{\mathrm{up}}_{\left(r,s\right)}
\left(\bsfL^{\left(N\right)}\left(w\right)\right)\dif w
+\mathsf O_{\left(r,s\right)}\left(u_0,u_1\right)
+\sum_{v\in\mathcal A\setminus\mathcal C}\mathsf L^{\left(N\right)}_v\left(u_0\right).
                                                        \label{eq:ud-finite-upper}
\end{align}
\end{subequations}
These are the unnormalized finite-volume counterparts of Tsai's
(6.5)--(6.6), as used in his (6.32)--(6.35), with the
physical-boundary contributions explicit.

\smallskip
\noindent\emph{Step 4: the core--fringe decomposition.}
The core consists of complete blocks with both neighboring blocks in
the finite system; the fringe consists of the remaining boundary pieces.
We prove the result for $m>0$; the case $m<0$ follows by reflection.
By \eqref{eq:ud-support-exhausts}, after discarding finitely many $N$
we may assume that the four central blocks lie in the finite support:
\begin{equation}\label{eq:ud-central-buffer}
 A_{b,\mathsf K}\subset\mathcal V_N,\qquad
 b\in\left\{-3/2,-1/2,1/2,3/2\right\}.
\end{equation}
Since $\mathsf K<\infty$ almost surely, the number of discarded systems is
almost surely finite.

For $m>0$, put $m_N=m\wedge i_N^+$. If $m_N\leq0$, the sum is empty. If
$m_N>0$, then $\mathsf D_m^{(N)}=\mathsf D_{m_N}^{(N)}$. Inside
$(0,m_N)\cap\mathbb V$, let $\mathcal C_{N,m}$ be the union of all
complete blocks $A_{b,\mathsf K}$ such that both $A_{b-1,\mathsf K}$ and $A_{b+1,\mathsf K}$
are contained in $\mathcal V_N$, and put
\[
\mathcal E_{N,m}\defeq \left(\left(0,m_N\right)\cap\mathbb V\right)\setminus\mathcal C_{N,m}.
\]
Since $0$ is a block boundary and \eqref{eq:ud-central-buffer} supplies the left neighbor of
the first positive block, every complete block in $(0,m_N)\cap\mathbb V$,
except possibly the last one, has both neighbors contained in $V_N$.
These blocks therefore belong to $\mathcal{C}_{N,m}$. Thus the core is an interval,
possibly empty, and the fringe is covered by at most two blocks: the
last complete block, if excluded from the core, and the block containing
the terminal partial piece, if present. If the target contains no
complete block, it lies entirely within the first positive block.
Consequently,  \eqref{eq:ud-block-size} gives
\begin{equation}\label{eq:ud-fringe-size}
\left|\mathcal E_{N,m}\right|
\leq c_{\kappa,\mathsf K}\left(1+m_N^{1-\kappa}\right).
\end{equation}
In particular, when $m_N=i_N^+$, the block cut by the right physical
endpoint and the adjacent unbuffered complete block are part of the
fringe; no interaction beyond that endpoint is introduced.

By the uniform upper-gap hypothesis
$\sup_{N,v\in\mathcal V_N}l_v^{(N)}<\infty$, equations
\eqref{eq:ud-Q-comparison}, \eqref{eq:ud-common-block-event}, and
\eqref{eq:ud-fringe-size} imply
\begin{equation}\label{eq:ud-fringe-bound}
\sup_{0\leq u\leq T}
\left|\sum_{v\in\mathcal E_{N,m}}
\left(\mathsf L^{\left(N\right)}_v\left(u\right)-l_v^{\left(N\right)}\right)\right|
\leq \mathsf C_T\left(1+m_N^{1-\kappa}\right).
\end{equation}
Indeed, the left-hand side is at most
$\sum_{v\in\mathcal E_{N,m}}(l_v^{(N)}+\mathsf Q^{0,T}_v)$, and each
fringe piece lies in a block controlled by
\eqref{eq:ud-block-Q-global}.

Suppose the core is nonempty, and denote its first and last block
indices by $b_-$ and $b_+$. For $j=1,\ldots,J$, set
\[
\mathcal A^{\mathrm{lw}}_j\defeq \left(\mathsf I^-_{b_--1,j},\mathsf I^+_{b_++1,j}\right)\cap\mathbb V,\qquad
\mathcal A^{\mathrm{up}}_j\defeq \left(\mathsf I^+_{b_--1,j},\mathsf I^-_{b_++1,j}\right)\cap\mathbb V.
\]
Both contain $\mathcal C_{N,m}$ and are contained in $\mathcal V_N$.
Their complements of the core lie in the two neighboring buffer
blocks. Uniformly in $N,m,j$, the block bounds therefore give
\begin{align}
\left|\mathcal A^{\mathrm{lw}}_j\setminus\mathcal C_{N,m}\right|
+\left|\mathcal A^{\mathrm{up}}_j\setminus\mathcal C_{N,m}\right|
&\leq \mathsf C_T\left(1+m_N^{1-\kappa}\right),                            \label{eq:ud-buffer-size}\\
\mathsf O_{\mathcal A^{\mathrm{lw}}_j}\left(t_{j-1},t_j\right)
+\mathsf O_{\mathcal A^{\mathrm{up}}_j}\left(t_{j-1},t_j\right)
&\leq \mathsf C_T\left(1+m_N^{1-\kappa}\right),                            \label{eq:ud-buffer-B}\\
\sum_{v\in\mathcal A^{\mathrm{lw}}_j\setminus\mathcal C_{N,m}}
\mathsf Q^{t_{j-1},t_j}_v
+\sum_{v\in\mathcal A^{\mathrm{up}}_j\setminus\mathcal C_{N,m}}
\mathsf L^{\left(N\right)}_v\left(t_{j-1}\right)
&\leq \mathsf C_T\left(1+m_N^{1-\kappa}\right).                            \label{eq:ud-buffer-mass}
\end{align}
For the last inequality, also use
$\mathsf L^{(N)}_v(t_{j-1})\leq l_v^{(N)}+\mathsf Q^{0,T}_v$.

It remains to bound the interactions. If
$\mathcal A^{\mathrm{up}}_j=(r,s)\cap\mathbb V$, then
\eqref{eq:ud-mixed-comparison} and \eqref{eq:ud-directed-seeds} imply,
for $u\in[t_{j-1},t_j]$,
\[
\mathsf L^{\left(N\right)}_{\left(r,i\right)}\left(u\right)\geq c_*\left(i-r\right)\quad\left(r<i\leq s\right),
\qquad
\mathsf L^{\left(N\right)}_{\left(i,s\right)}\left(u\right)\geq c_*\left(s-i\right)\quad\left(r\leq i<s\right).
\]
Consequently,
\begin{equation}\label{eq:ud-upper-interaction}
\eta^{\mathrm{up}}_{\mathcal A^{\mathrm{up}}_j}
\left(\bsfL^{\left(N\right)}\left(u\right)\right)
\leq c_*^{-1}H_{s-r}\leq \mathsf C_T\log\left(2+m_N\right),
\end{equation}
where $H_n=\sum_{q=1}^nq^{-1}$. Here
$s-r=|\mathcal A_j^{\mathrm{up}}|\leq
m_N+\mathsf C_T(1+m_N^{1-\kappa})\leq \mathsf C_T(1+m_N)$ by
\eqref{eq:ud-buffer-size}, which justifies the last bound.

Next write $\mathcal A^{\mathrm{lw}}_j=(r,s)\cap\mathbb{V}$ and
$\mathsf Z=\mathsf L^{(N)}_{(r,s)}(u)$. The pointwise estimate
\eqref{eq:ud-Q-comparison}, the initial upper-gap bound, and
\eqref{eq:ud-block-Q-global} apply because
\[
 \left|\mathcal A^{\mathrm{lw}}_j\right|\leq \mathsf C_T\left(1+m_N\right).
\]
Indeed, this interval is contained between the two buffer blocks of
$(0,m_N)$; cover it by the intervening complete blocks and two terminal
fragments. It follows that
\begin{equation}\label{eq:ud-z-upper}
\mathsf Z\leq\sum_{v\in\left(r,s\right)}
\left(l_v^{\left(N\right)}+\mathsf Q^{0,T}_v\right)
\leq \mathsf C_T\left(1+m_N\right).
\end{equation}
The outward one-sided bounds and \eqref{eq:ud-mixed-comparison} give
\[
\mathsf L^{\left(N\right)}_{\left(i,r\right)}\left(u\right)\geq c_*\left(r-i\right)\quad\left(i_N^-\leq i<r\right),
\qquad
\mathsf L^{\left(N\right)}_{\left(s,i\right)}\left(u\right)\geq c_*\left(i-s\right)\quad\left(s<i\leq i_N^+\right).
\]
Insert these bounds in \eqref{eq:ud-eta-lw} and extend both finite
exterior sums to infinity. Then
\begin{align}
\eta^{\mathrm{lw},N}_{\mathcal A^{\mathrm{lw}}_j}
\left(\bsfL^{\left(N\right)}\left(u\right)\right)
&\leq\sum_{q=1}^{\infty}
\frac{\mathsf Z}{\left(\mathsf Z+c_*q\right)c_*q}
=\frac1{c_*}\sum_{q=1}^{\infty}
\left(\frac1q-\frac1{q+\mathsf Z/c_*}\right)                         \notag\\
&\leq\frac1{c_*}\left(1+\log\left(1+\mathsf Z/c_*\right)\right)
\leq \mathsf C_T\log\left(2+m_N\right).                         \label{eq:ud-lower-interaction}
\end{align}
The first inequality also makes explicit why either finite physical
endpoint can only improve this estimate.

Apply \eqref{eq:ud-finite-lower} with
$\mathcal A=\mathcal A^{\mathrm{lw}}_j$ and
\eqref{eq:ud-finite-upper} with
$\mathcal A=\mathcal A^{\mathrm{up}}_j$. Equations
\eqref{eq:ud-buffer-size}--\eqref{eq:ud-lower-interaction} yield
\[
\sup_{t_{j-1}\leq u\leq t_j}
\left|\mathsf L^{\left(N\right)}_{\mathcal C_{N,m}}\left(u\right)
-\mathsf L^{\left(N\right)}_{\mathcal C_{N,m}}\left(t_{j-1}\right)\right|
\leq \mathsf C_T\left(1+m_N^{1-\kappa}+\log\left(2+m_N\right)\right).
\]
The number $J$ depends only on $T,\beta,\rho_*$ and not on $N,m$.
Iterating over $j=1,\ldots,J$, and using
$\log(2+r)\leq C_\kappa(1+r^{1-\kappa})$, gives
\begin{equation}\label{eq:ud-core-bound}
\sup_{0\leq u\leq T}
\left|\sum_{v\in\mathcal C_{N,m}}
\left(\mathsf L^{\left(N\right)}_v\left(u\right)-l_v^{\left(N\right)}\right)\right|
\leq \mathsf C_T\left(1+m_N^{1-\kappa}\right).
\end{equation}
Together with \eqref{eq:ud-fringe-bound}, this proves
\begin{equation}\label{eq:ud-unscaled-final}
\sup_{0\leq u\leq T}\left|\mathsf D_m^{\left(N\right)}\left(u\right)\right|
\leq \mathsf C_T\left(1+m_N^{1-\kappa}\right),
\qquad m>0,\quad N\text{ sufficiently large}.
\end{equation}

If $m_N\geq1$, then $m_N\leq m$ and $\kappa-1<0$, so
\[
m^{\kappa-1}\left|\mathsf D_m^{\left(N\right)}\left(u\right)\right|
\leq \mathsf C_Tm_N^{\kappa-1}\left(1+m_N^{1-\kappa}\right)\leq2\mathsf C_T.
\]
For $m<0$, reflection gives the same bound with $m_N=m\vee i_N^-$
and absolute values. An empty sum contributes zero.

Finally, only almost surely finitely many $N$ were discarded. For each
such finite system there are only finitely many distinct partial gap
sums, all continuous on $[0,T]$; beyond a physical endpoint the partial
sum is constant while $|m|^{\kappa-1}$ decreases. Their contribution
to the left-hand side of \eqref{eq:uniform-density-error} is therefore
almost surely finite and can be absorbed into $\mathsf C_T$.
For $r\in\mathbb N$, let $\mathsf Z_r$ be the supremum on the left of
\eqref{eq:uniform-density-error} with $T=r$. Continuity permits the time
supremum to be restricted to rational times, so $\mathsf Z_r$ is measurable.
The preceding proof gives $\prob[\mathsf Z_r<\infty]=1$. Thus
\begin{equation*}
 \Omega_*\defeq\bigcap_{r\ge1}\left\{\mathsf Z_r<\infty\right\},\qquad
 \mathsf C_T\defeq1+\mathsf Z_{\lceil T\rceil}
\end{equation*}
give a common probability-one event and increasing constants as asserted.
\end{proof}
We now express the discrepancy of the initial particle tails in gap coordinates. This identifies the constant that survives when the finite interaction drifts converge.
For $M\in\mathbb N$, define
\begin{equation}\label{eq:def-psi-tail}
 \psi_{0,>M}\left(\bl\right)
 \defeq\lim_{R\to\infty}
 \left(
 \sum_{M<k\leq R}\frac{1}{l_{\left(0,-k\right)}}
 -\sum_{M<k\leq R}\frac{1}{l_{\left(0,k\right)}}
 \right),
\end{equation}
whenever the limit exists, and put
\begin{equation*}
 \psi_{0,\leq M}\left(\bl\right)\defeq\sum_{k=1}^M
 \left(\frac1{l_{\left(0,-k\right)}}-\frac1{l_{\left(0,k\right)}}\right).
\end{equation*}
\begin{proposition}\label{from inverse of point to inverse of gap}
Under the hypotheses of Theorem \ref{theo sde theorem},
\begin{equation}\label{eq:initial-tail-gamma}
 \lim_{M\to\infty}\limsup_{N\to\infty}
 \left|\psi_{0,>M}\left(\bl^{\left(N\right)}\right)+\gamma\right|=0.
\end{equation}
\end{proposition}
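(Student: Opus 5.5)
The plan is to rewrite $\psi_{0,>M}(\bl^{(N)})$ as a reciprocal sum of the finite particles centred at $x_0^{(N)}$. Then split off the uncentred reciprocal sum, whose limit defines $\gamma$, and control the centring correction uniformly in $N$ with the lower envelope hypothesis \eqref{eq condition for L inf N}.

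\emph{Step 1: rewrite in particle coordinates.} Since $x_0^{(N)}\in\mathbb R$, we have $l^{(N)}_{(0,k)}=x_k^{(N)}-x_0^{(N)}$ for $0<k\le i_N^+$ and $l^{(N)}_{(0,-k)}=x_0^{(N)}-x_{-k}^{(N)}$ for $0<k\le -i_N^-$. All other cumulative gaps are $+\infty$ and contribute zero. Hence the limit in \eqref{eq:def-psi-tail} is a finite sum and
\[
 \psi_{0,>M}\left(\bl^{(N)}\right)=-\sum_{\substack{|k|>M\\ k\in\mathcal I_N}}\frac1{x_k^{(N)}-x_0^{(N)}}.
\]
The identity $\frac1{x_k-x_0}=\frac1{x_k}+\frac{x_0}{x_k(x_k-x_0)}$ then splits this into the uncentred sum $-\sum_{|k|>M}1/x_k^{(N)}$ and a correction $-x_0^{(N)}\sum_{|k|>M}\{x_k^{(N)}(x_k^{(N)}-x_0^{(N)})\}^{-1}$.

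\emph{Step 2: uniform linear separation (the main point).} By Lemma~\ref{lm monotonicity of gap process} applied at time zero, or directly from the definition of the envelope, $l^{(N)}_{(0,k)}\ge l^{\tinf,(N)}_{(0,k)}$ whenever the left side is finite. For $N\ge N_0$, the uniform bound $\mathfrak D_{\kappa,\rho^{(N)}}(\bl^{\tinf,(N)})\le C$ gives
\[
 l^{\tinf,(N)}_{(0,k)}\ge |k|\left(\rho^{(N)}-C|k|^{-\kappa}\right).
\]
Since $\rho^{(N)}\to\rho>0$, there is a deterministic $K_0$ with
\[
 \left|x_k^{(N)}-x_0^{(N)}\right|\ge \rho|k|/2 \quad\text{for all } |k|\ge K_0,\ k\in\mathcal I_N,\ N \text{ large}.
\]
Because $x_0^{(N)}\to x_0$ is bounded, after enlarging $K_0$ we also get $|x_k^{(N)}|\ge\rho|k|/4>b$. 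Two consequences follow for $M\ge K_0$ and all large $N$. First, the centring correction is bounded by $C\sum_{|k|>M}k^{-2}\le C/M$, uniformly in $N$. Second, every label with $|k|>M$ is counted in $\gamma_{1,b}(\bx^{(N)})$. I expect this uniformity step to be the only real obstacle, and it is precisely what \eqref{eq condition for L inf N} is designed to supply.

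\emph{Step 3: pass to the limit.} For the finite configuration, $\gamma_{1,b}(\bx^{(N)})$ is a finite sum. So for $M\ge K_0$,
\[
 \sum_{|k|>M}\frac1{x_k^{(N)}}=\gamma_{1,b}\left(\bx^{(N)}\right)-\sum_{\substack{|k|\le M\\|x_k^{(N)}|>b}}\frac1{x_k^{(N)}}.
\]
Coordinatewise convergence, with $b$ avoiding all $|x_k|$, makes the finite sum converge to $\sum_{|k|\le M,\,|x_k|>b}1/x_k$. Hence the left side tends to $\lim_N\gamma_{1,b}(\bx^{(N)})-\gamma_{1,b}(\bx)+\tau_M(\bx)$, where $\tau_M(\bx)$ is the label-symmetric tail $\lim_K\sum_{M<|k|\le K}1/x_k$. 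By Remark~\ref{rem tsai linear bound}, $\tau_M(\bx)\to0$ as $M\to\infty$. Combining with Step~2,
\[
 \limsup_{N\to\infty}\left|\psi_{0,>M}\left(\bl^{(N)}\right)+\gamma\right|\le|\tau_M(\bx)|+C/M,
\]
and letting $M\to\infty$ proves \eqref{eq:initial-tail-gamma}.
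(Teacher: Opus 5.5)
Your proposal is correct and follows essentially the paper's argument: write the centred reciprocal tail as the uncentred particle tail plus an $O(1/M)$ correction, controlled uniformly in $N$ by the linear separation from the lower envelope, then identify the uncentred tail through $\gamma_{1,b}$ and coordinatewise convergence of the finitely many inner terms. The only cosmetic difference is in the last step: the paper compares with $\psi_{0,>M}(\bl)$ and invokes its regular tail bound $O(M^{-\kappa})$, whereas you show directly that the label-symmetric particle tail $\tau_M(\bx)$ vanishes by Remark~\ref{rem tsai linear bound}, which is the same estimate.
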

\begin{proof}
Coordinatewise convergence gives
\begin{equation*}
 C_0\defeq\sup_N\left|x_0^{\left(N\right)}\right|<\infty.
\end{equation*}
The lower-envelope hypothesis gives deterministic constants $a>0$
and $M_0\in\mathbb N$ such that, whenever the relevant cumulative gap
is finite,
\begin{equation}\label{eq lower bound for l^UN, l}
 l^{\left(N\right)}_{\left(0,\sigma k\right)}\geq ak,
 \qquad \sigma\in\left\{-1,1\right\},\quad k\geq M_0,
\end{equation}
uniformly in $N$; the same estimate holds for $\bl$.  Increasing
$M_0$ if necessary, we also have
$|x_{\sigma k}^{(N)}|\geq ak/2$ whenever $\sigma k\in\mathcal I_N$.
Consequently,
\begin{equation}\label{eq:psi-particle-tail}
 \psi_{0,>M}\left(\bl^{\left(N\right)}\right)
 =-\sum_{\substack{i\in\mathcal I_N\\\left|i\right|>M}}
   \frac{1}{x_i^{\left(N\right)}}+e_{N,M},
\end{equation}
where
\begin{equation*}
 e_{N,M}\defeq -\sum_{\substack{i\in\mathcal I_N\\\left|i\right|>M}}
 \left\{
 \frac{1}{x_i^{\left(N\right)}-x_0^{\left(N\right)}}-\frac{1}{x_i^{\left(N\right)}}
 \right\}
\end{equation*}
and, for $M\geq M_0$,
\begin{equation}\label{eq:psi-particle-error}
 \left|e_{N,M}\right|
 \leq
 \sum_{\substack{i\in\mathcal I_N\\\left|i\right|>M}}
 \frac{\left|x_0^{\left(N\right)}\right|}
 {\left|x_i^{\left(N\right)}\right|\,\left|x_i^{\left(N\right)}-x_0^{\left(N\right)}\right|}
 \leq \frac{C}{M}.
\end{equation}
For $M$ sufficiently large, uniformly for all $N$ after the discarded
initial segment, the particles with $|i|>M$ lie outside $[-b,b]$. Put
\begin{equation*}
 F_{N,M}\defeq \sum_{\substack{i\in\mathcal I_N,\ \left|i\right|\leq M\\
                         \left|x_i^{\left(N\right)}\right|>b}}\frac1{x_i^{\left(N\right)}},
 \qquad
 F_M\defeq \sum_{\substack{\left|i\right|\leq M\\\left|x_i\right|>b}}\frac1{x_i}.
\end{equation*}
Then the finite-sum algebra in \eqref{eq:psi-particle-tail}, and its
counterpart for $\bl$, reads
\begin{equation*}
 \psi_{0,>M}\left(\bl^{\left(N\right)}\right)
 =-\gamma_{1,b}\left(\bx^{\left(N\right)}\right)+F_{N,M}+e_{N,M},
 \qquad
 \psi_{0,>M}\left(\bl\right)=-\gamma_{1,b}\left(\bx\right)+F_M+e_M,
\end{equation*}
where $|e_{N,M}|+|e_M|\leq C/M$.  Since $b$ is admissible,
coordinatewise convergence gives $F_{N,M}\to F_M$ for each fixed
$M$, while \eqref{eq def of gamma} gives
$\gamma_{1,b}(\bx^{(N)})-\gamma_{1,b}(\bx)\to\gamma$. Therefore
\begin{equation*}
 \limsup_{N\to\infty}
 \left|\psi_{0,>M}\left(\bl^{\left(N\right)}\right)+\gamma-\psi_{0,>M}\left(\bl\right)\right|\leq\frac{C}{M},
 \qquad
 \limsup_{N\to\infty}\left|\psi_{0,>M}\left(\bl^{\left(N\right)}\right)+\gamma\right|
 \leq \left|\psi_{0,>M}\left(\bl\right)\right|+\frac{C}{M}.
\end{equation*}
The regular tail estimate
$|\psi_{0,>M}(\bl)|\leq C M^{-\kappa}$ now proves
\eqref{eq:initial-tail-gamma}.
\end{proof}
The next estimate shows that the far interaction tail changes little on a compact time interval, uniformly over the finite systems. Together with the initial-tail identity, it determines the limiting drift.
\begin{lemma}\label{lem:uniform-interaction-tails}
  Under the hypotheses and notation of Theorem \ref{theo sde theorem}, let
$\bsfL^\UN(\cdot)$ be the finite gap process starting from $\bl^\UN$.
For every $T>0$, there exist an almost surely finite random integer
$\mathsf M_T$ and an almost surely finite random variable $\mathsf C_T$ such that, almost surely,
\begin{equation}
\label{eq:uniform-interaction-tails}
\sup_{N\in\mathbb N}\sup_{t\in\left[0,T\right]}\left|
\psi_{0,>M}\left(\bsfL^\UN\left(t\right)\right)-
\psi_{0,>M}\left(\bl^\UN\right)\right|\leq \mathsf C_TM^{-\kappa},
\quad M\geq \mathsf M_T.
\end{equation}
\end{lemma}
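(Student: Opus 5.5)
The idea is to write the tail $\psi_{0,>M}$ as a Stieltjes-type sum over cumulative gaps. We then compare $1/l_{(0,\pm k)}(t)$ with $1/l_{(0,\pm k)}(0)$ using the uniform displacement bound of Lemma~\ref{lem:uniform-density-error}. That lemma gives, on $\Omega_*$,
\[
\bigl|\sfL^\UN_{(0,\sigma k)}(t)-l^\UN_{(0,\sigma k)}\bigr|\le \mathsf C_T k^{1-\kappa},\qquad \sigma\in\{-1,1\},
\]
uniformly in $N$, $t\le T$ and $k$. We also need two-sided linear lower bounds on the cumulative gaps, uniform in $N$ and $t$. At time zero these come from the lower-envelope hypothesis \eqref{eq condition for L inf N}, exactly as in \eqref{eq lower bound for l^UN, l}: $l^\UN_{(0,\sigma k)}\ge ak$ for $k\ge M_0$. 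At positive times the displacement bound then gives $\sfL^\UN_{(0,\sigma k)}(t)\ge ak-\mathsf C_Tk^{1-\kappa}\ge ak/2$ for $k\ge \mathsf M_T$, where $\mathsf M_T$ is chosen so that $\mathsf C_T\mathsf M_T^{-\kappa}\le a/2$ (and $\mathsf M_T\ge M_0$). Alternatively, the mixed comparison \eqref{eq:ud-mixed-comparison} with the regular lower process $\underline{\bsfL}$ gives the same conclusion directly.

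With these bounds, each term of the tail changes by at most
\[
\Bigl|\frac1{\sfL^\UN_{(0,\sigma k)}(t)}-\frac1{l^\UN_{(0,\sigma k)}}\Bigr|
\le\frac{\mathsf C_Tk^{1-\kappa}}{(ak/2)(ak)}\le \mathsf C_T' k^{-1-\kappa}.
\]
Terms beyond the finite support are $1/\infty=0$ at every time, because the support $\mathcal V_N$ is fixed in time. The finite sum therefore needs no principal-value limit. Summing the difference over $k>M$ and both signs gives at most $\mathsf C_T'' M^{-\kappa}$, uniformly in $N$ and $t\in[0,T]$. This is \eqref{eq:uniform-interaction-tails}. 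It uses termwise absolute summability, so no cancellation between the $\pm$ sides is required.

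One bookkeeping point needs care. When $\sigma k$ lies at or beyond a physical endpoint of $\mathcal I_N$, the cumulative gap is infinite at all times and contributes zero to both terms. The comparison is thus only needed on the common finite range, where Lemma~\ref{lem:uniform-density-error} applies because $\mathsf D^{(N)}_{\sigma k}$ is exactly the displacement of $l_{(0,\sigma k)}$.

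The main obstacle is uniformity in $N$ of the linear lower bound at positive times. Since both the displacement constant $\mathsf C_T$ and the lower-envelope constant $a$ are independent of $N$, the threshold $\mathsf M_T$ is independent of $N$ as well. The finitely many systems discarded in the proof of Lemma~\ref{lem:uniform-density-error} are already absorbed into $\mathsf C_T$ there.
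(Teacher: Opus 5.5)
Your proposal is correct and follows the paper's proof essentially step for step. The paper likewise combines the displacement bound of Lemma~\ref{lem:uniform-density-error} with the uniform linear lower bound \eqref{eq lower bound for l^UN, l}, takes $\mathsf M_T=M_0\vee\lceil(2\mathsf A_T/a)^{1/\kappa}\rceil$, uses the termwise estimate $2\mathsf A_Ta^{-2}k^{-1-\kappa}$ for $k\ge\mathsf M_T$, sets infinite cumulative gaps to zero because the support is fixed in time, and sums over $k>M$ and both signs (one small slip: $l^{(N)}_{(0,\sigma k)}$ is infinite only when $\sigma k$ lies strictly beyond the physical endpoint, e.g.\ $k>i_N^+$, whereas at $k=i_N^+$ it is finite and is simply covered by the displacement bound).
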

\begin{proof}
Work on the common event of Lemma \ref{lem:uniform-density-error} and
fix $T>0$. For $\sigma\in\{-1,1\}$ and $k\ge1$, write
$g_{N,k}^{\sigma}=l_{(0,\sigma k)}^{(N)}$ and
$\mathsf G_{N,k}^{\sigma}(t)=\sfL_{(0,\sigma k)}^{(N)}(t)$.
By \eqref{eq lower bound for l^UN, l} and
\eqref{eq:uniform-density-error}, there are deterministic $a>0$, $M_0$,
and an almost surely finite $\mathsf A_T$ such that, whenever
$g_{N,k}^{\sigma}<\infty$ and $k\ge M_0$,
\begin{equation*}
 g_{N,k}^{\sigma}\ge ak,\qquad
 \left|\mathsf G_{N,k}^{\sigma}\left(t\right)-g_{N,k}^{\sigma}\right|
 \le\mathsf A_Tk^{1-\kappa},\qquad N\in\mathbb N,\quad t\in[0,T].
\end{equation*}
Choose
\begin{equation*}
 \mathsf M_T\defeq M_0\vee\left\lceil
 \left(2\mathsf A_T/a\right)^{1/\kappa}\right\rceil.
\end{equation*}
For $k\ge\mathsf M_T$, the same bounds give
\begin{equation*}
 \mathsf G_{N,k}^{\sigma}\left(t\right)\ge ak/2,\qquad
 \left|\frac1{\mathsf G_{N,k}^{\sigma}\left(t\right)}
       -\frac1{g_{N,k}^{\sigma}}\right|
 \le\frac{2\mathsf A_T}{a^2}k^{-1-\kappa}.
\end{equation*}
The initial and current gap configurations have the same finite support.
If $g_{N,k}^{\sigma}=\infty$, both reciprocals are therefore zero.
Summing over both signs and $k>M$, and using
$\sum_{k>M}k^{-1-\kappa}\le M^{-\kappa}/\kappa$, proves
\eqref{eq:uniform-interaction-tails} with
$\mathsf C_T=4\mathsf A_T/(a^2\kappa)$, uniformly in $N$ and $t\in[0,T]$.
\end{proof}
The gap limit determines the particle configuration up to a common translation. We identify that translation by proving convergence of the zeroth particle and its drift.
\begin{proposition}\label{prop equation for x_0}
    Under the hypotheses of Theorem \ref{theo sde theorem}, the finite
    zeroth coordinates converge to the process $\sfX_0$ given by
    \begin{equation}\label{eq equation for X_0}
        \sfX_0\left(t\right) = x_0+\sfB_0\left(t\right) + \frac{\beta}{2}\left(\int_0^t \psi_0\left(\bsfL\left(s\right)\right)\dif s - \gamma t\right).
    \end{equation}
    Moreover, for any $t>0$, we have, almost surely,
    \begin{equation}\label{eq convergence of X_0}
        \lim_{N\to\infty}\sup_{s\in\left[0,t\right]} \left|{\sfX}_0^\UN\left(s\right)-\sfX_0\left(s\right)\right| =0.
    \end{equation}
\end{proposition}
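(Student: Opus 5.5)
The plan is to write the finite zeroth-particle equation in gap coordinates and pass to the limit in its drift. For the finite system, $\mu_0(\boldsymbol{\sfX}^\UN(s))=\psi_0(\bsfL^\UN(s))$, since $x_0-x_{-k}=l_{(0,-k)}$ and $x_k-x_0=l_{(0,k)}$ and the sum is finite. Hence
\[
 \sfX_0^\UN\left(t\right)=x_0^\UN+\sfB_0\left(t\right)+\frac\beta2\int_0^t\psi_0\left(\bsfL^\UN\left(s\right)\right)\dif s .
\]
Because $x_0^\UN\to x_0$, it suffices to show that, almost surely,
\[
 \sup_{s\le t}\left|\int_0^s\left\{\psi_0\left(\bsfL^\UN\left(r\right)\right)-\psi_0\left(\bsfL\left(r\right)\right)+\gamma\right\}\dif r\right|\longrightarrow0 .
\]
Once this holds, \eqref{eq equation for X_0} defines $\sfX_0$, and \eqref{eq convergence of X_0} follows.

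Fix $M\ge\mathsf M_t$ and split $\psi_0=\psi_{0,\le M}+\psi_{0,>M}$. For the head $\psi_{0,\le M}$, only finitely many cumulative gaps are involved, namely $l_{(0,\pm k)}$ with $k\le M$. By Corollary~\ref{cor convergence of L^N}, these gaps converge uniformly on $[0,t]$. The limiting gaps are continuous and strictly positive on $[0,t]$, so they are bounded below there. Consequently, for large $N$, $\psi_{0,\le M}(\bsfL^\UN(r))\to\psi_{0,\le M}(\bsfL(r))$ uniformly in $r\le t$.

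For the tail, write
\[
 \psi_{0,>M}(\bsfL^\UN(r))+\gamma=\bigl[\psi_{0,>M}(\bsfL^\UN(r))-\psi_{0,>M}(\bl^\UN)\bigr]+\bigl[\psi_{0,>M}(\bl^\UN)+\gamma\bigr].
\]
Lemma~\ref{lem:uniform-interaction-tails} bounds the first bracket by $\mathsf C_tM^{-\kappa}$, uniformly in $N$ and $r\le t$. Proposition~\ref{from inverse of point to inverse of gap} shows that the $\limsup_N$ of the second bracket tends to $0$ as $M\to\infty$.

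It remains to bound the limiting tail $\psi_{0,>M}(\bsfL(r))$ uniformly in $r\le t$ by something that vanishes as $M\to\infty$. I would use $\bsfL\in\mathcal P^p_{\kappa,\rho}$ together with Remark~\ref{rem tsai linear bound} applied at each time. This gives $\sfL_{(0,\pm k)}(r)=\rho k+O(\mathsf D k^{1-\kappa})$ with $\mathsf D=\sup_{r\le t}\mathfrak D_{\kappa,\rho}(\bsfL(r))$, so the paired reciprocals differ by $O(k^{-1-\kappa})$. The tail is therefore at most $\mathsf C'M^{-\kappa}$ uniformly in time, and this also confirms that $\psi_0(\bsfL(r))$ exists and is locally bounded.

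Combining the three estimates and letting $N\to\infty$ and then $M\to\infty$ gives the claim. The main obstacle, the uniformity in $N$ of the far-tail increments, is already handled by Lemma~\ref{lem:uniform-density-error} through Lemma~\ref{lem:uniform-interaction-tails}. The remaining care goes into measurability and continuity of $r\mapsto\psi_0(\bsfL(r))$, needed to make sense of the integral. I would obtain this from the uniform tail bound: $\psi_0(\bsfL(r))$ is a uniform limit of the continuous functions $\psi_{0,\le M}(\bsfL(r))$. Finally, I would note that, by Proposition~\ref{ISDE e and u}(i), this $\sfX_0$ is exactly Tsai's zeroth coordinate shifted by $-\beta\gamma t/2$.
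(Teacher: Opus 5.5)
Your proposal is correct and follows the paper's proof essentially step for step. It rewrites the finite equation as $\sfX_0^{(N)}(t)=x_0^{(N)}+\sfB_0(t)+\frac\beta2\int_0^t\psi_0(\bsfL^{(N)}(s))\,\dif s$ and splits $\psi_0=\psi_{0,\le M}+\psi_{0,>M}$. You control the head by Corollary~\ref{cor convergence of L^N} and the finite-system tail by Lemma~\ref{lem:uniform-interaction-tails} and Proposition~\ref{from inverse of point to inverse of gap}. The limiting tail is bounded by the $O(k^{-1-\kappa})$ paired-reciprocal estimate from $\sup_{s\le T}\mathfrak D_{\kappa,\rho}(\bsfL(s))<\infty$, which also gives continuity of $\psi_0(\bsfL(\cdot))$, and you conclude from uniform convergence of $\psi_0(\bsfL^{(N)})$ to $\psi_0(\bsfL)-\gamma$ on $[0,T]$, letting $N\to\infty$ and then $M\to\infty$.
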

\begin{proof}
Fix $T>0$ and work on an event of probability one on which Corollary
\ref{cor convergence of L^N} and Lemma
\ref{lem:uniform-interaction-tails} hold on $[0,T]$.  Recall from
the gap identity at the beginning of this subsection that
$\mathfrak q(\boldsymbol{\sfX}^{(N)}(\cdot))=\bsfL^{(N)}(\cdot)$.
Consequently,
\begin{equation*}
    \sfX_0^{\left(N\right)}\left(t\right)=x_0^{\left(N\right)}+\sfB_0\left(t\right)+\frac{\beta}{2}\int_0^t \psi_0\left(\bsfL^{\left(N\right)}\left(s\right)\right)\,\dif s.
\end{equation*}

We first identify the limit of the drift, uniformly on $[0,T]$. For
$M\in\mathbb N$, the core--tail decomposition gives
\begin{equation*}
 \psi_0\left(\bsfL^{\left(N\right)}\left(s\right)\right)
 =\psi_{0,\leq M}\left(\bsfL^{\left(N\right)}\left(s\right)\right)
 +\psi_{0,>M}\left(\bsfL^{\left(N\right)}\left(s\right)\right),
\end{equation*}
and the analogous identity holds for $\bsfL$.  For each fixed $M$, Corollary \ref{cor convergence of L^N} gives uniform convergence of every cumulative gap
$\sfL_{(0,i)}^{(N)}$ with $0<|i|\leq M$.  Since the limiting cumulative
gaps are strictly positive and continuous on $[0,T]$, their minima on
this interval are positive.  Hence
\begin{equation*}
    \lim_{N\to\infty}
    \sup_{0\leq s\leq T}
    \left|\psi_{0,\leq M}\left(\bsfL^{\left(N\right)}\left(s\right)\right)-\psi_{0,\leq M}\left(\bsfL\left(s\right)\right)\right|=0.
\end{equation*}

The initial tail is controlled by \eqref{eq:initial-tail-gamma}.
Lemma \ref{lem:uniform-interaction-tails} gives almost surely finite $\mathsf M_T$ and
$\mathsf C_T$ such that, for $M\geq \mathsf M_T$,
\begin{equation*}
    \sup_{N\in\mathbb N}\sup_{0\leq s\leq T}
    \left|\psi_{0,>M}\left(\bsfL^{\left(N\right)}\left(s\right)\right)
    -\psi_{0,>M}\left(\bl^{\left(N\right)}\right)\right|\leq \mathsf C_TM^{-\kappa}.
\end{equation*}
It follows that
\begin{equation}\label{eq L^N+gamma is closed to l^N+gamma}
    \limsup_{N\to\infty}\sup_{0\leq s\leq T}\left|\psi_{0,>M}\left(\bsfL^{\left(N\right)}\left(s\right)\right)+\gamma\right|\leq \mathsf C_TM^{-\kappa}
    +\limsup_{N\to\infty}
    \left|\psi_{0,>M}\left(\bl^{\left(N\right)}\right)+\gamma\right|.
\end{equation}

We also need the corresponding tail estimate for the limiting gap process.  Put
\begin{equation*}
    \mathsf A_T\defeq \sup_{0\leq s\leq T}
    \mathfrak D_{\kappa,\rho}\left(\bsfL\left(s\right)\right)<\infty.
\end{equation*}
For all sufficiently large $k$, uniformly in $s\in[0,T]$,
\begin{equation*}
    \sfL_{\left(0,\pm k\right)}\left(s\right)\geq\frac{\rho k}{2},
    \quad
    \left|
    \frac{1}{\sfL_{\left(0,-k\right)}\left(s\right)}
    -\frac{1}{\sfL_{\left(0,k\right)}\left(s\right)}
    \right|
    \leq\frac{8\mathsf A_T}{\rho^2}k^{-1-\kappa}.
\end{equation*}
Therefore, for all sufficiently large $M$,
\begin{equation}\label{eq psi^M(L(s)) is small}
    \sup_{0\leq s\leq T}
    \left|\psi_{0,>M}\left(\bsfL\left(s\right)\right)\right|
    \leq \mathsf C_T M^{-\kappa}.
\end{equation}
In particular, $\psi_0(\bsfL(\cdot))$ is the uniform limit on $[0,T]$
of the continuous functions $\psi_{0,\leq M}(\bsfL(\cdot))$, and is therefore continuous.

Indeed, the decomposition and the triangle inequality give, for every
$s\leq T$,
\begin{equation*}
\begin{split}
 \left|\psi_0\left(\bsfL^{\left(N\right)}\left(s\right)\right)-\psi_0\left(\bsfL\left(s\right)\right)+\gamma\right|
 \leq{}&\left|\psi_{0,\leq M}\left(\bsfL^{\left(N\right)}\left(s\right)\right)-\psi_{0,\leq M}\left(\bsfL\left(s\right)\right)\right|\\
 &\quad+\left|\psi_{0,>M}\left(\bsfL^{\left(N\right)}\left(s\right)\right)+\gamma\right|
 +\left|\psi_{0,>M}\left(\bsfL\left(s\right)\right)\right|.
\end{split}
\end{equation*}
Taking the supremum in $s$, then the limsup in $N$, and finally letting
$M\to\infty$ in \eqref{eq L^N+gamma is closed to l^N+gamma},
\eqref{eq:initial-tail-gamma}, and
\eqref{eq psi^M(L(s)) is small}, we obtain
\begin{equation*}
    \lim_{N\to\infty}
    \sup_{0\leq s\leq T}
    \left|\psi_0\left(\bsfL^{\left(N\right)}\left(s\right)\right)
    -\psi_0\left(\bsfL\left(s\right)\right)+\gamma \right|=0.
\end{equation*}

Let $\sfX_0$ be the process in the statement.
Since $x_0^{(N)}\to x_0$, the finite-system equation and the uniform
drift convergence give
\begin{equation*}
    \begin{aligned}
    \sup_{0\leq t\leq T}
    \left|\sfX_0^{\left(N\right)}\left(t\right)-\sfX_0\left(t\right)\right|\leq \left|x_0^{\left(N\right)}-x_0\right|+\frac{\beta T}{2}\sup_{0\leq s\leq T}
    \left|\psi_0\left(\bsfL^{\left(N\right)}\left(s\right)\right)
    -\psi_0\left(\bsfL\left(s\right)\right)+\gamma
    \right|\longrightarrow0.
    \end{aligned}
\end{equation*}
Thus the limit exists, is uniquely determined by $\bsfL$ and $\sfB_0$,
and satisfies \eqref{eq equation for X_0}.  Taking a countable intersection over
$T\in\mathbb N$ proves \eqref{eq convergence of X_0} on every compact time interval.
\end{proof}
We now reconstruct the remaining particles from the limiting gaps and the zeroth coordinate, completing the proof of Theorem \ref{theo sde theorem}.
\begin{proof}[Proof of Theorem \ref{theo sde theorem}]
Define
\begin{equation*}
 \sfX_i\left(t\right)\defeq 
 \begin{cases}
  \sfX_0\left(t\right)+\sfL_{\left(0,i\right)}\left(t\right),&i>0,\\
  \sfX_0\left(t\right),&i=0,\\
  \sfX_0\left(t\right)-\sfL_{\left(0,i\right)}\left(t\right),&i<0.
 \end{cases}
\end{equation*}
Put
\begin{equation*}
 \sfY_i\left(t\right)\defeq \sfX_i\left(t\right)+\frac{\beta\gamma}{2}t.
\end{equation*}
Then $\mathfrak q(\boldsymbol{\sfY})=\bsfL$ and
\begin{equation*}
 \sfY_0\left(t\right)=x_0+\sfB_0\left(t\right)
 +\frac\beta2\int_0^t\psi_0\left(\bsfL\left(s\right)\right)\dif s.
\end{equation*}
The gap--particle equivalence in Proposition \ref{ISDE e and u}
therefore shows that $\boldsymbol{\sfY}$ is the unshifted Dyson ISDE
solution.  Since $\mu_i$ is invariant under common spatial
translations, $\boldsymbol{\sfX}$ solves \eqref{eq equation for X}.

If $\widetilde{\boldsymbol{\sfX}}$ is another adapted solution with
gap process in $\mathcal P_{\kappa,\rho}^p$, driven by the same
Brownian family, then
$\widetilde{\sfY}_i(t)=\widetilde{\sfX}_i(t)+\beta\gamma t/2$ is a
second regular solution of the unshifted ISDE.  Proposition
\ref{ISDE e and u} gives $\widetilde{\boldsymbol{\sfY}}
=\boldsymbol{\sfY}$ almost surely, and hence
$\widetilde{\boldsymbol{\sfX}}=\boldsymbol{\sfX}$.  Finally,
\eqref{eq convergence of X_0} and convergence of every fixed finite
collection of gaps in \eqref{eq convergence of L^N} prove
\eqref{eq convergence of X_i}.  When $\beta=2$, the relation is
$\sfX_i(t)=\sfY_i(t)-\gamma t$, in agreement with Proposition
\ref{gammadependence}.
\end{proof}
\subsection{Stieltjes-transform dynamics}
We next pass to the Stieltjes-transform formulation.  Recall that
$\mathbb H=\{z\in\mathbb C:\operatorname{Im}z>0\}$. For an open set
$D\subset\mathbb C$, let $\mathcal O(D)$ be the space of holomorphic
functions on $D$ with the topology of locally uniform convergence.
Thus $C([0,T];\mathcal O(D))$ denotes its continuous path space. Let
$\boldsymbol{\sfX}$ and $(\boldsymbol{\sfX}^{(N)})_{N\geq1}$ be as in
Theorem \ref{theo sde theorem}, and fix $T<\infty$.  The paired-series
estimate used repeatedly below is the following: for every compact
$\mathcal K\Subset\mathbb H$ and $A<\infty$, there is
$C_{\mathcal K,A}<\infty$ such that, whenever
$\varepsilon_i\in\mathbb R$ and
$|\varepsilon_i|\leq A|i|^{1-\kappa}$,,
\begin{equation*}
 \sup_{R\geq M}\sup_{z\in\mathcal K}
 \left|
 \sum_{\substack{i\in\mathbb Z\\M\leq\left|i\right|\leq R}}
 \frac{1}{\rho i+\varepsilon_i+z}
 \right|
 \leq C_{\mathcal K,A}M^{-\kappa}.
\end{equation*}
It follows by pairing $i$ with $-i$ and using the mean-value theorem.
The same argument has the following pole-free form. If
$x_i=x_0+\rho i+O(|i|^{1-\kappa})$, then, for every compact
$\mathcal K\Subset\mathbb C\setminus\{x_i:i\in\mathbb Z\}$, there is
$M_{\mathcal K}$ such that
\begin{equation}\label{eq:pole-free-paired-tail}
 \sup_{R\geq M}\sup_{z\in\mathcal K}
 \left|\sum_{M\leq\left|i\right|\leq R}\frac1{x_i-z}\right|
 \leq C_{\mathcal K}M^{-\kappa},\qquad M\geq M_{\mathcal K}.
\end{equation}
When the asymptotic bound and the distance from $\mathcal K$ to the
poles are uniform on a compact time interval, so are the constants in
\eqref{eq:pole-free-paired-tail}.
Since $\mathfrak q(\boldsymbol{\sfX}(\cdot))\in
\mathcal P_{\kappa,\rho}^p$, the following limit exists locally
uniformly on $[0,T]\times\mathbb H$ almost surely:
\begin{equation}\label{eq definition of S S^UN}
 \mathsf S_t\left(z\right)\defeq \lim_{R\to\infty}
 \sum_{\left|i\right|\leq R}\frac{1}{\sfX_i\left(t\right)-z},
 \qquad
 \mathsf S_t^{\left(N\right)}\left(z\right)\defeq 
 \sum_{i\in\mathcal I_N}\frac{1}{\sfX_i^{\left(N\right)}\left(t\right)-z}.
\end{equation}
The same tail discrepancy appears as an additive constant in the Stieltjes transform. Its locally uniform convergence will let us pass the finite transform equation and its spatial derivatives to the limit.
\begin{proposition}\label{prop s transform convergence}
    Let $\boldsymbol{\sfX}$, $(\boldsymbol{\sfX}^{(N)})_{N\geq1}$,
    and $\gamma$ be as in Theorem \ref{theo sde theorem}.  For every
    $T>0$ and compact $\mathcal K\Subset\mathbb H$, almost surely,
    \begin{equation}\label{eq s transform convergence}
       \lim_{N\to\infty} \sup_{t\in\left[0,T\right]}\sup_{z\in\mathcal K}
       \left|\mathsf S_t\left(z\right)+\gamma-\mathsf S^{\left(N\right)}_t\left(z\right)\right|=0.
    \end{equation}
\end{proposition}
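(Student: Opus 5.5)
We split the finite and limiting Stieltjes transforms into a label core $|i|\le M$ and a label tail $|i|>M$, and handle the two parts separately. The core is controlled by the uniform particle convergence of Theorem~\ref{theo sde theorem}. The tail is controlled by the initial reciprocal discrepancy together with the uniform displacement bounds already proved. We fix $T$, a compact $\mathcal K\Subset\mathbb H$, and work on the intersection of the probability-one events of Theorem~\ref{theo sde theorem}, Corollary~\ref{cor convergence of L^N}, Lemma~\ref{lem:uniform-density-error} and Proposition~\ref{prop equation for x_0}.

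\emph{Core.} For fixed $M$, equation \eqref{eq convergence of X_i} gives uniform convergence on $[0,T]$ of $\sfX_i^{(N)}$ to $\sfX_i$ for $|i|\le M$. Since $\operatorname{Im} z\ge c_{\mathcal K}>0$ on $\mathcal K$, the map $x\mapsto 1/(x-z)$ is uniformly Lipschitz on $\mathbb R\times\mathcal K$. Hence
\[
\sum_{|i|\le M}\frac1{\sfX_i^{(N)}(t)-z}\longrightarrow\sum_{|i|\le M}\frac1{\sfX_i(t)-z}
\]
uniformly in $(t,z)\in[0,T]\times\mathcal K$.

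\emph{Tail.} For the limiting tail, regularity of $\mathfrak q(\boldsymbol{\sfX})$ gives $\sfX_i(t)=\sfX_0(t)+\rho i+\varepsilon_i(t)$ with $|\varepsilon_i(t)|\le A_T|i|^{1-\kappa}$, uniformly on $[0,T]$. The paired-series estimate then bounds $\sum_{|i|>M}(\sfX_i(t)-z)^{-1}$ by $C M^{-\kappa}$, uniformly in $t$ and $z$. For the finite tail we compare with the initial data and write
\[
\frac1{\sfX_i^{(N)}(t)-z}-\frac1{x_i^{(N)}-z}=\frac{x_i^{(N)}-\sfX_i^{(N)}(t)}{(\sfX_i^{(N)}(t)-z)(x_i^{(N)}-z)}.
\]
Write $\sfX_i^{(N)}(t)-x_i^{(N)}=(\sfX_0^{(N)}(t)-x_0^{(N)})\pm\mathsf D_i^{(N)}(t)$.
\begin{itemize}
\item Lemma~\ref{lem:uniform-density-error} gives $|\mathsf D_i^{(N)}|\le\mathsf C_T|i|^{1-\kappa}$.
\item Proposition~\ref{prop equation for x_0} bounds the zeroth displacement uniformly in $N$.
\item The lower-envelope bound \eqref{eq lower bound for l^UN, l} together with the displacement bound gives $|\sfX_i^{(N)}(t)-z|\ge a|i|/4$ for $|i|\ge\mathsf M_T$.
\end{itemize}
Each difference is therefore $O(|i|^{-1-\kappa})$, and the sum over $|i|>M$ is $\le\mathsf C_T M^{-\kappa}$ uniformly in $N$, $t$ and $z$. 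This reduces the finite tail to the time-zero quantity $\sum_{i\in\mathcal I_N,|i|>M}(x_i^{(N)}-z)^{-1}$.

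\emph{Time-zero tail.} We expand $(x_i^{(N)}-z)^{-1}=1/x_i^{(N)}+z/\bigl(x_i^{(N)}(x_i^{(N)}-z)\bigr)$. The second term summed over $|i|>M$ is $O(1/M)$ uniformly in $N$, because $|x_i^{(N)}|\ge a|i|/2$. As in Proposition~\ref{from inverse of point to inverse of gap}, the first term equals $\gamma_{1,b}(\bx^{(N)})-F_{N,M}$ for large $M$. The same expansion for $\bx$ gives $\gamma_{1,b}(\bx)-F_M+O(1/M)$, and $F_{N,M}\to F_M$. So the $N$-tail minus the limiting time-zero tail tends, as $N\to\infty$ and then $M\to\infty$, to $\gamma$. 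Finally, the limiting time-zero tail is $O(M^{-\kappa})$ by the paired estimate \eqref{eq:pole-free-paired-tail} (or its $\mathbb H$ version).

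\emph{Conclusion.} Combining the pieces gives
\[
\limsup_N\sup_{t,z}\bigl|\mathsf S_t^{(N)}(z)-\mathsf S_t(z)-\gamma\bigr|\le C(M^{-\kappa}+M^{-1}),
\]
and letting $M\to\infty$ proves \eqref{eq s transform convergence}.

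The main obstacle is the uniformity of the finite tail in $N$. The finite systems have non-nested label supports $\mathcal I_N$, so the pairing of $i$ with $-i$ fails near the physical endpoints. The plan avoids pairing there by comparing each finite tail with its own initial data term by term. This uses only the one-sided displacement bound of Lemma~\ref{lem:uniform-density-error}, which already accounts for those endpoints, together with the explicit reciprocal discrepancy $\gamma$ at time zero.
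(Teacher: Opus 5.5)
Your proposal is correct and is essentially the paper's argument: the same core/tail split in the labels, with the finite tail controlled termwise and uniformly in $N$ through Lemma~\ref{lem:uniform-density-error} and the bounded zeroth particle, and the initial reciprocal discrepancy producing $\gamma$. The only difference is bookkeeping. The paper recentres the tail at $\sfX_0^{(N)}(t)$, writing it as $-\psi_{0,>M}(\bsfL^{(N)}(t))$ plus an $O(1/M)$ resolvent remainder so that Lemma~\ref{lem:uniform-interaction-tails} and Proposition~\ref{from inverse of point to inverse of gap} can be quoted, whereas you compare with the initial positions $x_i^{(N)}$ directly and so inline those two results.
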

\begin{proof}
Fix $\mathcal K\Subset\mathbb H$ and work on the common
probability-one event furnished above.  By Proposition
\ref{prop equation for x_0},
\begin{equation*}
 \mathsf C_{\mathcal K,T}\defeq \sup_N\sup_{0\leq t\leq T}\sup_{z\in\mathcal K}
 \left|\sfX_0^{\left(N\right)}\left(t\right)-z\right|<\infty.
\end{equation*}
For $N,M\in\mathbb N$, define
\begin{equation*}
 \mathsf R_{t,M}^{\left(N\right)}\left(z\right)\defeq \sum_{\substack{i\in\mathcal I_N\\\left|i\right|>M}}
 \left\{
 \frac1{\sfX_i^{\left(N\right)}\left(t\right)-z}
 -\frac1{\sfX_i^{\left(N\right)}\left(t\right)-\sfX_0^{\left(N\right)}\left(t\right)}
 \right\}.
\end{equation*}
The mixed comparison with the fixed regular lower process
$\bsfL^{\tinf,(N_0)}$ gives random $\mathsf c_T>0$ and $\mathsf M_T<\infty$ such
that, for every $N\geq N_0$, $t\leq T$, and finite coordinate
$i\in\mathcal I_N$ with $|i|\geq \mathsf M_T$,
\begin{equation*}
 \left|\sfX_i^{\left(N\right)}\left(t\right)-\sfX_0^{\left(N\right)}\left(t\right)\right|\geq \mathsf c_T\left|i\right|.
\end{equation*}
The finitely many systems $N<N_0$ can be absorbed by increasing
$\mathsf M_T$.  Increase it again until $\mathsf c_T|i|\geq2\mathsf C_{\mathcal K,T}$ for
$|i|\geq \mathsf M_T$.  The resolvent identity then gives
\begin{equation}\label{eq bd of ep^N_tMz}
 \sup_N\sup_{t\leq T}\sup_{z\in\mathcal K}
 \left|\mathsf R_{t,M}^{\left(N\right)}\left(z\right)\right|
 \leq 2\mathsf C_{\mathcal K,T}\mathsf c_T^{-2}
 \sum_{\left|i\right|>M}\left|i\right|^{-2}
 \leq\frac{\mathsf C'_{\mathcal K,T}}M.
\end{equation}
Here and below the displayed constants may be random.  Directly from
the definitions,
\begin{equation}\label{eq:finite-S-decomposition}
 \sfS_t^{\left(N\right)}\left(z\right)
 =\sum_{\substack{i\in\mathcal I_N\\\left|i\right|\leq M}}
 \frac1{\sfX_i^{\left(N\right)}\left(t\right)-z}
 -\psi_{0,>M}\left(\bsfL^{\left(N\right)}\left(t\right)\right)
 +\mathsf R_{t,M}^{\left(N\right)}\left(z\right).
\end{equation}
The same argument for the regular limiting process gives
\begin{equation*}
 \sfS_t\left(z\right)
 =\sum_{\left|i\right|\leq M}\frac1{\sfX_i\left(t\right)-z}
 -\psi_{0,>M}\left(\bsfL\left(t\right)\right)+\mathsf R_{t,M}\left(z\right),
 \qquad
 \sup_{t\leq T,z\in\mathcal K}\left|\mathsf R_{t,M}\left(z\right)\right|\leq\mathsf C/M.
\end{equation*}
For fixed $M$, every label $|i|\leq M$ belongs to $\mathcal I_N$ for
large $N$, and \eqref{eq convergence of X_i} gives uniform convergence
of the finite resolvent sums.  Equations
\eqref{eq:initial-tail-gamma},
\eqref{eq L^N+gamma is closed to l^N+gamma},
\eqref{eq psi^M(L(s)) is small}, and
\eqref{eq bd of ep^N_tMz} control the remaining terms.  Letting first
$N\to\infty$ and then $M\to\infty$ proves
\eqref{eq s transform convergence}.
\end{proof}
The same field extends to $\mathbb C\setminus\{\sfX_i(t):i\in\mathbb Z\}$ by
\begin{equation}\label{eq:meromorphic-S}
 \sfS_t\left(z\right)= \frac1{\sfX_0\left(t\right)-z}
 +\lim_{R\to\infty}\sum_{i=1}^{R}
 \left\{\frac1{\sfX_i\left(t\right)-z}+\frac1{\sfX_{-i}\left(t\right)-z}\right\}.
\end{equation}
The paired-series estimate gives locally uniform convergence on this
domain.  Hence $\sfS_t$, and therefore
$\sfS_{t,\gamma}=\sfS_t+\gamma$, is meromorphic with precisely the
simple poles $\sfX_i(t)$, each of residue $-1$.  The finite transforms
are rational functions.  We next derive their SDE.
For any holomorphic function $f$, put
\begin{equation}\label{eq:def-Qf}
 Q_f\left(z,w\right)\defeq 
 \begin{cases}
  \dfrac{f\left(z\right)-f\left(w\right)}{z-w},&z\neq w,\\[6pt]
  f'\left(z\right),&z=w.
 \end{cases}
\end{equation}
This is holomorphic in both variables, including on the diagonal, and
$Q_{f+\gamma}=Q_f$ for any constant $\gamma$.
We first compute the stochastic equation for the finite Stieltjes transform, including both complex covariations. These formulas identify the drift and covariance of the limiting field.
\begin{lemma}\label{lemma S^UN SDE}
    For $z\in\mathbb H$, the finite transform $\sfS_t^\UN(z)$
    satisfies
    \begin{equation}\label{eq SDE for S transform}
        \dif \sfS^\UN_t\left(z\right) = \dif \sfU^\UN_t\left(z\right) + \left(\frac{\beta}{4}\partial_z\left(\sfS^\UN_t\left(z\right)\right)^2+\frac{2-\beta}{4}\partial_z^2\sfS^\UN_t\left(z\right)\right)\dif t.
    \end{equation}
    Here, $\sfU^\UN_t(z)$ is a continuous martingale whose bilinear and
    Hermitian covariations are given by
    \begin{equation}\label{eq quadratic variation of sfU}
        \dif\left\langle\sfU^\UN\left(z\right),\sfU^\UN\left(z'\right)\right\rangle_t
        =\partial_z\partial_{z'}Q_{\sfS_t^\UN}\left(z,z'\right)\dif t,
    \end{equation}
    including $z=z'$ by \eqref{eq:def-Qf}, and
    \begin{equation}\label{eq:finite-hermitian-bracket}
      \dif\left\langle\sfU^\UN\left(z\right),\overline{\sfU^\UN\left(w\right)}\right\rangle_t
      =\left.\partial_z\partial_\zeta
      Q_{\sfS_t^\UN}\left(z,\zeta\right)\right|_{\zeta=\overline w}\dif t,
    \end{equation}
    where the finite transform in the second variable is continued to
    the lower half-plane by Schwarz reflection.
\end{lemma}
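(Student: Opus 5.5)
The plan is to apply Itô's formula to each resolvent $f_i(t)=1/(\sfX_i^\UN(t)-z)$, sum over $i\in\mathcal I_N$, and then rewrite the resulting drift sums in closed form through $\sfS^\UN$. The finite system \eqref{eq finite DBM} is a genuine SDE with noncolliding, continuous solutions. For $z\in\mathbb H$ the map $x\mapsto 1/(x-z)$ is smooth on $\mathbb R$, so Itô's formula applies without localization. It gives
\[
 \dif\frac1{\sfX_i^\UN-z}=-\frac{\dif\sfB_i}{(\sfX_i^\UN-z)^2}
 -\frac\beta2\sum_{j\neq i}\frac{1}{(\sfX_i^\UN-z)^2(\sfX_i^\UN-\sfX_j^\UN)}\dif t
 +\frac{\dif t}{(\sfX_i^\UN-z)^3}.
\]
Accordingly I would define the martingale part as
\[
\sfU^\UN_t(z)\defeq-\sum_{i\in\mathcal I_N}\int_0^t(\sfX_i^\UN(s)-z)^{-2}\,\dif\sfB_i(s).
\]
This is a finite sum of bounded integrands, since $|\sfX_i-z|\ge\operatorname{Im}z$, so it is a true continuous martingale.

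The key step is the algebraic identification of the drift. Write $x_i=\sfX_i^\UN$. The double sum is symmetrized by swapping $i\leftrightarrow j$, using the identity
\[
\frac{1}{x_i-x_j}\left(\frac1{(x_i-z)^2}-\frac1{(x_j-z)^2}\right)
=-\frac{(x_i-z)+(x_j-z)}{(x_i-z)^2(x_j-z)^2}.
\]
Combining this with $\sfS^2=\sum_{i,j}(x_i-z)^{-1}(x_j-z)^{-1}$, we have
\[
\partial_z\sfS^2=2\sum_{i,j}\frac{1}{(x_i-z)^2(x_j-z)}=2\sum_{i\ne j}\frac{1}{(x_i-z)^2(x_j-z)}+2\sum_i\frac1{(x_i-z)^3}.
\]
A short computation then shows that
\[
-\frac\beta2\sum_i\sum_{j\ne i}\frac{1}{(x_i-z)^2(x_i-x_j)}=\frac\beta4\partial_z\sfS^2-\frac\beta2\sum_i(x_i-z)^{-3}.
\]
Since $\partial_z^2\sfS=2\sum_i(x_i-z)^{-3}$, the total drift becomes $\frac\beta4\partial_z\sfS^2+\frac{2-\beta}4\partial_z^2\sfS$, which is \eqref{eq SDE for S transform}. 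This bookkeeping of the diagonal $j=i$ terms against the Itô correction is the step where errors are most likely. I will verify it by checking the case of a single particle, where the interaction drift vanishes and the equation reduces to $\dif\sfS=\dif\sfU+\tfrac12\partial_z^2\sfS\,\dif t$.

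For the brackets, the $\sfB_i$ are independent, so
\[
\dif\langle\sfU^\UN(z),\sfU^\UN(z')\rangle=\sum_i(x_i-z)^{-2}(x_i-z')^{-2}\dif t.
\]
Next compute $Q_{\sfS}(z,z')=\sum_i(x_i-z)^{-1}(x_i-z')^{-1}$ from the resolvent identity $\frac{1}{x-z}-\frac1{x-z'}=\frac{z-z'}{(x-z)(x-z')}$. Differentiating once in each variable gives exactly $\sum_i(x_i-z)^{-2}(x_i-z')^{-2}$. On the diagonal, this matches $\partial_z\partial_{z'}Q$ evaluated at $z=z'$, since $Q_f$ is jointly holomorphic. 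The same formula then extends to $z=z'$, where it equals the value obtained from $Q(z,z)=f'(z)$ by continuity.

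For the Hermitian bracket, $\overline{\sfU^\UN_t(w)}=-\sum_i\int(x_i-\overline w)^{-2}\dif\sfB_i$ because the $x_i$ and $\sfB_i$ are real. The same computation then holds with $z'=\overline w$, using the rational (Schwarz-reflected) continuation of $\sfS^\UN$ to the lower half-plane. When $z\neq\overline w$ no pole issue arises, and the formula extends to the case $z=\overline w$ if needed by the same diagonal convention. This gives \eqref{eq:finite-hermitian-bracket}.
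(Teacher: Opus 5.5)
Your proposal is correct and follows essentially the same route as the paper. Both apply It\^o's formula to each resolvent and symmetrize the interaction double sum, which expresses the drift through $\partial_z(\sfS^{(N)}_t)^2$ and $\partial_z^2\sfS^{(N)}_t$. The martingale part is $-\sum_i\int(\sfX_i^{(N)}-z)^{-2}\,\dif\sfB_i$. Both brackets then follow from the resolvent identity $Q_{\sfS^{(N)}_t}(z,\zeta)=\sum_i(\sfX_i^{(N)}-z)^{-1}(\sfX_i^{(N)}-\zeta)^{-1}$.

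One minor remark: the case $z=\overline w$ that you allow for never occurs, since $z\in\mathbb H$ and $\overline w$ lies in the lower half-plane.
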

\begin{proof}
Write $I=\mathcal I_N$ and suppress $t$ temporarily.  It\^o's
formula gives
\begin{equation}\label{eq ito for S^UN_t}
\begin{aligned}
 \dif\sfS_t^{\left(N\right)}\left(z\right)
 &=\left\{
 \sum_{i\in I}\left(\sfX_i^{\left(N\right)}-z\right)^{-3}
 -\frac\beta2\sum_{\substack{i,j\in I\\i\neq j}}
 \frac{\left(\sfX_i^{\left(N\right)}-z\right)^{-2}}
      {\sfX_i^{\left(N\right)}-\sfX_j^{\left(N\right)}}
 \right\}\dif t-\sum_{i\in I}\left(\sfX_i^{\left(N\right)}-z\right)^{-2}\dif\sfB_i\left(t\right).
\end{aligned}
\end{equation}
Symmetrizing the finite double sum yields
\begin{equation*}
 \sum_{\substack{i,j\in I\\i\neq j}}
 \frac{\left(\sfX_i^{\left(N\right)}-z\right)^{-2}}
      {\sfX_i^{\left(N\right)}-\sfX_j^{\left(N\right)}}
 =-\frac12\partial_z\left(\sfS_t^{\left(N\right)}\left(z\right)\right)^2
 +\sum_{i\in I}\left(\sfX_i^{\left(N\right)}-z\right)^{-3},
\end{equation*}
where
$\sum_{i\in I}(\sfX_i^{(N)}-z)^{-3}
=\frac12\partial_z^2\sfS_t^{(N)}(z)$.  Substitution gives
\eqref{eq SDE for S transform} with
\begin{equation}\label{eq:def-finite-U}
 \sfU_t^{\left(N\right)}\left(z\right)\defeq -\int_0^t\sum_{i\in\mathcal I_N}
 \left(\sfX_i^{\left(N\right)}\left(s\right)-z\right)^{-2}\dif\sfB_i\left(s\right).
\end{equation}
Taking the bilinear and Hermitian quadratic covariations in
\eqref{eq:def-finite-U} and using the elementary resolvent identity
gives \eqref{eq quadratic variation of sfU} and
\eqref{eq:finite-hermitian-bracket}.
\end{proof}
The following common localization and fourth-moment bound give uniform integrability for the finite martingale fields and their derivatives. This will retain the martingale identities in the limit.
\begin{lemma}\label{lem:field-ui}
Let $\mathcal K\Subset\operatorname{int}\mathcal K_1$ with
$\mathcal K_1\Subset\mathbb H$, and put
\begin{equation*}
 \mathsf A_t\defeq \sup_N\sup_{\substack{0\leq s\leq t\\z\in\mathcal K_1}}
 \left|\sfS_s^{\left(N\right)}\left(z\right)\right|,
 \qquad
 \mathsf T_R^{\mathrm{loc}}\defeq \inf\left\{t\leq T:\mathsf A_t\geq R\right\}\wedge T.
\end{equation*}
Then $\mathsf T_R^{\mathrm{loc}}$ is a stopping time,
$\mathsf T_R^{\mathrm{loc}}\uparrow T$ almost surely, and
\begin{equation}\label{eq:field-ui}
 \sup_N\expt\left[
  \sup_{0\leq t\leq T}\sup_{z\in\mathcal K}
  \left|\sfU_{t\wedge\mathsf T_R^{\mathrm{loc}}}^{\left(N\right)}\left(z\right)\right|^4
 \right]<\infty.
\end{equation}
The same conclusion holds for every fixed finite number of
$z$-derivatives on a smaller compact set.
\end{lemma}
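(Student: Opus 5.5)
The plan is to read off the qualitative claims from Proposition~\ref{prop s transform convergence}, and to get the moment bound from a Burkholder--Davis--Gundy estimate whose bracket is controlled by the imaginary part of the stopped transform. Holomorphy in $z$ then upgrades pointwise bounds to bounds uniform on $\mathcal K$.

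\emph{Step 1: the qualitative claims.} First I show $\mathsf A_T<\infty$ almost surely.
\begin{itemize}
\item Proposition~\ref{prop s transform convergence}, applied on $\mathcal K_1$, gives $\sup_{t\le T,\,z\in\mathcal K_1}|\sfS_t^{(N)}(z)-\sfS_t(z)-\gamma|\to0$.
\item The limit $\sfS_t+\gamma$ is bounded on $[0,T]\times\mathcal K_1$, being a locally uniform limit of continuous functions.
\item Each of the finitely many remaining $\sfS^{(N)}$ is bounded there by $|\mathcal I_N|/\operatorname{dist}(\mathcal K_1,\mathbb R)$.
\end{itemize}
Hence $\mathsf T_R^{\mathrm{loc}}=T$ as soon as $R>\mathsf A_T$, so $\mathsf T_R^{\mathrm{loc}}\uparrow T$, indeed eventually equal to $T$.

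For the stopping-time property, continuity in $(s,z)$ lets me restrict the suprema defining $\mathsf A_t$ to countably many $N$, rational $s$ and a countable dense set of $z$. Thus $\mathsf A$ is adapted and nondecreasing. It is also left-continuous, as a supremum of nondecreasing continuous running suprema. Then $\{\mathsf T_R^{\mathrm{loc}}<t\}=\bigcup_{s\in\mathbb Q,\,s<t}\{\mathsf A_s\ge R\}\in\mathcal F_t$. With the usual augmentation, which is right-continuous, this gives a stopping time.

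\emph{Step 2: pointwise fourth moment.} Fix $z\in\mathcal K_1$ and put $\eta=\operatorname{Im}z$. From \eqref{eq:def-finite-U}, the Hermitian bracket of $\sfU^{(N)}(z)$ has density $\sum_i|\sfX_i^{(N)}-z|^{-4}$. I bound this by
\[
\Bigl(\sum_i|\sfX_i^{(N)}-z|^{-2}\Bigr)^2=\bigl(\operatorname{Im}\sfS^{(N)}(z)/\eta\bigr)^2\le \mathsf A_t^2/\eta^2 .
\]
Before $\mathsf T_R^{\mathrm{loc}}$ this is at most $R^2/\eta^2$; at the hitting time itself it is $\le R^2/\eta^2$ by left-continuity, and the time integral ignores a single instant. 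Applying Burkholder--Davis--Gundy to the real and imaginary parts of the stopped martingale gives
\[
\expt\Bigl[\sup_{t\le T}\bigl|\sfU^{(N)}_{t\wedge\mathsf T_R^{\mathrm{loc}}}(z)\bigr|^4\Bigr]\le C\bigl(R^2T/\eta^2\bigr)^2 .
\]
This bound is uniform in $N$ and in $z\in\mathcal K_1$.

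\emph{Step 3: uniformity in $z$ and derivatives.} This is the step I expect to be the main obstacle, because it needs a version of $\sfU^{(N)}$ that is holomorphic in $z$ and a way to commute the supremum over $z$ with the expectation. For fixed $N$ the integrand is a finite sum, holomorphic in $z$ and locally bounded on $\mathbb H$. Stochastic Fubini therefore shows that contour integrals of the stopped field over closed loops in $\mathbb H$ vanish. Combined with a Kolmogorov continuity argument in $z$, using the same BDG bound applied to $z$-differences, this gives a version jointly continuous in $(t,z)$ and holomorphic in $z$.

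Next I cover $\mathcal K$ by finitely many discs $D(z_k,r)$ with $\overline{D(z_k,2r)}\subset\mathcal K_1$. For $z\in D(z_k,r)$, the Cauchy formula on the circle $|\zeta-z_k|=2r$ gives $|\sfU(z)|\le C r^{-1}\int_{|\zeta-z_k|=2r}|\sfU(\zeta)|\,|\dif\zeta|$. Then Jensen's inequality for the normalized arclength measure yields
\[
\expt\Bigl[\sup_{t}\sup_{z}|\sfU|^4\Bigr]\le C_r\int_{|\zeta-z_k|=2r}\expt\Bigl[\sup_t|\sfU(\zeta)|^4\Bigr]|\dif\zeta| ,
\]
which Step 2 bounds uniformly in $N$. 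This proves \eqref{eq:field-ui}.

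For finitely many $z$-derivatives on a smaller compact set, the same Cauchy formula for derivatives gives the analogous bound. It only costs powers of $r^{-1}$, since the stopping time is unchanged and still depends only on $\mathcal K_1$.
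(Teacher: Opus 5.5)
Your proof is correct and follows the paper's argument. Proposition~\ref{prop s transform convergence} gives $\mathsf A_T<\infty$ and the stopping-time claims. A BDG bound handles each fixed $z$, with the bracket density $\sum_k|\sfX_k^{(N)}-z|^{-4}$ controlled by $\operatorname{Im}\sfS^{(N)}(z)/\operatorname{Im}z$ before $\mathsf T_R^{\mathrm{loc}}$; the paper uses $\eta^{-3}\operatorname{Im}\sfS^{(N)}$ where you use its square, which is immaterial. Cauchy's formula on doubled discs then passes to $\sup_{z\in\mathcal K}$ and to derivatives.

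Your stochastic-Fubini/Kolmogorov construction of a holomorphic version is superfluous. The finite-system identity \eqref{eq SDE for S transform}, i.e.\ $\sfU^{(N)}_t=\sfS^{(N)}_t-\sfS^{(N)}_0-\int_0^t\mathcal A(\sfS^{(N)}_s)\,\dif s$, already supplies one pathwise.
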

\begin{proof}
Proposition \ref{prop s transform convergence} gives almost sure uniform
convergence of $(\sfS^{(N)})_N$ on $[0,T]\times\mathcal K_1$. The limit
is continuous, so the family is equicontinuous: uniform convergence
controls the tail, and the finitely many remaining continuous functions
preserve equicontinuity. Hence $\mathsf A_T<\infty$ and $\mathsf A$ is
continuous. Taking the suprema over fixed countable dense subsets
shows that $\mathsf A$ is adapted. Therefore
$\mathsf T_R^{\mathrm{loc}}$ is a stopping time and increases to $T$.
Put $\eta=\inf_{z\in\mathcal K_1}\operatorname{Im}z>0$.  Before
$\mathsf T_R^{\mathrm{loc}}$,
\begin{equation*}
 \sum_{k\in\mathcal I_N}
 \frac1{\left|\sfX_k^{\left(N\right)}\left(s\right)-z\right|^4}
 \leq\eta^{-3}\operatorname{Im}\sfS_s^{\left(N\right)}\left(z\right)
 \leq\eta^{-3}R.
\end{equation*}
The Burkholder--Davis--Gundy inequality therefore gives, uniformly in
$N$ and $z\in\mathcal K_1$,
\begin{equation*}
 \expt\left[
  \sup_{t\leq T}\left|\sfU_{t\wedge\mathsf T_R^{\mathrm{loc}}}^{\left(N\right)}\left(z\right)\right|^4
 \right]
 \leq C\left(\eta^{-3}RT\right)^2.
\end{equation*}
Cover $\mathcal K$ by finitely many discs whose doubled closed discs
lie in $\operatorname{int}\mathcal K_1$, and denote the boundary
circles of the doubled discs by $\Gamma_1,\ldots,\Gamma_q$.  Cauchy's
formula and H\"older's inequality give
\begin{equation*}
 \sup_{z\in\mathcal K}\left|\sfU_{t\wedge\mathsf T_R^{\mathrm{loc}}}^{\left(N\right)}\left(z\right)\right|^4
 \leq C_{\mathcal K,\mathcal K_1}
 \sum_{r=1}^q\int_{\Gamma_r}
 \left|\sfU_{t\wedge\mathsf T_R^{\mathrm{loc}}}^{\left(N\right)}\left(\zeta\right)\right|^4\left|\dif\zeta\right|.
\end{equation*}
Taking the supremum in $t$, then expectation, proves
\eqref{eq:field-ui}.  Cauchy's derivative formula proves the final
assertion.
\end{proof}
Recall that $\sfS_{t,\gamma}=\sfS_t+\gamma$. Proposition
\ref{prop s transform convergence} gives
\begin{equation}\label{eq S_t,gamma}
\sfS_{t,\gamma}\left(z\right)= \lim_{N\to\infty}\sfS^\UN_t\left(z\right)
=\sfS_t\left(z\right)+\gamma,\qquad t\in\left[0,T\right],\quad z\in\mathbb H.
\end{equation}
We introduce the notation and stopped-domain convention used when
passing to the limit in \eqref{eq SDE for S transform}.  For a
holomorphic function $f$, define
\begin{equation*}
    \mathcal A\left(f\right)\defeq \frac{\beta}{4}\partial_z\left(f^2\right)
    +\frac{2-\beta}{4}\partial_z^2f,
\end{equation*}
and use $Q_f$ from \eqref{eq:def-Qf}.
For complex local martingales, $\langle \mathsf{M},\mathsf{N}\rangle$ denotes bilinear
covariation and $\langle \mathsf{M},\overline {\mathsf{N}}\rangle$ denotes Hermitian
covariation. In a Hermitian covariance formula, the Stieltjes
transform is extended to the lower half-plane by $\sfS_t(\zeta)=\overline{\sfS_t(\overline\zeta)}.$ 

\noindent\textbf{Stopped pole-free continuation.}
We require the martingale identities to remain valid on domains that stay
away from the moving poles, including domains selected measurably at a
stopping time. This permits contour integration around individual particles.

Let $\mathsf T_{\mathrm{in}}\leq\mathsf T_{\mathrm{out}}\leq T$ be bounded stopping times, let
$(\mathcal D_j)_{j\geq1}$ be a fixed countable family of relatively compact open
subsets of $\mathbb C$, every connected component of which meets
$\mathbb H$, and let $\mathsf J$ be an $\mathcal F_{\mathsf T_{\mathrm{in}}}$-measurable
$\mathbb N$-valued random variable. Put $D=\mathcal D_{\mathsf J}$ and
$E_j=\{\mathsf J=j\}$. Suppose that, on $E_j$,
\begin{equation*}
    \inf_{\substack{\mathsf T_{\mathrm{in}}\leq s\leq\mathsf T_{\mathrm{out}},\ z\in\overline{\mathcal D_j}\\k\in\mathbb Z}}
    \left|z-\sfX_k\left(s\right)\right|>0.
\end{equation*}

Using the symmetric paired continuation of $\sfS$, define on $E_j$
and for $z\in \mathcal D_j$
\begin{equation*}
    \mathsf V_t^{\mathsf T_{\mathrm{in}},\mathsf T_{\mathrm{out}}}\left(z\right)\defeq 
    \begin{cases}
        0, & t\leq\mathsf T_{\mathrm{in}},\\[3pt]
        \sfS_{t\wedge\mathsf T_{\mathrm{out}},\gamma}\left(z\right)-\sfS_{\mathsf T_{\mathrm{in}},\gamma}\left(z\right)
        -\displaystyle\int_{\mathsf T_{\mathrm{in}}}^{t\wedge\mathsf T_{\mathrm{out}}}
          \mathcal A\left(\sfS_{s,\gamma}\right)\left(z\right)\dif s,
        & t>\mathsf T_{\mathrm{in}}.
    \end{cases}
\end{equation*}
Let $\mathsf W^j=\mathbf1_{E_j}\mathsf V^{\mathsf T_{\mathrm{in}},\mathsf T_{\mathrm{out}}}$ on $E_j$ and set $\mathsf W^j=0$
on $E_j^c$. We say that $(\sfS_{\cdot,\gamma},\sfU)$ has the
\emph{stopped pole-free continuation property} $(\mathbf{SC})$ on
$[0,T]$ if, for every such choice and every $j$, $\mathsf W^j$ is an adapted
continuous local-martingale field in
$C([0,T];\mathcal O(\mathcal D_j))$: for every $\mathcal K\Subset \mathcal D_j$, there is a
common localization under which the stopped field is an integrable
$C(\mathcal K)$-valued martingale. It also satisfies
\begin{equation*}
    \mathsf W_t^j\left(z\right)
    =
    \mathbf1_{E_j}
    \left\{\sfU_{t\wedge\mathsf T_{\mathrm{out}}}\left(z\right)-\sfU_{t\wedge\mathsf T_{\mathrm{in}}}\left(z\right)\right\},
    \quad z\in \mathcal D_j\cap\mathbb H.
\end{equation*}

Moreover, on $E_j$, for $z,w\in \mathcal D_j$,
\begin{equation*}
\begin{aligned}
    \left\langle \mathsf W^j\left(z\right),\mathsf W^j\left(w\right)\right\rangle_t
    &=\int_{t\wedge\mathsf T_{\mathrm{in}}}^{t\wedge\mathsf T_{\mathrm{out}}}
      \partial_z\partial_wQ_{\sfS_s}\left(z,w\right)\dif s,\\
    \left\langle \mathsf W^j\left(z\right),\overline{\mathsf W^j\left(w\right)}\right\rangle_t
    &=\int_{t\wedge\mathsf T_{\mathrm{in}}}^{t\wedge\mathsf T_{\mathrm{out}}}
      \left.
      \partial_z\partial_\zeta Q_{\sfS_s}\left(z,\zeta\right)
      \right|_{\zeta=\overline w}\dif s.
\end{aligned}
\end{equation*}
On $E_j^c$, $\mathsf W^j$ and both of its covariations vanish.
We can now pass the finite transform equation to the limit, retaining its martingale field and covariations. The stopped continuation property will allow us to recover the particle motions by contour integration.
\begin{proposition}
\label{prop:stieltjes-spde}
Fix $\beta\geq1$, $T>0$. Work under the
hypotheses and notation of Theorem \ref{theo sde theorem} and Proposition \ref{prop s transform convergence}, on the
common filtered probability space carrying the Brownian motions
$(\sfB_i)_{i\in\mathbb Z}$. Let $\sfS_t$ be the symmetric Stieltjes
transform of $\boldsymbol{\sfX}(t)$ on $\mathbb H$.

There exists an adapted field
$\sfU\in C([0,T];\mathcal O(\mathbb H))$ such that $\sfU_0=0$ and
$\sfU(z)$ is a continuous complex local martingale for every
$z\in\mathbb H$. Moreover,
\begin{equation}\label{eq:stieltjes-spde}
    \sfS_{t,\gamma}\left(z\right)
    =
    \sfS_{0,\gamma}\left(z\right)+\sfU_t\left(z\right)
    +\int_0^t\mathcal A\left(\sfS_{s,\gamma}\right)\left(z\right)\,\dif s.
\end{equation}
For $z,w\in\mathbb H$, its bilinear and Hermitian covariations are
\begin{equation}\label{eq:stieltjes-bilinear-bracket}
    \left\langle\sfU\left(z\right),\sfU\left(w\right)\right\rangle_t
    =
    \int_0^t\partial_z\partial_wQ_{\sfS_s}\left(z,w\right)\,\dif s
\end{equation}
and
\begin{equation}\label{eq:stieltjes-hermitian-bracket}
    \left\langle\sfU\left(z\right),\overline{\sfU\left(w\right)}\right\rangle_t
    =
    \int_0^t
    \left.
    \partial_z\partial_\zeta Q_{\sfS_s}\left(z,\zeta\right)
    \right|_{\zeta=\overline w}\dif s.
\end{equation}
Finally,
$(\sfS_{\cdot,\gamma},\sfU)$ satisfies $(\mathbf{SC})$.
\end{proposition}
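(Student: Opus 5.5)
The plan is to take the limit $N\to\infty$ in the finite identity of Lemma~\ref{lemma S^UN SDE}, with $\sfU$ defined as the residual, and then check its martingale and covariation properties. For $z\in\mathbb H$ put
\[
 \sfU_t\left(z\right)\defeq\sfS_{t,\gamma}\left(z\right)-\sfS_{0,\gamma}\left(z\right)-\int_0^t\mathcal A\left(\sfS_{s,\gamma}\right)\left(z\right)\dif s .
\]
Proposition~\ref{prop s transform convergence} gives $\sfS^{(N)}\to\sfS_{\cdot,\gamma}$ almost surely, uniformly on $[0,T]\times\mathcal K$ for each compact $\mathcal K\Subset\mathbb H$. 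Cauchy's estimates then give the same uniform convergence for $\partial_z\sfS^{(N)}$ and $\partial_z^2\sfS^{(N)}$. Since $\partial_z(f+\gamma)^2$ and $\partial_z^2(f+\gamma)$ are continuous in $f$ for locally uniform convergence, $\mathcal A(\sfS^{(N)}_s)\to\mathcal A(\sfS_{s,\gamma})$ uniformly on $[0,T]\times\mathcal K$. Integrating the finite equation then shows $\sfU^{(N)}\to\sfU$ almost surely in $C([0,T];\mathcal O(\mathbb H))$. It follows that $\sfU$ is adapted, continuous, holomorphic, satisfies $\sfU_0=0$, and \eqref{eq:stieltjes-spde} holds by definition.

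For the martingale property I would use the localization $\mathsf T_R^{\mathrm{loc}}$ of Lemma~\ref{lem:field-ui}. It is defined from the supremum over all $N$, so it is a single stopping time for every $N$, and it increases to $T$. The fourth-moment bound \eqref{eq:field-ui} makes $\{\sfU^{(N)}_{t\wedge\mathsf T_R^{\mathrm{loc}}}(z)\}_N$ uniformly integrable. Pointwise convergence then carries the martingale identity $\expt[(\sfU^{(N)}_{t\wedge \mathsf T_R}-\sfU^{(N)}_{s\wedge\mathsf T_R})G]=0$ over to the limit, for bounded $\mathcal F_s$-measurable $G$. The same argument applies to the products $\sfU^{(N)}(z)\sfU^{(N)}(w)-\int \partial_z\partial_wQ_{\sfS^{(N)}}$ and to their Hermitian analogues, which are martingales by \eqref{eq quadratic variation of sfU} and \eqref{eq:finite-hermitian-bracket}. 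Here the fourth moments make the squared terms uniformly integrable. The bracket integrands are Cauchy-type derivatives of $Q_{\sfS^{(N)}}$, which converge uniformly on compacts because $Q_f$ depends continuously on $f$ in $\mathcal O$, together with Schwarz reflection for the lower half-plane. They are also bounded before $\mathsf T_R^{\mathrm{loc}}$. Since $Q_{f+\gamma}=Q_f$, the limiting brackets are expressed through $\sfS_s$, which gives \eqref{eq:stieltjes-bilinear-bracket} and \eqref{eq:stieltjes-hermitian-bracket}.

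The main obstacle is $(\mathbf{SC})$, because the domains $\mathcal D_j$ may meet the real axis between poles, and there the finite transforms need not converge: finite particles can sit near the limiting ones. I would handle this as follows. On $E_j$, with positive distance $d$ from $\overline{\mathcal D_j}$ to the limiting poles over $[\mathsf T_{\mathrm{in}},\mathsf T_{\mathrm{out}}]$, use \eqref{eq convergence of X_i} for the finitely many labels with $\mathsf X_k$ near $\overline{\mathcal D_j}$, together with the tail bound of Lemma~\ref{lem:uniform-density-error} via \eqref{eq:pole-free-paired-tail}. This shows that eventually all finite poles stay at distance at least $d/2$, and that the paired finite sums converge uniformly on $\overline{\mathcal D_j}$ to the meromorphic continuation \eqref{eq:meromorphic-S} plus $\gamma$. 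The argument uses the decomposition \eqref{eq:finite-S-decomposition}, whose remainder bound holds on any compact set at positive distance from the poles.

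I would then apply a stopping-time version of the localization. Let $\tau$ be the first time after $\mathsf T_{\mathrm{in}}$ at which some $\sfX_k^{(N)}$ comes within $d/4$ of $\overline{\mathcal D_j}$, capped at $\mathsf T_{\mathrm{out}}$. Before $\tau$ the finite identity holds on $\mathcal D_j$, since the finite field is rational with its poles kept away. The BDG and Cauchy argument of Lemma~\ref{lem:field-ui} then goes through with $\eta$ replaced by $d/4$, using $\sum_k|\sfX^{(N)}_k-z|^{-4}\le (d/4)^{-2}\sum_k|\sfX_k^{(N)}-z|^{-2}$. The last sum is bounded via $\operatorname{Im}\sfS^{(N)}$ at a nearby point of $\mathbb H$, which is where the hypothesis that each component meets $\mathbb H$ is used. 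Because $\mathsf J$ is $\mathcal F_{\mathsf T_{\mathrm{in}}}$-measurable and countably valued, the indicator $\mathbf 1_{E_j}$ factors through the conditional expectations. The limit of the finite fields is then identified with $\mathsf W^j$, and the identity with the increment of $\sfU$ on $\mathcal D_j\cap\mathbb H$ follows by analytic continuation.
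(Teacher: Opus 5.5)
For $\sfU$ itself, your argument is the paper's. That covers defining it as the residual, obtaining $\sfU^{(N)}\to\sfU$ in $C([0,T];\mathcal O(\mathbb H))$, and transferring the martingale and compensated-product identities on $\mathbb H$ through $\mathsf T_R^{\mathrm{loc}}$ and the fourth-moment bound of Lemma~\ref{lem:field-ui}.

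For $(\mathbf{SC})$ you take a genuinely different route. The paper never returns to the finite systems off $\mathbb H$:
\begin{itemize}
\item On $\mathcal D_j\cap\mathbb H$, $\mathsf W^j$ is the stochastic integral of the predictable $\mathbf 1_{E_j}\mathbf 1_{(\mathsf T_{\mathrm{in}},\mathsf T_{\mathrm{out}}]}$ against $\sfU$, so it is already a local martingale. The identification with $\sfU$-increments there is immediate from \eqref{eq:stieltjes-spde}.
\item After localizing by the running supremum of $|\mathsf W^j|+\sum_k|\sfX_k-z|^{-4}$ over $\overline{\mathcal D_j}$, the map $z\mapsto\expt[\mathbf 1_A\{\mathsf W^j_{t\wedge\cdot}(z)-\mathsf W^j_{r\wedge\cdot}(z)\}]$ is holomorphic on $\mathcal D_j$ by Morera. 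It vanishes on $\mathcal D_j\cap\mathbb H$, hence on all of $\mathcal D_j$ by the identity theorem.
\item The same argument on $\mathcal D_j\times\mathcal D_j$ gives the stopped covariations.
\end{itemize}
That is where the hypothesis that every component meets $\mathbb H$ enters, and no finite-$N$ pole control near $\mathbb R$ is needed.

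Your route instead rebuilds the martingale property on $\mathcal D_j$ from the rational finite fields. The pole separation it needs does hold: use \eqref{eq convergence of X_i} for the finitely many labels that approach $\overline{\mathcal D_j}$, and the $N$-uniform bound $|\sfX^{(N)}_k-\sfX^{(N)}_0|\geq\mathsf c_T|k|$ from the proof of Proposition~\ref{prop s transform convergence} for the rest. Your route never actually uses the component hypothesis, so you have misattributed it. Your bound of $\sum_k|\sfX_k^{(N)}-z|^{-2}$ by $\operatorname{Im}\sfS^{(N)}$ works by shifting any point of $\overline{\mathcal D_j}$ vertically into a fixed compact subset of $\mathbb H$, whatever the component structure.

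As written, one step fails, though it is repairable. The distance $d$ depends on the limiting path on $[\mathsf T_{\mathrm{in}},\mathsf T_{\mathrm{out}}]$ and is not $\mathcal F_{\mathsf T_{\mathrm{in}}}$-measurable. So your $\tau$ is not a stopping time, and neither BDG nor optional stopping can be applied at it. Repair it as follows:
\begin{itemize}
\item Use deterministic thresholds $\delta=2^{-m}$ instead of $d/4$.
\item Stop the limit as well: let $\sigma_\delta$ be the first time after $\mathsf T_{\mathrm{in}}$ at which the limiting poles come within $2\delta$ of $\overline{\mathcal D_j}$, capped at $\mathsf T_{\mathrm{out}}$. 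Let $\tau^{(N)}_\delta$ be the analogous time for the $N$-th system with threshold $\delta$.
\item On $E_j$, almost surely $\tau^{(N)}_\delta\geq\sigma_\delta$ for all large $N$. Hence the finite fields stopped at $\tau^{(N)}_\delta\wedge\sigma_\delta\wedge\mathsf T_R^{\mathrm{loc}}$ converge to $\mathsf W^j$ stopped at $\sigma_\delta\wedge\mathsf T_R^{\mathrm{loc}}$, and your uniform integrability yields the martingale identity.
\item Since $\sigma_{2^{-m}}=\mathsf T_{\mathrm{out}}$ for large $m$ on $E_j$, $\mathsf W^j$ is a local martingale.
\end{itemize}
Stopping the limit is essential: without it, the stopped finite fields need not converge off the good event.

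Finally, $(\mathbf{SC})$ also asserts the two stopped covariation identities on $\mathcal D_j$, which you do not address. Run the compensated-product argument there with the finite densities $\sum_k(\sfX^{(N)}_k-z)^{-2}(\sfX^{(N)}_k-w)^{-2}$ and $\sum_k(\sfX^{(N)}_k-z)^{-2}(\sfX^{(N)}_k-\overline w)^{-2}$. These converge on pole-free compacta by the same tail estimates.
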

\begin{proof}
Recall the finite martingale field from \eqref{eq:def-finite-U}.
Using a countable exhaustion of $\mathbb H$, Proposition \ref{prop s transform convergence} and
Cauchy's integral formula give, on one event of probability one,
\begin{equation*}
    \partial_z^r\sfS^\UN
    \longrightarrow
    \partial_z^r\sfS_{\cdot,\gamma},
    \quad r=0,1,2,3,
\end{equation*}
locally uniformly on $[0,T]\times\mathbb H$. The finite-system SPDE
therefore yields
\begin{equation*}
    \sfU_t\left(z\right)\defeq 
    \sfS_{t,\gamma}\left(z\right)-\sfS_{0,\gamma}\left(z\right)
    -\int_0^t\mathcal A\left(\sfS_{s,\gamma}\right)\left(z\right)\,\dif s
\end{equation*}
and $\sfU^\UN\to\sfU$ in
$C([0,T];\mathcal O(\mathbb H))$.

Use the common localization $\mathsf T_R^{\mathrm{loc}}\uparrow T$
of Lemma \ref{lem:field-ui}, for
$\mathcal K\Subset\operatorname{int}\mathcal K_1$ and
$\mathcal K_1\Subset\mathbb H$. Its fourth-moment bound, together with
$\sfU^{(N)}\to\sfU$ in
$C([0,T]\times\mathcal K)$, permits passage to conditional
expectations and proves that the stopped limit is a
$C(\mathcal K)$-valued martingale.  Its fourth-moment bound also gives
uniform integrability of the bilinear and Hermitian compensated
products.  Cauchy's formula for the holomorphic divided differences
gives locally uniform convergence of
$\partial_z\partial_wQ_{\sfS_s^{(N)}}$ to
$\partial_z\partial_wQ_{\sfS_s}$, including on the diagonal; the
Hermitian kernels converge in the same way.  Passing to the limit on
a fixed countable dense subset and then using continuity proves that
$\sfU$ is a local-martingale field and gives
\eqref{eq:stieltjes-bilinear-bracket} and
\eqref{eq:stieltjes-hermitian-bracket}.

For the explicit densities, subtract the symmetric partial sums in
the definition of $\sfS_s$. Since the constant $\gamma$ cancels,
\begin{equation*}
    Q_{\sfS_s}\left(z,w\right)
    =
    \sum_{k\in\mathbb Z}
    \frac{1}{\left(\sfX_k\left(s\right)-z\right)\left(\sfX_k\left(s\right)-w\right)}.
\end{equation*}
Membership of $\mathfrak q(\boldsymbol{\sfX})$ in $\tsaipro$ implies, uniformly on
compact time intervals,
\begin{equation*}
    \sfX_k\left(s\right)
    =
    \sfX_0\left(s\right)+\rho k+O_T\left(\left|k\right|^{1-\kappa}\right).
\end{equation*}
The series above is therefore absolutely and locally uniformly
convergent away from the poles. Termwise differentiation gives the
bilinear and Hermitian covariance densities
\begin{equation*}
    \sum_{k\in\mathbb Z}
    \frac{1}{\left(\sfX_k\left(s\right)-z\right)^2\left(\sfX_k\left(s\right)-w\right)^2},\quad
    \sum_{k\in\mathbb Z}
    \frac{1}{\left(\sfX_k\left(s\right)-z\right)^2\left(\sfX_k\left(s\right)-\overline w\right)^2},
\end{equation*}
respectively. These series also converge absolutely and locally uniformly
away from the poles.

It remains to verify $(\mathbf{SC})$. Fix $\mathcal D_j$ and
$E_j=\{\mathsf J=j\}$. The same particle asymptotics show that the paired
summands in the Stieltjes series are $O_T(|k|^{-1-\kappa})$, uniformly
on compact time intervals and compact pole-free sets. The series for
its first two derivatives are absolutely and locally uniformly
convergent. Consequently, the selected field $\mathsf W^j$ in the definition of $(\mathbf{SC})$
is adapted, belongs to $C([0,T];\mathcal O(\mathcal D_j))$, and, on
$E_j$, extends continuously to
$[0,T]\times\overline{\mathcal D_j}$. On $\mathcal D_j\cap\mathbb H$,
\eqref{eq:stieltjes-spde} identifies it with
\begin{equation*}
    \mathbf1_{E_j}
    \left\{\sfU_{t\wedge\mathsf T_{\mathrm{out}}}-\sfU_{t\wedge\mathsf T_{\mathrm{in}}}\right\}.
\end{equation*}
This is a local martingale because
$\mathbf1_{E_j}\mathbf1_{(\mathsf T_{\mathrm{in}},\mathsf T_{\mathrm{out}}]}$ is predictable.

On $E_j$, put $\mathsf H_t^j=0$ for $t<\mathsf T_{\mathrm{in}}$, and for $t\geq\mathsf T_{\mathrm{in}}$ set
\begin{equation*}
    \mathsf H_t^j\defeq 
    \sup_{\mathsf T_{\mathrm{in}}\leq u\leq t\wedge\mathsf T_{\mathrm{out}}}
    \sup_{z\in\overline{\mathcal D_j}}
    \left\{
        \left|\mathsf W_u^j\left(z\right)\right|
        +\sum_{k\in\mathbb Z}\left|\sfX_k\left(u\right)-z\right|^{-4}
    \right\}.
\end{equation*}
Define
\begin{equation*}
    \mathsf T_m^{j,\mathrm{loc}}\defeq \inf\left\{t:\mathsf H_t^j>m\right\}\wedge T.
\end{equation*}
On $E_j^c$, put $\mathsf H^j=0$ and $\mathsf T_m^{j,\mathrm{loc}}=T$. Because
$E_j\in\mathcal F_{\mathsf T_{\mathrm{in}}}$, and because the suprema may be taken over
countable dense sets, $\mathsf T_m^{j,\mathrm{loc}}$ is a stopping time. The
pole-separation assumption and the particle asymptotics give
$\mathsf H_T^j<\infty$, so $\mathsf T_m^{j,\mathrm{loc}}\uparrow T$. For
$0\leq r\leq t\leq T$ and
$A\in\mathcal F_r$, the function
\begin{equation*}
    z\longmapsto
    \expt\left[
        \mathbf1_A
        \left\{
            \mathsf W^j_{t\wedge\mathsf T_m^{j,\mathrm{loc}}}\left(z\right)
            -\mathsf W^j_{r\wedge\mathsf T_m^{j,\mathrm{loc}}}\left(z\right)
        \right\}
    \right]
\end{equation*}
is holomorphic by Morera's theorem and dominated convergence. It
vanishes on $\mathcal D_j\cap\mathbb H$, and hence on every component of $\mathcal D_j$
by the identity theorem. Boundedness in the stopped
$C(\mathcal K)$-norm and separability of $C(\mathcal K)$ then give the
$C(\mathcal K)$-valued martingale property for every
$\mathcal K\Subset \mathcal D_j$. Thus $\mathsf W^j$ is a local-martingale field.
Applying the same argument on $\mathcal D_j\times \mathcal D_j$ to the bilinear and
Hermitian compensated products gives the two stopped covariance
formulas; in the Hermitian case the second variable is
anti-holomorphic. Countability of the domains and their components
allows all versions to be chosen simultaneously. This proves
$(\mathbf{SC})$.
\end{proof}

For regular particle paths, the paired-series estimate gives a meromorphic
Stieltjes transform with simple poles at the particles, each of residue
$-1$, simultaneously on every compact time interval. Indeed, regularity
gives $\sfX_i(t)=\sfX_0(t)+\rho i+O_T(|i|^{1-\kappa})$ on one event for
all integer time horizons, so \eqref{eq:pole-free-paired-tail} gives
uniform convergence on compact sets staying away from the poles. Near
any fixed particle its single summand supplies the residue, while strict
ordering and the paired-tail bound make the remaining sum holomorphic.

Conversely, the following result recovers the shifted Dyson ISDE from the
Stieltjes equation. The stopped continuation property permits contour
integration around the moving poles and recovers their driving Brownian motions.
\begin{proposition}
\label{prop:pole-recovery}
Fix $\beta\geq1$, $\gamma\in\mathbb R$, $p>1$,
$\kappa\in(0,1)$, and $\rho>0$. Let
$(\Omega,\mathcal F,(\mathcal F_t)_{t\geq0},\prob)$ satisfy the
usual conditions, and let $\bx\in\mathcal W$ be deterministic with
$\mathfrak q(\bx)\in\mathcal L_{\kappa,\rho}^{p}$. Let
$\boldsymbol{\sfX}=(\sfX_i)_{i\in\mathbb Z}$ be an adapted, coordinatewise
continuous process such that, almost surely,
\begin{equation*}
    \boldsymbol{\sfX}\left(0\right)=\bx,
    \quad
    \sfX_i\left(t\right)<\sfX_{i+1}\left(t\right),
    \quad i\in\mathbb Z,\ t\geq0,
    \quad
    \mathfrak q\left(\boldsymbol{\sfX}\right)\in\tsaipro.
\end{equation*}
Let $\sfS_t$ be the symmetric Stieltjes transform of $\boldsymbol{\sfX}(t)$.
Suppose there is a single adapted continuous
local-martingale field $\sfU$ on $\mathbb H$ such that
\eqref{eq:stieltjes-spde},
\eqref{eq:stieltjes-bilinear-bracket},
\eqref{eq:stieltjes-hermitian-bracket}, and $(\mathbf{SC})$ hold on
every compact time interval.

Then there exists an independent family
$(\sfB_i)_{i\in\mathbb Z}$ of standard real
$(\mathcal F_t)_{t\geq0}$-Brownian motions such that
\begin{equation*}
    \sfX_i\left(t\right)
    =
    x_i+\sfB_i\left(t\right)
    +\frac{\beta}{2}\int_0^t
    \left\{\mu_i\left(\boldsymbol{\sfX}\left(s\right)\right)-\gamma\right\}\dif s,
    \quad i\in\mathbb Z,\ t\geq0.
\end{equation*}
Moreover, $\boldsymbol{\sfX}$ is the unique strong solution of this
shifted ISDE with initial condition $\bx$ among processes whose gap
process belongs to $\tsaipro$.
\end{proposition}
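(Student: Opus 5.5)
The plan is to recover each particle path from the Stieltjes field by contour integration around its pole, using $(\mathbf{SC})$. Fix $i\in\mathbb Z$ and $T>0$. Since the particles are continuous and strictly ordered, there are stopping times $0=\mathsf T_0\leq\mathsf T_1\leq\cdots\uparrow T$ and $\mathcal F_{\mathsf T_m}$-measurable indices $\mathsf J_m$ with the following property. On $[\mathsf T_m,\mathsf T_{m+1}]$ the particle $\sfX_i$ stays inside a fixed rational disc $\mathcal D_{\mathsf J_m}$ (taken from a countable family of rational discs, each meeting $\mathbb H$). That disc has positive distance from every other particle $\sfX_k$, $k\neq i$. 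Concretely, $\mathsf T_{m+1}$ is the first time $\sfX_i$ leaves the inner half-disc or some neighbour comes within a fixed fraction of the radius, and the radius is chosen as a rational fraction of the current gaps.

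On such a disc we take the boundary circle $\Gamma=\partial\mathcal D_{\mathsf J_m}$. Since $\sfS_t$ has a simple pole at $\sfX_i(t)$ with residue $-1$, we have
\[
 \sfX_i\left(t\right)=-\frac{1}{2\pi\mi}\oint_\Gamma z\,\sfS_{t,\gamma}\left(z\right)\dif z
\]
for $t\in[\mathsf T_m,\mathsf T_{m+1}]$; the constant $\gamma$ integrates to zero against $z\,\dif z$. We apply the identity defining $\mathsf W^{\mathsf J_m}$ in $(\mathbf{SC})$ pointwise on $\Gamma$ and integrate. Then $\sfX_i(t)-\sfX_i(\mathsf T_m)$ equals a drift term plus $\mathsf M_i(t)-\mathsf M_i(\mathsf T_m)$, where
\[
 \mathsf M_i\left(t\right)\defeq -\frac{1}{2\pi\mi}\oint_\Gamma z\,\mathsf W_t\left(z\right)\dif z .
\]
This $\mathsf M_i$ is a continuous local martingale, by Fubini on the stopped $C(\Gamma)$-valued martingale.

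\textbf{Drift.} We compute $-\frac1{2\pi\mi}\oint_\Gamma z\,\mathcal A(\sfS_{s,\gamma})(z)\dif z$ by residues at $\sfX_i(s)$. Write $\sfS_{s,\gamma}=-(z-x)^{-1}+h(z)$, where $x=\sfX_i(s)$ and $h$ is holomorphic near $x$. Integrating by parts gives
\[
 \oint z\,\partial_z f\,\dif z=-\oint f\,\dif z .
\]
Applying this to $f=\sfS^2$, the drift becomes $\frac\beta4$ times the residue of $\sfS_{s,\gamma}^2$, which is $-2h(x)$, with sign conventions giving $+\frac\beta2 h(x)$. The $\partial_z^2$ term contributes $\oint z\,\partial_z^2\sfS=0$, since the integrand has no $(z-x)^{-1}$ term. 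Here
\[
 h(x)=\sum_{k\neq i}\frac{1}{\sfX_k-x}+\gamma=-\mu_i(\boldsymbol{\sfX})+\gamma
\]
in the symmetric-sum sense; regularity via \eqref{eq:pole-free-paired-tail} shows that the paired sum equals $\mu_i$. We will check the sign carefully against Lemma~\ref{lemma S^UN SDE}, where the finite identity gives exactly $\frac\beta2\{\mu_i-\gamma\}$ once the $\gamma$-dependence is tracked. A cleaner route to the sign is to note that the finite computation is a residue identity, so the same algebra applies.

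\textbf{Brackets.} From the covariation formulas in $(\mathbf{SC})$ we compute
\[
 \langle\mathsf M_i,\mathsf M_j\rangle_t=\frac{1}{(2\pi\mi)^2}\oint\oint z w\,\partial_z\partial_w Q_{\sfS_s}(z,w)\,\dif z\,\dif w\,\dif s ,
\]
using the explicit series $\sum_k(\sfX_k-z)^{-2}(\sfX_k-w)^{-2}$. Each integral $\oint z(\sfX_k-z)^{-2}\dif z$ equals $2\pi\mi$ when $\sfX_k$ lies inside the contour and $0$ otherwise. This gives $\langle\mathsf M_i,\mathsf M_j\rangle_t=\delta_{ij}t$. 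Realness of $\mathsf M_i$ follows from the Hermitian bracket (equivalently, $\sfX_i$ and the drift are real). For $i\neq j$ the contours are chosen disjoint on common stopping intervals. By L\'evy's characterization, the $\mathsf B_i\defeq\mathsf M_i$, pieced together across the $\mathsf T_m$ and then across $T\in\mathbb N$, form independent standard Brownian motions, which yields the shifted equation. Uniqueness then follows from Theorem~\ref{theo sde theorem}: $\sfY_i=\sfX_i+\beta\gamma t/2$ solves the unshifted ISDE in the regular class, and pathwise uniqueness from Proposition~\ref{ISDE e and u} applies. Strong solution status follows from the strong existence part of that proposition together with pathwise uniqueness.

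\textbf{Main obstacle.} The hardest step is the measurable localization: constructing the stopping times $\mathsf T_m$ and the $\mathcal F_{\mathsf T_m}$-measurable disc choices so that $(\mathbf{SC})$ applies. We must also ensure that infinitely many particles are handled simultaneously with disjoint contours for the cross-brackets, and that $\mathsf T_m\uparrow T$ without accumulation. For the latter, continuity and uniform positivity of the gaps on $[0,T]$ (from membership in $\mathcal P^p_{\kappa,\rho}$ and continuity of finitely many gaps) give a random lower bound on the gaps near $i$. That bound forces the increments of $\sfX_i$ over the successive intervals to be uniformly large, so only finitely many $\mathsf T_m$ fall below $T$.
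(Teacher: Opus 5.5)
Your overall route is the paper's: measurably selected rational contours between stopping times, residues for the drift, the $(\mathbf{SC})$ brackets with L\'evy's characterization, and Tsai's pathwise uniqueness. However, the localization step as you set it up does not satisfy the hypotheses of $(\mathbf{SC})$.

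You invoke $(\mathbf{SC})$ on a disc $\mathcal D_{\mathsf J_m}$ that contains $\sfX_i$, and integrate over its boundary circle. This fails for two reasons.
\begin{itemize}
\item The hypothesis of $(\mathbf{SC})$ requires $\inf|z-\sfX_k(s)|>0$ over $z\in\overline{\mathcal D_j}$ and \emph{all} $k\in\mathbb Z$, including $k=i$. On your disc the field $\mathsf V^{\mathsf T_{\mathrm{in}},\mathsf T_{\mathrm{out}}}$ is not even defined, because $\sfS_{t,\gamma}$ has a pole at $\sfX_i(t)$.
\item $(\mathbf{SC})$ yields the martingale property only on compacts $\mathcal K\Subset\mathcal D_j$, and $\partial\mathcal D_j$ is not such a compact.
\end{itemize}
The repair, which is what the paper does, is to use a thin rational annulus $D_n=\{\mathsf r_n-\mathsf d_n<|z-\mathsf c_n|<\mathsf r_n+\mathsf d_n\}$ and integrate over its middle circle $\Gamma_n\Subset D_n$. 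You stop when $|\sfX_i-\mathsf c_n|$ reaches $\mathsf r_n-2\mathsf d_n$ or $|\sfX_{i\pm1}-\mathsf c_n|$ drops to $\mathsf r_n+2\mathsf d_n$. Then $\overline{D_n}$ stays at distance at least $\mathsf d_n$ from every particle, while $\Gamma_n$ encloses exactly $\sfX_i$. Note also that at each exit it is one of $\sfX_{i-1},\sfX_i,\sfX_{i+1}$, not necessarily $\sfX_i$, that has moved by a fixed fraction of the gap; this is what rules out accumulation.

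The same issue affects the cross brackets. The covariation formulas in $(\mathbf{SC})$ relate only points of a single domain. So for $\langle\mathsf M_i,\mathsf M_j\rangle$ with $i\neq j$, you must pass to a common refinement of the two stopping covers and apply $(\mathbf{SC})$ to the union of the two active annuli. These unions form a fixed countable family, selected measurably at the left endpoint of each refined interval. Choosing disjoint contours does not by itself give you a formula for the cross covariation.

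Your drift computation also has a sign slip. The correct identity is
\[
-\frac1{2\pi\mi}\oint z\,\frac\beta4\partial_z\left(\sfS_{s,\gamma}^2\right)\dif z=\frac\beta4\operatorname{Res}_{z=x}\sfS_{s,\gamma}^2=-\frac\beta2h(x),
\]
not $+\frac\beta2h(x)$. With $h(x)=\gamma-\mu_i(\boldsymbol{\sfX}(s))$ this gives $\frac\beta2\{\mu_i-\gamma\}$ directly. The sign you wrote would produce the opposite drift. No comparison with the finite-$N$ computation is needed.
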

\begin{proof}
    Fix $T>0$ and $i\in\mathbb Z$. Membership in $\tsaipro$ gives,
uniformly for $s\in[0,T]$,
\begin{equation*}
    \sfX_k\left(s\right)
    =
    \sfX_0\left(s\right)+\rho k+O_T\left(\left|k\right|^{1-\kappa}\right).
\end{equation*}
    Hence the paired summands defining $\mu_i(\boldsymbol{\sfX}(s))$ are
$O_T(|k|^{-1-\kappa})$. Thus the principal value exists locally
uniformly in $s$ and is continuous.  For every compact set
$\mathcal K\Subset\mathbb C\setminus
\bigcup_{0\leq s\leq T}\{\sfX_k(s):k\in\mathbb Z\}$,
the same estimate gives
\begin{equation*}
    \sup_{0\leq s\leq T}\sup_{z\in\mathcal K}
    \sum_{k\in\mathbb Z}\left|\sfX_k\left(s\right)-z\right|^{-4}<\infty.
\end{equation*}

Put $\mathsf T_0=0$. If $\mathsf T_n<T$, let
\begin{equation*}
    \mathsf g_n\defeq 
    \min\left\{
        \sfX_i\left(\mathsf T_n\right)-\sfX_{i-1}\left(\mathsf T_n\right),
        \sfX_{i+1}\left(\mathsf T_n\right)-\sfX_i\left(\mathsf T_n\right)
    \right\}.
\end{equation*}
From a fixed enumeration of
$\mathbb Q\times(\mathbb Q_{>0})^2$, select the first
$(\mathsf c_n,\mathsf r_n,\mathsf d_n)$ satisfying the following inequalities, and denote
its enumeration index by $\mathsf J_n$:
\begin{equation*}
    \left|\mathsf c_n-\sfX_i\left(\mathsf T_n\right)\right|<\frac{\mathsf g_n}{64},
    \quad
    \frac{\mathsf g_n}{4}<\mathsf r_n<\frac{9\mathsf g_n}{32},
    \quad
    \frac{\mathsf g_n}{256}<\mathsf d_n<\frac{\mathsf g_n}{128}.
\end{equation*}
The choice and $\mathsf J_n$ are $\mathcal F_{\mathsf T_n}$-measurable. Define
\begin{equation*}
\begin{aligned}
    \mathsf T_{n+1}\defeq {}&T\wedge\inf\left\{t\geq\mathsf T_n:
        \left|\sfX_i\left(t\right)-\mathsf c_n\right|\geq \mathsf r_n-2\mathsf d_n
        \text{ or }
        \min_{\varepsilon\in\left\{-1,1\right\}}
        \left|\sfX_{i+\varepsilon}\left(t\right)-\mathsf c_n\right|
        \leq \mathsf r_n+2\mathsf d_n
    \right\},
\end{aligned}
\end{equation*}
and
\begin{equation*}
    \Gamma_n\defeq \left\{z:\left|z-\mathsf c_n\right|=\mathsf r_n\right\},
    \quad
    D_n\defeq \left\{z:\mathsf r_n-\mathsf d_n<\left|z-\mathsf c_n\right|<\mathsf r_n+\mathsf d_n\right\},
\end{equation*}
where $\Gamma_n$ is counterclockwise oriented. Throughout
$[\mathsf T_n,\mathsf T_{n+1}]$, $\Gamma_n$ encloses $\sfX_i$ and no other
pole, and
\begin{equation*}
    \inf_{\substack{\mathsf T_n\leq s\leq\mathsf T_{n+1},\ z\in\overline D_n\\
                    k\in\mathbb Z}}
    \left|z-\sfX_k\left(s\right)\right|\geq\mathsf d_n.
\end{equation*}

The construction reaches $T$ after finitely many steps almost surely.
Indeed,
\begin{equation*}
    \mathsf g_*\defeq 
    \min_{0\leq s\leq T}
    \min\left\{
        \sfX_i\left(s\right)-\sfX_{i-1}\left(s\right),
        \sfX_{i+1}\left(s\right)-\sfX_i\left(s\right)
    \right\}>0.
\end{equation*}
At a nonterminal exit, one of
$\sfX_{i-1},\sfX_i,\sfX_{i+1}$ moves by at least
$7\mathsf g_n/32\geq7\mathsf g_*/32$. Uniform continuity therefore prevents
accumulation of the stopping times. Set $\mathsf T_n=T$ after the first
time that $T$ is reached.

Apply $(\mathbf{SC})$ to the fixed countable family of rational annuli
generated by the preceding enumeration, with $\mathsf T_{\mathrm{in}}=\mathsf T_n$,
$\mathsf T_{\mathrm{out}}=\mathsf T_{n+1}$, and $\mathsf J=\mathsf J_n$ (so $\mathcal D_{\mathsf J_n}=D_n$), and write
$\mathsf V^{(n)}=\mathsf V^{\mathsf T_n,\mathsf T_{n+1}}$. On every event selecting a fixed
rational annulus, contour integration is a continuous linear
functional on $\mathcal O(D_n)$. Hence
\begin{equation*}
    \mathsf M_i^{\left(n\right)}\left(t\right)\defeq 
    -\frac{1}{2\pi\mi }
    \oint_{\Gamma_n}z\mathsf V_t^{\left(n\right)}\left(z\right)\dif z
\end{equation*}
is a continuous local martingale, stopped on
$[\mathsf T_n,\mathsf T_{n+1}]$. For the random annulus, first localize to the
first $m$ alternatives in the fixed enumeration and use a common localizer for them; letting $m\to\infty$ gives the assertion.

For $\mathsf T_n\leq t\leq\mathsf T_{n+1}$, the residue theorem and the defining
identity for $\mathsf V^{(n)}$ give
\begin{equation*}
\begin{aligned}
    \sfX_i\left(t\right)-\sfX_i\left(\mathsf T_n\right)
    ={}&\mathsf M_i^{\left(n\right)}\left(t\right)-\frac{1}{2\pi\mi }
      \int_{\mathsf T_n}^t\oint_{\Gamma_n}
      z\,\mathcal A\left(\sfS_{s,\gamma}\right)\left(z\right)\dif z\dif s.
\end{aligned}
\end{equation*}
The uniform pole separation justifies interchanging the time and
contour integrals. Near $\sfX_i(s)$,
\begin{equation*}
    \sfS_{s,\gamma}\left(z\right)
    =
    -\frac{1}{z-\sfX_i\left(s\right)}+\mathsf h_i\left(s,z\right),
    \quad
    \mathsf h_i\left(s,\sfX_i\left(s\right)\right)=\gamma-\mu_i\left(\boldsymbol{\sfX}\left(s\right)\right).
\end{equation*}
The second identity follows from the locally uniform paired
convergence: the cutoffs centered at $0$ and at $i$ differ by
$2|i|$ boundary terms, each of order $O_T(K^{-1})$. Consequently,
\begin{equation*}
    \frac{1}{2\pi\mi }
    \oint_{\Gamma_n}\sfS_{s,\gamma}\left(z\right)^2\dif z
    =2\left\{\mu_i\left(\boldsymbol{\sfX}\left(s\right)\right)-\gamma\right\},
    \quad
    \oint_{\Gamma_n}z\,\partial_z^2\sfS_{s,\gamma}\left(z\right)\dif z=0.
\end{equation*}
Using
$\oint_{\Gamma_n}z\,\partial_zf(z)\,\dif z
=-\oint_{\Gamma_n}f(z)\,\dif z$, we obtain
\begin{equation*}
    \sfX_i\left(t\right)-\sfX_i\left(\mathsf T_n\right)
    =
    \mathsf M_i^{\left(n\right)}\left(t\right)
    +\frac{\beta}{2}\int_{\mathsf T_n}^t
    \left\{\mu_i\left(\boldsymbol{\sfX}\left(s\right)\right)-\gamma\right\}\dif s.
\end{equation*}

Define
\begin{equation*}
 \sfB_i\left(t\right)\defeq 
    \sfX_i\left(t\right)-x_i
    -\frac{\beta}{2}\int_0^t
    \left\{\mu_i\left(\boldsymbol{\sfX}\left(s\right)\right)-\gamma\right\}\dif s.
\end{equation*}
For each $m$, the process stopped at $\mathsf T_m$ is a finite pasting of
continuous local martingales. Since $\mathsf T_m\uparrow T$ almost surely,
$\sfB_i$ is a real continuous local martingale on $[0,T]$.

For $i,j\in\mathbb Z$, take a common refinement of their stopping
covers and apply $(\mathbf{SC})$ to the union of the two active annuli.
All unions of two rational annuli form a fixed countable family, and
the selected union is measurable at the left endpoint of each refined
interval.
On a refined interval denote the two active contours by
$\Gamma^{(i)}$ and $\Gamma^{(j)}$.  The bilinear covariance formula
gives
\begin{equation*}
\begin{aligned}
    \frac{\dif}{\dif s}\left\langle\sfB_i,\sfB_j\right\rangle_s
    ={}&
    \sum_{k\in\mathbb Z}
    \left(
        \frac{1}{2\pi\mi }
        \oint_{\Gamma^{\left(i\right)}}
        \frac{z\dif z}{\left(\sfX_k\left(s\right)-z\right)^2}
    \right)
    \left(
        \frac{1}{2\pi\mi }
        \oint_{\Gamma^{\left(j\right)}}
        \frac{w\dif w}{\left(\sfX_k\left(s\right)-w\right)^2}
    \right)
    =\delta_{ij}.
\end{aligned}
\end{equation*}
The fourth-power bound and Cauchy--Schwarz justify the interchange of
the series and the contour integrals; each contour factor is the
indicator that it encloses $\sfX_k(s)$. It follows that
\begin{equation*}
    \left\langle\sfB_i,\sfB_j\right\rangle_t=\delta_{ij}t.
\end{equation*}
Taking a countable intersection over $T\in\mathbb N$ and
$i,j\in\mathbb Z$ makes these identities simultaneous for all indices
and times. Multidimensional L\'evy's characterization, applied to
every finite subfamily, shows that $(\sfB_i)_{i\in\mathbb Z}$ is an
independent family of standard real Brownian motions.

Set $\sfY_i(t)=\sfX_i(t)+\beta\gamma t/2$. Since the gaps and
interaction drift are invariant under a common translation,
$\boldsymbol{\sfY}$ is an adapted regular solution of
\eqref{eq ISDE definition} driven by the recovered Brownian family.
Proposition \ref{ISDE e and u} supplies a strong solution driven by that
family, and pathwise uniqueness among adapted regular solutions identifies
it with $\boldsymbol{\sfY}$. Translating back proves strong existence and
uniqueness for $\boldsymbol{\sfX}$.
\end{proof}
\bibliographystyle{acm}
\bibliography{ReferencesDBM}

\bigskip

\noindent{\sc School of Mathematics, University of Edinburgh, James Clerk Maxwell Building, Peter Guthrie Tait Rd, Edinburgh EH9 3FD, U.K.}\newline
\href{mailto:theo.assiotis@ed.ac.uk}{\small theo.assiotis@ed.ac.uk}

\bigskip

\noindent{\sc School of Mathematics, University of Edinburgh, James Clerk Maxwell Building, Peter Guthrie Tait Rd, Edinburgh EH9 3FD, U.K.}\newline
\href{mailto:F.Li-31@sms.ed.ac.uk}{\small F.Li-31@sms.ed.ac.uk}

\end{document}